\tikzset{main node/.style={circle,fill=blue!20,draw,minimum size=0.1cm,inner sep=0pt}}
\definecolor{seeblau}{RGB}{0, 169, 224}
\newtheorem*{corolario*}{Corollary}
\newtheorem*{proposicion*}{Proposition}
\newtheorem*{teorema*}{Theorem}
\newtheorem{teorema}{Theorem}[section]
\newtheorem{lema}[teorema]{Lemma}
\newtheorem{proposicion}[teorema]{Proposition}
\newtheorem{corolario}[teorema]{Corollary}
\newtheorem{porisma}[teorema]{Porism}
\newtheorem{hecho}[teorema]{Fact}
\newtheorem{particularizacion}[teorema]{Particularization}
\newtheorem{simplificacion}[teorema]{Simplification}
\newtheorem{ampliacion}[teorema]{Ampliation}
\theoremstyle{definition}
\newtheorem{estrategia}[teorema]{Strategy}
\newtheorem{procedimiento}[teorema]{Procedure}
\newtheorem{recordatorio}[teorema]{Reminder}
\newtheorem{conjetura}[teorema]{Conjecture}
\newtheorem{comentario}[teorema]{Comment}
\newtheorem{definicion}[teorema]{Definition}
\newtheorem{cuestion}[teorema]{Question}
\newtheorem{limitacion}[teorema]{Limitation}
\newtheorem{warning}[teorema]{Warning}
\newtheorem{ramificacion}[teorema]{Ramification}
\newtheorem{convencion}[teorema]{Convention}
\newtheorem{construccion}[teorema]{Construction}
\newtheorem{ejemplo}[teorema]{Example}
\newtheorem{experimento}[teorema]{Experiment}
\newtheorem{objetivo}[teorema]{Objective}
\newtheorem{mejora}[teorema]{Improvement}
\newtheorem{ejercicio}[teorema]{Exercise}
\newtheorem{observacion}[teorema]{Observation}
\newtheorem{remark}[teorema]{Remark}
\newtheorem{computacion}[teorema]{Computation}
\newtheorem{notacion}[teorema]{Notation}
\newtheorem{problem}[teorema]{Problem}
\newtheorem{reminder}[teorema]{Reminder}
\newtheorem{metafora}{Metaphor}
\newtheorem{roadmap}{Roadmap}
\newtheorem{step}{Step}
\DeclareMathOperator{\mirror}{mirror}
\DeclareMathOperator{\innr}{innr}
\DeclareMathOperator{\bou}{bou}
\DeclareMathOperator{\rel}{rel}
\DeclareMathOperator{\asy}{asy}
\DeclareMathOperator{\E}{E}
\DeclareMathOperator{\tot}{tot}
\DeclareMathOperator{\mult}{multi}
\DeclareMathOperator{\Des}{Des}
\DeclareMathOperator{\des}{des}
\DeclareMathOperator{\exc}{exc}
\DeclareMathOperator{\Sym}{Sym}
\DeclareMathOperator{\uni}{uni}
\DeclareMathOperator{\suma}{sum}
\DeclareMathOperator{\rcs}{rcs}
\DeclareMathOperator{\Ima}{Im}
\DeclareMathOperator{\tr}{tr}
\DeclareMathOperator{\Her}{Her}
\DeclareMathOperator{\Rea}{Re}
\DeclareMathOperator{\hur}{hur}
\DeclareMathOperator{\card}{card}
\DeclareMathOperator{\linspan}{linspan}
\DeclareMathOperator{\id}{id}
\DeclareMathOperator{\Int}{int}
\DeclareMathOperator{\un}{uni}
\DeclareMathOperator{\diag}{diag}
\DeclareMathOperator{\sym}{sym}
\DeclareMathOperator{\eigval}{eigval}
\DeclareMathOperator{\bd}{bd}
\DeclareMathOperator{\cl}{cl}
\DeclareMathOperator{\bi}{bi}
\DeclareMathOperator{\vol}{vol}
\DeclareMathOperator{\rt}{rt}
\DeclareMathOperator{\coeff}{coeff}
\DeclareMathOperator{\shortcoeff}{C}
\DeclareMathOperator{\shortdeg}{D}
\DeclareMathOperator{\rec}{rec}
\DeclareMathOperator{\trun}{trun}
\newcommand{\comm}[1]{}
\newcommand{\rotA}{{\mathpalette\rotaA\relax}}
\newcommand{\rotaA}[2]{\rotatebox[origin=c]{180}{$A$}}
\newcommand{\rotaar}[2]{\rotatebox[origin=c]{180}{$r$}}
\newcommand{\rotae}[2]{\rotatebox[origin=c]{180}{$e$}}
\newcommand{\rotaE}[2]{\rotatebox[origin=c]{180}{$E$}}
\newcommand{\rotaM}[2]{\rotatebox[origin=c]{180}{$M$}}
\title{\bf Your title}
\author{Your name}
\date{University of Konstanz}
\newglossaryentry{brackn}{%
name=\ensuremath{[n]},
description={The interval of integers up to $n\in\mathbb{N}$, i.e., $\{1,\dots,n\}\subseteq\mathbb{N}$}
}
\newglossaryentry{ptilde}{%
name=\ensuremath{\Tilde{p}},
description={For $p\in\mathbb{R}[\mathbf{x}]$ a polynomial, $p+y p^{(1)}\in\mathbb{R}[\mathbf{x},y]$}
}
\newglossaryentry{MLMPDR}{%
name=MLMPDR,
description={Shorthand for the term \textit{monic linear (symmetric) matrix polynomial determinantal representation}. We add \textit{of size $s$} if such size $s\in\mathbb{N}$ is made explicit as MLMPDR($s$)}
}
\newglossaryentry{LMP}{%
name=LMP,
description={Shorthand for the term \textit{linear (symmetric) matrix polynomial}. We add \text{of size $s$} if such size $s\in\mathbb{N}$ is made explicit as LMP($s$)}
}
\newglossaryentry{MLMP}{%
name=MLMP,
description={Shorthand for the term \textit{monic linear (symmetric) matrix polynomial}. We add \text{of size $s$} if such size $s\in\mathbb{N}$ is made explicit as MLMP($s$)}
}
\newglossaryentry{GLC}{%
name=GLC,
description={Shorthand for the \textit{generalized Lax conjecture}}
}
\newglossaryentry{RZ}{%
name=RZ,
description={Shorthand for the property of being \textit{real-zero} for a polynomial $p\in\mathbb{R}$}
}
\newglossaryentry{renegpk}{%
name=\ensuremath{p^{(k)}},
description={$k$-th Renegar derivative of $p\in\mathbb{R}[\mathbf{x}]$}
}
\newglossaryentry{homop}{%
name=\ensuremath{p^{h}},
description={Homogenization of $p\in\mathbb{R}[\mathbf{x}]$ using the additional variable $x_{0}$}
}
\newglossaryentry{bda}{%
name=\ensuremath{\bd(A)},
description={Topological border of $A\subseteq X$ for $(X,\tau)$ a topological space}
}
\newglossaryentry{padirection}{%
name=\ensuremath{p_{A}},
description={$p(\sum_{i=1}^{l}x_{i}a_{i})$ for a finite tuple of directions $A=(a_{1},\dots,a_{l})$ with $a_{i}\in\mathbb{R}^{n}$ for all $i\in[l]$ and $p\in\mathbb{R}[\mathbf{x}]$}
}
\newglossaryentry{sadirection}{%
name=\ensuremath{S_{a}(p)},
description={$\{\lambda\in\mathbb{R}\mid\lambda a\in S(p)\}$ for a direction $a\in\mathbb{R}^{n}$ and a polynomial $p\in\mathbb{R}[\mathbf{x}]$}
}
\newglossaryentry{phat}{%
name=\ensuremath{\hat{p}},
description={For $p\in\mathbb{R}[\mathbf{x}]$ a polynomial with $d=\deg(p)$, $\sum_{k=0}^{d}\frac{1}{k!}y^{k}p^{(k)}\in\mathbb{R}[\mathbf{x},y]$}
}
\newglossaryentry{overlinep}{%
name=\ensuremath{\overline{p}},
description={For $p\in\mathbb{R}[\mathbf{x}]$ a polynomial, $\sum_{k=0}^{2}\frac{1}{k!}y^{k}p^{(k)}\in\mathbb{R}[\mathbf{x},y]$}
}
\newglossaryentry{coefmp}{%
name=\ensuremath{\shortcoeff(m,p)},
description={Coefficient of the monomial $m$ in the polynomial $p\in\mathbb{R}[\mathbf{x}]$}
}
\newglossaryentry{degp}{%
name=\ensuremath{\shortdeg(p)},
description={A shorter notation more adequate for writing computations for the degree of the polynomial $p\in\mathbb{R}[\mathbf{x}]$}
}
\newglossaryentry{overaa}{%
name=\ensuremath{\overline{a}},
description={$(a,0)\in\mathbb{R}^{n+1}$ for $a\in\mathbb{R}^{n}$}
}
\newglossaryentry{overa}{%
name=\ensuremath{\overline{A}},
description={$(\overline{a_{1}},\dots,\overline{a_{l}},(0,\dots,0,1))\in(\mathbb{R}^{n}\times\{0\})^{l}\times(\{0\}^{n}\times\{1\})$ for a tuple of vectors $A=(a_{1},\dots,a_{l})\in(\mathbb{R}^{n})^{l}$ with $a_{i}\in\mathbb{R}^{n}$}
}
\newglossaryentry{rrz}{%
name=\ensuremath{\mathbb{R}_{\mbox{RZ}}[\mathbf{x}]},
description={Set of real-zero polynomials $p\in\mathbb{R}[\mathbf{x}]$ verifying $p(0)=1$ and $\deg(p)>0$}
}
\newglossaryentry{mni}{%
name=\ensuremath{M_{n,i;(j_{1},\dots,j_{m}),(i_{1},\dots,i_{k})}},
description={Mold monomial matrix given by the outer product of the vector $(1, x_{j_{1}}, \cdots, x_{j_{m}}, x_{i}x_{i_{1}}, \cdots, x_{i}x_{i_{k}})$ with $k,m,i,i_{a},j_{b}\in[n]$ for all $a\in[k], b\in[m]$ and $i_{a}\neq i_{a'}$ whenever $a\neq a'$ and $j_{b}\neq j_{b'}$ whenever $b\neq b'$. We denote it simply $M_{n,i}$ when we take $(j_{1},\dots,j_{m})=(i_{1},\dots,i_{k})=(1,\dots,n)$}
}
\newglossaryentry{ftg}{%
name=\ensuremath{f^{T}g},
description={For $F$ a field, indexed scalar product over the common inedexing set $I$ of $f\colon A\subseteq I\to F$ and $g\colon B\subseteq I\to F$}
}
\newglossaryentry{mn1}{%
name=\ensuremath{M_{n,\leq1}},
description={Mold monomial matrix of the relaxation given by the outer product $(1, x_{1}, \cdots, x_{n})^{T}(1, x_{1}, \cdots, x_{n})$ of the vector $(1, x_{1}, \cdots, x_{n})$ of monomials of degrees up to $1$ in the monomial basis}
}
\newglossaryentry{mpd}{%
name=\ensuremath{M_{p,d}},
description={Pencil associated to $p$ with respect to the virtual degree $d\in\mathbb{N}$. We write $M_{p}:=M_{p,\deg(p)}$}
}
\newglossaryentry{spd}{%
name=\ensuremath{S_{d}(p)},
description={Spectrahedron associated to $p$ with respect to the virtual degree $d\in\mathbb{N}$. We write $S(p):=S_{\deg(p)}(p)$}
}
\newglossaryentry{circ}{%
name=\ensuremath{L\circledcirc M},
description={Tensor in $B^{s_{1}\times\cdots\times s_{k}}$ obtained from applying the molding map $L\colon A\to B$ to each entry of the mold tensor $M\in A^{s_{1}\times\cdots\times s_{k}}$}
}
\newglossaryentry{lpd}{%
name=\ensuremath{L_{p,d}},
description={$L$-form associated to $p\in\mathbb{R}[\mathbf{x}]$ with virtual degree $d$. We write $L_{p}:=L_{p,\deg(p)}$}
}
\newglossaryentry{trundp}{%
name=\ensuremath{\trun_{d}(p)},
description={Truncation of the polynomial $p\in\mathbb{R}[\mathbf{x}]$ at degree $d\in\mathbb{N}$}
}
\newglossaryentry{expp}{%
name=\ensuremath{\exp(p)},
description={Exponential of the polynomial $p\in\mathbb{R}[\mathbf{x}]$}
}
\newglossaryentry{logp}{%
name=\ensuremath{\log(p)},
description={Logarithm of the polynomial $p\in\mathbb{R}[\mathbf{x}]$}
}
\newglossaryentry{rcs}{%
name=\ensuremath{\rcs(p)},
description={the rigidly convex set of the real-zero polynomial $p\in\mathbb{R}[\mathbf{x}]$}
}
\newglossaryentry{hur}{%
name=\ensuremath{\hur_{\alpha}(A_{1},\dots,A_{n})},
description={$\alpha$-Hurwitz product of $A_{i}\in\mathbb{C}^{d\times d}$ for $i\in[n]$ and $\alpha\in\mathbb{N}^{n}_{0}$}
}
\newglossaryentry{ma}{%
name=\ensuremath{m_{A}},
description={The only minimum degree monic monomial in the variables strictly appearing in the polynomial $A(\mathbf{x})\in\mathbb{R}[\mathbf{x}]$ that produces a polynomial after performing the multiplication $m_{A}A(\frac{1}{x_{1}},\dots,\frac{1}{x_{n}})$}
}
\newglossaryentry{rec}{%
name=\ensuremath{\rec(p)},
description={Reciprocal polynomial of the polynomial $p$. It can be applied to univariate or multivariate polynomials}
}
\newglossaryentry{mirrorx}{%
name=\ensuremath{\mirror(\mathbf{x})},
description={For $\mathbf{x}=(x_{1},\dots,x_{n})$ this gives $(x_{n},\dots,x_{1})$}
}
\newglossaryentry{aatob}{%
name=\ensuremath{A^{a\to b}},
description={The subset of permutations $\sigma\in A\subseteq\mathfrak{S}_{n}$ sending $a$ to $\sigma(a)=b$}
}
\newglossaryentry{asyA}{%
name=\ensuremath{\asy_{A}},
description={Asymptotic exponential growth map for the sequence of polynomials $A:=\{A_{n}\}_{n=0}^{\infty}$ with each polynomial $A_{n}\in\mathbb{R}[x_{1},\dots,x_{n}]$. Its domain can either be the space of real sequences $\mathbb{R}^\mathbb{N}$ or the space $S:=\{w\in(\bigcup_{n}\mathbb{R}^{n})^\mathbb{N}\mid w_{i}\in\mathbb{R}^{i}\}$ of vector sequences of index size}
}
\newglossaryentry{anw}{%
name=\ensuremath{A_{n,w}},
description={Weighted $n$-th multivariate Eulerian polynomials with weight $w\in\mathbb{R}^{n+1}$}
}
\newglossaryentry{snw}{%
name=\ensuremath{\mathfrak{S}_{n}^{w}},
description={The set of weighted permutations in $\mathfrak{S}_{n}$ with weight $w\colon[n]\to\mathbb{R}$}
}
\newglossaryentry{rela}{%
name=\ensuremath{\rel_{A}},
description={General relaxation map of the sequence of polynomials $A:=\{A_{n}\}_{n=0}^{\infty}$ with each polynomial $A_{n}\in\mathbb{R}[x_{1},\dots,x_{n}]$}
}
\newglossaryentry{boua}{%
name=\ensuremath{\bou_{A}},
description={General bound map of the sequence of polynomials $A:=\{A_{n}\}$ with each polynomial $A_{n}\in\mathbb{R}[x_{1},\dots,x_{n}]$}
}
\newglossaryentry{chijk}{%
name=\ensuremath{\chi_{j}^{(k)}},
description={$\frac{E(k+1,j-1)E(k+1,j+1)}{E(k+1,j)^2}-\frac{E(k+1,j-2)E(k+1,j+2)}{E(k+1,j)^2}+\frac{E(k+1,j-3)E(k+1,j+3)}{E(k+1,j)^2}-\cdots$}
}
\newglossaryentry{ujks}{%
name=\ensuremath{u_{j}^{(k,s)}},
description={$\binom{k+2}{j}\left(\frac{s+1-j}{s+1}\right)^{k+1}$}
}
\newglossaryentry{uni}{%
name=\ensuremath{\uni(n)},
description={Best possible bound obtained through the application of the relaxation to the univariate Eulerian polynomial $A_{n}$}
}
\newglossaryentry{multv}{%
name=\ensuremath{\mult_{v(n)}(n)},
description={Bound obtained through the application of the relaxation to the multivariate Eulerian polynomial $A_{n}$ and the linearization of the resulting LMI through the vector $v(n)$}
}
\newglossaryentry{volA}{%
name=\ensuremath{\vol(A)},
description={The volume of the Lebesgue-measurable set $A\subseteq\mathbb{R}^{n}$}
}
\newglossaryentry{mn}{%
name=\ensuremath{M_{n}},
description={LMP given by the application of the relaxation to the $n$-th RZ multivariate Eulerian polynomial $A_{n}(\mathbf{x})$}
}
\newglossaryentry{enk}{%
name=\ensuremath{E(n,k)},
description={Eulerian number for the number of permutations in $\mathfrak{S}_{n}$ having exactly $k$ descents}
}
\newglossaryentry{l}{%
name=\ensuremath{f\sim g},
description={For the sequences $f$ and $g$, have the limit $\lim_{n\to\infty}\frac{f(n)}{g(n)}=1$}
}
\newglossaryentry{m}{%
name=\ensuremath{f\lesssim g},
description={For the sequences $f$ and $g$, there exist another sequence $h$ verifying $f\sim h\leq g$}
}
\newglossaryentry{qin}{%
name=\ensuremath{q_{i}^{(n)}},
description={The $i$-th root of the $n$-th univariate Eulerian polynomial counting from the left to the right}
}
\newglossaryentry{desxy}{%
name=\ensuremath{\Des_{X,Y}},
description={Set of adequate descents of a permutation in $\mathfrak{S}_{n+1}$ for the pair of sets $X,Y\subseteq\mathbb{N}$}
}
\newglossaryentry{Rns}{%
name=\ensuremath{R(n,S)},
description={Set of permutations in $\mathfrak{S}_{n+1}$ that descend \textit{exactly} at $S\subseteq[n+1]$. When $n$ is fixed or clear by the context we simply denote these numbers $R(S)$ for any subset $S\subseteq[n+1]$. See also Remark \ref{rshortening}.}
}
\newglossaryentry{XS}{%
name=\ensuremath{(X)(\mathfrak{S}_{n+1})},
description={Set of permutations in $\mathfrak{S}_{n+1}$ whose descet top set is included in the set $X$}
}
\newglossaryentry{pnsxy}{%
name=\ensuremath{P_{n,s}^{X,Y}},
description={Number of permutations $\sigma\in\mathfrak{S}_{n}$ with exactly $s$ adequate descents for the pair $(X,Y)$. When $Y=\mathbb{N}$ we omit it and write $P_{n,s}^{X}$}
}
\newglossaryentry{S}{%
name=\ensuremath{\mathfrak{S}},
description={Union of all the sets of permutations on ordered sets of the form $[n]$ for $n\in\mathbb{N}$}
}
\newglossaryentry{des}{%
name=\ensuremath{\des},
description={Descent counting function}
}
\newglossaryentry{an}{%
name=\ensuremath{A_{n}},
description={The $n$-th Eulerian polynomial either univariate or multivariate depending on the dimension of the argument and the context in the text}
}
\newglossaryentry{en}{%
name=\ensuremath{E_{n}},
description={The $n$-th Eulerian-Frobenius polynomial either univariate or multivariate depending on the dimension of the argument and the context in the text}
}
\newglossaryentry{H}{%
name=\ensuremath{\mathcal{H}},
description={Open upper half-plane $\{z\in\mathbb{C}\mid\Ima(z)>0\}$}
}
\newglossaryentry{dt}{%
name=\ensuremath{\mathcal{DT}(\sigma)},
description={Descent top set of the permutation $\sigma\in\mathfrak{S}$}
}
\newglossaryentry{at}{%
name=\ensuremath{\mathcal{AT}(\sigma)},
description={Ascent top set of the permutation $\sigma\in\mathfrak{S}$}
}
\newglossaryentry{sample1}{name={sample1},description={first example}}
\newglossaryentry{sample2}{name={sample2},description={second example}}
\begin{document}



\newgeometry{hmarginratio=1:1,bottom=1cm}


\thispagestyle{empty}
\begin{center}
  \begin{sffamily}
    \begin{bfseries}
      \begin{LARGE}
	\mbox{\ul{A Method for Establishing Asymptotically Accurate}}\\%
 \mbox{\ul{Bounds for Extreme Roots of Eulerian Polynomials}}\\%
 \mbox{\ul{Using Polynomial Stability Preservers}}\\%
	\vspace{15mm}
      \end{LARGE}
       \end{bfseries}
      \begin{Large}
	\Large
   Dissertation zur Erlangung des akademischen Grades
eines Doktors der Naturwissenschaften (Dr. rer. nat.) \\
      \end{Large}

      \begin{large}
	\vspace{12mm}
	vorgelegt von \\[0.6\baselineskip]
	Alejandro Gonz\'alez Nevado\\
	\vspace{12mm}
	an der\\[\baselineskip]
	{\includegraphics[width=0.5\textwidth]{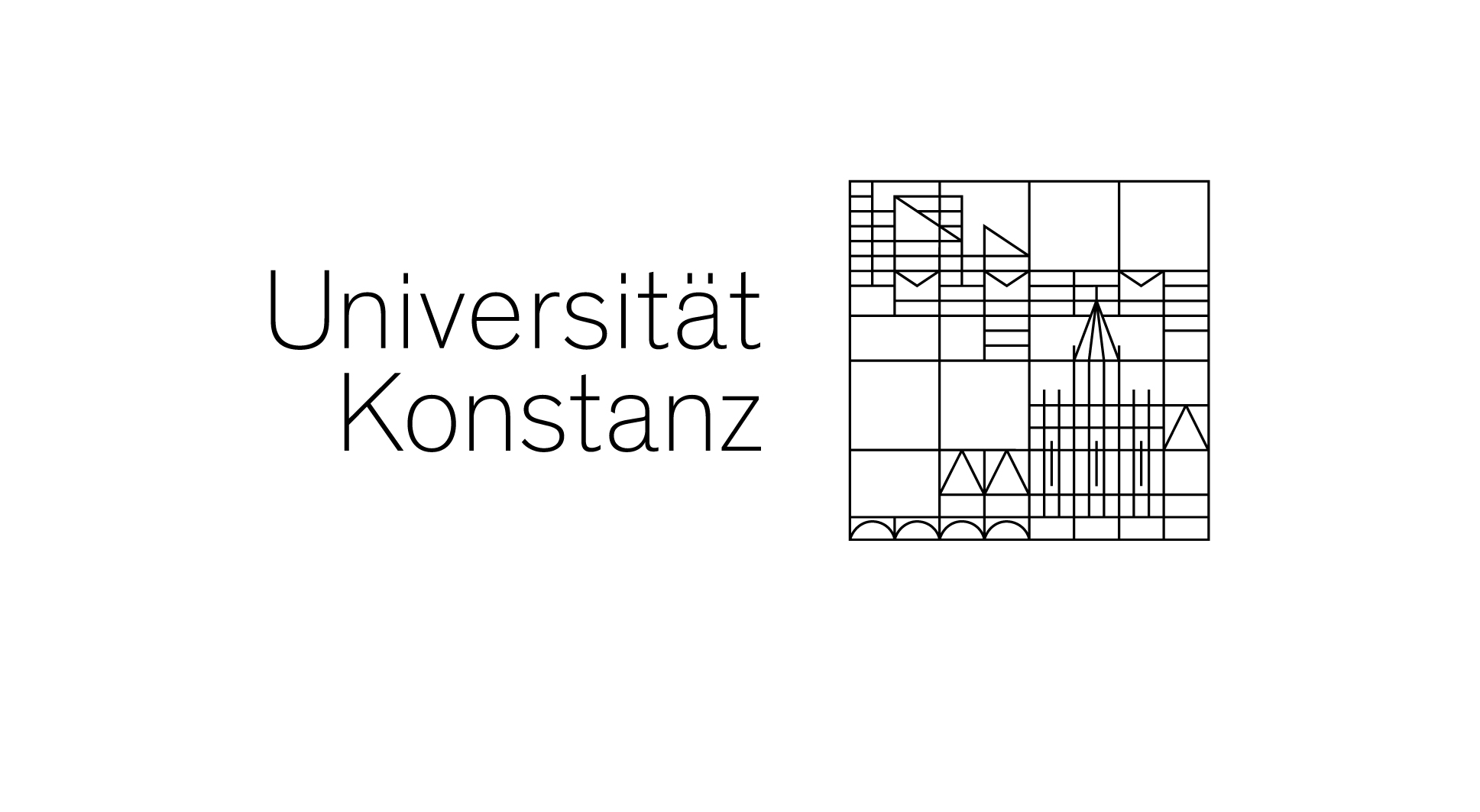}}\\
	\vspace{12mm}
	Mathematisch-Naturwissenschaftliche Sektion\\[0.6\baselineskip]
	Fachbereich f\"ur Mathematik und Statistik\\
      \end{large}

  \end{sffamily}

\end{center}

\begin{textblock*}{120mm}(20mm,235mm) 
\large\mdseries\sffamily
  {
    \noindent
    1. Referent: Prof. Dr. Markus Schweighofer \\

    \noindent
    2. Referent: Prof. Dr. Cordian Riener
  }

      \null
  \vspace{4mm}
    \large\mdseries\sffamily
      \noindent
    Tag der mündlichen Prüfung: 12. September 2025 (Konstanz)
\end{textblock*}


\restoregeometry





	
\pagestyle{fancy}
\fancyhead{}
\fancyhead[RO,LE]{\thepage}{}
\fancyfoot{}

	\pagenumbering{roman}
    
\renewcommand*{\glossaryname}{List of Symbols}

\printnoidxglossary[sort=use]
\clearpage
 \setcounter{chapter}{-1}

 \setlength{\emergencystretch}{3em}%

 \chapter*{Formalities on Writing a Dissertation}

\setlength{\emergencystretch}{3em}%

This short and unnumbered chapter serves the double purpose of summarizing the objectives of this thesis and of giving the corresponding thanks to all the people that made this thesis possible. Thus, we have four sections to fulfill here.
The first two are the summary (in English) and its corresponding translation (in German).

The next section is used to thank and acknowledge all the people that actively made possible this research. I speak here about \textit{making possible} in a very ample sense. Hence, this position include the people that gave me ideas, the people that supported me when these ideas did not work, the people that congratulated me and felt happy for me when they did actually work, the people that accompanied me through this journey (or part of it) by just being there and the people that made possible my survival and existence until the point in which I was able to declare that this thesis was \textit{finally} finished.

Finally, I use the last section of this chapter to acknowledge and thank the work of all the mathematicians that came before me and that made this work possible, one way or another, without actively knowing me nor the objectives of this research, just by existing at some point in history and making work and progress in their fields, serving thus either as foundations for the work that I develop here or as inspiration for my younger self for falling unconditionally in love with this beautiful area of knowledge. That is, I want to thank also here some of all these mathematicians whose theorems, definitions and insights set me in the place and position of pursuing mathematics as the most and highest noble art a human being can aspire to command, lead and advance.

\section*{Summary}

We develop tools to study, understand and bound extreme roots of multivariate real zero polynomials globally. This is done through the use of a relaxation that approximates their rigidly convex sets. This relaxation can easily be constructed using the degree $3$ truncation of the polynomial and it produces in this way a spectrahedron whose computation is relatively easy and whose size is relatively small and depending solely on the number of variables of the polynomial. As we know that, in order to be able to produce in general spectrahedral representations of rigidly convex sets, it is necessary to build matrices of very big size, we try, analyze and experiment with several constructions that could increase the size of the matrices of the relaxation. These constructions are based principally in two main approaches: adding information about higher degree monomials or non-trivially increasing the number of variables of the original polynomial. We explore these two construction first in a general setting and see that it is necessary to particularize to certain families of polynomials in order to make them work. In particular, we are able to prove that increasing the number of variables improves the behavior of the relaxation along the diagonal in the case of Eulerian polynomials. We see that applying the relaxation to multivariate Eulerian polynomials and then looking at the univariate polynomials injected in their diagonals produces an exponential asymptotic improvement in the bounds provided. We compare these bounds with other bounds that have appeared previously in the literature and refine these previous bounds in order to study how close do the bounds provided by the relaxation are to the actual roots of the Eulerian polynomials. As the univariate Eulerian polynomials present symmetries that helped us to understand better the structure of their roots, we also study generalizations of these notions of symmetry to multivariate Eulerian polynomials. These generalizations of symmetry shed new light on the combinatorial object these polynomials encode. Finally, the combination of all these techniques coming from real algebraic geometry, the theory of stability preservers, the numerical methods for root estimations and the study of symmetries of polynomials encoding combinatorial objects that we use in this thesis along the path suggested by Eulerian polynomials suggests to us that there is much more happening under the surface. This additional knowledge would affect and increase our understanding of many more families of polynomials emerging as generating polynomials associated to combinatorial objects, this is, in the same way as Eulerian polynomials emerge. Thus, the insights provided here by the study of Eulerian polynomials allow us to propose a further reaching mathematical program aimed at studying and delving deeper into the relations, connections and phenomena first devised and exemplified by the work presented here on Eulerian polynomials. We name this program the Mindelsee Program and find that it encourages connecting paths that could open bridges between many different areas of mathematics promoting thus the study of new families of (mainly real zero) polynomials and of new techniques aimed at improving our abilities to perform the task of understanding better the overall structure of their roots in depth.

\section*{Zusammenfassung}

Wir entwickeln Werkzeuge, um extreme Nullstellen multivariater RZ-Polynome global zu untersuchen, zu verstehen und zu beschr\"anken. Dies geschieht durch die Verwendung einer Relaxierung, die ihre starr konvexe Menge approximiert. Diese Relaxierung kann einfach mithilfe der Grad-$3$-Trunkierung des Polynoms konstruiert werden und erzeugt auf diese Weise ein Spektraeder, dessen Berechnung relativ einfach ist und dessen Größe vergleichsweise klein bleibt und ausschließlich von der Anzahl der Variablen des Polynoms abhängt. Da wir wissen, dass man im Allgemeinen sehr große Matrizen ben\"otigt, um eine starr konvexe Menge als Spektraeder darzustellen (falls m\"oglich), testen, analysieren und experimentieren wir mit mehreren Konstruktionen, die die Größe der Matrizen der Relaxierung erhöhen könnten. Diese Konstruktionen basieren hauptsächlich auf zwei Ansätzen: das Hinzufügen von Informationen über Monome höheren Grades oder der nicht-trivialen Erhöhung der Anzahl der Variablen des ursprünglichen Polynoms. Wir erforschen diese beiden Konstruktionen zunächst in einem allgemeinen Rahmen und stellen fest, dass es notwendig ist, sie auf bestimmte Familien von Polynomen zu spezialisieren, um sie wirksam zu machen. Insbesondere gelingt es uns zu beweisen, dass eine Erhöhung der Anzahl der Variablen das Verhalten der Relaxierung entlang der Diagonalen im Fall der eulerschen Polynome verbessert. Wir sehen, dass die Anwendung der Relaxierung auf die multivariaten eulerschen Polynome und die anschließende Betrachtung entlang ihrer Diagonalen als univariate Polynome eine exponentielle asymptotische Verbesserung der bereitgestellten Schranken bewirkt. Wir vergleichen diese Schranken mit anderen Schranken, die bereits zuvor in der Literatur erschienen sind, und verfeinern diese früheren Schranken weiter, um zu untersuchen, wie nah die durch die Relaxierung gelieferten Schranken an den tatsächlichen Nullstellen der eulerschen Polynome liegen. Da die univariaten eulerschen Polynome Symmetrien aufweisen, die uns geholfen haben, die Struktur ihrer Nullstellen besser zu verstehen, untersuchen wir auch Verallgemeinerungen dieser Begriffe von Symmetrie bei multivariaten eulerschen Polynomen. Diese Verallgemeinerungen von Symmetrie werfen neues Licht auf das kombinatorische Objekt, das diese Polynome kodieren. Schließlich legt die Kombination all dieser in dieser Dissertation verwendeten Techniken aus der reellen algebraischen Geometrie, der Theorie stabilitätserhaltender Operatoren, der numerischen Methoden zur Nullstellenabschätzung und der Untersuchung von Symmetrien von Polynomen, die kombinatorische Objekte kodieren, entlang des von eulerschen Polynomen vorgeschlagenen Weges nahe, dass hinter den Kulissen weitaus mehr geschieht. Dieses zusätzliche Wissen würde unser Verständnis vieler weiterer Familien von Polynomen erweitern, die als erzeugende Polynome kombinatorischer Objekte auftreten, das heißt, auf dieselbe Weise wie eulersche Polynome entstehen. Somit erlaubt uns die durch die Untersuchung der eulerschen Polynome gewonnene Einsicht, ein weiterreichendes mathematisches Programm vorzuschlagen, das darauf abzielt, die hier erstmals erkannten und exemplifizierten Beziehungen, Verbindungen und Phänomene eingehender zu untersuchen und zu vertiefen. Wir nennen dieses Programm das Mindelsee Programm und stellen fest, dass es Wege verbindet, die Brücken zwischen vielen verschiedenen Bereichen der Mathematik schlagen könnten, und somit das Studium neuer Familien von (hauptsächlich RZ-)Polynomen sowie neuer Techniken zur Verbesserung unserer Fähigkeiten zur eingehenden Analyse der Gesamtstruktur ihrer Nullstellen fördert.

\section*{Acknowledgements to those that made this possible}

I will proceed here in chronological order so that I can avoid, as far as possible, forgetting someone along the way.
I could clearly not be here if my parents (Rosa and Salvador) had used a condom that night of, I guess, mid April 1994.
Besides my conception and beyond that moment, my parents infused into me the spirit of curiosity, creativity and love for science and knowledge that made me, since I was very small, look into the direction of the purest knowledge endeavours I could think of. That ample sight, in a process that can only happen through the refinement in tastes that only aging can provide to us, made me evolve from liking biology (yes, and also geology) to chemistry, then from chemistry to physics and, finally, from physics to the pureness of the eternal truths that only the mathematical thought can provide and produce. Thus, they were the main people responsible for me following my current track. Also, inside my family, I cannot forget to mention my brother (Salvi), who always made me look further and beyond the mere surface of things into more
creative and, at the same time, strangely deep realms of reality. I think that I could not thank my brother without mentioning that he is the only person I know able to transform, like if he was an alchemist of thoughts, the \textit{concerning} into \textit{insightful} in unexpectedly comically hilarious ways while providing perspectives to views that were not there before and solving problems that never existed before he oversaw a solution. My brother created three small people that also showed to me the power of connecting creativity and playfulness, both in life and in science. I think I learned from my nephews (Salvita, Jairo and Angel) more than what they learned from me.

Going beyond my blood family, I have to mention \textit{the family we choose}, that is, my longtime friends. I have to thank here my group of friends from high school (Andrés, Quique, Fernando, Álvaro, Antonio, Edu, Jose and Pedro). They are always there to have an interesting conversation with (or to have crazy parties when we coincide in Málaga). But those with whom I have spoken the most about this thesis and the processes of researching its topics and writing it are my mathematician friends and two of the most clever persons I grew up with (Ricardo and Felix). Ricardo has always been there when I needed to have a rest from researching or when I just needed to walk while talking about some results or processes that frustrated me. He always provides fresh ideas and interesting points of view. And, even more important than that, he is able to do that in insuperable funny ways. If there is someone I can talk about every topic and joke about everything with and expect to be understood, that person is Ricardo with his vast knowledge of, basically, \textit{everything}. I cannot forget mentioning Felix, who always pushed me into surpassing myself and getting things done. He also always served as, not just a friend, but as an inspiration towards different ways of looking at mathematics (and its applications).

Along our journey towards intellectual maturity, we have many teachers and professors that set us in the way to be what we become. In this respect, I feel that I have to thank Pedro and Carlos from my high school (IES Ciudad de Melilla), who showed to me the beauty of real mathematics when the boring mathematics that I had to do in high school only was not able to do so. This grew into my university years, where many professors served as inspiration into mathematical research and its beauty. Among these, the main that come now to my mind now are Aniceto Murillo Mas, Miguel Ángel Gómez Lozano, Amable García Martín, Cándido Martín González, Mercedes Siles Molina, Francisco Javier Turiel Sandín, Daniel Girela Álvarez, Antonio Ángel Viruel Arbaizar, Nieves Álamo Antúnez, Gonzalo Aranda Pino, Antonio Díaz Ramos, Carlos María Parés Madroñal, Diego Gallardo Gómez, Carmen del Castillo Vázquez, María del Carmen Morcillo Aixelá, Santiago Marín Malave, Jose Ramón Brox López and Antonio Fernández López. I am sure I am forgetting many.

When I finished my master in Malaga, I moved to Germany thanks to Markus and the POEMA network. Here I also met marvelous mathematicians that showed to me new realms of mathematics I did not know so well before. This is where my research career started and, therefore, these are also the people that appeared during the time I was thinking about the results that I will present in this thesis.

The first people I met in Konstanz were Markus and his Ph.D. student Alexander when I came for the first interview. That day, I was also able to meet other Ph.D. students, like Julian Vill, Thorsten Mayer and Christoph Schulze. These were my first contacts with the mathematical community in Konstanz. When I finally became a POEMA, I also met all the POEMA staff that helped me to navigate this research project. I can only be infinitely grateful towards the whole POEMA network. Specially, within this network, I thank the people with whom I mainly worked: Markus and the two professors of my secondment Giorgio in Firenze and Mohab in Paris.

During the time that I took to complete this dissertation, Markus was always a helpful and encouraging supervisor who helped me to put my ideas down to Earth and polish them so other people could understand me better. I will always be grateful for his trust on me and the freedom he gave to me. His support and enthusiasm about the topic encouraged me to continue developing this project until the state in which it is presented here now.

We have, unfortunately, lived through very interesting times during the time that I needed to complete this dissertation. There was a global pandemic and a war in Europe. Even a volcano and a weird behavior of the Earth's magnetic field happened during this time. These events irremediably affected the development of this research. I have to be very grateful, especially during the pandemic, to Verena and her parents (Simone and Markus) and grandparents (Ute and Kurt), who emotionally supported me during the hardest parts of the pandemic when the cities became empty and going back to Spain was almost impossible due to the draconian travel restrictions we had to endure. They took me in their home as one more and I did not feel so lonely in the middle of the isolation thanks to that. Thus, I will always be grateful to Verena, who accompanied me during the first steps of this project.

At the end, everything changed. The pandemic was over and Konstanz recovered its normal activity. It was therefore not until the very end of this project that I could finally fully enjoy all the things that this beautiful city at the shores of the Bodensee offers. During this last period of this project and without pandemic restrictions anymore, I finally could made local friends. I am grateful that I could met all these beautiful people that pushed and encouraged me in the last stages of this project. I want to thank my officemate and Markus' Ph.D. student David Sawall, with whom I have had many interesting and helpful discussions (not always about mathematics \textit{stricto sensu}) during the last months of writing. He also helped me with the German translation of the summary that you could read above. I want to thank Alma and the Adtejandro group (I did not choose the name) for the beautiful moments I lived with them this past year. Elena, Kalle and Lazkin were there to celebrate with me the victories of Real Madrid in Champions League and the Spanish National Team in the Euro 2024. I enjoyed the walks with Maja at the end of summer and the talks with Selma. I enjoyed the night walks with my friends Jakob and Rufat as much as our time at the shore of the Bodensee. And, finally, I want to specially thank Anna, who has been possibly the greatest support I had at the very end of this project and who has given me times in which I could forget about all the worries and tasks associated to researching and therefore focus into enjoying the \textbf{beauty} of immediate life. She is the most clever person I found in my last year of this project and always provides me with new points of view about many things I feel I can only talk about with her. But we have to remember to \textbf{actually eat something} from time to time.

All in all, as every human endeavor, this project would not have been possible without all the people mentioned here and without all that other people whose names I do not know or I forgot along the way but that made sustainable that I could dedicate myself to mathematics during this time. For that reason, I want to thank them all and wish that the small piece of mathematics that they helped, in some sense, to create ends up touching positively in their lives. Thanks again to all of you!

\section*{Dedication to those who inspired me}

In the section above I thanked all the people that, more or less, were actively present during my time in this project or before. But there were people even before this immediate time that contributed to me wanting to pursue mathematics. These are the giants over which I could climb to look at the horizon. I think I would not be the mathematician I am today without reading the books of Carlos Ivorra that first introduced me into realms of mathematics unexplored for a child when I was still far from the university. I would not see mathematics the same way without having read the thoughts of Gauss, Euler and Grothendieck about this beautiful intellectual endeavor. I might not be so passionate, persevering and resilient about pursuing pure knowledge without having learned about the passion that Galois, Abel and Cantor put into a field that, at first, looked down on them and their discoveries. My philosophy towards pursuing the pure beauty of knowledge would neither be there without the many philosophers that pushed me into believing that pure knowledge, just by itself, constitutes the most honorable endeavor the human mind can pursue (Thales, Pythagoras, Socrates, Plato, Aristotle, Euclid, Suárez, Kant, Fichte, Hegel, Nietzsche and S{\'a}nchez Meca). I would have never known about the deep connections between my two favorite human endeavors without Jes{\'u}s Moster{\'i}n and his translation to Spanish of G{\"o}del full works. I also learned a lot from the dedication that Riemann, Poincar{\'e}, Hilbert, Dedekind, Dirichlet, Weierstrass or Cauchy put into the field. And our modern world would be impossible without Turing, of course. I left many more I did not thank because they do not come to my mind currently, but they should, of course, all be here. All the people that contributed to the development of this beautiful discipline have a space dedicated solely to them in my heart, as they should have in the hearts of all human beings that, without knowing it, benefit from the world they devoted their lives to construct. It had to be them the ones with whom this formal thanking chapter is closed before driving ourselves into the mathematics of this thesis.

\setlength{\emergencystretch}{0em}%
 \newpage
 \newgeometry{a4paper,left=1in,right=1in,footskip=0.15in,headheight=2.0in,tmargin=2in,bmargin=1in}
\begin{center}
\usetikzlibrary{fit}
\thispagestyle{empty}
\setcounter{diagheight}{50}

\begin{center}\subsection*{Graph of conceptual (weak) dependencies of chapters and parts}\end{center}
\begin{chart}
\reqhalfcourse 45,45:{}{Chapter \ref{ChPreamble}}{}
\reqhalfcourse 45,35:{}{Chapter \ref{ChRelaxation}}{}
\reqhalfcourse 25,25:{}{Chapter \ref{ChLimi}}{}
\reqhalfcourse 45,25:{}{Chapter \ref{4}}{}
\reqhalfcourse 45,15:{}{Chapter \ref{ChPrimer}}{}
\reqhalfcourse 65,15:{}{Chapter \ref{ChCombi}}{}
\reqhalfcourse 45,5:{}{Chapter \ref{ChRoot}}{}
\reqhalfcourse 65,5:{}{Chapter \ref{ChWeight}}{}
\reqhalfcourse 45,-5:{}{Chapter \ref{ChCounting}}{}
\reqhalfcourse 25,-15:{}{Chapter \ref{ChApp}}{}
\reqhalfcourse 45,-15:{}{Chapter \ref{ChExplosion}}{}
\reqhalfcourse 65,-15:{}{Chapter \ref{ChComparisons}}{}
\reqhalfcourse 45,-25:{}{Chapter \ref{ChDLGCombi}}{}
\reqhalfcourse 60,-25:{}{Chapter \ref{ChDLGMeth}}{}
\reqhalfcourse 25,-25:{}{Chapter \ref{ChStructured}}{}
\reqhalfcourse 45,-35:{}{Chapter \ref{conclusion}}{}

\prereq 45,45,45,35:
\prereq 45,35,25,25:
\prereq 45,35,45,25:
\prereq 45,15,65,15:
\prereq 45,15,45,5:
\prereq 45,5,65,5:
\prereq 45,5,45,-5:
\prereq 45,-5,25,-15:
\prereq 25,-15,45,-15:
\prereq 45,-15,65,-15:
\prereq 65,-15,60,-25:
\prereq 60,-25,45,-25:
\prereq 45,-25,25,-25:
\prereq 25,-25,45,-35:
\prereqc 65,15,45,-35;-200:
\prereqc 65,5,45,-35;-30:

\coreq 65,5,65,15:
\coreq 25,-15,25,-25:
\coreq 45,25,45,15:

\begin{pgfonlayer}{courses}
\draw[dashed] ([shift={(1mm,2mm)}]x45y35.north east) rectangle ([shift={(-1mm,-1mm)}]x25y25.south west);
\end{pgfonlayer}

\begin{pgfonlayer}{courses}
\draw[dashed] ([shift={(1mm,2mm)}]x45y15.north east) rectangle ([shift={(-1mm,-1mm)}]x45y-5.south west);
\end{pgfonlayer}

\begin{pgfonlayer}{courses}
\draw[dashed] ([shift={(1mm,1mm)}]x65y15.north east) rectangle ([shift={(-2mm,-1mm)}]x65y5.south west);
\end{pgfonlayer}

\begin{pgfonlayer}{courses}
\draw[dashed] ([shift={(-1mm,-1mm)}]x25y-15.south west) rectangle ([shift={(1mm,2mm)}]x65y-15.north east);
\end{pgfonlayer}

\begin{pgfonlayer}{courses}
\draw[dashed] ([shift={(-1mm,-1mm)}]x25y-25.south west) rectangle ([shift={(1mm,2mm)}]x60y-25.north east);
\end{pgfonlayer}
\end{chart}
\end{center}
\restoregeometry

\tableofcontents
\clearpage

\listoffigures
\clearpage

\listoftables
\clearpage


\pagestyle{fancy}
\fancyhead[RO,LE]{\thepage}{}
\fancyfoot{}
	\fancyhead[RE]{\small \rightmark}
\fancyhead[LO]{\small \leftmark}
	\pagenumbering{arabic}
\chapter[Preamble]{Preamble to Understand the Architecture of this Dissertation}\label{ChPreamble}

\section{A mathological foreword}
Paul Halmos introduced in \cite{63d34b69-f100-336d-9993-1bf80b759b0d} the term `mathology' to describe pure mathematics as an art so he could split it from the applied `mathophysics' as these two appear connected in an almost indivisible way in the atom of knowledge that we decided to call collectively `mathematics'. Here we claim to write a mathological foreword to this dissertation because in these initial words we do not pretend to go technical and into the mathematical details that we will address afterwards. Contrary to that, in this very first section of this work, we plan to use a metaphorical language in order to describe and advance in a very high and abstract level what we wanted to accomplish here. High level and abstract, but not in the usual pure mathematical sense. We want to write a story about how the characters that appear in this work relate to each other. This will let us draw a picture in simple terms that will allow the readers to navigate this document with a conceptual representation in their minds while advancing through the next pages. That is, before giving the map of the lands we are set to explore here properly, we will proceed \`{a} la Tolkien in \textit{The Silmarillion}, and firstly give to the reader a cosmogony of how our music developed into the five themes that compose this work. The map we promised will come, but, \textit{in the beginning was the word}.

\begin{metafora}[Relaxation as a hammer]
The main object of our path here is the relaxation introduced by Schweighofer in \cite{main}. During our journey, the relaxation acts like this tool searching for a nail because we first had the relaxation and then we spend much of our time thinking about its applications, the nails.
\end{metafora}

We began our journey with a hammer and started searching for nails. The \textit{holy nail} we began pursuing was the Generalized Lax Conjecture (GLC, from now on) \cite[Section 6]{helton2007linear}. However, our attempts to attack it with our hammer mostly failed, leaving us with some frustration but, at the same time, with a deep knowledge about the properties and abilities of our hammer. In particular, during our journey we learned that a successful attack would require our hammer to be fortified

\begin{remark}[Fortifications of the abilities of the hammer]
We learned that, in order to improve our relaxation, we need to develop techniques allowing us to collect more information than what the original relaxation actually carried. For this, we had to explore the paths of the relaxation and these explorations showed us that the best bet to improve the strength of the hammer was to augment the number of variables of our polynomials keeping some fundamental property we require: \gls{RZ}-ness. See again \cite{helton2007linear}. This observation initiated our trip into the methods to \textit{gracefully} extend the variables of a RZ polynomial keeping the property of being RZ. The most famous and simpler one of these methods stems form a construction in \cite{Nuij1968ANO} that we will use quite often in the future and that allows us to add one variable to our original polynomial.
\end{remark}

Extending the relaxation passed before through a phase in which we tried to directly increase the number of coefficients of the original polynomial that we use in the relaxation. It turns out that there is not a clear path of how to do that and, in fact, our incomplete explorations in that direction tend to point towards non-commutative liftings. Unfortunately, we could not pursue here these constructions because they would have led us very far from our objectives. This is why we ended up doing variable extensions. The first one of these extensions uses interlacing and Renegar derivatives, see \cite{fisk2006polynomials} and \cite[Section 4]{Renegar2006HyperbolicPA}.

\begin{remark}[Using several hammers to pin one nail]
Several properties allow us to take several polynomials and put them together into one big multivariate polynomial having many more variables. This was the original objective of the Amalgamation Conjecture \cite[Section 8.1]{main}. However, these attempts also failed in high degree or high number of variables, showing us that the whole idea of constructing these extensions is much more subtle than we originally thought. When the polynomials verify certain order theoretic properties with respect to their roots, it is in fact possible to build something similar to an amalgam. In that sense, several polynomials can be used to mix several hammers capable of attacking the same nail, the original polynomial, e.g., \cite[Proposition 2.7]{wagner2011multivariate}. We explored that path and verified that, if we want to succeed in that direction, more elaborated constructions will be needed. The usual construction coming to our mind did not perturb at all the relaxation.
\end{remark}

Indeed, the usual construction using the Renegar derivative \cite[Definition 2.4.2]{habnetzer} does not improve the relaxation. It does not even change it. We prove this limitation in Chapter \ref{4}. And it is this limitation what forces us to expand our explorations into deeper directions down the path of multivariability. For exploring these new ways, we have to narrow our sight to particular families of polynomials. We have to do that because the new extensions we try will work in different ways depending on the family to which we apply them. This forces us to narrow the scope and the methods in order to reach in a bike, by capillarity, areas of the forest that we could not explore with a car.

\begin{metafora}[The forest, the car and the bike]
Within the forest of variable extensions we encounter paths that cannot be traversed with a car\footnote{My professor of algebraic topology (F. J. Turiel) introduced us homology in the following way ``Homotopy is nice. It carries a lot of information about the space you are studying, but it is a bus and, often, you know that you cannot explore certain streets with a bus, so you need a bike." I borrow here from him that metaphor that was fixed in my mind many years ago.}. Before, we tried to fit our hammer in the car and search for its corresponding nail traversing big roads in the form of methods that allow us to increase the number of variables of any RZ polynomial. We saw that, at least so far, we do not really have access to such very general methods. For this reason, we decided to pack our hammer in our backpack and explore the forest on a bike. This bike allows us to explore narrow paths that connect with unexplored regions of the forest. The bike is the sequence of Eulerian polynomials \cite{mezo2019combinatorics,petersen2015eulerian} and the path that it allows us to explore is the realm of its multivariate generalizations \cite{visontai2013stable,haglund2012stable}.
\end{metafora}

Injecting the univariate Eulerian polynomials into the diagonal of its multivariate counterparts, we are able to show that we can increase the accuracy of the relaxation when the number of variables grows. In order to do this, we need to explore the combinatorial properties of the coefficients of these polynomials. In particular, we need to find formulas for these coefficients that eventually allow us to effectively compute the relaxation. With a computationally efficient version of the relaxation we are able to play in order to prove that the accuracy in the diagonal augments when the number of variables increases. This shows, for the first time, that increasing the number of variables has a positive net effect in the power of the relaxation to approximate the roots of the original polynomial.

\begin{metafora}[Linearization as shortcuts]
The forest of multivariate generalizations we traverse here has at some points some paths that are difficult to cross. The relaxation becomes too large to be properly handled in the presence of many variables. In these cases, we need to take shortcuts in the form of linearizations of our problems through numerical guesses of approximated generalized eigenvectors for the matrices (and generalized eigenvalue problems) that will pop up due to the application of the relaxation. 
\end{metafora}

Finally, we will become interested in the bike itself and on how it compares with other bikes. This will prompt us to explore new symmetries within the multivariate Eulerian polynomials and to compare the effectivity of applying the relaxation to other different methods. We will ultimately also become interested in the possibility of combining and mixing different methods like when one changes bikes (or wheels) when the texture of paths changes.

\begin{remark}[Symmetries, combinations and beyond]
These last explorations will open doors that we cannot close here. In this regard, the last part of this thesis shows many different paths to pursue beyond the limits that we imposed ourselves here in order to keep the dissertation in reasonably manageable boundaries.
\end{remark}

We let these metaphors serve us in our task of understanding the rest of this work. In the next section, we will highlight a sketch of the main peaks and tracks of our exploration path. So, as Thor would do, we take Mj\"{o}lnir with us into the unknown realm we will venture soon.

\section{Prolegomena to any attempt to read this work}

Here we will offer a strategy to read and understand the spirit of this work through a set of roadmaps. These will guide us from the very beginning of the relaxation to the ultimate borders of the research we have conducted here.

\begin{roadmap}[From the relaxation to its limitations]
We will begin introducing several results about the relaxation developed in \cite{main}. We will focus our attention on these results in \cite{main} that will be helpful and relevant for our endeavour here. After these first bricks are clear, we will work on the limitations we find when we try to improve and extend the relaxation in a general manner. That is, when we try to apply extensions that seem like they could work for all RZ polynomials.
\end{roadmap}

\setlength{\emergencystretch}{3em}%
This describes our work in Part \ref{I}. The limitations discovered there during the previous and initial roadmap will guide us towards narrower and more restricted venues. The main one of these venues is directly related and linked to a family of polynomials connected with counting descent statistics on permutations.
\setlength{\emergencystretch}{0em}%

\begin{roadmap}[Eulerian polynomials]
This is just the tip of the iceberg. In part \ref{II}, we concentrate our efforts on some famous, well-known and extensively studied families of polynomials in order to know how far the relaxation can reach. This leads us directly towards the study of Eulerian polynomials and their multivariate generalizations. We center our attention in the most known sequence of these generalizations and use them in order to improve the output of the relaxation. This interest will end up also guiding us towards a better understanding of the coefficients of these polynomials in Part \ref{V} because this information will be greatly needed in order to effectively build and compute the relaxation.
\end{roadmap}

Once we know enough about the Eulerian polynomials and we have built the relaxation, we have to study how good the bounds provided by the relaxation are. We do this in Part \ref{III} relating the bounds obtained in this way with previously known bounds in the literature. Luckily, the great interest that exists for these polynomials means that there have been many attempts to find bounds for their roots. This is precisely why we chose to concentrate in these for the first application of the relaxation in this direction of root bounding: because they provide a well-known framework in which we can easily make comparisons in order to establish winners and losers among the different methods available in the literature.

\begin{roadmap}[Comparisons with the relaxation]
In Part \ref{III}, we study the relaxation and the bounds it provides. This study is itself tricky and requires us to take care on the number and complexity of the computations necessary. In particular, several tricks and numerical experiments are needed in order to come up with proofs of actual improvement when the number of variables increases. Later, in order to perform other comparisons with different techniques, we have to deal with problems related to higher order asymptotic expansions and equations, polynomial stability and numeric stability (which we remark here that are completely different topics in spite of their similar names!).
\end{roadmap}

These comparisons irremediably lead us to the work of Sobolev in \cite{sobolev2006selected}. Refining what Sobolev did there and combining it with the relaxation, we finally obtain the best bound for these extreme roots in this work. However, these intermediate works of refinement that we have to do and a closer look to the methods used by Sobolev in connection to our setting open many new questions about doors leading to paths that we could not venture to explore here. It is clear for us that these explorations definitely broaden the scope of our framework here in ways that this thesis can in no way fully contain or even just hint at.

\begin{roadmap}[Sobolev half-bounds]
The work of Sobolev in \cite{sobolev2006selected} attacks the problem of understanding Eulerian polynomials from a completely different perspective. Sobolev is interested in these polynomials by analytic and numeric reasons instead of combinatorial ones. Surprisingly, Sobolev, completely ignoring that other point of view, gives the best account of the roots of these polynomials in the literature. However, a closer look at its work reveals that he proves more than what he claims. This is how \textit{understanding Sobolev better than he understood himself}\footnote{This is intended as a joke in the form of a reference to the famous view and quote of Kant that it is \textit{not at all unusual} to understand an author better than he understood himself \cite[Chapter 1]{strauss2013leo}, \cite[Footnote 6]{seebohm2002phenomenological}.} we could improve upon his methods and, eventually, combine them with the relaxation to provide the best inner bound for these extreme roots in this work.
\end{roadmap}

Finally, we understood the importance of knowing more about these polynomials. That idea moved us towards exploring its symmetries, which eventually urged us to introduce several generalizations of well-known concepts in the theory of univariate polynomials in Part \ref{V}. In particular, we extended notions of symmetry usual for univariate polynomials (and verified by the univariate Eulerian polynomials in particular) to the multivariate setting and thus we could verify that the version we use for their multivariate generalization also verify these generalized forms of symmetry.

\begin{roadmap}[Symmetry beyond one variable]
The palindromicity of the univariate Eulerian polynomial is a key property that helps us relate inner bounds for the smallest root to outer bounds for the biggest root. We are able to find a generalization of this property present in multivariate Eulerian polynomials. That property of the coefficients of these polynomials has a direct translation in terms of the underlying combinatorial objects. We also explore the consequences in that combinatorial setting.
\end{roadmap}

All these roadmaps, explored in each part of this work, lead us towards a conclusion proposing a path for new explorations in Part \ref{VI}. In particular, the whole spirit of this work is to establish the sketch of a framework suitable for exploring all these concepts in much more depth that we could do at this first general sight. This encourages us to propose in Chapter \ref{conclusion} a program aimed at developing a full theoretical framework connecting our real algebraic geometric tools with explorations in combinatorics through numerical lenses. This approach tries to put to work together several disparate areas of mathematical research into a task that could allow us to explore wide and profound connections between these disciplines. The explanation of (the importance of exploiting and developing further) this intersection of realms and branches of mathematics is the topic of the next section.

\section{Introduction to the intersection of subjects meeting here}

In order to understand the depth and width of the connections exploited within this work, it is important to first lay down the topics that interact at the mathematical junction we are exploring here. First and foremost, the relaxation was initially conceived as a way to attack the \gls{GLC}, which is a central question in convex or conic optimization whose ultimate objects belong to the realm of real algebraic geometry. The relaxation is at the same time a linear object that is built over a moment matrix through linear maps emerging from the series expansion of extensions to power series of the $\log$ function. So just in the ideation and definition of the relaxation we already have all these topics playing a role.

\begin{remark}[RAG and optimization]
The relaxation originally emerged as a way to attack the GLC, which sits at the juncture of convex real algebraic geometry, optimization and linear algebra. In the definition of the relaxation the theory of series expansion plays also a crucial role.
\end{remark}

Beyond these, when we try to extend the relaxation in order increase the information it can carry, we have to deal with trace inequalities that ultimately suggest an unexplored path into non-commutative algebra. The other way we study with the hope of extending the relaxation involves the theory of polynomial interlacers. Deeply far in this theory of interlacers, a seed for the theory of polynomial stability lays already latent.

\begin{remark}[Trace inequalities, non-commutative, interlacers and polynomial stability]
Our attempts to amplify the reach of the \gls{LMP} of the relaxation involve dealing with several trace inequalities that hint at a connection with noncommutative algebra and trace algebras. A different path to the extension of the relaxation involves the theory of interlacing polynomials and, with this, also some bits of the theory of polynomial stability.
\end{remark}

When we see the limitations of these general methods, we have to turn our attention towards some concrete and particularly well suited, studied and understood families of real-rooted and RZ polynomials. In our case, we decide to explore the well-known Eulerian polynomials, which connect us and our explorations with the combinatorial setting of the theory of statistics on permutations. Going multivariate in this setting requires thus both a sufficient understanding of the theory of stability preservers and of the nuances of the combinatorial theory of permutations over ordered sets.

\begin{remark}[Eulerian polynomials and statistics on permutations over ordered sets]
The definition we use for Eulerian polynomials connects us directly with the combinatorial setting through its relation to counting descent (top) statistics on permutations. These also involve simple notions of order and hint therefore at other generalizations of Eulerian polynomials defined over other differently (partially) ordered sets.
\end{remark}

Once these polynomials become the center of our attention, the relaxation begins to produce root bounds that we compare and study through the theory of asymptotic growth. In order for the relaxation to produce bounds that we can effectively analyze we have to turn to the theory of generalized eigenvalues in order to linearize our LMPs through numerical guesses for their corresponding approximated generalized eigenvalues.

\begin{remark}[Root bounds, asymptotic expansions and numerics of generalized eigenvalue problems]
The relaxation outputs a LMP. Dealing with it requires us to do some numeric work in order to simplify the expressions and objects we obtain so that we can actually work with them and effectively compute the bounds. These bounds need to be compared between them as they come in families. In order to perform these comparisons, we need to use the theory of asymptotic expansions so that we can actually understand the growth of the bounds we obtain.
\end{remark}

In this sense, the number of computations necessary plays a role and thus we graze the theory of effective computation, although we never really enter in it properly here. The numerical analysis of the guessed vector leads us towards the fields of numerics and numerical stability, although these topics will take more prominence afterwards.

\begin{remark}[(Approximated) computational complexity and numerical stability]
When we analyze the output of the relaxation, we get narrowly interested in how many computations we had to perform before obtaining such output. At some points, reducing these computations is essential. Additionally, it is also important to control and look at the problems that could emerge from problems with the numerical stability of the numerical methods at play because some quantities we have to deal with might grow very fast. This growth could cause problems that we have to be able to understand, isolate, prevent and solve. 
\end{remark}

The true and big irruption of numerics in the scene has to do with the study that Sobolev makes of these polynomials as helpful tools in the development of quadrature formulas. Thus, as his approach is analytic and numeric, when looking at his work, these two branches will acquire particular relevance. Particularly, numerics play a crucial role when we analyze the ultimate method that Sobolev uses in order to approximate these roots: the DLG method. This method is well-known in the field and it is also known that it has problems of numerical stability. The rest of the analytic tools used by Sobolev set us lightly into an analytic framework that we just touch briefly. This analytic framework could become even more relevant if we had decided to investigate the relation between these roots and the statistics of coefficients of polynomials, where deep connections between algebra, probability, analysis and statistics arise while trying to transport information from the coefficients to the roots and the other way around. A deeply famous theorem in this regard is due to Harper \cite{harper1967stirling}.

\begin{remark}[Numerics and distant bits of probability and analysis]
The approach of Sobolev to the study of the roots of the univariate Eulerian polynomials sets our discussion far from our initial combinatorial point of view deep into a different realm of numerics and analysis where Eulerian polynomials actually erupted for the first time in Euler's work \cite{euler}. In these areas, we can find deep knowledge about these polynomials totally disconnected from the literature from the combinatorial counterpart we began with. Also, the presence of analysis in this theory could actually be increased if we explored connections between roots and coefficients in terms of statistics \cite{pitman1997probabilistic}; something interesting that we do not explore in detail here.
\end{remark}

Finally, as the palindromicity of the univariate Eulerian polynomials was useful for us, we briefly ventured ourselves into the realm of symmetries and patterns in polynomials. We could prove a result about the multivariate generalization that ultimately connected with the combinatorics underlying at the nucleus of their construction.

\begin{remark}[Symmetries and patterns in polynomials]
The study of Eulerian polynomials invokes questions about patterns and symmetries in polynomials. These questions are prevalent in the recent literature and Eulerian polynomials (and other related polynomial families) are usually nice examples where some of these patterns arise naturally. We found ways to generalize some of these patterns beyond the univariate setting. This also allowed us to study how that symmetry can give us back some information about the underlying combinatorial object through the path that connects these polynomials to statistics on permutations.
\end{remark}

Thus we see that many different topics come together at this juncture and therefore, although lightly because this work cannot possibly go too deep into all the details around the connections between these topics, we have a long list of prerequisites. This list, as we said, is long but we do not require a deep understanding of all these topics beyond the ability to understand and know what we are talking about at each moment. Making these clear is the objective of the next important section.

\section{Mathematical prerequisites necessary before continuing}

The picture of the connections in the section above makes clear the main topics that will have prominence here. In the definition of the relaxation, it will suffice to understand linear matrix polynomials, matrix inequalities and basic linear algebra. Some convex real algebraic geometry will be helpful, but it is enough with a superficial knowledge of the areas around the GLC. For the extensions of the relaxation, trace inequalities and polynomial interlacing will be the main themes. The part about Eulerian polynomials requires some understanding of the combinatorics around statistics of permutations over order sets. For the comparison of the bounds, the main topic will require knowledge about some rudiments of asymptotic analysis and scales for asymptotic expansions. Some numerical linear algebra will be helpful to understand the numerical stability problems and the attempts at approximating generalized eigenvectors. When Sobolev enters the game, numerics of approximations of roots of polynomials will take some prominence together with some simple methods and tools associated to these areas, like the DLG method itself. Finally, for the last part, a light understanding of polynomial patterns paired together with some additional knowledge about statistics on permutations should do the job. With a light understanding of these topics, we can ensure a smooth reading throughout this text.

\section{Structure of the contents}

The structure of this thesis is simple. It is divided in this preamble, five parts in the main body and a conclusion. This preamble serves as a smooth introduction into the topic, themes and problems that we will develop here. Part \ref{I} introduces the relaxation and deals with our initial attempts at extending it in general, showing therefore the limitations of these extensions. In Part \ref{II} we introduce Eulerian polynomials, study their properties and roots and compute their coefficients and the corresponding $L$-forms that will allow us to write down the relaxations associated to these polynomials. Part \ref{III} deals with the comparison of the bounds obtained by other means and the bounds produced by the relaxation. In this part, we also have to develop techniques to compute approximations and linearizations for the LMP associated to the relaxation in order to be able to study the corresponding bounds produced by the relaxation. Part \ref{IV} deepens into Sobolev's approach to these polynomials and his attempts at providing approximations and bounds for their roots. Here, we also review and deepen the techniques, tools and results in Sobolev's papers and refine them in order to obtain even better results. Finally, we combine his methods with the relaxation in order to see how such combination provides even better bounds. We end the main body with Part \ref{V}, where we study symmetries on multivariate Eulerian polynomials expanding results and concepts well known in the theory of univariate polynomials to the multivariate case. We finish this dissertation with a conclusion that attempts to establish a look beyond our work here: in this conclusion, we propose the \textit{Mindelsee Program} as a thoroughly expanded extension and completion of the initial work we did here in a way that, building on the first blocks we lay down in this work, we can build a whole theory about the interrelations and connections that we could only superficially devise and analyze here. 

\section{Main results and a look beyond}

On Part \ref{I}, the main results are those related to the properties of the relaxation and the limitations of the possible ways of extending it. These are the following.

\begin{proposicion}[PSD-ness of extended initial matrix]
\label{propomatrix-Pre}
Applying $L_{p,e}$ for any integer $e\geq d:=\deg(p)$ to all the entries of any MMM of the form given by Equation \ref{ext1} for any RZ polynomial $p\in\mathbb{R}_{\mbox{RZ}}[\mathbf{x}]$ produces a PSD matrix.
\end{proposicion}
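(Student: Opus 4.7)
The plan is to reduce the claim to a positivity statement for the linear form $L_{p,e}$ on squares of certain polynomials. By construction, the MMM in Equation \ref{ext1} is a rank-one outer product $M = u u^T$ with
\[ u = (1, x_{j_1}, \dots, x_{j_m}, x_i x_{i_1}, \dots, x_i x_{i_k})^T. \]
Since $L_{p,e}$ is linear and the molding operation $\circledcirc$ applies it entrywise, for any coefficient vector $v \in \mathbb{R}^{1+m+k}$ one has
\[ v^T (L_{p,e} \circledcirc M)\, v \;=\; L_{p,e}\bigl(v^T M v\bigr) \;=\; L_{p,e}\bigl((u^T v)^2\bigr). \]
Hence PSD-ness of $L_{p,e} \circledcirc M$ becomes equivalent to the inequality $L_{p,e}(q^2) \geq 0$ for every polynomial $q$ in the linear span of the components of $u$.

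The next step is to invoke the basic positivity result underlying Schweighofer's relaxation (presumably recalled in the earlier sections of the thesis): for any RZ polynomial $p$ and any virtual degree $e \geq \deg(p)$, the matrix $L_{p,e} \circledcirc M_{n,\leq 1}$ is PSD, so in particular $L_{p,e}(r^2) \geq 0$ for every affine polynomial $r$. To push this from affine squares to squares of polynomials of the form $q = q_1 + x_i q_2$, where $q_1$ is an affine combination of $1, x_{j_1}, \dots, x_{j_m}$ and $q_2$ is a linear combination of $x_{i_1}, \dots, x_{i_k}$, I would exhibit the mold monomial matrix $M$ as a principal submatrix of a larger MMM containing all the monomials $1, x_1, \dots, x_n, x_i x_1, \dots, x_i x_n$, and then argue that the positivity of the basic case transfers to this enlarged matrix by a Gram-type identification rooted in the log-expansion defining $L_{p,e}$. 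Extracting the principal submatrix in the correct rows and columns then yields the required PSD-ness, since any principal submatrix of a PSD matrix is itself PSD.

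The principal obstacle is carrying out this embedding in a way that genuinely inherits positivity: the presence of the degree-$2$ monomials $x_i x_{i_b}$ pushes $(u^T v)^2$ up to degree $4$, so the bookkeeping on the truncation of $-\log(p)$ encoded in $L_{p,e}$ must be handled carefully; the hypothesis $e \geq d$ is the precise lever that guarantees that the truncation is rich enough to cover all monomials that arise up to degree $4$. Should this direct submatrix argument run into trouble because of the specific role played by the multiplier $x_i$, a natural fallback is to restrict everything along generic lines through the origin, reducing $p$ to a univariate RZ polynomial $p_a$ for which positivity of the corresponding univariate $L$-form on squares is classical, and then to reconstruct the multivariate positivity by linearity of $L_{p,e}$ together with a density argument in the direction $a$.
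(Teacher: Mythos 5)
Your opening reduction is correct: since the MMM is a rank-one outer product $M = uu^T$, the entrywise application of $L_{p,e}$ gives $v^{T}(L_{p,e}\circledcirc M)v = L_{p,e}\big((u^{T}v)^{2}\big)$, so PSD-ness is equivalent to nonnegativity of $L_{p,e}$ on squares of polynomials in $\linspan\{1, x_{j_1},\dots,x_{j_m}, x_i x_{i_1},\dots,x_i x_{i_k}\}$. This matches Lemma \ref{lemmaLform} in the text. However, beyond this point the proposal has a genuine gap: both of your proposed routes assume precisely what needs to be proven.

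The ``Gram-type identification'' embedding route does not go through, because $L_{p,e}$ is not known to be nonnegative on squares of arbitrary degree-$2$ polynomials --- that is exactly the statement at issue. The known positivity you invoke ($L_{p,e}\circledcirc M_{n,\le 1}$ PSD, i.e.\ $L_{p,e}(r^{2})\ge 0$ for affine $r$) concerns squares of degree up to $2$, while here $(u^{T}v)^{2}$ has degree up to $4$, and the degree-$4$ part is genuinely new: for determinantal $p$, $L_{p,e}(x_1^2 x_2^2)$ is not a single trace but the weighted combination $\frac{4}{6}\tr(A_1^2A_2^2)+\frac{2}{6}\tr(A_1A_2A_1A_2)$, and no formal ``Gram'' rewriting avoids this noncommutativity. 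The mechanism that actually makes the paper's proof work is a concrete trace inequality, Lemma \ref{lemmatrace} ($\tr(A^2B^2)\geq\tr(ABAB)$ for real symmetric $A,B$), refined in Lemma~\ref{lemmaineqimpro}, which allows the degree-$4$ contribution to be replaced by $\frac{1}{2}\tr(A_1K'A_1K'+A_1^2K'^2)$ and ultimately absorbed into $\tr\big((I_d+K+\tfrac12 A_1K'+\tfrac12 K'A_1)^2\big)\ge 0$. Also, the role of $e\ge d$ is not to make the truncation ``rich enough up to degree $4$'' (the $L$-form is defined on all monomials regardless of $e$); its only use is to guarantee $L_{p,e}(1)=e\ge d=\tr(I_d)$ at one step of the chain.

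Your fallback is also unworkable as stated: restricting to a single line through the origin reduces to one variable, but the argument needs \emph{two} simultaneous directions $w,w'$ (the coefficient vectors of the degree-$1$ and degree-$2$ parts of $u^T v$), which generically span a plane, not a line. The paper instead chooses an orthogonal $U$ sending both $w$ and $w'$ into $\mathbb{R}^2\times\{0\}$, invokes \cite[Proposition 3.8(b)]{main} to transfer to $p(Ux)$, and only then restricts to two variables so that the Helton--Vinnikov theorem supplies a determinantal representation $r=\det(I_d+A_1x_1+A_2x_2)$; there is no analogous ``classical'' positivity of the univariate $L$-form on degree-$4$ squares that would let a one-variable reduction close the argument without going through the trace inequality.
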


\begin{corolario}[Relaxation commutes with restriction for truncated extension]
\label{tildein-Pre}
Let $p\in\mathbb{R}[x]$ be a bivariate RZ polynomial and denote now $s:=p+yp^{(1)}\in\mathbb{R}[x,y]$. Then we have the identity $S(p)=\{a\in\mathbb{R}^{2}\mid (a,0)\in S(s)\}$.
\end{corolario}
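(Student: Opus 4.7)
The plan is to compare directly the pencils $M_s$ at $(a,0)$ and $M_p$ at $a$. By construction, $M_p = L_p \circledcirc M_{2,\leq 1}$ is a $3\times 3$ pencil built from the outer product of $(1,x_{1},x_{2})$ with itself, while $M_s = L_s \circledcirc M_{3,\leq 1}$ is a $4\times 4$ pencil built from the outer product of $(1,x_{1},x_{2},y)$ with itself; both relaxations use virtual degree $d=\deg(p)=\deg(s)$ (since $p^{(1)}$ has degree $d-1$, so $yp^{(1)}$ has degree $d$). The corollary is therefore equivalent to the matrix assertion that $M_s(a,0)\succeq 0$ if and only if $M_p(a)\succeq 0$ for every $a\in\mathbb{R}^{2}$.

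First I would establish the compatibility identity $L_s(m)\big|_{y=0}=L_p(m)$ for every monomial $m$ in $\mathbf{x}$ alone. Since $s(\mathbf{x},0)=p(\mathbf{x})$, and the $L$-form is constructed from the polynomial via the (truncated) logarithmic expansion that defines the relaxation, the series defining $L_s$ specialises at $y=0$ to the series defining $L_p$ on monomials that do not involve $y$. This immediately yields that the top-left $3\times 3$ principal block of $M_s(a,0)$ coincides with $M_p(a)$.

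Next I would analyse the last row and column of $M_s(a,0)$, which correspond in $M_{3,\leq 1}$ to the off-diagonal entries $y,\,x_{1}y,\,x_{2}y$ and to the corner entry $y^{2}$. Because $s$ depends on $y$ only through the linear term $y\,p^{(1)}(\mathbf{x})$, every summand in the expansion of $L_s$ applied to a monomial containing exactly one factor of $y$ retains a residual factor of $y$, so the three cross-entries $L_s(y)|_{y=0}$, $L_s(x_{1}y)|_{y=0}$ and $L_s(x_{2}y)|_{y=0}$ all vanish. The corner entry $L_s(y^{2})|_{y=0}$ is nonnegative because it is a diagonal entry of the matrix obtained by Proposition~\ref{propomatrix-Pre} applied to $s$ at $(a,0)$, and diagonal entries of PSD matrices are nonnegative. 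Hence $M_s(a,0)$ is block diagonal with blocks $M_p(a)$ and a nonnegative scalar, so $M_s(a,0)\succeq 0 \iff M_p(a)\succeq 0$, which is the required identity of spectrahedra.

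The main obstacle will be the cross-vanishing claim for $L_s(x_{i}y)|_{y=0}$: since $L_s$ mixes contributions from all coefficients of $s$ up to degree $d$, one must verify carefully that no pure $\mathbf{x}$-monomial can appear in $L_s$ applied to a monomial carrying a factor of $y$. Should this vanishing fail, the block-diagonal argument would have to be replaced by a Schur-complement computation; however, the fact that $s$ is linear in $y$ by construction of the truncated extension should make the vanishing automatic once the logarithmic expansion is written out and the $y$-free coefficients are collected.
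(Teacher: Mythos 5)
Your proposal contains a genuine gap: the vanishing claim for the cross-entries of $M_{s}(a,0)$ is false, and the block-diagonal structure you hope for does not materialize. The $L$-form $L_{s}(m)$ is already a scalar (defined via the coefficient extraction from $-\log(s(-\mathbf{x},-y)/s(0,0))$), so the notation $L_{s}(y)\big|_{y=0}$ is not meaningful, and in fact a direct computation gives $L_{s}(y)=p^{(1)}(0)/p(0)=\deg(p)=d$, $L_{s}(x_{1}y)=\shortcoeff(x_{1},p)$, and $L_{s}(x_{2}y)=\shortcoeff(x_{2},p)$ — none of which vanish. The reason is that the single $y$ from the $y\,p^{(1)}$ term in $s$ pairs with the explicit $y$ in the monomial whose coefficient is extracted, so the result is a pure-$\mathbf{x}$ quantity, not something carrying a residual $y$.

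What actually happens is almost the opposite of block-diagonality: the computations in the paper (Computation~\ref{computation1} and Computation~\ref{estructurematrices}) show that the last row and column of $M_{s}(a,0)$ \emph{duplicate} the first row and column exactly, except that the bottom-right corner is $d^{2}$ rather than $d$. This structure forces $\det M_{s}(a,0)$ to factor, and a Schur complement in your sense is not straightforward because the top-left $3\times 3$ block (equal to $M_{p}(a)$) need only be PSD, not PD. The paper handles this via Lemma~\ref{lemmaclave}, a small elementary lemma proved with Sylvester's criterion: for a symmetric $4\times 4$ matrix whose last row/column duplicates the first and whose corner is scaled by $k\geq 1$, PSD-ness is unchanged. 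Moreover, Corollary~\ref{tildein} in the paper does not compare $M_{s}$ with $M_{p}$ directly; it first proves $S(p)=\{a:(a,0)\in S(\hat{p})\}$ for the full Renegar series $\hat{p}=\sum_{k}\frac{1}{k!}y^{k}p^{(k)}$ (Proposition~\ref{propRenegar}), where a Helton–Vinnikov determinantal representation $\hat{p}=\det(I+x_{1}D+x_{2}A+yI)$ gives the result through the perfect-set criterion $U_{\hat{p}}=U_{p}$, and then transfers to $s=\tilde{p}$ via the matrix-structure Lemma~\ref{lemmaclave} (Corollary~\ref{goodidentity}). Your first step — identifying the upper-left block of $M_{s}(a,0)$ with $M_{p}(a)$ — is correct, but the rest of the argument would need to be replaced by the duplicated-row-and-column analysis.
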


On Part \ref{II}, results about Eulerian polynomials pop up. The main ones are the following.

\begin{corolario}[RZ-ness of dehomogenization]
$A_{n}(\mathbf{x},\mathbf{1})$ is RZ.
\end{corolario}

\begin{corolario}[Cardinal of sets with exact descent top set]
\label{-Pre}
Fix $s=|X|$ and $\{x_{1}<\cdots<x_{s}\}=X\subseteq\gls{brackn}$ and $\{y_{1}<\cdots<y_{n-s}\}=Y\subseteq[n]$ with $X\cup Y=[n]$. For subsets $S\subseteq Y$ name the ordered chain of elements obtained through the union $X\cup S=\{x_{S,1}<\cdots<x_{S,s+|S|}\}$. Thus, going through the complement, we have that $|R(n-1,X)|=$
\begin{gather}\label{coroR1-Pre}
\sum_{S\subseteq[n]\smallsetminus{X}}(-1)^{|S|}(n-|X\cup S|)!\prod_{i=1}^{s+|S|}(x_{S,i}-i).
\end{gather} Similarly, we can express this number in terms of deletions in the initial set as $|R(n-1,X)|=$
\begin{gather}\label{coroR2-Pre}
\sum_{J\subseteq X}(-1)^{|X\smallsetminus J|}\alpha(J)\hat{!}.
\end{gather}
\end{corolario}

On Part \ref{III}, the comparisons between bounds begin and therefore we center our attention in theorems about asymptotic expansions and differences. The corresponding main results are the followoing.

\begin{teorema}[Improved estimated and exact asymptotics]
\label{refinementStanley-Pre}
Fix $n,k\in\mathbb{N}$ with $k\leq n$ and decompose the $n$-th univariate Eulerian polynomial $A_{n}(x)=\prod_{i=1}^{n}(x+q_{i}^{(n)})$ so that $0<q_{n}^{(n)}\leq\cdots\leq q_{1}^{(n)}.$ Then we can estimate the growth of the absolute value of the $k$-th root of the $n$-th univariate Eulerian polynomial $A_{n}$ as $$\lim_{n\to\infty}\frac{q^{(n)}_{k}}{\left(\frac{k+1}{k}\right)^{n+1}}=1.$$
\end{teorema}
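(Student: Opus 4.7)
The plan is to combine an upper bound obtained by refining Stanley's classical argument with a matching lower bound derived from explicit coefficient asymptotics and Vieta's formulas. The crucial structural fact that makes this strategy work is that, for fixed $k$, the roots $q_k^{(n)}$ are exponentially separated from each other as $n \to \infty$, so each individual root can be read off asymptotically from the ratio of two consecutive coefficients of $A_n$.

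First, I would establish the asymptotic of the Eulerian-number coefficients of $A_n$. Writing $A_n(x) = \sum_{j=0}^{n} c_j^{(n)} x^{j}$ and using that the coefficients $c_j^{(n)}$ are Eulerian numbers up to a palindromic index shift (as encoded in the convention chosen for $A_n$ in the paper), the standard explicit formula $E(n,j) = \sum_{i=0}^{j-1}(-1)^i \binom{n+1}{i}(j-i)^n$ gives $E(n,j) \sim j^n$ for fixed $j$ and $n \to \infty$, with exponentially small corrections coming from the alternating subleading terms. Combining this with the palindromic symmetry of Eulerian coefficients yields, for each fixed $k$,
\begin{equation*}
\frac{c_{n-k}^{(n)}}{c_{n-k+1}^{(n)}} \sim \left(\frac{k+1}{k}\right)^{n+1}.
\end{equation*}

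Second, by Vieta's formulas, $c_{n-k}^{(n)}$ equals the elementary symmetric polynomial $e_k(q_1^{(n)},\ldots,q_n^{(n)})$. Because the roots are exponentially separated (the quotient $q_k^{(n)}/q_{k+1}^{(n)}$ grows like $\left(\frac{(k+1)^2}{k(k+2)}\right)^{n+1} \to \infty$), the dominant contribution to $e_k$ is the product $q_1^{(n)} \cdots q_k^{(n)}$ of the top $k$ roots, so that $e_k = q_1^{(n)} \cdots q_k^{(n)} (1 + o(1))$ and consequently $q_k^{(n)} \sim e_k/e_{k-1} = c_{n-k}^{(n)}/c_{n-k+1}^{(n)}$. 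The correction to the dominant term is controlled by sums of the form $\sum_{j > k} q_j^{(n)}/q_k^{(n)}$, and using a uniform Stanley-type upper bound $q_j^{(n)} \leq C \cdot \left(\frac{j+1}{j}\right)^{n+1}$ these are seen to decay exponentially fast in $n$. Combining this with the previous step yields the claimed limit.

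The main obstacle is establishing the Stanley-type uniform upper bound and controlling the multi-swap error terms in the expansion of $e_k$ simultaneously and uniformly in $n$. I would handle this either by induction on $n$ using the classical recursion $A_{n+1}(x) = (1 + nx)A_n(x) + x(1-x) A_n'(x)$ to track how the roots migrate with $n$, or by a direct sign-change argument that evaluates $A_n$ at the predicted locations $x = -\alpha \cdot \left(\frac{k+1}{k}\right)^{n+1}$ for $\alpha$ slightly above and below $1$ and counts sign alternations using the asymptotics of Step one. Either route should deliver the tight constant required to match the upper and lower asymptotic bounds.
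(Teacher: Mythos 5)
Your Steps 1 and 2 are the right core ideas and are the same ones the paper uses: write the coefficients of $A_n$ as elementary symmetric functions of the root moduli via Vieta, observe $E(n+1,k)\sim(k+1)^{n+1}$ for fixed $k$, and exploit the exponential splitting of the roots to read off each $q_k^{(n)}$ from the ratio $e_k/e_{k-1}$. You are also correct that the entire weight of the proof falls on establishing that splitting rigorously and uniformly, since without it the statement ``$e_k = q_1^{(n)}\cdots q_k^{(n)}(1+o(1))$'' is circular.

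The gap is that you leave this hard step at the level of a plan with two proposed routes, and neither route is the one that actually works in the paper. Induction on $n$ via the recursion $A_{n+1}(x)=(1+(n+1)x)A_n(x)+x(1-x)A_n'(x)$ is unlikely to close the gap: the recursion gives interlacing but no quantitative control on how far the $k$-th root of $A_{n+1}$ moves relative to that of $A_n$, and precisely this rate is the content of the theorem. The sign-change argument (evaluating $A_n$ at $-\alpha(\tfrac{k+1}{k})^{n+1}$ for $\alpha$ near $1$) is more plausible but is delicate because it requires controlling an alternating sum near a zero, which again presupposes quantitative separation. What the paper actually does is neither of these. For the base case $k=1$ it argues by contradiction through subsequences: it first pins down $\tfrac{2^{n+1}}{n+1}\leq q_1^{(n)}\leq 2^{n+1}$ from $e_1=E(n+1,n-1)$, then assumes a subsequence has $\lim q_1^{(n_m)}/2^{n_m+1}=l<1$, uses the next symmetric-function identity $e_2=E(n+1,n-2)$ to force $q_2^{(n_m)}\lesssim (n_m+1)(\tfrac32)^{n_m+1}$, and shows that under these constraints the sum $\sum_i q_i^{(n_m)}$ cannot reach $E(n_m+1,n_m-1)\sim 2^{n_m+1}$, a contradiction; boundedness of the sequence then gives the limit. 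The general $k$ is handled by bootstrapping this argument inductively on the root index $k$, not on $n$. Your proposal would become a complete proof if you replaced the two sketched gap-fillers by that contradiction-plus-subsequence scheme, carried out inductively in $k$ using $e_{k+1}$ to separate $q_k$ from $q_{k+1}$ at each stage.
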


\begin{proposicion}[Beating univariate]\label{multundiff-Pre}
Call $\un,\mult_{v}\colon\mathbb{N}\to\mathbb{R}_{>0}$ the sequence of bounds for the absolute value of the smallest (leftmost) root of the univariate Eulerian polynomial $A_{n}$ obtained after applying the relaxation to the univariate $A_{n}$ and the multivariate $A_{n}$ Eulerian polynomials respectively, where $\mult_{v}$ is obtained after linearizing using the vector $v$ and $\un$ is the optimal the actual root of the determinant of the relaxation. Remember also that $v$ depends on $y$. Then there exists a sequence $y\colon\mathbb{N}\to\mathbb{R}$ such that the difference $\mult_{v}-\un\sim\frac{1}{2}\left(\frac{3}{4}\right)^{n}+o(\left(\frac{3}{4}\right)^{n})$ is positive. 
\end{proposicion}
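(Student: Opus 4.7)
My plan is to express both $\un(n)$ and $\mult_{v}(n)$ as smallest positive real roots of explicit scalar polynomials obtained from the univariate and multivariate relaxation pencils, and then to compare matching asymptotic expansions up to the $(3/4)^{n}$ scale. Throughout, Theorem \ref{refinementStanley-Pre} provides the growth of the extremal root $q_{1}^{(n)}$ that both bounds target, while the combinatorial identities summarised in Corollary \ref{-Pre} furnish closed forms for all the ingredients involved.

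I would first handle the univariate side. The relaxation applied to $A_{n}(x)$ produces a small pencil whose entries are evaluations of the $L$-form $L_{A_{n},n}$ on monomials of low degree, and each such evaluation is a rational expression in the Eulerian numbers $E(n,k)$. Taking the determinant and extracting its smallest positive real root expresses $\un(n)$ in closed form as an algebraic function of $E(n,0), E(n,1), E(n,2)$, from which one reads off an expansion of $\un(n)$ to sufficient accuracy in the asymptotic scales $2^{n+1}$, $(3/2)^{n+1}$, and $(3/4)^{n}$ by combining Theorem \ref{refinementStanley-Pre} with the classical sharp estimates for Eulerian numbers.

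Next I would analyse the multivariate side. Applying the relaxation to $A_{n}(\mathbf{x})$ produces the pencil $M_{n}$, and contracting against a parametric direction $v = v(y) \in \mathbb{R}^{n+1}$ reduces the LMI $M_{n} \succeq 0$ along the diagonal $\mathbf{x} = (x,\dots,x)$ to the scalar inequality $F(x,y) \geq 0$, where $F(x,y) := v(y)^{T} M_{n}(x,\dots,x) v(y)$, whose smallest positive $x$-root is $\mult_{v}(n)$. The crucial device is to choose $v(0)$ so that $F(x,0)$ reproduces the univariate relaxation polynomial exactly, forcing $\mult_{v}(n)\big|_{y=0} = \un(n)$. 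Implicit differentiation at $(\un(n),0)$ then yields
\begin{equation*}
\mult_{v}(n) - \un(n) = -\frac{F_{y}(\un(n),0)}{F_{x}(\un(n),0)}\,y + O(y^{2}),
\end{equation*}
and the off-diagonal entries of $M_{n}$ which contribute to $F_{y}$ are precisely those encoding genuinely multivariate information of $A_{n}(\mathbf{x})$ that is absent in the univariate pencil. A direct computation of the ratio $-F_{y}/F_{x}$ using the asymptotics from Part \ref{II} then identifies the scale at which $y(n)$ must be chosen so that the leading contribution to $\mult_{v}(n) - \un(n)$ is $\tfrac{1}{2}(3/4)^{n}$ with positive sign.

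The main obstacle is ensuring that the constant in front of $(3/4)^{n}$ comes out exactly as $\tfrac{1}{2}$. This requires simultaneous cancellations of every intermediate asymptotic scale between the leading term (where the two bounds agree automatically because $v(0)$ recovers the univariate pencil) and the $(3/4)^{n}$ residue. These intermediate cancellations hinge on the combinatorial identities developed in Part \ref{II} together with the precise asymptotic content of Theorem \ref{refinementStanley-Pre}, and they are where the bulk of the technical work will sit. By contrast, the positivity statement is more robust: once the sign of $F_{y}(\un(n),0)$ has been pinned down, the sign of $F_{x}$ at the smallest positive root of a $\succeq 0$-defining scalar polynomial is forced by the convexity of the associated spectrahedron, and the sign of the full difference follows at once.
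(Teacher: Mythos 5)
Your argument hinges on anchoring an implicit-function expansion at $y=0$, and both anchors it needs fail. First, the scalar form $F(x,y) := v(y)^{T} M_{n}(x,\dots,x) v(y)$ is \emph{affine} in $x$ for every fixed $y$, because the diagonal-restricted pencil is linear in $x$ and you are contracting it against a single vector. But $\un(n)$ is the root of the \emph{determinant} of the $2 \times 2$ univariate pencil, a genuinely quadratic polynomial in $x$ whose root carries a radical. So no choice of $v(0)$ can make $F(x,0)$ ``reproduce the univariate relaxation polynomial exactly'': at best you could pick $v(0)$ to be the generalized eigenvector at $\un(n)$ so the Rayleigh-quotient root happens to match, but that eigenvector is itself an $n$-dependent algebraic object containing the same square root as $\un(n)$ and does not belong to any structured parametric family you could cleanly differentiate in $y$. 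The vector the paper actually uses, $v = (y, 0, 1, -1, \dots, -1) \in \mathbb{R}^{n+2}$, does \emph{not} recover $\un(n)$ at $y=0$; it is the all-ones compression that reproduces the univariate pencil, and even that yields only the suboptimal linear bound $b_{(1,1)}$, not the determinant root $\un$. The difference $\mult_v - \un$ is therefore not a perturbation off a $y=0$ fibre but a comparison of two genuinely distinct algebraic expressions.

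Second, the regime is wrong for a Taylor expansion in $y$. Writing $\mult_v(n) = N(y)/D(y)$ with $N,D$ explicit quadratics in $y$, the paper optimises over $y$ by solving $N'D - ND' = 0$, and the selected root grows like $y(n) \sim \tfrac{3^{n+1}}{2^{n+1}\,n}$, which tends to $+\infty$ with $n$. The linearization $\mult_v - \un = -\tfrac{F_y(\un,0)}{F_x(\un,0)}\,y + O(y^2)$ is a small-$y$ statement, and here $y$ is large: the $O(y^2)$ remainder competes at full strength with the linear term because the coefficients of $y$ and $y^2$ in $N$ and $D$ carry exponentials of comparable size, and every fixed-order truncation in $y$ throws away contributions that survive at the $(3/4)^{n}$ scale. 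The paper instead substitutes the optimal $y = (f - \sqrt{g})/h$ directly, writes both $\mult_v$ and $\un$ in the form $(\alpha + \beta\sqrt{\cdot})/(\gamma + \delta\sqrt{\cdot})$, and extracts the surviving $\tfrac{1}{2}(3/4)^{n}$ only after several rounds of conjugate multiplication that defeat cancellation of the leading exponential orders in both numerator and denominator. The positivity claim inherits the same problem: once the first-order linearization no longer controls the difference, the sign of $-F_y/F_x$ does not determine the sign of $\mult_v - \un$, and positivity must be read off the fully resolved asymptotic, as the paper does.
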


On Part \ref{IV}, we review and rewrite in depth the work of Sobolev. Taking a higher level of detail we can improve his claims.

\begin{proposicion}
The estimate for the absolute value of the leftmost root of the Eulerian polynomial obtained from approximating using Vi{\`e}te's formula for the biggest after the first iteration of the DLG method is, up to its forth growth term, $2^{k+1}-\left(\frac{3}{2}\right)^{k+1}-\frac{1}{2}\left(\frac{9}{8}\right)^{k+1}-\frac{k}{2}+o(k).$
\end{proposicion}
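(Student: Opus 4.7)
The plan is to apply one iteration of the Dandelin--Lobachevsky--Graeffe (DLG) transform to $A_n$ and then read off the biggest-root estimate via Viète's formula on the output. Writing $A_n(x)=\prod_{i=1}^n(x+q_i^{(n)})$, I would form $(-1)^n A_n(x)A_n(-x)$, which is even in $x$, and substitute $y=x^2$ to obtain $\tilde A_n(y)=\prod_{i=1}^n(y-(q_i^{(n)})^2)$, whose roots are the squared absolute values of the roots of $A_n$. Viète's formula reads $\sum_i (q_i^{(n)})^2$ off the coefficient of $y^{n-1}$ in $\tilde A_n$ (with the appropriate sign), and under the root-dominance hypothesis this gives the biggest-root estimate $(q_1^{(n)})^2 \approx \sum_i(q_i^{(n)})^2 = e_1^2 - 2e_2$ by Newton's identity, where $e_1,e_2$ are the two leading nontrivial coefficients of $A_n$.

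Next I would substitute the explicit Eulerian values. Setting $k=n$ and using the palindromicity $E(k+1,j)=E(k+1,k-j)$ together with the standard low-index closed forms $E(m,1)=2^m-m-1$ and $E(m,2)=3^m-(m+1)2^m+\binom{m+1}{2}$, one obtains $e_1=E(k+1,1)=2^{k+1}-k-2$ and $e_2=E(k+1,2)=3^{k+1}-(k+2)2^{k+1}+\binom{k+2}{2}$. A clean cancellation of the $(k+2)\cdot 2^{k+2}$ contributions in $e_1^2-2e_2$ then produces
\[
e_1^2 - 2e_2 \;=\; 4^{k+1} - 2\cdot 3^{k+1} + (k+2),
\]
so the Viète estimate for $|q_1^{(k)}|$ is $\sqrt{4^{k+1}-2\cdot 3^{k+1}+(k+2)}$.

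The final step is the asymptotic expansion. Writing the estimate as $2^{k+1}\sqrt{1-2(3/4)^{k+1}+(k+2)/4^{k+1}}$ and Taylor-expanding the radical about $1$, the constant, linear, and quadratic Taylor terms acting on $-2(3/4)^{k+1}$ produce in order the three dominant exponentially-growing contributions $2^{k+1}$, $-(3/2)^{k+1}$, and $-\tfrac12(9/8)^{k+1}$. The fourth growth term then arises from collecting the remaining pieces: the polynomial residue $(k+2)/4^{k+1}$ inside the radical, the cubic tail $-\tfrac12(27/32)^{k+1}$, and the correction $\sqrt{\sum_i(q_i^{(k)})^2}-q_1^{(k)}\sim\tfrac{1}{2q_1^{(k)}}\sum_{i\geq 2}(q_i^{(k)})^2$ that relates the Viète estimate to the true biggest root, where the subdominant $q_i^{(k)}$ are controlled through Theorem~\ref{refinementStanley-Pre}. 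Summing these contributions across all indices $i\geq 2$ should aggregate to the subexponential growth term $-k/2+o(k)$.

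The hard part will be precisely this aggregation. Every direct Taylor remainder of the radical decays exponentially (the first such being $-\tfrac12(27/32)^{k+1}$), so the linear $-k/2$ cannot come from the Taylor expansion alone; it must be assembled from the combined effect of the Viète overshoot summed over the subdominant roots via Theorem~\ref{refinementStanley-Pre} together with the $(k+2)$ residue from the cancellation identity for $e_1^2-2e_2$. Pinning the coefficient of $k$ to exactly $-\tfrac12$ while keeping every other contribution in $o(k)$ will require a uniform tail bound preventing the exponentially-decaying Taylor pieces from polluting the linear scale.
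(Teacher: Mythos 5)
Your algebraic simplification is the crux of the matter, and it is \emph{correct}: with $a_0^{(k)}=1$, $a_1^{(k)}=E(k+1,1)=2^{k+1}-(k+2)$ and $a_2^{(k)}=E(k+1,2)=3^{k+1}-2^{k+1}(k+2)+\tfrac{(k+1)(k+2)}{2}$ one has the clean cancellation
\[
(a_1^{(k)})^2 - 2a_0^{(k)}a_2^{(k)} = 4^{k+1}-2\cdot 3^{k+1}+(k+2).
\]
The paper's own computation, however, mis-distributes the factor $-2$ across $a_2^{(k)}$: only $3^{k+1}$ picks up the $-2$, while the terms $-2^{k+1}(k+2)$ and $\tfrac{(k+1)(k+2)}{2}$ are copied with coefficient $+1$. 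This leaves the paper with $4^{k+1}-2^{k+1}(k+2)+(k+2)^2-2\cdot 3^{k+1}+\tfrac{(k+1)(k+2)}{2}$ under the radical. The spurious $-2^{k+1}(k+2)$ survives, and after dividing by $2^{k+1}$ inside the radical it contributes $-\tfrac{k+2}{2}$, which is exactly where the paper's fourth growth term $-\tfrac{k}{2}+o(k)$ comes from.

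You correctly noticed that with your (right) expression the Taylor expansion of the radical produces only exponentially decaying remainders, the first of which is $-\tfrac12\left(\tfrac{27}{32}\right)^{k+1}$, and that no $-\tfrac{k}{2}$ can arise this way. But the rescue you then attempt --- attributing $-\tfrac{k}{2}$ to the overshoot $\sqrt{\sum_i(q_i^{(k)})^2}-|q_1^{(k)}|$ aggregated over subdominant roots --- is off target. The proposition is an asymptotic statement about the \emph{estimate} $\sqrt{b_1^{(k)}}$ itself, not about its distance from the true root, so there is no room to inject a correction from Theorem~\ref{refinementStanley-Pre}. The source of the discrepancy is simply the paper's algebra slip; with the corrected expression the fourth growth term of the estimate is $-\tfrac12\left(\tfrac{27}{32}\right)^{k+1}$, not $-\tfrac{k}{2}$. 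In short, your intermediate computation exposes an error in the paper rather than a gap in your own argument, and the $-k/2$ the statement claims cannot be recovered by the route you sketch.
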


\begin{corolario}
Let $q_{1}^{(k)}$ be the leftmost root of the univariate Eulerian polynomial of degree $k$. Then we have the inequalities \begin{gather*}\frac{2^{k+1}-(k+2)+\sqrt{2^{3 + k} - 4 \cdot 3^{1 + k} + 4^{1 + k} - k \left(2 - 2^{2 + k} + k\right)}}{2}\leq|q_{1}^{(k)}|\\\leq\frac{2}{k}\bigg{(}\frac{2^{k+1}-(k+2)}{2}+\frac{k-1}{2}\\\sqrt{\frac{-4 \left(2^{k}-1\right)^2 + \left(4^{1 + k} -2 + 2^{2 + k} - 2 \cdot 3^{1 + k}\right) k}{k-1}
}\bigg{)}.\end{gather*}
\end{corolario}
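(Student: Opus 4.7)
The plan is to derive both bounds from the first two elementary symmetric functions of the positive reals $q_{1}^{(k)},\dots,q_{k}^{(k)}$ appearing in the decomposition $A_{k+1}(x)=\prod_{i=1}^{k}(x+q_{i}^{(k)})$. First, I would read off the elementary symmetric functions directly from the coefficients of the monic polynomial $A_{k+1}$ via Vieta's formulas and then convert the resulting Eulerian numbers to their closed forms. Writing $\sigma_{1}:=\sum_{i} q_{i}^{(k)}$ and $\sigma_{2}:=\sum_{i<j} q_{i}^{(k)} q_{j}^{(k)}$, the palindromicity $E(n,j)=E(n,n-1-j)$ together with $E(n,1)=2^{n}-n-1$ and $E(n,2)=3^{n}-(n+1)2^{n}+\binom{n+1}{2}$ yields $\sigma_{1}=2^{k+1}-(k+2)$ and $\sigma_{2}=3^{k+1}-(k+2)2^{k+1}+\binom{k+2}{2}$. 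From these, the power sum $S_{2}:=\sum_{i}(q_{i}^{(k)})^{2}=\sigma_{1}^{2}-2\sigma_{2}$ simplifies to $S_{2}=4^{k+1}-2\cdot 3^{k+1}+(k+2)$.

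For the upper bound, I would apply the Cauchy--Schwarz inequality to the $k-1$ smaller values $q_{2}^{(k)},\dots,q_{k}^{(k)}$, obtaining
\[
(\sigma_{1}-|q_{1}^{(k)}|)^{2}=\Bigl(\sum_{i\geq 2}q_{i}^{(k)}\Bigr)^{2}\leq(k-1)\sum_{i\geq 2}(q_{i}^{(k)})^{2}=(k-1)(S_{2}-|q_{1}^{(k)}|^{2}).
\]
Rearranging produces the quadratic inequality $k t^{2}-2\sigma_{1}t+(\sigma_{1}^{2}-(k-1)S_{2})\leq 0$ in $t=|q_{1}^{(k)}|$, whose larger root is $\frac{\sigma_{1}+\sqrt{(k-1)(k S_{2}-\sigma_{1}^{2})}}{k}$. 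After substituting the closed forms, this gives the statement's upper bound.

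For the lower bound, I would use the trivial positivity $\sigma_{2}':=\sum_{2\leq i<j}q_{i}^{(k)} q_{j}^{(k)}\geq 0$. Writing $\sigma_{2}'=\sigma_{2}-|q_{1}^{(k)}|(\sigma_{1}-|q_{1}^{(k)}|)$ converts this into $t^{2}-\sigma_{1}t+\sigma_{2}\geq 0$, equivalently $2t^{2}-2\sigma_{1}t+(\sigma_{1}^{2}-S_{2})\geq 0$. The discriminant $4(2S_{2}-\sigma_{1}^{2})$ is positive, yielding two real roots $\alpha=\frac{\sigma_{1}-\sqrt{2S_{2}-\sigma_{1}^{2}}}{2}$ and $\beta=\frac{\sigma_{1}+\sqrt{2S_{2}-\sigma_{1}^{2}}}{2}$, and $|q_{1}^{(k)}|$ must lie in $(-\infty,\alpha]\cup[\beta,+\infty)$. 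To select the upper branch, I would use the elementary estimate $|q_{1}^{(k)}|\geq S_{2}/\sigma_{1}$, which holds since $(q_{i}^{(k)})^{2}\leq|q_{1}^{(k)}|\cdot q_{i}^{(k)}$ for every $i$; the positive discriminant condition $2S_{2}>\sigma_{1}^{2}$ is equivalent to $S_{2}/\sigma_{1}>\sigma_{1}/2>\alpha$, forcing $|q_{1}^{(k)}|\geq\beta$, which is the stated lower bound.

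The main obstacle is not conceptual but rather algebraic bookkeeping: once the bounds are expressed as $\frac{\sigma_{1}+\sqrt{(k-1)(k S_{2}-\sigma_{1}^{2})}}{k}$ and $\frac{\sigma_{1}+\sqrt{2S_{2}-\sigma_{1}^{2}}}{2}$, one must verify the polynomial-exponential identities
\[
(k-1)(kS_{2}-\sigma_{1}^{2})=(k-1)\bigl[-4(2^{k}-1)^{2}+k(4^{k+1}+2^{k+2}-2-2\cdot 3^{k+1})\bigr]
\]
and
\[
2S_{2}-\sigma_{1}^{2}=2^{k+3}-4\cdot 3^{k+1}+4^{k+1}-k(2-2^{k+2}+k),
\]
both of which follow by careful expansion of $(2^{k+1}-k-2)^{2}$ and combination with the closed form of $S_{2}$. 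The only additional subtlety is the branch selection for the lower bound, which is handled cleanly by the elementary inequality $|q_{1}^{(k)}|\geq S_{2}/\sigma_{1}$ once one observes that the positive discriminant condition is itself the required separation between $S_{2}/\sigma_{1}$ and $\sigma_{1}/2$.
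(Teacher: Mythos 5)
Your proof is correct and is, in substance, a self-contained re-derivation of the special case $s=1$, $N=k$ of Sobolev's Lemma~\ref{sobolema}, which is exactly the route the paper takes: Cauchy--Schwarz applied to $q_2^{(k)},\dots,q_k^{(k)}$ gives the upper bound, nonnegativity of $\sum_{2\le i<j}q_i^{(k)}q_j^{(k)}$ gives the lower bound, and Newton's identity $S_2=\sigma_1^2-2\sigma_2$ (equivalently the DLG coefficient formula $b_1^{(k)}=(a_1^{(k)})^2-2a_0^{(k)}a_2^{(k)}$ used by the paper) closes the algebra. Your branch-selection step via $|q_1^{(k)}|\geq S_2/\sigma_1$ is a welcome concrete detail that the paper's terse sketch of the Lemma's proof (``studying its behaviour as $\nu_1$ varies'') leaves implicit. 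Two minor slips worth fixing: the factorization should read $A_k(x)=\prod_{i=1}^{k}(x+q_i^{(k)})$, not $A_{k+1}$; and the positivity of the discriminant $2S_2-\sigma_1^2=\sigma_1^2-4\sigma_2$ --- precisely the hypothesis of Sobolev's Lemma --- is asserted but not verified, and it is not automatic for arbitrary positive reals (e.g.\ it fails when the $q_i$ are all equal and $k\ge 3$), so it deserves a line checking it from the closed forms, a point the paper also leaves implicit.
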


\begin{proposicion}
The majorant asymptotic growth first differs from the raw estimate at its fourth growth terms. In particular the majorant four first growth terms are $2^{k+1}-\left(\frac{3}{2}\right)^{k+1}-\frac{1}{2}\left(\frac{9}{8}\right)^{k+1}-\frac{1}{2k}\left(\frac{9}{8}\right)^{k+1}+o(\frac{1}{k}\left(\frac{9}{8}\right)^{k})$.
\end{proposicion}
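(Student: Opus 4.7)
The plan is to take the closed-form expression for the majorant stated in the previous corollary, namely
\[
M_k \;=\; \frac{2}{k}\Biggl(\frac{2^{k+1}-(k+2)}{2}+\frac{k-1}{2}\sqrt{\frac{-4(2^{k}-1)^2+\bigl(4^{k+1}-2+2^{k+2}-2\cdot 3^{k+1}\bigr)k}{k-1}}\,\Biggr),
\]
and perform a controlled asymptotic expansion as $k\to\infty$ using the scale determined by the raw-estimate comparison (the levels $2^{k+1}$, $(3/2)^{k+1}$, $(9/8)^{k+1}$ and $(9/8)^{k+1}/k$). Factoring $4\cdot 4^{k}$ out of the numerator under the square root, one isolates the inner bracket $\sqrt{1+x_k}$ with
\[
x_k \;=\; \frac{k+2}{k-1}\,2^{-k}\;-\;\frac{3k}{2(k-1)}\Bigl(\tfrac{3}{4}\Bigr)^{k}\;-\;\frac{k+2}{2(k-1)}\,4^{-k},
\]
so that $M_k=\tfrac{1}{k}\bigl(2^{k+1}-(k+2)+(k-1)\,2^{k+1}\sqrt{1+x_k}\bigr)$. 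Since $x_k\to 0$, Taylor expanding $\sqrt{1+x_k}=1+\tfrac{x_k}{2}-\tfrac{x_k^{2}}{8}+\cdots$ is legitimate and reduces the task to tracking the surviving growth orders.

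The zeroth order of the square root contributes $2^{k+1}-1-\tfrac{2}{k}$ after combining with the outer terms; the linear piece $\tfrac{(k-1)2^{k}}{k}x_k$ contributes, term by term, exactly $\tfrac{k+2}{k}-\left(\tfrac{3}{2}\right)^{k+1}-\tfrac{k+2}{2k\cdot 2^{k}}$. Here the rational pieces $-1-\tfrac{2}{k}$ and $\tfrac{k+2}{k}$ cancel exactly (a verification that is essentially free but pleasant), leaving $2^{k+1}-(3/2)^{k+1}$ up to the exponentially decaying remainder $-\tfrac{k+2}{2k\cdot 2^{k}}=o((9/8)^{k}/k)$, which is safely below our cut-off.

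The third growth term $-\tfrac{1}{2}(9/8)^{k+1}$ and the fourth growth term $-\tfrac{1}{2k}(9/8)^{k+1}$ must both emerge from the quadratic correction $-\tfrac{(k-1)2^{k-2}}{k}x_k^{2}$. Only the dominant square $\bigl(\tfrac{3k}{2(k-1)}\bigr)^{2}(3/4)^{2k}=\tfrac{9k^{2}}{4(k-1)^{2}}(9/16)^{k}$ survives; cross terms with $2^{-k}$ or $4^{-k}$ factors produce contributions of order $(3/8)^{k}$ or smaller, hence negligible. A direct simplification yields
\[
-\frac{(k-1)2^{k-2}}{k}\cdot\frac{9k^{2}}{4(k-1)^{2}}\Bigl(\tfrac{9}{16}\Bigr)^{k}\;=\;-\frac{k}{2(k-1)}\Bigl(\tfrac{9}{8}\Bigr)^{k+1},
\]
and the geometric expansion $\tfrac{k}{k-1}=1+\tfrac{1}{k}+O(1/k^{2})$ splits this into the third and fourth claimed terms, with remainder $O\bigl((9/8)^{k}/k^{2}\bigr)=o\bigl((9/8)^{k}/k\bigr)$.

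It remains to certify that no higher-order piece of the Taylor expansion, nor the subleading contributions of $x_k^{2}$, reach the level $(9/8)^{k+1}/k$. The bound $\tfrac{(k-1)2^{k+1}}{k}|x_k|^{j}$ for $j\ge 3$ is $O\bigl(k^{\,j-1}\,2^{k}(3/4)^{jk}\bigr)$, which for $j\ge 3$ decays like a constant times $(27/32)^{k}$ up to polynomial factors and is therefore absorbed in the error. The main obstacle is precisely this bookkeeping: one needs to verify by a clean comparison of growth rates that all rational cancellations line up and that every side-term is strictly below $(9/8)^{k+1}/k$, so that the three initial growth terms coincide with the raw estimate while the fourth genuinely differs and takes the asserted form.
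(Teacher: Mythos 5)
Your proof is correct but takes a genuinely different route from the paper's. The paper obtains the four growth terms one at a time by an iterated conjugation scheme: subtract the currently known expansion from the majorant, multiply by the conjugate expression to free the radical, estimate the resulting difference of squares, and repeat. You instead factor the dominant power $4\cdot 4^{k}$ out from under the square root, isolate a small remainder $x_k\to 0$, and do a single Taylor expansion of $\sqrt{1+x_k}$ to quadratic order; all four growth terms fall out of the combined zeroth-, first- and second-order contributions at once. I checked the computations: the exact cancellation $-1-\tfrac{2}{k}+\tfrac{k+2}{k}=0$ between the zeroth and linear pieces is correct, the linear piece correctly yields $-(3/2)^{k+1}$, the quadratic piece yields $-\tfrac{k}{2(k-1)}(9/8)^{k+1}$ which expands to the third and fourth claimed terms, and the cross terms and $j\ge 3$ Taylor terms are indeed $O((3/4)^{k}\cdot\mathrm{poly}(k))$, hence absorbed in $o((9/8)^{k}/k)$. (Your displayed bound $O(k^{j-1}2^k(3/4)^{jk})$ is slightly generous in the polynomial factor --- the leading behavior of $x_k$ carries no $k$-growth --- but this only makes the estimate looser, not wrong.) The Taylor-expansion route is arguably more transparent and gets all the cancellations in one pass, at the price of tracking several cross-terms simultaneously; the paper's sequential conjugation is more mechanical and is the method the author uses uniformly throughout that chapter, which makes the fourth-order step an unsurprising continuation of the second- and third-order computations already done.
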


\begin{proposicion}
Sobolev's refined minorant up until its fourth growth term grows like $2^{k+1}-\left(\frac{3}{2}\right)^{k+1}-\left(\frac{9}{8}\right)^{k+1}-2\left(\frac{27}{32}\right)^{k+1}+o(\left(\frac{27}{32}\right)^{k}).$
\end{proposicion}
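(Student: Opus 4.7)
The plan is to expand Sobolev's refined minorant directly using its building blocks in terms of Eulerian numbers and to collect the first four growth terms. Since the refined minorant augments the raw DLG-based lower bound (whose expansion is given by the immediately preceding proposition) by the alternating correction series $\chi_j^{(k)}$ evaluated at the index governing the leftmost root, its asymptotics are controlled by the ratios $E(k+1,j\pm 1)/E(k+1,j)$ for small $j$. These ratios admit geometric-like corrections whose consecutive bases decay by a factor of $3/4$, which is exactly what produces the chain of bases $2,3/2,9/8,27/32$ in the target expansion.

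First, I would write the refined minorant explicitly in terms of the $u_j^{(k,s)}$ building blocks and the first few non-trivial summands of $\chi^{(k)}$, as was done for the raw estimate and the majorant. Second, I would substitute the closed-form expression $E(k+1,j)=\sum_{i=0}^{j}(-1)^{i}\binom{k+2}{i}(j+1-i)^{k+1}$ and expand asymptotically: for fixed small $j$, $E(k+1,j)$ behaves as $(j+1)^{k+1}$ plus a cascade of geometric corrections $(j/(j+1))^{k+1}$, $((j-1)/(j+1))^{k+1}$, etc., each weighted by an appropriate binomial coefficient. Third, I would multiply out and collect contributions order by order: the leading $2^{k+1}$ comes from the dominant term of the refined formula; the $-(3/2)^{k+1}$ comes from the first binomial-weighted correction in $u_j^{(k,s)}$; the $-(9/8)^{k+1}$ arises from the first non-trivial summand of $\chi^{(k)}$, which, unlike the raw estimate, contributes with full coefficient $1$ rather than $\frac{1}{2}$; and the $-2(27/32)^{k+1}$ combines the second summand of $\chi^{(k)}$ with the residual cross-term between $\chi^{(k)}$ and the raw expansion.

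The main obstacle will be ensuring that the many cross-multiplications between the raw estimate and the series $\chi^{(k)}$ do not create additional contributions at the $(27/32)^{k+1}$ scale. Because each constituent asymptotic expansion carries a long tail of smaller corrections, one must check that every product that is not one of the two explicit sources identified above is in fact $o((27/32)^{k})$. In particular, the doubling of the coefficient on $(9/8)^{k+1}$ relative to the raw estimate and the precise factor $2$ in front of $(27/32)^{k+1}$ both hinge on a single cancellation between a $\chi^{(k)}$-contribution and a truncation-error term of the underlying DLG iterate; this is the bookkeeping step requiring the greatest care. Once it is verified, reading off the four growth terms is a direct evaluation, yielding the stated expansion.
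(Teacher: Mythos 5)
Your overall route — expand the minorant directly in terms of Eulerian-number asymptotics and collect growth terms order by order — does work and gives the stated answer, but it is a genuinely different organization from the paper's, which at each stage subtracts the already-known partial expansion and frees the radical by conjugation so that only the next growth term survives. Both are legitimate; yours is more systematic, the paper's is more economical because it never needs to expand $\sqrt{1+\epsilon}$ beyond what the current difference requires. A useful observation in your favor: the refined minorant obtained from Sobolev's Lemma, namely
$$\tfrac{1}{2}\Bigl(E(k+1,1)+\sqrt{E(k+1,1)^2-4E(k+1,2)}\Bigr),$$
is algebraically identical to the $\chi$-based lower bound $E(k+1,1)\tfrac{1+\sqrt{1-4\chi_1^{(k)}}}{2}$, so your decision to phrase things through $\chi^{(k)}$ is not incorrect as a matter of representation.

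However, two of the mechanisms you propose are wrong and would mislead the bookkeeping. First, there is no ``second summand of $\chi^{(k)}$'' when $j=1$: the alternating series defining $\chi_j^{(k)}$ truncates after its first term precisely because $E(k+1,-1)$ is absent, so $\chi_1^{(k)}=E(k+1,2)/E(k+1,1)^2$ and nothing more. The $-2(27/32)^{k+1}$ term therefore cannot ``combine the second summand of $\chi^{(k)}$ with a cross-term''; it arises purely from the cubic term $\epsilon^3/16$ in the Taylor expansion of $\sqrt{1+\epsilon}$ with $\epsilon\sim -4(3/4)^{k+1}$, which gives $-4(27/64)^{k+1}$ before the multiplication by $2^{k+1}$ and the final division by $2$. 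Second, and more consequentially, you do not address the key cancellation that makes the stated expansion valid at all: the explicit $-(k+2)$ contributed by $E(k+1,1)$ outside the root must exactly cancel the $+(k+2)$ produced by the linear part $\epsilon/2$ of the square-root expansion (the term $\tfrac{2(k+2)}{2^{k+1}}$ inside $\epsilon$, multiplied back by $2^{k+1}$). Without noting this cancellation your ``direct evaluation'' would appear to leave a residual $-\tfrac{k+2}{2}$ in the minorant, which grows without bound and hence would \emph{dominate} every exponential correction from $(3/2)^{k+1}$ onward — exactly the pitfall the paper keeps warning about when computing with radicals. Once that cancellation is made explicit your plan closes the gap; as written it does not.
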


We fix the strong improvement showed by these computations in the following important result.

\begin{teorema}
The leftmost root of the Eulerian polynomial admits the bound $|q_{1}^{k}|>2^{k+1}-\left(\frac{3}{2}\right)^{k+1}-\left(\frac{9}{8}\right)^{k+1}+1+o(1).$
\end{teorema}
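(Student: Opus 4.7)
The plan is to upgrade Sobolev's refined minorant (from the immediately preceding proposition) by combining it with the lower bound for $|q_{1}^{(k)}|$ produced by the relaxation studied in Part \ref{III}. Both are valid lower bounds on the same quantity, so their pointwise maximum is again a valid lower bound; the content of the theorem is that this maximum admits the cleaner asymptotic expression $2^{k+1}-(3/2)^{k+1}-(9/8)^{k+1}+1+o(1)$. In short, Sobolev's method pins down the first three growth terms $2^{k+1}$, $-(3/2)^{k+1}$ and $-(9/8)^{k+1}$, whereas at the next order his minorant contributes the negative correction $-2(27/32)^{k+1}$; the relaxation bound, while no better than Sobolev's at leading order, contributes a correction that is a strictly positive constant of order $1$, and this is what produces the improvement.

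First I would write down the two candidate minorants explicitly: the Sobolev refined minorant $M_{S}(k)$ from the previous proposition, with expansion $M_{S}(k)=2^{k+1}-(3/2)^{k+1}-(9/8)^{k+1}-2(27/32)^{k+1}+o((27/32)^{k})$, and the relaxation-based minorant $M_{R}(k)$ obtained from the spectrahedral bound of Part \ref{III} (the one produced by applying the relaxation to the multivariate Eulerian polynomial and then restricting to the diagonal, as in Proposition \ref{multundiff-Pre}). Then I would expand $M_{R}(k)$ to matching precision, tracking the coefficients of $2^{k+1}$, $(3/2)^{k+1}$, $(9/8)^{k+1}$ and the residual constant/order-one term; the multivariate relaxation machinery already developed in Part \ref{III} (together with the linearization vector $v(n)$ used in Proposition \ref{multundiff-Pre}) is what makes this expansion explicit and tractable.

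Next, I would set $M(k):=\max(M_{S}(k),M_{R}(k))$ and carry out the asymptotic comparison term by term. The first three growth terms of $M_{S}$ and $M_{R}$ agree, so the comparison reduces to the behavior at the scale where Sobolev contributes $-2(27/32)^{k+1}$. Since $(27/32)^{k+1}\to 0$ and the relaxation contribution is bounded below by a positive quantity that stabilizes (up to $o(1)$) to a value at least $1$, we have $M_{R}(k)\geq 2^{k+1}-(3/2)^{k+1}-(9/8)^{k+1}+1+o(1)$ for all sufficiently large $k$, and hence so does $M(k)$. Combining with $|q_{1}^{(k)}|\geq M(k)$ then yields the strict inequality claimed, where the strictness is picked up from the fact that the relaxation bound is a strict inner bound for the rigidly convex set.

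The main obstacle will be the precise book-keeping of the asymptotic expansion of $M_{R}(k)$: although the leading terms $2^{k+1}-(3/2)^{k+1}-(9/8)^{k+1}$ are already secured by the earlier comparisons in Part \ref{III}, isolating the constant $1+o(1)$ at the next order requires controlling how the linearization vector $v(n)$ behaves as $n\to\infty$ and verifying that no hidden cancellation drops this constant below $1$. Once this stability of the constant is established, the rest is a routine asymptotic matching and the theorem follows by taking the pointwise maximum and absorbing the negligible $(27/32)^{k+1}$ terms into the $o(1)$ error.
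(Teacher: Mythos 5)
The approach you propose does not match the paper's, and, more importantly, it cannot actually establish the theorem. You suggest taking the pointwise maximum of Sobolev's refined minorant $M_{S}(k)$ and a relaxation-based minorant $M_{R}(k)$ applied to the original (univariate or multivariate) Eulerian polynomial, and you assert that ``the first three growth terms of $M_{S}$ and $M_{R}$ agree.'' This is false. The optimal univariate relaxation bound $\un$ expands as $2^{k+1}-(3/2)^{k+1}-2(9/8)^{k+1}+o((9/8)^{k})$, with coefficient $-2$ on the $(9/8)^{k+1}$ term, whereas Sobolev's refined minorant has coefficient $-1$. So $\un$ is strictly worse than Sobolev at the third order, and the multivariate correction from Proposition~\ref{multundiff-Pre} only adds $\approx \frac{3}{8}(9/8)^{n/2}$ with $n=2m$, which is exponentially small compared with the $(9/8)^{k+1}$ gap. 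Hence $\max(M_{S}, M_{R})\sim M_{S}$, and the $+1+o(1)$ constant cannot emerge from this maximum; you would just recover Sobolev's $-2(27/32)^{k+1}$ correction, not a positive constant.

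The key idea you are missing is that the paper does not apply the relaxation to the original Eulerian polynomial at all for this theorem. Instead it first applies one iteration of the DLG method (root-squaring), producing a new univariate polynomial whose coefficients $b^{(k)}_{k-j}$ are quadratic combinations of Eulerian numbers up to degree six; it then applies the \emph{univariate} relaxation to this transformed polynomial, obtaining a $2\times2$ LMP with determinant $ax^{2}+bx+c$, and finally takes the square root of the relaxation's inner root to undo the squaring: $\uni_{2}(k):=\sqrt{\tfrac{2a}{-b-\sqrt{b^{2}-4ac}}}$. The entire content of the proof is the careful asymptotic expansion of $\uni_{2}(k)$ — two stages of conjugation to free the nested radicals — which yields $\uni_{2}(k)=2^{k+1}-(3/2)^{k+1}-(9/8)^{k+1}+1+o(1)$. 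The ``$+1$'' is a genuine output of this computation, not an assumed property of the relaxation bound on the original polynomial. To repair your proposal you would need to identify that the DLG transformation is applied first, compute the coefficients $b^{(k)}_{k-1}, b^{(k)}_{k-2}, b^{(k)}_{k-3}$, form the relaxation of that new polynomial, and then carry out the radical-freeing asymptotics; the ``take the max with Sobolev'' step is not part of the argument and would not help.
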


Finally, on Part \ref{V}, we study and generalize some symmetries of multivariate polynomials. Such analysis produces the next results.

\begin{proposicion}[Globality of the relaxation]
Let $A=\{A_{n}\}_{n=0}^{\infty}$ be a sequence of multivariate polynomials and $w=\{w_{n}\}_{n=0}^{\infty}\in S$ a sequence of directions choosing a sequence of univariate polynomials $$A_{w}=\{A_{n}(w_{n1}x_{1},\dots,w_{nn}x_{n})|_{x_{i}=t}\}$$ with all roots non-positive injected in the sequence $A$. Fix, moreover, any sequence of vectors $v=\{v_{n}\}_{n=0}^{\infty}$ and denote $R_{n}:=R_{n,0}+ x_{1}R_{n,1}+\cdots+x_{n}R_{n,n}$ the LMP of the relaxation applied to the corresponding polynomial $A_{n}$ in the sequence $A_{w}$ so that $R_{n,\tot}:=t(w_{1}R_{n,1}+\cdots+w_{n}R_{n,n})$ verifies $v_{n}^{T}R_{n,\tot}v_{n}>0$. Then we have the bounds $r_{n}>-\frac{v_{n}^{T}R_{n,0}v_{n}}{v_{n}^{T}R_{n,\tot}v_{n}}$, where $r=\{r_{n}\}_{n=0}^{\infty}$ is the sequence of the largest roots of the polynomials in $A_{w}$.
\end{proposicion}

\begin{teorema}[Eulerian mirrorreciprocity]
The $n$-th multivariate Eulerian polynomial $A_{n}(\mathbf{x})$ is mirrorreciprocal.
\end{teorema}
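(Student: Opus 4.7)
The plan is to prove mirrorreciprocity of $A_{n}(\mathbf{x})$ by a combinatorial bijection on permutations, generalizing the usual palindromicity argument for the univariate Eulerian polynomials. First, I would write $A_{n}(\mathbf{x})=\sum_{S}|R(n,S)|\prod_{i\in S}x_{i}$, with $S$ ranging over admissible descent top sets in $[n+1]$ and $R(n,S)$ the set of permutations in $\mathfrak{S}_{n+1}$ whose descent top set is exactly $S$. I would then unfold the statement of mirrorreciprocity using that $\rec(A_{n})$ is obtained by multiplying $A_{n}(1/x_{1},\dots,1/x_{n+1})$ by the monomial $m_{A_{n}}$: the claim reduces to the numerical identity $|R(n,S)|=|R(n,\phi(S))|$ for every admissible $S$, where $\phi$ is the composition of complementation (within the ground set of admissible descent tops) with the mirror $i\mapsto n+2-i$ on $[n+1]$.

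Second, I would exhibit an explicit involution $\Phi\colon\mathfrak{S}_{n+1}\to\mathfrak{S}_{n+1}$ implementing this correspondence. The natural candidate is the reverse--complement $\Phi(\sigma)(i):=n+2-\sigma(n+2-i)$, which is exactly the involution that drives univariate palindromicity. A direct check shows that $\sigma$ has a descent at position $j$ if and only if $\Phi(\sigma)$ has an ascent at position $n+1-j$, and vice versa. Tracking the values at the involved positions then yields a precise formula relating $\mathcal{DT}(\Phi(\sigma))$ to the mirrored complement of $\mathcal{DT}(\sigma)$ inside the admissible ground set; this set-theoretic identity is precisely $\phi(\mathcal{DT}(\sigma))$, so $\Phi$ restricts to a bijection $R(n,S)\to R(n,\phi(S))$ as required.

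Finally, I would confirm that this combinatorial bijection on coefficients matches the algebraic mirror-plus-reciprocal operation on monomials: $\mirror$ sends $x_{i}\mapsto x_{n+2-i}$, reciprocation sends $x_{i}\mapsto 1/x_{i}$, and multiplication by $m_{A_{n}}$ rebalances degrees. Stitching the combinatorial bijection together with this algebraic identification yields $A_{n}(\mathbf{x})=\rec(A_{n})(\mirror(\mathbf{x}))$, which is the desired mirrorreciprocity.

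The hard part will be the bookkeeping of indices at the boundary of $[n+1]$. The descent top set lives inside $[n+1]$, while descents occur only at positions $1,\dots,n$, so not every element of $[n+1]$ can serve as a descent top and the complementation inside $\phi$ must be taken with respect to the correct ground set (which itself depends on whether $1$ and $n+1$ are ever descent tops of any $\sigma\in\mathfrak{S}_{n+1}$). Ensuring that $\Phi$ induces the intended $\phi$ on the nose, rather than up to off-by-one shifts, and that the monomial $m_{A_{n}}$ realizes the degree compensation demanded by mirrorreciprocity without spurious factors, is the delicate part of the argument.
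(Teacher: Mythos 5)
Your proposed involution does not induce the correct bijection on descent top sets, and this is the crux of the argument, not just a bookkeeping delicacy. The reverse--complement $\Phi(\sigma)(i)=n+2-\sigma(n+2-i)$ converts the descent at position $j$ of $\sigma$ into a descent at position $n+1-j$ of $\Phi(\sigma)$, whose descent \emph{top} is $n+2-\sigma_{j+1}$, that is, $n+2$ minus the descent \emph{bottom} of $\sigma$. Hence $\mathcal{DT}(\Phi(\sigma))=\{\,n+2-b:b\in\mathcal{DB}(\sigma)\,\}$, which is a shifted image of the descent bottom set and in general has nothing to do with the mirrored complement $\tau([2,n+1]\smallsetminus\mathcal{DT}(\sigma))$ of the descent top set. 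Concretely, take $n=3$ and $\sigma=(4,3,1,2)\in\mathfrak{S}_{4}$. Then $\mathcal{DT}(\sigma)=\{3,4\}$, so the target set is $\tau([2,4]\smallsetminus\{3,4\})=\tau(\{2\})=\{4\}$. But $\Phi(\sigma)=(3,4,2,1)$ has $\mathcal{DT}(\Phi(\sigma))=\{2,4\}\neq\{4\}$. Since $\mathcal{DB}$ is not determined by $\mathcal{DT}$, different elements of $R(n,S)$ land in different $R$-sets under $\Phi$, so $\Phi$ does not restrict to a bijection $R(n,S)\to R(n,\phi(S))$. The reverse--complement does preserve the \emph{number} of descents palindromically (which is why it proves the univariate identity $E(n+1,k)=E(n+1,n-k)$), but it does not respect the finer descent-top statistic.

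The correct bijection has to resolve the mismatch between the complement of $\mathcal{DT}(\sigma)$ and the ascent/descent bottom sets, which differ by boundary elements (essentially $\sigma_{n+1}$ and the position of $1$). The paper's proof handles this by first lifting $\sigma\in\mathfrak{S}_{n+1}$ to $\mathfrak{S}_{n+2}$ via $\sigma\mapsto(\sigma,n+2)$ (which preserves $\mathcal{DT}$ because $|R(n,L)|$ is independent of $n$ for $n$ large enough), then cutting the one-line word at the position of $1$, swapping the two blocks so the word takes the form $(\sigma_{k+1}\cdots\sigma_{n+1}<n+2>\sigma_{1}\cdots\sigma_{k-1}>1)$, applying the order-reversing map $\tau(i)=n+3-i$ to each letter, and finally deleting the fixed letter $n+2$ at the end. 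It is precisely this cut-and-rotate through the positions of $1$ and $n+2$ that makes the descent top set transform as required; no composition of reverse and complement alone achieves this. To repair your argument you would need to replace $\Phi$ with a bijection that genuinely tracks descent tops, along the lines of the lift--cut--rotate--mirror map just described.
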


The combination of these results, their proofs, the whole story we tell here and the spirit of the general ideas that guide our research puts us at the end of this work in the position to propose the Mindelsee Program in the conclusion. With this brief tour around our main results in this thesis, we can finally venture now into the promised work itself. First, we need to introduce the relaxation (which is our task in the first chapter of the first part of this thesis) in order to study the limitations of our initial attempts at extending it (in the rest of the chapters of the first part).

\part{On the Definition of the Relaxation and its Limitations}\label{I}
\chapter{Relaxation of Rigidly Convex Sets}\label{ChRelaxation}

Here, the main reference is \cite{main}. We introduce the main definitions and results from this reference to keep the exposition self-contained. However, we will not include proofs in this chapter as these can be found in the cited reference. We also mention that, along this whole text, all the matrices are symmetric unless we explicitly say the opposite.

\section{Main definitions for real zero polynomials}

The main object we will center our attention to is the next generalization of being real-rooted for multivariate polynomials. These are the polynomials we are looking for and the ones to which our methods can be applied.

\begin{definicion}[Real zero polynomial]\cite[Definition 2.1]{main}\label{RZ}
Let $p\in\mathbb{R}[\mathbf{x}]$ be a polynomial. We say that $p$ is a \textit{real zero polynomial} (or a \textit{RZ polynomial} for short) if, for all directions $a\in\mathbb{R}^{n}$, the univariate restriction $p(ta)$ verifies that all its roots are real, i.e., if, for all $\lambda\in\mathbb{C}$, we have $$p(\lambda a)=0 \Rightarrow \lambda\in\mathbb{R}.$$
\end{definicion}

Observe that a RZ polynomial cannot have value $0$ at the origin, i.e., $p(0)\neq0$. This allows us to introduce the next subset of its domain $\mathbb{R}^{n}$ that will be of fundamental importance for us.

\begin{definicion}[Rigidly convex set]\cite[Definition 2.11]{main}\label{RCS}
Let $p\in\mathbb{R}[\mathbf{x}]$ be a RZ polynomial and call $V\subseteq\mathbb{R}^{n}$ its set of zeros. We call the \textit{rigidly convex set} of $p$ (or its \textit{RCS} for short) the Euclidean closure of the connected component of the origin $\mathbf{0}$ in the set $\mathbb{R}^{n}\smallsetminus V$ and denote it $\rcs(p)$.
\end{definicion}

We can give another definition. It is common to see both in the literature.

\begin{remark}[Equivalent definition]
Let $p\in\mathbb{R}[\mathbf{x}]$ be a RZ polynomial. Then we can equivalently define $$\rcs(p):=\{a\in\mathbb{R}^{n}\mid\forall\lambda\in[0,1)\ p(\lambda a)\neq0\}.$$ 
\end{remark}

The study of RCSs leads to many questions that still remain open. One of these questions is the generalized Lax conjecture (GLC for short), which is only resolved in certain special cases. We explain in the next section how these kinds of question arise from looking for examples of these polynomials.

\section{Motivation for the necessity and usability of a relaxation}

The next question motivates why we search for matrix relaxations of RCSs. It all boils down to the most natural example of RZ polynomials we can imagine. One could wonder if there are polynomials that verify the conditions in Definition \ref{RZ}. Yes, there are and we can find them through the use of the spectral theorem for Hermitian matrices. 

\begin{proposicion}[Example of RZ polynomials]\cite[Proposition 2.6]{main}
Consider $A_{0},\dots,A_{n}\in\Her_{d}(\mathbb{C})$ complex hermitian matrices with $A_{0}$ PD and build $$p=\det(A_{0}+x_{1}A_{1}+\cdots+x_{n}A_{n})\in\mathbb{R}[\mathbf{x}].$$ Then $p$ is RZ.
\end{proposicion}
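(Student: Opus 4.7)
The plan is to reduce the problem, along any direction $a\in\mathbb{R}^{n}$, to the spectral theorem for Hermitian matrices. Fix $a=(a_{1},\dots,a_{n})\in\mathbb{R}^{n}$ and set
\[
B_{a}:=a_{1}A_{1}+\cdots+a_{n}A_{n}\in\Her_{d}(\mathbb{C}),
\]
so that the univariate restriction becomes
\[
p(ta)=\det(A_{0}+tB_{a}).
\]
We want to check that every complex root $\lambda$ of this univariate polynomial in $t$ is real.

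First, I would note that $p$ has real coefficients: for any real assignment of the variables, $A_{0}+x_{1}A_{1}+\cdots+x_{n}A_{n}$ is Hermitian, hence its determinant is real, so $p\in\mathbb{R}[\mathbf{x}]$. Next, since $A_{0}$ is positive definite, it admits a Hermitian positive definite square root $A_{0}^{1/2}$ with inverse $A_{0}^{-1/2}$. Factoring it out,
\[
\det(A_{0}+tB_{a})=\det(A_{0})\,\det\bigl(I+t\,A_{0}^{-1/2}B_{a}A_{0}^{-1/2}\bigr).
\]
Set $\widetilde{B}_{a}:=A_{0}^{-1/2}B_{a}A_{0}^{-1/2}$; this is again Hermitian because $B_{a}$ and $A_{0}^{-1/2}$ are.

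Now the roots of $p(ta)$ in $t$ correspond, via $\lambda\mapsto -1/\lambda$ (noting that $\lambda=0$ is excluded because $p(0)=\det(A_{0})\neq 0$), to the eigenvalues of $\widetilde{B}_{a}$. By the spectral theorem for Hermitian matrices, these eigenvalues are all real, and so are the roots $\lambda$. Since $a\in\mathbb{R}^{n}$ was arbitrary, $p$ satisfies the condition in Definition \ref{RZ} and is therefore RZ.

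I do not expect any genuine obstacle: the only potentially subtle points are to justify that $p(0)\neq 0$ (immediate from $A_{0}$ being PD) and to verify that $p$ has real coefficients (immediate from Hermiticity), both of which are standard. The essential content is the existence of $A_{0}^{1/2}$ together with the spectral theorem.
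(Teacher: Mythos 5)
Your proof is correct and takes the standard route: the paper itself gives no proof here (it defers to \cite[Proposition 2.6]{main}), and the argument you give — conjugating by the Hermitian square root $A_0^{-1/2}$ so that the roots of $p(ta)$ become (negative reciprocals of) eigenvalues of the Hermitian matrix $A_0^{-1/2}B_aA_0^{-1/2}$, then invoking the spectral theorem — is exactly the one the cited reference uses. The two small checks you flag, $p(0)=\det(A_0)\neq 0$ and realness of the coefficients of $p$, are handled correctly.
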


The fact that we found the most natural example using Hermitian matrices makes us wonder if these are actually all possible examples. For $n=2$, we can identify our RZ polynomials even better than that.

\begin{teorema}[Helton-Vinnikov]\cite[Corollary 2.8]{main}
Let $p\in\mathbb{R}[x_{1},x_{2}]_{d}$ be a degree $d$ RZ polynomial with $p(0)=1$. Then there exist hermitian matrices $A_{1},A_{2}\in\Sym_{d}(\mathbb{R})$ with $p=\det(I_{d}+x_{1}A_{1}+x_{2}A_{2})$.
\end{teorema}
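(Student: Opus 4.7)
The Helton--Vinnikov theorem lies substantially deeper than its statement suggests, and I expect that any honest proof plan will have to invoke some real algebraic geometry of plane curves. My strategy would be to homogenize, reduce to a smooth curve via a density argument, and then invoke the theory of theta characteristics to extract the symmetric determinantal representation.

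First, I would homogenize: set $P(x_0, x_1, x_2) := x_0^d p(x_1/x_0, x_2/x_0)$, a homogeneous polynomial of degree $d$ in three variables. The assumption that $p$ is RZ with $p(0)=1$ translates into $P$ being \emph{hyperbolic} with respect to the point $e_0 = (1, 0, 0)$: namely, $P(e_0) = 1$, and for every $a \in \mathbb{R}^3$, the univariate polynomial $t \mapsto P(a + t e_0)$ has only real roots. Finding $A_1, A_2 \in \Sym_d(\mathbb{R})$ with $p = \det(I_d + x_1 A_1 + x_2 A_2)$ is then equivalent to producing a symmetric determinantal representation $P = \det(x_0 I_d + x_1 A_1 + x_2 A_2)$ of the homogeneous form.

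Second, I would reduce to the case where the projective curve $C = V(P) \subset \mathbb{P}^2$ is smooth and irreducible, by a standard density and compactness move. Any hyperbolic $P$ can be approximated by hyperbolic polynomials $P_n$ whose curves are smooth, since a small enough perturbation of the coefficients preserves simple real-rootedness of all line restrictions through $e_0$. If one obtains representations $P_n = \det(x_0 I_d + x_1 A_{1,n} + x_2 A_{2,n})$ for the perturbations, the coefficient matrices lie in a bounded set (their eigenvalues are controlled by the extremal roots of $P_n$ along coordinate directions), so one extracts a convergent subsequence and passes to the limit.

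Third, and this is the real heart of the argument, one must produce the representation for a smooth hyperbolic plane curve. Up to a natural equivalence, symmetric determinantal representations $P = \det(x_0 M_0 + x_1 M_1 + x_2 M_2)$ of a smooth plane curve $C$ of degree $d$ correspond bijectively to real theta characteristics $\theta$ on $C$ (line bundles with $\theta^{\otimes 2} \cong K_C$) satisfying $h^0(\theta) = 0$. For a hyperbolic curve, the real locus $C(\mathbb{R})$ consists of $\lfloor d/2 \rfloor$ nested ovals plus possibly a pseudoline, a topology that rigidly constrains the real points of the Jacobian and singles out a distinguished real theta characteristic whose associated representation is \emph{definite}, meaning $M_0$ is positive definite. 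Replacing $M_i$ by $M_0^{-1/2} M_i M_0^{-1/2}$ then yields $P = \det(x_0 I_d + x_1 A_1 + x_2 A_2)$ with $A_1, A_2$ real symmetric, and the normalization $P(e_0) = 1$ is automatic. The principal obstacle is exactly this third step: constructing the distinguished theta characteristic and verifying that it is real and definite requires the Riemann theta function on the Jacobian of $C$, the description of real line bundles via their restriction to the real ovals, and a careful parity argument tying hyperbolicity to the sign of the representation. The earlier two steps are essentially formal once one accepts this algebro-geometric input.
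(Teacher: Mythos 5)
The paper itself does not prove this theorem: the chapter opens by stating that proofs of results in \cite{main} are not reproduced, and this statement is a verbatim citation of \cite[Corollary 2.8]{main}, which in turn rests on the Helton--Vinnikov paper. So there is no ``paper's own proof'' to match your sketch against; what you have written is an attempt at the underlying theorem that the paper treats as a black box.

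Judged on its own terms, your three-step plan is the right skeleton of the original Helton--Vinnikov argument, and you correctly identify the theta-characteristic step as the genuine content. One imprecision in your second step is worth flagging, though, because it is exactly the kind of thing that can sink a perturbation argument if left unexamined. You write that a small perturbation ``preserves simple real-rootedness of all line restrictions through $e_0$,'' and infer from this that the curve becomes smooth. Two separate issues are being conflated. First, if $p$ sits on the boundary of the hyperbolicity cone (some restriction has a repeated real root), an arbitrary small perturbation need not stay hyperbolic; you need a directed perturbation, which is precisely what Nuij's lemma (already cited in the paper as \cite{Nuij1968ANO}) provides: every hyperbolic polynomial is a limit of strictly hyperbolic ones. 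Second, and more importantly, strict hyperbolicity over $\mathbb{R}$ does \emph{not} imply smoothness of the projective curve over $\mathbb{C}$; the curve can still have pairs of complex-conjugate singular points while every real line through $e_0$ meets it transversally. You therefore need a separate genericity argument — the smooth degree-$d$ forms are Zariski-open and dense, and the strictly hyperbolic locus is a nonempty Euclidean-open set, so their intersection is dense in the hyperbolicity cone — to get both properties simultaneously. With that repaired, the boundedness-and-compactness passage to the limit is sound. For the third step, your outline is faithful to the Helton--Vinnikov/Vinnikov route; it is perhaps worth knowing that later proofs (e.g., Hanselka's via symmetric powers, or Plaumann--Vinzant's via interlacers and isotropic subspaces) avoid the theta-function machinery at the cost of different algebraic input, but none of them makes the theorem shallow.
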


It is natural to ask now what happens for more variables. A dimension count tells us immediately that it cannot be that easy. For this reason, the next conjecture requires the introduction of a cofactor that overcomes such obstruction. The cofactor is chosen in a way that respects the RCSs of the original polynomial.

\begin{conjetura}[Generalized Lax conjecture, GLC]\cite[Conjecture 8.1, Theorem 8.8]{main}
Let $p\in\mathbb{R}[\mathbf{x}]$ be a RZ polynomial with $p(0)=1$. Then there exist another RZ polynomial $q\in\mathbb{R}[\mathbf{x}]$ and symmetric matrices $A_{1},\dots,A_{n}\in\Sym_{d}(\mathbb{R})$ for some $d\in\mathbb{N}$ such that \begin{enumerate}
\item $pq=\det(I_{d}+x_{1}A_{1}+\cdots+x_{n}A_{n})$
\item $\rcs(p)\subseteq\rcs(q).$
\end{enumerate}
\end{conjetura}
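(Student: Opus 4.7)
The statement is the Generalized Lax Conjecture itself, which is famously open in general (as the preamble of this thesis already admits: ``our attempts to attack it with our hammer mostly failed''). So rather than a genuine proof plan, I can only sketch the kind of strategy one would attempt, knowing full well that every published approach so far has broken against the same wall. My plan is to proceed by identifying a candidate cofactor $q$ and then try to certify that $pq$ admits a \gls{MLMPDR} whose underlying pencil has spectrahedron equal to $\rcs(p)\cap\rcs(q)=\rcs(p)$.

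First, I would leverage the base case $n=2$, which is the Helton--Vinnikov theorem cited just above the conjecture. The natural attempt is to slice: given $p\in\mathbb{R}[\mathbf{x}]$ RZ with $p(0)=1$, restrict $p$ to every $2$-plane $L\subseteq\mathbb{R}^{n}$ through the origin, obtaining a bivariate RZ polynomial $p|_{L}$ with a Helton--Vinnikov representation of size $d=\deg(p)$, and then try to glue these pencil data across the Grassmannian $\mathrm{Gr}(2,n)$. The obstruction here is well known: the Hermitian matrices furnished by Helton--Vinnikov are only unique up to unitary conjugation, so the gluing is controlled by a nontrivial cocycle on the Grassmannian, and there is no reason the cocycle should trivialize. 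The role of the cofactor $q$ in the conjecture is, heuristically, to absorb precisely this cohomological obstruction by increasing the size of the pencil.

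The second step would be to choose $q$ from the class of \emph{interlacers} of $p$, using the theory of polynomial stability preservers hinted at in the preamble. Natural candidates are the Renegar derivatives $p^{(k)}$ and the polynomials $\Tilde{p}$, $\hat{p}$, $\overline{p}$ introduced in the glossary, each of which is known to interlace $p$ and therefore automatically satisfies condition~(2) of the conjecture, $\rcs(p)\subseteq\rcs(q)$. Given such a $q$, one would then try to use Schweighofer's relaxation applied to $pq$ to \emph{construct} candidate matrices $A_{1},\dots,A_{n}$: compute $L_{pq,e}$ for $e=\deg(pq)$ and hope that a rank-collapse phenomenon (analogous to what happens when a moment matrix drops rank on the flat extension) forces the relaxation to be exact. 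As the introduction warns, the preliminary computations in this direction already show that the naive Renegar choice does not perturb the relaxation at all, so a more elaborate multivariate amalgam of several interlacers, along the lines of the Amalgamation Conjecture, would be required.

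The main obstacle, which I do not expect to overcome, is the dimension mismatch between the space of degree-$d$ RZ polynomials in $n$ variables and the space of monic linear pencils of any fixed size: for $n\geq 3$ the former grows strictly faster than the latter, so no bounded-size $q$ can work, and one is forced to let $\deg(q)$ grow with $p$. The delicate part is then to grow $q$ in a controlled way that simultaneously (i) preserves RZ-ness of the product $pq$, (ii) keeps $\rcs(q)\supseteq\rcs(p)$, and (iii) still admits a determinantal representation. Controlling all three constraints at once is exactly where every known approach (Br\"{a}nd\'{e}n's matroid counterexamples to stronger versions, Kummer--Plaumann--Vinzant, etc.) has either succeeded only in special structured cases or produced counterexamples to strengthenings. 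My honest expectation is that any attempt along these lines here would, at best, reduce the problem to a cleaner combinatorial statement about the interaction between interlacers and pencil liftings, which is in fact what the rest of the thesis tries to do by specializing to the Eulerian family.
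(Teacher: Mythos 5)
You have correctly recognized that the statement in question is a \emph{conjecture}, not a theorem: the paper states the GLC with an explicit \textquotedblleft conjetura\textquotedblright\ environment and a citation, offers no proof, and indeed organizes most of the rest of the thesis around the fact that the conjecture is open. So there is no paper proof to compare against, and the right response is exactly what you gave: decline to manufacture a proof, and instead survey the landscape of attack strategies and obstructions. Your survey is accurate and well-calibrated --- the Helton--Vinnikov base case, the gluing/cocycle obstruction over $\mathrm{Gr}(2,n)$, the dimension-count argument forcing $\deg(q)$ to grow, the role of interlacers and the Amalgamation Conjecture, and the observation that the Renegar derivative leaves the relaxation unchanged are all consistent with what the thesis itself establishes or cites. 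The only refinement worth making is that the thesis's actual contribution is narrower than even your cautious sketch suggests: it does not reduce the GLC to a \textquotedblleft cleaner combinatorial statement\textquotedblright\ in general, but specializes entirely to Eulerian polynomials to demonstrate that meaningful variable extensions \emph{can} improve the relaxation along the diagonal, treating this as evidence for, not a reduction of, the conjecture. In short: no gap, because no proof was expected or given by either party; your answer is the intellectually honest one.
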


This conjecture is widely open, although it is confirmed in some particular cases. Also, remarkably, dropping the containment condition (2) produces a known theorem. However, without that second condition, we lack real control over the definition of the corresponding RCS via these determinantal representations.

\begin{definicion}\cite[Definition 2.1.2]{habnetzer}
We say that a RZ polynomial $p\in\mathbb{R}[\mathbf{x}]$ is \textit{smooth} if for each $\mathbf{a}\in\mathbb{R}^{n}$ the univariate polynomial $p(t\mathbf{a})\in\mathbb{R}[t]$ has all its roots different, i.e., of multiplicity $1$, including possible roots at infinity.
\end{definicion}

\begin{teorema}\cite[Theorem 6]{kummer2017determinantal}
Let $p\in\mathbb{R}[\mathbf{x}]$ be a smooth RZ polynomial with $p(0)=1$. Then there exist another RZ polynomial $q\in\mathbb{R}[\mathbf{x}]$ and symmetric matrices $A_{1},\dots,A_{n}\in\Sym_{d}(\mathbb{R})$ for some $d\in\mathbb{N}$ such that $pq=\det(I_{d}+x_{1}A_{1}+\cdots+x_{n}A_{n})$.
\end{teorema}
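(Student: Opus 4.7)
The strategy is to pass to the projective setting, construct a suitable symmetric linear matrix pencil there, and then dehomogenize. Set $P := p^{h} \in \mathbb{R}[x_0, x_1, \ldots, x_n]$. The hypotheses that $p$ is RZ and smooth, with $p(0) = 1$, translate into $P$ being a hyperbolic polynomial of degree $d = \deg p$ with respect to $e_0 = (1, 0, \ldots, 0)$, whose projective vanishing locus $X = V(P) \subseteq \mathbb{P}^{n}$ is a smooth hypersurface. A representation of the desired form for $p$ with cofactor $q$ corresponds, after homogenizing the matrices, to a symmetric linear pencil $M = x_0 B_0 + x_1 B_1 + \cdots + x_n B_n$ of size $d$ with $B_0 \succ 0$ and $\det M = P \cdot Q$ for some hyperbolic polynomial $Q$.

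To produce $M$, I would invoke the classical theory of symmetric determinantal representations of hypersurfaces. For $n = 2$ the Helton--Vinnikov theorem stated above already gives such a representation with trivial cofactor. For $n \geq 3$ genuine cohomological obstructions force the introduction of a cofactor $Q$, but these obstructions can be resolved precisely when $X$ is smooth, by producing a symmetric resolution of an appropriate coherent sheaf on $X$ (a higher-dimensional analogue of a theta characteristic, for instance a symmetric twist of an Ulrich-type sheaf). The determinant of the minimal free resolution of such a sheaf yields a symmetric linear pencil whose determinant equals $P \cdot Q$. This sheaf-theoretic step is the technical heart of the argument and is exactly where the smoothness hypothesis is essential: for singular $X$ the relevant obstruction class need not vanish and no such global symmetric resolution need exist, which is why dropping smoothness takes one back to the full open GLC.

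Once $M$ is in hand, hyperbolicity of $Q$ is a formal check. Since $P(e_0) = p(0) = 1 \neq 0$, the matrix $M(e_0) = B_0$ is nonsingular, and by a sign convention we may arrange $B_0 \succ 0$. For any $v \in \mathbb{R}^{n+1}$, the symmetric pencil $t \mapsto B_0 + t \sum_i v_i B_i$ has only real eigenvalues, and each such eigenvalue is a root of $P(e_0 + tv) \cdot Q(e_0 + tv)$; since the roots of $P(e_0 + tv)$ are real by hyperbolicity of $P$, so are those of $Q(e_0 + tv)$, making $Q$ hyperbolic with respect to $e_0$. Conjugating by $B_0^{-1/2}$ and setting $x_0 = 1$ then yields $p(x) q(x) = \det(I_d + x_1 A_1 + \cdots + x_n A_n)$ with $q(x) := Q(1, x_1, \ldots, x_n)$ an RZ polynomial, completing the argument. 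The main obstacle is the existence step: everything else is bookkeeping around positivity along the hyperbolicity cone, but producing the symmetric sheaf in the second paragraph is a nontrivial algebro-geometric construction and is where Kummer's contribution actually sits.
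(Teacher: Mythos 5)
The paper does not prove this theorem; it is cited directly from Kummer's 2017 paper, so the only thing to compare against is Kummer's own argument. Your outline has the right shape---homogenize, produce a symmetric linear pencil with determinant $P\cdot Q$, conjugate to a monic pencil, dehomogenize---and the formal hyperbolicity check for the cofactor in your last paragraph is essentially correct. But the existence step in your second paragraph is a genuine gap, and not only because you leave it as a black box. As stated, the framework you reach for does not produce the cofactor: an Ulrich sheaf of rank $r$ supported on $V(P)$ yields a pencil with determinant proportional to $P^{r}$, not $P\cdot Q$, and to manufacture a cofactor you would need a symmetric Ulrich-type module supported on $V(P)\cup V(Q)$---which is circular, since $Q$ is exactly the unknown you are trying to build.

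The device that actually carries the proof, and that names the cited paper, is the B\'ezoutian of the homogenization $P$ with its directional derivative $D_{e_{0}}P$ along the hyperbolicity direction. Smoothness of $V(P)$ is what makes this symmetric bilinear form nondegenerate (that is precisely where the hypothesis earns its keep), and Hermite--Biehler interlacing of $P$ and $D_{e_{0}}P$ forces positivity at $e_{0}$. The linear pencil and the cofactor $Q$ then come out of a resolution built on this concrete form, with $Q$ determined after the fact rather than posited in advance. You correctly located the hard step, but the sheaf-theoretic gesture you make towards it neither identifies the actual tool nor explains, on its own terms, where the cofactor comes from.
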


Therefore, searching for a representation with a cofactor that respects the RCS of the original polynomial is the main conundrum towards a positive solution to the conjecture above. Finding these cofactors is in general not evident and, without more knowledge about the polynomial, this constitutes an insurmountable obstacle towards establishing the conjecture. This is why we introduce a relaxation that respects the RCS in the sense that the RCS of the relaxation contains the RCS of the original polynomial. In order to build this relaxation directly from the coefficients of our initial polynomial, we have to look first at logarithms and exponentials of power series, which is the objective of the next section.

\section{Extending the logarithm and the exponential}

The logarithm and the exponential maps admit definitions that can be extended to certain power series of polynomials. We see this now.

\begin{definicion}[Truncation, logarithm and exponential for power series]\cite[Definition 3.1, Definition 3.2]{main}
Let $p=\sum_{\alpha\in\mathbb{N}_{0}^{n}}a_{\alpha}x^{\alpha}\in\mathbb{R}[[\mathbf{x}]]$ be a real power series. Let $d\in\mathbb{N}$. We denote $$\gls{trundp}:=\sum_{\alpha\in\mathbb{N}_{0}^{n}\\|\alpha|\leq d}a_{\alpha}x^{\alpha}\in\mathbb{R}[\mathbf{x}]$$ the \textit{truncation of $p$ at degree $d$}. If $p(0)=0$, then we define $$\gls{expp}:=\sum_{k=0}^{\infty}\frac{p^{k}}{k!}\in\mathbb{R}[[\mathbf{x}]]$$ the \textit{exponential} of $p$. If $p(0)=1$, then we define $$\gls{logp}:=\sum_{k=1}^{\infty}(-1)^{k+1}\frac{(p-1)^{k}}{k}\in\mathbb{R}[[\mathbf{x}]]$$ the \textit{logarithm} of $p$.
\end{definicion}

Studying the truncation and seeing that the monomials in each degree stabilize after a finite number of steps, it is immediate that these functions are well defined. They are the inverse of each other and verify the usual properties of these maps with respect to the sum and product.

\begin{proposicion}[Properties of the exponential and the logarithm]\cite[Proposition 3.3]{main}
Write $$\mathbb{R}[[\mathbf{x}]]_{a\to b}:=\{p\in\mathbb{R}[[\mathbf{x}]]\mid p(a)=b\}.$$ Then it holds that the maps $$\exp\colon \mathbb{R}[[\mathbf{x}]]_{0\to0}\to \mathbb{R}[[\mathbf{x}]]_{0\to1}$$ and $$\log\colon \mathbb{R}[[\mathbf{x}]]_{0\to1}\to \mathbb{R}[[\mathbf{x}]]_{0\to0}$$ are inverse of each other. Moreover, as usual, $\exp(p+q)=\exp(p)\exp(q)$ and $\log(pq)=\log(p)+\log(q)$ for $p,q$ in the corresponding domains of definition.
\end{proposicion}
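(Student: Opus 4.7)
The strategy is to exploit the $\mathfrak{m}$-adic structure of $\mathbb{R}[[\mathbf{x}]]$ (where $\mathfrak{m}$ is the ideal of series vanishing at the origin) to reduce each identity to a finitary, degree-by-degree verification, and then to chain the identities through the classical univariate formal power series calculus.

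First I would settle well-definedness. If $p \in \mathbb{R}[[\mathbf{x}]]_{0\to 0}$, every monomial appearing in $p^{k}$ has total degree at least $k$, so for any fixed $d \in \mathbb{N}$ only the summands with $k \leq d$ can contribute to $\trun_{d}(\sum_{k\geq 0} p^{k}/k!)$. Hence the sum defining $\exp(p)$ is a bona fide element of $\mathbb{R}[[\mathbf{x}]]$, and its constant term is $1$, so $\exp(p) \in \mathbb{R}[[\mathbf{x}]]_{0\to 1}$. An identical argument, applied to $p - 1 \in \mathbb{R}[[\mathbf{x}]]_{0\to 0}$, shows that $\log(p)$ is well-defined and lies in $\mathbb{R}[[\mathbf{x}]]_{0\to 0}$.

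Next I would establish additivity of $\exp$. Since $\mathbb{R}[[\mathbf{x}]]$ is commutative, the binomial theorem gives $(p+q)^{n} = \sum_{k=0}^{n}\binom{n}{k}p^{k} q^{n-k}$ for $p,q \in \mathbb{R}[[\mathbf{x}]]_{0\to 0}$, and consequently
\[
\exp(p+q) \;=\; \sum_{n\geq 0}\frac{(p+q)^{n}}{n!} \;=\; \sum_{n\geq 0}\sum_{k+\ell=n}\frac{p^{k}}{k!}\,\frac{q^{\ell}}{\ell!}.
\]
Interchanging the order of summation is legitimate because for each $d$ only finitely many pairs $(k,\ell)$ contribute modulo $\mathfrak{m}^{d+1}$, and one recognises the resulting double series as the Cauchy product $\exp(p)\exp(q)$. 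For the inverse relations, I would invoke the univariate identities $\exp(\log(1+t)) = 1+t$ and $\log(\exp(t)) = t$ in $\mathbb{R}[[t]]$, which are standard and provable by matching coefficients. Since substitution of a series with zero constant term is continuous in the $\mathfrak{m}$-adic topology, substituting $t \mapsto p-1$ yields $\exp(\log p) = p$ on $\mathbb{R}[[\mathbf{x}]]_{0\to 1}$ and substituting $t \mapsto p$ yields $\log(\exp p) = p$ on $\mathbb{R}[[\mathbf{x}]]_{0\to 0}$. Multiplicativity of $\log$ then follows formally: writing $p = \exp(\log p)$ and $q = \exp(\log q)$ and using additivity of $\exp$ gives $pq = \exp(\log p + \log q)$, whence $\log(pq) = \log p + \log q$.

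The main obstacle I anticipate is justifying the two rearrangement steps (the Cauchy product and the substitution of a power series into another) without getting lost in manipulations of infinite sums. Once one commits to the $\mathfrak{m}$-adic viewpoint — everything is the limit of its degree-$d$ truncations, and each truncation only sees finitely many terms — both interchanges become immediate, and the rest of the proof is essentially algebraic bookkeeping.
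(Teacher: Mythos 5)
The paper does not prove this proposition; it is imported verbatim from \cite[Proposition 3.3]{main} with the explicit remark at the start of the chapter that proofs are omitted and can be found in that reference. So there is no in-paper argument to compare against. Judged on its own, your proof is correct and follows the standard formal-power-series treatment. The two load-bearing points are exactly the ones you flag: the $\mathfrak{m}$-adic truncation argument that makes each infinite sum and rearrangement a finite computation in each degree, and the observation that the substitution map $\mathbb{R}[[t]]\to\mathbb{R}[[\mathbf{x}]]$, $t\mapsto s$ (for $s$ with zero constant term), is a continuous ring homomorphism, which is what lets you push $\exp\circ\log=\mathrm{id}$ and $\log\circ\exp=\mathrm{id}$ from the univariate ring to the multivariate one term by term, and likewise transports the Cauchy-product identity for $\exp(p+q)$. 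You could make the substitution step slightly more explicit by stating that continuity plus the ring-homomorphism property give $\sigma(\exp u)=\exp(\sigma u)$ and $\sigma(\log(1+u))=\log(1+\sigma u)$ for $u\in t\mathbb{R}[[t]]$, which is precisely why the univariate identities specialize; but the justification you give is adequate. Deriving $\log(pq)=\log p+\log q$ from the already-established facts $p=\exp(\log p)$, $\exp(a+b)=\exp(a)\exp(b)$, and injectivity of $\exp$ (which you get for free from $\log\circ\exp=\mathrm{id}$) is a clean way to avoid a second direct computation.
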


The introduction of the logarithm, in particular, allows us to define a linear form that will play a major role in the rest of this text. In order to define this form, we look at the coefficients of a power series generated as the logarithm of our initial polynomial.

\begin{definicion}[$L$-form]\cite[Proposition 3.4]{main}
Let $p\in\mathbb{R}[[\mathbf{x}]]$ be a real power series with $p(0)\neq0$ and $d\in\mathbb{N}.$ We define the linear form $\gls{lpd}$ on $\mathbb{R}[\mathbf{x}]$ by specifying it on the monomial basis of $\mathbb{R}[\mathbf{x}]$. We set $L_{p,d}(1)=d$ and define implicitly the rest of the values by requiring the identity of the formal power series $$-\log\frac{p(-x)}{p(0)}=\sum_{\alpha\in\mathbb{N}_{0}^{n}\\\alpha\neq0}\frac{1}{|\alpha|}\binom{|\alpha|}{\alpha}L_{p,d}(x^{\alpha})x^{\alpha}\in\mathbb{R}[[\mathbf{x}]].$$ When $p\in\mathbb{R}[\mathbf{x}]$ is a polynomial, we denote $L_{p}:=L_{p,\deg(p)}$ and call it the \textit{$L$-form associated to $p$}.
\end{definicion}

The properties of the logarithm allow us now to translate several identities to the just introduced linear forms. We also compute the few values that will be important for the rest of this work. In particular, in order to define the relaxation, we just need the values over monomials of degree up to three.

\begin{corolario}[Properties of the $L$ form and values in low degree]\cite[Example 3.4]{main}
Let $p,q\in\mathbb{R}[[\mathbf{x}]]$ be power series with $p(0)\neq0\neq q(0)$ and $d,e\in\mathbb{N}_{0}.$ Then $$L_{pq,d+e}=L_{p,d}+L_{q,e}.$$ Moreover, suppose that $p\in\mathbb{R}[[\mathbf{x}]]$ truncates to $$\trun_{3}(p)=1+\sum_{i\in[n]}a_{i}x_{i}+\sum_{i,j\in[n]\\i\leq j}a_{ij}x_{i}x_{j}+\sum_{i,j,k\in[n]\\i\leq j\leq k}a_{ijk}x_{i}x_{j}x_{k}.$$ Then we have that \begin{gather*}L_{p,d}(x_{i})=a_{i},\\L_{p,d}(x_{i}^{2})=-2a_{ii}+a_{i}^{2},\\L_{p,d}(x_{i}^{3})=3a_{iii}-3a_{i}a_{ii}+a_{i}^{3}\end{gather*} for all $i\in[n]$, while \begin{gather*}L_{p,d}(x_{i}x_{j})=-a_{ij}+a_{i}a_{j},\\L_{p,d}(x_{i}^{2}x_{j})=a_{iij}+a_{i}a_{ij}-a_{j}a_{ii}+a_{i}^{2}a_{j}\end{gather*} for all $i,j\in[n]$ with $i<j$ and $$L_{p,d}(x_{i}x_{j}x_{k})=\frac{1}{2}(a_{ijk}-a_{i}a_{jk}-a_{j}a_{ik}-a_{k}a_{ij}+2a_{i}a_{j}a_{k})$$ for all $i,j,k\in[n]$ with $i<j<k.$
\end{corolario}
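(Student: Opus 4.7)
The plan is to split the statement into its two parts and attack each directly using the defining generating-series identity for the $L$-form.

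For the multiplicativity $L_{pq,d+e}=L_{p,d}+L_{q,e}$, I would first note that $(pq)(0)=p(0)q(0)\neq0$, so $L_{pq,d+e}$ is in fact defined. On the constant $1$ we get $L_{pq,d+e}(1)=d+e=L_{p,d}(1)+L_{q,e}(1)$ from the normalization convention. For any $\alpha\in\mathbb{N}_{0}^{n}$ with $\alpha\neq0$, the factorization
$$\frac{(pq)(-x)}{(pq)(0)}=\frac{p(-x)}{p(0)}\cdot\frac{q(-x)}{q(0)},$$
combined with the fact (recalled above) that $\log$ sends products to sums on $\mathbb{R}[[\mathbf{x}]]_{0\to1}$, yields
$$-\log\frac{(pq)(-x)}{(pq)(0)}=-\log\frac{p(-x)}{p(0)}-\log\frac{q(-x)}{q(0)}.$$
Matching the coefficient of $x^{\alpha}$ on each side through the implicit definition and canceling the common multinomial factor $\frac{1}{|\alpha|}\binom{|\alpha|}{\alpha}$ gives $L_{pq,d+e}(x^{\alpha})=L_{p,d}(x^{\alpha})+L_{q,e}(x^{\alpha})$. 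Linearity then extends the identity to the whole of $\mathbb{R}[\mathbf{x}]$.

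For the explicit low-degree values, the plan is to expand $-\log(p(-x)/p(0))$ directly up to degree $3$ and to read off the $L$-form values by inverting the multinomial factor appearing in the definition. Since $L_{p,d}$ depends on $p$ only through the ratio $p/p(0)$ together with the scalar $d$, and since the multiplicativity just proven reduces the general case to this normalization, I may assume $p(0)=1$. Set $u:=p(-x)-1=A+B+C+O(\deg 4)$, where
$$A=-\sum_{i}a_{i}x_{i},\qquad B=\sum_{i\leq j}a_{ij}x_{i}x_{j},\qquad C=-\sum_{i\leq j\leq k}a_{ijk}x_{i}x_{j}x_{k}$$
come directly from the degree-$3$ truncation hypothesis. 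Plugging into
$$-\log(1+u)=-u+\tfrac{1}{2}u^{2}-\tfrac{1}{3}u^{3}+O(u^{4}),$$
only $u$, $A^{2}+2AB$, and $A^{3}$ contribute up to degree $3$, so the degree-$3$ truncation of $-\log(p(-x)/p(0))$ equals $-(A+B+C)+\tfrac{1}{2}(A^{2}+2AB)-\tfrac{1}{3}A^{3}$.

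The remaining task is a careful coefficient comparison. For each monomial $x^{\alpha}$ of degree at most $3$ I would extract its coefficient from the above expression and equate it to $\frac{1}{|\alpha|}\binom{|\alpha|}{\alpha}L_{p,d}(x^{\alpha})$. The degree-$1$ and degree-$2$ cases are essentially immediate and produce the stated formulas for $L_{p,d}(x_{i})$, $L_{p,d}(x_{i}^{2})$ and $L_{p,d}(x_{i}x_{j})$. The cubic cases require more attention, with three distinct contributions: $-C$ feeds into the $a_{ijk}$ term, $AB$ feeds into the mixed terms of the form $a\cdot a_{\cdot\cdot}$, and $-\tfrac{1}{3}A^{3}$ feeds into the purely cubic term in the $a_{i}$. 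The main expected obstacle is precisely this combinatorial matching between the symmetric unordered sums defining the $a_{ij}$ and $a_{ijk}$ coefficients and the unrestricted ordered sums that arise when multiplying out $A^{2}$, $AB$ and $A^{3}$; for instance, for $x_{i}^{2}x_{j}$ with $i<j$ one must count the ordered triples $(l,j',k')$ with $j'\leq k'$ that yield the monomial, and for $x_{i}x_{j}x_{k}$ with $i<j<k$ all six permutations contribute in $A^{3}$. Once this bookkeeping is consistently carried out, the claimed formulas for $L_{p,d}(x_{i}^{3})$, $L_{p,d}(x_{i}^{2}x_{j})$ and $L_{p,d}(x_{i}x_{j}x_{k})$ follow directly.
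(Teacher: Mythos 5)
Your approach is the natural one and it is sound: for the additivity you note that $\log$ sends products to sums on $\mathbb{R}[[\mathbf{x}]]_{0\to 1}$ and compare coefficients in the defining generating series (with the constant term handled directly by the normalization $L_{p,d}(1)=d$); for the explicit values you expand $-\log(1+u)$ to third order in $u:=p(-x)-1$ and divide out the multinomial factor $\frac{1}{|\alpha|}\binom{|\alpha|}{\alpha}$. Your reduction to $u$, $A^{2}+2AB$ and $A^{3}$ as the only contributions through degree $3$ is correct. (The remark about invoking multiplicativity to assume $p(0)=1$ is harmless but unnecessary, since the displayed truncation already begins with $1$.)

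The real work, which you wave at rather than carry out, is the cubic bookkeeping, and it catches a sign. For $x_{i}^{2}x_{j}$ with $i<j$, the piece $\tfrac12\cdot 2AB = AB = \bigl(-\sum_{l}a_{l}x_{l}\bigr)\bigl(\sum_{l'\le m'}a_{l'm'}x_{l'}x_{m'}\bigr)$ contributes via the pairs $(l,(l',m'))\in\{(i,(i,j)),\,(j,(i,i))\}$, giving $-a_{i}a_{ij}-a_{j}a_{ii}$, and since $\tfrac13\binom{3}{2,1}=1$ this forces
\[
L_{p,d}(x_{i}^{2}x_{j})=a_{iij}-a_{i}a_{ij}-a_{j}a_{ii}+a_{i}^{2}a_{j},
\]
with a \emph{minus} sign on $a_{i}a_{ij}$, not the plus sign written in the corollary. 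You can cross-check against a determinantal example: for $p=\det(I_{2}+x_{1}A_{1}+x_{2}A_{2})$ with $A_{1}=\diag(1,2)$ and $A_{2}$ symmetric with rows $(1,1)$ and $(1,0)$, one computes $a_{1}=3$, $a_{2}=1$, $a_{11}=2$, $a_{12}=2$, all degree-$3$ coefficients $0$, and $\tr(A_{1}^{2}A_{2})=1$; only the minus-sign formula reproduces $1$. The minus-sign version is also the one actually used later in this thesis (see the computation of $L_{\overline p}(x_{1}^{2}y)$ in the chapter on the Renegar derivative). So your plan is right, but declaring that the ``claimed formulas follow directly'' without performing the cubic comparison is where the gap lies: had you done the comparison, you would have caught this sign typo in the stated result.
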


The avid reader could ask why exactly we stop at degree $3$. The reason is very practical: we go to degree up to three for a very particular reason that will decisively affect the definition of the relaxation. Until such degree the values of the $L$-forms have a nice expression in the form of traces for determinantal polynomials. This is fundamental.

\begin{proposicion}[$L$-form and traces]\cite[Corollary 3.18]{main}\label{lformtraces}
Let $$p=\det(I_{d}+x_{1}A_{1}+\cdots+x_{n}A_{n})\in\mathbb{R}[\mathbf{x}]_{d}$$ be a (determinantal) polynomial of degree $d$ with $A_{i}\in\Her_{d}(\mathbb{C})$ for all $i\in[n]$. Then \begin{gather*}L_{p}(1)=\tr(I_{d}),\\L_{p}(x_{i})=\tr(A_{i}),\\L_{p}(x_{i}x_{j})=\tr(A_{i}A_{j})=\tr(A_{j}A_{i}) \mbox{\ and}\\ L_{p}(x_{i}x_{j}x_{k})=\Rea\tr(A_{\sigma(i)}A_{\sigma(j)}A_{\sigma(k)})\end{gather*} for any permutation $\sigma$ of the formal indices $i,j,k$ for all $i,j,k\in[n].$
\end{proposicion}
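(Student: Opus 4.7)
The plan is to derive everything from the defining identity of the $L$-form by expanding $-\log(p(-x)/p(0))$ as a formal power series and matching coefficients against the definition. First I would note that $p(0)=\det(I_d)=1$, so the denominator is trivial, and that the formal power series identity $\log\det(M)=\tr\log(M)$ (which is easy to prove by diagonalizing a perturbation, or combinatorially via Newton's identities) together with the series expansion $-\log(I_d - Y)=\sum_{k\geq1}Y^k/k$ yields
\begin{equation*}
-\log p(-x)\;=\;\tr\bigl(-\log(I_d-\textstyle\sum_i x_iA_i)\bigr)\;=\;\sum_{k=1}^{\infty}\frac{1}{k}\tr\!\Bigl(\bigl(\textstyle\sum_i x_iA_i\bigr)^{k}\Bigr).
\end{equation*}

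Next I would expand the $k$-th power multinomially as $\sum_{(i_1,\dots,i_k)\in[n]^k} x_{i_1}\cdots x_{i_k}\,A_{i_1}\cdots A_{i_k}$ and group the resulting monomials by multi-index $\alpha\in\mathbb{N}_0^n$ with $|\alpha|=k$. Comparing the coefficient of $x^\alpha$ with the right-hand side of the defining identity of $L_{p,d}$ gives
\begin{equation*}
\binom{|\alpha|}{\alpha}L_{p}(x^{\alpha})\;=\;\sum_{(i_1,\dots,i_k)\,\mapsto\,\alpha}\tr(A_{i_1}\cdots A_{i_k}),
\end{equation*}
where the sum runs over the $\binom{|\alpha|}{\alpha}$ orderings of the multiset indexed by $\alpha$. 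The remaining work is to evaluate this sum for $|\alpha|\in\{0,1,2,3\}$.

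The cases $|\alpha|=0$ and $|\alpha|=1$ are immediate: they give $L_p(1)=\tr(I_d)=d$ and $L_p(x_i)=\tr(A_i)$. For $|\alpha|=2$, I would use cyclicity of the trace together with Hermiticity to note that $\tr(A_iA_j)=\tr(A_jA_i)\in\mathbb{R}$, so every ordering contributes the same real number, producing the stated equality with no need for a real-part. For $|\alpha|=3$ with three distinct indices, the six orderings split into two cyclic classes $\{A_iA_jA_k,A_jA_kA_i,A_kA_iA_j\}$ and $\{A_iA_kA_j,A_kA_jA_i,A_jA_iA_k\}$; Hermiticity gives $(A_iA_jA_k)^{*}=A_kA_jA_i$ and hence these two classes carry complex-conjugate traces, so the total equals $6\,\Rea\tr(A_iA_jA_k)$, matching the multinomial coefficient $\binom{3}{1,1,1}=6$. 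Analogous (and simpler) bookkeeping handles the cases with repeated indices among $i,j,k$.

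The only real obstacle I foresee is justifying $\log\det=\tr\log$ rigorously as an identity of formal power series in $x_1,\dots,x_n$ with matrix-valued coefficients; I would circumvent it either by a routine density/perturbation argument (pick a point where $I_d+\sum x_iA_i$ is simultaneously diagonalizable and propagate) or by appealing directly to the known formal-power-series identity, which is Proposition~3.15 / the lemma preceding it in \cite{main}. Once that identity is in hand, every remaining step is a bookkeeping exercise in matching coefficients and applying cyclicity and Hermiticity of the trace.
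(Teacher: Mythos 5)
Your proposal is correct and follows essentially the same route as the paper: your intermediate identity $\binom{|\alpha|}{\alpha}L_{p}(x^{\alpha})=\sum_{(i_1,\dots,i_k)\mapsto\alpha}\tr(A_{i_1}\cdots A_{i_k})$ is exactly the Hurwitz-product formula (\cite[Proposition 3.17]{main}, quoted later in the chapter), which the paper also uses as the source from which this proposition is deduced by specializing to $|\alpha|\le 3$ and exploiting cyclicity and Hermiticity. The only cosmetic difference is that you re-derive that formula via $\log\det=\tr\log$ rather than citing it.
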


We see therefore that the $L$-form has a nice behaviour in terms of traces for the monomials of degree up to three. Drawing on these relations, we build the relaxation in the next section.

\section{Defining the relaxation}

Our relaxation has the form of a LMP whose initial matrix is PSD and whose entries are defined in terms of values of the $L$-form on monomials of degree up to three. For clarity on building this, we will need the concept of mold matrices and entrywise application of maps.

\begin{notacion}[Mold matrices and entrywise transformations]
We denote $\gls{circ}$ the tensor in $B^{s_{1}\times\cdots\times s_{k}}$ obtained after applying the map $L\colon A\to B$ to each entry of $M\in A^{s_{1}\times\cdots\times s_{k}}.$ The tensor $M$ is called the \textit{mold tensor} and the map $L$ is the \textit{molding map}.
\end{notacion}

In particular, the relaxation is built over the simplest matrix mold that contains all the variables. This matrix is the mold given by the symmetric moment matrix indexed (multiplicatively) by all the monomials in the variables $x$ up to degree $1$. Due to its importance for us, we give it a name.

\begin{notacion}
We denote $\gls{mn1}=(1, x_{1}, \cdots, x_{n})^{T}(1, x_{1}, \cdots, x_{n})=$ \begin{equation}\label{mainmoment}
\begin{pmatrix}
1 & x_{1} & \cdots & x_{n}\\
x_{1} & x_{1}^{2} & \cdots & x_{1}x_{n}\\
\vdots & \vdots & \ddots & \vdots\\
x_{n} & x_{1}x_{n} & \cdots & x_{n}^2
\end{pmatrix}
\end{equation} the mold matrix indexed by monomials of degree up to $1$.
\end{notacion}

We now know our mold matrix. The molding map will be the corresponding $L$-form associated to our polynomial.

\begin{definicion}[Relaxation]\cite[Definition 3.19]{main}\
Let $p\in\mathbb{R}[[\mathbf{x}]]$ be a power series with $p(0)\neq0$ and $d\in\mathbb{N}_{0}$ and consider the symmetric matrices $A_{0}=L_{p,d}\circledcirc M_{n,\leq1}$ and $A_{i}=L_{p,d}\circledcirc (x_{i}M_{n,\leq1})$ for all $i\in[n]$. We call the linear matrix polynomial $$\gls{mpd}:=A_{0}+\sum_{i=1}^{n}x_{i}A_{i}$$ the \textit{pencil associated to $p$ with respect to the virtual degree $d$} and $$\gls{spd}:=\{a\in\mathbb{R}^{n}\mid M_{p,d}(a) \mbox{\ is PSD}\}$$ the \textit{spectrahedron associated to $p$ with respect to the virtual degree $d$}. When $p$ is a polynomial, we can omit the reference to the virtual degree by taking simply $d=\deg(p).$ 
\end{definicion}

Now it is easy to understand why we care so much about degree up to three and not beyond. The relaxation does not use any information of $p$ lying beyond degree $3$. This means that $p$ and $\trun_{k}(p)$ will have the same relaxation for $k>2$. Moreover, assuming $p(0)=1,$ the entries of the relaxation are given by degree up to three polynomials in the coefficients of $\trun_{3}(p)$, as we saw in the computations of some values of the $L$-form above. In particular, this means that, knowing the polynomial, we have a straightforward way of explicitly writing down the relaxation just defined. We now need to understand better which are the key properties that make this relaxation interesting for us.

\section{Key properties of the relaxation}

After the last discussion above, we know that computing the relaxation is easy. Evaluating its associated quadratic form at each point is also easy thanks to the properties of the $L$-form.

\setlength{\emergencystretch}{3em}%
\begin{proposicion}[Evaluation of relaxation quadratic form through the $L$-form]\cite[Lemma 3.22]{main}
Fix $d\in\mathbb{N}_{0}$ and let $p\in\mathbb{R}[[x]]$ be a power series with $p(0)\neq0$ and consider a point $a\in\mathbb{R}^{n}$ and a vector $v\in\mathbb{R}^{n+1}.$ Then $$v^{T}M_{p,d}(a)v=L_{p,d}((v_{0}+\sum_{i=1}^{n}v_{i}x_{i})^{2}(1+\sum_{i=1}^{n}a_{i}x_{i})).$$
\end{proposicion}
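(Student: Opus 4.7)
The plan is to unwind the definitions and push everything through the linearity of $L_{p,d}$. The key observation is that $M_{n,\leq 1}$ is defined as the outer product $(1,x_1,\ldots,x_n)^T(1,x_1,\ldots,x_n)$, so every scalar quantity of the form $v^T(L_{p,d}\circledcirc M_{n,\leq 1})v$ will naturally collapse to $L_{p,d}$ applied to the square $(v_0 + \sum_i v_i x_i)^2$.

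First, I would rewrite the pencil evaluated at $a$ in a compact form. By definition $M_{p,d}(a) = A_0 + \sum_{i=1}^n a_i A_i$, and since $A_0 = L_{p,d}\circledcirc M_{n,\leq 1}$ and $A_i = L_{p,d}\circledcirc (x_i M_{n,\leq 1})$, the linearity of $L_{p,d}$ (applied entrywise) together with the linearity of the molding construction yields
\[
M_{p,d}(a) = L_{p,d}\circledcirc\Bigl(\bigl(1+\sum_{i=1}^n a_i x_i\bigr)M_{n,\leq 1}\Bigr).
\]

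Next, I would compute $v^T M_{p,d}(a) v$ entrywise. Indexing rows and columns of $M_{n,\leq 1}$ by $\{0,1,\ldots,n\}$ and setting $x_0 := 1$, the $(i,j)$ entry of $M_{n,\leq 1}$ is $x_i x_j$, so the $(i,j)$ entry of $M_{p,d}(a)$ is $L_{p,d}\bigl((1+\sum_k a_k x_k)\,x_i x_j\bigr)$. Summing against $v$ and applying linearity of $L_{p,d}$ once more,
\[
v^T M_{p,d}(a) v = \sum_{i,j=0}^{n} v_i v_j\, L_{p,d}\bigl((1+\textstyle\sum_k a_k x_k)\,x_i x_j\bigr) = L_{p,d}\Bigl(\bigl(1+\sum_{k=1}^n a_k x_k\bigr)\sum_{i,j=0}^{n}v_i v_j x_i x_j\Bigr).
\]

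Finally, I would recognise $\sum_{i,j=0}^{n}v_i v_j x_i x_j = \bigl(\sum_{i=0}^n v_i x_i\bigr)^2 = \bigl(v_0 + \sum_{i=1}^n v_i x_i\bigr)^2$, which gives the desired identity. There is essentially no obstacle here: the statement is a bookkeeping consequence of the fact that $M_{n,\leq 1}$ is a rank-one mold, that the pencil collects the variables $x_1,\ldots,x_n$ exactly to form the factor $1+\sum_i a_i x_i$ inside the mold, and that $L_{p,d}$ is linear so it commutes with all the sums produced by taking the quadratic form $v^T(\cdot)v$. The only point that deserves a careful sentence is the justification that the entrywise map $L_{p,d}\circledcirc(-)$ commutes with scalar linear combinations of mold matrices, which is immediate from the linearity of $L_{p,d}$ on $\mathbb{R}[\mathbf{x}]$.
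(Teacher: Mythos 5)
Your proof is correct and follows the same bookkeeping-by-linearity strategy that the cited source uses and that this paper itself applies in the directly analogous Lemma \ref{lemmaLform} (where the quadratic form of the extended mold is evaluated entrywise and then recollected under $L_{p,e}$ via linearity). The key manipulations — collapsing the pencil to $L_{p,d}\circledcirc\bigl((1+\sum a_i x_i)M_{n,\leq 1}\bigr)$, reading off the $(i,j)$ entry as $L_{p,d}\bigl((1+\sum a_k x_k)x_i x_j\bigr)$ with the convention $x_0=1$, and factoring the double sum into $\bigl(\sum_i v_i x_i\bigr)^2$ — are exactly what is needed, and your closing sentence correctly identifies the one point worth spelling out, namely that $L_{p,d}\circledcirc(-)$ commutes with scalar linear combinations of molds because $L_{p,d}$ is a linear form.
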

\setlength{\emergencystretch}{0em}%

This property makes it easy to study how the relaxation changes when an orthogonal transformation is applied to the variables of the original power series.

\begin{proposicion}[Translation of linear transformation in the variables to the $L$-form]\cite[Proposition 3.23]{main}
Fix $d\in\mathbb{N}_{0}$ and let $p\in\mathbb{R}[[x]]$ be a power series with $p(0)\neq0$ and consider the (extended) orthogonal (block) matrix $\Tilde{U}:=\diag(1,U)$ with the block $U\in\in\mathbb{R}^{n\times n}$ orthogonal. Then $$M_{p(Ux),d}=\Tilde{U}^{T}M_{p(Ux),d}\Tilde{U}.$$
\end{proposicion}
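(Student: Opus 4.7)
The plan is to reduce the claimed matrix equality to a scalar identity via the preceding lemma. Both sides of the equation (read, as seems intended, as $M_{p(U\mathbf{x}),d}(\mathbf{x})=\tilde{U}^{T}M_{p,d}(U\mathbf{x})\tilde{U}$) are symmetric matrices affine in $\mathbf{x}$, so after polarization it suffices to prove, for every $v\in\mathbb{R}^{n+1}$ and every $a\in\mathbb{R}^{n}$, the scalar identity
\[
v^{T}M_{p(U\mathbf{x}),d}(a)v \;=\; (\tilde{U}v)^{T}M_{p,d}(Ua)(\tilde{U}v).
\]
Writing $\ell_{v}(\mathbf{x}):=v_{0}+\sum_{i}v_{i}x_{i}$ and $m_{a}(\mathbf{x}):=1+\sum_{i}a_{i}x_{i}$, the preceding lemma expresses each side as an $L$-form evaluated at the cubic $\ell^{2}m$, so the whole question boils down to comparing $L_{p(U\mathbf{x}),d}$ to $L_{p,d}$.

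The crux of the argument is a change-of-variables formula for the $L$-form: for $q(\mathbf{x}):=p(U\mathbf{x})$ and any polynomial $r\in\mathbb{R}[\mathbf{x}]$,
\[
L_{q,d}(r(\mathbf{x}))=L_{p,d}(r(U^{T}\mathbf{x})).
\]
I would derive this directly from the defining identity $-\log(p(-\mathbf{x})/p(0))=\sum_{\alpha\neq 0}\tfrac{1}{|\alpha|}\binom{|\alpha|}{\alpha}L_{p,d}(\mathbf{x}^{\alpha})\mathbf{x}^{\alpha}$. The cleanest bookkeeping is via the generating identity
\[
-\log\!\big(p(-\mathbf{y})/p(0)\big)=\sum_{k\ge 1}\tfrac{1}{k}\,L_{p,d}\!\big((\mathbf{y}^{T}\mathbf{x})^{k}\big),
\]
where $L_{p,d}$ acts on the $\mathbf{x}$ variables and $\mathbf{y}$ is a parameter; this follows from the multinomial expansion of $(\mathbf{y}^{T}\mathbf{x})^{k}$. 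Replacing $\mathbf{y}$ by $U\mathbf{y}$ on the left gives $-\log(q(-\mathbf{y})/q(0))$, while on the right it turns $(\mathbf{y}^{T}\mathbf{x})^{k}$ into $(\mathbf{y}^{T}U^{T}\mathbf{x})^{k}$, which is exactly the image of $(\mathbf{y}^{T}\mathbf{x})^{k}$ under the substitution $\mathbf{x}\mapsto U^{T}\mathbf{x}$. Matching coefficients of every $\mathbf{y}^{\alpha}$ against the analogous expansion for $q$ yields the formula on monomials, and linearity extends it to arbitrary polynomials; the boundary case $L_{q,d}(1)=d=L_{p,d}(1)$ is built into the normalization.

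With the change-of-variables formula in hand, the left-hand quadratic form becomes
\[
v^{T}M_{p(U\mathbf{x}),d}(a)v=L_{p,d}\!\left(\ell_{v}(U^{T}\mathbf{x})^{2}\cdot m_{a}(U^{T}\mathbf{x})\right).
\]
Two elementary identities close the argument: $\ell_{v}(U^{T}\mathbf{x})=v_{0}+(Uv_{1:n})^{T}\mathbf{x}=\ell_{\tilde{U}v}(\mathbf{x})$, since by definition $\tilde{U}v=(v_{0},Uv_{1:n})$, and $m_{a}(U^{T}\mathbf{x})=1+(Ua)^{T}\mathbf{x}=m_{Ua}(\mathbf{x})$. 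Substituting and applying the lemma in reverse gives $(\tilde{U}v)^{T}M_{p,d}(Ua)(\tilde{U}v)$, which is exactly what we need; varying $v$ and $a$ and using the affine-in-$\mathbf{x}$ nature of the pencil completes the proof.

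The main obstacle is the change-of-variables formula for the $L$-form; the rest is short linear-algebraic bookkeeping. The technical subtlety lies in handling the implicit formal-power-series definition of $L_{p,d}$ carefully under the substitution $\mathbf{x}\mapsto U\mathbf{x}$ and in justifying coefficient-matching in the generating variable $\mathbf{y}$. It is worth noting that orthogonality of $U$ is not actually used in the algebraic identity — the argument works for any invertible $U$ — it only enters the proposition's spectrahedral interpretation, ensuring $\tilde{U}$ is orthogonal and, in particular, that $S_{d}(p\circ U)=U^{T}S_{d}(p)$.
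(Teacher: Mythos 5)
Your proof is correct, and you also correctly diagnosed and repaired the typographical slip in the statement, which as printed (with $M_{p(Ux),d}$ on both sides of the equality) would be vacuous. The paper does not give its own proof of this proposition---it is cited wholesale from \cite{main}---but the change-of-variables formula that you isolate as the crux, namely $L_{p(U\mathbf{x}),d}(r(\mathbf{x}))=L_{p,d}(r(U^{T}\mathbf{x}))$, is in fact proved later in the paper as Proposition~\ref{propositiontransformation}, and there for arbitrary square matrices $A$ rather than only orthogonal ones. That proof proceeds through the index-chasing identity of Lemma~\ref{lemmaforproposition}, which expands $\sum_{|\alpha|=k}\binom{k}{\alpha}L(x^{\alpha})(Ax)^{\alpha}$ term by term over multinomial coefficients and matches it against $\sum_{|\alpha|=k}\binom{k}{\alpha}L((A^{T}x)^{\alpha})x^{\alpha}$. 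Your generating-identity packaging,
$$-\log\!\big(p(-\mathbf{y})/p(0)\big)=\sum_{k\ge1}\tfrac{1}{k}\,L_{p,d}\!\big((\mathbf{y}^{T}\mathbf{x})^{k}\big),$$
compresses that multinomial bookkeeping into a single line: replacing $\mathbf{y}\mapsto U\mathbf{y}$ turns the kernel $(\mathbf{y}^{T}\mathbf{x})^{k}$ into $(\mathbf{y}^{T}U^{T}\mathbf{x})^{k}$, so the transpose appears for structural reasons rather than emerging from an index shuffle. The remaining reduction---the evaluation lemma \cite[Lemma 3.22]{main}, the substitutions $\ell_{v}(U^{T}\mathbf{x})=\ell_{\Tilde{U}v}(\mathbf{x})$ and $m_{a}(U^{T}\mathbf{x})=m_{Ua}(\mathbf{x})$, and the fact that a real symmetric matrix is determined by its quadratic form---is the expected scaffolding and you have it right. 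Your closing observation is also accurate: the algebraic identity needs no orthogonality, since the change-of-variables formula holds for every $A$; orthogonality only ensures that $\Tilde{U}$ is orthogonal and that $U^{-1}=U^{T}$ in the ensuing spectrahedral corollary.
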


This has the next immediate consequence.

\begin{corolario}[Consequences for positivity sets]\cite[Proposition 3.24]{main}
Fix $d\in\mathbb{N}_{0}$ and let $p\in\mathbb{R}[[x]]$ be a power series with $p(0)\neq0$ and $U\in\mathbb{R}^{n\times n}$ and orthogonal matrix. Then $$S_{d}(p(Ux))=\{U^{T}a\mid a\in S_{d}(p)\}.$$
\end{corolario}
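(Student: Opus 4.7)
The plan is to derive this corollary as a direct consequence of the previous proposition relating the pencils via orthogonal conjugation. First I would unfold the definition: for any point $a \in \mathbb{R}^{n}$, membership $a \in S_{d}(p(Ux))$ is equivalent to $M_{p(Ux),d}(a) \succeq 0$. Applying the previous proposition pointwise, this becomes $\tilde{U}^{T} M_{p,d}(Ua)\, \tilde{U} \succeq 0$, where $\tilde{U} = \diag(1, U)$ is the corresponding extended matrix.

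Next I would invoke the elementary fact that congruence transformation by an invertible matrix preserves positive semidefiniteness: since $U$ is orthogonal, so is $\tilde{U}$, and in particular $\tilde{U}$ is invertible. Therefore $\tilde{U}^{T} M_{p,d}(Ua)\, \tilde{U} \succeq 0$ holds if and only if $M_{p,d}(Ua) \succeq 0$, which in turn is equivalent to $Ua \in S_{d}(p)$. Setting $b := Ua$ and noting $a = U^{T} b$ (again by orthogonality of $U$), this rewrites precisely as $a \in \{U^{T} b \mid b \in S_{d}(p)\}$.

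Chaining these equivalences yields the stated set equality. The only step that requires any care is making sure the orthogonality of $U$ is used twice, once to invoke the previous proposition (which is stated for orthogonal $U$) and once to justify the change of variable $a \leftrightarrow U^{T}b$. No genuine obstacle arises; the content of the corollary is simply that the spectrahedron transforms covariantly under the orthogonal substitution exactly as the variable does, and this is immediate from the covariance already proved at the level of pencils.
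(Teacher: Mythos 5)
Your argument is correct and is the natural derivation: unwind the definition of $S_d$, apply the preceding proposition pointwise to get $M_{p(Ux),d}(a)=\tilde U^{T}M_{p,d}(Ua)\tilde U$, cancel the congruence by the invertible $\tilde U$ to reduce to $M_{p,d}(Ua)\succeq 0$, and reparametrize by $b=Ua$. The paper itself does not reprove this corollary (it is cited directly from \cite[Proposition 3.24]{main}), so there is no in-text proof to compare against, but your route is the one the cited proposition is designed to enable. One remark worth making: Proposition 3.23 as reproduced in this document contains an obvious typo (both sides read $M_{p(Ux),d}$); you have silently and correctly inferred the intended identity $M_{p(Ux),d}(a)=\tilde U^{T}M_{p,d}(Ua)\tilde U$, and without that reading the corollary would not follow, so it is good that you made the use of the orthogonality of $U$ explicit at both places it enters.
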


The behaviour towards restrictions is also the expected one.

\begin{corolario}[Restrictions]\cite[Lemma 3.26]{main}
Fix $d,m,n\in\mathbb{N}_{0}$ with $m\leq n$, let $q\in\mathbb{R}[[x_{1},\dots,x_{n}]]$ be a power series with $q(0)\neq0$ and define $r=q|_{x_{m+1}=0,\dots,x_{n}=0}\in\mathbb{R}[[x_{1},\dots,x_{m}]].$ Then $L_{q,d}(p)=L_{r,d}(p)$ for all polynomials $p\in\mathbb{R}[x_{1},\dots,x_{m}]$ and $\{a\in\mathbb{R}^{m}\mid (a,\mathbf{0})\in S_{d}(q)\}\subseteq S_{d}(r).$
\end{corolario}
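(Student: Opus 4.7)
The plan is to handle the two assertions in sequence: first the identity of $L$-forms on polynomials in $\mathbb{R}[x_1, \ldots, x_m]$, and then the spectrahedral inclusion, which will follow from this identity together with the elementary fact that every principal submatrix of a PSD matrix is itself PSD.

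For the first assertion, I would work directly from the defining identity of the $L$-form as coefficients of $-\log(q(-x)/q(0))$. The key point is that setting $x_{m+1} = \cdots = x_n = 0$ commutes with the formal operations of reciprocal and logarithm in $\mathbb{R}[[\mathbf{x}]]$: each coefficient of $\log$ of a series depends on only finitely many coefficients of the argument, and since $r(0) = q(0) \neq 0$ the substitution is well-defined term by term. This gives
\begin{equation*}
-\log\frac{r(-x_1, \ldots, -x_m)}{r(0)} = \left(-\log\frac{q(-x)}{q(0)}\right)\bigg|_{x_{m+1} = \cdots = x_n = 0}.
\end{equation*}
Matching the monomial expansions of both sides, only the summands on the right with $\alpha_{m+1} = \cdots = \alpha_n = 0$ survive, and they must coincide with the corresponding coefficients on the left. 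This forces $L_{q,d}(x^\alpha) = L_{r,d}(x^\alpha)$ for every $\alpha \in \mathbb{N}_0^m$ with $|\alpha| > 0$. Together with $L_{q,d}(1) = d = L_{r,d}(1)$ and the linearity of both forms, this yields $L_{q,d}(p) = L_{r,d}(p)$ for all $p \in \mathbb{R}[x_1, \ldots, x_m]$.

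For the second assertion, suppose $(a, \mathbf{0}) \in S_d(q)$, meaning that $M_{q,d}(a, \mathbf{0})$ is PSD. I would compare $M_{r,d}(a)$ with the principal $(m+1) \times (m+1)$ submatrix of $M_{q,d}(a, \mathbf{0})$ indexed by the rows and columns corresponding to $\{1, x_1, \ldots, x_m\}$. Unwinding the construction of the pencil through $A_0$ and the $A_i$, the $(j,k)$-entry of $M_{q,d}(a, \mathbf{0})$ equals $L_{q,d}((1 + \sum_{i=1}^{m} a_i x_i) x_j x_k)$ (with the convention $x_0 := 1$). For $j, k \in \{0, 1, \ldots, m\}$ the argument of $L_{q,d}$ lies in $\mathbb{R}[x_1, \ldots, x_m]$, so the first part of the corollary lets me replace $L_{q,d}$ by $L_{r,d}$, and the resulting entry is precisely the $(j,k)$-entry of $M_{r,d}(a)$. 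Hence $M_{r,d}(a)$ appears as a principal submatrix of a PSD matrix and is therefore PSD, proving $a \in S_d(r)$.

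The only genuinely delicate step is the commutation of the formal logarithm with the substitution $x_{m+1} = \cdots = x_n = 0$; everything else is clean bookkeeping once that is in place. Since each coefficient of the formal $\log$ depends on only finitely many lower-degree coefficients of its input, and since the constant term is preserved by the substitution, this commutation is essentially automatic, but it is the point where some care with formal power series is required.
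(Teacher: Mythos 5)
Your proof is correct, and since the paper in hand explicitly defers the proof of this corollary to \cite[Lemma 3.26]{main} without reproducing it, your argument stands as a complete self-contained derivation. Both halves are sound: the commutation of the formal logarithm with the substitution $x_{m+1}=\cdots=x_n=0$ is the right lever for the $L$-form identity, and the observation that $M_{r,d}(a)$ is the leading $(m{+}1)\times(m{+}1)$ principal submatrix of $M_{q,d}(a,\mathbf{0})$ (once the first part lets you swap $L_{q,d}$ for $L_{r,d}$ on entries lying in $\mathbb{R}[x_1,\dots,x_m]$) reduces the spectrahedral inclusion to the elementary fact about principal submatrices of PSD matrices, exactly as intended. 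One small point worth making explicit when you write this up: the commutation of $\log$ with the substitution relies on the fact that $r(0)=q(0)\neq 0$ so that both $q(-x)/q(0)$ and $r(-x_1,\dots,-x_m)/r(0)$ have constant term $1$, making the formal logarithm well-defined on both sides; you do mention that the constant term is preserved, and that is precisely what is needed.
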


But the most important property of the relaxation is the fact that it is indeed a relaxation.

\begin{teorema}[Relaxation]\cite[Theorem 3.35]{main}
Let $p\in\mathbb{R}[x]$ be a RZ polynomial. Then $\rcs(p)\subseteq S(p).$
\end{teorema}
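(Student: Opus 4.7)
Proof plan: The plan is to reduce the general multivariate claim to its bivariate instance and then close the bivariate case using the Helton-Vinnikov theorem together with the trace formulas of Proposition \ref{lformtraces}.

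First, normalising so that $p(0) = 1$, fix $a \in \rcs(p)$ and $v = (v_0, v_*) \in \mathbb{R}^{n+1}$ with $v_* = (v_1, \dots, v_n)$, and write $q(x) := v_0 + \sum v_i x_i$, $\ell_a(x) := 1 + \sum a_i x_i$, and $u_b(x) := \sum b_i x_i$. By the evaluation formula \cite[Lemma 3.22]{main}, the target $M_{p,d}(a) \succeq 0$ is equivalent to $L_{p,d}(q^2 \ell_a) \geq 0$ for every such $v$. Introduce now the bivariate polynomial $\tilde p(y, z) := p(y a + z v_*)$: it is RZ (any univariate restriction of $\tilde p$ through the origin is also one of $p$), and $\tilde p(\lambda, 0) = p(\lambda a) \neq 0$ for $\lambda \in [0, 1)$ shows $(1, 0) \in \rcs(\tilde p)$. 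A coefficient-by-coefficient comparison of the defining logarithmic series for $L_{p,d}$ and $L_{\tilde p, d}$ --- substituting $x \mapsto y a + z v_*$ into $-\log(p(-x)/p(0))$ and expanding $(y a + z v_*)^\beta$ multinomially --- yields the substitution identity $L_{p,d}(f(u_a, u_{v_*})) = L_{\tilde p, d}(f(y,z))$ for every $f \in \mathbb{R}[y,z]$. Applied to $f(y, z) = (v_0 + z)^2 (1 + y)$ this collapses to $v^T M_{p,d}(a) v = w^T M_{\tilde p, d}(1, 0) w$ with $w = (v_0, 0, 1) \in \mathbb{R}^3$, so it suffices to show $M_{\tilde p, d}(1, 0) \succeq 0$ in two variables.

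For that bivariate instance, Helton-Vinnikov yields a representation $\tilde p = \det(I_m + y A_1 + z A_2)$ with $A_1, A_2 \in \Sym_m(\mathbb{R})$ and $m := \deg \tilde p$; padding by zero blocks to $B_0 := I_d$ and $B_i := \diag(A_i, \mathbf{0}_{d-m})$ for $i \in \{1, 2\}$ produces a size-$d$ determinantal representation compatible with the virtual degree $d$. The containment $(1, 0) \in \rcs(\tilde p)$, together with continuity of the spectrum of $I_m + \lambda A_1$ along $\lambda \in [0, 1]$ starting from $I_m \succ 0$, forces $C := I_d + B_1 \succeq 0$. Proposition \ref{lformtraces} then delivers, for $i, j \in \{0, 1, 2\}$ under the convention $x_0 = 1, x_1 = y, x_2 = z$,
\[ (M_{\tilde p, d}(1, 0))_{ij} = L_{\tilde p, d}(x_i x_j (1 + y)) = \tr(B_i B_j) + \tr(B_i B_j B_1) = \tr(B_i B_j C) = \langle B_i C^{1/2}, B_j C^{1/2} \rangle_F, \]
so $M_{\tilde p, d}(1, 0)$ is the Gram matrix of $(B_i C^{1/2})_{i=0,1,2}$ in the Frobenius inner product and therefore PSD.

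The main obstacle is verifying the substitution identity $L_{p,d}(f(u_a, u_{v_*})) = L_{\tilde p, d}(f)$: it is the structural bridge that trades arbitrarily many variables for exactly two, and its proof requires a patient matching of multinomial coefficients in the two defining logarithmic series, plus the virtual-degree bookkeeping that keeps $L_{\tilde p, d}(1) = d$ even when $\deg \tilde p < d$. Once this identity is established and the padding is in place, Helton-Vinnikov combined with the Gram-matrix interpretation closes the proof in one line, with all the geometric content loaded onto the two-variable determinantal representation theorem.
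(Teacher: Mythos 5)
Your proof is correct and follows essentially the same strategy as \cite[Theorem~3.35]{main}, which the paper itself describes (e.g., in the discussion preceding Proposition~\ref{propomatrix}) as reducing to the bivariate case so that Helton--Vinnikov applies and then expressing the $L$-form in terms of traces. The only deviation is cosmetic: rather than rotating $a$ and $v_*$ into $\mathbb{R}^2\times\{0\}$ via an orthogonal matrix and invoking the restriction lemma \cite[Lemma~3.26]{main}, you parametrize the relevant plane directly by $\tilde p(y,z)=p(ya+zv_*)$, which is fine once the rectangular analogue of the paper's substitution lemma (Lemma~\ref{lemmaforproposition} / Proposition~\ref{propositiontransformation}) is checked by the same multinomial-coefficient matching.
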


These nice properties of the relaxation push us into the direction of finding objects similar to it. The first way to find these objects is studying variations of the relaxation in some special cases. This is the topic of the next section that will put us in the line towards the analysis constituting the first part of this thesis.

\section{Variations of the relaxation}

There are two natural ways to proceed in order to increase the amount of information about the polynomial collected in the relaxation. We can \textit{meaningfully} increase the number of variables of our polynomial or we can extend the number of monomials used in the mold matrix. We will see a glimpse of both strategies before studying them in detail in the next chapters.

\subsection{Extending the variables}

When we want to extend the number of variables in our RZ polynomials, the first problem that we will find is that there is, in general, not an easy way to add nonzero monomials to our polynomial in a way that keeps RZ-ness. This could be attempted through amalgamations of RZ polynomials, but this approach does not work in general, as proved in \cite{david}. Here we will explore why these extensions could be important in order to establish the GLC through the use of our relaxation. For this, we consider the easiest case possible and see how the relaxation behaves when adding \textit{enough} variables.

\begin{proposicion}[Always available determinantal representation]\cite[Theorem 4.4]{quarez2012symmetric} Let $p\in\mathbb{R}[\mathbf{x}]$ be a polynomial of degree $d$ with $p(0)\neq0$. Then there exist symmetric matrices $A_{i}\in\Sym_{s}(\mathbb{R})$ such that $p(\mathbf{x})=c\det(A_{0}+\sum_{i=1}^{n}x_{i}A_{i})$ with $c\in\mathbb{R}.$ The degree $d$ of $p$ and the size $s$ of the LMP do not have to agree. We can indeed take $s=2\binom{n+\lfloor\frac{d}{2}\rfloor}{n}$.
\end{proposicion}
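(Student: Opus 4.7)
This is Quarez's theorem on symmetric determinantal representations of arbitrary polynomials, and its proof is technical. I would outline the strategy as follows: encode $p$ as a bilinear form in a monomial vector and then apply the Schur complement identity to collapse the determinant of a carefully chosen block pencil to a scalar multiple of $p$. The factor $2$ in the announced size $2\binom{n+\lfloor d/2\rfloor}{n}$ arises naturally from the block construction, with one block of size $N:=\binom{n+\lfloor d/2\rfloor}{n}$ indexing monomials up to degree $m:=\lfloor d/2\rfloor$ and the other providing the auxiliary block needed to recover $p$.

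\textbf{Step 1 (Bilinear encoding).} Let $v(x)\in\mathbb{R}^{N}$ be the column vector of all monomials $x^{\alpha}$ with $|\alpha|\leq m$, arranged with $v_{0}=1$. When $d=2m$ is even, every monomial of $p$ is a product $v_{i}v_{j}$, so there exists (non-uniquely) a symmetric matrix $C\in\Sym_{N}(\mathbb{R})$ with $p(x)=v(x)^{T}Cv(x)$. When $d=2m+1$ is odd, I would instead seek an affine pencil $C(x)=C_{0}+\sum_{i=1}^{n}x_{i}C_{i}$ with $C_{j}\in\Sym_{N}(\mathbb{R})$ satisfying $p(x)=v(x)^{T}C(x)v(x)$. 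Existence in both cases follows from a dimension count on the spanning set $\{v_{i}v_{j}\}$ (even case) or $\{v_{i}x_{k}v_{j}\}$ (odd case), which covers every monomial of degree $\leq d$.

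\textbf{Step 2 (Schur complement).} Assemble a $2N\times 2N$ symmetric pencil
\[
M(x)=\begin{pmatrix}A & B(x)\\ B(x)^{T} & D(x)\end{pmatrix},
\]
where $A\in\Sym_{N}(\mathbb{R})$ is an invertible constant block, $B(x)$ is $N\times N$ linear in $x$ and encodes $v(x)$ in a chosen way, and $D(x)$ is $N\times N$ linear in $x$ and absorbs the encoding $C$ (or $C_{0},\dots,C_{n}$) from Step 1. Then the Schur complement identity
\[
\det M(x)=\det(A)\cdot\det\bigl(D(x)-B(x)^{T}A^{-1}B(x)\bigr)
\]
is the main tool. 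The blocks are chosen so that the matrix $D(x)-B(x)^{T}A^{-1}B(x)$ has rank one with nonzero entry proportional to $p(x)$, so the outer determinant collapses to $cp(x)$ with $c=\pm\det(A)\neq 0$.

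\textbf{Step 3 (Verification).} One checks that $M(x)$ is symmetric, linear in $x$ (of the form $A_{0}+\sum_{i=1}^{n}x_{i}A_{i}$), of size exactly $2N$, and that the determinant equals $cp(x)$.

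\textbf{Main obstacle.} The hard part lies in Step 2: engineering $A$, $B(x)$, and $D(x)$ so that the Schur complement collapses \emph{exactly} to $p(x)$, with no spurious higher-degree terms. This requires the linear pencil $B(x)$ and the constant block $A$ to be chosen with precisely matched ranks and supports, so that all monomial cancellations occur. The combinatorial bookkeeping is noticeably more delicate in the odd-degree case, where the extra degree-one factors appearing in $B(x)^{T}A^{-1}B(x)$ must be absorbed into $D(x)$ in a symmetric way without inflating the size beyond $2N$. Once that encoding is fixed, the rest of the proof is purely mechanical.
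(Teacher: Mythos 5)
The paper itself does not prove this proposition; it is imported verbatim from Quarez \cite[Theorem 4.4]{quarez2012symmetric} and used as a black box, so what you have written is a reconstruction of Quarez's argument rather than something to be compared with a proof in the text. Your Step 1 is sound: with $m=\lfloor d/2\rfloor$ and $N=\binom{n+m}{n}$, the products $v_iv_j$ (resp.\ $x_k v_i v_j$ for odd $d$) of entries of the monomial vector span every monomial of degree $\le d$, so a constant (resp.\ affine) symmetric Gram matrix $C$ with $p=v^TCv$ exists, and you correctly read off that the factor $2$ in the announced size comes from a $2\times 2$ block structure with $N\times N$ blocks.

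The concrete error is in Step 2. You require the $N\times N$ Schur complement $D(x)-B(x)^TA^{-1}B(x)$ to ``have rank one with nonzero entry proportional to $p(x)$'' and infer that $\det M(x)=cp(x)$. But a rank-one $N\times N$ matrix with $N>1$ has determinant zero, so the Schur identity $\det M=\det(A)\det\bigl(D-B^TA^{-1}B\bigr)$ would give $\det M\equiv 0$, not $cp(x)$. What is needed is that the Schur complement have \emph{determinant} equal to $p$ up to a nonzero constant, which is a full-rank condition, not a rank-one condition. This is also exactly where ``$B(x)$ encodes $v(x)$ in a chosen way'' is doing all the work without saying how: $v(x)$ is nonlinear in $x$, so it cannot sit directly inside an affine pencil. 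The device that makes this kind of construction go is an affine unipotent pencil $W(x)=I-\sum_i x_iW_i$ satisfying $W(x)v(x)=e_1$ (built by choosing, for each exponent $\alpha\ne 0$, an index $i$ with $\alpha_i>0$ and using the recursion $v_\alpha=x_iv_{\alpha-e_i}$), which has $\det W(x)\equiv 1$ and $v(x)=W(x)^{-1}e_1$. It is this linearization, not generic block engineering, that forces the Schur complement's determinant to reduce cleanly to the quadratic form $v^TCv=p$ rather than to a degree-$\le 2N$ polynomial with spurious terms; without it your Step 2 has no mechanism to produce the required cancellations, and the ``mechanical'' verification you defer to Step 3 would not go through.
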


Clearly, if the polynomial $p$ verifies $p(0)\neq0$, then $A_{0}$ verifies $\det(A_{0})\neq0$ and therefore we can orthogonally diagonalize it in order to obtain a signature matrix $J$ using the spectral theorem. Once we have this determinantal representation, we can increase the number of variables in a fairly easy way. Our objective with this increase will be to obtain a set of matrices linearly spanning a particular kind of set.

\begin{definicion}[Perfect subsets]\cite[Definition 3.29]{main}
We say that a subset of Hermitian matrices $U\subseteq\Her_{d}(\mathbb{R})$ is \textit{perfect} if it satisfies $$\forall A\in U\colon((\forall M\in U\colon\tr(M^{2}A)\geq0)\implies A \mbox{\ is PSD}).$$
\end{definicion}

We therefore only consider extensions of variables having at least one determinantal representation whose matrices linearly span one of these sets.

\begin{definicion}[Perfection]
Fix $n,m\in\mathbb{N}$ with $n>m$. Given a polynomial $p\in\mathbb{R}[x_{1},\dots x_{m}]$, we say that $\overline{p}\in\mathbb{R}[x_{1},\dots x_{n}]$ is a \textit{perfection} of $p$ if $\overline{p}$ admits at least one determinantal representation generating a perfect subset and $\overline{p}(x_{1},\dots,x_{m},\mathbf{0})=p(x_{1},\dots,x_{m})$.
\end{definicion}

This allows us to write the next revealing result. This result tells us that increasing the number of variables in a meaningful way helps the relaxation to manage better the information of the polynomial.

\begin{proposicion}[Exactness in perfect sets]\cite[Proposition 3.33]{main}\
Consider the two polynomials $p\in\mathbb{R}[x_{1},\dots, x_{m}]$ and $\overline{p}\in\mathbb{R}[x_{1},\dots, x_{n}]$ a perfection of $p$ whose determinantal representation linearly spanning a perfect set has size $d$. Then $$\{a\in\mathbb{R}^{m}\mid (a,0)\in S_{d}(\overline{p})\subseteq\mathbb{R}^{n}\}=\rcs(p).$$ In particular, if $\overline{p}$ is RZ, then the initial matrix $A_{0}$ of the LMP defining its associated relaxation is PSD.
\end{proposicion}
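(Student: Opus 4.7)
The plan is to reduce the spectrahedral condition $(a,0) \in S_d(\overline{p})$ to a trace inequality, exploit the perfect-set hypothesis to convert it into positive semidefiniteness of a concrete matrix, and then identify that matrix condition with membership in $\rcs(p)$ via a spectral path argument. Fix a determinantal representation $\overline{p} = \det(I_d + x_1 A_1 + \cdots + x_n A_n)$ whose matrices linearly span the perfect subset $U := \linspan(I_d, A_1, \ldots, A_n) \subseteq \Her_d(\mathbb{R})$. The first step is to combine the evaluation formula $v^T M_{\overline{p},d}(a,0) v = L_{\overline{p},d}((v_0 + \sum_{i=1}^n v_i x_i)^2 (1 + \sum_{i=1}^m a_i x_i))$ with Proposition \ref{lformtraces} applied monomial by monomial in degrees up to $3$ to derive the key identity
\[v^T M_{\overline{p},d}(a,0) v = \tr(B^2 C),\]
where $B := v_0 I_d + \sum_{i=1}^n v_i A_i$ and $C := I_d + \sum_{i=1}^m a_i A_i$ both lie in $U$, and $\det(C) = \overline{p}(a,0) = p(a)$.

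With the identity in hand both inclusions follow symmetrically. Since $v$ ranges freely over $\mathbb{R}^{n+1}$, $B$ ranges over all of $U$, so $(a,0) \in S_d(\overline{p})$ is equivalent to $\tr(M^2 C) \geq 0$ for every $M \in U$. The perfectness of $U$ converts this into $C \succeq 0$; and conversely, $C \succeq 0$ immediately yields $\tr(B^2 C) \geq 0$ since products of positive semidefinite matrices have non-negative trace. Hence the spectrahedral membership is precisely the condition $C \succeq 0$. To match $\{a : C \succeq 0\}$ with $\rcs(p)$, I examine the matrix path $I_d + t \sum_{i=1}^m a_i A_i$ for $t \in [0,1]$: its eigenvalues are $1 + t\mu$ where $\mu$ ranges over the spectrum of $\sum a_i A_i$, and $C \succeq 0$ is equivalent to each such $\mu \geq -1$, which in turn is equivalent to $1 + t\mu > 0$ for all $t \in [0,1)$, i.e., $p(ta) = \det(I_d + t\sum a_i A_i) > 0$ throughout the half-open segment $[0,a)$. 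Combined with the convexity of $\rcs(p)$ and the fact that the origin lies in its interior (since $p(0) \neq 0$), this last condition is exactly the statement $a \in \rcs(p)$.

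The second assertion is then automatic: since $0 \in \rcs(p)$ for any RZ polynomial, the just-established set equality specialised at $a = 0$ gives $(0,0) \in S_d(\overline{p})$, that is, $A_0 = M_{\overline{p},d}(0) \succeq 0$. The main obstacle I anticipate is establishing the trace identity cleanly: one has to expand $(v_0 + \sum v_i x_i)^2 (1 + \sum a_i x_i)$ into all its monomials of degrees $0$ through $3$, apply the four separate trace formulas of Proposition \ref{lformtraces} term by term, and then use cyclicity of the trace (together with the fact that $\Rea \tr$ collapses to $\tr$ on products of real symmetric matrices) to reassemble everything into the single expression $\tr(B^2 C)$ with the correct combinatorial coefficients. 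The remainder is a routine application of the perfect-set definition and standard spectral and convexity arguments.
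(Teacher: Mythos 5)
The paper offers no proof here beyond a one-sentence deferral to the cited reference, so your task was really to reconstruct that reference's argument, and you have done so correctly: the trace identity $v^{T}M_{\overline{p},d}(a,0)v=\tr(B^{2}C)$ from the evaluation lemma and Proposition~\ref{lformtraces}, perfectness of $U$ to convert $\tr(M^{2}C)\geq0$ for all $M\in U$ into $C\succeq0$, and the eigenvalue argument that identifies $\{a\mid I_{d}+\sum_{i}a_{i}A_{i}\succeq0\}$ with $\rcs(p)$. This is visibly the mechanism the paper itself uses a few pages later when it rewrites $S(p)$ as $\{a\mid\forall M\in U_{p}\colon\tr(M^{2}(I+aD))\geq0\}$ in the proof of Proposition~\ref{analogyprop}, which confirms your reading. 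Two small remarks: you tacitly normalize to a monic pencil $\overline{p}=\det(I_{d}+\sum x_{i}A_{i})$, which is what makes Proposition~\ref{lformtraces} applicable and is available since $\overline{p}(\mathbf{0})=1$, but the definition of perfection only requires the coefficient matrices to span a perfect set, so this normalization deserves a sentence; and the closing appeal to convexity of $\rcs(p)$ is unnecessary, because the equivalent characterization $\rcs(p)=\{a\mid\forall\lambda\in[0,1)\colon p(\lambda a)\neq0\}$ already matches your eigenvalue condition directly.
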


\begin{proof}
The proof of \cite[Proposition 3.33]{main} can be immediately extended to cover this more general case.
\end{proof}

Thus, in the scenario where $\overline{p}$ is RZ (which implies that $p$ itself is so) the relaxation automatically gives another determinantal representation and a cofactor respecting the original RCS of $p$ as long as its initial matrix verifies that its determinant is nonzero.

\begin{corolario}[PSD-ness in monic sets]
If the RZ polynomial $$p=\det(I_{d}+\sum_{i=1}^{m}x_{i}A_{i})\in\mathbb{R}[x_{1},\dots,x_{m}]$$ is completed to the RZ polynomial $$\overline{p}=\det(I_{d}+\sum_{i=1}^{n}x_{i}A_{i})\in\mathbb{R}[x_{1},\dots,x_{n}]$$ such that the family of symmetric matrices $I_{d}, A_{1}, \dots, A_{n}$ is linearly independent and spans $\Sym_{d}(\mathbb{R})$, then $\det(M_{p,d})=q\overline{p}\neq0$ with $\rcs(\overline{p})\subseteq\rcs(q).$
\end{corolario}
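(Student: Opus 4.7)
The plan is to reduce the statement to the previous Proposition (Exactness in perfect sets) after observing that under the hypotheses the linear span of $\{I_{d},A_{1},\dots,A_{n}\}$ equals $\Sym_{d}(\mathbb{R})$, and that $\Sym_{d}(\mathbb{R})$ is itself a perfect subset. Indeed, if $A\in\Sym_{d}(\mathbb{R})$ satisfies $\tr(M^{2}A)\geq 0$ for every $M\in\Sym_{d}(\mathbb{R})$, then choosing $M=ww^{T}$ for an arbitrary $w\in\mathbb{R}^{d}$ yields $(w^{T}w)(w^{T}Aw)\geq 0$, forcing $A$ to be PSD. Applying that proposition with $m=n$, so that $\overline{p}$ is a perfection of itself, shows both that $S_{d}(\overline{p})=\rcs(\overline{p})$ and that the initial matrix $A_{0}$ of the pencil $M_{\overline{p},d}$ is PSD.

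I would then strengthen the PSD-ness of $A_{0}$ to positive definiteness. By Proposition \ref{lformtraces} the entries of $A_{0}$ are the traces $\tr(A_{i}A_{j})$ under the convention $A_{0}:=I_{d}$, so $A_{0}$ is the Gram matrix of $\{I_{d},A_{1},\dots,A_{n}\}$ in the Frobenius inner product on $\Sym_{d}(\mathbb{R})$. Linear independence of this family then yields $A_{0}$ PD; in particular $\det(M_{\overline{p},d})$ is a nonzero element of $\mathbb{R}[\mathbf{x}]$, and being the determinant of a monic pencil with PD initial matrix it is RZ, with rigidly convex set equal to $S_{d}(\overline{p})=\rcs(\overline{p})$.

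The decisive step is to show that $\overline{p}$ divides $\det(M_{\overline{p},d})$. Here I would combine the evaluation formula for the quadratic form with the trace representation of the $L$-form to rewrite $v^{T}M_{\overline{p},d}(a)v=\tr(V^{2}B(a))$, where $B(a):=I_{d}+\sum a_{i}A_{i}$ and $V:=v_{0}I_{d}+\sum v_{i}A_{i}$. At any $a$ with $\overline{p}(a)=0$, $B(a)$ admits a kernel vector $w$; picking $V=ww^{T}$, which lies in our span because the family spans $\Sym_{d}(\mathbb{R})$, makes the form vanish on the corresponding $v$, so $M_{\overline{p},d}(a)$ is singular and $\det(M_{\overline{p},d})(a)=0$. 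Thus every zero of $\overline{p}$ is a zero of $\det(M_{\overline{p},d})$, and using that $\overline{p}$ factors into distinct RZ irreducibles forced by its monic determinantal representation, each factor of $\overline{p}$ must divide $\det(M_{\overline{p},d})$, giving the factorization $\det(M_{\overline{p},d})=q\overline{p}$. Finally, $q$ is itself RZ as a factor of an RZ polynomial, and the identity $\rcs(q\overline{p})=\rcs(q)\cap\rcs(\overline{p})$ together with $\rcs(q\overline{p})=\rcs(\overline{p})$ produces the desired inclusion $\rcs(\overline{p})\subseteq\rcs(q)$.

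The main obstacle I anticipate is the divisibility step, and more precisely the control of multiplicities: the kernel-vector argument only yields zero set containment, which by itself is not enough should $\overline{p}$ happen to carry repeated irreducible factors. Justifying the full divisibility likely requires a refinement of the above, counting null directions of $M_{\overline{p},d}(a)$ generated by bases of $\ker B(a)$ through the assignment $w\mapsto ww^{T}$ and matching their dimension against the multiplicity of each corresponding factor of $\overline{p}$.
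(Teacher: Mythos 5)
Your overall strategy is the right one and the first two steps—identifying $\Sym_{d}(\mathbb{R})$ as a perfect set and recognizing the initial matrix as the Gram matrix of $\{I_{d},A_{1},\dots,A_{n}\}$ in the Frobenius inner product—are exactly the reductions the paper's framework sets up. However, the divisibility step as written contains a genuine error and the gap you flag at the end is not just a potential obstacle, it is real. The claim that a monic determinantal representation forces $\overline{p}$ to factor into \emph{distinct} RZ irreducibles is false: $\det(I_{2}+xI_{2})=(1+x)^{2}$ already has a repeated factor, and nothing in the hypotheses rules this out. So the argument "every zero of $\overline{p}$ is a zero of $\det(M_{\overline{p},d})$, hence $\overline{p}\mid\det(M_{\overline{p},d})$" is invalid as stated. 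A second, more minor issue: vanishing of the \emph{quadratic} form $v^{T}M_{\overline{p},d}(a)v=0$ does not by itself imply $\det M_{\overline{p},d}(a)=0$ for an indefinite symmetric matrix. What actually holds, and what you should say, is that $V=ww^{T}$ lies in the kernel of the associated \emph{bilinear} form: for every $v'$ with corresponding $V'$, one has $(v')^{T}M_{\overline{p},d}(a)v=\tr(V'B(a)ww^{T})=w^{T}V'B(a)w=0$ because $B(a)w=0$, so $M_{\overline{p},d}(a)v=0$ and the pencil is genuinely singular there.

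Both problems are repairable, and there are two routes. The direct repair of your approach is the multiplicity count you already anticipate: if $k=\dim\ker B(a)$ with kernel basis $w_{1},\dots,w_{k}$, then the symmetric matrices $w_{i}w_{j}^{T}+w_{j}w_{i}^{T}$, $1\le i\le j\le k$, give $\binom{k+1}{2}$ linearly independent elements of $\ker M_{\overline{p},d}(a)$ by the same computation as above, and $\binom{k+1}{2}\ge k$ for $k\ge 1$, so along every line through $a$ the zero multiplicity of $\det M_{\overline{p},d}$ is at least that of $\overline{p}$; this yields genuine polynomial divisibility and not just radical containment. A cleaner route entirely avoids the divisibility issue: under the basis $\{I_{d},A_{1},\dots,A_{n}\}$ of $\Sym_{d}(\mathbb{R})$, the identity $v^{T}M_{\overline{p},d}(x)v=\tr(V^{2}B(x))$ exhibits $M_{\overline{p},d}(x)$ as the Gram matrix of the symmetrized multiplication operator $W\mapsto\tfrac12(B(x)W+WB(x))$ on $\Sym_{d}(\mathbb{R})$. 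Its eigenvalues are $\tfrac12(\lambda_{i}+\lambda_{j})$ for $1\le i\le j\le d$ where $\lambda_{1},\dots,\lambda_{d}$ are the eigenvalues of $B(x)$, so up to the nonzero constant coming from the basis change, $\det M_{\overline{p},d}=\prod_{i\le j}\tfrac12(\lambda_{i}+\lambda_{j})=\overline{p}\cdot\prod_{i<j}\tfrac12(\lambda_{i}+\lambda_{j})=:\overline{p}\cdot q$; here $q$ is manifestly a polynomial in $x$ (a symmetric function of the $\lambda_{i}$), and on $\rcs(\overline{p})$ all $\lambda_{i}\ge 0$ so all $\tfrac12(\lambda_{i}+\lambda_{j})\ge 0$, giving $\rcs(\overline{p})\subseteq\rcs(q)$ directly without invoking $\rcs(q\overline{p})=\rcs(q)\cap\rcs(\overline{p})$.
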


Notice that the condition that asks that the family of symmetric matrices $I_{d}, A_{1}, \dots, A_{n}$ is linearly independent and spans $\Sym_{d}(\mathbb{R})$ implies that it has to be $n+1=\dim(\Sym_{d}(\mathbb{R})).$ The result above also implies immediately the next corollary. 

\begin{corolario}[Monic perfect sets provide RZ relaxations]
If $p\in\mathbb{R}[\mathbf{x}]$ admits a RZ perfection which admits a determinantal representation whose initial matrix is PD, then the relaxation is exact and it produces a cofactor respecting the original RCS.
\end{corolario}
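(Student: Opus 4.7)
My plan is to derive this corollary as a direct consequence of combining the two preceding results: the proposition \emph{Exactness in Perfect Sets} and the corollary \emph{PSD-ness in Monic Sets}. The PD initial matrix hypothesis will be used to put the determinantal representation into monic form, and then the perfection hypothesis will feed into the two preceding results to give the two asserted conclusions about $p$.

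First, I would normalize to monic form. Writing the given representation $\overline{p} = \det(A_0 + \sum_{i=1}^{n} x_i A_i)$ with $A_0 \succ 0$ as $A_0 = B^T B$ with $B$ invertible and conjugating the pencil yields $\overline{p} = (\det B)^2 \det(I_d + \sum_{i=1}^{n} x_i B^{-T} A_i B^{-1})$. Under the standard RZ normalization $\overline{p}(0) = 1$, we may thus assume $\overline{p} = \det(I_d + \sum_{i=1}^{n} x_i \tilde{A}_i)$. Since $\overline{p}$ is a perfection, the family $\{I_d, \tilde{A}_1, \dots, \tilde{A}_n\}$ generates a perfect subset; by the trace identification of Proposition~\ref{lformtraces}, the initial matrix of the LMP $M_{\overline{p}, d}$ is exactly the Frobenius Gram matrix of this family, which is PD as soon as the family is linearly independent (and we may always pass to a maximal linearly independent subfamily without losing the perfect-subset property nor the monic representation of $\overline{p}$).

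Next I would apply the preceding corollary \emph{PSD-ness in Monic Sets} to $\overline{p}$ in this monic form. It outputs a factorization $\det(M_{\overline{p}, d}) = q \cdot \overline{p}$ with $q$ RZ and $\rcs(\overline{p}) \subseteq \rcs(q)$, producing a cofactor for $\overline{p}$ that respects $\rcs(\overline{p})$. In parallel, the \emph{Exactness in Perfect Sets} proposition applied to $\overline{p}$ delivers the identity $\{a \in \mathbb{R}^m \mid (a,0) \in S_d(\overline{p})\} = \rcs(p)$.

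To transfer everything from $\overline{p}$ back to $p$, I would restrict all identities to the subspace $x_{m+1} = \cdots = x_n = 0$. The restriction of $M_{\overline{p}, d}$ defines a pencil whose spectrahedron, by the exactness identity, is exactly $\rcs(p)$ (this is the claimed exactness of the relaxation). Its determinant, by the restricted factorization, equals $q(x_1,\dots,x_m,0,\dots,0) \cdot p(x_1,\dots,x_m)$. Finally, the cofactor $q(\cdot, 0)$ respects the original RCS because $\rcs(p) = \{a \mid (a,0) \in \rcs(\overline{p})\} \subseteq \{a \mid (a,0) \in \rcs(q)\} \subseteq \rcs(q(\cdot, 0))$.

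The main obstacle I anticipate is the gap between the \emph{perfect subset} hypothesis inherited from the perfection and the stronger \emph{spans $\Sym_d(\mathbb{R})$} hypothesis written into the preceding corollary. I expect the proof of that corollary to extend transparently to the perfect case, since its essential input is PD-ness of the Gram matrix — a consequence of linear independence, which is strictly weaker than spanning. The subtlety will be to check that the passage to a maximal linearly independent subfamily genuinely preserves both the perfection property and the factorization argument, rather than forcing us to enlarge back to the spanning setup.
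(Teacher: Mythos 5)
The paper does not actually record a proof of this corollary; it only remarks, just before the statement, that the result ``follows immediately'' from the preceding corollary (PSD-ness in monic sets). Your reconstruction---normalize to monic form, combine Exactness in perfect sets with PSD-ness in monic sets, then restrict to $x_{m+1}=\cdots=x_n=0$---is the natural way to try to make that ``immediately'' precise, and your final paragraph correctly isolates a genuine tension: PSD-ness in monic sets is stated only for families spanning all of $\Sym_d(\mathbb{R})$, whereas a perfection only guarantees a perfect linear span, which can be a proper subspace (the diagonal matrices, for instance, form a perfect set that does not span $\Sym_d(\mathbb{R})$). Your proposal does not actually close that gap; you only express the hope that the proof of the preceding corollary extends, and since that proof is not in the paper either, that hope is doing real work.

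Beyond that, two steps you treat as harmless are not. First, normalizing $A_0=B^TB\mapsto I_d$ by conjugation does \emph{not} preserve perfectness: writing $U=\linspan\{A_0,A_1,\dots,A_n\}$ and $V=B^{-1}UB^{-T}$, for $M=B^{-1}\hat M B^{-T}\in V$ and $A=B^{-1}\hat A B^{-T}\in V$ one computes $\tr(M^2A)=\tr(\hat M A_0^{-1}\hat M A_0^{-1}\hat A A_0^{-1})$, which is not $\tr(\hat M^2\hat A)$ unless $A_0=I_d$ to begin with; so after the change of basis you can no longer cite the perfect-set hypothesis on the conjugated family, which is exactly what you need to feed into both preceding results. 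Second, ``pass to a maximal linearly independent subfamily without losing $\dots$ the monic representation of $\overline p$'' is not a free move: dropping a dependent $\tilde A_i$ forces the corresponding variable to be absorbed affinely into the surviving ones, which changes the presentation of $\overline p$, hence the $L$-form values, the mold matrix, and the pencil $M_{\overline p,d}$ whose determinant you want to factor. The surrounding discussion in the paper suggests reading the PD hypothesis as applying to the initial matrix of the \emph{relaxation} (the Gram matrix of the $\tilde A_i$'s, which the Exactness proposition already shows is PSD); under that reading linear independence is automatic and no conjugation of the pencil is required, which would sidestep both of these problems, but the perfect-versus-spanning gap you flagged would still remain.
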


All this shows us that increasing the number of variables should have in general a positive impact on the approximation that the relaxation gives to the original RCS. We will see more about this in the next parts of this work. We have to keep in mind that usually we start \textit{without} any clue about the determinantal representation of $p$ and that not all polynomials admit monic symmetric determinantal representations, as we said before. Thus, in general, finding these variable extensions is not an easy task. Our intention with this section was just to show how adding variables helps, although anything here gave any real clue about how this addition of variables should work in general when we lack a determinantal representation. In the next subsection, we explore another path to build a better relaxation: increasing the number of monomials that play a role in it thus allowing for the collection of more information about the original polynomial. In this sense, remember that the relaxation of $p$ equals that of $\trun_{3}(p)$. If we want to capture the behaviour of the polynomial, we clearly need to pack more information about it in our relaxation. The possible methods allowing us to successfully do this are what we study and explore in the next subsection. However, we will see that some obstructions appear immediately on our way.

\subsection{Extending the matrices}

Putting more monomials into the mix looking directly at the polynomial is problematic for several reasons. These reasons boil down to the noncommutativity of the product of matrices. Before seeing this, we introduce these products.

\begin{definicion}[Hurwitz product]\cite[Definition 3.16]{main}\ 
 Let $A_{i}\in\mathbb{C}^{d\times d}$ for $i\in[n]$ and $\alpha\in\mathbb{N}^{n}_{0}.$ We define the \textit{$\alpha$-Hurwitz product} of $A_{i}$ for $i\in[n]$ as \begin{gather*}\gls{hur}:=\sum_{f\colon[|\alpha|]\to[n]\ \forall i\in[n]\ \card(f^{-1}(i))=\alpha_{i}}A_{f(1)}\cdots A_{f(|\alpha|)}\in\mathbb{C}^{d\times d}.\end{gather*} 
\end{definicion}

Now we can see that the $L$-forms associated to determinantal polynomials come in the form of these products. We will see this next.

\begin{teorema}[Values of $L$-form as traces of Hurwitz products]\cite[Proposition 3.17]{main} Let $d\in\mathbb{N}$ and consider $A_{1},\dots,A_{n}\in\Her_{d}(\mathbb{C})$ so that we can build the RZ polynomial $p:=\det(I_{d}+\sum_{i=1}^{n}x_{i}A_{i})\in\mathbb{R}[\mathbf{x}].$ Then $$L_{p,d}(x^{\alpha})=\frac{1}{\binom{|\alpha|}{\alpha}}\tr(\hur_{\alpha}(A_{1},\dots,A_{n}))$$ for all $\alpha\in\mathbb{N}^{n}_{0}.$ 
\end{teorema}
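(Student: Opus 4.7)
The plan is to unpack the definition of the $L$-form in the determinantal case and match coefficients against an explicit expansion of the logarithm of the determinant. Since $p(0) = \det(I_d) = 1$, the defining identity for $L_{p,d}$ reads
\begin{equation*}
-\log p(-x) \;=\; \sum_{\alpha \in \mathbb{N}_0^n,\, \alpha \neq 0} \frac{1}{|\alpha|}\binom{|\alpha|}{\alpha} L_{p,d}(x^{\alpha})\, x^{\alpha}.
\end{equation*}
With $B := \sum_{i=1}^n x_i A_i$ (viewed as a matrix with entries in the ring of formal power series with zero constant term), the left-hand side becomes $-\log\det(I_d - B)$, so the whole game is to expand this quantity into a power series in $x$ and read off the coefficient of $x^{\alpha}$.

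First I would invoke the standard formal identity $\log\det(I_d - B) = \tr\log(I_d - B)$, together with the formal power series expansion $-\log(1-t) = \sum_{k\geq 1} t^k/k$, to rewrite
\begin{equation*}
-\log\det(I_d - B) \;=\; \sum_{k \geq 1} \frac{1}{k}\, \tr(B^{k}).
\end{equation*}
The identity $\log\det = \tr\log$ is routine in the formal setting since $B$ has entries vanishing at the origin, so the series $\sum_k B^k/k$ converges entry-wise in $\mathbb{R}[[\mathbf{x}]]$; I would mention this but not dwell on it, as it is the standard way in which both operations are defined in this context.

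Next I would carry out the combinatorial expansion. Multiplying out $B^{k} = (\sum_{i} x_i A_i)^{k}$ yields
\begin{equation*}
\tr(B^{k}) \;=\; \sum_{f\colon [k]\to [n]} x_{f(1)}\cdots x_{f(k)}\, \tr\bigl(A_{f(1)}\cdots A_{f(k)}\bigr),
\end{equation*}
and grouping functions $f$ according to their multiplicity vector $\alpha$ with $|\alpha| = k$ (where $\alpha_i = \operatorname{card}(f^{-1}(i))$) collapses this to
\begin{equation*}
\tr(B^{k}) \;=\; \sum_{|\alpha|=k} x^{\alpha}\, \tr\bigl(\hur_{\alpha}(A_1,\ldots,A_n)\bigr),
\end{equation*}
by the very definition of the Hurwitz product. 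Summing over $k$ gives
\begin{equation*}
-\log\det(I_d - B) \;=\; \sum_{\alpha \neq 0} \frac{1}{|\alpha|}\, \tr\bigl(\hur_{\alpha}(A_1,\ldots,A_n)\bigr)\, x^{\alpha}.
\end{equation*}

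Finally, I would match this expansion against the defining series for $L_{p,d}$: the coefficient of $x^{\alpha}$ on the left is $\tfrac{1}{|\alpha|}\tr(\hur_{\alpha}(A_1,\ldots,A_n))$, while on the right it is $\tfrac{1}{|\alpha|}\binom{|\alpha|}{\alpha} L_{p,d}(x^{\alpha})$, and solving for $L_{p,d}(x^{\alpha})$ yields the claimed formula. The only real obstacle is bookkeeping: keeping straight that $\binom{|\alpha|}{\alpha}$ counts exactly the orderings of the factors appearing in a single monomial (so it never cancels inside the sum defining $\hur_{\alpha}$), and justifying the termwise manipulations in the formal power series ring; both are standard but must be done carefully so that the matching of coefficients is unambiguous.
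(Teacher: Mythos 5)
Your proof is correct and is the standard argument: expand $-\log\det(I_d-B)=\tr(-\log(I_d-B))$, group the multinomial expansion of $\tr(B^k)$ by the multi-index $\alpha$, and match coefficients against the defining series for $L_{p,d}$. The paper itself does not reprove this result (it cites \cite[Proposition 3.17]{main}), and your argument is the natural route that the cited source takes. One small bookkeeping point: the defining series only determines $L_{p,d}$ on monomials with $\alpha\neq 0$, so the case $\alpha=0$ should be checked separately; it holds because $\hur_0(A_1,\dots,A_n)$ is the empty product $I_d$, so $\tr(\hur_0)=d=L_{p,d}(1)$, with $\binom{0}{0}=1$.
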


Now, this allows us to see what is our problem when we try to extract traces of products of the matrices involving more than $3$ matrices using just the information about the coefficients of the polynomial. This, in particular, contrasts with what happened to traces of less than $4$ matrices and this is why we could establish these traces in Proposition \ref{lformtraces}.

\begin{remark}[$L$-form behaviour and noncommutativy problems for degree higher than three] Suppose $p=\det(I_{d}+\sum_{i=1}^{n}x_{i}A_{i})$ has a determinantal representation with $A_{i}\in\Sym_{d}(\mathbb{R})$ for $i\in[n].$ Up to degree $3$, the $L$-form allows us to separate the trace of the products of the matrices in the determinantal representation. This means that we have $L_{p}(x^{\alpha})=\tr(A^{\alpha})$ for $\alpha\in\mathbb{N}^{n}_{0}.$ This is no longer true for degree higher than $4$ because for example $\tr(A_{1}A_{2}A_{1}A_{2})\neq\tr(A_{1}A_{2}A_{2}A_{1})$ in general and therefore, without access to the determinantal representation, it is not clear what should the value of $L_{p}(x^{2}_{1}x^{2}_{2})$ be in order to capture that product of traces into the matrix defining the relaxation.
\end{remark}

The remark above will be important when we try to extend the number of monomials at play. It also points clearly in the direction of a necessity for lifting our polynomials in commutative variables into a \textit{nice} non-commutative version and then applying the $L$-form there. However, dealing with non-commutative variables lies beyond our objectives here. One could ask why it is important to extend the number of monomials with which we deal in the relaxation. The answer comes again in terms of traces.

\begin{remark}[Extending the monomials in terms of traces]
 Suppose again that $p=\det(I_{d}+\sum_{i=1}^{n}x_{i}A_{i})$ has a determinantal representation with $A_{i}\in\Her_{d}(\mathbb{R})$ for $i\in[n]$ and $(I_{d},A_{1},\dots,A_{n})$ linearly independent. We saw above that, if we increase its variables to form the polynomial extension $\det(I_{d}+\sum_{i=1}^{m}x_{i}A_{i})$ for $m\geq n$ such that $(I_{d},A_{1},\dots,A_{m})$ is a basis of $\Her_{d}(\mathbb{R})$, then applying the relaxation to $\overline{p}:=\det(I_{d}+\sum_{i=1}^{m}x_{i}A_{i})$ gives us a determinantal representation for the product $\overline{p}\cdot\overline{q}$ for some real zero polynomial $\overline{q}\in\mathbb{R}[x_{1},\dots,x_{m}]$ with $\rcs(\overline{p})\subseteq\rcs(\overline{p})$, which, setting the additional variables $x_{n+1},\dots,x_{m}$ to zero so we come back to the original subspace, gives us immediately a determinantal representation of the product $pq$ with $\rcs(p)\subseteq\rcs(q)$ with $q:=\overline{q}|_{(x_{n+1},\dots,x_{m})=\mathbf{0}}$. This is what the GLC requires. Now, it is not clear how to extend the variables without having already previously a determinantal representation. However, applying Burnside's theorem on matrix algebras (this is a well-known result widely used in many subfields of linear algebra, see its proof in different environments, e.g., in \cite{shapiro2014burnside},\cite{lomonosov2004simplest},\cite{freslon2023compact},\cite{horn2012matrix} or \cite{zhelobenko1973compact}), we know that two generic symmetric matrices $A,B\in\Sym_{d}(\mathbb{R})$ generate the whole matrix algebra $\mathbb{R}^{d\times d}$. Thus, the generic $p$ will have its matrices $A_{1}$ and $A_{2}$ generating the whole matrix algebra and therefore taking the symmetric matrices generated by $A_{1}$ and $A_{2}$ would give $\Sym_{d}(\mathbb{R})$. Using this, we can form a basis $\{B_{1},\dots,B_{m}\}$ of $\Sym_{d}(\mathbb{R})$ written only in terms of $A_{1}$ and $A_{2}$ and form the matrices $\tr(B_{i}A_{k}B_{j})$ with $k\in[n]\cup\{0\}$ with $A_{0}=I_{d}$. Now, if we could completely split these products through the $L$-form we could try to recover a determinantal representation just looking at the $L$-form generically without needing access to the determinantal representation.
\end{remark}

This means that understanding the $L$-forms better in terms of traces would bring us very close to solving the GLC. Unfortunately, this task is not easy. Another consequence of this is that, augmenting the monomials at play, we could hope to build better relaxations or even to recover our polynomial in special cases. However, doing this directly through the $L$-forms as we understand them so far has a problem that we will see in detail in the next section and that we describe next. This problem comes in the form of an obstruction for the general initial moment matrix generated when we expand too much the number of monomials considered.

\begin{remark}[Lost of PSD-ness and control]
In general, the main obstruction we will encounter is that the initial matrix of the relaxation stops being in general PSD when we consider more monomials. This is a problem because we search for determinantal representations where this matrix is PSD. Of course, this initial matrix can be made PSD if we \textit{artificially} multiply by some polynomials that we know that have PSD initial matrices. This can be done because of the properties of the relaxation. However, this does not really add any new information about the polynomial to the matrices in the LMP defining the relaxation. This strategy also does not rely in making the initial matrix bigger in order to carry such information. This artificial construction does not accomplish what we want because we want to look at matrix molds and not at arbitrary factors that do not actually help in the fundamental task of increasing the amount of information about the polynomial carried by the relaxation.
\end{remark}

Now, we are in a position to understand why the two kinds of extensions that we study next are important to consider. Studying certain manageable particularizations of these extensions is the topic of the rest of this part. In particular, we will see the limitations that we encounter when trying to follow these paths towards a useful extension in our way to obtain the kind of determinantal representations we want.
\chapter{Limitations of Immediate Matrix Extensions}\label{ChLimi}

Building on what we saw in the previous chapter, we study the main obstruction encountered when we try to extend the relaxation: the initial matrix should be PSD. In general, arbitrary extension will not respect this. However, we can find an extension that respects this, although we will end up seeing that it still presents some other problems. We present the results in this chapter as an example procedure of how to analyze and study these extensions. We could not find larger extensions overcoming the obstruction we study here. The main tool in this section will be tracial matrix inequalities. This is a huge field. We will mainly use the inequality in \cite[Theorem 1]{dawei} for $k=1$ (also in \cite[Theorem 2.1.12]{bikchentaevtrace}). In order to understand the complexity of this field, we direct the interested reader to the papers \cite{theobald1975inequality,shebrawi2013trace,lasserre1995trace,fang1994inequalities,petz1994survey,sutter2017multivariate,coope1994matrix}. Notice also the connection to the invariant theory of matrices in \cite{klep2018positive,PROCESI1976306}.

\section[The construction of logarithmic spectrahedral relaxations]{Revisiting the construction of logarithmic spectrahedral relaxations}

We pivot over the constructions of the previous chapter in order to build something carrying more information. Some new notation will be necessary in order to shorten our arguments.

\begin{notacion}[Linear combinations of the same matrices]
Let $A_{i}\in\Her_{d}(\mathbb{C})$ for $i\in[n]$ for some $n\in\mathbb{N}.$ For simplicity, in what follows we will denote $K:=u_{1}A_{1}+\cdots+u_{n}A_{n}$ and $K':=u'_{1}A_{1}+\cdots+u'_{n}A_{n}$ for the values of $n$ and $A_{i},u_{i},u'_{i}$ specified in each circumstance so there is no possible confusion. Moreover, we suppose wlog that all our RZ polynomials $p$ verify $p(0)=1$ and $\deg(p)>0$ and denote this set $\gls{rrz}.$
\end{notacion}

The mold that we used above is going to be extended in order to fit more monomials. The new mold will allow us to carry more information than the original mold of the relaxation allowed.

\begin{remark}[Mold of the relaxation]
The relaxation in the previous chapter was built over the mold $(1, x_{1}, \cdots, x_{n})^{T}(1, x_{1}, \cdots, x_{n})$. We will increase this mold introducing $n$ new monomials.
\end{remark}

We remind the reader of our entrywise application of maps to entries of tensors. We use tensors here because we will describe how the relaxation is constructed in a more general setting. In particular, we can interpret that the relaxation in \cite[Definition 3.19]{main} is constructed from a (symmetric) [(generalized)] moment $3$-tensor given as the tensor product of the vector of monomials $(1,x_{1},\dots,x_{n})$ with itself. This gives us a (symmetric) $3$-tensor filled with monomials up to degree $3$. Now we introduce some notation and define the $L$-polynomial determinants of such a symmetric tensor. The generality we require for this is the reason why we set our notation to be able to speak easily about entrywise applications of functions to the elements inside a matrix. We will see that this is helpful for our task. Now we remember how to denote subtensors of lower order inside a tensor. In order to keep the notation as simple as possible, these will be denoted just by fixing some indices.

\begin{definicion}[Sliced specializations of tensors]
Fix $k,s\in\mathbb{N}.$ Consider $A\in S^{n_{1}\times\cdots\times n_{k}}$ a $k$-tensor with elements given by $t_{i_{1},\dots,i_{k}}$ for $i_{j}\in[n_{j}]$ for $j\in[k]$. We denote the subtensor of order $k-s$ obtained from fixing $s$ indices in positions $(j_{1}<\dots<j_{s})$ to the respective numbers $(i_{j_{1}},\dots,i_{j_{s}})$ for $s,j_{l}\in[k]$ and $i_{j_{l}}\in[n_{j_{l}}]$ for $l\in[s]$ by $A_{(j_{1}\to i_{j_{1}},\dots,j_{s}\to i_{j_{s}})}$ and call it the \textit{$(j_{1}\to i_{j_{1}},\dots,j_{s}\to i_{j_{s}})$ slice of $A$}.
\end{definicion}

Now we can easily define the map in (the entries of) a $3$-tensor that gives the relaxation in \cite{main} as we have the notation necessary to manipulate elements entrywise and to refer to subtensors obtained by fixing some indices.

\begin{definicion}[From tensors to determinantal polynomials]
\label{detpol}
Let $R$ be a ring and $A\in S^{n\times n\times n}$ be a symmetric $3$-tensor with entries in the set $S$ and consider two maps $f\colon[n]\to R$ and $L\colon S\to R.$ Then we define the \textit{$L$-polynomial determinant of the $3$-tensor $A$ with markers $f$} as $\det(f,L,A):=\det(f(1)L\circledcirc A_{1\to 1}+\cdots+f(n)L\circledcirc A_{1\to n}).$
\end{definicion}

The symmetry of the $3$-tensor gives us some freedom at the time of slicing.

\begin{hecho}[Symmetry]
The index being fixed in every summand inside the determinant can be chosen to be any of the three available as a consequence of the symmetry of the $3$-tensor, i.e., $\det(f,L,A)=\det(f(1)L\circledcirc A_{i\to 1}+\cdots+f(n)L\circledcirc A_{i\to n})$ for $i\in\{1,2,3\}.$
\end{hecho}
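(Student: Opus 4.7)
The plan is to unpack the definitions and reduce the claim to the pointwise equality of the three sliced matrices. Concretely, I would first observe that for a symmetric $3$-tensor $A\in S^{n\times n\times n}$ with entries $t_{i_1,i_2,i_3}$, the condition of symmetry means $t_{i_1,i_2,i_3}=t_{i_{\sigma(1)},i_{\sigma(2)},i_{\sigma(3)}}$ for every permutation $\sigma\in\mathfrak{S}_3$. Under the slicing convention from Definition~\ref{detpol} (or rather the preceding slice definition), the slice $A_{j\to k}$ is an $n\times n$ matrix whose $(a,b)$-entry reads off the $k$ in the $j$-th slot of the tensor. Writing these three options out, $A_{1\to k}$ has $(a,b)$-entry $t_{k,a,b}$, $A_{2\to k}$ has $(a,b)$-entry $t_{a,k,b}$, and $A_{3\to k}$ has $(a,b)$-entry $t_{a,b,k}$.

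Next, I would invoke the symmetry of $A$ pointwise to conclude $t_{k,a,b}=t_{a,k,b}=t_{a,b,k}$ for all $a,b,k\in[n]$, so that the three matrices $A_{1\to k}$, $A_{2\to k}$ and $A_{3\to k}$ coincide as elements of $S^{n\times n}$. This is the only substantive step, and it is purely index-bookkeeping. The molding operation $L\circledcirc(-)$ is defined entrywise, so from the identity of the mold matrices we immediately get $L\circledcirc A_{1\to k}=L\circledcirc A_{2\to k}=L\circledcirc A_{3\to k}$ for every $k\in[n]$.

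Finally, multiplying each of these matrices by the scalar $f(k)$, summing over $k\in[n]$, and taking determinants preserves the identity, yielding
\[
\det\!\Bigl(\sum_{k=1}^{n} f(k)\,L\circledcirc A_{1\to k}\Bigr)=\det\!\Bigl(\sum_{k=1}^{n} f(k)\,L\circledcirc A_{i\to k}\Bigr)
\]
for $i\in\{1,2,3\}$, which is exactly the asserted equality $\det(f,L,A)=\det(f(1)L\circledcirc A_{i\to 1}+\cdots+f(n)L\circledcirc A_{i\to n})$.

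There is no real obstacle here; the only place where care is needed is making sure that the indexing convention for the slices is consistent, so that the equality $t_{k,a,b}=t_{a,k,b}=t_{a,b,k}$ genuinely matches up matrix entries in the same positions rather than in transposed positions. Since $A$ is symmetric as a $3$-tensor, each individual slice is also a symmetric matrix, so even a transposition ambiguity would be harmless; this makes the verification essentially automatic.
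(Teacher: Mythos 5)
Your argument is correct and is precisely the direct verification-from-definitions that the statement ``as a consequence of the symmetry of the $3$-tensor'' gestures at; the paper itself records this as a \emph{Fact} with no written proof. The key computation --- that $t_{k,a,b}=t_{a,k,b}=t_{a,b,k}$ by symmetry of $A$, so the three slices $A_{1\to k},A_{2\to k},A_{3\to k}$ coincide as matrices, and that entrywise molding, scalar multiplication by $f(k)$, summation, and taking the determinant all preserve this identity --- is exactly what is needed, and your closing remark that each slice is itself symmetric (so any residual transposition ambiguity is harmless) is a useful sanity check.
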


It is easy to build the relaxation using this.

\begin{remark}[Construction of the relaxation through sliced specializations]
In particular, the relaxation in \cite[Definition 3.19]{main} is constructed fixing $A$ the obvious mold moment symmetric $3$-tensor built through taking the $3$-outer power of the vector $(1,x_{1},\dots,x_{n})$, $L$ any of the $L$-forms defined there corresponding to a power series $p\in\mathbb{R}[\mathbf{x}]$ and $f\colon\{1,\dots,n+1\}\to\mathbb{R}[\mathbf{x}]$ with $f(1)=1$ and $f(i)=x_{i-1}$ for $i\in\{2,\dots,n+1\}.$ We observe that a necessary condition for this operation to give an RZ polynomial is that the matrix $f(1)L\circledcirc A_{i\to 1}$ is PD. \cite[Theorem 3.35]{main} shows that in fact this condition is fulfilled in this case whenever we take $L=L_{p,e}$ with $p\in\mathbb{R}[\mathbf{x}]$ an RZ polynomial with $e\geq\deg(p).$ We will put the focus in what follows on this condition translated to other situations and objects under our study.
\end{remark}

We use moment matrices as molds to build other polynomials over them applying the $L$-forms $L_{p,e}$ entrywise together with other intermediate operations as in \cite[Definition 3.19]{main}. We will refer quite often to these objects, so we simplify their name.

\begin{notacion}[Mold moment matrices](cf. \cite[Subsection 2.2.2]{putinar2008emerging} and \cite[Subsection 3.2.1]{lasserre2009moments})
Each moment matrix in the next section that is supposed to be the initial matrix (at the origin) of an LMP generated by applying $L_{p,e}$ entrywise will be called a \textit{mold moment matrix} (or \textit{MMM}, for short).
\end{notacion}

We need mold big enough so they are able to carry information about many coefficients. There are many possibilities to explore, but just a few seem to overcome the most basic obstructions.

\begin{remark}[Limit amount of information]
The main problem of this relaxation is that it takes a limited amount of information about the polynomial $p$ which interests us at each moment. Here we investigate then precisely other directions that allow the collection of higher degree information given by using MMMs whose indexes run up to these higher degrees.
\end{remark}

There are of course very different possibilities of structures of MMMs that lead us to the objective of collecting the information given by the coefficients of the monomials of higher degree. These can be classified in several categories that we will name when we consider each candidate structure. This is the topic of the next sections. For this we first have to study the traces.

\section{Trace inequalities matter}

We begin with a lemma about an inequality between the two different traces of products of four matrices that are pairwise equal that will be useful in the proof of Proposition \ref{propomatrix}. In \cite[Theorem 1]{dawei} there is a generalization of the next result. We reproduce here only the short proof of the particular case $k=1.$

\begin{lema}[$(4,2)$-products trace inequality]\label{lemmatrace}
Let $A,B\in\Sym_{d}(\mathbb{R})$ be two real symmetric matrices. Then there are just two possible traces of $4$-products formed by two copies of each matrix and these are given by $\tr(A^2B^2)\geq\tr(ABAB),$ which gives equality if and only if $A$ and $B$ commute.
\end{lema}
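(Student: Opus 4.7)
The plan is to use the standard commutator trick that converts the desired trace inequality into the nonnegativity of the trace of a manifestly positive semidefinite matrix.

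First, I would verify the preliminary combinatorial claim that only two such $4$-traces exist. Any word of length four in two letters $A, B$ with exactly two of each is, up to cyclic rotation, either $AABB$ or $ABAB$. Since the trace is invariant under cyclic permutations, the only distinct values among such traces are $\tr(A^2B^2)$ and $\tr(ABAB)$, which already accounts for the first assertion of the lemma.

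Next, I would introduce the commutator $C := AB - BA$. The symmetry of $A$ and $B$ gives $C^{T} = B^{T}A^{T} - A^{T}B^{T} = BA - AB = -C$, so $C$ is skew-symmetric. Consequently $C^{T}C = -C^{2}$ is positive semidefinite and in particular has nonnegative trace. I would then expand
\begin{equation*}
\tr(C^{T}C) = -\tr((AB-BA)^{2}) = -\tr(ABAB) + \tr(AB^{2}A) + \tr(BA^{2}B) - \tr(BABA),
\end{equation*}
and, using cyclicity to rewrite $\tr(AB^{2}A) = \tr(BA^{2}B) = \tr(A^{2}B^{2})$ and $\tr(BABA) = \tr(ABAB)$, conclude that
\begin{equation*}
0 \;\leq\; \tr(C^{T}C) \;=\; 2\bigl(\tr(A^{2}B^{2}) - \tr(ABAB)\bigr),
\end{equation*}
which is the desired inequality.

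Finally, for the equality characterization, I would note that $\tr(C^{T}C) = 0$ forces $C = 0$ (since $\tr(C^T C)$ is the sum of squares of the entries of $C$), i.e.\ $AB = BA$; conversely, if $A$ and $B$ commute, then $ABAB = A^{2}B^{2}$ as operators, so their traces coincide trivially. There is no real obstacle here; the only subtlety is being careful with cyclic permutations when expanding $\tr((AB-BA)^2)$, and with the observation that skew-symmetry of $C$ is what guarantees $-C^{2}$ is PSD (without symmetry of $A$ and $B$ the argument would break).
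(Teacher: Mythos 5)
Your proof is correct and takes essentially the same route as the paper, which also reduces the inequality to the nonnegativity of the trace of $-C^{2}$ for the skew-symmetric commutator $C=AB-BA$ (following \cite{dawei}). In fact your version tracks the sign more carefully: the paper's displayed identity states $\tr((AB-BA)^{2})=2(\tr(A^{2}B^{2})-\tr(ABAB))$, which has the opposite sign of the correct expansion $\tr((AB-BA)^{2})=2(\tr(ABAB)-\tr(A^{2}B^{2}))$, whereas your $\tr(C^{T}C)=2(\tr(A^{2}B^{2})-\tr(ABAB))\geq 0$ is stated correctly and also makes the equality case immediate.
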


\begin{proof}
The first part is immediate checking all the possible products and reducing using the properties of the trace. For the second part, we refer to \cite{dawei}, where we found the following straightforward argument. The matrix $AB-BA$ is skew-symmetric and therefore its square $(AB-BA)^{2}$ is negative semidefinite so we obtain $0\geq\tr((AB-BA)^{2})=2(\tr(A^2B^2)-\tr(ABAB)),$ which finishes the proof.
\end{proof}

This inequality allows us to establish another inequality in terms of the $L$-form. We need to connect the $L$-form with the trace.

\begin{lema}[$L$-form versus trace inequality]\label{lemmaineq}
Let $r(x_{1},x_{2})=\det(I_{d}+A_{1}x_{1}+A_{2}x_{2})\in\mathbb{R}_{\mbox{RZ}}[\mathbf{x}]$ be a bivariate RZ polynomial and fix an arbitrary integer $e\in\mathbb{N}_{0}.$ Then $L_{r,e}(x_{1}^2(u_{1}x_{1}+u_{2}x_{2})^{2})\geq\tr(A_{1}(u_{1}A_{1}+u_{2}A_{2})A_{1}(u_{1}A_{1}+u_{2}A_{2}))$ for each $(u_{1},u_{2})\in\mathbb{R}^{2}.$
\end{lema}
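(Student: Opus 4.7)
The plan is to expand both sides as quartic expressions in $u_1, u_2$ and compare them monomial by monomial, reducing everything to the scalar inequality already proved in Lemma \ref{lemmatrace}.

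First I would expand the polynomial argument as
\[
x_1^2(u_1 x_1 + u_2 x_2)^2 = u_1^2 x_1^4 + 2 u_1 u_2 x_1^3 x_2 + u_2^2 x_1^2 x_2^2,
\]
and the corresponding matrix quantity
\[
\tr(A_1 K A_1 K) = u_1^2 \tr(A_1^4) + 2 u_1 u_2 \tr(A_1^3 A_2) + u_2^2 \tr(A_1 A_2 A_1 A_2),
\]
using cyclicity of the trace to collapse the two mixed terms. So the claim reduces to an inequality between the three $L_{r,e}$-values on the monomials $x_1^4, x_1^3 x_2, x_1^2 x_2^2$ and the three matrix traces above.

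Next I would use the Hurwitz product formula for the $L$-form, noting that since $r$ has a monic symmetric determinantal representation and the values $L_{r,e}(x^\alpha)$ for $|\alpha|>0$ do not depend on $e$ (the virtual degree only appears in $L_{r,e}(1) = e$), one can invoke the identity $L_{r,e}(x^\alpha) = \frac{1}{\binom{|\alpha|}{\alpha}} \tr(\hur_\alpha(A_1, A_2))$ freely. For $\alpha = (4,0)$ and $\alpha = (3,1)$, every word in the Hurwitz sum is a cyclic rotation of a single word, so all traces coincide and the multinomial coefficients cancel, yielding $L_{r,e}(x_1^4) = \tr(A_1^4)$ and $L_{r,e}(x_1^3 x_2) = \tr(A_1^3 A_2)$. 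These two terms therefore already match the corresponding contributions on the RHS exactly.

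The only nontrivial term is $\alpha = (2,2)$. Here I would enumerate the six words with two $A_1$'s and two $A_2$'s and, using cyclicity, collect them into four copies of $\tr(A_1^2 A_2^2)$ and two copies of $\tr(A_1 A_2 A_1 A_2)$, giving
\[
L_{r,e}(x_1^2 x_2^2) = \tfrac{2}{3}\tr(A_1^2 A_2^2) + \tfrac{1}{3}\tr(A_1 A_2 A_1 A_2).
\]
Subtracting the matching $u_2^2 \tr(A_1 A_2 A_1 A_2)$ from the RHS then leaves
\[
L_{r,e}(x_1^2(u_1 x_1 + u_2 x_2)^2) - \tr(A_1 K A_1 K) = \tfrac{2 u_2^2}{3}\bigl(\tr(A_1^2 A_2^2) - \tr(A_1 A_2 A_1 A_2)\bigr),
\]
which is non-negative by Lemma \ref{lemmatrace} applied to $A = A_1$ and $B = A_2$. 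This finishes the proof.

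The only step that requires real care is the bookkeeping for the Hurwitz product at $\alpha = (2,2)$: identifying which of the six words become equal under the cyclic trace and in which proportion. Everything else is a linear algebra expansion and a direct appeal to Lemma \ref{lemmatrace}.
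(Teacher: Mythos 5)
Your proof is correct and follows essentially the same route as the paper: convert each degree-$4$ value of $L_{r,e}$ into traces of Hurwitz products via the determinantal representation, note that $L_{r,e}(x_1^4)$ and $L_{r,e}(x_1^3 x_2)$ match the corresponding trace terms exactly, identify $L_{r,e}(x_1^2 x_2^2)=\tfrac{2}{3}\tr(A_1^2A_2^2)+\tfrac{1}{3}\tr(A_1A_2A_1A_2)$, and reduce the whole inequality to $\tr(A_1^2A_2^2)\geq\tr(A_1A_2A_1A_2)$ from Lemma \ref{lemmatrace}. Isolating the difference as $\tfrac{2u_2^2}{3}\bigl(\tr(A_1^2A_2^2)-\tr(A_1A_2A_1A_2)\bigr)$ is a slightly cleaner way to present the same computation than the paper's chain of (in)equalities, but the argument is the same.
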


\begin{proof}
We only have to develop the products of matrices and express the values of $L_{r,e}$ over the corresponding products of variables in terms of the trace. For this we must have in mind the following list of values obtained applying \cite[Proposition 3.17]{main} to write the form $L_{r,e}$ in terms of the traces of the matrices in the representation of $r$. Here we have to take special care of monomials of degree four in two variables, which are not considered in \cite[Corollary 3.18]{main} because they do not behave as nice as the monomials of degree up to $3.$ Thus applying \cite[Proposition 3.17]{main} we observe that we have the following expressions in terms of the trace. \begin{enumerate}
    \item $$L_{r,e}(x_{1}^3x_{2})=4\binom{4}{3,1}^{-1}\tr(A_{1}^3 A_{2})=\tr(A_{1}^3 A_{2}).$$
    \item \begin{gather*}L_{r,e}(x_{1}^2x_{2}^2)=\binom{4}{2,2}^{-1}(4\tr(A_{1}^2 A_{2}^2)+2\tr(A_{1}A_{2}A_{1}A_{2}))\\=\frac{4}{6}\tr(A_{1}^2 A_{2}^2)+\frac{2}{6}\tr(A_{1}A_{2}A_{1}A_{2}).\end{gather*}
    \item $$L_{r,e}(x_{1}^4)=\binom{4}{4}^{-1}\tr(A_{1}^4)=\tr(A_{1}^4).$$
\end{enumerate}
This allows us to write the next chain of comparisons where we use Lemma \ref{lemmatrace} for the inequality
\begin{gather}
   L_{r,e}(x_{1}^2(u_{1}x_{1}+u_{2}x_{2})^2)=L_{r,e}(u_{1}^{2}x_{1}^4+2u_{1}u_{2}x_{1}^3x_{2} + u_{2}^{2}x_{1}^2x_{2}^2)=\\\tr(u_{1}^{2}A_{1}^4+2u_{1}u_{2}A_{1}^3A_{2}+\frac{4}{6}u_{2}^{2}A_{1}^2 A_{2}^2+\frac{2}{6}u_{2}^{2}A_{1}A_{2}A_{1}A_{2})\geq\\
\begin{gathered}\label{playineq}\tr(u_{1}^{2}A_{1}^4+2u_{1}u_{2}A_{1}^3A_{2}+u_{2}^{2}A_{1}A_{2}A_{1}A_{2})=\\\tr(A_{1}(u_{1}A_{1}+u_{2}A_{2})A_{1}(u_{1}A_{1}+u_{2}A_{2})),\end{gathered}
\end{gather} which finishes the proof of our result.
\end{proof}

Equation \ref{playineq} in the chain of comparisons of the proof above will be important in the future. For that reason, it has its own number. A similar lemma presenting these kinds of inequalities follows from the one above.

\begin{lema}[$L$-form versus trace inequality with more degree freedom]\label{lemmaineqalmost}
Fix $\mathbf{x}=(x_{1},x_{2})$ and let $r(\mathbf{x})=r(x_{1},x_{2})=\det(I_{d}+A_{1}x_{1}+A_{2}x_{2})=\det(I_{d}+x^{T}A)\in\mathbb{R}_{\mbox{RZ}}[x]_{d}$ be a bivariate RZ polynomial of degree $d:=\deg(r)$ and fix an arbitrary integer $d\leq e\in\mathbb{N}_{0}.$ Then \begin{equation*}L_{r,e}((1+u^{T}x+x_{1}u'^{T}x)^2)\geq\tr((I_{d}+u^{T}A+A_{1}(u'^{T}A))^2)\end{equation*} for each $u=(u_{1},u_{2})^{T}, u'=(u'_{1},u'_{2})^{T}\in\mathbb{R}^{2}$ and where $A=(A_{1},A_{2})\in(\Sym_{d}(\mathbb{R}))^{2}$ so we can denote $u^{T}A=u_{1}A_{1}+u_{2}A_{2}$ and $u'^{T}A=u'_{1}A_{1}+u'_{2}A_{2}.$
\end{lema}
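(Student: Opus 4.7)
The plan is to expand the square inside the $L$-form into its six monomial summands of degrees $0,1,2,3$ and $4$, apply $L_{r,e}$ termwise, and match the resulting quantities against the corresponding expansion of $\tr((I_{d}+u^{T}A+A_{1}(u'^{T}A))^{2})$. Writing $B:=u^{T}A$ and $C:=u'^{T}A$ and using cyclicity of the trace to collapse the mixed cross products, the target right hand side becomes
\begin{equation*}
d+2\tr(B)+2\tr(A_{1}C)+\tr(B^{2})+2\tr(BA_{1}C)+\tr(A_{1}CA_{1}C).
\end{equation*}

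On the polynomial side, I would expand $(1+u^{T}x+x_{1}u'^{T}x)^{2}$ into the summands $1$, $2(u^{T}x)$, $2x_{1}(u'^{T}x)$, $(u^{T}x)^{2}$, $2x_{1}(u^{T}x)(u'^{T}x)$ and $x_{1}^{2}(u'^{T}x)^{2}$. For each of the first five summands, whose total degree is at most $3$, the values of $L_{r,e}$ on the underlying monomials are given exactly by traces of Hurwitz products via Proposition \ref{lformtraces}; direct substitution yields respectively $e$, $2\tr(B)$, $2\tr(A_{1}C)$, $\tr(B^{2})$, and, after expanding the cubic into $u_{1}u'_{1}x_{1}^{3}$, $(u_{1}u'_{2}+u_{2}u'_{1})x_{1}^{2}x_{2}$, $u_{2}u'_{2}x_{1}x_{2}^{2}$ and reassembling by cyclicity of $\tr$, exactly $2\tr(BA_{1}C)$. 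The only inequality arising so far is the identification $L_{r,e}(1)=e\geq d=\tr(I_{d})$ granted by the hypothesis $e\geq d$.

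The remaining quartic summand $x_{1}^{2}(u'^{T}x)^{2}$ is precisely the argument treated in Lemma \ref{lemmaineq} with $u$ replaced by $u'$, which directly supplies $L_{r,e}(x_{1}^{2}(u'^{T}x)^{2})\geq\tr(A_{1}CA_{1}C)$. Summing the six contributions and comparing with the expanded trace above produces the desired inequality. The genuinely noncommutative obstruction is confined entirely to the quartic piece and is already absorbed by the previous lemma; accordingly, the only really delicate step of the plan is the bookkeeping of the cubic term, where the implicit symmetrization in each $L_{r,e}$-value must be matched against the cyclic symmetrization of the trace expansion of $BA_{1}C$, with everything else reducing to a linear unpacking of the values of $L_{r,e}$ on monomials of degree at most $3$.
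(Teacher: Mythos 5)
Your proposal is correct and follows essentially the same route as the paper's proof: both expand the square inside the $L$-form, read off the degree-$\le 3$ contributions exactly as traces of products of the $A_i$ via \cite[Proposition 3.17, Corollary 3.18]{main}, absorb the single degree-$4$ piece $x_1^2(u'^Tx)^2$ via Lemma \ref{lemmaineq}, and use $e\geq d$ to dominate the constant term. The only difference is a purely cosmetic regrouping of the lower-degree summands (the paper keeps $(1+u^Tx)^2$ and $2x_1(1+u^Tx)(u'^Tx)$ together), which changes nothing in the argument.
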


\begin{proof}
We want to show\begin{gather}L_{r,e}((1+u^{T}x+x_{1}u'^{T}x)^2)=\\L_{r,e}((1+u^{T}x)^2+2x_{1}(1+u^{T}x)(u'^{T}x)+x_{1}^2(u'^{T}x)^2)=\\\begin{gathered}\label{intra1}L_{r,e}\bigg{(}(1+u_{1}x_{1}+u_{2}x_{2})^2+2x_{1}(1+u_{1}x_{1}+u_{2}x_{2})(u'_{1}x_{1}+u'_{2}x_{2})\\+x_{1}^2(u'_{1}x_{1}+u'_{2}x_{2})^2\bigg{)}\geq\tr((I_{d}+u^{T}A+A_{1}(u'^{T}A))^2).\end{gathered}\end{gather} Remembering \cite[Proposition 3.17 and Corollary 3.18]{main} and the developments in the proof of Lemma \ref{lemmaineq}, we see that the only problematic case in order to express the form $L_{r,e}$ in terms of traces is $L_{r,e}(x_{1}^2x_{2}^2)=\frac{4}{6}\tr(A_{1}^2 A_{2}^2)+\frac{2}{6}\tr(A_{1}A_{2}A_{1}A_{2})$ where the trace divides into two summands. Therefore, developing the first square on the LHS of Inequation \ref{intra1} and taking into account the fact that the last summand has problematic degree $4$ and so it deserves an special treatment, we have that the inequality that we must prove can be expanded and then simplified in terms of traces to \begin{gather*}e+\tr\bigg{(}2(u_{1}A_{1}+u_{2}A_{2})+(u_{1}A_{1}+u_{2}A_{2})^2+\\2A_{1}(I_{d}+u_{1}A_{1}+u_{2}A_{2})(u'_{1}A_{1}+u'_{2}A_{2})\bigg{)}+L_{r,e}(x_{1}^2(u'_{1}x_{1}+u'_{2}x_{2})^2)\geq\\e+\tr\bigg{(}2(u_{1}A_{1}+u_{2}A_{2})+(u_{1}A_{1}+u_{2}A_{2})^2+\\2A_{1}(I_{d}+u_{1}A_{1}+u_{2}A_{2})(u'_{1}A_{1}+u'_{2}A_{2})\bigg{)}+\\\tr(A_{1}(u'_{1}A_{1}+u'_{2}A_{2})A_{1}(u'_{1}A_{1}+u'_{2}A_{2})),\end{gather*} where we used Lemma \ref{lemmaineq} to deal at once with the otherwise problematic \begin{gather*}
   L_{r,e}(x_{1}^2(u'_{1}x_{1}+u'_{2}x_{2})^2)=L_{r,e}(u'_{1}{}^{2}x_{1}^{4}+2u'_{1}u'_{2}x_{1}^{3}x_{2})+L_{r,e}(u'_{2}{}^{2}x_{1}^{2}x_{2}^2)=\\\tr(u'_{1}{}^{2}A_{1}^{4}+2u'_{1}u'_{2}A_{1}^{3}A_{2}+\frac{4}{6}u'{}_{2}^{2}A_{1}^{2} A_{2}^{2}+\frac{2}{6}u'_{2}{}^{2}A_{1}A_{2}A_{1}A_{2}).
\end{gather*}
Finally, we can see that the the last expression in the previous inequality provides an easy path towards the inequality that we are searching for so we have at once
\begin{gather*}e+\tr(2K+K^2+2A_{1}(I_{d}+K)K'+A_{1}K'A_{1}K')\geq\\ d+\tr(2K+K^2+2A_{1}(I_{d}+K)K'+A_{1}K'A_{1}K')=\\\tr(I_{d}+2K+K^2+2A_{1}(I_{d}+K)K'+A_{1}K'A_{1}K')=\\\tr((I_{d}+K)^2+2(I_{d}+K)A_{1}K'+A_{1}K'A_{1}K')=\tr(((I_{d}+K)+A_{1}K')^2),\end{gather*} where we denoted $K=u_{1}A_{1}+u_{2}A_{2}$ and $K'=u'_{1}A_{1}+u'_{2}A_{2}$ and used the facts that $e\geq d$ and several identities between traces of the products of up to $3$ real symmetric matrices that let us conclude the previous chain of (in)equalities in terms of traces that completes the proof of this lemma.
\end{proof}

The statement of the result above cover many polynomials. In fact, it covers all polynomials with two variables we are interested in.

\begin{remark}
Indeed, we can observe that the set of RZ polynomials considered in the statement of the previous lemma is actually all the set $\mathbb{R}_{\mbox{RZ}}[\mathbf{x}]$. This is so because the lemma is stated for the case of two variables, where the Helton-Vinnikov theorem is available and therefore we know that we can always write $r(x_{1},x_{2})=\det(I_{d}+A_{1}x_{1}+A_{2}x_{2}),$ where we can suppose that $A_{1}$ is diagonal by a trivial orthogonal diagonalization argument.
\end{remark}

Now, experimentation with the $L$-form plays a key role in finding the identities we look for. For the importance of experiments of this type in order to determine probabilities in trace inequalities, we refer the reader to the work of Greene in \cite{greene2014traces} and the master's theses of his students \cite{schneider2015investigating,huang2009traces}.

\begin{remark}[Experiments on the $L$-form]
We note here that $\tr((I_{d}+K+A_{1}K')^2)\ngeq0$ in general although our random experiments indicate that when $A_{1}$ is diagonal and $K$ and $K'$ are real symmetric $\tr((I_{d}+K+A_{1}K')^2)>0$ happens quite often. Moreover, at this point, our random experiments also indicate that, in general, we should have $L_{r,e}((1+u^{T}x+x_{1}u'^{T}x)^2)\geq0$ and this is what we will prove.
\end{remark}

Equipped with our new knowledge about inequalities using the $L$-form, we try now to expand the indexing column and row of our MMMs just to cover one degree more.

\begin{definicion}[$i$-monomial extensions of the relaxation mold]
Fix the tuple of variables $\mathbf{x}=(x_{1},\dots,x_{n})$. Denote $\gls{mni}:=$\begin{gather}
(1, x_{j_{1}}, \cdots, x_{j_{m}}, x_{i}x_{i_{1}}, \cdots, x_{i}x_{i_{k}})^{T}(1, x_{j_{1}}, \cdots, x_{j_{m}}, x_{i}x_{i_{1}}, \cdots, x_{i}x_{i_{k}})=\\\begin{gathered}
    \label{ext1}
\begin{pmatrix}
1 & x_{j_{1}} & \cdots & x_{j_{m}} & x_{i}x_{i_{1}} & \cdots & x_{i}x_{i_{k}}\\
x_{j_{1}} & x_{j_{1}}^{2} & \cdots & x_{j_{1}}x_{j_{m}} & x_{j_{1}}x_{i}x_{i_{1}} & \cdots & x_{j_{1}}x_{i}x_{i_{k}}\\
\vdots & \vdots & \ddots & \vdots & \vdots & \ddots & \vdots\\
x_{j_{m}} & x_{j_{1}}x_{j_{m}} & \cdots & x_{j_{m}}^2 & x_{j_{m}}x_{i}x_{i_{1}} & \cdots & x_{j_{m}}x_{i}x_{i_{k}}\\
x_{i}x_{i_{1}} & x_{i}x_{i_{1}}x_{j_{1}} & \cdots & x_{i}x_{i_{1}}x_{j_{m}} & x_{i}^{2}x_{i_{1}}^{2} & \cdots & x_{i}^{2}x_{i_{1}}x_{i_{k}}\\
\vdots & \vdots & \ddots & \vdots & \vdots & \ddots & \vdots\\
x_{i}x_{i_{k}} & x_{i}x_{i_{k}}x_{j_{1}} & \cdots & x_{i}x_{i_{k}}x_{j_{m}} & x_{i}^{2}x_{i_{1}}x_{i_{k}} & \cdots & x_{i}^{2}x_{i_{k}}^{2}
\end{pmatrix},
\end{gathered}
\end{gather} where $k,m,i,i_{a},j_{b}\in[n]$ for all $a\in[k], b\in[m]$ and $i_{a}\neq i_{a'}$ whenever $a\neq a'$ and $j_{b}\neq j_{b'}$ whenever $b\neq b'$. We call it the \textit{$i$-monomial $(j_{1},\dots,j_{m};i_{1},\dots,i_{k})$-extension of the relaxation mold $M_{n,\leq1}$}. For simplicity, we will denote it simply $M_{n,i}$ when we take $(j_{1},\dots,j_{m})=(i_{1},\dots,i_{k})=(1,\dots,n)$ and call it the \textit{$i$-monomial extension of the relaxation mold $M_{n,\leq1}$}.
\end{definicion}

It turns out that this MMM produces PSD matrices for the independent term, i.e., the matrices obtained by applying $L_{p,e}$ to all the entries of the matrix in Equation \ref{ext1} above for any RZ polynomial $p\in\mathbb{R}[\mathbf{x}]$ and $e\geq\deg(p)$ are all PSD. We proceed proving this.

\begin{remark}[Looking at the initial matrix]
The initial matrix being PSD is a cut for MMM to be useful for us. Otherwise, we cannot produce a monic representation.
\end{remark}

Remember that we will use our notation for entrywise application. Additionally, in the course of the next results we will adopt some simplifications that will make our life easier wlog.

\setlength{\emergencystretch}{3em}%
\begin{simplificacion}\label{remarkred}
We remember that because of classical results it is enough to show that the biggest possible matrix described in Equation \ref{ext1}, i.e., the one with $k=n=m$, is PSD as we know that if a matrix is PSD then each principal submatrix of it is PSD and moreover simultaneous permutations of corresponding rows and columns do not change the set of eigenvalues. Thus, we fix wlog (as a simple relabeling of the variables shows) $i=1$ instead of $i\in\{1,\dots,n\}$ arbitrary and $k=n=m$ in what follows. Additionally, we introduce temporarily, as in \cite[Lemma 3.22]{main} in order to shorten the formulas, the false variables $x_{0}:=1$ and $x_{n+j}=:x_{i}x_{j}$ for each $j\in\{1,\dots,n\}$ and denote, in accordance with the simplifications just adopted, $M$ the matrix \ref{ext1} with $i=1$ and $k=n=m$ and divide any arbitrary vector $v=(w_{0},w_{1},\dots,w_{n},w'_{1},\dots,w'_{n})^{T}=(w^{T},w'^{T})^{T}=(v_{0},\dots,v_{2n})^{T}\in\mathbb{R}^{2n+1}$ with $w=(w_{0},w_{1},\dots,w_{n})\in\mathbb{R}^{n+1}$ and $w'=(w'_{1},\dots,w'_{n})\in\mathbb{R}^{n}$. We note therefore that we have just extended the vector of variables to $x=(x_{0},x_{1},\dots,x_{n},x_{n+1},\dots,x_{2n})$ althought this extension is artificial because the new elements in the tuple are false variables depending on the original variables.
\end{simplificacion}
\setlength{\emergencystretch}{0em}%

The last point of the previous remark forces us to introduce the next notion in order to keep short notations easier to understand.

\begin{definicion}[Indexed scalar product]
Let $I$ be an index set where we fix two subsets $A,B\subseteq I$ and consider the maps $f\colon A\to F$ and $g\colon B\to F$ to a field $F.$ We can consider the $f$ and $g$ elements of some vector space over $F$ so $f\in F^{A}$ and $g\in F^{B}.$ We denote as usual $\gls{ftg}:=\sum_{i\in A\cap B}f(i)g(i)$ the \textit{indexed scalar product of $f$ and $g$ over $I$} and note that it extends the usual scalar product of vectors when $A=B,$ i.e., when the vectors are (indexed) in the same vector space over the field $F.$
\end{definicion}

The next lemma is analogous to \cite[Lemma 3.22]{main} but applied to our extension of the relaxation. We use the definition above because, as explained in Simplification \ref{remarkred}, $w=(w_{0},w_{1},\dots,w_{n}),w'=(w'_{1},\dots,w'_{n})$ while $x=(x_{0},x_{1},\dots,x_{2n})=(x_{0},x_{1},\dots x_{n},x_{n+1}\dots,x_{n+n})$ with $x_{n+j}=x_{i}x_{j}$ but we set $i=1$ wlog.

\begin{lema}[Computation of the quadratic form of the extension through the $L$-form and decomposition of vectors]\label{lemmaLform}
Let $p\in\mathbb{R}[x]$ be a power series with $p(0)\neq 0$ and consider $e\in\mathbb{N}_{0}$ and $v\in\mathbb{R}^{2n+1}.$ Then $v^{T}(L_{p,e}\circledcirc M)v=$\begin{equation*}L_{p,e}((w^{T}x)^2+2x_{1}(w^{T}x)(w'^{T}x)+x_{1}^2(w'^{T}x)^2).\end{equation*}
\end{lema}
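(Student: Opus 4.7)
The plan is to exploit the outer-product structure of the mold matrix $M$ together with the linearity of the form $L_{p,e}$. After adopting the false-variable convention from Simplification~\ref{remarkred}, that is $x_{0}=1$ and $x_{n+j}=x_{i}x_{j}$ for $j\in\{1,\dots,n\}$, the matrix $M$ is simply $\tilde{x}\tilde{x}^{T}$ for $\tilde{x}=(x_{0},x_{1},\dots,x_{n},x_{n+1},\dots,x_{2n})^{T}$, so its entries factor as $M_{ab}=\tilde{x}_{a}\tilde{x}_{b}$. This is the only structural observation needed.

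First, I would expand the quadratic form entrywise and push the scalar coefficients inside $L_{p,e}$ by linearity, obtaining
\begin{equation*}
v^{T}(L_{p,e}\circledcirc M)v = \sum_{a,b=0}^{2n}v_{a}v_{b}\,L_{p,e}(\tilde{x}_{a}\tilde{x}_{b}) = L_{p,e}\!\left(\left(\sum_{a=0}^{2n}v_{a}\tilde{x}_{a}\right)^{2}\right).
\end{equation*}
This reduces the task to identifying the internal linear combination $\sum_{a}v_{a}\tilde{x}_{a}$ with the linear form appearing inside $L_{p,e}$ on the right-hand side of the statement.

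Next, I would split this sum according to the block decomposition $v=(w^{T},w'^{T})^{T}$. The first $n+1$ terms contribute the indexed scalar product $w^{T}x=w_{0}+\sum_{j=1}^{n}w_{j}x_{j}$ (using the convention $x_{0}=1$), while the last $n$ terms contribute $\sum_{j=1}^{n}w'_{j}x_{i}x_{j}=x_{i}(w'^{T}x)$. Adding the pieces gives $\sum_{a}v_{a}\tilde{x}_{a}=w^{T}x+x_{i}(w'^{T}x)$. Squaring this expression and applying linearity of $L_{p,e}$ once more produces exactly the three summands $(w^{T}x)^{2}+2x_{i}(w^{T}x)(w'^{T}x)+x_{i}^{2}(w'^{T}x)^{2}$ of the claim.

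The argument has no real obstacle; the only point requiring care is the slightly overloaded role of the symbol $x$, which denotes both the genuine variable tuple and its extension by the false variables $x_{0}$ and $x_{n+j}$, so one has to verify that the block splitting genuinely matches the definition of $M$ in Equation~\ref{ext1}. Once that bookkeeping is done, the lemma is a purely formal consequence of the outer-product form of $M$ and of the linearity of $L_{p,e}$, with no case analysis, no inequality and no appeal to the explicit values of $L_{p,e}$ on specific monomials.
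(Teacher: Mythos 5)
Your proof is correct and takes essentially the same route as the paper: both rely on the outer-product structure of $M$, the linearity of $L_{p,e}$ to convert $\sum_{a,b}v_{a}v_{b}L_{p,e}(\tilde{x}_{a}\tilde{x}_{b})$ into $L_{p,e}\bigl((\sum_{a}v_{a}\tilde{x}_{a})^{2}\bigr)$, and then the block split of $v$ into $w$ and $w'$ together with the false-variable convention $x_{0}=1$, $x_{n+j}=x_{i}x_{j}$ to rewrite the linear form as $w^{T}x+x_{i}(w'^{T}x)$ and expand the square.
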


\begin{proof} We proceed simply computing $v^{T}(L_{p,e}\circledcirc M)v=$
\begin{gather*}
\sum_{l=0}^{2n}\sum_{j=0}^{2n}v_{l}v_{j}L_{p,e}(x_{l}x_{j})=L_{p,e}\left(\left(\sum_{l=0}^{2n}v_{l}x_{l}\right)\left(\sum_{j=0}^{2n}v_{j}x_{j}\right)\right)=\\L_{p,e}\left(\left(\sum_{l=0}^{n}v_{l}x_{l}+\sum_{l=n+1}^{2n}v_{l}x_{l}\right)\left(\sum_{j=0}^{n}v_{j}x_{j}+\sum_{j=n+1}^{2n}v_{j}x_{j}\right)\right)=
\\L_{p,e}\left(\left(\sum_{l=0}^{n}w_{l}x_{l}+x_{1}\sum_{l=1}^{n}w'_{l}x_{l}\right)\left(\sum_{j=0}^{n}w_{j}x_{j}+x_{1}\sum_{j=1}^{n}w'_{j}x_{j}\right)\right)=\\L_{p,e}\left(\left(\sum_{l=0}^{n}w_{l}x_{l}\right)^2+2x_{1}\left(\sum_{j=0}^{n}w_{j}x_{j}\right)\left(\sum_{l=1}^{n}w'_{l}x_{l}\right)+x_{1}^2\left(\sum_{l=1}^{n}w'_{l}x_{l}\right)^2\right),\end{gather*} which is the identity we wanted to prove.
\end{proof}

We will need an improvement of the inequality in Lemma \ref{lemmaineq}.

\begin{mejora}[Tightening $L$-form versus trace inequalities]
\label{lemmaineqimpro} Let $$r(x_{1},x_{2})=\det(I_{d}+A_{1}x_{1}+A_{2}x_{2})\in\mathbb{R}_{\mbox{RZ}}[\mathbf{x}]$$ be a bivariate RZ polynomial and fix an arbitrary integer $e\in\mathbb{N}_{0}.$ Then \begin{gather*}L_{r,e}(x_{1}^2(u_{1}x_{1}+u_{2}x_{2})^{2})\geq\\\frac{1}{2}\tr(A_{1}(u_{1}A_{1}+u_{2}A_{2})A_{1}(u_{1}A_{1}+u_{2}A_{2})+A_{1}^{2}(u_{1}A_{1}+u_{2}A_{2})^{2})\end{gather*} for each $(u_{1},u_{2})\in\mathbb{R}^{2}.$
\end{mejora}

\begin{proof}
This proof consists in a different game with the quantities in Equation \ref{playineq} so we have instead $L_{r,e}(x_{1}^2(u_{1}x_{1}+u_{2}x_{2})^2)=L_{r,e}(u_{1}^{2}x_{1}^4+2u_{1}u_{2}x_{1}^3x_{2} + u_{2}^{2}x_{1}^2x_{2}^2)=$
\begin{gather*}\tr(u_{1}^{2}A_{1}^4+2u_{1}u_{2}A_{1}^3A_{2}+\frac{4}{6}u_{2}^{2}A_{1}^2 A_{2}^2+\frac{2}{6}u_{2}^{2}A_{1}A_{2}A_{1}A_{2})\geq\\\tr(u_{1}^{2}A_{1}^4+2u_{1}u_{2}A_{1}^3A_{2}+\frac{1}{2}u_{2}^{2}A_{1}^2 A_{2}^2+\frac{1}{2}u_{2}^{2}A_{1}A_{2}A_{1}A_{2})=\\\tr(\frac{1}{2}u_{1}^{2}A_{1}^4+u_{1}u_{2}A_{1}^3A_{2}+\frac{1}{2}u_{2}^{2}A_{1}^2 A_{2}^2)+\\\tr(\frac{1}{2}u_{1}^{2}A_{1}^4+u_{1}u_{2}A_{1}^3A_{2}+\frac{1}{2}u_{2}^{2}A_{1}A_{2}A_{1}A_{2})=\\\tr(B_{1}^{2}(u_{1}B_{1}+u_{2}B_{2})^{2})+\tr(B_{1}(u_{1}B_{1}+u_{2}B_{2})B_{1}(u_{1}B_{1}+u_{2}B_{2})),
\end{gather*} where we called $B_{i}:=2^{-\frac{1}{4}}A_{i}$ and used Lemma \ref{lemmatrace} to set the inequality.
\end{proof}

All the presented lemmas about the interplay between the $L$-form and the trace in the extension of the relaxation together with the trace inequalities studied above have as a consequence our main theorem. We dedicate the next and final section to the presentation of this result.

\section[Extensions of the multiplicative arrange]{Symmetric matrices constructed via extensions of the multiplicative arrange}

Now we are ready to see that the initial matrix of our extension is indeed PSD. This is the main condition for an extension to be worth studying, as otherwise it is impossible to produce, in general, a monic representation from the extension and therefore to guarantee that the relaxation provides a RZ polynomial itself.

\begin{remark}
For the proof of the next result, we basically copy the strategy followed in the proof of \cite[Theorem 3.35]{main} based in reducing our general problem to the case $n=2$ where the (hermitian) Helton-Vinnikov theorem is available and thus we can use the existence of a \gls{MLMPDR} of RZ polynomials to express the form $L_{p,e}$ in terms of traces of the matrices in that MLMPDR of $p,$ which simplifies drastically the development of the proof.
\end{remark}

We will proceed in this way. The result is the next.

\begin{proposicion}[PSD-ness of initial matrix]
\label{propomatrix}
Applying $L_{p,e}$ for any integer $e\geq d:=\deg(p)$ to all the entries of any MMM of the form given by Equation \ref{ext1} for any RZ polynomial $p\in\mathbb{R}_{\mbox{RZ}}[\mathbf{x}]$ produces a PSD matrix.
\end{proposicion}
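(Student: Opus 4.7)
The plan is to mirror the strategy of \cite[Theorem 3.35]{main}, reducing successively to the bivariate case where the Helton-Vinnikov theorem supplies a symmetric determinantal representation of the polynomial and thereby converts the values of $L_{r,e}$ on low-degree monomials into traces of matrix products. By Simplification \ref{remarkred} it is enough to consider the largest mold with $i=1$ and $k=n=m$, since principal submatrices of PSD matrices are PSD and the role of $i$ is symmetric under variable relabeling. For a generic test vector $v=(w,w')\in\mathbb{R}^{n+1}\times\mathbb{R}^{n}$, Lemma \ref{lemmaLform} reformulates the PSD claim as $L_{p,e}((f+x_{1}g)^{2})\geq 0$, where $f=w_{0}+w_{1}x_{1}+\cdots+w_{n}x_{n}$ is affine and $g=w'_{1}x_{1}+\cdots+w'_{n}x_{n}$ is linear.

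Next I would shrink the number of effective variables. The coefficient vectors $(w_{2},\ldots,w_{n})$ and $(w'_{2},\ldots,w'_{n})$ span a subspace of $\mathbb{R}^{n-1}$ of dimension at most two, so an orthogonal transformation $\tilde{U}=\mathrm{diag}(1,U')$ on $\mathbb{R}^{n}$ fixing $x_{1}$ suffices to place this span inside $\mathrm{span}(x_{2},x_{3})$. Because $\tilde{U}$ fixes the distinguished variable $x_{1}=x_{i}$, it acts on the extended monomial mold by a congruence with the block-diagonal orthogonal matrix $\mathrm{diag}(1,1,U',1,U')$, which preserves PSD-ness; the same congruence rule used in \cite[Proposition 3.23]{main} then justifies replacing $p$ by $p\circ\tilde{U}$, and \cite[Lemma 3.26]{main} allows the further restriction to the trivariate RZ polynomial $r:=(p\circ\tilde{U})|_{x_{4}=\cdots=x_{n}=0}$ without changing the value of $L_{p,e}((f+x_{1}g)^{2})$. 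A subsequent rotation in the $(x_{2},x_{3})$-plane aligns either $\nabla f|_{x_{1}=0}$ or $\nabla g|_{x_{1}=0}$ with the $x_{2}$-axis, after which an additive decomposition of the remaining linear form, combined with a density argument over RZ polynomials admitting a symmetric determinantal representation, isolates a genuinely bivariate contribution living in $\mathbb{R}[x_{1},x_{2}]$.

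In the bivariate endgame Helton-Vinnikov furnishes $r=\det(I_{d}+A_{1}x_{1}+A_{2}x_{2})$ with $A_{1}$ diagonal after an orthogonal conjugation. Expanding $(f+x_{1}g)^{2}$ into a sum of monomials of degree at most four and translating each $L_{r,e}(x^{\alpha})$ into a trace via the Hurwitz-product formula, I would invoke Lemma \ref{lemmatrace}, Lemma \ref{lemmaineqalmost}, and the sharper Improvement \ref{lemmaineqimpro} to bound every degree-four contribution by Frobenius-norm squares $\|S\|_{F}^{2}$ for explicit symmetric matrices $S$ built from $I_{d},A_{1},A_{2},w,w'$; the lower-degree contributions already appear in the form $\tr((I_{d}+u^{T}A)^{2})\geq 0$ coming from the proof of \cite[Theorem 3.35]{main}. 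Summing everything yields the desired nonnegativity. The main obstacle will be exactly the trivariate-to-bivariate step, because keeping $x_{1}$ fixed makes it impossible to simultaneously align two linearly independent gradients with a single axis in $(x_{2},\ldots,x_{n})$; bridging that gap demands a careful combination of the additive split of $g$ with the sharpened trace bound of Improvement \ref{lemmaineqimpro}, together with the diagonal structure of $A_{1}$, which is the technical heart of the argument.
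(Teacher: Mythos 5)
Your proposal follows the same skeleton as the paper's proof: reduce to the largest mold via Simplification \ref{remarkred}, rewrite the PSD claim as $L_{p,e}((1+w^{T}x+x_{1}(w'^{T}x))^{2})\geq 0$ via Lemma \ref{lemmaLform}, land on a bivariate polynomial where Helton--Vinnikov applies, and close with the trace bounds of Lemma \ref{lemmatrace} and Improvement \ref{lemmaineqimpro}. Where you diverge is the variable reduction. The paper chooses one orthogonal $U$ with $U^{T}w,\,U^{T}w'\in\mathbb{R}^{2}\times\{0\}$, invokes \cite[Proposition 3.8(b)]{main} to pass to $q=p\circ U$, restricts to the bivariate $r:=q|_{x_{3}=\cdots=x_{n}=0}$, and finishes via $L_{r,e}((1+u^{T}x+x_{1}(u'^{T}x))^{2})\geq\tr((I_{d}+K+\tfrac{1}{2}A_{1}K'+\tfrac{1}{2}K'A_{1})^{2})\geq 0$. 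You instead insist on a block rotation $\diag(1,U')$ that fixes $x_{1}$, which — since $e_{1},w,w'$ generically span a three-dimensional space — only compresses the problem to a trivariate restriction where Helton--Vinnikov is unavailable.

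Your unease is understandable (under the change of variables the factor $(Ux)_{1}=(U^{T}e_{1})^{T}x$ equals $x_{1}$ only if $U$ fixes $e_{1}$, so the paper's one-step reduction leans nontrivially on \cite[Proposition 3.8(b)]{main}), but your proposal halts at precisely that obstacle. The closing sentence offers an additive split of $g$, a density argument over RZ polynomials with symmetric determinantal representations, and the diagonal structure of $A_{1}$ as the ``technical heart'', yet none of these is carried out, and the density step is dubious: for three or more variables, RZ polynomials admitting monic symmetric determinantal representations of size equal to their degree form a strictly lower-dimensional family (the very dimension count behind the GLC), so they are not dense. In sum, you have correctly located the hard step, but the proposal does not bridge it and hence does not prove the proposition; the paper simply never fixes $x_{1}$ and thereby never enters the trivariate regime you get stuck in.
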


\begin{proof}
After the reductions in Remark \ref{remarkred} we have to prove thus only that next matrix is PSD: $L_{p,e}\circledcirc M:=$\begin{gather}\label{biggestmatrix}L_{p,e}\circledcirc((1, x_{1}, \dots, x_{n}, x_{1}^{2}, \dots, x_{1}x_{n})^{T}(1, x_{1},\dots, x_{n}, x_{1}^2, \dots, x_{1}x_{n})).\end{gather} By continuity and homogeneity, it suffices to show the nonnegativity when $w_{0}=1$ so we make a slight modification on the index sets of $w$ and denote now any arbitrary vector with $w_{0}=1$ as $v=(1,w_{1},\dots,w_{n},w'_{1},\dots,w'_{n})^{T}=(1,w^{T},w'^{T})^{T}=(1,v_{1},\dots,v_{2n})^{T}\in\mathbb{R}^{2n+1}.$ Therefore we have to show that\begin{equation*}
    (1,w_{1},\dots,w_{n},w'_{1},\dots,w'_{n})L_{p,e}\circledcirc M(1,w_{1},\dots,w_{n},w'_{1},\dots,w'_{n})^{T}\geq0.
\end{equation*} By Lemma \ref{lemmaLform} and having in mind that $x_{0}=1$ and our new assumption $w_{0}=1$ and the small modification on the index set of $w$ in the notation just introduced accordingly, the previous inequality is equivalent to\begin{equation*}L_{p,e}((1+w^{T}x)^2+2x_{1}(1+w^{T}x)(w'^{T}x)+x_{1}^2(w'^{T}x)^2)\geq0.\end{equation*} In order to prove this inequality, we choose an orthogonal matrix $U\in\mathbb{R}^{n\times n}$ such that $u:=U^{T}w$ and $u':=U^{T}w'$ lie both in $\mathbb{R}^{2}\times\{0\}\subseteq\mathbb{R}^{n}.$ By \cite[Proposition 3.8(b)]{main} it suffices to show \begin{equation*}L_{q,e}((1+u^{T}x)^2+2x_{1}(1+u^{T}x)(u'^{T}x)+x_{1}^2(u'^{T}x)^2)\geq0,\end{equation*} where $q(x):=p(Ux)\in\mathbb{R}_{\mbox{RZ}}[x]$ so we can set $r(x_{1},x_{2}):=q(x_{1},x_{2},0)\in\mathbb{R}_{\mbox{RZ}}[x_{1},x_{2}]$ and reduce to the case of two variables as in Lemma \ref{lemmaineqalmost}. We can then apply Helton-Vinnikov to write $r(x_{1},x_{2})=\det(I_{d}+A_{1}x_{1}+A_{2}x_{2})$ for $(A_{1},A_{2})\in(\sym_{d}(\mathbb{R}))^{2}.$ Now our problem reduces to prove \begin{equation}\label{ineqprin}L_{r,e}((1+u^{T}x)^2+2x_{1}(1+u^{T}x)(u'^{T}x)+x_{1}^2(u'^{T}x)^2)\geq0\end{equation} for the variables $x=(x_{1},x_{2})$ and arbitrary $u=(u_{1},u_{2}), u'=(u'_{1},u'_{2})\in\mathbb{R}^{2}.$ We remember here that Lemma \ref{lemmaineqalmost} does not solve our problem as we cannot guarantee that $\tr((I_{d}+u^{T}A+A_{1}u'^{T}A)^2)\geq0$ always and so we need to implement a different strategy to prove the new inequality in Equation \ref{ineqprin}. For simplicity we denote again $K:=u_{1}A_{1}+u_{2}A_{2}$ and $K':=u'_{1}A_{1}+u'_{2}A_{2}.$ From Lemma \ref{lemmaineqimpro} we know now that \begin{equation*}L_{r,e}(x_{1}^2(u'^{T}x)^{2})\geq\frac{1}{2}\tr(A_{1}K'A_{1}K'+A_{1}^{2}K'^{2})\end{equation*} so we can write the next chain of inequalities instead of those in the proof of Lemma \ref{lemmaineqalmost} $L_{r,e}((1+u^{T}x+x_{1}u'^{T}x)^2)=$ \begin{gather}L_{r,e}((1+u^{T}x)^2+2x_{1}(1+u^{T}x)(u'^{T}x)+x_{1}^2(u'^{T}x)^2)=\\\begin{gathered}\label{intra2}L_{r,e}((1+u_{1}x_{1}+u_{2}x_{2})^2+2x_{1}(1+u_{1}x_{1}+u_{2}x_{2})(u'_{1}x_{1}+u'_{2}x_{2})+\\x_{1}^2(u'_{1}x_{1}+u'_{2}x_{2})^2)=\end{gathered} 
\\e+\tr(2K+K^2+2A_{1}(I_{d}+K)K')+L_{r,e}(x_{1}^2(u'_{1}x_{1}+u'_{2}x_{2})^2)\geq\\d+\tr(2K+K^2+2A_{1}(I_{d}+K)K')+\frac{1}{2}\tr(A_{1}K'A_{1}K'+A_{1}^{2}K'^{2})=\\d+\tr(K+\frac{1}{2}K^2+A_{1}(I_{d}+K)K'+\frac{1}{2}A_{1}K'A_{1}K')\\+\tr(K+\frac{1}{2}K^2+A_{1}(I_{d}+K)K'+\frac{1}{2}A_{1}^{2}K'^{2})=\\\tr(\frac{1}{2}I_{d}+K+\frac{1}{2}K^2+A_{1}(I_{d}+K)K'+\frac{1}{2}A_{1}K'A_{1}K')+\\\tr(\frac{1}{2}I_{d}+K+\frac{1}{2}K^2+A_{1}(I_{d}+K)K'+\frac{1}{2}A_{1}^{2}K'^{2})=\\\tr(\frac{1}{2}(I_{d}+K)^2+A_{1}(I_{d}+K)K'+\frac{1}{2}A_{1}K'A_{1}K')+\\\tr(\frac{1}{2}(I_{d}+K)^2+A_{1}(I_{d}+K)K'+\frac{1}{2}A_{1}^{2}K'^{2})=\\\tr((I_{d}+K)^2+2A_{1}(I_{d}+K)K'+\frac{1}{2}A_{1}K'A_{1}K'+\frac{1}{2}A_{1}^{2}K'^{2})=\\\tr((I_{d}+K+\frac{1}{2}A_{1}K'+\frac{1}{2}K'A_{1})^{2}),\end{gather}
where we used the fact that $e\geq d$ and several times some identities between traces of the products of up to $3$ real symmetric matrices that let us conclude the previous chain of (in)equalities in terms of traces that closes this proof because now we have that $I_{d}+K+\frac{1}{2}A_{1}K'+\frac{1}{2}K'A_{1}\in\sym_{d}(\mathbb{R})$ is a real symmetric matrix and thus all its eigenvalues $\lambda_{1}\leq\cdots\leq\lambda_{d}\in\mathbb{R}$ are real numbers so the trace of its square is $\tr((I_{d}+K+\frac{1}{2}A_{1}K'+\frac{1}{2}K'A_{1})^{2})=\sum_{i=1}^{d}\lambda_{i}^{2}\geq0,$ which finishes our proof.
\end{proof}

The last part of the proof of the proposition above is in fact an improvement of Lemma \ref{lemmaineqalmost}. We see this.

\begin{observacion}[Improvements of previous lemmas in proposition above]
In the last part of the proof of the previous proposition we have indeed an improvement of Lemma \ref{lemmaineqalmost} where instead of $\tr((I_{d}+u^{T}A+A_{1}u'^{T}A)^2)$ we can now write $\tr((I_{d}+u^{T}A+\frac{1}{2}A_{1}(u'{}^{T}A)+\frac{1}{2}(u'{}^{T}A)A_{1})^{2})$. This gives a tighter inequality because $\tr((I_{d}+u^{T}A+\frac{1}{2}A_{1}(u'{}^{T}A)+\frac{1}{2}(u'{}^{T}A)A_{1})^{2})\geq\tr((I_{d}+u^{T}A+A_{1}u'{}^{T}A)^2)$ (or, even better, we can write the trace inequality $\tr((I_{d}+u^{T}A+\frac{1}{2}A_{1}(u'{}^{T}A)+\frac{1}{2}(u'{}^{T}A)A_{1})^{2})>\tr((I_{d}+u^{T}A+A_{1}u'{}^{T}A)^2)$ strictly if $u'_{2}{}^{2}A_{1}^{2}A_{2}^{2}\neq u'_{2}{}^{2}A_{1}A_{2}A_{1}A_{2},$ which can happen only if $u'_{2}\neq 0$) as can be easily seen following the path of the inequalities in both proofs. In fact, we observe that the first inequalities in Equations \ref{playineq} and \ref{intra2} come (apart from the fact that $e\geq d$) from substituting respectively \begin{gather*}
  \tr(\frac{4}{6}A_{1}^2 A_{2}^2+\frac{2}{6}A_{1}A_{2}A_{1}A_{2})\geq\tr(A_{1}A_{2}A_{1}A_{2})
\mbox{\ and\ }\\ \tr(\frac{4}{6}A_{1}^2 A_{2}^2+\frac{2}{6}A_{1}A_{2}A_{1}A_{2})\geq\frac{1}{2}\tr(A_{1}^2 A_{2}^2+A_{1}A_{2}A_{1}A_{2})
\end{gather*} using the inequality $\tr(A_{1}^2 A_{2}^2)\geq\tr(A_{1}A_{2}A_{1}A_{2})$ established in Lemma \ref{lemmatrace} but this also shows that the respective lower bounds verify $\tr(A_{1}A_{2}A_{1}A_{2})\leq\frac{1}{2}\tr(A_{1}^2 A_{2}^2+A_{1}A_{2}A_{1}A_{2})$.\end{observacion}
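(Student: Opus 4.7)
The plan is to reduce the claimed trace comparison to a direct application of Lemma \ref{lemmatrace}, and then separately to justify the ``tracing back'' assertion by comparing the two ways the relevant $(4,2)$-trace inequality was invoked. To simplify bookkeeping, I introduce the abbreviations $S := I_{d}+u^{T}A$, $B := A_{1}$, and $C := u'^{T}A$, noting that both $S$ and $C$ are symmetric (being real linear combinations of symmetric matrices, with $I_d$ symmetric). The two expressions to be compared then become $\tr((S+\tfrac{1}{2}(BC+CB))^{2})$ on the left and $\tr((S+BC)^{2})$ on the right.

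First I would expand both squares. Using cyclicity of the trace, the square of the right-hand side yields $\tr(S^{2})+2\tr(SBC)+\tr((BC)^{2})$. For the left-hand side, since $S$ is symmetric we have $\tr(SBC)=\tr((SBC)^{T})=\tr(C^{T}B^{T}S^{T})=\tr(CBS)=\tr(SCB)$, so the cross term collapses to $2\tr(SBC)$ as well. The purely quadratic remainder is $\tr(\tfrac{1}{4}(BC+CB)^{2})$, which, by expanding $(BC+CB)^{2}=BCBC+BCCB+CBBC+CBCB$ and applying cyclicity (note $\tr(CBCB)=\tr(BCBC)$ and $\tr(BCCB)=\tr(CBBC)=\tr(B^{2}C^{2})$), evaluates to $\tfrac{1}{2}\tr((BC)^{2})+\tfrac{1}{2}\tr(B^{2}C^{2})$. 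Subtracting the two sides, the $\tr(S^{2})$ and $2\tr(SBC)$ contributions cancel, leaving precisely
\begin{equation*}
\tr\!\Big((S+\tfrac{1}{2}(BC+CB))^{2}\Big)-\tr((S+BC)^{2})=\tfrac{1}{2}\bigl(\tr(B^{2}C^{2})-\tr(BCBC)\bigr).
\end{equation*}
An immediate invocation of Lemma \ref{lemmatrace} with the pair $(B,C)=(A_{1},u'^{T}A)$ gives nonnegativity, proving the inequality. For strictness, the equality clause in Lemma \ref{lemmatrace} forces $B$ and $C$ to commute; since $B=A_{1}$ and $C=u'_{1}A_{1}+u'_{2}A_{2}$, commutation reduces to $u'_{2}[A_{1},A_{2}]=0$, so whenever $u'_{2}\neq 0$ and $A_{1},A_{2}$ do not commute (equivalently, $u'_{2}{}^{2}A_{1}^{2}A_{2}^{2}\neq u'_{2}{}^{2}A_{1}A_{2}A_{1}A_{2}$) the inequality is strict.

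For the second half of the observation, I would trace the origin of each bound back to Lemma \ref{lemmatrace}. Both Lemma \ref{lemmaineqalmost} (used to derive the coarser $\tr((I_{d}+u^{T}A+A_{1}u'^{T}A)^{2})$) and the sharpening inside the proof of Proposition \ref{propomatrix} begin from the exact identity
\begin{equation*}
L_{r,e}(x_{1}^{2}x_{2}^{2})=\tfrac{4}{6}\tr(A_{1}^{2}A_{2}^{2})+\tfrac{2}{6}\tr(A_{1}A_{2}A_{1}A_{2}),
\end{equation*}
and differ only in how one replaces the convex combination $(\tfrac{4}{6},\tfrac{2}{6})$ by a smaller quantity via the single inequality $\tr(A_{1}^{2}A_{2}^{2})\geq\tr(A_{1}A_{2}A_{1}A_{2})$. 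I would exhibit explicitly that Lemma \ref{lemmaineqalmost} corresponds to the substitution $\tfrac{4}{6}\tr(A_{1}^{2}A_{2}^{2})+\tfrac{2}{6}\tr(A_{1}A_{2}A_{1}A_{2})\geq\tr(A_{1}A_{2}A_{1}A_{2})$ (which shoves all the weight onto the smaller quantity), while the improved argument uses $\tfrac{4}{6}\tr(A_{1}^{2}A_{2}^{2})+\tfrac{2}{6}\tr(A_{1}A_{2}A_{1}A_{2})\geq\tfrac{1}{2}\tr(A_{1}^{2}A_{2}^{2})+\tfrac{1}{2}\tr(A_{1}A_{2}A_{1}A_{2})$, i.e.\ the symmetric midpoint. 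Comparing these two lower bounds directly gives $\tfrac{1}{2}(\tr(A_{1}^{2}A_{2}^{2})+\tr(A_{1}A_{2}A_{1}A_{2}))\geq\tr(A_{1}A_{2}A_{1}A_{2})$, which is the coefficient-level reflection of the trace inequality established above.

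The main obstacle is really only notational: once one expands and uses symmetry of $S$ together with cyclicity, everything collapses to the $(4,2)$-inequality of Lemma \ref{lemmatrace}, so the real work is keeping track of the many trace manipulations without dropping a factor. No new inequality beyond Lemma \ref{lemmatrace} is needed, and the strictness analysis is a direct reading of its equality case.
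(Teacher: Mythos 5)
Your proposal is correct and amounts to the same argument the paper intends: expanding both squares and cancelling the shared $\tr(S^{2})$ and cross terms is exactly what ``following the path of the inequalities in both proofs'' yields, since the two final traces differ precisely by $\tfrac{1}{2}\bigl(\tr(A_{1}^{2}(u'^{T}A)^{2})-\tr(A_{1}(u'^{T}A)A_{1}(u'^{T}A))\bigr)$, which Lemma \ref{lemmatrace} controls, with the equality case giving the stated strictness condition. Your coefficient-level identification of the two substitutions behind Equations \ref{playineq} and \ref{intra2} likewise matches the paper's reading, so no further changes are needed.
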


Thanks to the observation above we see that we actually proved the next corollary of Lemmas \ref{lemmaineqalmost} and \ref{lemmaineqimpro} via the Equation \ref{intra2} while completing the proof of Proposition \ref{propomatrix}.

\begin{corolario}[PSD-ness of initial matrix with degree control]
\label{coroineqalmost}
Fix the vector of variables $x=(x_{1},x_{2})$ and let $r(x)=r(x_{1},x_{2})=\det(I_{d}+A_{1}x_{1}+A_{2}x_{2})=\det(I_{d}+x^{T}A)\in\mathbb{R}_{\mbox{RZ}}[x]_{d}$ be a bivariate RZ polynomial of degree $d:=\deg(r)$ and fix an arbitrary integer $d\leq e\in\mathbb{N}_{0}.$ Then \begin{gather*}L_{r,e}((1+u^{T}x+x_{1}u'^{T}x)^2)\geq\tr((I_{d}+u^{T}A+\frac{1}{2}A_{1}(u'^{T}A)+\frac{1}{2}(u'^{T}A)A_{1})^2)\geq\\ \max\{0,\tr((I_{d}+u^{T}A+A_{1}u'^{T}A)^2)\}\end{gather*} for each $u=(u_{1},u_{2})^{T}, u'=(u'_{1},u'_{2})^{T}\in\mathbb{R}^{2}$ and where $A=(A_{1},A_{2})\in(\Sym_{d}(\mathbb{R}))^{2}$ so we can denote $u^{T}A=u_{1}A_{1}+u_{2}A_{2}$ and $u'^{T}A=u'_{1}A_{1}+u'_{2}A_{2}.$
\end{corolario}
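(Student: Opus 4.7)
The plan is to follow and tidy up the computation already carried out during the proof of Proposition \ref{propomatrix}. First, I would reproduce the chain of identities and inequalities from Equation \ref{intra2}: starting with $L_{r,e}((1+u^{T}x+x_{1}u'^{T}x)^{2})$, expand using the basic values of the $L$-form in terms of traces (which is legal here because the Helton--Vinnikov representation is assumed), isolate the single problematic degree-$4$ term $L_{r,e}(x_{1}^{2}(u'^{T}x)^{2})$, and apply Improvement \ref{lemmaineqimpro} to bound it below by $\tfrac{1}{2}\tr(A_{1}K'A_{1}K'+A_{1}^{2}K'^{2})$, with $K=u^{T}A$ and $K'=u'^{T}A$. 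Regrouping the remaining traces exactly as in the display ending Equation \ref{intra2}, this becomes $\tr\bigl((I_{d}+K+\tfrac{1}{2}A_{1}K'+\tfrac{1}{2}K'A_{1})^{2}\bigr)$, which gives the first inequality of the statement.

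Next, for the second inequality, I would compare $S:=I_{d}+K+\tfrac{1}{2}(A_{1}K'+K'A_{1})$ against $T:=I_{d}+K+A_{1}K'$ by expanding $\tr(S^{2})-\tr(T^{2})$. Writing $P:=I_{d}+K$ (symmetric, since $K$ is a linear combination of symmetric matrices), the cross terms produce $\tr(PK'A_{1})-\tr(PA_{1}K')$, and using cyclicity together with the fact that for real symmetric $P,A_{1},K'$ one has $\tr(PK'A_{1})=\tr((PK'A_{1})^{T})=\tr(A_{1}K'P)=\tr(PA_{1}K')$, this linear difference vanishes. What is left is precisely $\tfrac{1}{2}\bigl(\tr(A_{1}^{2}K'^{2})-\tr(A_{1}K'A_{1}K')\bigr)$, which is nonnegative by Lemma \ref{lemmatrace} applied to the symmetric matrices $A_{1}$ and $K'$. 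This establishes $\tr(S^{2})\geq\tr(T^{2})$.

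Finally, for the $\max\{0,\cdot\}$ part, the key observation is that $S$ is a real symmetric matrix, so $\tr(S^{2})$ equals its squared Frobenius norm and is automatically nonnegative; combined with $\tr(S^{2})\geq\tr(T^{2})$, we conclude $\tr(S^{2})\geq\max\{0,\tr(T^{2})\}$. I expect the only delicate step to be the cancellation $\tr(PK'A_{1})=\tr(PA_{1}K')$: although it is straightforward once one recalls that for three real symmetric matrices $\tr(ABC)=\tr(CBA)$ (by taking transposes) and combines this with cyclicity, it is the pivotal algebraic identity that makes the two cross terms match so that only the Lemma \ref{lemmatrace} remainder survives. Everything else is bookkeeping: a careful rearrangement of the traces already performed in the proof of Proposition \ref{propomatrix}.
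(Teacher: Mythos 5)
Your proof is correct and follows essentially the same route as the paper's: Equation~\ref{intra2} for the first inequality, Lemma~\ref{lemmatrace} for the second, and symmetry of $S$ for the $\max\{0,\cdot\}$ step. The only cosmetic difference is that the paper verifies the middle inequality by expanding $\tr(S^{2})$ coordinate-wise in $u'_{1},u'_{2}$ and then invoking Lemma~\ref{lemmatrace} for $A_{1},A_{2}$, whereas you compute the difference $\tr(S^{2})-\tr(T^{2})$ abstractly (with the cross terms cancelling because $\tr(PK'A_{1})=\tr(PA_{1}K')$ for real symmetric $P,A_{1},K'$) and apply the lemma to $A_{1},K'$ directly --- the same estimate applied one step earlier, and arguably tidier.
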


\begin{proof}
Equation \ref{intra2} for the first inequality and \begin{gather*}
    \tr((I_{d}+u^{T}A+\frac{1}{2}A_{1}(u'{}^{T}A)+\frac{1}{2}(u'{}^{T}A)A_{1})^2)=\\d+\tr(2u^{T}A+(u^{T}A)^2+2A_{1}(I_{d}+u^{T}A)(u'{}^{T}A))+\\\frac{1}{2}\tr(A_{1}(u'{}^{T}A)A_{1}(u'{}^{T}A)+A_{1}^{2}(u'{}^{T}A)^2)=
\\
    d+\tr(2u^{T}A+(u^{T}A)^2+2A_{1}(I_{d}+u^{T}A)(u'{}^{T}A))+\\\tr(u'_{1}{}^{2}A_{1}^{4}+2u'_{1}u'_{2}A_{1}^{3}A_{2}+\frac{1}{2}u'_{2}{}^{2}A_{1}^{2} A_{2}^{2}+\frac{1}{2}u'_{2}{}^{2}A_{1}A_{2}A_{1}A_{2})\geq
\\
    d+\tr(2u^{T}A+(u^{T}A)^2+2A_{1}(I_{d}+u^{T}A)(u'{}^{T}A))+\\\tr(u'_{1}{}^{2}A_{1}^4+2u'_{1}u'_{2}A_{1}^{3}A_{2}+u'_{2}{}^{2}A_{1}A_{2}A_{1}A_{2})=
\\
    \tr((I_{d}+u^{T}A+A_{1}u'{}^{T}A)^2),
\end{gather*} where we reused some of the identities used in the previous proof of this section but that are easy to develop in order to make clear the inequality that we have just establish and that finishes this proof.
\end{proof}

Hence we just saw that, actually, the last proposition says a bit more than just being PSD. We can pursue strictness.

\begin{observacion}[Seeking for strictness]
It could well happen that $\tr((I_{d}+K+\frac{1}{2}A_{1}K'+\frac{1}{2}K'A_{1})^{2})>0$ in some cases for all (or enough in the sense of covering all the possible cases of vectors $v$) the possible bivariate polynomials $r$ obtained from $p$ via the mechanism described in the proof. Thus, it is possible that in some cases this path allows us to establish that these matrices are PD. Our experiments show that these matrices are PD almost always and thus it is convenient to study when these matrices have not all zero eigenvalues.
\end{observacion}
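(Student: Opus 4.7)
The plan is to translate the weak inequality $\tr(N(v)^2) \geq 0$ from Proposition \ref{propomatrix} (with $N(v) := I_{d}+K+\tfrac{1}{2}A_{1}K'+\tfrac{1}{2}K'A_{1}$) into a strict inequality whenever possible. Since $N(v)$ is a real symmetric matrix, we have $\tr(N(v)^2)=\sum_{i}\lambda_{i}(N(v))^{2}$, so $\tr(N(v)^{2})>0$ if and only if $N(v)\neq 0$. Thus the study reduces to identifying the affine algebraic locus of test vectors $v$ (or equivalently of pairs $(u,u')\in\mathbb{R}^{2}\times\mathbb{R}^{2}$ after the orthogonal reduction used in the proof of Proposition \ref{propomatrix}) for which the symmetric matrix equation
$$
I_{d}+u_{1}A_{1}+u_{2}A_{2}+u'_{1}A_{1}^{2}+\tfrac{u'_{2}}{2}(A_{1}A_{2}+A_{2}A_{1})=0
$$
holds. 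This locus is cut out by $\dim\Sym_{d}(\mathbb{R})=\binom{d+1}{2}$ affine-linear equations in the four scalar unknowns $(u_{1},u_{2},u'_{1},u'_{2})$, so it is a (possibly empty) affine subvariety of $\mathbb{R}^{4}$.

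First I would show that the bad locus is empty under a generic hypothesis on $p$. Indeed, the four symmetric matrices
$$
V:=\linspan\bigl\{A_{1},\,A_{2},\,A_{1}^{2},\,\tfrac{1}{2}(A_{1}A_{2}+A_{2}A_{1})\bigr\}\subseteq\Sym_{d}(\mathbb{R})
$$
are linearly independent for generic $(A_{1},A_{2})$ as soon as $d\geq 3$, and the condition $-I_{d}\in V$ is Zariski-closed of positive codimension. Whenever it fails, $N(v)\neq 0$ for every admissible $v$, the bound $\tr(N(v)^{2})>0$ is strict, and Proposition \ref{propomatrix} upgrades from PSD to PD. This accounts for the \emph{almost always} behaviour noted in the experiments.

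Next, for the non-generic cases where $-I_{d}\in V$, I would exploit the additional slack in Equation \ref{intra2} that was discarded when invoking Lemma \ref{lemmaineqimpro}. The equality case in Lemma \ref{lemmatrace} characterizes exactly when $\tr(A_{1}^{2}A_{2}^{2})=\tr(A_{1}A_{2}A_{1}A_{2})$, namely the commutation of the relevant symmetric matrices on the appropriate invariant subspaces. Plugging this characterization back into the reduction to the bivariate polynomial $r(x_{1},x_{2})$ used in the proof of Proposition \ref{propomatrix}, we obtain additional algebraic constraints on $(u,u')$ that, combined with $N(v)=0$, should force $v=0$ except on a proper subvariety. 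The strategy is then a case split according to the rank of $[A_{1},A_{2}]$ and the intersection $V\cap\mathbb{R}\cdot I_{d}$.

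The main obstacle I foresee is precisely this second step: in the non-generic regime the bound $\tr(N(v)^{2})$ genuinely vanishes on a nontrivial subset, and recovering strictness forces one to quantify in closed form the slack provided by the combined use of both inequalities of Lemma \ref{lemmatrace} that is hidden inside Lemma \ref{lemmaineqimpro} and Corollary \ref{coroineqalmost}. Making that slack explicit, and showing that it is strictly positive exactly where $N(v)=0$ but $v\neq 0$, is the delicate calculation; it is where our current trace-inequality toolkit is weakest, and it likely requires a finer, $p$-dependent invariant than the mere trace of a square.
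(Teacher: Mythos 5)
Your opening reduction is exactly the paper's own mechanism: since $N(v):=I_{d}+K+\frac{1}{2}A_{1}K'+\frac{1}{2}K'A_{1}$ is real symmetric, $\tr(N(v)^{2})>0$ iff $N(v)\neq0$, and strictness can alternatively come from the slack in the trace inequality (i.e.\ $A_{1}K'A_{1}K'\neq A_{1}^{2}K'^{2}$, or $e>d$). Be aware, though, that the paper never proves the ``almost always PD'' claim at all — the observation is explicitly experimental — and what it does instead is walk back through the chain of inequalities in the proof of Proposition \ref{propomatrix} and extract \emph{per-vector} sufficient conditions for strictness (Lemma \ref{lemalargoPD}), together with a clean fixed-matrix version only for polynomials admitting an MLMPDR (Corollary \ref{coroMLMPDRPD}). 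Your fixed-matrix locus analysis essentially reproduces the setting of that corollary.

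The genuine gap is in your genericity step. You treat $(A_{1},A_{2})$ as fixed and cut out the bad locus by affine-linear equations in $(u_{1},u_{2},u'_{1},u'_{2})$, but in the general argument the matrices are \emph{not} fixed: for each test pair $(w,w')$ one chooses an orthogonal $U$ (depending on $w,w'$) and then applies Helton--Vinnikov to the restriction $r(x_{1},x_{2})=p(U(x_{1},x_{2},0,\dots,0))$, so a single RZ polynomial $p$ produces a whole family of pairs $(A_{1},A_{2})$ indexed by the test vectors, and PD-ness requires nonvanishing of $N(v)$ (or strictness of the trace inequality) simultaneously across this family. The statement ``$-I_{d}\notin\linspan\{A_{1},A_{2},A_{1}^{2},\frac{1}{2}(A_{1}A_{2}+A_{2}A_{1})\}$ for Zariski-generic pairs'' is a condition on one pair and does not descend to a well-defined, checkable condition on $p$ itself, nor does genericity of pairs say anything about the specific pairs arising from the restrictions of a given random $p$; this is precisely why Lemma \ref{lemalargoPD} is forced into its awkward ``for every pair $w,w'$ the polynomial admits an orthogonal transformation such that\dots'' formulation, and why the fixed-matrix reasoning is confined to the MLMPDR case. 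Your second step (equality cases of Lemma \ref{lemmatrace}, rank of the commutator) would still have to be run pair by pair, so it does not repair the global ``generic $p$'' claim; making that descent from matrix-pair genericity to a property of $p$ is the missing idea, and it is not supplied by the dimension count.
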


Thus, in what follows, we try to establish this strictness. We explain how we obtain such certificate of strictness.

\begin{remark}
For the proof the next result, we just have to analyze the steps followed in the proof of Proposition \ref{propomatrix} above in order to find the necessary strengthening of the conditions that allow us to determine that any of the inequalities that are used in that proof is strict. The complicated statement of the lemma comes thus probably from the somehow artificial procedure that is performed there in order to reduce the multivariate RZ polynomial $p$ to the bivariate RZ polynomial $r$, but we cannot in principle, in general, avoid this procedure as we just have available the Helton-Vinnikov theorem on $2$ variables. This shows us that a simplification of the statement will require a different strategy. Nevertheless, a clearer procedure is likely to exist because, as we said, our experiments show that almost all matrices formed by the procedure described at the beginning of the statement, i.e., $L_{p,e}\circledcirc M$ for random RZ polynomial $p\in\mathbb{R}_{\mbox{RZ}}[x]$ are PD, as expected.
\end{remark}

We will use these insights soon. The result is the following.

\begin{lema}[Strictness]
\label{lemalargoPD}
Applying $L_{p,e}$ for any integer $e\geq d:=\deg(p)$ to all the entries of any matrix of the form given by Equation \ref{ext1} for any RZ polynomial $p\in\mathbb{R}_{\mbox{RZ}}[x]$ produces a PD matrix if, for every pair of vectors $w,w'\in\mathbb{R}^{n}$ not both zero, the RZ polynomial $p$ admits an orthogonal transformation $U\in\mathbb{R}^{n\times n}$ with $u:=U^{T}w, u':=U^{T}w'\in\mathbb{R}^{2}\times\{0\}\subseteq\mathbb{R}^{n}$ such that we can write $r(x_{1},x_{2}):=p(U(x_{1},x_{2},0)^{T})=\det(I_{d}+x_{1}A_{1}+x_{2}A_{2})$ with $A_{1}K'A_{1}K'\neq A_{1}^{2}K'^{2}$ or with $\eigval(K+\frac{1}{2}A_{1}K'+\frac{1}{2}K'A_{1})\neq\{0\}$ and at least one of the following conditions is true: $e>d$ or $\eigval(I_{d}+K+\frac{1}{2}A_{1}K'+\frac{1}{2}K'A_{1})\neq\{0\}.$
\end{lema}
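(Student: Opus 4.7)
The plan is to reuse the scaffolding of the proof of Proposition \ref{propomatrix} essentially verbatim and to audit, step by step, where each inequality along the chain of comparisons culminating in Equation \ref{intra2} can be upgraded from $\geq$ to $>$. Since PSD-ness is already in hand, the entire content of the lemma will be a careful case analysis that identifies, for every nonzero test vector $v\in\mathbb{R}^{2n+1}$, at least one of the inequalities in that chain which is strict.

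First I would pick an arbitrary nonzero $v$ and split into the two cases $v_{0}\neq 0$ and $v_{0}=0$, which cannot be fused by the usual continuity argument since continuity only transports nonnegativity, not strict positivity, to the boundary. When $v_{0}\neq 0$, scaling reduces matters to $v_{0}=1$ and I repeat the reduction of Proposition \ref{propomatrix}: apply the hypothesized orthogonal transformation $U$ to send $(w,w')$ via $(u,u')=(U^{T}w,U^{T}w')$ into $\mathbb{R}^{2}\times\{0\}$, pass via Lemma \ref{lemmaLform} to the bivariate RZ polynomial $r(x_{1},x_{2})=\det(I_{d}+x_{1}A_{1}+x_{2}A_{2})$ furnished by the Helton--Vinnikov hypothesis, and retrace Equation \ref{intra2} down to the terminal expression $\tr\bigl((I_{d}+K+\tfrac{1}{2}A_{1}K'+\tfrac{1}{2}K'A_{1})^{2}\bigr)$. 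When $v_{0}=0$ the exact same reduction delivers instead $\tr\bigl((K+\tfrac{1}{2}A_{1}K'+\tfrac{1}{2}K'A_{1})^{2}\bigr)$, without the $I_{d}$ shift and without any $e-d$ contribution at all.

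The audit of Equation \ref{intra2} exposes three distinct sources of strictness. The replacement $e\to d$ is strict exactly when $e>d$, and this source is available only in the case $v_{0}\neq 0$. The appeal to Improvement \ref{lemmaineqimpro} reduces via Lemma \ref{lemmatrace} to the trace comparison $\tr(A_{1}^{2}K'^{2})\geq\tr(A_{1}K'A_{1}K')$, which is strict precisely when $A_{1}$ and $K'$ fail to commute; the hypothesis $A_{1}K'A_{1}K'\neq A_{1}^{2}K'^{2}$ forces noncommutativity (because commutativity would trivially equate the two matrix products), and in that scenario strictness at this step propagates to both subcases of $v$. Finally, the nonnegativity of a trace of the square of a symmetric matrix is strict exactly when that matrix is not zero, equivalently when its spectrum is not $\{0\}$. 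Combining these observations, the case $v_{0}=0$ demands either the noncommutativity condition or $\eigval(K+\tfrac{1}{2}A_{1}K'+\tfrac{1}{2}K'A_{1})\neq\{0\}$, while the case $v_{0}\neq 0$ demands either the noncommutativity condition, $e>d$, or $\eigval(I_{d}+K+\tfrac{1}{2}A_{1}K'+\tfrac{1}{2}K'A_{1})\neq\{0\}$; these two branching requirements are exactly what the two disjunctive clauses in the statement encode.

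The main obstacle I anticipate is purely logical bookkeeping: the two spectrum hypotheses (one with $I_{d}$ and one without) look deceptively similar and there is a real temptation to conflate them, but they govern disjoint boundary cases of $v$ and each is individually necessary for the argument. No new inequality is required; the whole proof is a line-by-line recycling of Proposition \ref{propomatrix} with the status indicator switched from ``$\geq 0$'' to ``$>0$''.
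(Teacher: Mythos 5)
Your proposal is correct and follows exactly the same route as the paper's own proof: a two-case split on $v_{0}\neq 0$ versus $v_{0}=0$ (since the continuity-and-homogeneity reduction used to establish PSD-ness does not transport strict positivity to the boundary), followed by the identical reduction through Lemma \ref{lemmaLform}, the orthogonal transformation, and Helton--Vinnikov to land on the two terminal trace inequalities $L_{r,e}((1+u^{T}x+x_{1}u'^{T}x)^{2})\geq\tr((I_{d}+K+\tfrac{1}{2}A_{1}K'+\tfrac{1}{2}K'A_{1})^{2})\geq0$ and $L_{r,e}((u^{T}x+x_{1}u'^{T}x)^{2})\geq\tr((K+\tfrac{1}{2}A_{1}K'+\tfrac{1}{2}K'A_{1})^{2})\geq0$, then audit each inequality for strictness.

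The one thing you omit is the degenerate case $w=w'=0$ with $v_{0}\neq 0$. The lemma's hypothesis only furnishes an orthogonal transformation and a Helton--Vinnikov representation for pairs $(w,w')$ \emph{not both zero}, so for $v=(v_{0},0,\dots,0)$ your reduction has no determinantal $r$ to appeal to. The paper treats this separately and it is trivial: $(1,0)L_{p,e}\circledcirc M(1,0)^{T}=L_{p,e}(1)=e>0$ since $\deg(p)>0$. You should add that one line; everything else in your audit — the noncommutativity hypothesis $A_{1}K'A_{1}K'\neq A_{1}^{2}K'^{2}$ forcing $A_{1}K'\neq K'A_{1}$ and hence strictness in Lemma \ref{lemmatrace}, the $e>d$ contribution vanishing when $v_{0}=0$, and the two spectrum conditions governing the two boundary branches of $v$ — matches the paper's reasoning. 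Your parenthetical caution that the stated second disjunctive clause of the lemma does not list noncommutativity (even though that would suffice in the $v_{0}\neq0$ branch) is accurate; the statement gives a condition that is sufficient but a bit stronger than the case analysis strictly requires, and the paper is content with that.
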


\begin{proof}
We know that $L_{p,e}\circledcirc M$ is PD if and only if for every vector $v\neq0$ of any of the forms $$(1,w^{T},w'^{T})^{T}, (0,w^{T},w'^{T})^{T}\in\mathbb{R}^{2n+1}$$ we have that $v^{T}L_{p,e}\circledcirc M v>0.$ Therefore we have to show that\begin{gather*}
    (1,w^{T},w'^{T})L_{p,e}\circledcirc M(1,w^{T},w'^{T})^{T}>0 \mbox{\ and\ } \\(0,w^{T},w'^{T})L_{p,e}\circledcirc M(0,w^{T},w'^{T})^{T}>0,
\end{gather*} whenever these vectors are not $0.$ But before, first of all, we note that the excluded case $w=0=w'$ is trivial since, in that case, we have that the second equation does not have to be considered at all and the first equation is true because $(1,0)L_{p,e}\circledcirc M(1,0)^{T}=L_{p,e}(1)=e>0.$ Now, by Lemma \ref{lemmaLform} and having in mind that $x_{0}=1$ and our new assumption $w_{0}=1$ or $w_{0}=0$ and the small modification on the index set of $w$ in the notation introduced accordingly as in Proposition \ref{propomatrix}, the previous inequalities are equivalent to\begin{equation*}L_{p,e}((1+w^{T}x+x_{1}w'^{T}x)^2)>0 \mbox{\ and\ } L_{p,e}((w^{T}x+x_{1}w'^{T}x)^2)>0.\end{equation*} In order to prove these inequalities, we can now choose the special orthogonal matrices admitted by $p$ in our assumptions $U\in\mathbb{R}^{n\times n}$ such that (among other properties used in what follows) $u:=U^{T}w$ and $u':=U^{T}w'$ lie both in $\mathbb{R}^{2}\times\{0\}\subseteq\mathbb{R}^{n}.$ By \cite[Proposition 3.8(b)]{main} it suffices then to show \begin{equation*}L_{q,e}((1+u^{T}x+x_{1}u'^{T}x)^2)>0 \mbox{\ and\ } L_{q,e}((u^{T}x+x_{1}u'^{T}x)^2)>0,\end{equation*} where $q(x):=p(Ux)\in\mathbb{R}_{\mbox{RZ}}[x]$ so we can set $r(x_{1},x_{2}):=q(x_{1},x_{2},0)\in\mathbb{R}_{\mbox{RZ}}[x_{1},x_{2}]$ and reduce to the case of two variables as in Lemma \ref{lemmaineqalmost}. We can then apply Helton-Vinnikov to write $r(x_{1},x_{2})=\det(I_{d}+A_{1}x_{1}+A_{2}x_{2})$ for $(A_{1},A_{2})\in(\sym_{d}(\mathbb{R}))^{2}.$ Now our problem reduces to prove \begin{equation*}L_{r,e}((1+u^{T}x+x_{1}u'^{T}x)^2)>0 \mbox{\ and\ } L_{r,e}((u^{T}x+x_{1}u'^{T}x)^2)>0\end{equation*} for the remaining variables $x=(x_{1},x_{2})$ and the vectors $u=(u_{1},u_{2}), u'=(u'_{1},u'_{2})\in\mathbb{R}^{2}$ just obtained via the choice of the special orthogonal transformation $U.$ Thus we have now that \begin{equation*}L_{r,e}((1+u^{T}x+x_{1}u'^{T}x)^2)\geq\tr((I_{d}+K+\frac{1}{2}A_{1}K'+\frac{1}{2}K'A_{1})^{2})\geq0,\end{equation*}
where if $e>d$ the first inequality is strict, and \begin{equation*}L_{r,e}((u^{T}x+x_{1}u'^{T}x)^2)\geq\tr((K+\frac{1}{2}A_{1}K'+\frac{1}{2}K'A_{1})^{2})\geq0.\end{equation*} In general now it can be observed easily following the inequalities in Proposition \ref{propomatrix} that the first inequality in each of the two previous equations is strict if $A_{1}K'A_{1}K'\neq A^{2}_{1}K'^{2}$ and the second inequality is strict if the matrices $I_{d}+K+\frac{1}{2}A_{1}K'+\frac{1}{2}K'A_{1}$ or $K+\frac{1}{2}A_{1}K'+\frac{1}{2}K'A_{1},$ respectively, have not all zero eigenvalues. These observations finish this proof.
\end{proof}

The main question now is how the conditions in the previous lemma translate to the initial polynomial $p$ and how we can find more natural ways of expressing these conditions in terms of the RZ polynomial $p$ itself. In order to do this, we will need to restrict momentarily our point of view.

\begin{ampliacion}[Going beyond reduction to the case $n=2$]
The results before were valid for all RZ polynomials $p$ with $p(0)=1$ because we were always able to reduce our problem to the case $n=2$. However, this reduction implies that we get conditions for the matrices being PD that are complicated and not natural enough. Thus, inasmuch as our problems for clarity in these conditions seem to come from the reduction to the case $n=2$, we will avoid this reduction in order to find other conditions easier to express but, at first, this will lead us naturally towards a restriction to polynomials having monic linear matrix polynomial determinantal representations (MLMPDRs), which are all RZ polynomials for $n\leq 2$ but not anymore all RZ polynomials for $n\geq3$ (due to well-known dimensional counts).
\end{ampliacion}

Doing this ampliation, we hope to find nicer conditions that we will try to extend to all RZ polynomials and not just those admitting a MLMPDR. We notice the similarities between the proof of the next result and the preceding one.

\begin{remark}
The proof of the next corollary corresponds basically with the previous proof with the omission of the restriction to two variables performed through orthogonal transformations.
\end{remark}

Thus, we can proceed in this manner in order to establish the last result of this chapter. This is the topic of the next corollary.

\begin{corolario}[The case of determinantal polynomials]
\label{coroMLMPDRPD}
Applying $L_{p,e}$ for any integer $e\geq d:=\deg(p)$ to all the entries of any matrix of the form given by Equation \ref{ext1} for any RZ polynomial $p\in\mathbb{R}_{\mbox{RZ}}[x]$ with an MLMPDR$(s)$ $p=\det(I_{d}+x_{1}A_{1}+\cdots+x_{n}A_{n})$ produces a PD matrix if, for every pair of vectors $w,w'\in\mathbb{R}^{n}$ not both zero, we have that $A_{1}K'A_{1}K'\neq A_{1}^{2}K'^{2}$ or that $\linspan(\{A_{1},\dots,A_{n},\frac{1}{2}A_{1}A_{1}+\frac{1}{2}A_{1}A_{1},\dots,\frac{1}{2}A_{1}A_{n}+\frac{1}{2}A_{n}A_{1})\}=2n$ and at least one of the following conditions is true: the inequality $e>d$ or the identity $$\linspan(\{I_{d},A_{1},\dots,A_{n},A_{1}A_{1}+A_{1}A_{1},\dots,A_{1}A_{n}+A_{n}A_{1}\})=2n+1.$$
\end{corolario}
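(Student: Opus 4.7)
The plan is to follow the proof of Lemma \ref{lemalargoPD} almost verbatim, with one crucial simplification: since $p$ is assumed to admit a MLMPDR in all $n$ variables from the outset, the orthogonal reduction to the bivariate case, which in the previous lemma was the only device available to invoke the Helton--Vinnikov theorem and thereby obtain a determinantal representation, can be dropped entirely. First I would reduce the claim that $L_{p,e}\circledcirc M$ is PD to showing $v^{T}(L_{p,e}\circledcirc M)v>0$ for every nonzero $v\in\mathbb{R}^{2n+1}$. By homogeneity and the shape of $M$, it suffices to check the two sub-cases $v=(1,w^{T},w'^{T})^{T}$ and $v=(0,w^{T},w'^{T})^{T}$ with $(w,w')\in\mathbb{R}^{2n}\smallsetminus\{0\}$, together with the trivial $v=(1,0,\ldots,0)^{T}$, for which the quadratic form equals $L_{p,e}(1)=e>0$. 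Invoking Lemma \ref{lemmaLform}, the two nontrivial sub-cases reduce to
$$L_{p,e}\bigl((1+w^{T}x+x_{1}w'^{T}x)^{2}\bigr)>0\quad\text{and}\quad L_{p,e}\bigl((w^{T}x+x_{1}w'^{T}x)^{2}\bigr)>0.$$

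Next, since $p=\det(I_{d}+x_{1}A_{1}+\cdots+x_{n}A_{n})$ already exhibits a MLMPDR in all $n$ variables, Proposition \ref{lformtraces} (complemented by the degree-$4$ computations carried out in the proof of Lemma \ref{lemmaineq}) allows the expansion of each $L_{p,e}$-value as a sum of traces of products of $I_{d},A_{1},\ldots,A_{n}$, directly in $n$ variables. Writing $K:=\sum_{i=1}^{n}w_{i}A_{i}$ and $K':=\sum_{i=1}^{n}w'_{i}A_{i}$, the chain of manipulations performed in Proposition \ref{propomatrix}, with the $(4,2)$-trace inequality $\tr(A_{1}^{2}K'^{2})\geq\tr(A_{1}K'A_{1}K')$ of Lemma \ref{lemmatrace} applied through Improvement \ref{lemmaineqimpro}, then yields
\begin{align*}
L_{p,e}\bigl((1+w^{T}x+x_{1}w'^{T}x)^{2}\bigr)&\geq (e-d)+\tr\Bigl(\bigl(I_{d}+K+\tfrac{1}{2}A_{1}K'+\tfrac{1}{2}K'A_{1}\bigr)^{2}\Bigr),\\
L_{p,e}\bigl((w^{T}x+x_{1}w'^{T}x)^{2}\bigr)&\geq \tr\Bigl(\bigl(K+\tfrac{1}{2}A_{1}K'+\tfrac{1}{2}K'A_{1}\bigr)^{2}\Bigr).
\end{align*}
Both right-hand sides are $\geq 0$ because the inner matrices are real symmetric.

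The final step is to promote these inequalities to strict ones. The first inequality in each chain is strict if $A_{1}K'A_{1}K'\neq A_{1}^{2}K'^{2}$ (from the strict part of Lemma \ref{lemmatrace}), and the affine chain is additionally strict when $e>d$, which makes the term $e-d$ positive. The second inequality, $\tr(S^{2})\geq 0$, is strict if and only if $S\neq 0$. Because $S$ depends linearly on $(w,w')$, possibly translated by $I_{d}$, the requirement that $S$ be nonzero for every nonzero $(w,w')$ is equivalent to the linear map
$$(w,w')\longmapsto\sum_{i=1}^{n}w_{i}A_{i}+\sum_{i=1}^{n}w'_{i}\cdot\tfrac{1}{2}(A_{1}A_{i}+A_{i}A_{1})$$
being injective (for the homogeneous sub-case) or to $-I_{d}$ not lying in its image (for the affine sub-case). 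The former is precisely the rank-$2n$ condition listed in the statement, and the latter, equivalently $I_{d}\notin\linspan\{A_{i},\tfrac{1}{2}(A_{1}A_{i}+A_{i}A_{1})\}$, is the rank-$2n+1$ condition.

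The step I expect to require the most care is the disjunctive bookkeeping: one must verify that every nonzero $v$, in each of the two sub-cases, is covered by at least one source of strictness under the stated hypotheses. For $v=(1,w^{T},w'^{T})^{T}$ the global alternative $e>d$ or rank $2n+1$ provides strictness uniformly (the first via the term $e-d>0$ in the first chain, the second via nonvanishing of the inner matrix in the trace of its square). For $v=(0,w^{T},w'^{T})^{T}$ with $(w,w')\neq 0$ the per-pair alternative $A_{1}K'A_{1}K'\neq A_{1}^{2}K'^{2}$, together with the global fallback of rank $2n$ that ensures the inner matrix is nonzero for every nonzero $(w,w')$, does the job. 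The remaining pieces, namely the trace computations and the Helton--Vinnikov-free adaptation of the earlier proof, reduce to routine transcription of the manipulations already carried out in Proposition \ref{propomatrix} and its refinement in Corollary \ref{coroineqalmost}.
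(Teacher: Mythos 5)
Your proposal is correct and takes essentially the same route as the paper: drop the bivariate Helton--Vinnikov reduction since a MLMPDR is handed to you, expand the $L$-form in traces of the representing matrices directly in $n$ variables, run the same chain of inequalities as in Proposition \ref{propomatrix} and Lemma \ref{lemalargoPD}, and translate the per-pair nonvanishing conditions on the inner symmetric matrices into the stated global linear-independence (rank) conditions.

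One small point that the paper's proof handles explicitly and your proposal glosses over: the given MLMPDR has size $s$, not \emph{a priori} $d$, and the trace expansions (in particular the appearance of $L_{p,e}(1)=e\geq d$ matching $\tr(I_d)=d$, and hence the term $e-d\geq 0$ in your chain) only come out clean when the representation size equals the degree. The paper opens by invoking \cite[Theorem 2.2]{branden2011obstructions} to normalize $s=d$ before doing any trace arithmetic. You write $p=\det(I_d+x_1A_1+\cdots+x_nA_n)$ from the start, which silently assumes this normalization; you should state the reduction or at least cite it, since without $s=d$ the quantity $(e-s)$ replacing $(e-d)$ could be negative and the lower bound $\geq 0$ would fail. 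Also, your parenthetical claim that the rank-$(2n+1)$ condition is \emph{equivalent} to $I_d\notin\linspan\{A_i,\tfrac12(A_1A_i+A_iA_1)\}$ is slightly too strong (the former also forces the other $2n$ generators to be independent), but it is an implication in the direction you need, so the argument stands.
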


\begin{proof}
First we remember that, by \cite[Theorem 2.2]{branden2011obstructions}, we can always suppose that $s=d.$ Using the first part of the proof of Lemma \ref{lemalargoPD}, we know that $L_{p,e}\circledcirc M$ is PD if and only if for every vector $v\neq0$ of any of the forms $(1,w^{T},w'^{T})^{T}, (0,w^{T},w'^{T})^{T}\in\mathbb{R}^{2n+1}$ we have respectively that \begin{equation*}L_{p,e}((1+w^{T}x+x_{1}w'^{T}x)^2)>0 \mbox{\ and\ } L_{p,e}((w^{T}x+x_{1}w'^{T}x)^2)>0.\end{equation*} Since we took now RZ polynomials that we know that admit an MLMPDR($d$) $p=\det(I_{d}+x_{1}A_{1}+\cdots+x_{n}A_{n})$, we do not need to reduce to the case $n=2$ and we can directly perform our computations with $p$ instead of $r$ and, as it can be easily seen that everything works similarly to the case $n=2$, we obtain then that \begin{equation*}L_{p,e}((1+w^{T}x+x_{1}w'^{T}x)^2)\geq\tr((I_{d}+K+\frac{1}{2}A_{1}K'+\frac{1}{2}K'A_{1})^{2})\geq0,\end{equation*}
where, if $e>d$, the first inequality is strict, and \begin{equation*}L_{p,e}((w^{T}x+x_{1}w'^{T}x)^2)\geq\tr((K+\frac{1}{2}A_{1}K'+\frac{1}{2}K'A_{1})^{2})\geq0,\end{equation*} where, in this case, we called $K=w_{1}A_{1}+\cdots+w_{1}A_{1}$ and $K'=w'_{1}A_{1}+\cdots+w'_{1}A_{1}.$ As before but with the difference that our matrices are now not altered by any (orthogonal) transformation on the variables, it can be observed easily that the first inequality in each of the two previous equations is strict if $A_{1}K'A_{1}K'\neq A^{2}_{1}K'^{2}$ and the second inequality is strict if the matrices $I_{d}+K+\frac{1}{2}A_{1}K'+\frac{1}{2}K'A_{1}$ or $K+\frac{1}{2}A_{1}K'+\frac{1}{2}K'A_{1},$ respectively, have not all zero eigenvalues. The rest of the proof is trivial observing the conditions obtained in Lemma \ref{lemalargoPD}.
\end{proof}

The work for non-determinantal polynomial lies beyond our interests here. However, we can lightly mention an interesting possible approach to study that problem using the already cited \cite[Theorem 6]{kummer2017determinantal}.

\begin{ramificacion}[Using Kummer's result]
By \cite[Proposition 3.6]{main} we know that $L_{pq,d+e}=L_{p,d}+L_{q,e}$ for every pair of nonnegative integers $d,e\in\mathbb{N}_{0}$ and every pair of RZ polynomials $p,q\in\mathbb{R}_{\mbox{RZ}}[x].$ Additionally, although not every RZ polynomial $p$ has a MLMPDR, by a result of Kummer we know that some multiple of it by another RZ polynomial $q$ has it so $pq$ can be studied as in Corollary \ref{coroMLMPDRPD}. However, applying directly that corollary to $pq$ gives awful conditions for the matrices studied to be PD. At this point, it is therefore evident that we cannot avoid a deeper study of $L_{p,e}$ in terms of the coefficients of $p$ if we want to go further because the expression of $L_{p,e}$ in terms of traces, although helpful to shorten our previous proofs until this point, seems worn out when we cannot ensure anymore the existence of MLMPDR for our RZ polynomials.
\end{ramificacion}

This ampliation of the relaxation matrix tells us that it is possible to extend the matrix mold keeping the condition that the initial matrix is PSD. However, our experiments show that the polynomial obtained through this method is not in general a relaxation in the sense that it respects the original RCS. For this reason, we turn in the next chapter to other strategies to increase the matrices in the relaxation: adding more variables through interlacers and stability preservers. This is the topic of the next chapter.
\chapter{Invariability under Known Variable Extensions}\label{4}

In this section, we will use three key notions that will allow us to extend a RZ polynomial into a new one having one additional variable. These notions are the Renegar derivative \cite[Section 4]{Renegar2006HyperbolicPA} (see also \cite{Saunderson2017ASR} for understanding better the importance of this construction), interlacing of polynomials \cite{johnson1987interlacing} (see also \cite{fisk2006polynomials} for an extended discussion and many results) and a note of Nuij on keeping real-rootedness under certain transformation \cite[Lemma]{Nuij1968ANO} that was eventually improved and generalized to the multivariate case in the form we can see in \cite[Lemma 2.5.1]{habnetzer} and that we will use here in combination with \cite[Proposition 6.7]{main}. In our case, note that the cited Lemma is applied using one additional \textit{ghost} variable. These kinds of variables will appear quite often in future parts of this work.

\begin{notacion}[Homogenization and Renegar derivative]
For a polynomial $p\in\mathbb{R}[\mathbf{x}]$ and $k\in\mathbb{N}$ we denote $\gls{renegpk}=\left(\frac{\partial^{k}}{\partial x_{0}^{k}}p^{h}\right)\bigg{|}_{x_{0}=1}$ the \textit{Renegar derivative of $p$}, where $\gls{homop}:=x_{0}^{d}p(\frac{1}{x_{1}},\dots,\frac{1}{x_{n}})$ is the \textit{homogenization} of $p$ using the additional variable $x_{0}.$ 
\end{notacion}

\section{Univariate inspiration}\label{section1}

In this first section, we will see that using results from \cite{nuijtype} (only valid for univariate polynomials $p\in\mathbb{R}[x]$) we can extract an idea about how to deal with certain transformations sending the set of RZ polynomials in the variable $x$ to set of RZ polynomials in the variable $y$ involving linear combinations of products of the $k$-th Renegar derivative by the corresponding power $y^{k}$ of the new variable $y$. Here we will see a method that we cannot directly generalize to the multivariate case because we do not have the results on \cite{nuijtype} for multivariate polynomials. However, we will see in future sections that a similar idea to the one that we expose here can be refined to obtain a multivariate version of the result that closes this section. Surprisingly, after some work over the structure of the relaxation defined in \cite[Definition 3.19]{main}, we will find that the mentioned multivariate result admits a straightforward proof not involving any generalization of \cite{nuijtype} but just easy results from \cite{main}. Thus, this section serves uniquely as an introduction to the type of results we are looking for in this document.

\begin{convencion}
We denote $I=I_{d}$ when the size $d$ is understood from the context. For a matrix $A$ we denote $\diag(A)$ the matrix having the same diagonal as $A$ that is zero everywhere else. For ease of writing, with $\mathbb{R}A=\{rA\mid r\in\mathbb{R}\}$ we mean the real linear span of the matrix $A$.
\end{convencion}

\begin{objetivo}[Using transformation respecting real-rootedness]
We will study the transformation $p\to p+y q$ with $q$ the Renegar derivative (\cite[Definition 2.4.2]{habnetzer}) of $p\in\mathbb{R}[\mathbf{x}]$ as first example of a transformation that produces an RZ polynomial having one extra variable. This is the easiest transformation of this kind. However, we will see that it does not add new information about the polynomial and, therefore, it does not improve the relaxation. This analysis will impulse us to search for different transformation (e.g., through stability preservers) or different interlacers able to improve the relaxation.
\end{objetivo}

In particular, when looking at these transformations increasing the number of variables in a polynomial, we will see that finding these is a highly subtle endeavour. This greatly limits what we can do. In particular, adding more than one new variable already becomes a highly technical work.

\begin{limitacion}[For any possible multivariate analogues]
The map $p\to p+y q$ works sending the set of RZ polynomials in the variables $\mathbf{x}$ to the set of RZ polynomials in the variables $(\mathbf{x},y)$. In general, it is only required that $q,p$ are in \textit{proper position} (see \cite[Proposition 2.7]{wagner2011multivariate}). However, adding more than one variable has to pass through recursions, combinatorics and the theory of stability preservers. We will see examples of this in the next parts of this dissertation.
\end{limitacion}

We begin by dealing with the univariate case through known results in order to take inspiration and ideas for our next multivariate steps and results. In particular, the next proposition is \cite[Example 3.6]{nuijtype}.

\begin{proposicion}[Determinantal representation of Renegar extension]
If $p=\det(I+xD)\in\mathbb{R}[x]$ with $D$ diagonal, then $r:=p+yq,$ with $q$ the Renegar derivative of $p,$ has the determinantal respresentation $r=\det(I+xD+yE)$ with $E$ the all ones matrix.
\end{proposicion}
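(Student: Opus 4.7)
The plan is to compute $\det(I + xD + yE)$ directly and then match the result term by term with $p + y p^{(1)}$. Since $E = \mathbf{1}\mathbf{1}^T$ with $\mathbf{1} = (1,\ldots,1)^T \in \mathbb{R}^n$ is rank one, the matrix determinant lemma applied to $I + xD + y\mathbf{1}\mathbf{1}^T$ yields
\[
\det(I + xD + yE) = \det(I + xD)\bigl(1 + y\,\mathbf{1}^T(I + xD)^{-1}\mathbf{1}\bigr).
\]
Because $D = \diag(d_1,\ldots,d_n)$, the inverse $(I+xD)^{-1}$ is diagonal with entries $1/(1+xd_i)$, whence
\[
\det(I + xD + yE) = p(x) + y\sum_{i=1}^{n}\prod_{j\neq i}(1+xd_j).
\]

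Next I would identify the coefficient of $y$ with $q = p^{(1)}$. Writing $p(x)=\prod_{i=1}^n(1+xd_i)=\sum_{k=0}^{n}e_k(D)x^k$ in terms of the elementary symmetric polynomials of $d_1,\ldots,d_n$, and expanding each leave-one-out product as $\prod_{j\neq i}(1+xd_j)=\sum_{k=0}^{n-1}e_k(D\setminus\{d_i\})x^k$, I would invoke the standard combinatorial identity $\sum_{i=1}^n e_k(D\setminus\{d_i\})=(n-k)e_k(D)$ (each $k$-subset of $[n]$ survives in exactly $n-k$ of the leave-one-out subsets) to obtain
\[
\sum_{i=1}^{n}\prod_{j\neq i}(1+xd_j)=\sum_{k=0}^{n-1}(n-k)e_k(D)x^k.
\]

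To close the loop, I would read off $p^{(1)}$ directly from its definition: using the standard homogenization $p^h(x_0,x)=x_0^n p(x/x_0)=\sum_{k=0}^{n}e_k(D)x_0^{n-k}x^k$, differentiating in $x_0$ and evaluating at $x_0=1$ yields $p^{(1)}(x)=\sum_{k=0}^{n-1}(n-k)e_k(D)x^k$, matching term by term the sum already computed. Combining the two equalities gives $\det(I+xD+yE)=p+yq=r$.

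The entire argument is essentially a one-line application of the matrix determinant lemma together with the classical identity relating leave-one-out elementary symmetric polynomials to $(n-k)e_k$; the only mildly delicate point is lining up the convention for the univariate Renegar derivative, which reduces to reading off the homogeneous components of $p$. I do not foresee any real obstacle.
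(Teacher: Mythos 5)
The paper does not prove this proposition at all: it is quoted verbatim as a citation to \cite[Example 3.6]{nuijtype}. So there is no in-paper argument to compare against, and your self-contained proof fills that gap. The argument itself is correct and clean: the matrix determinant lemma gives $\det(I+xD+yE)=p(x)\bigl(1+y\,\mathbf{1}^{T}(I+xD)^{-1}\mathbf{1}\bigr)=p(x)+y\sum_{i}\prod_{j\neq i}(1+xd_{j})$ (valid as a polynomial identity by density of invertible $I+xD$), and the identity $\sum_{i}e_{k}(D\setminus\{d_{i}\})=(n-k)e_{k}(D)$ matches the coefficient of $y$ with $\sum_{k}(n-k)e_{k}(D)x^{k}=\bigl(\partial_{x_{0}}x_{0}^{n}p(x/x_{0})\bigr)\big|_{x_{0}=1}$.

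The one point you rightly flagged as delicate is not entirely cosmetic. The identity you derive produces the Renegar derivative taken at homogenization degree $n=\operatorname{size}(D)$, whereas the paper's definition of $p^{(1)}$ uses $d=\deg(p)$. These agree exactly when $D$ has no zero diagonal entries, i.e.\ $\deg(p)=n$. If some $d_{i}=0$, the two conventions differ by a multiple of $p$ and the stated formula fails literally (e.g.\ $D=\operatorname{diag}(0,1)$ gives $\det(I+xD+yE)=1+x+y(2+x)$, but $p^{(1)}=1$ with $\deg(p)=1$). So the proposition implicitly assumes $\deg(p)=n$, or equivalently a Renegar derivative at virtual degree $n$; this is precisely the "degree drop'' issue the paper spends considerable effort on later (Remark~\ref{remarkdegree}, Remark~\ref{remarksize}). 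It would strengthen your write-up to state the hypothesis $\deg p=n$ (all $d_{i}\neq 0$) explicitly, rather than leaving it as a "convention to line up.''
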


We need a lemma about traces and diagonal matrices.

\begin{lema}[Trace and diagonals]\label{lemmaabove}
Let $A,D$ be square matrices with $A$ arbitrary and $D$ diagonal. Then $\tr(AD)=\tr(\diag(A)D)=\tr(D\diag(A))=\tr(DA)$.
\end{lema}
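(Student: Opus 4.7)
The plan is to unwind both traces entrywise, exploit the fact that $D$ is diagonal to collapse the double sums, and then invoke cyclicity. First I would write $\tr(AD) = \sum_{i} (AD)_{ii} = \sum_{i}\sum_{j} A_{ij} D_{ji}$, and since $D_{ji} = 0$ whenever $j \neq i$, the inner sum collapses to the single term $A_{ii} D_{ii}$. Hence $\tr(AD) = \sum_{i} A_{ii} D_{ii}$.

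Next I would repeat the identical calculation with $\diag(A)$ in place of $A$: because $\diag(A)$ and $A$ share the same diagonal entries, the same collapse yields $\tr(\diag(A) D) = \sum_{i} A_{ii} D_{ii}$. Comparing the two displayed expressions gives $\tr(AD) = \tr(\diag(A) D)$.

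Finally, the remaining two equalities $\tr(\diag(A) D) = \tr(D \diag(A))$ and $\tr(AD) = \tr(DA)$ are direct instances of the cyclic property of the trace (or, equivalently, can be obtained by the same entrywise computation with the roles of the two matrices swapped, noting that $D$ being diagonal means $D_{ij} = 0$ for $i \neq j$ forces the analogous collapse). Chaining these identities produces the desired fourfold equality.

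There is really no hard step here; the only thing to be careful about is making the argument uniform in which side of the product $D$ sits on, which is handled by the observation that diagonality of $D$ kills every off-diagonal contribution in $AD$ and in $DA$ alike.
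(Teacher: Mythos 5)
Your proof is correct and takes essentially the same route as the paper's: the paper compresses the argument to the observation that right- or left-multiplication by a diagonal matrix scales columns or rows, which is exactly the entrywise collapse you spell out with the double sum.
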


\begin{proof}
This follows immediately from the definition of trace and the multiplication of matrices, noting that multiplication by diagonal matrices corresponds to multiplying each row or column (depending on left or right multiplication) of that matrix by the corresponding diagonal element.
\end{proof}

Now we relate the relaxation of the Renegar extension with the relaxation of the original polynomial. A restriction shows that they coincide where they can.

\begin{proposicion}[Relaxation of a polynomial equals restriction of the relaxation of the Renegar extension]
\label{analogyprop}
With the notation above, $$S(p)=\{a\in\mathbb{R}\mid (a,0)\in S(r)\}.$$
\end{proposicion}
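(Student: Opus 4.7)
The plan is to prove both inclusions separately, with all the content in the inclusion $S(p) \subseteq \{a \mid (a,0) \in S(r)\}$. First observe that $\deg(r) = \deg(p) = d$: the univariate Renegar derivative is $q = d p - x p'$, of degree at most $d - 1$, so $\deg(yq) \leq d$; hence $M_r$ and $M_p$ are built with the same virtual degree $d$, and in particular $L_r$ and $L_p$ agree on $\mathbb{R}[x]$ because $r(x,0) = p(x)$. The restriction property recorded earlier in the paper (the Corollary on restrictions) then yields $\{a \in \mathbb{R} \mid (a,0) \in S(r)\} \subseteq S(p)$ at once, settling the easy inclusion.

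For the reverse inclusion, fix $a \in S(p)$ and an arbitrary $v = (v_0, v_1, v_2)^T \in \mathbb{R}^3$, and write $u = (v_0, v_1)^T$, $e_1 = (1, 0)^T$. The starting point is the quadratic-form identity
\[
v^T M_r(a,0) v = L_r\bigl((v_0 + v_1 x + v_2 y)^2(1 + a x)\bigr),
\]
which I then expand by powers of $y$. The $y^0$ part is $L_p((v_0+v_1 x)^2(1+ax)) = u^T M_p(a) u$ since $L_r|_{\mathbb{R}[x]} = L_p$. The $y^1$ and $y^2$ parts are extracted via the determinantal representation $r = \det(I + xD + yE)$ with $E = \mathbf{1}\mathbf{1}^T$ and the trace formula for $L$-forms (Proposition \ref{lformtraces}). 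The rank-one identity $\tr(D^i E D^j) = \mathbf{1}^T D^{i+j}\mathbf{1} = \tr(D^{i+j})$, together with $E^2 = d E$, collapses every Hurwitz trace that appears and yields $L_r(y x^k) = L_p(x^k)$ for all $k \geq 0$, as well as $L_r(y^2) = d^2$ and $L_r(x y^2) = d\, L_p(x)$. These crystallize into
\[
L_r\bigl(y(1+ax)(v_0+v_1 x)\bigr) = e_1^T M_p(a) u, \qquad L_r\bigl(y^2(1+ax)\bigr) = d\, e_1^T M_p(a) e_1.
\]

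Substituting into the expansion gives
\[
v^T M_r(a,0) v = u^T M_p(a) u + 2 v_2\, e_1^T M_p(a) u + d v_2^2\, e_1^T M_p(a) e_1,
\]
and the closing move is to split $d = 1 + (d-1)$ and complete the square, obtaining
\[
v^T M_r(a,0) v = (u + v_2 e_1)^T M_p(a) (u + v_2 e_1) + (d-1) v_2^2\, e_1^T M_p(a) e_1.
\]
Both summands are nonnegative because $M_p(a) \succeq 0$ (the second is a nonnegative multiple of a diagonal entry of a PSD matrix, with $d \geq 1$), so $M_r(a, 0) \succeq 0$ and $(a, 0) \in S(r)$, finishing the reverse inclusion. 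The main obstacle is the clean packaging of the mixed values $L_r(y^i x^k)$: what makes everything go through, and what ultimately underlies the chapter's thesis that this Renegar-type variable extension does not improve the relaxation, is the rank-one structure of $E = \mathbf{1}\mathbf{1}^T$, which forces $L_r(y\, g(x)) = L_p(g(x))$ for every $g \in \mathbb{R}[x]$ and thereby reduces the $3 \times 3$ PSD question for $M_r(a,0)$ to the $2 \times 2$ PSD condition on $M_p(a)$.
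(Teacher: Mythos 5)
Your proof is correct, and the core algebraic move is the same as the paper's, but you route through a different characterization of the relaxation. The paper proves the reverse inclusion via the trace characterization from \cite[Proposition 3.33]{main}: it writes $S(r)=\{(a,b)\mid\forall M\in U_{r}:\tr(M^{2}(I+aD+bE))\geq 0\}$ with $U_{r}=U_{p}+\mathbb{R}E$, then shows for $M\in U_{p}$ that $\tr((M+v_{2}E)^{2}(I+aD))=\tr((M+v_{2}I)^{2}(I+aD))+\tr((v_{2}\sqrt{d-1}I)^{2}(I+aD))$ using Lemma \ref{lemmaabove}, and concludes since both $M+v_{2}I$ and $v_{2}\sqrt{d-1}I$ lie in $U_{p}$. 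You instead start from the quadratic-form identity $v^{T}M_{r}(a,0)v=L_{r}((v_{0}+v_{1}x+v_{2}y)^{2}(1+ax))$, compute the mixed values $L_{r}(y^{i}x^{k})$ directly from the Hurwitz-trace formula on $\det(I+xD+yE)$, and arrive at
\[
v^{T}M_{r}(a,0)v=(u+v_{2}e_{1})^{T}M_{p}(a)(u+v_{2}e_{1})+(d-1)v_{2}^{2}\,e_{1}^{T}M_{p}(a)e_{1}.
\]
Under the dictionary $v_{0}I+v_{1}D\leftrightarrow(v_{0},v_{1})$, this is precisely the paper's decomposition: $M+v_{2}I\leftrightarrow u+v_{2}e_{1}$ and $v_{2}\sqrt{d-1}I\leftrightarrow v_{2}\sqrt{d-1}\,e_{1}$. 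What your version buys is the explicit identity $L_{r}(y\,g(x))=L_{p}(g(x))$ for all $g$, which makes visible exactly why the Renegar extension adds nothing (the rank-one $E=\mathbf{1}\mathbf{1}^{T}$ forces this collapse); what the paper's version buys is brevity, since it never needs to evaluate individual mixed $L$-form entries but just checks the certificate set $U_{r}$ directly. Both are complete and the disagreement is presentational rather than structural.
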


\setlength{\emergencystretch}{3em}%
\begin{proof}
Using \cite[Proposition 3.33]{main}, we write $S(p)=\{a\in\mathbb{R}\mid\forall M\in U_{p}:\tr(M^{2}(I+aD))\geq0\}$ and $S(r)=\{(a,b)\in\mathbb{R}^{2}\mid\forall M\in U_{r}:\tr(M^{2}(I+aD+bE))\geq0\}$ with $U_{p}:=\{v_{0}I+v_{1}D\mid (v_{0},v_{1})\in\mathbb{R}^2\}\subseteq U_{r}:=\{v_{0}I+v_{1}D+v_{2}E\mid (v_{0},v_{1},v_{2})\in\mathbb{R}^3\}$. Note that $U_{r}=U_{p}+\mathbb{R}E$. The inclusion $\supseteq$ follows from \cite[Lemma 3.26 (c)]{main}. For $\subseteq$, fix $a\in S(p)$, then $\tr(M^{2}(I+aD))\geq0$ for all $M\in U_{p}$. Observe that, for $M\in U_{p}$, $\tr((M+v_{2}E)^{2})=\tr(((M+v_{2}I)^2+v_{2}^{2}(d-1)I)=\tr((M+v_{2}I)^2+(v_{2}\sqrt{d-1}I)^2)$. Note then that $v_{2}\sqrt{d-1}I,M+v_{2}I\in U_{p}$. We see now immediately using Lemma \ref{lemmaabove}, proved above, that $(a,0)\in S(r)$ because for all $M+v_{2}E\in U_{r}$ with $M\in U_{p}$ we have then $\tr((M+v_{2}E)^{2}(I+aD))=\tr(((M+v_{2}I)^2+(v_{2}\sqrt{d-1}I)^2)(I+aD))=\tr((M+v_{2}I)^{2}(I+aD))+\tr((v_{2}\sqrt{d-1}I)^{2}(I+aD))\geq0$ because $a\in S(p)$ and $v_{2}\sqrt{d-1}I,M+v_{2}I\in U_{p}$. This establishes the last inclusion and finishes our proof.
\end{proof}
\setlength{\emergencystretch}{0em}%

We properly introduce now polynomial variable extensions.

\begin{definicion}[Extensions of polynomials]
Consider disjoint sets of variables $\mathbf{x}$ and $\mathbf{y}$. Let $p\in\mathbb{R}[\mathbf{x}]$ be a polynomial and $r\in\mathbb{R}[\mathbf{x},\mathbf{y}]$ be another polynomial such that $p(\mathbf{x})=r(\mathbf{x},0)$. We say that $r$ is a \textit{(polynomial) extension} of the \textit{original polynomial} $p$. Similarly, we say that the set given by $\{a\in\mathbb{R}^{n}\mid (a,0)\in S(r)\}$ is the \textit{intersection with the original subspace of the relaxation of the extension $r$ of the original polynomial $p$}.
\end{definicion}

Intersections of the relaxation will be important for us.

\begin{observacion}[Intersections and restrictions]
The set on the RHS of the equation above appears as the intersection with the line $y=0$ of the relaxation $S(r)$ of the extension $r$ of $p$, i.e., $p=r|_{y=0}$. These sets obtained from intersections of the original subspace with the relaxation of an extension of the original polynomial have sometimes interesting relations to the RCSs of the original polynomials.
\end{observacion}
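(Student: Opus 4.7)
The plan is to prove both inclusions after translating each relaxation into the trace-inequality form provided by \cite[Proposition 3.33]{main}. The determinantal representations $p=\det(I+xD)$ and $r=\det(I+xD+yE)$ give us, via that proposition, $S(p)=\{a\in\mathbb{R}\mid \tr(M^{2}(I+aD))\geq 0\text{ for all }M\in U_{p}\}$ with $U_{p}=\linspan\{I,D\}$, and analogously $(a,b)\in S(r)$ iff $\tr(N^{2}(I+aD+bE))\geq 0$ for every $N\in U_{r}=U_{p}+\mathbb{R}E$. The inclusion $\{a\mid(a,0)\in S(r)\}\supseteq S(p)$ from right to left amounts to restricting a relaxation to the slice $y=0$, so it is a direct application of \cite[Lemma 3.26]{main}; the interesting direction is the other one.

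For the nontrivial inclusion I would fix $a\in S(p)$ and an arbitrary $N=M+v_{2}E$ with $M\in U_{p}$, and show $\tr(N^{2}(I+aD))\geq 0$ by decomposing the trace as a sum of two already-controlled nonnegative terms. The key observation is that every matrix in $U_{p}$ is diagonal, so by Lemma \ref{lemmaabove} any trace against the diagonal matrix $I+aD$ only sees the diagonal of its first factor. Expanding $N^{2}=M^{2}+v_{2}(ME+EM)+v_{2}^{2}E^{2}$ and computing diagonals: since $M$ is diagonal, $\diag(ME+EM)=2M$, and $E^{2}=dE$ has diagonal $dI$. Hence the diagonal of $N^{2}$ agrees with the diagonal of $(M+v_{2}I)^{2}+v_{2}^{2}(d-1)I=(M+v_{2}I)^{2}+(v_{2}\sqrt{d-1}\,I)^{2}$.

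Applying Lemma \ref{lemmaabove} once more yields the splitting
\begin{equation*}
\tr(N^{2}(I+aD))=\tr((M+v_{2}I)^{2}(I+aD))+\tr((v_{2}\sqrt{d-1}\,I)^{2}(I+aD)).
\end{equation*}
Both $M+v_{2}I$ and $v_{2}\sqrt{d-1}\,I$ are elements of $U_{p}$ (which is closed under adding real multiples of $I$), so the hypothesis $a\in S(p)$ makes each summand nonnegative, giving the desired inequality. The main obstacle I foresee is precisely this decomposition step: one must recognise that even though $E\notin U_{p}$ and $(M+v_{2}E)^{2}$ acquires off-diagonal contributions from the all-ones matrix, against a diagonal test matrix those contributions vanish, and the remaining diagonal correction is exactly the square of a scalar multiple of the identity. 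Once this structural fact is in hand, the proof is essentially linear algebra; the subtlety is entirely in spotting the splitting $(M+v_{2}I)^{2}+(v_{2}\sqrt{d-1}\,I)^{2}$ rather than attempting to deal with $E$ directly.
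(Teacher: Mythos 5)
Your argument is a proof of Proposition~\ref{analogyprop} (the equation to which this observation refers), since the observation itself is only a remark and the paper offers no separate proof for it. Your proof is correct and follows essentially the same route as the paper's proof of that proposition: $\supseteq$ from \cite[Lemma 3.26]{main}, and for $\subseteq$ the splitting of $\tr((M+v_{2}E)^{2}(I+aD))$ into $\tr((M+v_{2}I)^{2}(I+aD))+\tr((v_{2}\sqrt{d-1}\,I)^{2}(I+aD))$ via Lemma~\ref{lemmaabove} together with closure of $U_{p}$ under adding multiples of $I$; you merely spell out the diagonal bookkeeping of $N^{2}$ in a bit more detail than the paper does.
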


We will see one of these relations for the bivariate case in the next section.

\section{Another result on intersections in bivariate case}

In this section, we will follow one of the paths drawn by Proposition \ref{analogyprop} at the end of the previous section. That proposition shows a clear relation between the two sets (relaxations) compared. This relation comes in the form of an equality of borders.

\begin{definicion}[Topological objects at the extremes of sets]
Let $(X,\tau)$ be a topological space and $A\subseteq X.$ The \textit{border} $\gls{bda}$ of $A$ equals $\cl(A)\smallsetminus\Int(A).$
\end{definicion}

These are the objects that we will to compare here via intersections.

\begin{objetivo}[Intersection of borders of rigidly convex sets and restrictions of relaxations of extensions]
We will show here a relation given in the form of an intersection of borders but in this case between the borders of the RCS of the original polynomial and the border of the intersection with the original subspace of the relaxation of the given extension of the original polynomial.
\end{objetivo}

We introduce some simplifications in order to facilitate our arguments. In particular, we always normalize our eigenvectors.

\begin{simplificacion}[Normalized vectors]
For ease of writing, we will generally take all our eigenvectors $v$ normalized so they verify $v^{\ast}v=1$. Therefore, a matrix of the form $vv^{\ast}$ has eigenvalues $(1,\mathbf{0})$ and then $\tr(vv^{\ast})=1$.
\end{simplificacion}

As usual, we take by convention our RZ polynomials $p$ with $p(\mathbf{0})=1$.

\begin{convencion}
As we will work with a bivariate RZ original polynomial $p\in\mathbb{R}[\mathbf{x}]$ with $p(0)=1$ wlog, we recall that such a bivariate RZ polynomial is determinantal (i.e., it admits a representation $p:=\det(I+x_{1}A_{1}+x_{2}A_{2})$ with the matrices $A_{i}$ both real symmetric) by Helton-Vinnikov Theorem \cite[Theorem 2.7]{main}.
\end{convencion}

Thus, we can finally establish now the proposition about the intersection of borders of the relaxation and the original RCS.

\begin{definicion}[Signedly extreme eigenvalues, roots around a point]
For a hermitian matrix $A$, we say that $\lambda$ is a \textit{signedly extreme} eigenvalue if $\lambda$ is either positive and the maximal or negative and the minimal. Also, for a univariate polynomial $p\in\mathbb{R}[t]$ (or a univariate LMP $M\in\Her_{d}{\mathbb{C}}[t]$) and $a\in\mathbb{R}$, we define $\innr_{a}(p)$ as the set of roots of $p$ ($\det(M)$) around $a$.
\end{definicion}

\begin{proposicion}[Intersection of borders, univariate view]
Let $p\in\mathbb{R}[\mathbf{x}]$ be a bivariate RZ polynomial and $p:=\det(I+x_{1}A_{1}+x_{2}A_{2})$ one of its determinantal representations with $A_{i}$ a (complex) Hermitian matrix for each $i\in[2]$. For $v$ a (normalized) eigenvector to a signedly extreme eigenvalue $\lambda$ of a nontrivial real linear combination of $A_{1}$ and $A_{2}$, i.e., of a matrix of the form $b_{1}A_{1}+b_{2}A_{2}$ with $\{0\}\neq\{b_{1},b_{2}\}\subseteq\mathbb{R}$, define the new polynomial $q=\det(I+x_{1}A_{1}+x_{2}A_{2}+yvv^{\ast})\in\mathbb{R}[\mathbf{x},y]$. Then $\innr_{0}(p(t(b_{1},b_{2})))\cap\innr_{0}(M_{q}(t(b_{1},b_{2},0)))\neq\emptyset.$
\end{proposicion}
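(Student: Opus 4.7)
The plan is to exhibit $t_{0} := -1/\lambda$ as a common element of the two intersecting sets, using that $\lambda$ is a signedly extreme eigenvalue of $b_{1}A_{1}+b_{2}A_{2}$ with eigenvector $v$.

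First, I would check that $t_{0} \in \innr_{0}(p(t(b_{1},b_{2})))$. The univariate restriction $p(t(b_{1},b_{2})) = \det(I + t(b_{1}A_{1} + b_{2}A_{2}))$ vanishes exactly at the reciprocals $-1/\mu$ of the nonzero eigenvalues $\mu$ of $b_{1}A_{1}+b_{2}A_{2}$. Because $\lambda$ is signedly extreme, $t_{0}$ is the root of smallest absolute value on its side of the origin, hence bounds the connected component of $0$, placing it in $\innr_{0}$.

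The heart of the proof is to show that $t_{0}$ is a root of $\det M_{q}(t(b_{1},b_{2},0))$ lying around $0$. My strategy is to test the vector $u := (0,0,0,1)^{T}$, corresponding to the $y$-slot of the relaxation mold $(1,x_{1},x_{2},y)$, against $M_{q}(t_{0}(b_{1},b_{2},0))$. By the quadratic-form identity $u^{T} M_{q}(a) u = L_{q}((u_{0}+u_{1}x_{1}+u_{2}x_{2}+u_{3}y)^{2}(1+a_{1}x_{1}+a_{2}x_{2}+a_{3}y))$ from \cite[Lemma 3.22]{main}, this specializes to $L_{q}(y^{2}(1+t_{0}b_{1}x_{1}+t_{0}b_{2}x_{2}))$. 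The polynomial inside has total degree at most three, so Proposition \ref{lformtraces} applied to the determinantal representation $q = \det(I+x_{1}A_{1}+x_{2}A_{2}+yvv^{*})$ converts the $L_{q}$-value into $\tr\bigl((vv^{*})^{2}(I+t_{0}(b_{1}A_{1}+b_{2}A_{2}))\bigr)$. Using $(vv^{*})^{2}=vv^{*}$ and the cyclic property of the trace, this simplifies to $v^{*}(I+t_{0}(b_{1}A_{1}+b_{2}A_{2}))v$, which vanishes because $(b_{1}A_{1}+b_{2}A_{2})v = \lambda v$ and $t_{0}\lambda = -1$. I would then combine this with the PSD-ness of $M_{q}(t_{0}(b_{1},b_{2},0))$: since $q(s(b_{1},b_{2},0)) = p(s(b_{1},b_{2}))$, the point $(t_{0}b_{1},t_{0}b_{2},0)$ lies on $\partial\rcs(q)$, hence in $S(q)$ by the relaxation containment, so the pencil is PSD there; PSD-ness together with $u^{T}M_{q}u = 0$ forces $u \in \ker M_{q}(t_{0}(b_{1},b_{2},0))$, and in particular $\det M_{q}(t_{0}(b_{1},b_{2},0)) = 0$.

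The step I expect to be trickiest is the final ``around $0$'' bookkeeping, that is, verifying that $t_{0}$ is really the first root of $\det M_{q}(t(b_{1},b_{2},0))$ encountered as $t$ moves from $0$. For this I would use that $M_{q}(0)$ is PD (being, generically, the trace-Gram matrix of $\{I,A_{1},A_{2},vv^{*}\}$) and combine $\rcs(q)\subseteq S(q)$ with the fact that $\partial\rcs(q)$ along the ray $(b_{1},b_{2},0)$ is exactly at $t_{0}$: the former forces any singularity of $M_{q}$ on that ray to occur no earlier than $t_{0}$, while the explicit kernel vector shows one occurs no later, pinning it at $t_{0}$ and placing it in $\innr_{0}(M_{q}(t(b_{1},b_{2},0)))$. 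This completes the intersection.
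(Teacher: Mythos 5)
Your proposal is correct and follows essentially the same route as the paper's first proof of the (equivalent) bivariate formulation: the crucial step in both is testing $M_q$ at $t_0(b_1,b_2,0)$ against the direction corresponding to $vv^{\ast}$ (your $u=(0,0,0,1)$, the paper's choice of $M=vv^{\ast}\in U_q$), reducing to $\tr\bigl(vv^{\ast}(I+t_0(b_1A_1+b_2A_2))\bigr)=1+t_0\lambda=0$, and then invoking PSD-ness from $\rcs(q)\subseteq S(q)$. You phrase the computation through \cite[Lemma 3.22]{main} and Proposition \ref{lformtraces}, while the paper goes directly through the trace characterization of $S(q)$ coming from \cite[Proposition 3.33]{main}; these are the same calculation in dual notations, and your concluding boundary bookkeeping matches the paper's (equally informal) identification of the border point.
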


This way of writing the proposition was very geometrical. It says that, in the line spanned by the vector $(b_{1},b_{2})$, some of the roots around $0$ of $p$ and $M_{q}$ (the LMP defining the relaxation of the extended polynomial $q$) coincide. This means that, in some sense, in these special points, we have reached exactness through the consideration of the polynomial variable extension $q$ of $p$. The proposition can also be written in the next more algebraic way. We will use this rewriting to prove it. The equivalence between these two ways of writings the statement in immediate. We will be more precise about the intersection in this rewriting.

\begin{proposicion}[Intersection of borders, bivariate view]
Fix a bivariate RZ polynomial $p\in\mathbb{R}[\mathbf{x}]$ and one of its determinantal representations $p:=\det(I+x_{1}A_{1}+x_{2}A_{2})$ with $A_{i}$ a (complex) Hermitian matrix for each $i\in[2]$. For $v$ a (normalized) eigenvector to the greatest positive or smallest negative eigenvalue of a nontrivial real linear combination of $A_{1}$ and $A_{2}$, i.e., of a matrix of the form $b_{1}A_{1}+b_{2}A_{2}$ with $\{0\}\neq\{b_{1},b_{2}\}\subseteq\mathbb{R}$, define the new polynomial $q=\det(I+x_{1}A_{1}+x_{2}A_{2}+yvv^{\ast})\in\mathbb{R}[\mathbf{x},y]$. Then $$\bd(\{(a_{1},a_{2})\in\mathbb{R}^{2}\mid(a_{1},a_{2},0)\in S(q)\})\cap\bd(C(p))\neq\emptyset.$$ In fact, fixing $B:=b_{1}A_{1}+b_{2}A_{2}$ a nontrivial real linear combination of $A_{1}$ and $A_{2}$.\begin{enumerate}
    \item Suppose that $v$ is an eigenvector to the greatest (then positive by hypothesis) eigenvalue $\lambda_{1}$ of $B$, then we have that $-\frac{1}{\lambda_{1}}(b_{1},b_{2})$ is in the described intersection of borders.
    \item Suppose that $v$ is an eigenvector to the smallest (then negative by hypothesis) eigenvalue $\lambda_{d}$ of $B$, then we have that $-\frac{1}{\lambda_{d}}(b_{1},b_{2})$ is in the described intersection of borders.
\end{enumerate}
In short, suppose that $v$ is an eigenvector to the greatest positive or smallest negative eigenvalue $\lambda$ of $B$, then we have that $-\frac{1}{\lambda}(b_{1},b_{2})$ is in the described intersection of borders $\bd(\{(a_{1},a_{2})\in\mathbb{R}^{2}\mid(a_{1},a_{2},0)\in S(q)\})\cap\bd(C(p))\neq\emptyset$.
\end{proposicion}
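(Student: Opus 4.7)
The plan is to verify separately that the proposed point $-\frac{1}{\lambda}(b_1,b_2)$ lies on each of the two boundaries, and both verifications will reduce to the univariate restriction of $p$ (and of the relaxation) along the line $\mathbb{R}(b_1,b_2)$.

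For the membership in $\bd(\rcs(p))$, I would restrict $p$ to that line: $p(t(b_1,b_2))=\det(I+tB)$, a univariate polynomial whose roots in $t$ are exactly $\{-1/\mu:\mu\in\eigval(B),\mu\neq0\}$. The intersection of $\rcs(p)$ with the line is the connected component of the origin in the complement of these roots, so its two boundary points are $-1/\lambda_{\max}^{+}$ (the largest positive eigenvalue) and $-1/\lambda_{\min}^{-}$ (the smallest negative eigenvalue of $B$). Since by hypothesis $\lambda$ is signedly extreme, $-\frac{1}{\lambda}(b_1,b_2)$ is one of these boundary points, which is enough.

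For the membership in $\bd(\{(a_1,a_2)\mid(a_1,a_2,0)\in S(q)\})$, I would first observe that $q=\det(I+x_1A_1+x_2A_2+y\,vv^{*})$ is a hermitian determinantal RZ polynomial (note $vv^{*}$ is rank-one PSD). Hence, by the quadratic-form identity $w^{T}M_q(a)w=L_q((w_0+w_1x_1+w_2x_2+w_3y)^2(1+a_1x_1+a_2x_2+b\,y))$ together with Proposition \ref{lformtraces} expressing $L_q$ as traces in degrees $\leq 3$, membership in $S(q)$ is equivalent to $\tr(M^2(I+a_1A_1+a_2A_2+b\,vv^{*}))\geq 0$ for every $M$ in the real span $U_q:=\linspan(I,A_1,A_2,vv^{*})$. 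At the candidate point the matrix $I-\frac{1}{\lambda}B$ has eigenvalues $1-\mu/\lambda$ for $\mu\in\eigval(B)$; by the signedly extreme property of $\lambda$ all of these are nonnegative, so $I-\frac{1}{\lambda}B\succeq 0$ and the trace inequality holds for every hermitian $M$, not only those in $U_q$. This places the point in the set.

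To upgrade this to a boundary membership, I would use the witness $M=vv^{*}\in U_q$, which is available precisely because we extended by the variable $y$ along $vv^{*}$. Using $(vv^{*})^2=vv^{*}$ (normalized $v$) and $v^{*}Bv=\lambda$ one computes $\tr((vv^{*})^2(I+tB))=v^{*}(I+tB)v=1+t\lambda$. Perturbing $t$ slightly past $-1/\lambda$ in the appropriate direction (decreasing $t$ when $\lambda>0$, increasing it when $\lambda<0$) makes this value strictly negative, so arbitrarily close to $-\frac{1}{\lambda}(b_1,b_2)$ one finds points outside the restricted relaxation set.

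The only delicate step is invoking the trace characterization of $S(q)$ for the $3$-variable hermitian determinantal polynomial $q$; beyond that, the argument is essentially bookkeeping: the eigenvector $v$ and the matrix $vv^{*}$ simultaneously force the zero eigenvalue of $I-\frac{1}{\lambda}B$ and serve as the witness in $U_q$ certifying boundary behavior, so the geometric intuition that the added direction $vv^{*}$ ``touches'' the RCS of $p$ exactly at the corresponding boundary point turns into a straightforward algebraic computation.
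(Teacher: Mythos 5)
Your proof is correct and takes essentially the same route as the paper's first proof: both rely on the trace characterization of $S(q)$ inherited from the determinantal representation, and both use $M=vv^{\ast}\in U_q$ as the witness making the defining trace vanish at the candidate point $-\tfrac{1}{\lambda}(b_1,b_2)$. Your version avoids the orthogonal diagonalization of $B$, makes boundary membership in the relaxation explicit via a perturbation of $1+t\lambda$ rather than asserting it from the trace equality, and handles RCS membership by a clean restriction-to-the-line argument, but these are executional improvements rather than a different strategy.
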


\begin{proof}
We will prove the second point because the first is analogous. Take an orthogonal matrix $U$ such that $UBU^{T}=D=\diag(\lambda_{1}\geq\dots\geq\lambda_{d})$ is the ordered diagonal of the eigenvalues of $B$. In particular, this implies $Uvv^{\ast}U^{T}=(0,\dots,0,1)^{T}(0,\dots,0,1)=\diag(0,\dots,0,1)$. We know that $S(q)=\{(a_{1},a_{2},a_{3})\in\mathbb{R}^{3}\mid\forall M\in U_{q}:\tr(M^{2}(I+a_{1}A_{1}+a_{2}A_{2}+a_{3}vv^{\ast}))\geq0\}$ with $U_{q}:=\{v_{0}I+v_{1}A_{1}+v_{2}A_{2}+v_{3}vv^{\ast}\mid(v_{0},\dots,v_{3})\in\mathbb{R}^{4}\}$. In particular, a point is on the border $\bd(S(q))$ of $S(q)$ if it is in $S(q)$ and there exists $0\neq M\in U_{q}$ such that $\tr(M^{2}(I+a_{1}A_{1}+a_{2}A_{2}+a_{3}vv^{\ast}))=0.$ Taking $(a_{1},a_{2},0)=k(b_{1},b_{2},0)$ for some $k\in\mathbb{R}$ and using the transformation $U$ we have \begin{gather*}\tr(M^{2}(I+k(b_{1}A_{1}+b_{2}A_{2})))=\tr(UM^{2}U^{T}U(I+k(b_{1}A_{1}+b_{2}A_{2}))U^{T})=\\ \tr(UM^{2}U^{T}U(I+k(b_{1}A_{1}+b_{2}A_{2}))U^{T})= \tr(UM^{2}U^{T}U(I+kB)U^{T})=\\\tr(UM^{2}U^{T}(I+kD)).\end{gather*} Now it is clear that $UM^{2}U^{T}=(UMU^{T})^{2}$ has its diagonal positive so setting $k=-\frac{1}{\lambda_{d}}$ for $\lambda_{d}$ the smallest nonzero eigenvalue implies $(I+kD)$ has its $i\in\{1,\dots,d\}$ entry equal to $1-\frac{\lambda_{i}}{\lambda_{d}}.$ If $\lambda_{i}=0$, we get $1$; if $\lambda_{i}<0$, then $\lambda_{d}\leq\lambda_{i}<0$ so $1\geq\frac{\lambda_{i}}{\lambda_{d}}>0$ which implies $-1\leq-\frac{\lambda_{i}}{\lambda_{d}}<0$ so summing $1$ we get $0\leq1-\frac{\lambda_{i}}{\lambda_{d}}$; if $\lambda_{i}>0$, as $\lambda_{d}<0$ also, $-\frac{\lambda_{i}}{\lambda_{d}}>0$ so $1-\frac{\lambda_{i}}{\lambda_{d}}>1>0$. Therefore, we see that setting $k=-\frac{1}{\lambda_{d}}$ for $\lambda_{d}$ the smallest negative eigenvalue implies $(I+kD)$ is a diagonal with all its entries nonnegative and in particular the last element of the diagonal is $0$. Taking $v_{3}=1$ we get $M=vv^{\ast}\neq0$ and we have that $UM^{2}U^{T}=(UMU^{T})^{2}=\diag(0,\dots,0,1)$ so $$\tr(UM^{2}U^{T}(I+kD))=\tr(\diag(0,\dots,0,1)(I+kD))=0,$$ which means that $k(b_{1},b_{2},0)$ is in $\bd(S(q))$ (the border of $S(q)$) so $k(b_{1},b_{2})\in\bd(\{(a_{1},a_{2})\in\mathbb{R}^{2}\mid(a_{1},a_{2},0)\in S(q)\}).$ Finally, we see that $k(b_{1},b_{2})\in\bd(C(p))$ because there we have that the LMP describing $p$ is the diagonal positive semidefinite matrix $I+kB$ that has at the last element of the diagonal. Thus $-\frac{1}{\lambda_{d}}(b_{1},b_{2}):=k(b_{1},b_{2})\in\bd(\{(a_{1},a_{2})\in\mathbb{R}^{2}\mid(a_{1},a_{2},0)\in S(q)\})\cap\bd(C(p))$, which finishes the proof.
\end{proof}

We give another more elementary proof that comes even more directly from the definitions. For completeness, we decide to give this proof for the first point in the proposition, but it also works for the second. In the proof that follows, we introduce some shorthands. We denote $a_{ijk}$ the entry in row $j$ and column $k$ of the matrix $A_{i}$ and $a_{ij}$ the $j$-th row (or column because the matrices will be symmetric) vector of the matrix $A_{i}.$ We can now then proceed with the promised alternative proof.

\begin{proof}
If the matrix defining $S(p)$ is denoted $M_{p}$, we have that the matrix $M_{q}$ defining $S(q)$ has an easy expression as a block matrix whose upper left corner is $M_{p}$ and whose additional row is given by $$\begin{pmatrix} \tr(vv^{\ast})+x_{1}\tr(A_{1}vv^{\ast})+x_{2}\tr(A_{2}vv^{\ast})+y\tr(vv^{\ast}vv^{\ast}) \\ \tr(A_{1}vv^{\ast})+x_{1}\Rea\tr(A_{1}^{2}vv^{\ast})+x_{2}\Rea\tr(A_{1}A_{2}vv^{\ast})+y\Rea\tr(A_{1}vv^{\ast}vv^{\ast}) \\ \tr(A_{2}vv^{\ast})+x_{1}\Rea\tr(A_{2}A_{1}vv^{\ast})+x_{2}\Rea\tr(A_{2}^{2}vv^{\ast})+y\Rea\tr(A_{2}vv^{\ast}vv^{\ast}) \\ \tr(vv^{\ast}vv^{\ast})+x_{1}\Rea\tr(A_{1}vv^{\ast}vv^{\ast})+x_{2}\Rea\tr(A_{2}vv^{\ast}vv^{\ast})+y\Rea\tr(vv^{\ast}vv^{\ast}vv^{\ast}) \end{pmatrix}$$ $$=\begin{pmatrix} 1+x_{1}a_{111}+x_{2}a_{211}+y \\ a_{111}+x_{1}||a_{11}||^{2}+x_{2}\Rea\tr(A_{1}A_{2}vv^{\ast})+ya_{111} \\ a_{211}+x_{1}\Rea\tr(A_{2}A_{1}vv^{\ast})+x_{2}||a_{21}||^{2}+y a_{211} \\ 1+x_{1}a_{111}+x_{2}a_{211}+y \end{pmatrix}.$$ In $y=0$ and denoting $w:=\Rea\tr(A_{1}A_{2}vv^{\ast})$, this looks like $$\begin{pmatrix} 1+x_{1}a_{111}+x_{2}a_{211} \\ a_{111}+x_{1}||a_{11}||^{2}+x_{2}w \\ a_{211}+x_{1}w+x_{2}||a_{21}||^{2}\\ 1+x_{1}a_{111}+x_{2}a_{211}.\end{pmatrix}$$ In order to describe a point, it will be more convenient to assume wlog that we take $(b_{1},b_{2})=\lambda(1,0)$ for some $\lambda>0$. Changing the matrices so that they absorb the scalar we can suppose furthermore $\lambda=1$. This is wlog because we can choose a suitable orthogonal transformation $U$ that does this. Moreover, we can also suppose wlog that $A_{1}$ is the ordered diagonal of its eigenvalues $\diag(\lambda_{1}\geq\dots\geq\lambda_{d})$ (with $\lambda_{1}>0$ by hypothesis) using another orthogonal transformation $V$. We will also take the eigenvector unitary wlog. These simplifications result in the fact that we want to find a point in the intersection of the borders of the proposition with $p:=\det(I+x_{1}D+x_{2}A_{2})$ and $q:=\det(I+x_{1}D+x_{2}A_{2}+yL)$ with $D:=A_{1}$ diagonal (with positive and negative entries) and $L$ an all zero matrix except a one in one of the corners of its diagonal (which depends on the choice of the eigenvector to be $(1,0,\dots,0)$ for the greatest positive eigenvalue or $(0,\dots,0,1)$ for the smallest negative eigenvalue). Thus $a_{111}=\lambda_{1}>0$ is the upper left entry of $A_{1}$, $w=\lambda_{1}a_{211}$ with $a_{i11}$ the upper left entry of $A_{i}$ and $||a_{11}||^{2}=\lambda_{1}^{2}$ with $||a_{i1}||$ the norm of the first row (or column) of $A_{i}$ so we have $$\begin{pmatrix} 1+x_{1}\lambda_{1}+x_{2}a_{211} \\ \lambda_{1}+x_{1}\lambda_{1}^{2}+x_{2}\lambda_{1}a_{211} \\ a_{211}+x_{1}\lambda_{1}a_{211}+x_{2}||a_{21}||^{2}\\ 1+x_{1}\lambda_{1}+x_{2}a_{211} \end{pmatrix}=\begin{pmatrix} 1+x_{1}\lambda_{1}+x_{2}a_{211} \\ \lambda_{1}(1+x_{1}\lambda_{1}+x_{2}a_{211}) \\ a_{211}+x_{1}\lambda_{1}a_{211}+x_{2}||a_{21}||^{2}\\ 1+x_{1}\lambda_{1}+x_{2}a_{211} \end{pmatrix}$$ so solving $1+x_{1}\lambda_{1}+x_{2}a_{211}=0$ we have that if $x_{1}=-\frac{1+x_{2}a_{211}}{\lambda_{1}}$ three of the entries are $0$. But, moreover, choosing $x_{2}=0$ we obtain $x_{1}=-\frac{1}{\lambda_{1}}$ that also makes $0$ the third entry and so at the point $(-\frac{1}{\lambda_{1}},0)$ we have that the additional entries of $M_{q}$ with respect to $M_{p}$ are $0$ so $\det(M_{q})=0$ at such point. If we can prove that this point is in $C(p)$, we found here then in particular a point where $C(p)$ and $\bd(\{(a_{1},a_{2})\in\mathbb{R}^{2}\mid(a_{1},a_{2},0)\in S(q)\})$ touch (tangentially) because the intersection of $\bd(\{(a_{1},a_{2})\in\mathbb{R}^{2}\mid(a_{1},a_{2},0)\in S(q)\})$ with $C(p)$ produces a relaxation of $C(p)$, i.e., $C(p)\subseteq\bd(\{(a_{1},a_{2})\in\mathbb{R}^{2}\mid(a_{1},a_{2},0)\in S(q)\})$. In fact, we are going to see that such point is in $\bd(C(p))$ directly. Observe that the LMP defining $p$ has the form $I-\frac{1}{\lambda_{1}}D$ that is diagonal and (strictly) PSD with a $0$ in its upper left corner so $(-\frac{1}{\lambda_{1}},0)$ is in $\bd(C(p))$ and the proof finishes simply undoing the transformation $U$ so we get the intersection point $-\frac{1}{\lambda_{1}}(b_{1},b_{2}).$
\end{proof}

This result shows us the importance of studying restrictions of the relaxation. In the next section, we study restrictions to any plane through the origin. 

\section{Restricting the relaxation to subspaces}

We will see here that the intersection of all restrictions to $2$-dimensional vector subspaces $\pi\subseteq\mathbb{R}^{n}$ through (meaning containing) a vector $0\neq a\in\pi\subseteq\mathbb{R}^{n}$ determines the relaxation defined in \cite{main} on the line $\linspan(\{a\})$ defined by this vector.

\begin{objetivo}[Slicing into planes]
We have to make these points more precise so they make sense and this is precisely our objective in this section. However, we can already say that, roughly speaking, we want to show that we can say that the relaxation $S(p)$ of an RZ polynomial $p\in\mathbb{R}[\mathbf{x}]$ is determined (on any line through the origin) by the corresponding relaxations of the bivariate restrictions of such polynomial (to planes containing such line we are looking at). This fact will be important for the next section about the Renegar derivative.
\end{objetivo}

We begin this section with a (kind of) generalization of the results \cite[Lemma 3.7 and Proposition 3.8]{main} beyond orthogonal transformations.

\begin{lema}[Identity of homogeneous polynomials involving the linear forms]
\label{lemmaforproposition}
Let $L$ be a linear form on the vector subspace $V$ of $\mathbb{R}[x]$ generated by the monomials of degree $k$. Then, for all matrices $A\in\mathbb{R}^{n\times n}$, we have the identity of homogeneous degree $k$ polynomials $$\sum_{|\alpha|=k}\frac{1}{k}\binom{k}{\alpha}L(x^{\alpha})(Ax)^{\alpha}=\sum_{|\alpha|=k}\frac{1}{k}\binom{k}{\alpha}L((A^{T}x)^{\alpha})x^{\alpha}\in V.$$
\end{lema}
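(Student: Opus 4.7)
The plan is to introduce an auxiliary tuple of variables $y=(y_{1},\ldots,y_{n})$ independent of $x$ and to consider the bilinear form $y^{T}Ax\in\mathbb{R}[x,y]$ raised to the $k$-th power. I will expand $(y^{T}Ax)^{k}$ in two different ways via the multinomial theorem, apply $L$ on the $y$-side to each representation, and note that the two outputs must coincide because they come from applying the same linear operation to the same object.

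Concretely, writing $y^{T}Ax=\sum_{i=1}^{n}y_{i}(Ax)_{i}$ and applying the multinomial theorem gives
\[
(y^{T}Ax)^{k}=\sum_{|\alpha|=k}\binom{k}{\alpha}y^{\alpha}(Ax)^{\alpha},
\]
whereas the equivalent rewriting $y^{T}Ax=(A^{T}y)^{T}x=\sum_{i=1}^{n}(A^{T}y)_{i}x_{i}$ yields
\[
(y^{T}Ax)^{k}=\sum_{|\alpha|=k}\binom{k}{\alpha}(A^{T}y)^{\alpha}x^{\alpha}.
\]

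I would then regard both expansions as polynomials in $y$ with coefficients in $\mathbb{R}[x]$, and extend $L$ coefficient-wise to a linear map $L_{y}$ acting on the degree-$k$ homogeneous part in $y$, declaring $L_{y}(y^{\alpha}):=L(x^{\alpha})$ (since $L$ is defined on monomials, the symbol used for the variable is immaterial). Applying $L_{y}$ to the first expansion produces $\sum_{|\alpha|=k}\binom{k}{\alpha}L(x^{\alpha})(Ax)^{\alpha}$, and applying it to the second produces $\sum_{|\alpha|=k}\binom{k}{\alpha}L((A^{T}x)^{\alpha})x^{\alpha}$; both must agree, as they arise from $L_{y}$ applied to the very same element of $\mathbb{R}[x][y]$. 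Dividing by $k$ delivers the claimed identity.

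I do not foresee a genuine obstacle: the only subtlety is keeping track of which copy of the variables $L$ acts upon versus which one parametrizes the resulting polynomial in $V$. The introduction of the auxiliary vector $y$ automates this bookkeeping and reduces the lemma to the tautological symmetry $y^{T}Ax=(A^{T}y)^{T}x$ combined with linearity of $L$.
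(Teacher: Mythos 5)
Your proof is correct, and it takes a genuinely different route from the paper's. The paper proves the identity by brute-force index chasing: it expands both sides into sums over index tuples $i_{1},\dots,i_{k}$ and $l_{1},\dots,l_{k}$, writing $L(x^{\alpha})(Ax)^{\alpha}$ and $L((A^{T}x)^{\alpha})x^{\alpha}$ in terms of the matrix entries $a_{ij}$, until the two multi-sums are recognized as the same sum under a relabelling of the dummy indices (this closely mirrors the proof of \cite[Lemma 3.7]{main}, which the paper's argument explicitly adapts). You instead use a polarization trick: introduce an auxiliary tuple $y$, observe that $(y^{T}Ax)^{k}$ is a bihomogeneous element of $\mathbb{R}[x,y]$ of bidegree $(k,k)$, expand it two ways via $y^{T}Ax=\sum_{i}y_{i}(Ax)_{i}=\sum_{i}(A^{T}y)_{i}x_{i}$, and then apply the $\mathbb{R}[x]$-linear extension $L_{y}$ of $L$ (acting on the $y$-variables) to both expansions; well-definedness of $L_{y}$ forces the two resulting $x$-polynomials to coincide, and dividing by $k$ gives the claim. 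The only point worth making explicit is that $L_{y}((A^{T}y)^{\alpha})=L((A^{T}x)^{\alpha})$ follows from linearity after expanding $(A^{T}y)^{\alpha}$ into $y$-monomials with scalar coefficients, which you do mention implicitly. Your approach is shorter and more conceptual, compressing the content of the identity into the tautology $y^{T}Ax=(A^{T}y)^{T}x$; the paper's approach is more elementary and spells out the index manipulation explicitly, which makes it easier to check line by line and keeps it structurally parallel to the companion lemma from \cite{main} that the paper builds on.
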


\begin{proof}
As in the proof of \cite[Lemma 3.7]{main} we need to clarify some (further) notations before we begin. Denote the $i$-th row of $A:=(a_{i,j})_{i,j=1}^{n}$ by $a_{i}$ and the transpose of the $i$-th column by $a^{i}$ (so both $a_{i}$ and $a^{i}$ are row vectors) for all $i\in\{1,\dots,n\}$. Therefore, the $j$-th entry of $a_{i}$ is $a_{ij}$ and the $j$-entry of $a^{i}$ is $a_{ji}$ for all $i,j\in\{1,\dots,n\}$. Additionally, we denote $e_{i}:=(e_{i1},\dots,e_{in})$ the $i$-th canonical row vector so that $e_{ij}=1$ iff $i=j$ and $e_{ij}=0$ otherwise. In what follows, we often form the product of the row vector $\mathbf{u}$ with the column vector of variables $\mathbf{x}$ which is the element $\mathbf{ux}\in\mathbb{R}[\mathbf{x}]$. Now we can proceed with the proof. For this, we observe, in the same spirit as \cite[Lemma 3.7]{main}, that \begin{gather*}
    \sum_{|\alpha|=k}\binom{k}{\alpha}L(x^{\alpha})(Ax)^{\alpha}=\\\sum_{|\alpha|=k}\binom{k}{\alpha}L(x^{\alpha})(\sum_{j=1}^{n}a_{1j}x_{j},\dots,\sum_{j=1}^{n}a_{nj}x_{j})^{\alpha}=\\\sum_{|\alpha|=k}\binom{k}{\alpha}L(x^{\alpha})(a_{1}x,\dots,a_{n}x)^{\alpha}=\\\sum_{|\alpha|=k}\binom{k}{\alpha}L(x^{\alpha})(\sum_{j=1}^{n}a_{1j}x_{j})^{\alpha_{1}}\cdots(\sum_{j=1}^{n}a_{nj}x_{j})^{\alpha_{n}}=\\\sum_{|\alpha|=k}\binom{k}{\alpha}L(x^{\alpha})(a_{1}x)^{\alpha_{1}}\cdots(a_{n}x)^{\alpha_{n}}=\\\sum_{i_{1},\dots,i_{k}=1}^{n}L(e_{i_{1}}x\cdots e_{i_{k}}x)a_{i_{1}}x\cdots a_{i_{k}}x=\\\sum_{i_{1},\dots,i_{k}=1}^{n}\sum_{j_{1},\dots,j_{k}=1}^{n}e_{i_{1}j_{1}}\cdots e_{i_{1}j_{1}}L(x_{j_{1}}\cdots x_{j_{k}})\sum_{l_{1},\dots,l_{k}=1}^{n}a_{i_{1}l_{1}}\cdots a_{i_{k}l_{k}}x_{l_{1}}\cdots x_{l_{k}}=\\\sum_{i_{1},\dots,i_{k}=1}^{n}L(x_{i_{1}}\cdots x_{i_{k}})\sum_{l_{1},\dots,l_{k}=1}^{n}a_{i_{1}l_{1}}\cdots a_{i_{k}l_{k}}x_{l_{1}}\cdots x_{l_{k}}=\\\sum_{i_{1},\dots,i_{k}=1}^{n}\sum_{l_{1},\dots,l_{k}=1}^{n}L(x_{i_{1}}\cdots x_{i_{k}})a_{i_{1}l_{1}}\cdots a_{i_{k}l_{k}}x_{l_{1}}\cdots x_{l_{k}}=\\\sum_{i_{1},\dots,i_{k}=1}^{n}\sum_{l_{1},\dots,l_{k}=1}^{n}L(a_{i_{1}l_{1}}\cdots a_{i_{k}l_{k}}x_{i_{1}}\cdots x_{i_{k}})x_{l_{1}}\cdots x_{l_{k}}
\end{gather*} while, proceeding similarly, we get that \begin{gather*}
    \sum_{|\alpha|=k}\binom{k}{\alpha}L((A^{T}x)^{\alpha})x^{\alpha}=\\\sum_{|\alpha|=k}\binom{k}{\alpha}L((\sum_{j=1}^{n}a_{j1}x_{j},\dots,\sum_{j=1}^{n}a_{jn}x_{j})^{\alpha})x^{\alpha}=\\\sum_{|\alpha|=k}\binom{k}{\alpha}L((a^{1}x,\dots,a^{n}x)^{\alpha})x^{\alpha}=\\\sum_{|\alpha|=k}\binom{k}{\alpha}L((\sum_{j=1}^{n}a_{j1}x_{j})^{\alpha_{1}}\cdots(\sum_{j=1}^{n}a_{jn}x_{j})^{\alpha_{n}})x^{\alpha}=\\\sum_{|\alpha|=k}\binom{k}{\alpha}L((a^{1}x)^{\alpha_{1}}\cdots(a^{n}x)^{\alpha_{n}})x^{\alpha}=\\\sum_{i_{1},\dots,i_{k}=1}^{n}L(a^{i_{1}}x\cdots a^{i_{k}}x)e_{i_{1}}x\cdots e_{i_{k}}x=\\\sum_{i_{1},\dots,i_{k}=1}^{n}L(\sum_{l_{1},\dots,l_{k}=1}^{n}a_{l_{1}i_{1}}\cdots a_{l_{k}i_{k}}x_{l_{1}}\cdots x_{l_{k}})\sum_{j_{1},\dots,j_{k}=1}^{n}e_{i_{1}j_{1}}\cdots e_{i_{1}j_{1}}x_{j_{1}}\cdots x_{j_{k}}=\\\sum_{i_{1},\dots,i_{k}=1}^{n}L(\sum_{l_{1},\dots,l_{k}=1}^{n}a_{l_{1}i_{1}}\cdots a_{l_{k}i_{k}}x_{l_{1}}\cdots x_{l_{k}})x_{i_{1}}\cdots x_{i_{k}}=\\\sum_{i_{1},\dots,i_{k}=1}^{n}\sum_{l_{1},\dots,l_{k}=1}^{n}L(a_{l_{1}i_{1}}\cdots a_{l_{k}i_{k}}x_{l_{1}}\cdots x_{l_{k}})x_{i_{1}}\cdots x_{i_{k}}
\end{gather*} that changing the auxiliary variables and comparing with what we obtained above tells us that \begin{gather*}
    \sum_{|\alpha|=k}\binom{k}{\alpha}L(x^{\alpha})(Ax)^{\alpha}=\sum_{|\alpha|=k}\binom{k}{\alpha}L((A^{T}x)^{\alpha})x^{\alpha},\end{gather*} which, multiplying by $\frac{1}{k}$, gives the result, thus finishing this proof.
\end{proof}

Lemma \ref{lemmaforproposition} above will help us to prove the next proposition analogously to the procedure followed in \cite[Lemma 3.7 and Proposition 3.8]{main}.

\begin{proposicion}[Transformation of linear forms under the action of matrices]
\label{propositiontransformation}
Let $A\in\mathbb{R}^{n\times n}$ be a square matrix. \begin{enumerate}
    \item If $p\in\mathbb{R}[[x]]$ with $p(0)\neq0$ and $d\in\mathbb{N}_{0}$, then $$L_{p(Ax),d}(q(x))=L_{p(x),d}(q(A^{T}x)).$$
    \item If $p\in\mathbb{R}[x]$ with $p(0)\neq0$, then $$L_{p(Ax)}(q(x))=L_{p(x)}(q(A^{T}x)).$$
\end{enumerate}
\end{proposicion}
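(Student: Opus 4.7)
The plan is to prove part (1) by computing the formal power series $-\log\frac{p(-Ax)}{p(0)}$ in two different ways and then identifying the resulting expressions via Lemma \ref{lemmaforproposition}. Part (2) will then follow by linearity using the default virtual degrees, since the identity at monomials of positive degree is independent of the virtual degree.

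First, I would note that since $(p \circ A)(0) = p(0) \neq 0$, the $L$-form $L_{p(Ax),d}$ is well defined, and by its very definition we have
\begin{equation*}
-\log\frac{p(-Ax)}{p(0)} = \sum_{\alpha\in\mathbb{N}_{0}^{n}\setminus\{0\}}\frac{1}{|\alpha|}\binom{|\alpha|}{\alpha}L_{p(Ax),d}(x^{\alpha})x^{\alpha}.
\end{equation*}
On the other hand, substituting the formal expression $Ax$ into the defining identity for $L_{p(x),d}$ (which is legitimate since each coordinate of $Ax$ vanishes at $0$) yields the second expansion
\begin{equation*}
-\log\frac{p(-Ax)}{p(0)} = \sum_{\alpha\in\mathbb{N}_{0}^{n}\setminus\{0\}}\frac{1}{|\alpha|}\binom{|\alpha|}{\alpha}L_{p(x),d}(x^{\alpha})(Ax)^{\alpha}.
\end{equation*}
Equating these two series and grouping summands by total degree $k=|\alpha|$, I would then arrive at the polynomial identity, for each $k\geq1$,
\begin{equation*}
\sum_{|\alpha|=k}\frac{1}{k}\binom{k}{\alpha}L_{p(Ax),d}(x^{\alpha})x^{\alpha} = \sum_{|\alpha|=k}\frac{1}{k}\binom{k}{\alpha}L_{p(x),d}(x^{\alpha})(Ax)^{\alpha}.
\end{equation*}

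The key step is now to invoke Lemma \ref{lemmaforproposition} applied to the linear form $L:=L_{p(x),d}$ restricted to the space of degree-$k$ monomials. This lets me rewrite the right-hand side as $\sum_{|\alpha|=k}\frac{1}{k}\binom{k}{\alpha}L_{p(x),d}((A^{T}x)^{\alpha})x^{\alpha}$, so that both sides are expressed in the monomial basis $\{x^{\alpha}:|\alpha|=k\}$. Comparing coefficients of each $x^{\alpha}$ (noting that $L_{p(x),d}((A^{T}x)^{\alpha})$ is a genuine scalar, being the value of a linear form at a concrete polynomial) produces the monomial-wise identity
\begin{equation*}
L_{p(Ax),d}(x^{\alpha}) = L_{p(x),d}((A^{T}x)^{\alpha}), \qquad |\alpha|\geq 1.
\end{equation*}
For $\alpha=0$ the identity $L_{p(Ax),d}(1)=d=L_{p(x),d}(1)$ is immediate from the definition.

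Finally, extending both sides by linearity in $q$ gives, for any $q(x)=\sum_{\alpha}c_{\alpha}x^{\alpha}\in\mathbb{R}[x]$,
\begin{equation*}
L_{p(Ax),d}(q(x)) = \sum_{\alpha}c_{\alpha}L_{p(x),d}((A^{T}x)^{\alpha}) = L_{p(x),d}\left(\sum_{\alpha}c_{\alpha}(A^{T}x)^{\alpha}\right) = L_{p(x),d}(q(A^{T}x)),
\end{equation*}
which is exactly part (1). Part (2) follows by taking $d=\deg(p)$: the monomial identity above is independent of the virtual degree for $|\alpha|\geq 1$, and the value at $1$ agrees provided we use the matching virtual degree convention built into the plain notation $L_{p(Ax)}$ and $L_{p(x)}$.

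The main potential obstacle I expect is purely bookkeeping: ensuring that the substitution $x\mapsto Ax$ is legitimate inside the formal power series $-\log\frac{p(-x)}{p(0)}$ (which requires that this series vanishes at $0$, a fact guaranteed by the normalization $p(0)\neq 0$ and the structure of the logarithm series), and that the coefficient comparison after invoking Lemma \ref{lemmaforproposition} is being performed inside a polynomial space where the monomials $x^{\alpha}$ form a basis. Neither is a deep difficulty, but the manipulation mirrors delicately that of \cite[Lemma 3.7 and Proposition 3.8]{main}, so careful handling of the indices is what needs most attention.
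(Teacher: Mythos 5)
Your proposal is correct and follows essentially the same approach as the paper's proof: both proceed by reducing to monomials, using the defining power series identity for the $L$-forms together with the observation that $\log(p(A(-x)))=(\log p)(-Ax)$, invoking Lemma \ref{lemmaforproposition} to move the matrix $A$ across to a transpose, and then comparing coefficients in the monomial basis. Your write-up is if anything a touch more explicit about the degree-zero case and about the legitimacy of the formal substitution $x\mapsto Ax$, but the argument is the same.
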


\begin{proof}
We only have to prove the first part, where we only need to prove the identity for the monomial basis by linearity. For this reason, we suppose, wlog, that $p(0)=1$ and we will prove that $L_{p(Ax),d}(x^{\beta})=L_{p(x),d}((A^{T}x)^{\beta})$ for all $\beta\neq0$. Following the corresponding definition, it is clear now that we have $\log(p(A(-x)))=(\log(p))(-Ax)$. This means that \begin{gather*}
    \sum_{0\neq\alpha\in\mathbb{N}_{0}^{n}}\frac{1}{|\alpha|}\binom{|\alpha|}{\alpha}L_{p(Ax),d}(x^{\alpha})x^{\alpha}=\sum_{0\neq\alpha\in\mathbb{N}_{0}^{n}}\frac{1}{|\alpha|}\binom{|\alpha|}{\alpha}L_{p(x),d}(x^{\alpha})(Ax)^{\alpha}.
\end{gather*} We use Lemma \ref{lemmaforproposition} above to modify the RHS and write it as \begin{gather*}
    \sum_{0\neq\alpha\in\mathbb{N}_{0}^{n}}\frac{1}{|\alpha|}\binom{|\alpha|}{\alpha}L_{p(x),d}((A^{T}x)^{\alpha})x^{\alpha}.
\end{gather*} Now we only have to compare the coefficient of $x^{\beta}$ in both sides. In the LHS it is $\frac{1}{|\beta|}\binom{|\beta|}{\beta}L_{p(Ax),d}(x^{\beta})$ and in the RHS using the last expression gives us the coefficient $\frac{1}{|\beta|}\binom{|\beta|}{\beta}L_{p(x),d}((A^{T}x)^{\beta})$ so $L_{p(x),d}((A^{T}x)^{\beta})=L_{p(Ax),d}(x^{\beta})$, as we wanted, and thus the proof is finished.
\end{proof}

We make clear that we identify a matrix $A\in\mathbb{R}^{n\times n}$ with a linear transformation $A\colon\mathbb{R}^{n}\to\mathbb{R}^{n}$ and a polynomial $p$ with a map $p\colon\mathbb{R}^{n}\to\mathbb{R}$ in the usual way and therefore we can compose these to form $p\circ A$, which can also be seen as a new polynomial and as a map $p\circ A\colon\mathbb{R}^{n}\to\mathbb{R}$. Thus, now we also want to study how $S(p)$ and $S(p\circ A)$ relate in the particular case when $A=\diag(k,\dots,k)$ for some scalar $k$. We will need this result at the end of this section. The result can also be seen as a corollary of the previous proposition.

\begin{corolario}[Transformation of the relaxation under scaling]
\label{lemmafactor}
Let $p\in\mathbb{R}[x]$ be an RZ polynomial and $0\neq k\in\mathbb{R}$ a scalar, then $\frac{1}{k}S(p)=S(p(kx))$.
\end{corolario}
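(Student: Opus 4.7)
The plan is to deduce the corollary directly from Proposition \ref{propositiontransformation} applied to the diagonal matrix $A=kI_{n}$ (so that $A^{T}=kI_{n}$ as well), combined with the quadratic form characterisation of the spectrahedron $S(p)$ given by the evaluation lemma $v^{T}M_{p,d}(a)v=L_{p,d}((v_{0}+\sum_{i=1}^{n}v_{i}x_{i})^{2}(1+\sum_{i=1}^{n}a_{i}x_{i}))$ recalled earlier. Since $p$ and $p(kx)$ have the same total degree (because $k\neq0$), we can work with the same virtual degree throughout, which keeps the notation light and removes any degree-bookkeeping worry.

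First, I would rephrase the goal as: for every $a\in\mathbb{R}^{n}$, one has $a\in S(p(kx))$ if and only if $ka\in S(p)$. By the evaluation lemma, $a\in S(p(kx))$ is equivalent to
\[
L_{p(kx)}\Bigl((v_{0}+\textstyle\sum_{i=1}^{n}v_{i}x_{i})^{2}(1+\textstyle\sum_{i=1}^{n}a_{i}x_{i})\Bigr)\geq 0
\]
for every $v\in\mathbb{R}^{n+1}$. Applying Proposition \ref{propositiontransformation} with $A=kI_{n}$ transports this inequality to
\[
L_{p}\Bigl((v_{0}+\textstyle\sum_{i=1}^{n}(kv_{i})x_{i})^{2}(1+\textstyle\sum_{i=1}^{n}(ka_{i})x_{i})\Bigr)\geq 0.
\]

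Second, I would use the fact that the substitution $v\mapsto(v_{0},kv_{1},\dots,kv_{n})$ is a bijection of $\mathbb{R}^{n+1}$ onto itself (here is precisely where $k\neq 0$ is used), so the preceding inequality holds for all $v$ if and only if, for every $w\in\mathbb{R}^{n+1}$, $w^{T}M_{p}(ka)w\geq 0$, that is, $ka\in S(p)$. Unwinding the definition of set scaling gives $a\in S(p(kx))\iff a\in \frac{1}{k}S(p)$, which is the asserted equality $\frac{1}{k}S(p)=S(p(kx))$.

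I do not foresee any real obstacle: the only point that requires a moment of care is matching the virtual degree used in both $L$-forms (both can be taken to be $\deg(p)$) and, as mentioned, observing that $k\neq 0$ is exactly what makes the linear change of parameters $v\mapsto(v_{0},kv_{1},\dots,kv_{n})$ a bijection so that the universal quantifier can be transported across.
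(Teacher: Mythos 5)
Your proof is correct and follows essentially the same route as the paper: both arguments pass through Proposition~\ref{propositiontransformation} to express $L_{p(kx)}$ in terms of $L_{p}$, then exploit the evaluation identity $v^{T}M_{p,d}(a)v=L_{p,d}((v_{0}+\sum_{i}v_{i}x_{i})^{2}(1+\sum_{i}a_{i}x_{i}))$ together with a $k$-dependent linear bijection of the test vectors to recognise $M_{p}(ka)$. The only superficial difference is the reparametrisation used: the paper tests on $(kv_{0},v_{1},\dots,v_{n})$ and extracts an overall factor $k^{2}$, while you test on $(v_{0},kv_{1},\dots,kv_{n})$; these are the same trick, as the two vectors differ by a nonzero scalar multiple and the quadratic form is homogeneous of degree $2$, so the two bijections encode identical information.
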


\begin{proof}
Remember that $S(p):=\{a\in\mathbb{R}^{n}\mid M_{p}(a)\geq0\}$ that is defined via the matrix $M_{p}(a):=(L_{p}(\sum_{k=0}^{n}a_{k}x_{k}x_{i}x_{j}))_{i,j=0}^{n}$, where $x_{0}:=1=:a_{0}$. Note then that, using the Proposition \ref{propositiontransformation} above, we have the identity $M_{p(kx)}(a):=(L_{p(kx)}(\sum_{l=0}^{n}a_{l}x_{l}x_{i}x_{j}))_{i,j=0}^{n}=${\tiny \begin{gather*}
    \begin{pmatrix}
    L_{p(kx)}(1+\sum_{l=1}^{n}a_{l}x_{l}) & L_{p(kx)}(x_{1}+\sum_{l=1}^{n}a_{l}x_{l}x_{1}) & \cdots & L_{p(kx)}(x_{n}+\sum_{l=1}^{n}a_{l}x_{l}x_{n})\\  L_{p(kx)}(x_{1}+\sum_{l=1}^{n}a_{l}x_{l}x_{1}) & L_{p(kx)}(x_{1}^{2}+\sum_{l=1}^{n}a_{l}x_{l}x_{1}^{2}) & \cdots & L_{p(kx)}(x_{1}x_{n}+\sum_{l=1}^{n}a_{l}x_{l}x_{1}x_{n})\\ \vdots & \vdots & \ddots & \vdots\\L_{p(kx)}(x_{n}+\sum_{l=1}^{n}a_{l}x_{l}x_{n}) & L_{p(kx)}(x_{1}x_{n}+\sum_{l=1}^{n}a_{l}x_{l}x_{1}x_{n}) & \cdots & L_{p(kx)}(x_{n}^{2}+\sum_{l=1}^{n}a_{l}x_{l}x_{n}^{2})
    \end{pmatrix}=\\\begin{pmatrix}
    L_{p}(1+\sum_{l=1}^{n}(ka_{l})x_{l}) & L_{p}(kx_{1}+k\sum_{l=1}^{n}(ka_{l})x_{l}x_{1}) & \cdots & L_{p}(kx_{n}+k\sum_{l=1}^{n}(ka_{l})x_{l}x_{n})\\  L_{p}(kx_{1}+k\sum_{l=1}^{n}(ka_{l})x_{l}x_{1}) & L_{p}(k^{2}x_{1}^{2}+k^{2}\sum_{l=1}^{n}(ka_{l})x_{l}x_{1}^{2}) & \cdots & L_{p}(k^{2}x_{1}x_{n}+k^{2}\sum_{l=1}^{n}(ka_{l})x_{l}x_{1}x_{n})\\ \vdots & \vdots & \ddots & \vdots\\L_{p}(kx_{n}+k\sum_{l=1}^{n}(ka_{l})x_{l}x_{n}) & L_{p}(k^{2}x_{1}x_{n}+k^{2}\sum_{l=1}^{n}(ka_{l})x_{l}x_{1}x_{n}) & \cdots & L_{p}(k^{2}x_{n}^{2}+k^{2}\sum_{l=1}^{n}(ka_{l})x_{l}x_{n}^{2})
    \end{pmatrix},
\end{gather*}
}
where in the last expression we used Proposition \ref{propositiontransformation} and grouped the appearing factors adequately.
Therefore we can observe now that \begin{gather*}
(kv_{0},v_{1},\dots,v_{n})M_{p(kx)}(a)(kv_{0},v_{1},\dots,v_{n})^{T}=\\k^{2}(v_{0},v_{1},\dots,v_{n})M_{p}(ka)(v_{0},v_{1},\dots,v_{n})^{T}\end{gather*} for each vector $v\in\mathbb{R}^{n+1}$. This tells us that $M_{p(kx)}(a)$ is PSD iff for all $v\in\mathbb{R}^{n+1}$ \begin{gather*}
    (v_{0},v_{1},\dots,v_{n})M_{p(kx)}(a)(v_{0},v_{1},\dots,v_{n})^{T}\geq0
\end{gather*} iff for all $v\in\mathbb{R}^{n+1}$ \begin{gather*}
    (kv_{0},v_{1},\dots,v_{n})M_{p(kx)}(a)(kv_{0},v_{1},\dots,v_{n})^{T}\geq0
\end{gather*} iff for all $v\in\mathbb{R}^{n+1}$ \begin{gather*}
    k^{2}(v_{0},v_{1},\dots,v_{n})M_{p}(ka)(v_{0},v_{1},\dots,v_{n})^{T}\geq0
\end{gather*} iff for all $v\in\mathbb{R}^{n+1}$ \begin{gather*}
    (v_{0},v_{1},\dots,v_{n})M_{p}(ka)(v_{0},v_{1},\dots,v_{n})^{T}\geq0
\end{gather*} iff $M_{p}(ka)$ is PSD. Now $S(p(kx)):=\{a\in\mathbb{R}^{n}\mid M_{p(kx)}(a)\geq0\}=\{a\in\mathbb{R}^{n}\mid M_{p}(ka)\geq0\}\overset{b:=ka}{=}\{\frac{b}{k}\in\mathbb{R}^{n}\mid M_{p}(b)\geq0\}=\frac{1}{k}\{b\in\mathbb{R}^{n}\mid M_{p}(b)\geq0\}=:\frac{1}{k}S(p)$, which finishes this proof.
\end{proof}

There is a more geometric approach to the proof above that one might try at first but that turns out to produce only a partial proof of the lemma. This is the object of the next observation.

\begin{observacion}[Attention to geometric approaches]
In particular, the geometric approach uses the determinant of the matrix obtained at the beginning of the proof instead of continuing directly with the definition of PSD-ness. This approach has a clear limitation: it needs the mentioned determinant to be nontrivial at the origin and therefore it produces a proof valid only for polynomials $p\in\mathbb{R}[\mathbf{x}]$ verifying that this determinant is not $0$ at the origin, which, in particular, implies that the matrices of their relaxations are PD at the origin. In particular, this means that the determinant of the matrix polynomial defining the relaxation of the corresponding RZ polynomial is, up to a constant, a RZ polynomial.
\end{observacion}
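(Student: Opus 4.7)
The plan is to unpack the four interlocking claims of the observation one after the other, all of which reduce to the matrix identity already extracted in the proof of Corollary \ref{lemmafactor}.

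First, I would make the alluded geometric approach explicit. The congruence underlying the proof of Corollary \ref{lemmafactor} can be recast as $D\, M_{p(kx)}(a)\, D = k^{2}\, M_{p}(ka)$ with $D = \diag(k,1,\ldots,1)$. Taking determinants on both sides discards the PSD-testing content and leaves the purely algebraic identity $\det(M_{p(kx)}(a)) = k^{2n}\, \det(M_{p}(ka))$, so the two multivariate polynomials $\det(M_{p(kx)})$ and $\det(M_{p})$ agree up to the rescaling $a \mapsto ka$ and a nonzero constant. If one already knew that $S(p)$ coincided with the rigidly convex set $\rcs(\det(M_{p}))$, the scaling identity of Corollary \ref{lemmafactor} would then follow immediately from the obvious scaling behaviour of rigidly convex sets under dilations of the variables.

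Second, I would show where this shortcut breaks down. The identification $S(p) = \rcs(\det(M_{p}))$ is only meaningful once $\det(M_{p})$ is RZ in the sense of Definition \ref{RZ}, which by definition requires $\det(M_{p})(0) \neq 0$. The initial matrix $M_{p}(0)$ is always symmetric and PSD because $0 \in \rcs(p) \subseteq S(p)$ by the relaxation theorem of \cite{main}; adding the hypothesis $\det(M_{p}(0)) \neq 0$ rules out any zero eigenvalue of $M_{p}(0)$ and thus upgrades PSD-ness to strict positive definiteness, which is precisely the third claim of the observation.

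Finally, I would verify that under this PD hypothesis, $\det(M_{p})/\det(M_{p}(0))$ is genuinely RZ. Writing $M_{p}(a) = M_{p}(0) + \sum_{i=1}^{n} a_{i} A_{i}$, for any direction $a \in \mathbb{R}^{n}$ one conjugates by $M_{p}(0)^{-1/2}$ to rewrite the univariate slice as
\[
\det(M_{p}(ta)) = \det(M_{p}(0)) \cdot \det\!\Bigl(I + t\, M_{p}(0)^{-1/2}\bigl(\textstyle\sum_{i} a_{i} A_{i}\bigr) M_{p}(0)^{-1/2}\Bigr),
\]
and the rightmost factor is (up to reciprocation) the characteristic polynomial of a real symmetric matrix, hence has only real roots. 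The core obstacle made explicit by the observation is conceptual rather than computational: the determinant alone cannot certify PSD-ness of a symmetric matrix when positivity may fail via a nontrivial kernel, so outside the regime where $M_{p}(0)$ is strictly PD the clean geometric route via $\rcs(\det(M_{p}))$ collapses and the direct matrix-congruence argument carried out in Corollary \ref{lemmafactor} cannot be avoided.
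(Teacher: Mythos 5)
Your proposal is correct and takes essentially the same route as the paper, which elaborates this observation in Warning \ref{warningfactor}: pass to the determinant identity $\det(M_{p(kx)}(a))=k^{2n}\det(M_{p}(ka))$, note that the geometric argument only works under the extra hypothesis $\det(M_{p}(0))\neq0$, i.e.\ when the (always PSD) initial matrix is actually PD, and then identify the spectrahedra with the rigidly convex sets of the (then RZ, up to a constant) determinants. The only minor differences are that you obtain the determinant identity from the congruence $\diag(k,1,\dots,1)\,M_{p(kx)}(a)\,\diag(k,1,\dots,1)=k^{2}M_{p}(ka)$ instead of extracting row and column factors, and that you verify RZ-ness of $\det(M_{p})$ explicitly by conjugating with $M_{p}(0)^{-1/2}$, a fact the paper simply invokes as the standard determinantal example of an RZ polynomial.
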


We describe this approach in the next warning as a way to show why the use of the determinant might produce only partial proofs if we are not careful enough. Hence showing that the use of the determinant of the matrix polynomial defining the relaxation might be dangerous and should be avoided whenever possible.

\begin{warning}[Problems taking determinants before it is due]
\label{warningfactor}
We begin this warning just after the expansion of the matrix in the proof above. Now we define the polynomial $D_{p}(a):=\det(M_{p}(a))=\det(M_{p})(a)$. Using properties of the determinant to extract factors $k$ in each row and column except the first one of each in the last expression above and noticing that the bottom right inner submatrix allows for the extraction of a factor $k^{2}$, we can extract several factors $k$ out of the determinant of $M_{p(kx)}(a)$ until we obtain the identity $D_{p(kx)}(a)=\det(M_{p(kx)}(a))=k^{2n}D_{p}(ka)$. We \textbf{have to suppose} now that $D_{p(kx)}(0)=k^{2n}D_{p}(0)\neq0$ in order to continue. Now $S(p(kx))$ is the Euclidean closure of the connected component containing zero of the set $\{a\in\mathbb{R}^{n}\mid D_{p(kx)}(a)\neq0\}$, which coincides (because $D_{p}(a)$ and $D_{p(kx)}(a)$ are now RZ polynomials thanks to the previous additional hypothesis) with the convex hull of the innermost oval defined by the set of zeros $\{a\in\mathbb{R}^{n}\mid D_{p(kx)}(a)=0\}=\{a\in\mathbb{R}^{n}\mid k^{2n}D_{p}(ka)=0\}=\{a\in\mathbb{R}^{n}\mid D_{p}(ka)=0\}\overset{b:=ka}{=}\{\frac{b}{k}\in\mathbb{R}^{n}\mid D_{p}(b)=0\}=\frac{1}{k}\{a\in\mathbb{R}^{n}\mid D_{p}(a)=0\}$. The last expression tells us, in fact, that $\frac{1}{k}S(p)=S(p(kx))$, which finishes this (partial) proof.
\end{warning}

Warning \ref{warningfactor} above shows that we have to take care with determinants when we work with the relaxation because we had to suppose at some point that $D_{p(kx)}(0)=k^{2n}D_{p}(0)\neq0$ in order to produce a (therefore partial) proof. Now we introduce notations that allow us to deal with polynomials derived from $p$ and with the segments produced by intersecting the relaxation with lines through the origin.

\begin{notacion}[Polynomial directions]
\label{problemnotation}
Let $p\in\mathbb{R}[\mathbf{x}]_{d}$ be a polynomial of degree $d$. For each finite tuple of directions $A=(a_{1},\dots,a_{l})$ with $a_{i}\in\mathbb{R}^{n}$ we denote the polynomial corresponding to such directions $\gls{padirection}(x_{1},\dots,x_{l}):=p(\sum_{i=1}^{l}x_{i}a_{i})$, which has generically degree $d$. For a direction $a\in\mathbb{R}^{n}$, denote also $$\gls{sadirection}:=\{\lambda\in\mathbb{R}\mid\lambda a\in S(p)\}.$$
\end{notacion}

There is a problem with the polynomials introduced by this notation and how we are supposed to deal and operate with them. This problem and the solution we implement in order to address it are relevant for the whole current chapter. We explain this in the next warning.

\begin{warning}[Degree might drop]
Let $A=(a_{1},\dots,a_{l})$ with $a_{i}\in\mathbb{R}^{n}$ be a tuple of directions. The restriction $p_{A}(x_{1},\dots,x_{l}):=p(\sum_{i=1}^{l}x_{i}a_{i})$ has only generically the same degree $d$ as $p$. That is, it might happen that for some $A$ and some $p$ we obtain $p_{A}$ such that $\deg(p_{A})\leq\deg(p)$. The easiest nontrivial example with an RZ polynomial is obtained by fixing $p(x_{1},x_{2}):=x_{1}-x_{2}+1$ and $A=(a_{1}):=((1,1))$, which produces the univariate polynomial $p_{A}(y_{1}):=p(y_{1}a_{1})=p(y_{1}(1,1))=p(y_{1},y_{1})=y_{1}-y_{1}+1=1$, so $0=\deg(p_{A})<\deg(p)=1$. We solve this problem by forcing each restriction $p_{A}$ to behave, for every operation that requires taking the degree into account, as if it had the degree of the polynomial $p$ we are restricting at each moment.
\end{warning}

\setlength{\emergencystretch}{3em}%
As mentioned, the solution proposed in the warning will be relevant throughout the whole chapter. This is especially important for the application of two concepts: the relaxation operator $S$ and the Renegar derivative map $\cdot^{(i)}$, as these require a degree as an entry in order to be computed.
\setlength{\emergencystretch}{0em}%

\begin{convencion}[Forcing a degree behaviour]
We force each restriction $p_{A}$ to behave, for every operation that requires taking the degree into account, as if it had the same degree of the polynomial $p$ we are restricting at each moment.
\end{convencion}

Our notation makes easy to take care of this degree convention.

\begin{observacion}[Tracking degree]
This forces us to remember and keep track at each moment of the original polynomial a restriction comes from. Fortunately, this tracking is easy and thus totally possible and doable during all the document because our notation helps with this task. This is so because we always refer to the polynomial $p$ we are restricting to and to the subspace that defines that restriction given by $$\linspan(A):=\linspan(\{a_{1},\dots,a_{l}\})\subseteq\mathbb{R}^{n}$$ in the name that Notation \ref{problemnotation} gives to the corresponding restriction: $p_{A}$.
\end{observacion}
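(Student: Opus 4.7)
The final statement is the observation titled \emph{Tracking degree}, which is a meta-remark about notation rather than a substantive mathematical claim; accordingly, the ``proof'' I would write is really a short justification that the notation $p_A$ carries enough data to unambiguously reconstruct the degree convention we just adopted. The plan is to verify two things: (i) from the symbol $p_A$ alone one can read off both the original polynomial $p\in\mathbb{R}[\mathbf{x}]$ and the tuple of directions $A=(a_1,\dots,a_l)$, and (ii) whenever a later construction (for instance the operator $S$ or the Renegar derivative $\cdot^{(i)}$) requires a virtual degree as input, the number $\deg(p)$ supplied by the convention is well-defined as soon as $p$ is identified.

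First, I would note that the subscript $A$ in $p_A$ is by definition the ordered tuple $(a_1,\dots,a_l)$, which determines the subspace $\linspan(A)\subseteq\mathbb{R}^n$ and hence the restriction map $\mathbb{R}^l\to\mathbb{R}^n$, $(x_1,\dots,x_l)\mapsto\sum_{i=1}^l x_i a_i$. Second, the base symbol $p$ fixes the polynomial whose pullback under this map is being considered. Together, these two pieces of data determine $p_A$ and, crucially, also the integer $d:=\deg(p)$ that the convention forces us to use whenever $p_A$ is fed into an operation requiring a virtual degree. In particular, if $p_A$ happens to vanish to order strictly greater than expected along some direction, so that the genuine degree of $p_A$ drops below $\deg(p)$, we still apply the operator (relaxation, Renegar derivative, etc.) as if the degree were $\deg(p)$, and this is unambiguous because $p$ is visible in the notation.

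The only point requiring any real care is the verification that this convention is consistent with the earlier definitions: the relaxation $S_d(q)$ and the Renegar derivative $q^{(k)}$ were both defined with a virtual degree parameter $d$ that is allowed to exceed $\deg(q)$, so feeding in $d=\deg(p)$ when $\deg(p_A)<\deg(p)$ is legal and produces a well-defined object. I would briefly cite the definitions of $L_{p,d}$ and of $p^{(k)}$ to make this explicit, and remark that the value of the $L$-form on the low-degree monomials (which is all the relaxation uses) is unchanged by this virtual bookkeeping, so no ambiguity arises downstream.

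The main obstacle, which is essentially stylistic rather than mathematical, will be to phrase all of this without giving the impression that a genuine theorem is being proved; the observation is doing bookkeeping, and the justification should read as a sanity check confirming that the notation is adequate for the conventions of the rest of the chapter.
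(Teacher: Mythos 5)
Your proposal is correct and matches the paper's (implicit) justification: the observation is purely a bookkeeping remark, and the paper supports it exactly as you do, by noting that the symbol $p_A$ explicitly records both the original polynomial $p$ and the tuple $A$ spanning the restriction subspace, so the virtual degree $\deg(p)$ required by Remark \ref{remarkdegree} is always recoverable from the notation. Your extra sanity check that $S_d$ and the Renegar derivative accept a virtual degree exceeding the actual degree is consistent with the paper's definitions and adds nothing that conflicts with its treatment.
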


We compress this comment in the next remark.

\begin{remark}[Use of degree shorthands]
\label{remarkdegree}
Whenever we write in the future $S(p_{A})$ or $(p_{A})^{(i)}$ we always mean the subset of $\mathbb{R}^{l}$ given by the relaxation $S_{d}(p_{A})$ (as introduced in \cite[Definition 3.19]{main}) and the polynomial in $\mathbb{R}[\mathbf{x}]$ given by the Renegar derivative $\frac{\partial^{k}}{\partial x_{0}^{k}}(x_{0}^{d}p_{A}(\frac{x}{x_{0}}))|_{x_{0}=1}$ produced by homogenizing using the new variable $x_{0}$ the restricted polynomial $p_{A}$ at degree $d:=\deg(p)$ with $p$ being the polynomial that we are restricting to the subspace $\linspan(A)\subseteq\mathbb{R}^{n}$ regardless of the actual degree $\deg(p_{A})$ of the polynomial $p_{A}$ obtained as a restriction of the polynomial $p$ to the subspace $\linspan(A)$ spanned by the vectors in the tuple of vectors $A=(a_{1},\dots,a_{l})\in(\mathbb{R}^{n})^{l}$ with $a_{i}\in\mathbb{R}^{n}$ for each $i\in\{1,\dots,l\}$, which might be smaller than $d$.
\end{remark}

We introduce now a useful lemma that will help us to understand better how the objects introduced above work and will be useful for a simplification of the next result that we want to prove. Thus, the next lemma serves a double purpose of showing how to handle these objects together and simplifying their treatment in the future.

\begin{lema}[Simplifying work lemma]\label{workinglemma} Write $e_{1}$ for the first unit vector in $\mathbb{R}^{n}$ and fix $b\in\mathbb{R}^{n}$ arbitrary.
\begin{enumerate}
    \item For every tuple $(a,p,k)$ formed by a vector $a\in\mathbb{R}^{n}$, an RZ polynomial $p\in\mathbb{R}[x]$ and a scalar $0\neq k\in\mathbb{R}$ we have $S_{ka}(p)=\frac{1}{k}S_{a}(p)$.
    \item For every tuple $(A,p,k)$ formed by a finite tuple of vectors (or directions) $A=(a_{1},\dots,a_{l})$ with $a_{i}\in\mathbb{R}^{n}$, an RZ polynomial $p\in\mathbb{R}[x]$ and a scalar $0\neq k\in\mathbb{R}$ we have $S_{b}(p_{kA})=S_{b}((p\circ k)_{A})$, where $k$ represents the operator $x\mapsto kx$ in the last expression.
    \item For every pair $(a,p)$ formed by a vector $a\in\mathbb{R}^{n}$ and an RZ polynomial $p\in\mathbb{R}[x]$ there exists an RZ polynomial $q\in\mathbb{R}[x]$ such that the set $S_{a}(p)$ can be written in terms of $S_{||a||e_{1}}$ in the direct way $S_{a}(p)=S_{||a||e_{1}}(q)$. Moreover, we can take the polynomial $q=(p\circ U)$ for some unitary matrix $U$ such that $U^{T}a=||a||e_{1}$.
    \item For every pair $(A,p)$ formed by a finite tuple of directions $A=(a_{1},\dots,a_{l})$ with $a_{i}\in\mathbb{R}^{n}$ and an RZ polynomial $p\in\mathbb{R}[x]$ there exists a finite tuple of directions $\hat{A}=(\hat{a}_{1},\dots,\hat{a}_{l})$ with $\hat{a}_{i}\in\mathbb{R}^{n}$ and $\hat{a}_{1}=||a_{1}||e_{1}$ and an RZ polynomial $q\in\mathbb{R}[x]$ such that the set $S_{b}(p_{A})$ can be written in terms of the relaxation operator $S_{b}$ of the restriction $q_{\hat{A}}$ of the polynomial $q$ to the subspace $\linspan(\hat{A})$ spanned by the tuple $\hat{A}$ in the direct way $S_{b}(p_{A})=S_{b}(q_{\hat{A}})$. Moreover, we can take the polynomial $q=(p\circ U)$ and the vectors $\hat{a}_{i}=U^{T}a_{i}$ for all $i\in\{1,\dots,l\}$ for some unitary matrix $U$.
\end{enumerate}
\end{lema}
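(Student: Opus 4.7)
My plan is that all four parts reduce to direct manipulation of the definitions $S(p) = \{a \mid M_p(a) \succeq 0\}$ and $S_a(p) = \{\lambda \in \mathbb{R} \mid \lambda a \in S(p)\}$, together with the already established Corollary on consequences for positivity sets (behaviour of $S(p)$ under orthogonal transformations) and the trivial fact that composition with a scaling or an orthogonal matrix preserves the degree of $p$, so that the virtual degree convention of Remark \ref{remarkdegree} is automatically respected throughout.

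For (1), I would just substitute in the definition: $S_{ka}(p) = \{\lambda \mid (k\lambda)a \in S(p)\}$ and, setting $\mu := k\lambda$, this is exactly $\frac{1}{k}S_a(p)$. For (2), I would observe that $p_{kA}$ and $(p \circ k)_A$ are literally the same polynomial in $\mathbb{R}[x_1,\ldots,x_l]$, since $p_{kA}(x_1,\ldots,x_l) = p\bigl(\sum_i x_i (k a_i)\bigr) = p\bigl(k \sum_i x_i a_i\bigr) = (p \circ k)_A(x_1,\ldots,x_l)$. Both sides carry the virtual degree $\deg(p)$ (as $k \neq 0$), so their relaxations coincide and the $S_b$-sets agree.

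For (3), I would choose an orthogonal matrix $U \in \mathbb{R}^{n \times n}$ with $U^T a = \|a\| e_1$ (which always exists: extend $a/\|a\|$ to an orthonormal basis and read off $U$), and set $q := p \circ U$. Then $q$ is RZ and has the same degree as $p$. Applying the Corollary on consequences for positivity sets yields $S(q) = \{U^T c \mid c \in S(p)\}$, and then a one-line chase gives $\lambda \in S_a(p) \iff \lambda a \in S(p) \iff U^T(\lambda a) = \lambda \|a\| e_1 \in S(q) \iff \lambda \in S_{\|a\|e_1}(q)$. For (4), I would reuse the very same $U$ applied to $a_1$, set $\hat a_i := U^T a_i$, $\hat A = (\hat a_1, \ldots, \hat a_l)$ and $q := p \circ U$; then
\begin{equation*}
q_{\hat A}(y_1,\ldots,y_l) = p\Bigl(U \sum_{i=1}^l y_i U^T a_i\Bigr) = p\Bigl(\sum_{i=1}^l y_i a_i\Bigr) = p_A(y_1,\ldots,y_l),
\end{equation*}
so $p_A = q_{\hat A}$ as polynomials and the claim is immediate.

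Honestly, none of the four parts poses a real mathematical difficulty; the lemma is an inventory of manipulations one wants to perform freely later on. The only subtle point to watch is the degree convention from Remark \ref{remarkdegree}: whenever I invoke the operator $S_{\bullet}$, I must certify that the virtual degree used on both sides is the same. This is automatic in every case because scaling by a nonzero $k$ and composing with an orthogonal $U$ both preserve $\deg(p)$, and in (2) and (4) the polynomials compared are in fact identical so there is nothing to verify beyond writing them out.
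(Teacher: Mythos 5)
Your proposal is correct and follows essentially the same route as the paper's own proof: a change of variables for (1), the observation that $p_{kA}=(p\circ k)_{A}$ as polynomials for (2), and the choice of an orthogonal $U$ with $U^{T}a=\|a\|e_{1}$ combined with the positivity-set corollary for (3) and (4), where $p_{A}=q_{\hat A}$ is again an equality of polynomials. Your explicit attention to the virtual-degree convention is a useful sanity check that the paper leaves implicit.
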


\begin{proof}
1. $S_{ka}(p):=\{\lambda\in\mathbb{R}\mid\lambda ka\in S(p)\}\overset{\mu:=\lambda k}{=}\{\frac{\mu}{k}\in\mathbb{R}\mid\mu a\in S(p)\}=\frac{1}{k}\{\mu\in\mathbb{R}\mid\mu a\in S(p)\}:=\frac{1}{k}S_{a}(p)$.

2. $S_{b}(p_{kA}):=\{\lambda\in\mathbb{R}\mid\lambda b\in S(p_{kA})\}\overset{q(x):=p(kx)}{=}\{\lambda\in\mathbb{R}\mid\lambda b\in S(q_{A})\}:=S_{b}(q_{A})$ because we have $p_{kA}:=p(\sum_{i=1}^{l}x_{l}ka_{l})=p(k\sum_{i=1}^{l}x_{l}a_{l})=q(\sum_{i=1}^{l}x_{l}a_{l})=:q_{A}$.

3. First, observe that it is possible to choose an orthogonal matrix $U\in\mathbb{R}^{n\times n}$ such that $||a||e_{1}=U^{T}a\in\mathbb{R}^{2}\times\{0\}^{n-2}\subseteq\mathbb{R}^{n}$. Thence we can write that $S_{a}(p):=\{\lambda\in\mathbb{R}\mid\lambda a\in S(p)\}=\{\lambda\in\mathbb{R}\mid\lambda U^{T}a\in S(p(Ux))\}$ by \cite[Proposition 3.24]{main} because $\lambda a\in S(p)$ iff $\lambda U^{T}a\in S(p(Ux))$. Because we choose $U$ such that $U^{T}a=||a||e_{1}$ we, in fact, have proved the identity $$S_{a}(p)=\{\lambda\in\mathbb{R}\mid\lambda ||a||e_{1}\in S(p(Ux))\}=S_{||a||e_{1}}(p\circ U),$$ which finished the proof of this point.

4. As before, it is clear that it is possible to choose $U$ such that $U^{T}a_{i}=\hat{a}_{i}.$ Using the identity $p_{A}=p(\sum_{i=1}^{l}x_{i}a_{i})=p(U(\sum_{i=1}^{l}x_{i}U^{T}a_{i}))=$ \begin{gather*}
(p\circ U)(\sum_{i=1}^{l}x_{i}U^{T}a_{i})=(p\circ U)(x_{1}||a||e_{1}+\sum_{i=2}^{l}x_{i}\hat{a}_{i})=(p\circ U)_{\hat{A}},\end{gather*} which leads automatically to the identity $S_{b}(p_{A})=S_{b}((p\circ U)_{\hat{A}})$ because this is in fact just $$\{\lambda\in\mathbb{R}\mid\lambda b\in S(p_{A})\}=\{\lambda\in\mathbb{R}\mid \lambda b\in S((p\circ U)_{\hat{A}})\},$$ which gives the result concerning to this point and finishes the proof.
\end{proof}

Equipped with this lemma, we can begin studying restrictions of the relaxation to all the planes containing one vector or direction. This is the endeavour of the next section around the topic of subspaces filling the whole space.

\section[The relaxation via restrictions to planes]{Determination of the relaxation via restrictions to planes}\label{sectioninter}

We begin this section by exposing a way of dividing the space $\mathbb{R}^{n}$ into subspaces. We will also choose a set of transformations over each one of these parts (subspaces) of such divisions that will be useful for the bivariate perspective that the analysis of our relaxation will reach when we use \cite[Lemma 3.22]{main}. We can fill the whole space through a choice of subspaces.

\begin{definicion}[Filling space through subspaces]
\label{definitionspace}
Fix $V\subseteq\mathbb{R}^{n}$ a subspace and $l\in[n-1]$. A \textit{space-filling selection of $l$-dimensional subspaces strictly through $V$} is a set $\Pi$ of $l$-dimensional subspaces $\pi\subseteq\mathbb{R}^{n}$ such that $\bigcup\Pi=\mathbb{R}^{n}$ and $\pi\cap\pi'=V$ for all pairs $\pi,\pi'\in\Pi$ with $\pi\neq\pi'$. The subspace $V\subseteq\mathbb{R}^{n}$ is called the \textit{common subspace} of the space-filling selection of $l$-dimensional subspaces strictly through $V$ given by $\Pi$. An \textit{uniform-$\Pi$ set of transformations} is a set $U_{\Pi}$ of transformations $U_{\pi}\in\mathbb{R}^{n\times n}$ for each subspace $\pi\in\Pi$.
\end{definicion}

The common subspace $V$ can be characterized as the subset of $\mathbb{R}^{n}$ such that any element $b$ not in it determines a unique element of the space-filling selection of $l$-dimensional subspaces strictly through $V$ given by $\Pi$. We see this in the following proposition and use these elements of $\mathbb{R}^{n}$ as indices of the elements in $\Pi$ and in $U_{\Pi}$ due to the mentioned uniqueness that serves the purpose of determining completely these corresponding elements.

\begin{remark}
Notice that it must be $l<n$ so that we have that $\card(\Pi)>1$ and we can therefore find actual pairs of different subspaces defining $V$. We do not want that requirement to be vacuous because we want to have ways to determine $V$.
\end{remark}

\begin{proposicion}[Characterization of common subspace]
Let $\Pi$ be a space-filling selection of $l$-dimensional subspaces strictly through the common subspace $V\subseteq\mathbb{R}^{n}$. Then $$V=\{a\in\mathbb{R}^{n}\mid(\forall b\notin V)\ (\exists! \pi\in\Pi)\ b\in\pi\}.$$
\end{proposicion}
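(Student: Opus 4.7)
The plan is to prove the set equality by double inclusion, reading the right-hand side as the characterization ``$a \in V$ iff $a$ fails to be contained in a unique member of $\Pi$'', equivalently, ``$b \notin V$ iff there is a unique $\pi \in \Pi$ with $b \in \pi$''. The substantive content is then a direct unwinding of the two defining properties of a space-filling selection, namely $\bigcup \Pi = \mathbb{R}^n$ and $\pi \cap \pi' = V$ for distinct $\pi, \pi' \in \Pi$.

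First I would record the basic structural fact that $V \subseteq \pi$ for every $\pi \in \Pi$. This is immediate: pick any $\pi' \in \Pi$ with $\pi' \neq \pi$ (which exists because $l < n$ and a single $l$-dimensional subspace cannot cover $\mathbb{R}^n$), and then $V = \pi \cap \pi' \subseteq \pi$. In particular $V \subseteq \bigcap \Pi$, so every element of $V$ lies in every member of $\Pi$; since $|\Pi| \geq 2$, no $a \in V$ can lie in a \emph{unique} $\pi \in \Pi$. This handles the inclusion that says points characterized by non-uniqueness contain $V$.

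Next I would prove the converse: any $b \notin V$ belongs to exactly one $\pi \in \Pi$. Existence is simply the covering property $\bigcup \Pi = \mathbb{R}^n$, which furnishes some $\pi \in \Pi$ with $b \in \pi$. For uniqueness, suppose for contradiction that $b \in \pi \cap \pi'$ for some other $\pi' \in \Pi$ with $\pi' \neq \pi$; then by the defining relation of a space-filling selection, $b \in \pi \cap \pi' = V$, contradicting $b \notin V$. Thus the $\pi$ containing $b$ is unique, completing the reverse inclusion.

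The main obstacle is not technical but interpretational: the right-hand side as typeset has no apparent free occurrence of $a$, which I read as a typographical slip for the equivalent complementary statement $\mathbb{R}^n \smallsetminus V = \{b \in \mathbb{R}^n \mid (\exists!\, \pi \in \Pi)\ b \in \pi\}$. Once this reading is fixed, no further machinery beyond Definition \ref{definitionspace} and a counting argument on $|\Pi|$ is needed; in particular, no appeal to the relaxation $S(\cdot)$, the $L$-form, or any of the material on Renegar derivatives and restrictions developed in the preceding lemmas enters this purely set-theoretic claim.
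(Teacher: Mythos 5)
Your proof is correct and follows the same basic approach as the paper's, namely a direct unwinding of Definition~\ref{definitionspace}. However, you are noticeably more careful than the paper in two respects. First, you explicitly flag the typographical issue in the statement: as typeset, the defining condition of the right-hand side has no free occurrence of $a$, so the set is either empty or all of $\mathbb{R}^n$; your reading $\mathbb{R}^n \smallsetminus V = \{b \mid \exists!\,\pi\in\Pi:\ b\in\pi\}$ is the correct intended content. Second, you prove both inclusions: the paper's proof only verifies that any $b\notin V$ lies in a unique $\pi\in\Pi$ (existence from the covering property, uniqueness from $\pi\cap\pi'=V$), leaving tacit the observation you make explicit, that $V\subseteq\bigcap\Pi$ (because $V=\pi\cap\pi'\subseteq\pi$ for any $\pi'\neq\pi$) and therefore, since $|\Pi|\geq 2$, no element of $V$ lies in a unique $\pi$. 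Your uniqueness argument is also slightly cleaner: you pass directly to the contradiction $b\in\pi\cap\pi'=V$, whereas the paper inserts an unnecessary detour through $\linspan(\{b\}\cup V)$.
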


\begin{proof}
With the notation above, any $b\notin V$ is in one and only one of the $l$-dimensional subspaces in $\Pi$ because otherwise, taking $\pi\neq\pi'$ two of these $l$-dimensional subspaces in $\Pi$ containing $b\notin V$, we would get the contradiction $b\in\linspan(\{b\}\cup V)\subseteq\pi\cap\pi'=V$. This finishes the proof.
\end{proof}

This allows us to simplify our notation making reference to the elements instead of the subspaces. Therefore, we denote the corresponding unique subspace using the elements as indices.

\begin{notacion}[Indexing by elements]
Let $\Pi$ be a space-filling selection of $l$-dimensional subspaces strictly through the common subspace $V\subseteq\mathbb{R}^{n}$ and let $b\notin V$. We denote $\pi_{b}\in\Pi$ the unique subspace in $\Pi$ containing $b\notin V$. This indexing is well-defined thanks to the proposition above.
\end{notacion}

Thus we can index using elements $b\notin V$ of the space $\mathbb{R}^{n}$ as indices for these elements of the space-filling selection of $l$-dimensional subspaces strictly through $V$ given by $\Pi$. We similarly index $U_{b}:=U_{\pi_{b}}$ whenever $\Pi\ni\pi_{b}\ni b\notin V$. The uniformity in the choice of the set $U_{\Pi}$ of transformations $U_{\pi}$ for $\pi\in\Pi$ associated to $\Pi$ will be important in the proof of the Fact \ref{factintersection} closing this section. In particular, there is, of course, a canonical way of choosing an uniform-$\Pi$ set of transformations in a way that it sends its corresponding (indexing) $l$-dimensional subspace $\pi\in\Pi$ of the space-filling selection of $l$-dimensional subspaces strictly through $V$ given by $\Pi$ to the canonical $l$-dimensional subspace $P:=\{e_{1},\dots,e_{l}\}$. We put this in symbols.

\begin{definicion}[Canonical uniform set of tranformations]
\label{definitionspace2}
If a uniform-$\Pi$ set of transformations $U_{\Pi}$ verifies that $U_{\pi}^{T}\pi=P$ for each $\pi\in\Pi$, we say that $U_{\Pi}$ is \textit{canonical}. Observe that we can also write this indexing by elements $b\notin V$ as $U_{b}^{T}\pi_{b}=P$ for all $b\notin V$.
\end{definicion}

The instance of the very general Definition \ref{definitionspace} that we will need is the particular case corresponding to setting the dimension of the subspaces in the space-filling selection $l=2$, putting the common subspace $V=\linspan(\{e_{1}\})$ and fixing the elements of $U_{\Pi}$ as orthogonal transformations. We will also establish a convention on which element of $\Pi$ we consider as covering the common subspace $V$. During this particularization we introduce also other conventions along the lines mentioned on this paragraph that will make our steps clearer during our future arguments and proofs. We collect in symbols all these requirements over our particular objects for clarity.

\begin{particularizacion}[$l=2,V=\{e_{1}\}$]
We introduced the notions in Definition \ref{definitionspace} in general, but we noted above that we are mainly interested in the case $l=2$ and $V=\linspan(\{e_{1}\})$. We index then $U_{b}:=U_{\pi_{b}}$ whenever $\Pi\ni\pi_{b}\ni b\notin\linspan(\{e_{1}\})$ and we make sure that $P:=\{e_{1},e_{2}\}\in\Pi$ and that $U_{a_{1}e_{1}+a_{2}e_{2}}:=U_{e_{2}}=U_{P}:=I$ for all $a_{1},a_{2}\in\mathbb{R}$ in order to cover with our point indexation the cases (in the common subspace $V$, when $a_{2}=0$, and in the bidimentional subspace $P\in\Pi$ that we decide that will cover the common subspace, when $a_{2}\neq0$) where we are already in the canonical plane $P\in\Pi$ with the identity transformation. We will also take $U_{\Pi}$ canonical with all its transformations orthogonal.
\end{particularizacion}

We also make several conventions that will facilitate our arguments wlog.

\begin{convencion}
From now on, we will consider moreover that $p\in\mathbb{R}[\mathbf{x}]$ and $p_{A}\in\mathbb{R}[\mathbf{y}]$ with $A\in(\mathbb{R}^{n})^{l}$ have the same $\max(\{l,n\})$ variables, having always in mind that the completions up to that maximum might produce monomials having coefficient zero or vectors in $A$ that are zero.
\end{convencion}

We explain this in more detail and also why it will help us.

\begin{remark}
If $l=n$, we just have to set $\mathbf{x}=\mathbf{y}$ as then $p$ and $p_{A}$ have the same number of variables. We now analyze the cases where we need to adjust the number of variables. If $l<n$, then we consider the extension of the tuple $A$ given by postpending to it as many zero vectors $0\in\mathbb{R}^{n}$ as necessary until the new tuple $A'\in(\mathbb{R}^{n})^{n}$ has $n$ vectors and we consider instead $p_{A'}\in\mathbb{R}[\mathbf{y},y_{l+1},\dots,y_{n}]$ instead of $p_{A}\in\mathbb{R}[\mathbf{y}]$, but in fact $p_{A}=p_{A'}$. Otherwise, if $n<l$ then we consider $p\in\mathbb{R}[\mathbf{x},x_{n+1},\dots,x_{l}]$ with all monomials involving the new extra variables having a zero coefficient. 
\end{remark}

We prove now a first connection between the relaxation of the RCS of an RZ polynomial $p$ and the relaxation of the RCSs of its bivariate restrictions. Such connection is just an initial (and particular) step towards a more general, clear, deep and elegant one that we will establish as the main result of this section in the last point of the ending Corollary \ref{corolariofinal}. In particular, we prove the following fact, which arises as a corollary of Proposition \ref{propositiontransformation} above.

\begin{hecho}[Intersection identity through scaling]
\label{factintersection}
For all scalar $k\in\mathbb{R}$ and RZ polynomial $p\in\mathbb{R}[x]$ we have the identity $$S_{ke_{1}}(p)=\bigcap_{b\in\mathbb{R}^{n}}S_{k(1,0)}(p_{e_{1},b}).$$
\end{hecho}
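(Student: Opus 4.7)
The plan is to bypass the geometric handling of restrictions to planes and instead reduce both sides of the claimed identity to a common family of scalar inequalities in terms of a single $L$-form, $L_{p,d}$ with $d=\deg(p)$. Two tools are needed: \cite[Lemma 3.22]{main}, which writes $v^{T}M_{p,d}(a)v=L_{p,d}((v_{0}+\sum_{i=1}^{n}v_{i}x_{i})^{2}(1+\sum_{i=1}^{n}a_{i}x_{i}))$, and Proposition \ref{propositiontransformation}, which transports the $L$-form across an arbitrary linear change of variables. Unfolding Notation \ref{problemnotation}, and applying Remark \ref{remarkdegree} to force the virtual degree $d$ for every restriction, we see that $\lambda\in S_{ke_{1}}(p)$ amounts to $L_{p,d}((v_{0}+\sum_{i=1}^{n}v_{i}x_{i})^{2}(1+\lambda k x_{1}))\geq 0$ for every $v\in\mathbb{R}^{n+1}$, while $\lambda\in S_{k(1,0)}(p_{e_{1},b})$ amounts to $L_{p_{e_{1},b},d}((w_{0}+w_{1}y_{1}+w_{2}y_{2})^{2}(1+\lambda k y_{1}))\geq 0$ for every $w\in\mathbb{R}^{3}$.

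Next, for each $b\in\mathbb{R}^{n}$ form the square matrix $\tilde{T}_{b}:=[e_{1}\mid b\mid 0\mid\cdots\mid 0]\in\mathbb{R}^{n\times n}$, so that, viewing the bivariate $p_{e_{1},b}$ as an $n$-variable polynomial trivially extended to the dummy variables $y_{3},\dots,y_{n}$, one has $p_{e_{1},b}(y)=p(\tilde{T}_{b}y)$. A direct calculation gives $\tilde{T}_{b}^{T}y=(y_{1},\,b\cdot y,\,0,\dots,0)^{T}$, so Proposition \ref{propositiontransformation} applied to $q(y):=(w_{0}+w_{1}y_{1}+w_{2}y_{2})^{2}(1+\lambda k y_{1})$ yields
\begin{equation*}
L_{p_{e_{1},b},d}(q(y))=L_{p,d}(q(\tilde{T}_{b}^{T}y))=L_{p,d}\bigl((w_{0}+w_{1}y_{1}+w_{2}(b\cdot y))^{2}(1+\lambda k y_{1})\bigr).
\end{equation*}
Because $p_{e_{1},b}$ (in the $n$-variable reading) does not involve $y_{3},\dots,y_{n}$, its $L$-form vanishes on monomials containing any such variable, so the $n$-variable and bivariate $L$-forms coincide on the polynomial $q$; hence the displayed identity genuinely computes $w^{T}M_{p_{e_{1},b},d}(\lambda k,0)w$ in terms of $L_{p,d}$.

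The proof concludes with a matching of parameter spaces. As $(w_{0},w_{1},w_{2},b)$ ranges over $\mathbb{R}^{3}\times\mathbb{R}^{n}$, the linear form $w_{0}+w_{1}y_{1}+w_{2}(b\cdot y)$ covers exactly the same set of linear forms $v_{0}+\sum_{i=1}^{n}v_{i}y_{i}$ with $v\in\mathbb{R}^{n+1}$: given $v$, take $w_{0}=v_{0}$, $w_{1}=0$, $w_{2}=1$, $b=(v_{1},\dots,v_{n})$; and conversely each $(w,b)$ produces an obvious $v$. Consequently the family of scalar inequalities in $L_{p,d}$ defining $M_{p,d}(\lambda k e_{1})\succeq 0$ is identical, as a set, to the union over $b\in\mathbb{R}^{n}$ of the families defining $M_{p_{e_{1},b},d}(\lambda k,0)\succeq 0$, which is exactly the statement $S_{ke_{1}}(p)=\bigcap_{b\in\mathbb{R}^{n}}S_{k(1,0)}(p_{e_{1},b})$.

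The main obstacle is bookkeeping: one must ensure that the virtual degree is $d=\deg(p)$ everywhere (both for the bivariate restriction in the sense of Remark \ref{remarkdegree} and when applying Proposition \ref{propositiontransformation}, which is what allows the use of a possibly rank-deficient $\tilde{T}_{b}$), and that the $L$-form of $p_{e_{1},b}$ read in $n$ variables truly agrees with the bivariate one on polynomials supported in $y_{1},y_{2}$. Both points are immediate from the defining power-series identity of the $L$-form, so no further technical work is needed; degenerate cases such as $b\in\linspan(\{e_{1}\})$ or $k=0$ are handled transparently by the same parametrization argument.
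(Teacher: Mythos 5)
Your proof is correct, and it takes a genuinely different and cleaner route than the paper's. The paper's proof sets up an elaborate geometric apparatus (space-filling selections of planes strictly through $\linspan\{e_{1}\}$, uniform-$\Pi$ sets of orthogonal transformations $U_{b}$, plane-by-plane groupings), first uses orthogonal $U_{b}$ to push $b$ into $\linspan\{e_{1},e_{2}\}$, restricts to the bivariate $p\circ U_{b}$, and only then applies Proposition~\ref{propositiontransformation} with a $2\times 2$ matrix $A_{b}$ on bivariate restrictions, carefully arguing that $A_{b}c$ covers $\mathbb{R}^{2}$ whenever $m_{b}\neq 0$ and handling the degenerate $b\in\linspan\{e_{1}\}$ cases separately. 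You instead skip the orthogonal-reduction step entirely: you apply Proposition~\ref{propositiontransformation} once, at full dimension $n$, with the rank-$2$ zero-padded matrix $\tilde{T}_{b}=[e_{1}\mid b\mid 0\mid\cdots\mid 0]$, relying on the fact (visible in the proof of Lemma~\ref{lemmaforproposition}) that the transformation rule never requires $A$ to be invertible or orthogonal. This reduces the whole argument to a transparent matching of the two families of linear forms $\{w_{0}+w_{1}y_{1}+w_{2}(b\cdot y)\}$ and $\{v_{0}+\sum v_{i}y_{i}\}$, and it absorbs the paper's degenerate cases ($b\in\linspan\{e_{1}\}$, $k=0$) into the same parametrization with no special handling. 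The only thing the paper's approach buys that yours does not is the reusable Definition~\ref{definitionspace}/\ref{definitionspace2} machinery, but since the paper itself never invokes that machinery outside this one proof, your argument is strictly shorter and arguably clearer. One small bookkeeping point you should make explicit for a reader: your normalization keeps an arbitrary $v_{0}$ (and $w_{0}$), whereas the paper's chain normalizes to $(1+b^{T}x)^{2}$ by a homogeneity-and-continuity argument; both are valid, but it is worth a sentence to note that Lemma~3.22 is being used with the unnormalized quadratic test vector, and that the two formulations of PSD-ness are equivalent.
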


\begin{proof}
We will prove this identity using \cite[Lemma 3.22]{main} and, for each $b\in\mathbb{R}^{n}$, orthogonal matrices $U_{b}$ fixing $e_{1}$ and sending $b$ to the plane spanned by $\{e_{1},e_{2}\}$, i.e., $U_{b}^{T}b=l_{b}e_{1}+m_{b}e_{2}$ for all $b\in\mathbb{R}^{n}$. In particular, we can take for the trivial cases $U_{\mu e_{1}+\nu e_{2}}=I$ for all $\mu,\nu\in\mathbb{R}$. Moreover, for the nontrivial case, we will choose these matrices uniformly on each element of a space-filling selection of planes strictly through $e_{1}$ (see Definition \ref{definitionspace} and its corresponding discussion). That is, we choose a set $\Pi$ of planes $\pi\subseteq\mathbb{R}^{n}$ such that $\bigcup\Pi=\mathbb{R}^{n}$ and $\pi\cap\pi'=\linspan(\{e_{1}\})$ for all pairs $\pi,\pi'\in\Pi$ with $\pi\neq\pi'$ so any $b\notin\linspan(\{e_{1}\})$ is in one and only one plane $\pi_{b}\in\Pi$. In this case we denote also $U_{b}=U_{\pi_{b}}$ whenever $b\in\pi_{b}\in\Pi$. We make sure that we cover the trivial case $b\in\linspan(\{e_{1}\})$ noting that $\linspan(\{e_{1},e_{2}\})\in\Pi$ and setting $\pi_{k_{1}e_{1}+k_{2}e_{2}}=\linspan(\{e_{1},e_{2}\})$ and $U_{k_{1}e_{1}+k_{2}e_{2}}=U_{e_{2}}=I$ for all $k_{1},k_{2}\in\mathbb{R}$. This uniformity in the choice of the orthogonal transformations will be crucial soon. Observe that with this we have \begin{gather}\label{planetag} U_{b}^{T}\pi_{b}=\linspan(\{e_{1},e_{2}\}) \mbox{\ for all\ } b\in\mathbb{R}^{n}.\end{gather} We will build now a chain of equivalent conditions beginning by the LHS of the statement until we reach an expression that can be adequately connected to a different chain of equivalences reaching the RHS of the statement. In fact, $\lambda\in S_{ke_{1}}(p)$ iff $L_{p}((1+\lambda ke_{1}^{T}x)(1+b^{T}x)^2)\geq0$ for all $b\in\mathbb{R}^{n}$ using \cite[Lemma 3.22]{main}, continuity of the full expression, homogeneity of $(b_{0} + b^{T}x)^{2}$ with respect to $(b_{0},b)$, the fact that squares of real numbers are nonnegative and linearity of $L_{p}$. Now $L_{p}((1+\lambda ke_{1}^{T}x)(1+b^{T}x)^2)\geq0$ for all $b\in\mathbb{R}^{n}$ iff $L_{p(U_{b}x)}((1+\lambda ke_{1}^{T}U_{b}x)(1+b^{T}U_{b}x)^2)\geq0$ for all $b\in\mathbb{R}^{n}$ by \cite[Lemma 3.8(a)]{main}, but by the choice of $U_{b}$ for each $b$ this is equivalent to \begin{gather}\label{beforegrouping}L_{(p\circ U_{b})}((1+\lambda ke_{1}^{T}x)(1+(l_{b}e_{1}+m_{b}e_{2})^{T}x)^2)\geq0 \mbox{\ for all\ } b\in\mathbb{R}^{n}.\end{gather} Moreover, grouping via the uniform choice of $U_{\pi}$ for each plane $\pi\in\Pi$ (see Equation \ref{planetag}), this is equivalent to the plane-by-plane uniform condition \begin{gather}\label{aftergrouping}L_{(p\circ U_{\pi})}((1+\lambda ke_{1}^{T}x)(1+(le_{1}+me_{2})^{T}x)^2)\geq0 \mbox{\ for all\ } l,m\in\mathbb{R} \mbox{\ for all\ } \pi\in\Pi.\end{gather} Note that the trivial case appearing when $b\in\linspan(\{e_{1}\})$) is covered by our choice of $\Pi$ and the indexation within the case when $\pi=\pi_{e_{2}}=\linspan(\{e_{1},e_{2}\})$. Calling from now on $(p\circ U_{b})(\overline{x}):=(p\circ U_{b})(x_{1},x_{2},0)$ and $\overline{x}=(x_{1},x_{2})$ and renaming $e_{1},e_{2}\in\mathbb{R}^{2}$, the condition in Equation \ref{beforegrouping} (before grouping) is equivalent to the bivariate condition \begin{gather}
L_{(p\circ U_{b})(\overline{x})}((1+\lambda ke_{1}^{T}\overline{x})(1+(l_{b}e_{1}+m_{b}e_{2})^{T}\overline{x})^2)\geq0 \mbox{\ for all\ } b\in\mathbb{R}^{n}.\end{gather} After grouping (see Equation \ref{aftergrouping}), this condition is equivalent to the plane-by-plane uniform condition \begin{gather}\label{almostend}L_{(p\circ U_{\pi})(\overline{x})}((1+\lambda ke_{1}^{T}\overline{x})(1+(le_{1}+me_{2})^{T}\overline{x})^2)\geq0 \mbox{\ for all\ } l,m\in\mathbb{R} \mbox{\ and\ } \pi\in\Pi.\end{gather} Note again that the trivial case is covered in the same way as above. Denote now $U_{\Pi}=\{U_{\pi}\mid\pi\in\Pi\}$ and remember that we imposed $I=U_{e_{2}}\in U_{\Pi}$. The condition in Equation \ref{almostend} above is therefore equivalent, by the choice of $\Pi$, to \begin{gather}L_{(p\circ U)(\overline{x})}((1+\lambda ke_{1}^{T}\overline{x})(1+c^{T}\overline{x})^2)\geq0 \mbox{\ for all\ } c\in\mathbb{R}^{2} \mbox{\ for all\ } U\in U_{\Pi}.\end{gather} Finally, this chain of equivalent conditions ends with the equivalent condition \begin{gather}\label{finalLHScondition}
L_{(p\circ U_{b})(\overline{x})}((1+\lambda ke_{1}^{T}\overline{x})(1+c^{T}\overline{x})^2)\geq0 \mbox{\ for all\ } c\in\mathbb{R}^{2}\mbox{\ for all\ }b\in\mathbb{R}^{n}.\end{gather} Now we will pursue a different chain of equivalences beginning on the RHS of the statement that will end up on the condition in Equation \ref{finalLHScondition} above finishing the proof. In order to do this we need some notation and a chain of equalities. Denote $A_{b}=\begin{pmatrix}
1 & l_{b}\\0 & m_{b}
\end{pmatrix}$ for all $b\in\mathbb{R}^{n}$ and observe that $A_{b}$ defines a bijective linear map whenever $m_{b}\neq0$. Observe further that then for all $b\in\mathbb{R}^{n}$ we have \begin{equation}\label{chainofequationsbivariaterestrictions}
\begin{gathered}
(p\circ U_{b})_{e_{1},l_{b}e_{1}+m_{b}e_{2}}(\overline{x})=(p\circ U_{b})(x_{1}e_{1}+x_{2}(l_{b}e_{1}+m_{b}e_{2}))=\\(p\circ U_{b})(x_{1}+l_{b}x_{2},m_{b}x_{2}))=(p\circ U_{b})(A_{b}\overline{x})=\\p(x_{1}U_{b}e_{1}+x_{2}U_{b}(l_{b}e_{1}+m_{b}e_{2}))=p(x_{1}e_{1}+x_{2}b)=p_{e_{1},b}(\overline{x}).\end{gathered}
\end{equation} Now we can continue. Observe, following the condition on the RHS, that $\lambda\in\bigcap_{b\in\mathbb{R}^{n}}S_{k(1,0)}(p_{e_{1},b})$ iff $L_{p_{e_{1},b}}((1+\lambda ke_{1}^{T}\overline{x})(1+c^{T}\overline{x})^2)\geq0$ for all $c\in\mathbb{R}^{2}$ for all $b\in\mathbb{R}^{n}$, which is equivalent, by the chain of equalities in Equation \ref{chainofequationsbivariaterestrictions} above, to $L_{(p\circ U_{b})(A_{b}\overline{x})}((1+\lambda ke_{1}^{T}\overline{x})(1+c^{T}\overline{x})^2)\geq0$ for all $c\in\mathbb{R}^{2}$ for all $b\in\mathbb{R}^{n}$, which is equivalent, by Proposition \ref{propositiontransformation} above, to the condition $L_{(p\circ U_{b})(\overline{x})}((1+\lambda ke_{1}^{T}A_{b}^{T}\overline{x})(1+c^{T}A_{b}^{T}\overline{x})^2)\geq0$ for all $c\in\mathbb{R}^{2}$ for all $b\in\mathbb{R}^{n}$, which can be written as $L_{(p\circ U_{b})}((1+(A_{b}\lambda ke_{1})^{T}\overline{x})(1+(A_{b}c)^{T}\overline{x})^2)\geq0$ for all $c\in\mathbb{R}^{2}$ for all $b\in\mathbb{R}^{n}$, and, developing further, it can be written as $L_{(p\circ U_{b})}((1+\lambda ke_{1}^{T}\overline{x})(1+(A_{b}c)^{T}\overline{x})^2)\geq0$ for all $c\in\mathbb{R}^{2}$ for all $b\in\mathbb{R}^{n}$. Observe now that $m_{b}=0$ iff $A_{b}(\mathbb{R}^{2})=\mathbb{R}\times\{0\}$ iff $U^{T}_{b}b=l_{b}e_{1}$ for some $l_{b}\in\mathbb{R}$ and, as we choose $U^{T}_{b}e_{1}=e_{1}$, by orthogonality (i.e., $U^{T}_{b}U_{b}=U_{b}U^{T}_{b}=I$), this means that this happens iff $b\in\linspan(\{e_{1}\})$. Thus this case is covered by the case $b=e_{2}$ because it produces the same matrix $U_{e_{2}}=U_{k_{1}e_{1}}=I$ and thus the same polynomial $p=p\circ U_{b}$ in the subindex of $L$. Therefore, as the vectors $b\in\mathbb{R}^{n}$ that verify $m_{b}\neq0$ cover all the cases and when $m_{b}\neq0$ we have that, when $c$ varies in all $\mathbb{R}^{2}$, so does $A_{b}c$, we have a final equivalence to the condition \begin{gather}\label{finalRHScondition}L_{(p\circ U_{b})}((1+\lambda ke_{1}^{T}\overline{x})(1+c^{T}\overline{x})^2)\geq0 \mbox{\ for all\ } c\in\mathbb{R}^{2} \mbox{\ for all\ } b\in\mathbb{R}^{n},\end{gather} which coincides with the final condition in Equation \ref{finalLHScondition} (the conditions in Equations \ref{finalLHScondition} and \ref{finalRHScondition} are the same) of the chain above beginning in the LHS and finishes this proof connecting via equivalences the conditions of the LHS and the RHS of the statement.
\end{proof}

\begin{corolario}[Implied intersection identities]
\label{corolariofinal}
For all vector $0\neq a\in\mathbb{R}^{n}$, scalar $k\in\mathbb{R}$ and RZ polynomial $p\in\mathbb{R}[x]$ we have the identities \begin{enumerate}
    \item $S_{k\frac{a}{||a||}}(p)=\bigcap_{b\in\mathbb{R}^{n}}S_{k(1,0)}(p_{\frac{a}{||a||},b})$.
    \item $S_{a}(p)=\frac{1}{||a||}\bigcap_{b\in\mathbb{R}^{n}}S_{(1,0)}(p_{\frac{a}{||a||},b})$.
    \item $S_{a}(p)=\bigcap_{b\in\mathbb{R}^{n}}S_{(1,0)}(p_{a,b})$.
\end{enumerate}
\end{corolario}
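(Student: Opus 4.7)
The plan is to deduce all three identities from Fact~\ref{factintersection} by combining it with the change-of-variables results collected in Lemma~\ref{workinglemma}, Proposition~\ref{propositiontransformation} and Corollary~\ref{lemmafactor}. The underlying idea is that Fact~\ref{factintersection} already handles the ``canonical'' direction $e_{1}$, so for a general direction $a$ we should rotate $a$ to $\|a\|e_{1}$, apply the Fact, and then translate back. The main thing to keep track of is how the one-variable parameter $b\in\mathbb{R}^{n}$ on the right-hand side transforms under these operations.

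For point~(1), I would pick an orthogonal matrix $U\in\mathbb{R}^{n\times n}$ with $U^{T}(a/\|a\|)=e_{1}$ (as in Lemma~\ref{workinglemma}.3) and set $q:=p\circ U$. Applying Fact~\ref{factintersection} to $q$ gives $S_{ke_{1}}(q)=\bigcap_{b\in\mathbb{R}^{n}}S_{k(1,0)}(q_{e_{1},b})$. By \cite[Proposition 3.24]{main} one has $S(q)=\{U^{T}c\mid c\in S(p)\}$, which immediately yields $S_{ke_{1}}(q)=S_{k a/\|a\|}(p)$ because $Ue_{1}=a/\|a\|$. On the right-hand side, a direct computation analogous to Equation~\ref{chainofequationsbivariaterestrictions} gives $q_{e_{1},b}(\overline{x})=p(x_{1}(a/\|a\|)+x_{2}Ub)=p_{a/\|a\|,Ub}(\overline{x})$, and since $b\mapsto Ub$ is a bijection on $\mathbb{R}^{n}$, the intersection over $b$ is the intersection over $Ub$, producing the identity claimed in (1).

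Point~(2) is then immediate from (1) by a linear rescaling of the single parameter along the direction: writing $\mu:=\lambda\|a\|$, one sees $\lambda a\in S(p)\iff\mu(a/\|a\|)\in S(p)$, so $S_{a}(p)=\tfrac{1}{\|a\|}S_{a/\|a\|}(p)$, and specialising (1) at $k=1$ finishes the argument. For point~(3), the cleanest way is to show the ``per-$b$'' identity
$$S_{(1,0)}(p_{a,b})=\tfrac{1}{\|a\|}\,S_{(1,0)}(p_{a/\|a\|,b}),$$
and then intersect over $b\in\mathbb{R}^{n}$ and appeal to (2). To establish this identity one notices $p_{a,b}(x_{1},x_{2})=p_{a/\|a\|,b}(\|a\|x_{1},x_{2})$, i.e.\ $p_{a,b}=p_{a/\|a\|,b}\circ A$ for the bivariate diagonal matrix $A:=\operatorname{diag}(\|a\|,1)$; then Proposition~\ref{propositiontransformation} converts any evaluation $L_{p_{a,b}}((1+\lambda x_{1})(1+d^{T}\overline{x})^{2})$ into $L_{p_{a/\|a\|,b}}((1+\lambda\|a\|x_{1})(1+(\|a\|d_{1},d_{2})^{T}\overline{x})^{2})$, and the substitution $d\mapsto(\|a\|d_{1},d_{2})$ is a bijection on $\mathbb{R}^{2}$ because $\|a\|\neq 0$.

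The main obstacle I expect is purely bookkeeping: tracking the parameters under the two distinct kinds of transformations (the orthogonal change of the ambient variables and the non-orthogonal rescaling inside the restricted bivariate polynomial) and checking that each parameter substitution is genuinely a bijection so that the intersection is preserved. Once Proposition~\ref{propositiontransformation} is used in its most general form (arbitrary square matrices, not just orthogonal ones), the non-orthogonal scaling in point~(3) causes no problem, and the degree convention collected in Remark~\ref{remarkdegree} ensures that applying $S$ to the various restrictions is unambiguous. No new inequality is needed beyond Fact~\ref{factintersection}; the work is entirely algebraic.
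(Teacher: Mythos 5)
Your proof is correct and follows essentially the same route as the paper's: reduce to the canonical direction $e_{1}$ via an orthogonal change of variables (Lemma~\ref{workinglemma}/\cite[Proposition 3.24]{main}), apply Fact~\ref{factintersection}, and then undo the rotation and the scaling. The only noteworthy variation is in point~(3), where you prove the per-$b$ identity $S_{(1,0)}(p_{a,b})=\tfrac{1}{\|a\|}S_{(1,0)}(p_{a/\|a\|,b})$ with the \emph{same} $b$ on both sides, by applying Proposition~\ref{propositiontransformation} to the non-orthogonal diagonal matrix $\operatorname{diag}(\|a\|,1)$ and a bijective substitution in the $d$-parameter of the $L$-form, whereas the paper writes $p_{a/\|a\|,b}=p_{a,\|a\|b}(\tfrac{1}{\|a\|}\overline{x})$, invokes the uniform-rescaling Corollary~\ref{lemmafactor}, and then re-indexes the intersection over $b\mapsto\|a\|b$; both work, and your version cleanly avoids the re-indexing step.
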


\begin{proof}
1. Using the corresponding point (3 or 4) of the Lemma \ref{workinglemma} above in each part and choosing the same matrix $U$ for both parts of the identity in the current statement reduces our proof to show the identity $$S_{ke_{1}}(p)=\bigcap_{b\in\mathbb{R}^{n}}S_{k(1,0)}(p_{e_{1},b}),$$ whose advantage is that now our initial unitary vector $\frac{a}{||a||}$ is the simplest possible and we can establish it using Fact \ref{factintersection} above.

2. Using now the first point of such lemma and setting $k=||a||$ if $a\neq0$ we can particularize the already proved point to the identity $$S_{a}(p)=\bigcap_{b\in\mathbb{R}^{n}}S_{||a||(1,0)}(p_{\frac{a}{||a||},b})=\frac{1}{||a||}\bigcap_{b\in\mathbb{R}^{n}}S_{(1,0)}(p_{\frac{a}{||a||},b}).$$

3. We use the point above and Lemma \ref{lemmafactor}. Fix $b\in\mathbb{R}^{n}$ arbitrary. Observe that $p_{\frac{a}{||a||},b}:=p(x_{1}\frac{a}{||a||}+x_{2}\frac{||a||}{||a||}b)=p(a\frac{x_{1}}{||a||}+||a||b\frac{x_{2}}{||a||})=p_{a,||a||b}(\frac{1}{||a||}\overline{x})$. Then $S_{(1,0)}(p_{\frac{a}{||a||},b}):=\{\lambda\in\mathbb{R}\mid\lambda(1,0)\in S(p_{\frac{a}{||a||},b})\}=$\begin{gather*}
\{\lambda\in\mathbb{R}\mid\lambda(1,0)\in S(p_{a,||a||b}(\frac{1}{||a||}\overline{x}))\}=\\\{\lambda\in\mathbb{R}\mid\lambda(1,0)\in ||a||S(p_{a,||a||b})\}\overset{\mu:=\frac{\lambda}{||a||}}{=}\\\{||a||\mu\in\mathbb{R}\mid\mu(1,0)\in S(p_{a,||a||b})\}=\\||a||\{\mu\in\mathbb{R}\mid\mu(1,0)\in S(p_{a,||a||b})\}=:\\ ||a||S_{(1,0)}(p_{a,||a||b}),\end{gather*} where we used the identity found above and Lemma \ref{lemmafactor}. Thus, since this is true for any $b\in\mathbb{R}^{n}$, we can continue the chain of equalities of the point above with $\cdots=\frac{1}{||a||}\bigcap_{b\in\mathbb{R}^{n}}||a||S_{(1,0)}(p_{a,||a||b})=\bigcap_{b\in\mathbb{R}^{n}}S_{(1,0)}(p_{a,b})$, which finishes the proof of this point and establishes the corollary.
\end{proof}

The last point of Corollary \ref{corolariofinal} above is the fact that we wanted to establish on this section. This gets us ready to begin exploring cubic boundedness in the next section towards a more general result.

\section{Reducing subindex polynomial via cubic boundedness}\label{sectionL}

We will use Helton-Vinnikov again to restrict to planes. These restrictions will be key in our arguments during this section.

\begin{observacion}[Consequences of Helton-Vinnikov]
Using this theorem and orthogonal diagonalization we know that every bivariate RZ polynomial $p$ with $p(0)=1$ and $d:=\deg(p)$ admits an expression of the form $p=\det(I+x_{1}D+x_{2}A)$ with $I,D,A\in\mathbb{R}^{d\times d}$ and $D$ diagonal and $A$ symmetric.
\end{observacion}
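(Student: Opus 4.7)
The plan is to combine the Helton-Vinnikov theorem cited at the start of the chapter with a single orthogonal change of basis. First, I would apply the theorem recalled in the excerpt (the bivariate case with $p(0)=1$) to obtain a representation $p=\det(I_{d}+x_{1}A_{1}+x_{2}A_{2})$ with $A_{1},A_{2}\in\Sym_{d}(\mathbb{R})$. At this point neither matrix is assumed to have any special structure beyond symmetry; the task is to upgrade the representation so that $A_{1}$ becomes diagonal while keeping $A_{2}$ symmetric and without altering $p$.

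The key step is orthogonal diagonalization of $A_{1}$. Because $A_{1}$ is real symmetric, the spectral theorem produces an orthogonal matrix $U\in\mathbb{R}^{d\times d}$ with $U^{T}A_{1}U=D$ a real diagonal matrix. Conjugation by $U$ leaves the pencil inside the determinant manageable:
\begin{equation*}
U^{T}(I_{d}+x_{1}A_{1}+x_{2}A_{2})U=I_{d}+x_{1}D+x_{2}A,
\end{equation*}
where $A:=U^{T}A_{2}U\in\Sym_{d}(\mathbb{R})$ is still symmetric (conjugation by an orthogonal matrix preserves symmetry).

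Finally, I would invoke the multiplicativity of the determinant together with $\det(U^{T})\det(U)=\det(U^{T}U)=\det(I_{d})=1$ to conclude
\begin{equation*}
p=\det(I_{d}+x_{1}A_{1}+x_{2}A_{2})=\det(U^{T})\det(I_{d}+x_{1}A_{1}+x_{2}A_{2})\det(U)=\det(I_{d}+x_{1}D+x_{2}A),
\end{equation*}
which is the desired representation. The argument is essentially a one-line application of the spectral theorem on top of Helton-Vinnikov, so there is no real obstacle: the only care needed is to check that conjugation sends the whole linear pencil to another linear pencil of the same shape and that the normalization $p(0)=1$ is preserved (which it trivially is, as the orthogonal conjugation does not affect the coefficient $\det(I_{d})=1$).
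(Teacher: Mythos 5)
Your proposal is correct and is precisely the argument the paper intends: invoke Helton--Vinnikov to get a symmetric pencil $\det(I_{d}+x_{1}A_{1}+x_{2}A_{2})$, then orthogonally diagonalize $A_{1}$ via the spectral theorem, noting that conjugation by $U$ preserves the identity, keeps $A_{2}$ symmetric, and does not change the determinant. This is the same ``trivial orthogonal diagonalization argument'' the paper appeals to in nearby passages, so there is nothing to add.
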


Now we can consider the extension of that polynomial given by the RZ polynomial $\det(I+x_{1}D+x_{2}A+yI)$. This polynomial has a nice representation in terms of the Renegar derivatives of $p$ in a way similar to what we saw in \cite{nuijtype} for univariate polynomials. We will see this in the next proposition. However, to see this, we have first to deal with the size of the determinantal representation. This is important for degree considerations.

\begin{remark}[Degree drops]
Above, we took the care of fixing the size of the determinantal representation equal to the degree of the polynomial $p$. However, this does not have to happen for a general determinantal representation.
\end{remark}

The next definition introduces the polynomial related to $p$ and its derivatives that we will end obtaining. The advantage of this definition is that it does not produce degree drops.

\begin{definicion}[Extended polynomial]
\label{hatp}
Let $p\in\mathbb{R}[\mathbf{x}]$ be a polynomial with $d:=\deg(p)$. We define the polynomial $$\hat{p}:=\sum_{k=0}^{\infty}\frac{1}{k!}y^{k}p^{(k)}=\sum_{k=0}^{\infty}\frac{1}{k!}y^{k}p^{(k)}=\sum_{k=0}^{d}\frac{1}{k!}y^{k}p^{(k)}\in\mathbb{R}[\mathbf{x},y].$$
\end{definicion}

The next proposition will establish the connection between the just defined $\hat{p}$ and extensions of determinantal representations of size equal to the degree of the initial polynomial $p\in\mathbb{R}[\mathbf{x}]$. This gives the relation between determinantal representations and Renegar derivatives that we need. Note that the care taken to require the hypothesis of size and degree equality in the next proposition is what makes the bulk of the writing in its statement, although that is not at all an important part of the result.

\begin{remark}
This will happen more times in the future. These kind of problems with degrees will in fact annoy us until we reach our final result even though, as happens here, they appear as a consequence of the consideration of cases that might occur but that do not constitute, however, an important part of our findings in general.
\end{remark}

Thus, we should not be scared by the length of the statement in the proposition as these hypotheses are easy to fulfill. The result is the following.

\begin{proposicion}[Relation between extensions and determinants]
\label{intermedia}
Let $p\in\mathbb{R}[\mathbf{x}]$ be a determinantal (therefore RZ) polynomial with determinantal representation $$\det(I+x_{1}A_{1}+\cdots+x_{n}A_{n})$$ of size $d:=\deg(p)$ (i.e., $A_{i}\in\mathbb{R}^{d\times d}$). Then the polynomial $$\det(I+x_{1}A_{1}+\cdots+x_{n}A_{n}+yI)$$ obtained as the determinant of the extension $$I+x_{1}A_{1}+\cdots+x_{n}A_{n}+yI$$ of the LMP $$I+x_{1}A_{1}+\cdots+x_{n}A_{n}$$ inside the determinant in the given determinantal representation of $p$ can be recovered solely from $p$ and its Renegar derivatives as $\hat{p}$. That is, under the hypotheses of equality of size and degree, $$\det(I+x_{1}A_{1}+\cdots+x_{n}A_{n}+yI)=\sum_{k=0}^{\infty}\frac{1}{k!}y^{k}p^{(k)},$$ where $p^{(k)}$ represents the $k$-th Renegar derivative.
\end{proposicion}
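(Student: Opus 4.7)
The plan is to reduce the identity to a Taylor expansion of the homogenization $p^h$ around the point $x_0=1$. First I would use the determinantal representation and the hypothesis that the size $d$ of the representation equals $\deg(p)$ to compute $p^h$ explicitly. By definition, $p^h = x_0^{d} p(x_1/x_0,\dots,x_n/x_0)$, and since $\det(I + \sum_{i=1}^n (x_i/x_0)A_i)$ is a polynomial of degree at most $d$ in the $x_i/x_0$, pulling the factor of $x_0^d$ inside the determinant (one factor per row) produces
\[
p^h = \det\!\left(x_0 I + x_1 A_1 + \cdots + x_n A_n\right).
\]
The degree-equals-size hypothesis is precisely what makes this identity clean: it guarantees that the $d$ factors of $x_0$ produced by pulling $x_0^d$ into the determinant match the $d$ rows of the matrix without any leftover.

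Second, I would compare the LHS of the claim with $p^h$ at the shifted point $x_0 = 1+y$. Evaluating the formula just obtained at $x_0 = 1+y$ gives
\[
p^h(1+y, x_1,\dots,x_n) = \det\!\left((1+y)I + x_1 A_1 + \cdots + x_n A_n\right) = \det\!\left(I + x_1 A_1 + \cdots + x_n A_n + yI\right),
\]
which is exactly the determinant appearing on the LHS of the statement.

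Third, I would invoke Taylor's theorem in the variable $x_0$ around $x_0 = 1$. Since $p^h$ is a polynomial, its Taylor series at $x_0=1$ terminates and gives the exact identity
\[
p^h(1+y, x_1, \dots, x_n) = \sum_{k=0}^{\infty} \frac{y^k}{k!}\, \frac{\partial^k p^h}{\partial x_0^k}\bigg|_{x_0 = 1}.
\]
By the very definition of the Renegar derivative $p^{(k)} = \left(\frac{\partial^k}{\partial x_0^k} p^h\right)\big|_{x_0 = 1}$, the right-hand side is precisely $\hat{p} = \sum_{k=0}^{\infty} \frac{1}{k!} y^k p^{(k)}$. Combining the three identities concludes the proof.

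I do not expect any substantial obstacle here: the proof is essentially a bookkeeping exercise connecting the algebraic shift $x_0 \mapsto 1+y$ with the analytic Taylor expansion that defines the Renegar derivatives. The only point requiring care is the hypothesis $d=\deg(p)$, which is exactly what is needed so that $p^h$ has the clean determinantal form $\det(x_0 I + \sum x_i A_i)$; without it, the homogenization would carry an extra power of $x_0$ outside the determinant, and the substitution $x_0 \mapsto 1+y$ would no longer match the LMP extension inside the determinant on the LHS.
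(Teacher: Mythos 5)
Your proposal is correct and follows essentially the same route as the paper: both proofs hinge on the observation that the degree-equals-size hypothesis yields the clean formula $p^{h}=\det(x_{0}I+x_{1}A_{1}+\cdots+x_{n}A_{n})$, followed by the substitution $x_{0}\mapsto 1+y$ and a Taylor expansion of $p^{h}$ in $x_{0}$ around $1$. The only difference is cosmetic: you invoke Taylor's theorem directly, whereas the paper re-derives it in place by expanding $(1+y)^{i}$ via the binomial theorem, rewriting the binomial coefficients as derivative coefficients, and interchanging sums.
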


\begin{proof}
We begin homogenizing $p$ in order to be able to \textit{follow} the variable $y$ easily. We know that $p=\det(I+x_{1}A_{1}+\cdots+x_{n}A_{n})$. Consider instead its homogenization $p^{h}=\det(x_{0}I+x_{1}A_{1}+\cdots+x_{n}A_{n})$. Remember that $p^{h}=\sum_{i=0}^{d}x_{0}^{i}p_{d-i}$ is obtained from $p$ by completing each homogeneous part $p_{i}$ of degree $i$ of $p$ with the additional factor $x_{0}^{d-i}$ so the total product $x_{0}^{d-i}p_{i}$ reaches degree $d$ and note that the definition of the Renegar derivative and $\hat{\cdot}$ allows us to write $\det((1+y)I+x_{1}A_{1}+\cdots+x_{n}A_{n})=p^{h}(1+y,x)=\sum_{i=0}^{d}(1+y)^{i}p_{d-i}=\sum_{i=0}^{d}\sum_{k=0}^{i}\binom{i}{k}y^{k}p_{d-i}=(\sum_{i=0}^{d}\sum_{k=0}^{i}\binom{i}{k}x_{0}^{i}y^{k}p_{d-i})|_{x_{0}=1}=$\begin{gather*}
(\sum_{i=0}^{d}\sum_{k=0}^{i}\frac{i(i-1)\cdots(i-k+1)}{k!}x_{0}^{i}y^{k}p_{d-i})|_{x_{0}=1}=(\sum_{i=0}^{d}\sum_{k=0}^{i}\frac{1}{k!}\frac{\partial^{k}}{\partial x_{0}^{k}}x_{0}^{i}y^{k}p_{d-i})|_{x_{0}=1}\\=(\sum_{i=0}^{d}\sum_{k=0}^{d}\frac{1}{k!}\frac{\partial^{k}}{\partial x_{0}^{k}}x_{0}^{i}y^{k}p_{d-i})|_{x_{0}=1}=(\sum_{k=0}^{d}\frac{1}{k!}y^{k}\frac{\partial^{k}}{\partial x_{0}^{k}}\sum_{i=0}^{d}x_{0}^{i}p_{d-i})|_{x_{0}=1}=\end{gather*}$(\sum_{k=0}^{d}\frac{1}{k!}y^{k}\frac{\partial^{k}}{\partial x_{0}^{k}}p^{h})|_{x_{0}=1}=\sum_{k=0}^{d}\frac{1}{k!}y^{k}(\frac{\partial^{k}}{\partial x_{0}^{k}}p^{h})|_{x_{0}=1}=\sum_{k=0}^{d}\frac{1}{k!}y^{k}p^{(k)}=:\hat{p}$, where we completed the the sum indexed by $k$ up to $d$ adding terms that are zero when the index $k>i$ and thus we disentangled one sum from the other allowing the subsequent permutation. This finishes the proof.
\end{proof}

Something in this proof has already appeared in the literature while studying important results. We talk about this.

\begin{observacion}[Shifted hyperbolic polynomials]
The expansions of the polynomials appearing in this proof are directly connected with the \textit{shifted} hyperbolic polynomial appearing in \cite[Lemma 3.10]{main}, \cite[Theorem 2.2]{branden2011obstructions}, \cite[Theorem 3.7]{netzer2012polynomials}. We will greatly play with these shifts.
\end{observacion}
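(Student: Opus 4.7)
The statement is an observational remark rather than a theorem, so my plan is to make explicit the bibliographic connection by identifying a common underlying object: the Taylor expansion of a hyperbolic polynomial along its hyperbolicity direction. First, I would fix precise notation: for a homogeneous polynomial $p^h \in \mathbb{R}[x_0, \mathbf{x}]_d$ hyperbolic with respect to the direction $e_0 = (1, 0, \ldots, 0)$, the \emph{shifted hyperbolic polynomial} is $p^h(x_0 + t, \mathbf{x})$, viewed as a polynomial in $(t, x_0, \mathbf{x})$. I would then unwind each of the three references cited (\cite[Lemma 3.10]{main}, \cite[Theorem 2.2]{branden2011obstructions}, \cite[Theorem 3.7]{netzer2012polynomials}) and record in each case the exact form in which the shift parameter enters.

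The second step in my plan is to compute the Taylor expansion of $p^h(x_0 + t, \mathbf{x})$ with respect to $t$ around $t = 0$. Formally, this yields
\begin{equation*}
p^h(x_0 + t, \mathbf{x}) \;=\; \sum_{k=0}^{d} \frac{t^k}{k!}\, \frac{\partial^k p^h}{\partial x_0^k}(x_0, \mathbf{x}).
\end{equation*}
Setting $x_0 = 1$ and $t = y$ would then give exactly the expansion
\begin{equation*}
p^h(1 + y, \mathbf{x}) \;=\; \sum_{k=0}^{d} \frac{y^k}{k!} p^{(k)}(\mathbf{x}) \;=\; \hat{p}(\mathbf{x}, y),
\end{equation*}
identifying $\hat{p}$ as the dehomogenization of the shifted hyperbolic polynomial. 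This is exactly the computation already performed inside the proof of Proposition~\ref{intermedia}; my task here is only to highlight that it coincides, term by term, with the shifts appearing in the three cited statements.

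The third step would be to close the loop by translating back: since $\hat{p}(\mathbf{x}, y)$ was shown in Proposition~\ref{intermedia} to equal $\det(I + x_1 A_1 + \cdots + x_n A_n + y I)$ whenever a determinantal representation of the correct size exists, the observation is that each of the three references is implicitly (or explicitly) using the same $y \leftrightarrow$ ``shift along the hyperbolicity direction'' correspondence, just encoded in different notations (variable $t$, matrix shift $+tI$, or Renegar expansion).

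The main obstacle I anticipate is purely \emph{expository} rather than mathematical: each of the three cited results uses its own notational conventions for the shift (some use $x_0 \mapsto x_0 + t$, some use $M \mapsto M + tI$, and some use $p \mapsto \sum_k \frac{t^k}{k!} p^{(k)}$), and it requires careful bookkeeping to verify that after the standard identifications (homogenization, evaluation at $x_0 = 1$, and equality of degree and size in the determinantal representation) these are literally the same polynomial. Since the observation is informal, I would present this cross-reference as a short \textbf{dictionary} rather than a full proof, leaving the identifications as straightforward verifications.
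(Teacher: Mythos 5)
Your proposal is correct and follows essentially the same route the paper takes: the observation is an informal remark whose substance is precisely the identity $p^h(1+y,\mathbf{x})=\sum_{k=0}^{d}\frac{y^{k}}{k!}p^{(k)}=\hat{p}$ already established inside the proof of Proposition~\ref{intermedia}, and your ``dictionary'' between the shift conventions ($x_0\mapsto x_0+t$, $M\mapsto M+tI$, Renegar expansion) is exactly the connection the paper intends to flag. No gap; presenting it as a cross-reference rather than a formal proof is appropriate here.
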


Note that $\hat{p}\in\mathbb{R}[\mathbf{x},y]$ does not depend on the particular determinantal representation of $p\in\mathbb{R}[\mathbf{x}]$ that we begin with and therefore the next corollary is immediate from Proposition \ref{intermedia}.

\begin{corolario}[Expansion independence of the chosen determinantal representation]
Let $p\in\mathbb{R}[\mathbf{x}]$ be a determinantal polynomial with determinantal representation $\det(I+x_{1}A_{1}+\cdots+x_{n}A_{n})$ of size $d:=\deg(p)$. Then the polynomial $\hat{p}\in\mathbb{R}[\mathbf{x},y]$ has a determinantal representation given by $\det(I+x_{1}A_{1}+\cdots+x_{n}A_{n}+yI)$. In particular, in the case of two variables, for any bivariate RZ polynomial $p\in\mathbb{R}[\mathbf{x}]$, the trivariate polynomial $\hat{p}\mathbb{R}[\mathbf{x},y]$ has a determinantal representation. 
\end{corolario}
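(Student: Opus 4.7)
The plan is to exploit the fact that $\hat{p}$ is intrinsically defined from $p$ alone (via its Renegar derivatives), together with Proposition \ref{intermedia}, which already did the heavy lifting of identifying $\hat{p}$ with the determinant of an enlarged pencil.

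First, I would observe that the statement of Proposition \ref{intermedia} gives exactly the identity $\hat{p} = \det(I + x_1 A_1 + \cdots + x_n A_n + yI)$ under the hypothesis that the size of the determinantal representation matches the degree. Since the right-hand side is, by construction, a monic linear matrix polynomial determinantal representation of $\hat{p}$ in the variables $(\mathbf{x}, y)$, the first assertion follows immediately once we verify that the size-equals-degree hypothesis is satisfied, which is precisely what the statement assumes. Here the key conceptual point to emphasize is independence: although two different determinantal representations of $p$ of size $d$ would produce two different tuples $(A_1, \dots, A_n)$ and hence two different pencils $I + \sum x_i A_i + yI$, both of them must give the same determinant $\hat{p}$, because $\hat{p}$ was defined in Definition \ref{hatp} purely from $p$ and its Renegar derivatives, with no reference to any representation.

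For the bivariate particular case, the plan is to invoke the Helton--Vinnikov theorem (cited in the excerpt as \cite[Corollary 2.8]{main}): every bivariate RZ polynomial $p \in \mathbb{R}[x_1, x_2]$ with $p(0) = 1$ admits a determinantal representation $p = \det(I_d + x_1 A_1 + x_2 A_2)$ with $A_1, A_2 \in \Sym_d(\mathbb{R})$ and the size $d$ of the representation equal to $\deg(p)$. Thus the size-equals-degree hypothesis of Proposition \ref{intermedia} is automatically fulfilled in two variables, so the proposition applies and yields the trivariate determinantal representation $\hat{p} = \det(I_d + x_1 A_1 + x_2 A_2 + yI_d)$.

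There is no real obstacle here: the result is essentially a repackaging of Proposition \ref{intermedia} combined with Helton--Vinnikov. The only subtlety worth highlighting explicitly in the write-up is that the equality of size and degree in the hypothesis of Proposition \ref{intermedia} is indispensable, since otherwise adding $yI$ of a larger size would inflate the degree in $y$ beyond $d$ and the identification with $\hat{p}$ would break. In the bivariate setting, Helton--Vinnikov removes this subtlety for free; in the general $n$-variable setting, one must know a priori that $p$ is determinantal with size equal to degree, which is why the first part of the corollary is stated conditionally on this hypothesis while the second part can be stated unconditionally.
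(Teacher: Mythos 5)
Your proposal is correct and matches the paper's own (very terse) proof: invoke Proposition \ref{intermedia} for the general claim and Helton--Vinnikov for the bivariate case. Your additional paragraph spelling out why the conclusion is representation-independent (because $\hat{p}$ is defined intrinsically from $p$ via Renegar derivatives) faithfully captures the remark the paper makes just before stating the corollary, and your caution about the size-equals-degree hypothesis is exactly the point the paper flags in the remark immediately after.
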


\begin{proof}
The general part is a rewriting of Proposition \ref{intermedia} and the particular sentence is a direct consequence of applying that rewriting together with the Helton-Vinnikov theorem.
\end{proof}

As happens here, there will be problems with the homogenizing degree in the future. In this particular case, there is a problem with a possible difference between the degree of a determinantal polynomial and the size of the LMP in a given determinantal representation.

\begin{remark}[Fixing problems with homogenizing degree]
Note how we had to circumvent in our exposition of the statement of the result in Proposition \ref{intermedia} the fact that a determinantal representation of $p\in\mathbb{R}[\mathbf{x}]$ might be given by an LMP of size greater than the degree of the polynomial $p\in\mathbb{R}[\mathbf{x}]$. Not contemplating this carefully enough would have created the problem that $\hat{p}\in\mathbb{R}[\mathbf{x},y]$, as defined at the beginning of the section in Definition \ref{hatp}, is not exactly the determinant of an extension of the LMP in the given determinantal representation of $p\in\mathbb{R}[\mathbf{x}]$ but something similar due to the different homogenization that we could obtain in the course of the proof of Proposition \ref{intermedia}. We let as an easy exercise verifying this and showing how does $\hat{p}\in\mathbb{R}[\mathbf{x},y]$ relate then to the determinant of an extension of the LMP in the given determinantal representation of $p\in\mathbb{R}[\mathbf{x}]$.
\end{remark}

Sadly, we could not find more nontrivial multivariate extensions than the one presented in Proposition \ref{intermedia} (and others obtained in a similar way by considering determinantal representations of size higher than the degree $d$ of $p$, which always exist trivially whenever $p$ has a determinantal representation) for the results in \cite{nuijtype}. But, fortunately, the one in Proposition \ref{intermedia} above will be enough for our task here. For clarity, the multivariate extensions of the results in \cite{nuijtype} that we would look for are solutions to the problem described below.

\begin{problem}[Other expansions and extensions]
Suppose $p\in\mathbb{R}[\mathbf{x}]$ is an RZ polynomial and call $d=\deg(p)$. Find instances of coefficients $a_{i}$ for $i\in\{0,\dots,d\}$ such that the polynomial $\sum_{k=0}^{d}a_{k}y^{k}p^{(k)}\in\mathbb{R}[\mathbf{x},y]$ admits a determinantal representation.
\end{problem}

Anyway, continuing with the instance of this problem that we have settled in Proposition \ref{intermedia}, we proceed using \cite[Example 3.5]{main} and without caring about RZ-ness (i.e., in a just formal sense) in order to reach the chain of identities between relaxations given by $S(\hat{p})=S(\sum_{k=0}^{\infty}\frac{1}{k!}y^{k}p^{(k)})=S(\trun_{3}(\sum_{k=0}^{\infty}\frac{1}{k!}y^{k}p^{(k)}))=S(\sum_{k=0}^{3}\frac{1}{k!}y^{k}p^{(k)})$. Not even all the monomials appearing in the last expression are of importance to determine that set (as the second expression implies), but this expression guarantees that we only have to care of up to the third Renegar derivative $p^{(3)}$ in our study during this section. This proves the next proposition.

\begin{proposicion}[RZ-ness and relaxation preservation for extensions of bivariate polynomials]
\label{propoinstead}
Let $p\in\mathbb{R}[\mathbf{x}]$ be a bivariate RZ polynomial. Then we have that the polynomial $\mathbb{R}[\mathbf{x},y]\ni\hat{p}:=\sum_{k=0}^{\infty}\frac{1}{k!}y^{k}p^{(k)}$ is a trivariate RZ polynomial and $S(\hat{p})=S(\sum_{k=0}^{3}\frac{1}{k!}y^{k}p^{(k)})$.
\end{proposicion}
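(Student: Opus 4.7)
The plan is to deduce both claims by combining Helton--Vinnikov with Proposition \ref{intermedia} and with the fact, already invoked just before the statement via \cite[Example 3.5]{main}, that the relaxation $S(q)$ of a power series $q$ depends only on $\trun_{3}(q)$.

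First, to establish the RZ-ness of $\hat{p}$, I would invoke Helton--Vinnikov on the bivariate RZ polynomial $p\in\mathbb{R}[\mathbf{x}]$ (normalized to $p(0)=1$ without loss of generality) to obtain a determinantal representation $p=\det(I_{d}+x_{1}A_{1}+x_{2}A_{2})$ with $A_{1},A_{2}\in\Sym_{d}(\mathbb{R})$ and $d=\deg(p)$. Because the size of this representation matches the degree, the hypotheses of Proposition \ref{intermedia} are satisfied, and that proposition yields the identity $\hat{p}=\det(I_{d}+x_{1}A_{1}+x_{2}A_{2}+y I_{d})$. This is a determinantal polynomial whose initial matrix $I_{d}$ is positive definite and whose coefficient matrices $A_{1},A_{2},I_{d}$ are all Hermitian, so the standard example of RZ polynomials recalled at the beginning of this chapter guarantees that $\hat{p}$ is RZ in the variables $(\mathbf{x},y)$.

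Second, for the identity of spectrahedra, I would exploit the truncation principle: by construction, the entries of $M_{q}$ only involve values of $L_{q,d}$ on monomials of degree at most three, and such values depend only on the coefficients of $q$ up to degree three; hence $S(q)=S(\trun_{3}(q))$ for any polynomial $q$. I would then observe that $\hat{p}$ and $\sum_{k=0}^{3}\frac{1}{k!}y^{k}p^{(k)}$ differ only in summands indexed by $k\geq 4$, and each such summand $\frac{1}{k!}y^{k}p^{(k)}$ has $y$-degree at least four and therefore total $(\mathbf{x},y)$-degree at least four, so they are killed by $\trun_{3}$. Consequently $\trun_{3}(\hat{p})=\trun_{3}\bigl(\sum_{k=0}^{3}\frac{1}{k!}y^{k}p^{(k)}\bigr)$, and applying $S(\cdot)$ to both sides together with the truncation principle yields $S(\hat{p})=S\bigl(\sum_{k=0}^{3}\frac{1}{k!}y^{k}p^{(k)}\bigr)$.

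I do not expect any real obstacle in this argument; the substantive work has been done in Proposition \ref{intermedia}, and the only mild subtlety is ensuring that the Helton--Vinnikov representation has size exactly $d$ so that the size-equals-degree hypothesis of that proposition applies cleanly. If one tried to work instead with an arbitrary determinantal representation of larger size, the identity for $\hat{p}$ would need the homogenization adjustment alluded to in the remark following Proposition \ref{intermedia}, but Helton--Vinnikov sidesteps this issue automatically in the bivariate setting that we are in.
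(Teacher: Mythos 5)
Your argument is correct and follows essentially the same route the paper takes: the RZ-ness of $\hat{p}$ is read off from the size-$d$ Helton--Vinnikov representation $p=\det(I_{d}+x_{1}D+x_{2}A)$ together with Proposition \ref{intermedia}, which gives $\hat{p}=\det(I_{d}+x_{1}D+x_{2}A+yI_{d})$, and the spectrahedron identity is the truncation argument via \cite[Example 3.5]{main}. The one small imprecision is your wholesale claim that $S(q)=S(\trun_{3}(q))$ for any $q$: since $S(q)$ is computed with virtual degree $\deg(q)$, truncation can change that degree in general, but in this specific application both $\hat{p}$ and $\sum_{k=0}^{3}\frac{1}{k!}y^{k}p^{(k)}$ have the same degree $\deg(p)$ (each nonzero summand $y^{k}p^{(k)}$ has total degree $\deg(p)$), so the comparison is harmless here — as it is, implicitly, in the paper's own chain.
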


We are especially interested in the set $\{a\in\mathbb{R}^{2}\mid (a,0)\in S(\hat{p})\}$. Observe that a point of the form $(a,0)$ is in $S(\hat{p})$ iff $M_{\hat{p}}(a,0):=A_{0}+a_{1}A_{1}+a_{2}A_{2}\geq0$ in the notation of \cite[Definition 3.19]{main}. Observe the greatest $3\times3$ upper left submatrix of $M_{\hat{p}}(a,0)$ is, in fact, $M_{p}(a)$. Thus, $a\in S(p)$ iff such upper left submatrix verifies $M_{p}(a)\geq 0$. The monomials of $\hat{p}$ involved in the computation of $M_{\hat{p}}(a,0)$ are in fact only the monomials up to degree $3$ of the polynomial $\hat{p}$, i.e., the monomials of $\trun_{3}(\hat{p})$. However, we can go further noting that the only monomial of degree $3$ corresponding to the term $k=3$ of the sum $\sum_{k=0}^{d}\frac{1}{k!}y^{k}p^{(k)}$ defining $\hat{p}$, which is in fact $y^{3}$, does not appear in $M_{\hat{p}}(a,0)$ because it only appears in (a corner of) $A_{3}$, which is annihilated by the $0$ in the third entry (due to the restriction). Therefore, calling $\gls{overlinep}:=\sum_{k=0}^{2}\frac{1}{k!}y^{k}p^{(k)}$, we have that $M_{\hat{p}}(a,0)=M_{\overline{p}}(a,0)$. Hence, we have immediately the next fact.

\begin{hecho}[Expansion and truncation relaxation matrix identity]
\label{immediatefact}
Let $p\in\mathbb{R}[\mathbf{x}]$ be an RZ polynomial. Then $$\{a\in\mathbb{R}^{2}\mid M_{\hat{p}}(a,0)\geq0\}=\{a\in\mathbb{R}^{2}\mid M_{\overline{p}}(a,0)\geq0\}.$$
\end{hecho}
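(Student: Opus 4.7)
The plan is to show that both matrices actually coincide entry by entry, after which equality of their positivity loci is automatic. I would start by unpacking the definitions: both $M_{\hat{p}}$ and $M_{\overline{p}}$ are $4\times 4$ LMPs in the variables $(x_1,x_2,y)$, and for any power series $q\in\mathbb{R}[[x_1,x_2,y]]$ with $q(0)\neq 0$, the $(i,j)$-entry of $M_{q}(a_1,a_2,a_3)$ is the value of $L_{q}$ on the polynomial $z_iz_j+\sum_{k=1}^{3}a_k z_k z_i z_j$, where $z_0=1$, $z_1=x_1$, $z_2=x_2$ and $z_3=y$. Setting $a_3=0$ annihilates every summand carrying the factor $a_3$, so the only monomials that survive inside the $L$-form are of the form $z_iz_j$ and $x_k z_i z_j$ with $k\in\{1,2\}$.

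Next I would record the key bookkeeping observation: in every such surviving monomial the exponent of $y$ is at most $2$. Indeed $z_iz_j$ has $y$-degree $\leq 2$, with the maximum attained only when $i=j=3$, and multiplying by $x_1$ or $x_2$ contributes no additional factor of $y$. The only place where a $y^3$ could appear would be in the term $a_3 L_q(y^3)$ arising from $k=i=j=3$, and precisely this term vanishes because $a_3=0$.

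Then I would appeal to the defining identity $-\log(q(-x)/q(0))=\sum_{\alpha}\frac{1}{|\alpha|}\binom{|\alpha|}{\alpha}L_{q}(x^{\alpha})x^{\alpha}$ to argue that the value $L_{q}(x_1^{\alpha_1}x_2^{\alpha_2}y^{\alpha_3})$ depends only on the coefficients of $q$ supported on monomials of $y$-degree at most $\alpha_3$, since the $y$-degree of a product of power series factors is the sum of the $y$-degrees of the factors. By construction $\hat{p}-\overline{p}=\sum_{k\geq 3}\frac{1}{k!}y^{k}p^{(k)}$ consists exclusively of monomials of $y$-degree $\geq 3$, so $\hat{p}$ and $\overline{p}$ share every coefficient of $y$-degree $\leq 2$. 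Combining this with the preceding paragraph, $L_{\hat{p}}$ and $L_{\overline{p}}$ agree on every monomial appearing in the entries of $M_{\hat{p}}(a_1,a_2,0)$ and $M_{\overline{p}}(a_1,a_2,0)$.

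From here the conclusion is immediate: $M_{\hat{p}}(a_1,a_2,0)=M_{\overline{p}}(a_1,a_2,0)$ as numerical symmetric matrices for every $(a_1,a_2)\in\mathbb{R}^{2}$, so one is PSD if and only if the other is. The only real obstacle is the bookkeeping around the $y^3$ monomial in the $(3,3)$-entry, which I would pin down by writing out that entry explicitly to verify that the potentially discrepant term $\frac{1}{6}y^{3}p^{(3)}(0)$ is multiplied by $a_3$ and therefore killed upon restricting to $a_3=0$; the remainder of the argument is the clean degree-accounting sketched above.
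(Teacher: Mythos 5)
Your proof is correct, and it is organized differently from the paper's. The paper first uses that $M_q$ depends only on $\trun_3(q)$ (total-degree truncation), then observes that $\trun_3(\hat p)$ and $\trun_3(\overline p)$ differ only in the $y^3$ coefficient, that this coefficient can enter the relaxation only through $L_q(y^3)$ (by the divisibility fact from \cite[Example 3.5]{main}, namely that $L_q(m)$ involves only coefficients of monomials dividing $m$), and finally that $L_q(y^3)$ sits only in the coefficient matrix $A_3$, which the restriction $a_3=0$ annihilates. You instead filter directly by $y$-degree: the surviving entries of $M_q(a_1,a_2,0)$ are $L_q$-values on monomials of $y$-degree $\leq 2$, and your filtration lemma (proved straight from the $\log$ series, by observing that the $y$-degrees of the factors in $(q-1)^k$ add up) says such values see only coefficients of $q$ of $y$-degree $\leq 2$, which $\hat p$ and $\overline p$ share. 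Your lemma is slightly weaker than the divisibility fact but is exactly enough, and it has the virtue of being self-contained in the defining log identity rather than relying on the explicit low-degree formulas. The one stylistic slip is your closing sentence about "$\frac{1}{6}y^{3}p^{(3)}(0)$ being multiplied by $a_3$": that term is a coefficient of $\hat p$ rather than a summand of the matrix, and it enters only through $L_{\hat p}(y^3)=(A_3)_{33}$, which is then multiplied by $a_3$; the conclusion is right, but the phrasing conflates a coefficient of the polynomial with an entry of the pencil.
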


Thus we can prove easily in particular the next corollary.

\begin{corolario}[Expansion and truncation relaxation identity]
Let $p\in\mathbb{R}[\mathbf{x}]$ be an RZ polynomial. We have the identity $S_{(1,0,0)}(\hat{p}):=\{\lambda\in\mathbb{R}\mid \lambda(1,0,0)\in S(\hat{p})\}=\{\lambda\in\mathbb{R}\mid \lambda(1,0,0)\in S(\overline{p})\}=:S_{(1,0,0)}(\overline{p})$.
\end{corolario}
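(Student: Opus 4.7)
The plan is to specialize Fact \ref{immediatefact} to the one-parameter family of points of the form $\lambda(1,0,0)$ and observe that this specialization is essentially a tautological rewriting of the corollary.

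First, I would unpack the definitions: $\lambda \in S_{(1,0,0)}(\hat{p})$ means, by definition, that $\lambda(1,0,0) \in S(\hat{p})$, which in turn means that $M_{\hat{p}}(\lambda(1,0,0)) \succeq 0$. The key observation is that $\lambda(1,0,0) = ((\lambda,0),0)$ as an element of $\mathbb{R}^{2} \times \mathbb{R}$, so setting $a := (\lambda,0) \in \mathbb{R}^{2}$ we have $M_{\hat{p}}(\lambda(1,0,0)) = M_{\hat{p}}(a,0)$, and similarly $M_{\overline{p}}(\lambda(1,0,0)) = M_{\overline{p}}(a,0)$.

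Next, I would invoke Fact \ref{immediatefact} directly to this vector $a = (\lambda,0)$. Since the fact asserts the equality of the sets $\{a \in \mathbb{R}^{2}\mid M_{\hat{p}}(a,0) \succeq 0\}$ and $\{a \in \mathbb{R}^{2} \mid M_{\overline{p}}(a,0) \succeq 0\}$, it applies in particular to every $a$ of the form $(\lambda,0)$. Hence $M_{\hat{p}}(\lambda(1,0,0)) \succeq 0$ if and only if $M_{\overline{p}}(\lambda(1,0,0)) \succeq 0$, i.e. $\lambda(1,0,0) \in S(\hat{p})$ if and only if $\lambda(1,0,0) \in S(\overline{p})$, which is precisely the asserted identity of sets.

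There is no real obstacle here: the corollary is just the one-dimensional slice (through the first coordinate axis) of the two-dimensional set identity already established in Fact \ref{immediatefact}. The only thing to be careful about is the bookkeeping of the coordinates, namely checking that the line $\lambda \mapsto \lambda(1,0,0)$ is indeed contained in the plane $\{(a_{1},a_{2},0) \mid (a_{1},a_{2}) \in \mathbb{R}^{2}\}$ over which the fact is stated, which is immediate from the vanishing of the third coordinate.
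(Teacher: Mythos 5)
Your proof is correct and follows essentially the same route as the paper's: both arguments reduce the corollary to a direct application of Fact \ref{immediatefact} via the observation that $\lambda(1,0,0)=((\lambda,0),0)$. The only cosmetic difference is that the paper first rephrases the fact as the two-dimensional set equality $\{a\in\mathbb{R}^{2}\mid (a,0)\in S(\hat{p})\}=\{a\in\mathbb{R}^{2}\mid (a,0)\in S(\overline{p})\}$ and leaves the one-dimensional slicing implicit, whereas you slice to the line $a=(\lambda,0)$ explicitly.
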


\begin{proof}
From the discussion proving Fact \ref{immediatefact} it is also immediate that $\{a\in\mathbb{R}^{2}\mid (a,0)\in S(\hat{p})\}=\{a\in\mathbb{R}^{2}\mid (a,0)\in S(\overline{p})\}$ because $(a,0)\in S(\hat{p})$ iff $M_{\hat{p}}(a,0)\geq 0$ if and only if $M_{\overline{p}}(a,0)\geq 0$ iff $(a,0)\in S(\overline{p})$.
\end{proof}

At this point, we have to take care of RZ-ness. Note that the polynomial $\overline{p}$ might not be RZ, but it gets very close to the RZ polynomial we are most interested in: $\gls{ptilde}:=p+y p^{(1)}$.

\begin{remark}[Loss of guarantee of RZ-ness]
The main problem for keeping RZ-ness is just our lack of control over the effects of the additional term $\frac{y^2}{2}p^{(2)}$ of $\overline{p}$.
\end{remark}

Our next step will be comparing the matrix polynomials associated to $\Tilde{p}$ and $\overline{p}$ given by $M_{\Tilde{p}}(a_{1},a_{2},0), M_{\overline{p}}(a_{1},a_{2},0)\in\mathbb{R}^{4\times4}[\mathbf{a}]$ corresponding to the relaxations of these two polynomials. For this, we will use \cite[Example 3.5]{main} in order to compute the value of the entries of these matrix polynomials.

\begin{remark}[Reduction of computations in related matrix polynomials]
For a practical reason, we will only need the values of the first and last row and column of these two matrices, which reduces significantly the quantities that we need to compute (remember that these matrices are symmetric).
\end{remark}

We introduce notation to talk independently about the coefficients of a particular monomial within a polynomial. The notation is, naturally, the expected.

\begin{notacion}[Coefficients of polynomials]
For ease of writing during the next computation, we shorten the usual names of our operators and we denote $\gls{coefmp}$ the coefficient of the monomial $m$ in the polynomial $p\in\mathbb{R}[\mathbf{x}]$ and $d:=\gls{degp}=\shortdeg(\overline{p})=\shortdeg(\hat{p})=\shortdeg(\Tilde{p})$ the degree of the polynomial $p$.
\end{notacion}

We can begin to perform our computations. We will do it in a particular order so that we can maintain clarity during the process.

\begin{computacion}[Necessary values of $L$-forms of extensions]
\label{computation1}
We divide our findings in several steps depending on the first appearance of the term that we compute at each moment for clarity. We also consider the value for each polynomial $\overline{p}$ or $\Tilde{p}$ in the subindex. Observe, in particular, that $\shortcoeff(m,\overline{p})=\shortcoeff(m,\Tilde{p})$ whenever $m$ is not divisible by $y^2$ and therefore $L_{\overline{p}}(m)=L_{\Tilde{p}}(m)$ in such cases (when $\shortdeg(m)\leq3$) as, for monomials $m$ with $\shortdeg(m)\leq3$, \cite[Example 3.5]{main} shows that to compute their corresponding $L$-forms only the coefficients of terms dividing $m$ are required. On the other hand, in the following lists a recurring phenomenon will be the fact that $L_{\overline{p}}(y^2m)\neq L_{\Tilde{p}}(y^2m)$ for each monomial $m$ with $\shortdeg(m)\leq1$ and thus these computations are performed in different lines and numbered independently at the end of each list. Thus, for the independent matrix (of the relaxation) corresponding to the polynomials $\overline{p}$ or $\Tilde{p}$ we have the list corresponding to the set $$\{1,x_{1},x_{2},y,x_{1}y,x_{2}y,y^{2}\}$$ of monomials that appear for the first time on such matrix:\begin{enumerate}
    \item \begin{gather*}L_{\Tilde{p}}(1)=L_{\overline{p}}(1)=\shortdeg(\overline{p})=d,\end{gather*}
    \item \begin{gather*}L_{\Tilde{p}}(x_{1})=L_{\overline{p}}(x_{1})=\shortcoeff(x_{1},\overline{p})=\shortcoeff(x_{1},p) \mbox{\ and, analogously,}\end{gather*}
    \item \begin{gather*}L_{\Tilde{p}}(x_{2})=L_{\overline{p}}(x_{2})=\shortcoeff(x_{1},\overline{p})=\shortcoeff(x_{2},p),\end{gather*}
    \item \begin{gather*}L_{\Tilde{p}}(y)=L_{\overline{p}}(y)=\shortcoeff(y,\overline{p})=\shortcoeff(y,yp^{(1)})=\\\shortcoeff(1,p^{(1)})=p^{(1)}(0)=\shortdeg(p)=d,\end{gather*}
    \item \begin{gather*}L_{\Tilde{p}}(x_{1}y)=L_{\overline{p}}(x_{1}y)=\shortcoeff(x_{1},\overline{p})\shortcoeff(y,\overline{p})-\shortcoeff(x_{1}y,\overline{p})=\\\shortcoeff(x_{1},p)\shortcoeff(y,yp^{(1)})-\shortcoeff(x_{1}y,yp^{(1)})\\=\shortcoeff(x_{1},p)\shortcoeff(1,p^{(1)})-\shortcoeff(x_{1},p^{(1)})=\\\shortdeg(p)\shortcoeff(x_{1},p)-(\shortdeg(p)-1)\shortcoeff(x_{1},p)=\\d\shortcoeff(x_{1},p)-(d-1)\shortcoeff(x_{1},p)=\\(d-d+1)\shortcoeff(x_{1},p)=\shortcoeff(x_{1},p) \mbox{\ and, analogously,}\end{gather*}
    \item \begin{gather*}L_{\Tilde{p}}(x_{2}y)=L_{\overline{p}}(x_{2}y)=\shortcoeff(x_{2},p),\end{gather*}
    \item \begin{gather*}L_{\overline{p}}(y^{2})=\shortcoeff(y,\overline{p})^2-2\shortcoeff(y^2,\overline{p})=\\\shortcoeff(y,yp^{(1)})^2-2\shortcoeff(y^2,\frac{y^2}{2}p^{(2)})=\\\shortcoeff(1,p^{(1)})^2-2\shortcoeff(1,\frac{1}{2}p^{(2)})=\\p^{(1)}(0)^2-2\frac{p^{(2)}(0)}{2}=\shortdeg(p)^2-2\frac{1}{2}\shortdeg(p)(\shortdeg(p)-1)=\\d^2-d(d-1)=d^{2}-d^{2}+d=d, \mbox{\ but}\end{gather*}
    \item \begin{gather*}L_{\Tilde{p}}(y^2)=\shortcoeff(y,\Tilde{p})^2-2\shortcoeff(y^2,\Tilde{p})=\shortcoeff(y,yp^{(1)})^2-2\shortcoeff(y^2,0)=\\\shortcoeff(1,p^{(1)})^2-2\shortcoeff(y^2,0)=p^{(1)}(0)^2=\shortdeg(p)^2=d^2,\end{gather*}
\end{enumerate}
for the matrix coefficient of $a_{1}$ (of the relaxation) corresponding to the polynomials $\overline{p}$ or $\Tilde{p}$ we have the list corresponding to the set $$\{x_{1}^{2},x_{1}x_{2},x_{1}^{2}y,x_{1}x_{2}y,x_{1}y^{2}\}$$ of monomials that appear for the first time on such matrix:\begin{enumerate}
\item \begin{gather*}L_{\Tilde{p}}(x_{1}^2)=L_{\overline{p}}(x_{1}^{2})=\shortcoeff(x_{1},\overline{p})^{2}-2\shortcoeff(x_{1}^2,\overline{p})=\\\shortcoeff(x_{1},p)^{2}-2\shortcoeff(x_{1}^2,p),\end{gather*}
\item \begin{gather*}L_{\Tilde{p}}(x_{1}x_{2})=L_{\overline{p}}(x_{1}x_{2})=\shortcoeff(x_{1},\overline{p})\shortcoeff(x_{2},\overline{p})-\shortcoeff(x_{1}x_{2},\overline{p})=\\\shortcoeff(x_{1},p)\shortcoeff(x_{2},p)-\shortcoeff(x_{1}x_{2},p),\end{gather*}
\item \begin{gather*}L_{\Tilde{p}}(x_{1}^{2}y)=L_{\overline{p}}(x_{1}^{2}y)=\shortcoeff(x_{1}^2 y,\overline{p})-\shortcoeff(x_{1},\overline{p})\shortcoeff(x_{1}y,\overline{p})-\\\shortcoeff(y,\overline{p})\shortcoeff(x_{1}^2,\overline{p})+\shortcoeff(x_{1},\overline{p})^2\shortcoeff(y,\overline{p})=\\\shortcoeff(x_{1}^2 y,yp^{(1)})-\shortcoeff(x_{1},p)\shortcoeff(x_{1}y,yp^{(1)})-\\\shortcoeff(y,yp^{(1)})\shortcoeff(x_{1}^2,p)+\shortcoeff(x_{1},p)^2\shortcoeff(y,yp^{(1)})=\\\shortcoeff(x_{1}^2,p^{(1)})-\shortcoeff(x_{1},p)\shortcoeff(x_{1},p^{(1)})-\\\shortcoeff(1,p^{(1)})\shortcoeff(x_{1}^2,p)+\shortcoeff(x_{1},p)^2\shortcoeff(1,p^{(1)})=\\(\shortdeg(p)-2)\shortcoeff(x_{1}^2,p)-(\shortdeg(p)-1)\shortcoeff(x_{1},p)\shortcoeff(x_{1},p)-\\\shortdeg(p)\shortcoeff(x_{1}^2,p)+\shortdeg(p)\shortcoeff(x_{1},p)^2=\\(d-2)\shortcoeff(x_{1}^2 ,p)-(d-1)\shortcoeff(x_{1},p)\shortcoeff(x_{1},p)-d\shortcoeff(x_{1}^2,p)+\\d\shortcoeff(x_{1},p)^{2}=(d-2)\shortcoeff(x_{1}^2 ,p)-(d-1)\shortcoeff(x_{1},p)^{2}\\-d\shortcoeff(x_{1}^2,p)+d\shortcoeff(x_{1},p)^{2}=\\(d-2-d)\shortcoeff(x_{1}^2,p)+(d-(d-1))\shortcoeff(x_{1},p)^{2}=\\\shortcoeff(x_{1},p)^{2}-2\shortcoeff(x_{1}^2,p),\end{gather*}
\item \begin{gather*}L_{\Tilde{p}}(x_{1}x_{2} y)=L_{\overline{p}}(x_{1}x_{2} y)=\frac{1}{2}(\shortcoeff(x_{1}x_{2} y,\overline{p})-\shortcoeff(x_{1},\overline{p})\shortcoeff(x_{2} y,\overline{p})-\\\shortcoeff(x_{2},\overline{p})\shortcoeff(x_{1} y,\overline{p})-\shortcoeff(y,\overline{p})\shortcoeff(x_{1}x_{2},\overline{p})+\\2\shortcoeff(x_{1},\overline{p})\shortcoeff(x_{2},\overline{p})\shortcoeff(y,\overline{p}))=\\\frac{1}{2}(\shortcoeff(x_{1}x_{2} y,yp^{(1)})-\shortcoeff(x_{1},p)\shortcoeff(x_{2} y,yp^{(1)})-\\\shortcoeff(x_{2},p)\shortcoeff(x_{1} y,yp^{(1)})-\shortcoeff(y,yp^{(1)})\shortcoeff(x_{1}x_{2},p)+\\2\shortcoeff(x_{1},p)\shortcoeff(x_{2},p)\shortcoeff(y,yp^{(1)}))=\\\frac{1}{2}(\shortcoeff(x_{1}x_{2},p^{(1)})-\shortcoeff(x_{1},p)\shortcoeff(x_{2} ,p^{(1)})-\\\shortcoeff(x_{2},p)\shortcoeff(x_{1} ,p^{(1)})-\shortcoeff(1,p^{(1)})\shortcoeff(x_{1}x_{2},p)+\\2\shortcoeff(x_{1},p)\shortcoeff(x_{2},p)\shortcoeff(1,p^{(1)}))=\\\frac{1}{2}((\shortdeg(p)-2)\shortcoeff(x_{1}x_{2},p)-(\shortdeg(p)-1)\shortcoeff(x_{1},p)\shortcoeff(x_{2},p)-\\(\shortdeg(p)-1)\shortcoeff(x_{2},p)\shortcoeff(x_{1} ,p)-\shortdeg(p)\shortcoeff(x_{1}x_{2},p)+\\2\shortdeg(p)\shortcoeff(x_{1},p)\shortcoeff(x_{2},p))=\\\frac{1}{2}((d-2)\shortcoeff(x_{1}x_{2},p)-(d-1)\shortcoeff(x_{1},p)\shortcoeff(x_{2},p)-\\(d-1)\shortcoeff(x_{2},p)\shortcoeff(x_{1},p)-d\shortcoeff(x_{1}x_{2},p)+\\2d\shortcoeff(x_{1},p)\shortcoeff(x_{2},p))=\\\frac{1}{2}((d-2-d)\shortcoeff(x_{1}x_{2},p)+(2d-2(d-1))\shortcoeff(x_{1},p)\shortcoeff(x_{2},p))=\\\frac{1}{2}(-2\shortcoeff(x_{1}x_{2},p)+2\shortcoeff(x_{1},p)\shortcoeff(x_{2},p))=\\\shortcoeff(x_{1},p)\shortcoeff(x_{2},p)-\shortcoeff(x_{1}x_{2},p),\end{gather*}
\item \begin{gather*}L_{\overline{p}}(x_{1}y^{2})=\shortcoeff(x_{1}y^{2},\overline{p})-\shortcoeff(y,\overline{p})\shortcoeff(x_{1}y,\overline{p})-\\\shortcoeff(x_{1},\overline{p})\shortcoeff(y^{2},\overline{p})+\shortcoeff(y,\overline{p})^{2}\shortcoeff(x_{1},\overline{p})\\=\shortcoeff(x_{1}y^{2},\frac{y^2}{2}p^{(2)})-\shortcoeff(y,yp^{(1)})\shortcoeff(x_{1}y,yp^{(1)})-\\\shortcoeff(x_{1},p)\shortcoeff(y^{2},\frac{y^2}{2}p^{(2)})+\\\shortcoeff(y,yp^{(1)})^{2}\shortcoeff(x_{1},p)=\shortcoeff(x_{1},\frac{p^{(2)}}{2})-\shortcoeff(1,p^{(1)})\shortcoeff(x_{1},p^{(1)})-\\\shortcoeff(x_{1},p)\shortcoeff(1,\frac{p^{(2)}}{2})+\shortcoeff(1,p^{(1)})^{2}\shortcoeff(x_{1},p)\\=\frac{1}{2}\shortcoeff(x_{1},p^{(2)})-\shortcoeff(1,p^{(1)})\shortcoeff(x_{1},p^{(1)})-\\\frac{1}{2}\shortcoeff(x_{1},p)\shortcoeff(1,p^{(2)})+\shortcoeff(1,p^{(1)})^{2}\shortcoeff(x_{1},p)=\\\frac{1}{2}(\shortdeg(p)-1)(\shortdeg(p)-2)\shortcoeff(x_{1},p)-\shortdeg(p)(\shortdeg(p)-1)\shortcoeff(x_{1},p)-\\\frac{1}{2}\shortcoeff(x_{1},p)\shortdeg(p)(\shortdeg(p)-1)+\shortdeg(p)^{2}\shortcoeff(x_{1},p)=\\\frac{1}{2}(d-1)(d-2)\shortcoeff(x_{1},p)-d(d-1)\shortcoeff(x_{1},p)-\\\frac{1}{2}\shortcoeff(x_{1},p)d(d-1)+d^{2}\shortcoeff(x_{1},p)=\\(\frac{1}{2}(d-1)(d-2)-d(d-1)-\frac{1}{2}d(d-1)+d^{2})\shortcoeff(x_{1},p)=\\\frac{1}{2}(d^{2}-d-2d+2-2d^{2}+2d-d^{2}+d+2d^{2})\shortcoeff(x_{1},p)=\\\frac{1}{2}2\shortcoeff(x_{1},p)=\shortcoeff(x_{1},p), \mbox{\ but}\end{gather*}
\item \begin{gather*}L_{\Tilde{p}}(x_{1}y^{2})=\shortcoeff(x_{1}y^{2},\Tilde{p})-\shortcoeff(y,\Tilde{p})\shortcoeff(x_{1}y,\Tilde{p})-\\\shortcoeff(x_{1},\Tilde{p})\shortcoeff(y^{2},\Tilde{p})+\shortcoeff(y,\Tilde{p})^{2}\shortcoeff(x_{1},\Tilde{p})=\\\shortcoeff(x_{1}y^{2},0)-\shortcoeff(y,yp^{(1)})\shortcoeff(x_{1}y,yp^{(1)})-\\\shortcoeff(x_{1},p)\shortcoeff(y^{2},0)+\shortcoeff(y,yp^{(1)})^{2}\shortcoeff(x_{1},p)=\\-\shortcoeff(1,p^{(1)})\shortcoeff(x_{1},p^{(1)})+\shortcoeff(1,p^{(1)})^{2}\shortcoeff(x_{1},p)=\\-\shortdeg(p)(\shortdeg(p)-1)\shortcoeff(x_{1},p)+\shortdeg(p)^{2}\shortcoeff(x_{1},p)=\\-d(d-1)\shortcoeff(x_{1},p)+d^{2}\shortcoeff(x_{1},p)=(-d^{2}+d+d^{2})\shortcoeff(x_{1},p)=\\d\shortcoeff(x_{1},p),\end{gather*}
\end{enumerate}
and, finally, similarly by analogy to the corresponding case above, for the matrix coefficient of $a_{2}$ (of the relaxation) corresponding to the polynomials $\overline{p}$ or $\Tilde{p}$ we have the list corresponding to the set $$\{x_{2}^{2},x_{2}^{2}y,x_{2}y^{2}\}$$ of monomials that appear for the first time on such matrix:\begin{enumerate}
\item \begin{gather*}L_{\Tilde{p}}(x_{2}^2)=L_{\overline{p}}(x_{2}^2)=\shortcoeff(x_{2},p)^{2}-2\shortcoeff(x_{2}^2,p),\end{gather*}
\item \begin{gather*}L_{\Tilde{p}}(x_{2}^2 y)=L_{\overline{p}}(x_{2}^2 y)=\\\shortcoeff(x_{2},p)^{2}-2\shortcoeff(x_{2}^2,p),\end{gather*}
\item \begin{gather*}L_{\overline{p}}(x_{2}y^{2})=\shortcoeff(x_{2},p), \mbox{\ but}\end{gather*}
\item \begin{gather*}L_{\Tilde{p}}(x_{2}y^{2})=d\shortcoeff(x_{2},p).\end{gather*}
\end{enumerate}
\end{computacion}

We give a compact form of the identities found in our computations. This compact form will evince a nice pattern between the evaluations of the corresponding $L$-forms that we are interested in (that is, evaluations of the $L$-forms $L_{\overline{p}}$ and $L_{\Tilde{p}}$ corresponding to the polynomials $\overline{p}$ and $\Tilde{p}$ over monomials up to degree $3$ except $y^{3}$).

\begin{observacion}[Compact form of the computed values]
\label{observation1}
The internal identities between the elements of the lists above show that we can compress the quantities obtained in the computations above into the shorter list: \begin{enumerate}
    \item $L_{\overline{p}}(1)=L_{\Tilde{p}}(1)=L_{\overline{p}}(y)=L_{\Tilde{p}}(y)=L_{\overline{p}}(y^{2})=d$,
    \item $L_{\overline{p}}(x_{i}y^{2})=L_{\overline{p}}(x_{i}y)=L_{\Tilde{p}}(x_{i}y)=L_{\overline{p}}(x_{i})=L_{\Tilde{p}}(x_{i})=\shortcoeff(x_{i},p)$,
    \item $L_{\overline{p}}(x_{i}^{2})=L_{\Tilde{p}}(x_{i}^{2})=L_{\overline{p}}(x_{i}^{2}y)=L_{\Tilde{p}}(x_{i}^{2}y)=\shortcoeff(x_{i},p)^{2}-2\shortcoeff(x_{i}^{2},p)$,
    \item $L_{\overline{p}}(x_{1}x_{2})=L_{\Tilde{p}}(x_{1}x_{2})=L_{\overline{p}}(x_{1}x_{2}y)=L_{\Tilde{p}}(x_{1}x_{2}y)=\shortcoeff(x_{1},p)\shortcoeff(x_{2},p)-\shortcoeff(x_{1}x_{2},p)$.
\end{enumerate} This list shows in a nutshell that, whenever the monomials being evaluated have degree at most $3$, we have the chain of identities $L_{\overline{p}}(m)=L_{\Tilde{p}}(m)=L_{\overline{p}}(my)=L_{\Tilde{p}}(my)=L_{\overline{p}}(my^{2})$ for all monomials $m$ in the set of variables $\{x_{1},x_{2}\}$ and where we do not consider evaluations of the corresponding $L$-forms over monomials of degree greater than $3$ whenever they appear. In particular, only the evaluations of the form $L_{\Tilde{p}}(my^{2})$ with $m$ a monomial in the set of variables $\{x_{1},x_{2}\}$ and $\deg(m)\leq1$ present a different behaviour characterized by the identity $L_{\Tilde{p}}(my^{2})=dL_{\overline{p}}(my^{2})$. Finally, observe that for all the monomials $m$ involved in our findings and future computations we have that $L_{\overline{p}}(m)=L_{\hat{p}}(m)$ because in the computation of $L_{\hat{p}}(m)$ only are involved the coefficients in $\hat{p}$ of monomials dividing $m$ as showed by \cite[Example 3.5]{main} all these monomials are in $\overline{p}$ with the same coefficients as in $\hat{p}$.
\end{observacion}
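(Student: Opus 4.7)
The plan is to verify the compact form by directly matching each assertion against the explicit values obtained in Computation \ref{computation1}, organizing them into four equivalence classes determined by the monomial $m\in\mathbb{R}[x_1,x_2]$ that multiplies a power of $y$. No new computation is needed; the task is essentially bookkeeping plus a few observations about which coefficients of $\overline p$ and $\widetilde p$ feed into the formulas from \cite[Example 3.5]{main}.

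First, I would recall the key structural fact: $\widetilde p = p + y p^{(1)}$ differs from $\overline p = p + y p^{(1)} + \tfrac{y^2}{2} p^{(2)}$ only in the coefficient of monomials divisible by $y^2$. Hence, if the formula for $L_{\overline p}(m)$ (or $L_{\widetilde p}(m)$) coming from \cite[Example 3.5]{main} only involves coefficients of monomials not divisible by $y^2$, then $L_{\overline p}(m)=L_{\widetilde p}(m)$. This immediately takes care of the identities $L_{\overline p}(m)=L_{\widetilde p}(m)$ and $L_{\overline p}(my)=L_{\widetilde p}(my)$ for $m\in\{1,x_1,x_2,x_1^2,x_2^2,x_1x_2\}$, since the relevant divisors of $my$ have $y$-degree at most $1$.

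Second, I would group the values from Computation \ref{computation1} into the four claimed equalities. For item (1) it suffices to check that the constant $d$ appears in the computations of $L_{\overline p}(1)$, $L_{\widetilde p}(1)$, $L_{\overline p}(y)$, $L_{\widetilde p}(y)$ and $L_{\overline p}(y^2)$; these are exactly the numbered entries (1), (4) and (7) of the first list. For item (2) I would read off from entries (2), (3), (5), (6) of the first list and entry (5) of the second list that all five evaluations produce $\shortcoeff(x_i,p)$. For items (3) and (4) I would invoke entries (1)--(4) of the second list and entries (1), (2) of the third list to recover the claimed common values $\shortcoeff(x_i,p)^2-2\,\shortcoeff(x_i^2,p)$ and $\shortcoeff(x_1,p)\shortcoeff(x_2,p)-\shortcoeff(x_1x_2,p)$. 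The only remaining case where $L_{\overline p}$ and $L_{\widetilde p}$ disagree is when the monomial contains a $y^2$ factor, and there entries (6), (8) of the first list together with entries (5), (6) of the second list and entries (3), (4) of the third list show the uniform relation $L_{\widetilde p}(my^2)=d\,L_{\overline p}(my^2)$ with $\deg(m)\le 1$.

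Third, I would justify the final remark $L_{\overline p}(m)=L_{\widehat p}(m)$ for every monomial $m$ appearing in our tables. By \cite[Example 3.5]{main}, the value of $L_{\widehat p}(m)$ is a polynomial expression in the coefficients $\shortcoeff(m',\widehat p)$ for $m'$ dividing $m$; since $\deg(m)\le 3$ and $\widehat p = \overline p + \sum_{k\ge 3}\tfrac{1}{k!}y^k p^{(k)}$, every such divisor $m'$ has the same coefficient in $\overline p$ and in $\widehat p$.

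The only possible obstacle is purely clerical: making sure that in each grouping the case analysis on degrees of $m$ (degree $0$, $1$ or $2$ in the variables $x_1,x_2$) covers exactly the monomials that were tabulated, without including any degree-$4$ monomial such as $x_1^2 y^2$ that would go beyond the scope of the mold matrix at hand. Once that indexing is pinned down, the observation follows entirely by inspection of Computation \ref{computation1}.
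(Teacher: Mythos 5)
Your proposal is correct and takes essentially the same approach as the paper: the observation is verified by direct inspection of the values tabulated in Computation~\ref{computation1}, together with the structural remark (already made in that computation) that $\shortcoeff(m,\overline p)=\shortcoeff(m,\widetilde p)$ whenever $m$ is not divisible by $y^2$, and the fact that $L_{\hat p}$ depends only on coefficients of divisors of the input monomial. One small clerical slip: for the exceptional $y^2$ case you cite entries (6), (8) of the first list, but the relevant pair is $L_{\overline p}(y^2)$ and $L_{\widetilde p}(y^2)$, which are entries (7) and (8) respectively; entry (6) is $L(x_2 y)$.
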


Observation \ref{observation1} above has a clear consequence for the structure of the matrix polynomials $M_{\Tilde{p}}(a_{1},a_{2},0),M_{\overline{p}}(a_{1},a_{2},0)\in\mathbb{R}^{4\times4}[a]$ corresponding to the relaxations of the two polynomials $\Tilde{p}$ and $\overline{p}$ when the last variable $a_{3}$ is fixed to be $0$. In fact, this shows that these matrix polynomials share many of their entries and some internal relations between the entries of each one of these matrix polynomials itself.

\begin{remark}[Overlap of matrices of the relaxation]
\label{remarkmatrix}
In particular, the identities observed above tell us that the matrix polynomials associated to $\Tilde{p}$ and $\overline{p}$ given by $M_{\Tilde{p}}(a_{1},a_{2},0),M_{\overline{p}}(a_{1},a_{2},0)\in\mathbb{R}^{4\times4}[\mathbf{a}]$ share all their entries except the one on the bottom right corner. Moreover, $M_{\overline{p}}(a_{1},a_{2},0)$ has equal first and last row (and then first and last column by symmetry).
\end{remark}

We will see better what we said in Remark \ref{remarkmatrix} above expanding our matrices. Using our computations, we are ready to see the matrix polynomials $M_{\Tilde{p}}(a_{1},a_{2},0),M_{\overline{p}}(a_{1},a_{2},0)\in\mathbb{R}^{4\times4}[\mathbf{a}]$ corresponding to the relaxations of the two polynomials $\Tilde{p}$ and $\overline{p}$ when the last variable $a_{3}$ is fixed to be $0$. We already said in Remark \ref{remarkmatrix} above that these matrices have a very nice (and to a great extent shared) structure that will help us to get the result that we need.

\setlength{\emergencystretch}{3em}%
\begin{computacion}[Matrices of the relaxation of the expansions]
\label{estructurematrices}
As we said, we will see that we do not need to write these matrices fully but just the first and last rows and columns, i.e., the entries in the geometric border of such matrices. Note, moreover, that the computations above guarantee that the upper left largest $3\times3$ submatrices of $M_{\Tilde{p}}(a_{1},a_{2},0)$ and $M_{\overline{p}}(a_{1},a_{2},0)$ are equal because monomials of the form $y^{2}m$ only appear in the lower right smallest ($1\times1$) submatrices of $M_{\Tilde{p}}(a_{1},a_{2},0)$ and $M_{\overline{p}}(a_{1},a_{2},0)$ and the corresponding $L$-forms for $\overline{p}$ or $\Tilde{p}$ are different only for such monomials, as we saw in the mentioned computations. Therefore, in particular, the entries not in the corresponding geometric border (the $2\times2$ matrices at the corresponding core) coincide for both matrices $M_{\Tilde{p}}(a_{1},a_{2},0),M_{\overline{p}}(a_{1},a_{2},0)$ and that symmetric submatrix at the ``core" of these matrices will be simply called $U=(u_{ij})_{i,j=2}^{3}\in\mathbb{R}^{2\times2}[a]$. We will concentrate the expression of $U$ directly in the first (independent) term of each expansion of the corresponding matrix polynomials $M_{\Tilde{p}}(a_{1},a_{2},0)$ or $M_{\overline{p}}(a_{1},a_{2},0)$ for simplicity. Now we proceed with such expansions. First, $M_{\overline{p}}(a_{1},a_{2},0)=$\begin{gather*}
    \begin{pmatrix}
    d & C(x_{1},p) & C(x_{2},p) & d\\
    C(x_{1},p) & u_{22} & u_{23} & C(x_{1},p)\\
    C(x_{2},p) & u_{23} & u_{33} & C(x_{2},p)\\
    d & C(x_{1},p) & C(x_{2},p) & d
    \end{pmatrix}+\\\tiny{a_{1}\begin{pmatrix}
    C(x_{1},p) & C(x_{1},p)^{2}-2C(x_{1}^{2},p) & C(x_{1},p)C(x_{2},p)-C(x_{1}x_{2},p) & C(x_{1},p)\\
    C(x_{1},p)^{2}-2C(x_{1}^{2},p) & 0 & 0 & C(x_{1},p)^{2}-2C(x_{1}^{2},p)\\
   C(x_{1},p)C(x_{2},p)-C(x_{1}x_{2},p) & 0 & 0 & C(x_{1},p)C(x_{2},p)-C(x_{1}x_{2},p)\\
    C(x_{1},p) & C(x_{1},p)^{2}-2C(x_{1}^{2},p) & C(x_{1},p)C(x_{2},p)-C(x_{1}x_{2},p) & C(x_{1},p)\end{pmatrix}}+\\\tiny{a_{2}\begin{pmatrix}
   C(x_{2},p) & C(x_{1},p)C(x_{2},p)-C(x_{1}x_{2},p) & C(x_{2},p)^{2}-2C(x_{2}^{2},p) & C(x_{2},p)\\
    C(x_{1},p)C(x_{2},p)-C(x_{1}x_{2},p) & 0 & 0 & C(x_{1},p)C(x_{2},p)-C(x_{1}x_{2},p)\\
    C(x_{2},p)^{2}-2C(x_{2}^{2},p) & 0 & 0 & C(x_{2},p)^{2}-2C(x_{2}^{2},p)\\
    C(x_{2},p) & C(x_{1},p)C(x_{2},p)-C(x_{1}x_{2},p) & C(x_{2},p)^{2}-2C(x_{2}^{2},p) & C(x_{2},p)
    \end{pmatrix}}.
    \end{gather*} Observe that, similarly, we have also $M_{\Tilde{p}}(a_{1},a_{2},0)=$\begin{gather*}
    \begin{pmatrix}
    d & C(x_{1},p) & C(x_{2},p) & d\\
    C(x_{1},p) & u_{22} & u_{23} & C(x_{1},p)\\
    C(x_{2},p) & u_{23} & u_{33} & C(x_{2},p)\\
    d & C(x_{1},p) & C(x_{2},p) & d^{2}
    \end{pmatrix}+\\\tiny{a_{1}\begin{pmatrix}
    C(x_{1},p) & C(x_{1},p)^{2}-2C(x_{1}^{2},p) & C(x_{1},p)C(x_{2},p)-C(x_{1}x_{2},p) & C(x_{1},p)\\
    C(x_{1},p)^{2}-2C(x_{1}^{2},p) & 0 & 0 & C(x_{1},p)^{2}-2C(x_{1}^{2},p)\\
   C(x_{1},p)C(x_{2},p)-C(x_{1}x_{2},p) & 0 & 0 & C(x_{1},p)C(x_{2},p)-C(x_{1}x_{2},p)\\
    C(x_{1},p) & C(x_{1},p)^{2}-2C(x_{1}^{2},p) & C(x_{1},p)C(x_{2},p)-C(x_{1}x_{2},p) & dC(x_{1},p)\end{pmatrix}}+\\\tiny{a_{2}\begin{pmatrix}
   C(x_{2},p) & C(x_{1},p)C(x_{2},p)-C(x_{1}x_{2},p) & C(x_{2},p)^{2}-2C(x_{2}^{2},p) & C(x_{2},p)\\
    C(x_{1},p)C(x_{2},p)-C(x_{1}x_{2},p) & 0 & 0 & C(x_{1},p)C(x_{2},p)-C(x_{1}x_{2},p)\\
    C(x_{2},p)^{2}-2C(x_{2}^{2},p) & 0 & 0 & C(x_{2},p)^{2}-2C(x_{2}^{2},p)\\
    C(x_{2},p) & C(x_{1},p)C(x_{2},p)-C(x_{1}x_{2},p) & C(x_{2},p)^{2}-2C(x_{2}^{2},p) & dC(x_{2},p)
    \end{pmatrix}}.
\end{gather*}
\end{computacion}
\setlength{\emergencystretch}{0em}%

\begin{notacion}
For a matrix $A$, we call $A_{ij}$ the submatrix (not the cofactor) obtained from $A$ after deleting the row $i$ and the column $j$. For ease of writing, we denote also $|N|=\det(N)$ the determinant of the square matrix $N$.
\end{notacion}

Therefore, we denote the cofactor corresponding to the entry $(i,j)$ by $|A_{ij}|$. As happened in the case of Lemma \ref{lemmafactor}, we can try to preview our next step using determinants. This shows us what to expect geometrically. This preview will show that the line defined by the top left entry of our matrices plays a special role, but at the same time it will not help because of the triviality of the determinant of the first matrix polynomial. We see this in more detail.

\begin{warning}[Determinantal preview of the geometry]
\label{bigproblemwithdeterminant}
Fix $a\in\mathbb{R}^{2}$ and, for ease of writing, call $M:=M_{\Tilde{p}}(a_{1},a_{2},0)$. Thus, we call the top left $3\times3$ shared submatrix of both matrix polynomials $M_{44}$. Now we develop the determinant of $M$ by the last column so we obtain $\det(M)=$\begin{gather*}
    (-1)^{1+4}(d+a_{1}\shortcoeff(x_{1},p)+a_{2}\shortcoeff(x_{2},p))|M_{14}|+(-1)^{2+4}l|M_{24}|+\\(-1)^{3+4}m|M_{34}|+(-1)^{4+4}d(d+a_{1}\shortcoeff(x_{1},p)+a_{2}\shortcoeff(x_{2},p))|M_{44}|=\\(-1)^{1+4}(d+a_{1}\shortcoeff(x_{1},p)+a_{2}\shortcoeff(x_{2},p))(-1)^{2}|M_{44}|+0+\\0+(-1)^{4+4}d(d+a_{1}\shortcoeff(x_{1},p)+a_{2}\shortcoeff(x_{2},p))|M_{44}|=\\d(d+a_{1}\shortcoeff(x_{1},p)+a_{2}\shortcoeff(x_{2},p))|M_{44}|-\\(d+a_{1}\shortcoeff(x_{1},p)+a_{2}\shortcoeff(x_{2},p))|M_{44}|=\\(d-1)(d+a_{1}\shortcoeff(x_{1},p)+a_{2}\shortcoeff(x_{2},p))|M_{44}|,
\end{gather*} where for clarity we preferred not to expand the entries $l,m$ corresponding to their obvious cofactors because they have to disappear anyway (because of repetition of rows $|M_{24}|=0=|M_{34}|$) and we performed several permutations of rows to allow for the appearance of the uniform cofactor $|M_{44}|$ showing that the matrix $M_{14}$ is matrix $M_{44}$ after we permute first and last rows first and then second and third rows; that is why we wrote $(-1)^{2}$ as a factor whose exponent accounts for the two permutations performed. However the first matrix polynomial $M_{\overline{p}}(a_{1},a_{2},0)$ has determinant $\det(M_{\overline{p}}(a_{1},a_{2},0))=0$ because the last and first row are equal. Thus the analysis of the corresponding determinants does not give us a clear picture of what is happening (even when the polynomial $p$ is chosed so that $\Tilde{p}$ produces a relaxation whose matrix at the origin is PD) but points out clearly to the line $(d+a_{1}\shortcoeff(x_{1},p)+a_{2}\shortcoeff(x_{2},p))$ (corresponding to the top left entry of our matrix polynomials) that appears as a factor of the determinant $\det(M_{\Tilde{p}}(a_{1},a_{2},0))$ of the matrix polynomial $M_{\Tilde{p}}(a_{1},a_{2},0)$ defining the relaxation of $\Tilde{p}$ when the third coordinate is set equal to $0$.
\end{warning}

Hence, contrary to the partial proof depicted in Warning \ref{warningfactor} before we effectively proved fully Lemma \ref{lemmafactor} without using determinants, the analysis of the determinants here is not helpful at all (it does not even give a partial proof) because one of our matrix polynomials will always and trivially have determinant $0$, i.e., $\det(M_{\overline{p}})=0$. Therefore we need alternative ways of dealing with our problem of relating the sets where these two matrix polynomials are PSD. That alternative way will fundamentally use the structure of these matrices together with Sylvester's criterion through the next lemma. We recall Sylvester's criterion for completeness.

\begin{hecho}[Sylvester's criterion]
\label{Sylves}
Let $A$ be a Hermitian matrix. $A$ is PD iff all its leading principal minors are positive. $A$ is PSD iff all its principal minors are nonnegative.
\end{hecho}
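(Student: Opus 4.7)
The plan is to prove the two characterizations in sequence, starting with the positive definite case since it is the cleaner of the two. For the forward direction of the PD statement, I would observe that if $A$ is PD, then for each $k \leq d$ the leading $k \times k$ principal submatrix $A_k$ represents the restriction of the quadratic form $x^* A x$ to the coordinate subspace spanned by $e_1, \dots, e_k$, hence is itself PD; since the determinant of a PD Hermitian matrix is the product of its (positive) eigenvalues, the leading principal minors are all positive. For the converse, I would induct on $d$. Assuming all leading principal minors of $A$ are positive, the inductive hypothesis makes $A_{d-1}$ PD, and writing $A$ in block form
\[
A = \begin{pmatrix} A_{d-1} & b \\ b^* & c \end{pmatrix},
\]
the Schur complement $c - b^* A_{d-1}^{-1} b$ equals $\det(A)/\det(A_{d-1}) > 0$. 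Combined with $A_{d-1}$ being PD, this standard Schur complement criterion forces $A$ to be PD.

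For the PSD statement, the forward direction proceeds exactly as above: any principal submatrix of a PSD matrix is itself PSD (restrict the form to the corresponding coordinate subspace), hence has nonnegative determinant. The converse is the delicate part, because \emph{all} principal minors, not merely the leading ones, are genuinely needed: the example $\operatorname{diag}(0,-1)$ has leading principal minors $0$ and $0$ yet fails to be PSD. The plan here is a perturbation argument. For each $\varepsilon > 0$, expand the leading $k \times k$ minor of $A + \varepsilon I$ using the identity
\[
\det(A_k + \varepsilon I_k) = \sum_{j=0}^{k} \varepsilon^{k-j} e_j(A_k),
\]
where $e_j(A_k)$ denotes the sum of all $j \times j$ principal minors of $A_k$. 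Each $e_j(A_k)$ is nonnegative by hypothesis, and the leading term $\varepsilon^k$ is strictly positive, so every leading principal minor of $A + \varepsilon I$ is strictly positive. By the PD case just established, $A + \varepsilon I$ is PD for every $\varepsilon > 0$; letting $\varepsilon \to 0^+$ and invoking closedness of the PSD cone yields that $A$ is PSD.

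The main obstacle I anticipate is precisely the combinatorial identity for $\det(A_k + \varepsilon I_k)$: one must recognize that the coefficient of $\varepsilon^{k-j}$ in the characteristic-polynomial-type expansion is exactly the sum of $j \times j$ principal minors, which in turn is why the hypothesis of nonnegativity of \emph{all} principal minors (rather than just the leading ones) is both necessary and sufficient to make the perturbation trick work. Once this identity is accepted, both directions of both equivalences follow from the block arguments and a limit, with no further subtlety.
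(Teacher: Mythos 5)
Your proof is correct and complete: the PD characterization via Schur-complement induction, the forward direction of the PSD case by restriction of the quadratic form, and the PSD converse via the expansion $\det(A_k+\varepsilon I_k)=\sum_{j=0}^{k}\varepsilon^{k-j}e_j(A_k)$ combined with closedness of the PSD cone are all sound, and you correctly identify, with the $\operatorname{diag}(0,-1)$ example, why \emph{all} principal minors rather than just the leading ones are needed in the semidefinite case. Note, however, that the paper does not prove this statement at all: it is labelled a Fact and merely recalled for completeness before being used in Lemma \ref{lemmaclave}, so there is no argument in the paper to compare yours against.
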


Lamentably, we work in the second case. Here, more minors have to be considered.

\begin{remark}[Many principal minors]
Note the many more principal minors needed to be considered in the last case of the Fact \ref{Sylves} above, not just the leading ones. In particular, we will find ourselves in that last and less compact case because we will work with the PSD-ness of our matrices.
\end{remark}

Now we can use Sylvester's criterion in order to prove a lemma about matrices having the form of those found in the Computation \ref{estructurematrices} above. This lemma will overcome the difficulties found in Warning \ref{bigproblemwithdeterminant}.

\begin{lema}[PDS equivalence under big enough scaling of a corner]
\label{lemmaclave}
Fix $k\geq1$. Suppose that we have two symmetric matrices in $\Sym_{4}(\mathbb{R})$ of the form \begin{gather*}
    A:=\begin{pmatrix}
    a_{11} & a_{12} & a_{13} & a_{11}\\
    a_{12} & a_{22} & a_{23} & a_{12}\\
    a_{13} & a_{23} & a_{33} & a_{13}\\
    a_{11} & a_{12} & a_{13} & a_{11}\\
    \end{pmatrix},  B:=\begin{pmatrix}
    a_{11} & a_{12} & a_{13} & a_{11}\\
    a_{12} & a_{22} & a_{23} & a_{12}\\
    a_{13} & a_{23} & a_{33} & a_{13}\\
    a_{11} & a_{12} & a_{13} & ka_{11}\\
    \end{pmatrix}.
\end{gather*} Then $A$ is PSD iff $B$ is PSD.
\end{lema}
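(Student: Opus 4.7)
The plan is to exploit the fact that $A$ has its first and fourth rows (and columns) equal, so $A$ effectively encodes only a $3\times 3$ quadratic form, while $B$ differs from $A$ only in a single diagonal slot whose sign is controlled by $a_{11}$.

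First I would observe the pointwise identity
\begin{equation*}
v^{T}Bv \;=\; v^{T}Av + (k-1)\,a_{11}\,v_{4}^{2}
\end{equation*}
for every $v=(v_1,v_2,v_3,v_4)^{T}\in\mathbb{R}^{4}$, which is immediate from the definitions of $A$ and $B$. Next I would call $\tilde{A}\in\mathrm{Sym}_{3}(\mathbb{R})$ the common top-left $3\times 3$ principal submatrix of $A$ and $B$, namely
\begin{equation*}
\tilde{A}=\begin{pmatrix} a_{11} & a_{12} & a_{13}\\ a_{12} & a_{22} & a_{23}\\ a_{13} & a_{23} & a_{33}\end{pmatrix},
\end{equation*}
and introduce the linear map $P\colon\mathbb{R}^{4}\to\mathbb{R}^{3}$ given by $Pv:=(v_{1}+v_{4},v_{2},v_{3})^{T}$. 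The coincidence of the first and fourth rows/columns of $A$ is exactly the statement $A=P^{T}\tilde{A}P$, which I would check by expanding $(Pv)^{T}\tilde{A}(Pv)$ and matching the resulting quadratic form entry by entry against $v^{T}Av$.

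Once these two observations are in hand, both directions of the equivalence follow cleanly. For $A$ PSD $\Rightarrow$ $B$ PSD, note that PSD-ness of $A$ forces the diagonal entry $a_{11}\geq 0$, so the displayed identity together with $k\geq 1$ gives $v^{T}Bv\geq v^{T}Av\geq 0$. For $B$ PSD $\Rightarrow$ $A$ PSD, since $\tilde{A}$ is a principal submatrix of $B$, PSD-ness of $B$ implies $\tilde{A}$ is PSD, and then $A=P^{T}\tilde{A}P$ is PSD as a congruence image of a PSD matrix.

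There is no real obstacle here: the only non-obvious input is recognising the factorisation $A=P^{T}\tilde{A}P$, which transparently captures the ``hidden'' $3\times 3$ nature of $A$. I would not expect to need Sylvester's criterion or any determinantal gymnastics (as Warning~\ref{bigproblemwithdeterminant} already warned us, determinants are degenerate here since $\det(A)=0$ by the repeated row).
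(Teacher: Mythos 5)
Your proposal is correct, and it is genuinely different from — and cleaner than — the paper's proof of the backward implication. Both proofs handle the forward direction the same way (PSD-ness of $A$ forces $a_{11}\geq 0$, and $B-A$ adds $(k-1)a_{11}v_{4}^{2}\geq 0$ to the quadratic form), but for $B$ PSD $\Rightarrow$ $A$ PSD the paper invokes Sylvester's criterion and then explicitly enumerates every principal minor of $A$ that touches the last row and column ($\binom{3}{0}+\binom{3}{1}+\binom{3}{2}+\binom{3}{3}$ of them), showing each one vanishes by a repeated row or reduces, via chains of simultaneous row/column permutations, to an already-covered principal minor of the shared $3\times 3$ block. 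Your factorisation $A=P^{T}\tilde{A}P$ with $Pv=(v_{1}+v_{4},v_{2},v_{3})^{T}$ collapses all of that into one line: $\tilde{A}$ is a principal submatrix of $B$ hence PSD, and $v^{T}Av=(Pv)^{T}\tilde{A}(Pv)\geq 0$. Your route buys brevity and makes the structural reason for the lemma (that $A$ is a rank-deficient lift of a $3\times 3$ form) visible; the paper's route is more pedestrian but uses only the most elementary test for PSD-ness. One small terminological caveat: $P$ is $3\times 4$, so $A=P^{T}\tilde{A}P$ is not a congruence in the strict (invertible) sense — but the positivity transfer $\tilde{A}\succeq 0\Rightarrow P^{T}\tilde{A}P\succeq 0$ holds for any $P$, which is all you use, so the argument stands.
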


\begin{proof}
$\Rightarrow.$ Suppose that $A$ is PSD. Then, using Sylvester's criterion, we know in particular that the $1\times1$ principal minor on the top left corner of $A$ is nonnegative, i.e., $a_{11}\geq0$. Therefore $B$ is obtained from the PSD matrix $A$ by increasing a nonnegative term of the diagonal of such PSD matrix $A$ by a factor $k\geq1$, which immediately implies that $B$ is also PSD.

$\Leftarrow.$ Suppose that $B$ is PSD. Then, using Sylvester's criterion, we know in particular that all the principal minors involving only the $3\times3$ top left submatrix of $B$ are nonnegative. This submatrix is shared with $A$ and thus we can cover already a big portion of the principal minors of $A$ guaranteeing that these are nonnegative. We do this because we want to use the other direction of the Sylvester's criterion in order to prove that, under the assumption that $B$ is PSD, $A$ is also PSD. For this, we already covered many principal $3\times 3$ minors of $A$ but we still have to analyze (guaranteeing nonnegativity) all the principal minors of $A$ involving the last row and column of $A$. There is $\binom{3}{0}=1$ additional principal $1\times1$ minor of $A$ involving the last row and column, which is the minor given by the bottom right corner $a_{11}$, but this is nonnegative because it coincides with the already covered (shared with $B$) nonnegative top left minor, which is also $a_{11}$. There are additionally $\binom{3}{1}=3$ principal $2\times2$ minors involving the last row and column of $A$ given by \begin{gather*}
\begin{vmatrix}
a_{11} & a_{11}\\
a_{11} & a_{11}
\end{vmatrix}=0,\\ \begin{vmatrix}
a_{22} & a_{12}\\
a_{12} & a_{11}
\end{vmatrix}=-\begin{vmatrix}
a_{12} & a_{11}\\
a_{22} & a_{12}
\end{vmatrix}=\begin{vmatrix}
a_{11} & a_{12}\\
a_{12} & a_{22}
\end{vmatrix},\\ \begin{vmatrix}
a_{33} & a_{13}\\
a_{13} & a_{11}
\end{vmatrix}=-\begin{vmatrix}
a_{13} & a_{11}\\
a_{33} & a_{13}
\end{vmatrix}=\begin{vmatrix}
a_{11} & a_{13}\\
a_{13} & a_{33}
\end{vmatrix}
\end{gather*} that, as we see, can be transformed into already covered and then nonnegative principal $2\times2$ minors via elementary transformations (in particular, we just need $2$-chains of elementary transformations given by a row permutation followed by its analogous column permuation and thus we reach an already known principal $2\times2$ minor of $B$ hence nonnegative by hypothesis). Similarly, there are additionally $\binom{3}{2}=3$ principal $3\times3$ minors involving the last row and column of $A$ given by \begin{gather*}
\begin{vmatrix}
a_{11} & a_{12} & a_{11}\\
a_{12} & a_{22} & a_{12}\\
a_{11} & a_{12} & a_{11}
\end{vmatrix}=0,\\\begin{vmatrix}
a_{11} & a_{13} & a_{11}\\
a_{13} & a_{33} & a_{13}\\
a_{11} & a_{13} & a_{11}
\end{vmatrix}=0,\\
\begin{vmatrix}
a_{22} & a_{23} & a_{12}\\
a_{23} & a_{33} & a_{13}\\
a_{12} & a_{13} & a_{11}
\end{vmatrix}=-\begin{vmatrix}
a_{12} & a_{13} & a_{11}\\
a_{23} & a_{33} & a_{13}\\
a_{22} & a_{23} & a_{12}
\end{vmatrix}=\\\begin{vmatrix}
a_{11} & a_{13} & a_{12}\\
a_{13} & a_{33} & a_{23}\\
a_{12} & a_{23} & a_{22}
\end{vmatrix}=-\begin{vmatrix}
a_{11} & a_{13} & a_{12}\\
a_{12} & a_{23} & a_{22}\\
a_{13} & a_{33} & a_{23}
\end{vmatrix}=\begin{vmatrix}
a_{11} & a_{12} & a_{13}\\
a_{12} & a_{22} & a_{23}\\
a_{13} & a_{23} & a_{33}
\end{vmatrix}
\end{gather*} that, as we see, can be transformed into already covered and then nonnegative principal $3\times3$ minors via elementary transformations (in particular, we just need $4$-chains of elementary transformations given by row permutations intercalated with its analogous column permutation until we reach an already known principal $3\times3$ minor of $B$ hence nonnegative by hypothesis). Finally, there lacks only $\binom{3}{3}=1$ principal $4\times4$ minor of $A$ involving necessarily the last row and column, which is in fact the (principal) minor given by $\det(A)$, but this is nonnegative because trivially $\det(A)=0$. Now we only need to use one direction of Sylvester's criterion to prove that $A$ is PSD, which establishes this implication and thus finishes the proof of this lemma.
\end{proof}

Using Lemma \ref{lemmaclave} above we can finally prove the identity between the subsets of $\mathbb{R}^{2}$ where each of these two matrix polynomials are PSD. In particular, remember that Computation \ref{estructurematrices} showed us that the structure of these matrix polynomials (at each $a\in\mathbb{R}^{2}$) coincides precisely with the structure of the symmetric matrices related in the mentioned Lemma \ref{lemmaclave}.

\begin{corolario}[Identity of restrictions]
\label{goodidentity}
We have $\{a\in\mathbb{R}^{2}\mid M_{\hat{p}}(a,0)\geq0\}=\{a\in\mathbb{R}^{2}\mid M_{\Tilde{p}}(a,0)\geq0\}$.
\end{corolario}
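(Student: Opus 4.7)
The plan is to combine Fact \ref{immediatefact} with a pointwise application of Lemma \ref{lemmaclave} to the matrices described in Computation \ref{estructurematrices}. Fact \ref{immediatefact} already reduces the problem to showing $\{a\in\mathbb{R}^{2}\mid M_{\overline{p}}(a,0)\geq0\}=\{a\in\mathbb{R}^{2}\mid M_{\Tilde{p}}(a,0)\geq0\}$, so the remaining work is entirely about comparing these two matrix polynomials entry by entry at each fixed $a\in\mathbb{R}^{2}$.

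First, I would fix an arbitrary $a=(a_{1},a_{2})\in\mathbb{R}^{2}$ and read off from Computation \ref{estructurematrices} the structures of $M_{\overline{p}}(a,0)$ and $M_{\Tilde{p}}(a,0)$. The key observation, already highlighted in Remark \ref{remarkmatrix} and Observation \ref{observation1}, is that these two $4\times 4$ symmetric matrices agree in every entry except the bottom-right corner, and that moreover the first row of $M_{\overline{p}}(a,0)$ equals its last row. Writing $v:=d+a_{1}\shortcoeff(x_{1},p)+a_{2}\shortcoeff(x_{2},p)$, the top-left entry of both matrices is $v$, the bottom-right entry of $M_{\overline{p}}(a,0)$ is also $v$, while the bottom-right entry of $M_{\Tilde{p}}(a,0)$ is $d^{2}+a_{1}\,d\shortcoeff(x_{1},p)+a_{2}\,d\shortcoeff(x_{2},p)=dv$. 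Thus $M_{\overline{p}}(a,0)$ and $M_{\Tilde{p}}(a,0)$ fit exactly into the template of Lemma \ref{lemmaclave} with $a_{11}=v$ and scaling factor $k=d$.

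Since $p\in\mathbb{R}_{\mbox{RZ}}[\mathbf{x}]$ has $\deg(p)>0$ by our standing convention on $\mathbb{R}_{\mbox{RZ}}[\mathbf{x}]$, we have $d=\deg(p)\geq1$, so the hypothesis $k\geq1$ of Lemma \ref{lemmaclave} is met. Applying that lemma pointwise gives $M_{\overline{p}}(a,0)\succeq0$ iff $M_{\Tilde{p}}(a,0)\succeq0$ for every $a\in\mathbb{R}^{2}$, which yields $\{a\in\mathbb{R}^{2}\mid M_{\overline{p}}(a,0)\geq0\}=\{a\in\mathbb{R}^{2}\mid M_{\Tilde{p}}(a,0)\geq0\}$. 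Chaining this with Fact \ref{immediatefact} closes the proof.

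The only subtlety — and therefore what I would expect to be the main obstacle were the groundwork not already in place — is verifying that the bottom-right entry really scales by exactly the factor $d$ between $M_{\overline{p}}(a,0)$ and $M_{\Tilde{p}}(a,0)$, uniformly in $a$. This is precisely the content of the three identities $L_{\Tilde{p}}(y^{2})=dL_{\overline{p}}(y^{2})$ and $L_{\Tilde{p}}(x_{i}y^{2})=dL_{\overline{p}}(x_{i}y^{2})$ for $i\in\{1,2\}$, all of which are already recorded in Observation \ref{observation1}. Since the compact form there also guarantees the coincidence of every other entry, nothing further is required beyond quoting these inputs and invoking Lemma \ref{lemmaclave}.
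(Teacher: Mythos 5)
Your proof is correct and follows essentially the same route as the paper's: reduce via the $\hat{p}$–$\overline{p}$ identity at $y=0$ (recorded in Fact \ref{immediatefact}) and then apply Lemma \ref{lemmaclave} pointwise to $M_{\overline{p}}(a,0)$ and $M_{\Tilde{p}}(a,0)$. You simply spell out more explicitly than the paper does the verification that the matrices fit the lemma's template with $a_{11}=v$ and $k=d\geq1$, all of which is indeed already available from Computation \ref{estructurematrices} and Observation \ref{observation1}.
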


\begin{proof}
Fix $a\in\mathbb{R}^{2}$ arbitrary. Then Lemma \ref{lemmaclave} says that $M_{\hat{p}}=M_{\overline{p}}(a,0)$ is PSD iff $M_{\Tilde{p}}(a,0)$ is PSD. Therefore, the identity follows immediately. 
\end{proof}

Now that we know that the sets in $\mathbb{R}^{2}$ characterized by the PSD-ness of these two matrices are the same, we can proceed analogously as in Section \ref{section1}. But, instead of doing it with $\Tilde{p}$, which in the current bivariate scenario has not a nice known determinantal representation derived from a Helton-Vinnikov determinantal representation of $p$ contrary to what happened in Section \ref{section1} for univariate polynomials, we will work with $\hat{p}$, which has, as we already saw, a nice determinantal representation derived from the determinantal representation of $p$ in the bivariate case.

\begin{remark}[Proceeding under unavailability of determinantal representations]
We can precisely do this because Corollary \ref{goodidentity} guarantees us that the sets defined by the relaxations of $\hat{p}$ and $\Tilde{p}$ are the same. Thus, we proceed with the proof of the analogous to Proposition \ref{analogyprop} of Section \ref{section1}.
\end{remark}

The proof of the new result has indeed some advantages with respect to its previous analogue. In particular, some of the matrices involved are simpler to understand.

\begin{observacion}[Advantages of the next proof]
The proof of this proposition (which is analogous to Proposition \ref{analogyprop}) is very similar to the proof of the analogous one, and even simpler because now we do not have to deal with the all ones matrix $E$ but only with the identity matrix.
\end{observacion}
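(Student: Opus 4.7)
The plan is to mirror the structure of the proof of Proposition \ref{analogyprop}, exploiting the fact that the determinantal representation of $\hat{p}$ supplied by Proposition \ref{intermedia} adjoins the identity matrix $I$ for the new variable $y$, rather than the all-ones matrix $E$ that appeared in the univariate situation. The target identity is
\begin{equation*}
S(p) = \{a \in \mathbb{R}^{2} \mid (a, 0) \in S(\hat{p})\},
\end{equation*}
from which Corollary \ref{tildein-Pre} (concerning $\Tilde{p} = p + y p^{(1)}$) will follow at once by combining it with Corollary \ref{goodidentity}.

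By normalizing we may assume $p(0) = 1$; Helton-Vinnikov then supplies a symmetric representation $p = \det(I_{d} + x_{1}A_{1} + x_{2}A_{2})$ of size $d = \deg(p)$, and Proposition \ref{intermedia} immediately lifts this to $\hat{p} = \det(I_{d} + x_{1}A_{1} + x_{2}A_{2} + y I_{d})$. Combining the identity $v^{T} M_{q}(c) v = L_{q}((v_{0} + v^{T}x)^{2}(1 + c^{T}x))$ from \cite[Lemma 3.22]{main} with the trace formulas for $L_{q}$ on monomials of degree at most three valid whenever $q$ is determinantal (Proposition \ref{lformtraces}), one reads off the trace descriptions
\begin{gather*}
S(p) = \{a \in \mathbb{R}^{2} \mid \forall M \in U_{p},\ \tr(M^{2}(I + a_{1}A_{1} + a_{2}A_{2})) \geq 0\}, \\
S(\hat{p}) = \{(a, b) \in \mathbb{R}^{3} \mid \forall M \in U_{\hat{p}},\ \tr(M^{2}(I + a_{1}A_{1} + a_{2}A_{2} + bI)) \geq 0\},
\end{gather*}
with $U_{p} := \linspan\{I, A_{1}, A_{2}\}$ and $U_{\hat{p}} := \linspan\{I, A_{1}, A_{2}, I\}$.

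The crucial observation is the trivial identity $U_{\hat{p}} = U_{p}$, which holds because the extra matrix attached to $y$ already lies in $U_{p}$. Setting $b = 0$ in the characterization of $S(\hat{p})$ then returns verbatim the family of trace inequalities that cuts out $S(p)$, giving both inclusions at once; in particular, no trace-level decomposition analogous to $(M + v_{2}E)^{2} = (M + v_{2}I)^{2} + (v_{2}\sqrt{d-1}\, I)^{2}$ used in the proof of Proposition \ref{analogyprop} is required. The only nontrivial input is Helton-Vinnikov, which furnishes a size-degree determinantal representation in two variables; once this is in hand there is no further obstacle, and this is precisely the simplification anticipated in the preceding observation.
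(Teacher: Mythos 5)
Your argument matches the paper's proof of Proposition \ref{propRenegar} in all essentials: both reduce to the trace characterization of the relaxation for determinantal polynomials, identify the key fact that $U_{\hat{p}} = U_{p}$ (since the new coefficient matrix for $y$ is $I$, which already lies in the span), and conclude by setting $b = 0$, noting that this avoids the decomposition $(M + v_{2}E)^{2} = (M + v_{2}I)^{2} + (v_{2}\sqrt{d-1}\,I)^{2}$ that was needed with the all-ones matrix. The only cosmetic difference is that you cite \cite[Lemma 3.22]{main} together with the trace formulas whereas the paper invokes \cite[Proposition 3.33]{main} directly, but the resulting characterization and the core observation are the same.
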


We can now then proceed with the result and its proof. Recall first that we have the determinantal representations $p=\det(I+x_{1}D+x_{2}A)$ and now $r:=\hat{p}=\sum_{k=0}^{\infty}\frac{1}{k!}y^{k}p^{(k)}=\det(I+x_{1}D+x_{2}A+yI).$

\begin{proposicion}[Relaxation commutes with restriction for extension]
\label{propRenegar}
Let $p\in\mathbb{R}[\mathbf{x}]$ be a bivariate RZ polynomial and denote $r:=\sum_{k=0}^{\infty}\frac{1}{k!}y^{k}p^{(k)}\in\mathbb{R}[x,y]$. Then we have the identity $$S(p)=\{a\in\mathbb{R}^{2}\mid (a,0)\in S(r)\}.$$
\end{proposicion}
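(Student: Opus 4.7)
The plan is to use the machinery already built up in this section, so that only a short block-matrix observation remains. My strategy would be to show that for every $a\in\mathbb{R}^{2}$, the matrix $M_{r}(a,0)$ is PSD if and only if $M_{p}(a)$ is PSD.

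First, since $r=\hat{p}$ by definition, I would apply Fact \ref{immediatefact} to obtain $M_{r}(a,0)=M_{\hat{p}}(a,0)=M_{\overline{p}}(a,0)$ for every $a\in\mathbb{R}^{2}$. This reduces the problem to showing that $M_{\overline{p}}(a,0)$ is PSD exactly on $S(p)$. Next, I would exploit the structure of $M_{\overline{p}}(a,0)$ made explicit in Computation \ref{estructurematrices}: it is a $4\times 4$ symmetric matrix whose upper-left $3\times 3$ submatrix equals $M_{p}(a)$, whose fourth row coincides with its first row and whose fourth column coincides with its first column, and whose $(4,4)$ entry equals the $(1,1)$ entry.

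For any vector $v=(v_{1},v_{2},v_{3},v_{4})^{T}\in\mathbb{R}^{4}$, setting $w:=(v_{1}+v_{4},v_{2},v_{3})^{T}\in\mathbb{R}^{3}$, a direct row-by-row expansion of $v^{T}M_{\overline{p}}(a,0)v$ uses that rows $1$ and $4$ of $M_{\overline{p}}(a,0)$ are equal and that the $(4,4)$ entry equals the $(1,1)$ entry, and it collapses the quadratic form to $w^{T}M_{p}(a)w$. As $v$ ranges over $\mathbb{R}^{4}$ (in particular, varying $v_{1},v_{2},v_{3}$ freely with $v_{4}=0$), the vector $w$ ranges over all of $\mathbb{R}^{3}$, so $M_{\overline{p}}(a,0)$ is PSD if and only if $M_{p}(a)$ is PSD. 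Chaining the equivalences, $a\in\{a\in\mathbb{R}^{2}\mid (a,0)\in S(r)\}$ iff $M_{\overline{p}}(a,0)$ is PSD iff $M_{p}(a)$ is PSD iff $a\in S(p)$, which yields the claim.

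The hard part is not in this final proposition itself but rather in everything that feeds into it: Proposition \ref{intermedia} tying $\hat{p}$ to an extended determinantal representation, Proposition \ref{propoinstead} dropping the expansion to degree $3$, the long bookkeeping in Computation \ref{computation1} verifying that the upper-left $3\times 3$ block of $M_{\overline{p}}(a,0)$ is really $M_{p}(a)$, and Fact \ref{immediatefact} identifying $M_{\hat{p}}(a,0)$ with $M_{\overline{p}}(a,0)$. Given that scaffolding, the remaining step is just the block observation above. An alternative, longer route would route through Corollary \ref{goodidentity} and Lemma \ref{lemmaclave} in order to work with $M_{\widetilde{p}}$ instead (and then to imitate Proposition \ref{analogyprop} via tracial certificates), but the direct argument through the row-repetition structure of $M_{\overline{p}}(a,0)$ avoids having to invoke Lemma \ref{lemmaclave} and is cleaner to present.
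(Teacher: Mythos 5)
Your proof is correct, but it takes a genuinely different route from the paper's. The paper proves Proposition \ref{propRenegar} through the trace characterization of \cite[Proposition 3.33]{main}: it writes $p=\det(I+x_1 D + x_2 A)$ (Helton--Vinnikov) and $r=\det(I+x_1 D + x_2 A + yI)$ (Proposition \ref{intermedia}), and then observes that the added coefficient matrix is $I$, which is already in the span, so $U_r=U_p$; the trace conditions defining $S(p)$ and the $b=0$ slice of $S(r)$ then coincide verbatim. You instead bypass the trace machinery entirely and work directly with the LMP: you use the matrix identity $M_{r}(a,0)=M_{\hat{p}}(a,0)=M_{\overline{p}}(a,0)$ (note that this is the identity stated in the discussion that precedes Fact \ref{immediatefact}; Fact \ref{immediatefact} itself only records the weaker statement about the positivity sets), then use the row/column-duplication structure of $M_{\overline{p}}(a,0)$ from Computation \ref{estructurematrices} to collapse the quadratic form $v^{T}M_{\overline{p}}(a,0)v$ to $w^{T}M_{p}(a)w$ with $w=(v_1+v_4,v_2,v_3)$. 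The computation checks: expanding $v^{T}M_{\overline{p}}(a,0)v$ using row 4 = row 1, column 4 = column 1, and entry $(4,4)=(1,1)$ gives exactly $w^{T}M_{p}(a)w$, and since $w$ ranges over all of $\mathbb{R}^{3}$ the two PSD conditions are equivalent. What you gain is an argument that, for this final step, needs neither Helton--Vinnikov nor the perfect-set/trace framework of \cite[Proposition 3.33]{main}; you are also right that this is a strictly cleaner special case of Lemma \ref{lemmaclave}, since at $k=1$ the duplicated corner permits a direct quadratic-form substitution that the more general $k\geq1$ case (which the paper handles via Sylvester's criterion) does not. What the paper's approach buys is brevity once its trace framework is in place and a structural parallel with the analogous Proposition \ref{analogyprop}, where the matrix $E$ is not already in the span and the argument is genuinely different; the paper's own remark following the proposition notes the trace alternative but not your more elementary matrix-level one.
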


\begin{proof}
Using \cite[Proposition 3.33]{main}, we write $S(p)=\{a\in\mathbb{R}^{2}\mid\forall M\in U_{p}:\tr(M^{2}(I+a_{1}D+a_{2}A))\geq0\}$ and $S(r)=\{(a,b)\in\mathbb{R}^{3}\mid\forall M\in U_{r}:\tr(M^{2}(I+a_{1}D+a_{2}A+bI))\geq0\}$ with $U_{p}:=\{v_{0}I+v_{1}D+v_{2}A\mid (v_{0},v_{1},v_{2})\in\mathbb{R}^{3}\}= U_{r}:=\{v_{0}I+v_{1}D+v_{2}A+v_{3}I\mid (v_{0},v_{1},v_{2},v_{3})\in\mathbb{R}^{4}\}$. Note that now $U_{r}=U_{p}$, contrary to the case in the analogous Proposition \ref{analogyprop}. These observations lead to direct proof of the identity now because it means that $S(r)=\{(a,b)\in\mathbb{R}^{3}\mid\forall M\in U_{p}:\tr(M^{2}(I+a_{1}D+a_{2}A+bI))\geq0\}$ and therefore our set of interest $\{a\in\mathbb{R}^{2}\mid (a,0)\in S(r)\}=\{a\in\mathbb{R}^{2}\mid \forall M\in U_{p}:\tr(M^{2}(I+a_{1}D+a_{2}A))\geq0\}=:S(p),$ which finishes the proof.
\end{proof}

The proof of this result was happily much more straightforward than the proof of Proposition \ref{analogyprop}. Note that the argument using the trace also works here but it is less direct, clear and elegant than the proof we just gave.

\begin{remark}[Another path in previous proof]
Clearly, we could have proceeded with the proof analogously because the inclusion $\supseteq$ follows from \cite[Lemma 3.26 (c)]{main} in the same way. Similarly, for $\subseteq$, fix $a\in S(p)$, then $\tr(M^{2}(I+a_{1}D+a_{2}A))\geq0$ for all $M\in U_{p}=U_{r}$ (this equality holds in this new case). We see now immediately that $(a,0)\in S(r)$ because for all $M\in U_{r}=U_{p}$ we have then $\tr(M^{2}(I+a_{1}D+a_{2}A))\geq0$ because $a\in S(p)$ and $U_{p}=U_{r}$. This establishes the last inclusion and finishes our proof.
\end{remark}

Combining our last two results Corollary \ref{goodidentity} and Proposition \ref{propRenegar} we obtain immediately the next corollary about $\Tilde{p}$.

\begin{corolario}[Relaxation commutes with restriction for truncated extension]
\label{tildein}
Let $p\in\mathbb{R}[x]$ be a bivariate RZ polynomial and denote now $s:=p+yp^{(1)}\in\mathbb{R}[x,y]$. Then we have the identity $$S(p)=\{a\in\mathbb{R}^{2}\mid (a,0)\in S(s)\}.$$
\end{corolario}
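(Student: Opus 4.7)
The plan is to assemble the statement as an immediate consequence of the two results that precede it in this section, since the heavy lifting (the nontrivial structural matrix argument via Sylvester's criterion and the trace-based identification of the admissible direction cone $U_{r}$ with $U_{p}$) has already been carried out in Corollary \ref{goodidentity} and Proposition \ref{propRenegar}. First I would observe that $s$ in the statement is precisely the polynomial $\Tilde{p} = p + yp^{(1)}$ introduced earlier, so that what is to be shown is the identity $S(p) = \{a\in\mathbb{R}^{2}\mid (a,0)\in S(\Tilde{p})\}$. The strategy is then to route from $S(p)$ to $\{a\mid(a,0)\in S(\hat{p})\}$ via Proposition \ref{propRenegar}, and from there to $\{a\mid(a,0)\in S(\Tilde{p})\}$ via Corollary \ref{goodidentity}.

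Concretely, the first step would be to invoke Proposition \ref{propRenegar} applied to $r=\hat{p}$, which gives the equality
\[
S(p)=\{a\in\mathbb{R}^{2}\mid (a,0)\in S(\hat{p})\}.
\]
Next I would rewrite each side in terms of the relaxation matrix: by definition $(a,0)\in S(\hat{p})$ if and only if $M_{\hat{p}}(a,0)\succeq 0$, so the previous equality reads $S(p)=\{a\in\mathbb{R}^{2}\mid M_{\hat{p}}(a,0)\succeq 0\}$. I would then apply Corollary \ref{goodidentity} to replace this last set by $\{a\in\mathbb{R}^{2}\mid M_{\Tilde{p}}(a,0)\succeq 0\}$, which unfolds back to $\{a\in\mathbb{R}^{2}\mid (a,0)\in S(\Tilde{p})\}=\{a\in\mathbb{R}^{2}\mid (a,0)\in S(s)\}$.

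There is essentially no obstacle, since both inputs are already established; the only subtlety worth flagging in the write-up is that $\Tilde{p}$ is not a priori known to be RZ (its relaxation matrix is defined through the truncation formulas independently of RZ-ness), so the meaningful content is really that the PSD locus of $M_{\Tilde{p}}(\cdot,0)$ reproduces $S(p)$ despite the loss of control over the quadratic-in-$y$ term. The conclusion would be a single short chain of equalities presenting the three identifications above, with a parenthetical remark noting that this is what makes the variable extension $p\mapsto p+yp^{(1)}$ fail to refine the relaxation along the hyperplane $y=0$, motivating the search in subsequent chapters for more informative multivariate extensions.
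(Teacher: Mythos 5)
Your proposal is correct and follows the same route as the paper's proof: invoke Proposition \ref{propRenegar} to get $S(p)=\{a\in\mathbb{R}^2\mid(a,0)\in S(\hat{p})\}$, unfold the definition of the relaxation to $M_{\hat{p}}(a,0)\succeq 0$, and then swap $\hat{p}$ for $\Tilde{p}=s$ via Corollary \ref{goodidentity}. The observation that $\Tilde{p}$ need not be RZ a priori is a nice addition but matches the spirit of the surrounding discussion.
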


\begin{proof}
Denote as before $r:=\sum_{k=0}^{\infty}\frac{1}{k!}y^{k}p^{(k)}\in\mathbb{R}[x,y]$. Proposition \ref{propRenegar} implies that $S(p)=\{a\in\mathbb{R}^{2}\mid (a,0)\in S(r)\}$, but $S(r):=\{(a,b)\in\mathbb{R}^{3}\mid M_{r}(a,b)\geq0\}$ so $(a,0)\in S(r)$ iff $M_{r}(a,0)\geq0$ iff $M_{s}(a,0)\geq0$ iff $(a,0)\in S(s)$, where the last two equivalences come from Corollary \ref{goodidentity} and the definition of the relaxation respectively. Therefore, we can change $r$ by $s$ in the initial identity coming from Proposition \ref{propRenegar} to get the one that we want now $S(p)=\{a\in\mathbb{R}^{2}\mid (a,0)\in S(s)\}$, which finishes this proof.
\end{proof}

Finally, we collect a few ideas about the connections between the results we obtained in this section. This will clarify our next steps in this chapter. 

\begin{remark}[Summary of the results in this section]
We were able to prove that for bivariate polynomials $S(p)=\{a\in\mathbb{R}^{2}\mid (a,0)\in S(\Tilde{p})\}$. The next step is combining Corollary \ref{tildein}, which is valid only for bivariate polynomials $p$, with the very last result Corollary \ref{corolariofinal} in Section \ref{sectioninter} above, which allows us to change perspective from a multivariate approach to a bivariate one via an infinite intersection.
\end{remark}

In the sense of the remark above, Corollaries \ref{corolariofinal} and \ref{tildein} fit together perfectly and let us deduce that the operation of adding to an RZ polynomial $p$ its first Renegar derivative multiplied by a new constant (producing the RZ polynomial $\Tilde{p}$) does not help us to improve the relaxation defined in \cite[Definition 3.19]{main}. We dedicate the next section to prove this. We finish leaving an easy exercise that can help us to understand how the relaxation does not change through some easy transformations.

\begin{ejercicio}[Trivial variations of the extension]
A good ending exercise is to see that everything we did on this section can be directly extended to trivial variations of the extension of the results in \cite{nuijtype} that we found at the beginning. That is, we can define $\hat{p}_{s}:=\hat{p}|_{y=sy}$ and similarly for $\overline{p}_{s}$ and $\Tilde{p}_{s}$ for any $0\neq s\in\mathbb{R}$ and see that the relaxations defined by these are related in the same way as we saw here for the particular case $s=1$ and that, in fact, they do not change when $s$ does.
\end{ejercicio}

Now we have finally all the tools we need to prove in the next section that the operation we are studying $p\mapsto p+yp^{(1)}$ lets the restriction of the relaxation unchanged. Thus we will see that using the interlacer $p^{(1)}$ obtained through the Renegar derivative together with a Nuij type transformation preserving RZ-ness while adding an additional variable does not help us. The fundamental importance of this result for this thesis is that it will naturally lead us towards the task of finding out what happens for similar Nuij type transformation using more involved interlacers that actually contain more information about the polynomials or the families these polynomials are embedded in. That is, this result about the invariance of the relaxation for this particular transformation will automatically open the door for us to consider similar but less direct transformations that will actually produce a change in the respective restriction of the relaxation, which is the topic of future parts of this dissertation.

\section{Renegar derivative does not help}

In this section, we put together the two final results from the two previous sections in order to prove that the relaxation does not improve considering $\Tilde{p}$ instead of $p$. Thanks to these results, the proof of this fact will be almost immediate. However, some notation is needed before continuing.

\begin{notacion}[Bivariate restriction determinantal representation]\label{notacionres}
Let $p\in\mathbb{R}[x]$ be an RZ polynomial. For each pair of vectors $a,b\in\mathbb{R}^{n}$ we can form the bivariate restriction of the RZ polynomial $p$ to the subspace $\linspan{a,b}$ that we denoted already $p_{a,b}$. As $p_{a,b}$ is clearly a bivariate RZ polynomial, we can apply Helton-Vinnikov to find a pair of matrices such that $$p_{a,b}(x_{1},x_{2})=\det(I+x_{1}D_{a,b}+x_{2}A_{a,b})$$ with $D_{a,b}$ diagonal and $A_{a,b}$ symmetric.
\end{notacion}

Here the problem with the degree dropping might appear again and therefore we have to circumvent it order to fit all our results together correctly. There is an advantage here with respect to what we had to do before when we talk about the relaxation or the Renegar derivative of a restriction: the solution in this case will be much more natural than what we had to in Remark \ref{remarkdegree} because any polynomial with a determinantal representation of size $s\in\mathbb{N}$ admits determinantal representations of size higher than $s$ in a trivial way constructed just by enlarging the coefficient matrices with zero entries. We devote the next remark to fix this helpful convention.

\begin{remark}[Determinantal degree drop]\label{remarksize}
Let $p\in\mathbb{R}[x]$ be an RZ polynomial and $a,b\in\mathbb{R}^{n}$ a pair of vectors. Independently of the degree of $p_{a,b}$, we will take the matrices $A_{a,b}$ and $D_{a,b}$ introduced in Notation \ref{notacionres} above of size $d:=\deg(p)$ so the representation has always size $d$ (even when we have to artificially increase the size of these matrices by adding zero entries).
\end{remark}

This remark therefore forces us to make the next assumptions about our choices in order to work smoothly through our proofs. This will greatly simplify our work without loss of generality.

\begin{convencion}[Fixing degree]
Let $p\in\mathbb{R}[x]$ be an RZ polynomial. We fix a choice of these matrices verifying the requirement in Remark \ref{remarksize} for each pair $a,b\in\mathbb{R}^{n}$. Moreover, we remember how we defined in the previous section the relevant operators whose bidimensional or bivariate particular cases we will also use here.
\end{convencion}

Now we look in particular to the bivariate case of the main operator studied in sections above.

\begin{particularizacion}[Hat operators in bivariate case]
Let $p\in\mathbb{R}[x]$ be a polynomial. Remember that we defined in Section \ref{sectionL} above the operators between RZ polynomials $$\hat{p_{a,b}}(x_{1},x_{2}):=\sum_{k=0}^{\infty}\frac{1}{k!}y^{k}(p_{a,b})^{(k)}$$ and $$\Tilde{p_{a,b}}:=\sum_{k=0}^{1}\frac{1}{k!}y^{k}(p_{a,b})^{(k)}=p_{a,b}+y^{k}(p_{a,b})^{(1)}$$ for each choice of a pair of vectors $a,b\in\mathbb{R}^{n}$. Remember also the convention fixed in Remark \ref{remarksize} with respect to the application to restrictions of polynomials of operators whose definition requires making use of the degree of the argument polynomial for computing its image.
\end{particularizacion}

Thanks to the care that we took for restrictions $p_{A}$ of polynomials $p$ to the subspace $\linspan(A)$ generated by a tuple of vectors $A\in(\mathbb{R}^{n})^{l}$ at the time of taking Renegar derivatives (or computing, in general, any operator between polynomials involving degree, e.g., $\hat{\cdot},\Tilde{\cdot},\overline{\cdot},\cdot^{(i)}$), the relaxation operator $S_{\cdot}$ (which also involves the degree) and the size of the representations given by Helton-Vinnikov in the bivariate restrictions of our RZ polynomials in this section, we reached a point where all our conventions fit correctly together and let us advance even over the cases where the degrees of the restriction polynomial $p_{A}$ differ from the degree of the original RZ polynomial $p$ we began with.

\begin{remark}[Conventions fit]
As we took the care in Remark \ref{remarksize} above of fixing all the representations of the restrictions $p_{a,b}$ of $p$ for any pair of vectors $a,b\in\mathbb{R}^{n}$ of size $d:=\deg(p)$ and convene on Remark \ref{remarkdegree} on computing $\hat{p_{a,b}}$ as if it had degree $d$, the same proof as the one in Proposition \ref{intermedia} guarantees (taking care of the degree convention on restrictions) now the identity $$\hat{p_{a,b}}(x_{1},x_{2})=\det(I+x_{1}D_{a,b}+x_{2}A_{a,b}+y I).$$
\end{remark}

Along the same lines, we need to prove that the notations above behave well in another respect. In particular, we will now show that the operators between polynomials involving the Renegar derivative represented by $\cdot^{(i)}$, $\hat{\cdot}$ and $\Tilde{\cdot}$ commute with the operator $\cdot_{A}$ representing the restriction of the polynomial to a subspace $\linspan(A)$ spanned by the vectors in a tuple $A\in(\mathbb{R}^{n})^{m}$ for some $m\in\mathbb{N}$. Note that these are all maps that respect RZ polynomials.

\begin{hecho}[Operators are linear]
\label{facteasy}
The maps mentioned above $\Tilde{\cdot}$ and $\hat{\cdot}$ are linear in the vector space of polynomials $\mathbb{R}[x]$. The same happens with $\cdot_{A}$.
\end{hecho}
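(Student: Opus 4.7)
The plan is to observe that each of the three operators is a composition of elementary linear operations on polynomials, with one caveat about fixed virtual degrees that is already handled by the conventions set earlier. I would start with $\cdot_{A}$, which is the easiest: for a tuple $A = (a_{1}, \ldots, a_{l}) \in (\mathbb{R}^{n})^{l}$, the assignment $p \mapsto p_{A}$ is precisely pullback along the fixed linear map $\phi_{A}\colon \mathbb{R}^{l} \to \mathbb{R}^{n}$, $\mathbf{y} \mapsto \sum_{i=1}^{l} y_{i} a_{i}$. Pullback along a fixed map is a ring homomorphism $\mathbb{R}[\mathbf{x}] \to \mathbb{R}[\mathbf{y}]$, hence in particular $\mathbb{R}$-linear, and so $(p + \lambda q)_{A} = p_{A} + \lambda q_{A}$ for every $\lambda \in \mathbb{R}$ and every pair $p, q \in \mathbb{R}[\mathbf{x}]$.

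For $\hat{\cdot}$ and $\Tilde{\cdot}$, the key is that, once we fix the virtual degree $d$ at which we homogenize (as per the convention in Remark \ref{remarkdegree}), each Renegar derivative factors as $p \mapsto p^{h} \mapsto \partial_{x_{0}}^{k} p^{h} \mapsto (\partial_{x_{0}}^{k} p^{h})|_{x_{0} = 1}$. At a fixed virtual degree $d$, homogenization is $\mathbb{R}$-linear on the subspace of polynomials of degree at most $d$: indeed $(p + \lambda q)^{h} = x_{0}^{d} (p + \lambda q)(\mathbf{x}/x_{0}) = x_{0}^{d} p(\mathbf{x}/x_{0}) + \lambda x_{0}^{d} q(\mathbf{x}/x_{0}) = p^{h} + \lambda q^{h}$. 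The partial derivative $\partial_{x_{0}}^{k}$ and the evaluation $x_{0} = 1$ are obviously linear. Composing these three linear steps shows that $p \mapsto p^{(k)}$ is linear at any fixed virtual degree.

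Once we have linearity of each individual Renegar derivative, linearity of $\Tilde{\cdot}$ and $\hat{\cdot}$ is immediate from their definitions: $\Tilde{p} = p + y p^{(1)}$ and $\hat{p} = \sum_{k=0}^{d} \frac{y^{k}}{k!} p^{(k)}$ are finite $\mathbb{R}[y]$-linear combinations of the linear operators $p \mapsto p^{(k)}$, and multiplication by the fixed polynomial $y^{k}/k!$ commutes with scalar multiplication and addition in $p$. Hence $\Tilde{\cdot}$ and $\hat{\cdot}$ are themselves linear as maps $\mathbb{R}[\mathbf{x}] \to \mathbb{R}[\mathbf{x}, y]$.

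The (very mild) obstacle is conceptual rather than technical: Renegar differentiation depends on the choice of virtual degree, since homogenization does. The claim of linearity must therefore be read relative to the convention already fixed in Remark \ref{remarkdegree}, namely that every degree-sensitive operator is computed using the degree of the original polynomial from which the restrictions come, regardless of possible degree drops. As long as we apply the operators at a single, uniformly fixed virtual degree (which is how they are used throughout this chapter), the argument above goes through without further subtleties.
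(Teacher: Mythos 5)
Your proof is correct and follows the same route the paper uses: the paper's own proof is a one-line remark that everything follows from the (straightforward) linearity of $\cdot^{(i)}$ and $\cdot_{A}$, which is exactly what you verify in detail. Your explicit identification of $\cdot_{A}$ as pullback along a fixed linear map, and of $\cdot^{(k)}$ as a composition of homogenization, differentiation, and evaluation at a fixed virtual degree, together with the observation about the role of Remark~\ref{remarkdegree}, simply fleshes out what the paper leaves implicit.
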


\begin{proof}
This follows immediately from the definitions and the straightforward fact that $\cdot^{(i)}$ and $\cdot_{A}$ are linear.
\end{proof}

We introduce an easy lemma that will simplify the proof of the result that we want. At this point, we have to remember again Remark \ref{remarkdegree}, where we explained how we deal with operators involving the use of the degree for their definitions (e.g., the Renegar derivative) when these are applied to restrictions $p_{A}$ of a polynomial $p$ to a subspace $\linspan(A)\subseteq\mathbb{R}^{n}$. In particular, we apply the definition of these operators as if $p_{A}$ had always degree $d:=\deg(p)$ (which is not always the case, as we mentioned several times already).

\begin{lema}[Subspace restriction and Renegar derivative commute]
\label{lemaRene}
Let $p\in\mathbb{R}[\mathbf{x}]$ be a polynomial and $A\in(\mathbb{R}^{n})^{l}$ for some $l\in\mathbb{N}$ a finite tuple of vectors. Then $(p^{(k)})_{A}=(p_{A})^{(k)}$.
\end{lema}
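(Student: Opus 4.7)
The plan is to reduce the claim to an identity between homogenizations, since the Renegar derivative is defined by homogenizing, differentiating with respect to the new variable $x_0$ and then evaluating at $x_0=1$. The key observation is that homogenization and restriction to a subspace commute, provided one homogenizes at the fixed virtual degree $d:=\deg(p)$ rather than at the possibly smaller actual degree $\deg(p_A)$. This is precisely the content of the degree convention from Remark \ref{remarkdegree}, and it is where the whole statement becomes nontrivial: without that convention the two sides would generically be polynomials of different degrees and the equality would fail.

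More concretely, writing $A=(a_1,\dots,a_l)$ and using a fresh homogenizing variable $y_0$ for the restricted polynomial and $x_0$ for the original one, I would first verify the identity
\begin{equation*}
(p_A)^h(y_0,\mathbf{y})=p^h\Bigl(y_0,\sum_{i=1}^{l}y_i a_i\Bigr),
\end{equation*}
where both homogenizations are taken at degree $d$. Both sides unfold to $y_0^{d}\,p\bigl(\tfrac{1}{y_0}\sum_{i=1}^{l}y_i a_i\bigr)$ by direct substitution, so the identity is immediate; it is exactly here that the convention of homogenizing $p_A$ as if it had degree $d$ is used, and with a different choice the RHS of this identity would in general carry an extra power of $y_0$ absorbing the degree drop.

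Once this homogenization identity is established, the rest is mechanical: since the inner argument $\sum_{i=1}^{l}y_i a_i$ is independent of $y_0$, differentiating $k$ times with respect to $y_0$ and then setting $y_0=1$ gives
\begin{equation*}
(p_A)^{(k)}(\mathbf{y})=\Bigl(\tfrac{\partial^k}{\partial y_0^k}(p_A)^h\Bigr)\Big|_{y_0=1}=\Bigl(\tfrac{\partial^k}{\partial x_0^k}p^h\Bigr)\Big|_{x_0=1}\Bigl(\sum_{i=1}^{l}y_i a_i\Bigr)=(p^{(k)})_A(\mathbf{y}),
\end{equation*}
which is the desired equality. The main (and really the only) obstacle is keeping the bookkeeping on degrees straight, so I would state explicitly at the start of the proof that all homogenizations, relaxations and Renegar derivatives applied to $p_A$ are to be understood at the virtual degree $d$ in the sense of Remark \ref{remarkdegree}, and that under this convention the symbol $(p_A)^h$ is unambiguous; after this preliminary, the computation above finishes the proof in a couple of lines.
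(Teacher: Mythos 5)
Your proof is correct. It rests on the same two pillars as the paper's proof: the degree convention of Remark \ref{remarkdegree} (homogenizing $p_A$ at the virtual degree $d=\deg(p)$ rather than its possibly smaller actual degree), and the fact that the linear substitution $\mathbf{x}\mapsto\sum_{i}y_i a_i$ does not touch the homogenizing variable. The difference is one of organization: the paper expands $p$ in the monomial basis, pushes the definition of the Renegar derivative through term by term, and verifies the equality via the explicit factors $\frac{(d-|\alpha|)!}{((d-|\alpha|)-k)!}$, whereas you first isolate the clean structural identity $(p_A)^h(y_0,\mathbf{y})=p^h\bigl(y_0,\sum_i y_i a_i\bigr)$ and then observe that $\partial_{y_0}^k$ commutes with the $y_0$-free substitution, reducing the rest to two lines. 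Your version is more conceptual and makes visible exactly where the degree convention is used — in the homogenization identity — whereas the paper's coefficient computation has the same information but hides the structure under the multi-index bookkeeping. Both are valid; yours would arguably read better in the text, since it prepares the reader more directly for the use of the same convention in Corollary \ref{corohattilde}.
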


\begin{proof}
Write the polynomial $p:=\sum_{\alpha\in\mathbb{N}^{n}}a_{\alpha}x^{\alpha}\in\mathbb{R}[x]$ with $d=\deg(p)$. Remember that the $k$-th Renegar derivative of the polynomial $p$ is $p^{(k)}(x):=$\begin{gather*}
\frac{\partial^{k}}{\partial x_{0}^{k}}(x_{0}^{d}p(\frac{x}{x_{0}}))|_{x_{0}=1}=\frac{\partial^{k}}{\partial x_{0}^{k}}(x_{0}^{d}\sum_{\alpha\in\mathbb{N}^{n}}a_{\alpha}(\frac{x}{x_{0}})^{\alpha})|_{x_{0}=1}=\\\frac{\partial^{k}}{\partial x_{0}^{k}}(\sum_{\alpha\in\mathbb{N}^{n}}a_{\alpha}x_{0}^{d-|\alpha|}x^{\alpha})|_{x_{0}=1}=\sum_{\alpha\in\mathbb{N}^{n}}a_{\alpha}(\frac{\partial^{k}}{\partial x_{0}^{k}}x_{0}^{d-|\alpha|})|_{x_{0}=1}x^{\alpha}=\\\sum_{\alpha\in\mathbb{N}^{n}}a_{\alpha}\frac{(d-|\alpha|)!}{((d-|\alpha|)-k)!}x^{\alpha},\end{gather*} where we set $\frac{(d-|\alpha|)!}{((d-|\alpha|)-k)!}=0$ if $k>d-|\alpha|$. So, taking into account the convention adopted in Remark \ref{remarkdegree}, we obtain the chain of identities $(p_{A})^{(k)}(y)=\frac{\partial^{k}}{\partial x_{0}^{k}}(x_{0}^{d}p_{A}(\frac{y}{x_{0}}))|_{x_{0}=1}=$\begin{gather*}
\frac{\partial^{k}}{\partial x_{0}^{k}}(x_{0}^{d}p(\frac{\sum_{i=1}^{l}y_{i}a_{i}}{x_{0}}))|_{x_{0}=1}=\frac{\partial^{k}}{\partial x_{0}^{k}}(x_{0}^{d}\sum_{\alpha\in\mathbb{N}^{n}}a_{\alpha}(\frac{\sum_{i=1}^{l}y_{i}a_{i}}{x_{0}})^{\alpha})|_{x_{0}=1}=\\\frac{\partial^{k}}{\partial x_{0}^{k}}(\sum_{\alpha\in\mathbb{N}^{n}}a_{\alpha}x_{0}^{d-|\alpha|}(\sum_{i=1}^{l}y_{i}a_{i})^{\alpha})|_{x_{0}=1}=\sum_{\alpha\in\mathbb{N}^{n}}a_{\alpha}(\frac{\partial^{k}}{\partial x_{0}^{k}}x_{0}^{d-|\alpha|})|_{x_{0}=1}(\sum_{i=1}^{l}y_{i}a_{i})^{\alpha}=\\\sum_{\alpha\in\mathbb{N}^{n}}a_{\alpha}\frac{(d-|\alpha|)!}{((d-|\alpha|)-k)!}(\sum_{i=1}^{l}y_{i}a_{i})^{\alpha}=(p^{(k)})_{A},\end{gather*} as the computation above shows. Note that the effect of the convention of Remark \ref{remarkdegree} is that at very beginning of the chain of identities above the factor $x_{0}$ out of the polynomial evaluation $p_{A}(\frac{y}{x_{0}})$ is $d=\deg(p)$ even though it could be that $\deg(p_{A})<d$, this is the effect that we want and that we were looking for when we wrote that remark. This finishes the proof of this lemma. 
\end{proof}

We make clear that the comment at the end of this proof is a landmark of this section because without the convention of Remark \ref{remarkdegree} we could not continue.

\begin{warning}[Failure if degree drops]
Without the convention of Remark \ref{remarkdegree}, Lemma \ref{lemaRene} above is not true, as one can easily see. Thus, what follows is also not true without said convention. The convention on that Remark is then fundamental for the rest of this section.
\end{warning}

We have to notice something about the behaviour of our operators with respect to the new variables the introduce. These new variables can be permanent or auxiliary.

\begin{remark}[Action of operators introducing additional variables]
Contrary to the map $\cdot^{(i)}$ that annihilates the homogenizing variable $x_{0}$, the maps $\Tilde{\cdot}$ and $\hat{\cdot}$ introduce a new variable that does not disappear.
\end{remark}

We therefore need the next notation for ease of writing during the rest of this section.

\begin{notacion}[Augmented subspace restriction]
\label{notation1more}
For a vector $a\in\mathbb{R}^{n}$ we denote $\gls{overa}:=(a,0)=\in\mathbb{R}^{n}\times\{0\}$. For a tuple of vectors $A=(a_{1},\dots,a_{l})\in(\mathbb{R}^{n})^{l}$ with $a_{i}\in\mathbb{R}^{n}$ we denote $$\gls{overaa}:=(\overline{a_{1}},\dots,\overline{a_{l}},(0,\dots,0,1))\in(\mathbb{R}^{n}\times\{0\})^{l}\times(\{0\}^{n}\times\{1\}).$$
\end{notacion}

Now, we can use Lemma \ref{lemaRene} to prove immediately something similar for the maps $\Tilde{\cdot}$ and $\hat{\cdot}$ instead of $\cdot^{(i)}$ because of how these maps are related.

\begin{corolario}[Operators and augmented subspace restriction commute]
\label{corohattilde}
Let $p\in\mathbb{R}[\mathbf{x}]$ be a polynomial and $A\in(\mathbb{R}^{n})^{l}$ for some $l\in\mathbb{N}$ a finite tuple of vectors. Then $(\hat{p})_{\overline{A}}=\hat{(p_{A})}$ and $(\Tilde{p})_{\overline{A}}=\Tilde{(p_{A})}$.
\end{corolario}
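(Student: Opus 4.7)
The plan is to reduce everything to Fact \ref{facteasy} (linearity of $\cdot_{A}$, $\hat{\cdot}$, and $\Tilde{\cdot}$) together with Lemma \ref{lemaRene} (commutation of restriction and Renegar derivative under the convention of Remark \ref{remarkdegree}). First I would unwind what it means to restrict along $\overline{A}$: since $\overline{A}=(\overline{a_{1}},\dots,\overline{a_{l}},(0,\dots,0,1))\in (\mathbb{R}^{n+1})^{l+1}$, the restriction of a polynomial $q(\mathbf{x},y)\in\mathbb{R}[\mathbf{x},y]$ along $\overline{A}$ is the evaluation $q\bigl(\sum_{i=1}^{l}z_{i}a_{i},\,z_{l+1}\bigr)$, i.e.\ it restricts the $\mathbf{x}$-part via $A$ and simply renames the additional variable $y$ as $z_{l+1}$. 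This is the key bookkeeping step: the extra coordinate introduced by $\hat{\cdot}$ and $\Tilde{\cdot}$ is exactly the new variable that the last vector of $\overline{A}$ controls.

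Next I would treat the $\Tilde{\cdot}$ case, which is the simpler of the two and already contains the whole idea. Writing $\Tilde{p}=p+y\,p^{(1)}$ and applying $\cdot_{\overline{A}}$ using the linearity from Fact \ref{facteasy}, I would get
\begin{equation*}
(\Tilde{p})_{\overline{A}} \;=\; (p)_{\overline{A}}+\bigl(y\,p^{(1)}\bigr)_{\overline{A}} \;=\; p_{A}+z_{l+1}\,(p^{(1)})_{A},
\end{equation*}
where I used that $p$ does not depend on $y$ (so $p_{\overline{A}}=p_{A}$, read in the variables $z_{1},\dots,z_{l}$) and that $y$ becomes $z_{l+1}$ under the restriction. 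Then I would apply Lemma \ref{lemaRene} to rewrite $(p^{(1)})_{A}=(p_{A})^{(1)}$, turning the expression into $p_{A}+z_{l+1}(p_{A})^{(1)}=\Tilde{(p_{A})}$, which is exactly the required identity (with $z_{l+1}$ playing the role of the new variable in $\Tilde{(p_{A})}$).

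The $\hat{\cdot}$ case is a term-by-term repetition of the argument above. Using linearity of $\cdot_{\overline{A}}$ applied to the finite sum $\hat{p}=\sum_{k=0}^{d}\tfrac{1}{k!}y^{k}p^{(k)}$ with $d=\deg(p)$, I would get
\begin{equation*}
(\hat{p})_{\overline{A}} \;=\; \sum_{k=0}^{d}\tfrac{1}{k!}\,z_{l+1}^{k}\,(p^{(k)})_{A} \;=\; \sum_{k=0}^{d}\tfrac{1}{k!}\,z_{l+1}^{k}\,(p_{A})^{(k)} \;=\; \hat{(p_{A})},
\end{equation*}
where the middle equality is Lemma \ref{lemaRene} applied to each $k$, and the last equality uses Remark \ref{remarkdegree} to ensure that $\hat{(p_{A})}$ is computed as if $p_{A}$ had degree $d=\deg(p)$, so that the summation index on both sides ranges over the same set $\{0,\dots,d\}$.

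The only real obstacle is precisely this last point: without the convention in Remark \ref{remarkdegree}, the definition of $\hat{(p_{A})}$ would use $\deg(p_{A})$, which may be strictly smaller than $d$, and the identity would fail at the higher-order terms where $(p_{A})^{(k)}$ is taken to be zero by one convention but equals $(p^{(k)})_{A}\neq 0$ by the other. Fortunately this ambiguity has already been fixed in Remark \ref{remarkdegree} and incorporated into Lemma \ref{lemaRene}, so the plan reduces to the careful but routine substitution outlined above.
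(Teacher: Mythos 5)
Your proof is correct and takes essentially the same route as the paper: apply linearity from Fact~\ref{facteasy} to commute the restriction past the sum, then invoke Lemma~\ref{lemaRene} term-by-term, relying on the degree convention of Remark~\ref{remarkdegree} so that the summation index on both sides of the $\hat{\cdot}$ identity ranges over $\{0,\dots,\deg(p)\}$. The only cosmetic difference is that you spell out the renaming of $y$ to $z_{l+1}$ and treat $\Tilde{\cdot}$ as the lead case, whereas the paper proves $\hat{\cdot}$ directly, keeps the variable name $y$ throughout, and dismisses $\Tilde{\cdot}$ as analogous.
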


\begin{proof}
We prove it for $\hat{\cdot}$ because the other case is analogous. We have the chain of identities $(\hat{p})_{\overline{A}}:=$\begin{gather*}
    (\sum_{k=0}^{\infty}\frac{1}{k!}y^{k}p^{(k)})_{\overline{A}}=\sum_{k=0}^{\infty}\frac{1}{k!}y^{k}(p^{(k)})_{A}=\sum_{k=0}^{\infty}\frac{1}{k!}y^{k}(p_{A})^{(k)}=\hat{p_{A}},
\end{gather*} where we used the definition of $\hat{\cdot}$, the definition of $\overline{A}$, linearity of the restriction to $A$ showed in Fact \ref{facteasy}, and the commutativity proved in Lemma \ref{lemaRene} respectively always under the assumption taken in Remark \ref{remarkdegree}. This concludes the proof.
\end{proof}

We make clear that the action of Remark \ref{remarkdegree} extends beyond the Renegar derivative until it reaches the maps $\hat{\cdot}$ and $\Tilde{\cdot}$, object of the last corollary. This supposition, the convention on how to take Renegar derivatives of restrictions of a polynomial $p$ to a subspace $\linspan(A)$, is necessary in order to establish the Corollary \ref{corohattilde} above.

\begin{remark}[Degree fixing again]
Note that we did not say it explicitly, but $\hat{\cdot}$ and $\Tilde{\cdot}$ are operators depending on the Renegar derivative and hence on the degree of the argument and, therefore, because of Remark \ref{remarkdegree}, these objects apply to restrictions in the form $\hat{(p_{A})}$ and $\Tilde{(p_{A})}$ as if $p_{A}$ had the degree of $p$ even though it could happen that $\deg(p_{A})<\deg(p)$ if the actual degree drops at that particular restriction.
\end{remark}

Now we are finally ready to prove the main result of this section.

\begin{teorema}[Restriction of the relaxation equals relaxation of the restriction]
\label{teoremabuscado}
Let $p\in\mathbb{R}[\mathbf{x}]$ be an RZ polynomial. Then $$S(p)=\{a\in\mathbb{R}^{n}\mid(a,0)\in S(\Tilde{p})\}.$$
\end{teorema}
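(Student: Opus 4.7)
The plan is to prove the two inclusions separately. For $\{a \in \mathbb{R}^n \mid (a, 0) \in S(\Tilde{p})\} \subseteq S(p)$, I would simply invoke \cite[Lemma 3.26]{main} with $q = \Tilde{p}$ and $r = \Tilde{p}|_{y=0} = p$, observing that $\deg(\Tilde{p}) = \deg(p)$ since $\Tilde{p} = p + y p^{(1)}$; this gives the inclusion immediately.

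For the opposite inclusion, the strategy is to reduce the multivariate statement to the already established bivariate version (Corollary \ref{tildein}) through the infinite intersection characterization of Section \ref{sectioninter}. Fix $a \in S(p)$; the case $a = 0$ being trivial, I assume $a \neq 0$. Applying the third part of Corollary \ref{corolariofinal} to $p$, the hypothesis $a \in S(p)$ becomes: $(1,0) \in S(p_{a,b})$ for every $b \in \mathbb{R}^n$. Since each $p_{a,b}$ is a bivariate RZ polynomial, Corollary \ref{tildein} transforms this into the equivalent statement: $(1,0,0) \in S(\Tilde{p_{a,b}})$ for every $b \in \mathbb{R}^n$.

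Next I would apply the third part of Corollary \ref{corolariofinal} a second time, now to the $(n+1)$-variate polynomial $\Tilde{p}$ at the direction $(a,0) \in \mathbb{R}^{n+1}$, reformulating the goal $(a,0) \in S(\Tilde{p})$ as: $(1,0) \in S(\Tilde{p}_{(a,0),c})$ for every $c \in \mathbb{R}^{n+1}$. Writing $c = (b,t)$, I split on whether $t = 0$ or $t \neq 0$. When $t = 0$, direct substitution yields $\Tilde{p}_{(a,0),(b,0)}(x_1,x_2) = \Tilde{p}(x_1 a + x_2 b, 0) = p_{a,b}(x_1,x_2)$, so the required membership is exactly what we already have. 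When $t \neq 0$, using Proposition \ref{propositiontransformation} to see that replacing $c$ by $c/t$ leaves invariant the membership of $(1,0)$ in the corresponding bivariate relaxation, I reduce to $t = 1$. A short computation using Lemma \ref{lemaRene} and the definition of $\Tilde{\cdot}$ then identifies
\begin{equation*}
\Tilde{p}_{(a,0),(b,1)}(x_1,x_2) = p_{a,b}(x_1,x_2) + x_2 (p_{a,b})^{(1)}(x_1,x_2) = \Tilde{p_{a,b}}(x_1,x_2,x_2),
\end{equation*}
which is precisely the restriction $(\Tilde{p_{a,b}})_{(1,0,0),(0,1,1)}$ of the trivariate $\Tilde{p_{a,b}}$ to the plane spanned by $(1,0,0)$ and $(0,1,1)$; this is a manifestation of Corollary \ref{corohattilde}. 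A third application of the third part of Corollary \ref{corolariofinal}, this time to $\Tilde{p_{a,b}}$ at direction $(1,0,0)$, then uses $(1,0,0) \in S(\Tilde{p_{a,b}})$ to conclude $(1,0) \in S((\Tilde{p_{a,b}})_{(1,0,0), d})$ for every $d \in \mathbb{R}^3$, and in particular for $d = (0,1,1)$, closing this case.

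The main obstacle I expect is precisely the case $t \neq 0$ above: the bivariate restriction $\Tilde{p}_{(a,0),c}$ is not of the form $p_{a,b}$ nor of the form $\Tilde{p_{a,b}}$, but rather a further bivariate restriction of the trivariate $\Tilde{p_{a,b}}$. Recognizing this compatibility via Corollary \ref{corohattilde}, and then applying the infinite intersection characterization one more time in order to transfer membership of $(1,0,0)$ in $S(\Tilde{p_{a,b}})$ down to membership of $(1,0)$ in $S((\Tilde{p_{a,b}})_{(1,0,0),(0,1,1)})$, is the key technical step that ties everything together.
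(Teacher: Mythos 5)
Your argument is correct and follows essentially the same path as the paper's: \cite[Lemma 3.26]{main} for the easy inclusion, and for the hard one a reduction to the bivariate case through Corollary \ref{corolariofinal}, an application of Corollary \ref{tildein} on the bivariate restrictions $p_{a,b}$, and the identification of the bivariate slices of $\Tilde{p}$ with slices of the trivariate $\Tilde{p_{a,b}}$ via Lemma \ref{lemaRene} and Corollary \ref{corohattilde}. The only cosmetic difference is that you work pointwise on $c$, splitting on the last coordinate $t$ and normalizing $t=1$ via a scaling/conjugation argument based on Proposition \ref{propositiontransformation}, whereas the paper presents the same content as a single chain of set identities between intersections, closing it with a surjectivity check that the reparametrization $\Tilde{b}=(b_1'a+b_2'b,\,b_3')$ covers $\mathbb{R}^{n+1}$.
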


\begin{proof}
The inclusion $\supseteq$ is clear using \cite[Lemma 3.26 (c)]{main}. For other inclusion $\subseteq$, fix $a\in\mathbb{R}^{n}$ an arbitrary direction. We will show that for such direction $a\in\mathbb{R}^{n}$ we have the identity \begin{gather}\label{equationinmid}
S_{(a,0)}(\Tilde{p}):=\{\lambda\in\mathbb{R}\mid\lambda(a,0)\in S(\Tilde{p})\}=\{\lambda\in\mathbb{R}\mid\lambda a\in S(p)\}=:S_{a}(p).\end{gather} Using the results proved until this point, we have the chain of identities $S_{a}(p)=$ \begin{gather*}
    \bigcap_{b\in\mathbb{R}^{n}}S_{(1,0)}(p_{a,b}):= \bigcap_{b\in\mathbb{R}^{n}}\{\lambda\in\mathbb{R}\mid\lambda(1,0)\in S(p_{a,b})\}=\\\bigcap_{b\in\mathbb{R}^{n}}\{\lambda\in\mathbb{R}\mid\lambda(1,0)\in\{a\in\mathbb{R}^{2}\mid (a,0)\in S(p_{a,b}+y(p_{a,b})^{(1)})\}\}=\\\bigcap_{b\in\mathbb{R}^{n}}\{\lambda\in\mathbb{R}\mid\lambda(1,0,0)\in S(p_{a,b}+y(p_{a,b})^{(1)})\}=\\\bigcap_{b\in\mathbb{R}^{n}}S_{(1,0,0)}(p_{a,b}+y(p^{(1)})_{a,b})=\\\bigcap_{b\in\mathbb{R}^{n}}\bigcap_{b'\in\mathbb{R}^{3}}S_{(1,0)}((p_{a,b}+y(p^{(1)})_{a,b})_{(1,0,0),b'})=\\\bigcap_{b\in\mathbb{R}^{n+1}}S_{(1,0)}((p+y(p^{(1)}))_{\overline{a},b})=:\bigcap_{b\in\mathbb{R}^{n+1}}S_{(1,0)}(\Tilde{p}_{\overline{a},b})=S_{\overline{a}}(\Tilde{p}),
\end{gather*} where we used Corollary \ref{corolariofinal} in the first identity, the definition of $S_{(1,0)}(p_{a,b})$ given via Notation \ref{problemnotation} in the second, the identity $S(p_{a,b})=\{a\in\mathbb{R}^{2}\mid (a,0)\in S(p_{a,b}+y(p_{a,b})^{(1)})\}$ given by Corollary \ref{tildein} in the third, a simple rewriting or simplification of conditions in the fourth, Lemma \ref{lemaRene} in the fifth to put $(p_{a,b})^{(1)}=(p^{(1)})_{a,b}$ together with Notation \ref{problemnotation} used to compact the writing of the sets under the intersection noting that $(p_{a,b}+y(p^{(1)})_{a,b})$ is a trivariate polynomial, and Corollary \ref{corolariofinal} again in the sixth in order to be able to write $$S_{(1,0,0)}(p_{a,b}+y(p^{(1)})_{a,b})=\bigcap_{b'\in\mathbb{R}^{3}}S_{(1,0)}((p_{a,b}+y(p^{(1)})_{a,b})_{(1,0,0),b'}).$$ Importantly, the seventh holds because of the chain of identities \begin{gather*}(p_{a,b}+y(p^{(1)})_{a,b})_{(1,0,0),b'}=((p(x_{1}a+x_{2}b)+y(p^{(1)})(x_{1}a+x_{2}b))(x_{1},x_{2},y))_{(1,0,0),b'}=\\
(p(x_{1}a+x_{2}b)+y(p^{(1)})(x_{1}a+x_{2}b))(z_{1}(1,0,0)+z_{2}b')=\\(p(x_{1}a+x_{2}b)+y(p^{(1)})(x_{1}a+x_{2}b))(z_{1}+z_{2}b'_{1},z_{2}b'_{2},z_{2}b'_{3}))=\\p((z_{1}+z_{2}b'_{1})a+(z_{2}b'_{2})b)+(z_{2}b'_{3})(p^{(1)})((z_{1}+z_{2}b'_{1})a+(z_{2}b'_{2})b)=\\p(z_{1}a+z_{2}(b'_{1}a+b'_{2}b))+(z_{2}b'_{3})(p^{(1)})(z_{1}a+z_{2}(b'_{1}a+b'_{2}b))=\\(p+yp^{(1)})(z_{1}(a,0)+z_{2}(b'_{1}a+b'_{2}b,b'_{3}))=(p+yp^{(1)})_{(a,0),(b'_{1}a+b'_{2}b,b'_{3})}=
\end{gather*} $(p+yp^{(1)})_{\overline{a},\Tilde{b}}=\Tilde{p}_{\overline{a},\Tilde{b}}$ with $\Tilde{b}\in\mathbb{R}^{n+1}$ so that it varies freely in that space while $b$ varies in $\mathbb{R}^{n}$ and $b'$ does so in $\mathbb{R}^{3}$. Finally, the eight is the rewriting $p+yp^{(1)}=:\Tilde{p}$ and the ninth and last is again a final (and third) application of Corollary \ref{corolariofinal}. This proves the identity in Equation \ref{equationinmid} (i.e., $S_{(a,0)}(\Tilde{p})=S_{a}(p)$) for each direction $a\in\mathbb{R}^{n}$ and establishes the remaining inclusion finishing therefore the proof of this theorem.
\end{proof}

During the chain of identities in the proof above, one might want to continue the third line with the equivalent expression $\bigcap_{b\in\mathbb{R}^{n}}\{\lambda\in\mathbb{R}\mid\lambda(1,0,0)\in S((p+yp^{(1)})_{\overline{a},\overline{b},(0,0,1)})\}=:\bigcap_{b\in\mathbb{R}^{n}}S_{(1,0,0)}((\Tilde{p})_{\overline{a},\overline{b},(0,0,1)})$, obtained using Notation \ref{notation1more} and which does not produce a proof of the theorem but instead gives something similar to the last point of Corollary \ref{corolariofinal} at the end of Section \ref{sectioninter} but for the special case of $\Tilde{p}$. The reason why that expression does not produce a proof (contrary to the chosen expression) rests in the fact that the additional variable $y$ of $\Tilde{p}$ requires a special treatment (given in the form of an additional vector $(0,0,1)$) and this implies a tridimensional restriction that we did not study in this document. We collect this fact here anyway.

\begin{porisma}[Relaxation on directions also behave well under restrictions]
\label{porismarnadom}
$S_{a}(p)=\bigcap_{b\in\mathbb{R}^{n}}S_{(1,0,0)}((\Tilde{p})_{\overline{a},\overline{b},(0,0,1)})=S_{\overline{a}}(\Tilde{p}).$
\end{porisma}

We can proceed in the same way to obtain analogous results for other similar operators.

\begin{remark}[Results on other operators]
Using Proposition \ref{propoinstead} instead of Corollary \ref{tildein} in the proof of the last Theorem \ref{teoremabuscado} and Porism \ref{porismarnadom} produces similar results for $\hat{\cdot}$ instead of $\Tilde{\cdot}$.
\end{remark}

Now this result opens automatically the question about the behaviour of the relaxation under similar extensions involving other interlacers. We do this through generalizations of the construction of new RZ polynomial studied in this chapter like the ones presented in \cite{fisk2006polynomials} for general interlacers under very mild conditions. We study a case of these extensions in the next part. In particular, we will see that for other interlacers of a particularly well-understood family of polynomials we can make the relaxation improve asymptotically along the diagonal. Thus we will see that finding other forms of involving interlacers into the polynomial in order to augment the number of variables can result in improvements of the relaxation, which points towards such direction in order to increase our understanding of both the relaxation and, with some luck, possibly the GLC. Before we do that, we need to introduce the kind of polynomials over which we will study this. These are special because we do not have in principle a general construction but just an approach that works in certain cases and that needs to be put in the correct setting in order to allow for a generalization that could allow us to apply it to further cases until we could eventually cover all the RZ polynomials. We begin therefore humbly and simply with Eulerian polynomials and their rigidly convex sets. See \cite{bona2022combin, petersen2015eulerian, kitaev2011patterns} for various degrees of deeper and wider information about Eulerian polynomilas, their generalizations, their properties and their coefficients. This is the topic of the next part of this work, where we will see possibilities of extending the relaxation. This new objective goes in contrast to the limitations of the relaxation explored in this part that finishes here.
\part{On Eulerian Polynomials}\label{II}
\chapter[A Primer on Eulerian Polynomials]{A Primer on Eulerian Polynomials and Statistics and Patterns on Permutations}\label{ChPrimer}

We gave several general references on Eulerian polynomials and numbers at the end of the previous part in order to introduce the reader into the wonderful world they open and prepare the journey we have ahead. However, in this chapter we will use more specific references that address the main topics about these polynomials that are interesting, useful and fundamental for us. As the reader could see, the realm of these polynomials, their meaning, their properties and their generalizations is vast and at least as ample as the realms of the objects they encode: descent statistics on permutations. We will however center here our attention in their roots and where they are located: the roots of the univariate Eulerian polynomials are real and negative and the roots of the multivariate generalizations we will work with here satisfy nice properties in terms of polynomial stability, hyperbolicity or RZ-ness depending on how we decide to look at these generalizations. We already can say that, since our ultimate objective is to apply the relaxation, our preferred point of view is the one given by the framework of RZ polynomials. Here we will see how to proceed from the literature about these polynomials in order to end in the RZ framework we want. Hence, our main references during this chapter will be the papers \cite{visontai2013stable, haglund2012stable} that deal with stability of these multivariate generalizations of the Eulerian polynomials. Starting from this stability, we will be able to derive hyperbolicity and RZ-ness of related polynomials using what we know about the subtle relations between these types of polynomials.

\section{Counting patterns in permutations}

We begin this section defining a classical statistic over the symmetric groups defined as permutations of an ordered set. This paves the road for the introduction of the Eulerian polynomials.

\begin{definicion}[Descents]\cite[Subsection 2.1]{haglund2012stable}
\label{primer}
Call $\gls{S}=\bigcup_{n=0}^{\infty} \mathfrak{S}_{n}$ the union of all the symmetric groups $\mathfrak{S}_{n}$ whose elements are understood as bijections from the set $[n]\subseteq\mathbb{N}$ to itself. We define the \textit{descent counting function} \begin{gather*}
\des\colon\mathfrak{S}\to\mathbb{N},\sigma\mapsto\des(\sigma):=|\{i\mid\sigma_{i}>\sigma_{i+1}\}|.
\end{gather*}
\end{definicion}

Now observe that almost all the information about where descents happen and which elements are affected by them is lost just considering the map $\des$ above. For this reason, we have to think about ways of collecting more information in a meaningful way.

\begin{observacion}
[Tags and variables]
The way we can collect more information is therefore keeping track of the elements at which a descent happens. We can use these elements as tags for new variables, for example. Therefore, instead of having just one variable we can have several variables stemming from the original one when we tag these by the elements at which a descent happens. Thus, we can build more sophisticated objects carrying this extra information through these tags.
\end{observacion}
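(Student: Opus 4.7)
The plan is to make this informal observation concrete by exhibiting an explicit refinement of the descent counting function that genuinely carries strictly more information, thereby justifying the claim that several tagged variables can arise from the single variable recording $\des(\sigma)$. First I would fix precisely what "the element at which a descent happens" means in Definition \ref{primer}. There are a priori several natural candidates: the descent position $i$ itself, the descent top $\sigma_{i}$, the descent bottom $\sigma_{i+1}$, or any pair formed from these. For each choice I would introduce one formal indeterminate per admissible tag, obtaining a vector of variables $\mathbf{x}=(x_{1},\dots,x_{n})$ indexed by the chosen tag set.

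Next I would assemble, for every permutation $\sigma\in\mathfrak{S}_{n}$, the tag-monomial $m_{\sigma}(\mathbf{x})=\prod_{t\in T(\sigma)}x_{t}$, where $T(\sigma)$ is the set of tags attached to the descents of $\sigma$ under the chosen convention, and then form the generating polynomial $A_{n}(\mathbf{x})=\sum_{\sigma\in\mathfrak{S}_{n}}m_{\sigma}(\mathbf{x})$. The observation then reduces to two concrete checks. On the one hand, the specialisation $x_{1}=\cdots=x_{n}=x$ must collapse the monomial $m_{\sigma}$ to $x^{\des(\sigma)}$, so that $A_{n}(x,\dots,x)$ recovers the classical univariate Eulerian polynomial, confirming that no information present in $\des$ is lost. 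On the other hand, $A_{n}(\mathbf{x})$ must distinguish permutations that $\des$ conflates; this is witnessed by producing, for $n\geq 3$, two permutations sharing the same number of descents but placing them at different tags, and observing that they contribute distinct monomials to $A_{n}(\mathbf{x})$.

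Finally, I would argue that this construction is not merely a bookkeeping device by pointing to the downstream rigidity it imposes: the finer polynomial admits no collapse of two variables without identifying the corresponding tags, so the extra variables are genuinely carrying extra combinatorial structure rather than being formal duplicates. I expect the main obstacle not to be the verification itself, which is elementary once the tagging is fixed, but rather the principled choice among descent positions, tops, and bottoms. The "right" choice is the one whose generating polynomial is compatible with the stability-preserving operations we will exploit later via \cite{visontai2013stable,haglund2012stable}; committing to descent tops at this stage, and deferring the stability justification to the subsequent sections, seems the most economical way to ground the observation without prejudging the structure theorems that motivate it.
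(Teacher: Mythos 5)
Your plan matches what the paper itself does: this observation is an informal remark that the paper substantiates in the following sections precisely by tagging with descent tops (and ascent tops), forming the multivariate Eulerian polynomial $A_{n}(\mathbf{x},\mathbf{y})$ of \cite{visontai2013stable,haglund2012stable}, recovering the univariate polynomial by specialising the variables, and deferring the justification of the choice of tags to the stability results. So your proposal is correct and takes essentially the same route as the paper.
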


The use of these tags means that we will indeed count the features of the permutations in a finer way. Thus, we will be able to collect more information about the permutations at play.

\begin{remark}[Counting finer]
The particular tagging elements that we will use and describe here refer to the descent top and the ascent top statistics over permutations. In order to introduce this extension of counting descents, we first need to refine the concepts introduced in Definition \ref{primer} used for counting descents in a very simple way.
\end{remark}

Top and bottoms appear naturally in sequences of elements in an ordered set. As we see permutation as ordered sequences of elements of ordered sets through the one-line notation here, tops and bottoms appear as a very natural feature of these permutations. We see this next.

\begin{definicion}[Tops]\label{defexcat}\cite[Definition 3.1]{visontai2013stable}
For a permutation given as $\sigma=(\sigma_{1}\cdots\sigma_{n+1})\in\mathfrak{S}_{n+1}$ with $\sigma_{i}\in[n+1]$ for all $i\in[n+1]$ so we use the one-line notation for it, we say that $\sigma_{i}\in[n+1]$ is a \textit{descent top} if $\sigma_{i}>\sigma_{i+1}.$ Similarly, we say that $\sigma_{i+1}\in[n+1]$ is an \textit{ascent top} if $\sigma_{i}<\sigma_{i+1}.$ 
\end{definicion}

Observe that $1\in[n]$ can never be a top of anything because $1=\min([n])$. Now that we know already the objects we will deal with, we have to see how we pass from counting objects to polynomials. This will also help us realize how finer-counting these objects yields several types of liftings of the original polynomials. This is the very general and broad topic of the next section.

\section{From counting to polynomials}

Now we begin with an object and we want to transform the information coming from that object into functions. Usually the functions we like are either formal power series or, even better for their straightforward study, polynomials. We concentrate on finite objects and finite features to count with finitely many options. Thus we look here therefore to polynomials.

\setlength{\emergencystretch}{3em}%
\begin{definicion}[Generating polynomials]\cite[Section 2.2]{wilf2005generatingfunctionology}
The \textit{ordinary generating polynomial associated to the tuple} $(a_{1},\dots,a_{n})\in\mathbb{R}^{n}$ is the polynomial $p(x):=\sum_{i=1}^{n}a_{i}x^{i}.$ Building over this, if we consider a finite set $A$ of objects to be enumerated in some way through a \textit{measure function} $m\colon A\to\mathbb{N}$ we call the \textit{ordinary generating polynomial associated to the set $A$ through the measure function $m$} the polynomial $p(x):=\sum_{n>0}a_{i}x^{i}$, where $a_{i}:=|\{a\in A\mid m(a)=i\}|$ is the number of objects of size $i$ as measured by $m$ in the set $A.$  
\end{definicion}
\setlength{\emergencystretch}{0em}%

Note that, in general, the variable in generating polynomials only serves as a placeholder for keeping track of the coefficients accompanying the corresponding monomials and does not fulfill any additional role. We will change this through tags in the future. These tags will allow our variables to carry further information about the objects measured and stored in these polynomial functions.

\begin{remark}[Tagging]
Through the tags we can enrich our generating univariate polynomials into multivariate ones.
\end{remark}

We begin with a set of objects and form its generating polynomial in the usual way described above. Once we have this generating polynomial, we can see that counting finer over these initial objects and adding the new information into the mix through their respective tags allows us to carry and store more information in our polynomial. In particular, the monomials formed by the new tagged variables will allow us to produce a natural lifting of the univariate polynomial we began with. This is how we build the multivariate versions of the original polynomial. This is the general theory. We introduce the particular polynomials we are interested in in the next section. These are the Eulerian polynomials. These polynomials are connected to the descents and descent tops described in the first section of the current chapter. Remember that these polynomials emerge naturally when we count features in permutations happening (or defined) over ordered sets.

\section{Univariate Eulerian polynomials}

In this section we introduce the main object of our interest. How well we can approximate the roots of these polynomials will be the measure that we will use to check how much we can improve the relaxation through the different methods to augment the number of variables (and coefficients) at play that we will see in this dissertation. We introduce then now the univariate Eulerian polynomials.

\begin{definicion}\cite[Section 2.2]{haglund2012stable}[Univariate Eulerian polynomials]
Let $\mathfrak{S}_{n+1}$ be the symmetric group considered as permutations on the ordered set $[n+1]$ and written in its one-line form. Then we define the univariate polynomial $$\gls{an}(x)=\sum_{\sigma\in \mathfrak{S}_{n+1}}x^{\des(\sigma)}\in\mathbb{N}[x]$$ as the generating polynomial for the descent statistic over the symmetric group $\mathfrak{S}_{n+1}$.
\end{definicion}

These polynomials were classically first introduced by Euler while discussing a method of summation of series. Starting from there, these polynomials and their coefficients have been widely studied and applied in many other different mathematical settings. We go now through some of these settings.

\begin{remark}[Applications in (algebraic) combinatorics]
Through their combinatorial interpretation, the Eulerian polynomials, their coefficients and their multiple extensions and generalizations to new frameworks and settings appear among the most studied objects in combinatorics and algebra, making special appearance in questions related to, e.g., unimodality, log-concavity, gamma-positivity or real-rootedness. Recently, many generalizations of these polynomials and their coefficients have been connected to deep questions in order theory and algebraic geometry through the study of the geometry of matroids. To know more about these new and very hot topics in (algebraic) combinatorics we refer the reader to \cite{gu2016interlacing,shin2012symmetric,shareshian2017gamma,alexandersson2024rook,ALCO_2024__7_5_1479_0}.
\end{remark}

But apart from this face, Eulerian polynomials and Eulerian numbers find also their space in a seemingly distant branch of mathematics. These uses are more closely related to their origins in Euler's work \cite{euler} as an analytical tool for implementing summation techniques.

\begin{remark}[Applications in numerical analysis]
Eulerian polynomials and their coefficients are used in the construction of quadrature formulas similar to well-known Euler-MacLaurin quadrature formula. In this setting, they are also related to $B$-spline interpolation methods, as one can consult in \cite{he2012eulerian}.
\end{remark}

\setlength{\emergencystretch}{3em}%
Hence these two disparate appearances of the Eulerian polynomials and their multiple generalizations call in general for a deeper analysis involving an exchange of information between these two realms through the different generalizations, properties and extensions they allow. Thus Eulerian polynomials appear as an unexpected bridge between two seemingly unrelated areas of mathematics. These bridges are usually productive hotspots for new mathematics. It is therefore also interesting to look at the definition of these polynomials that is used in the analytic setting.
\setlength{\emergencystretch}{0em}%

\begin{definicion}[Euler-Frobenius polynomials]\label{eulerfrobenius}\cite[Chapter 25]{sobolev2006selected}
Let $n\in\mathbb{N}$ be a nonnegative integer. We define the \textit{$n$-th Euler-Frobenius polynomial} as $$\gls{en}(x):=\frac{(1-x)^{n+2}}{x}\left(x\frac{d}{dx}\right)^{[n]}\frac{x}{(1-x)^2},$$ where $[n]$ denotes the $n$-th power of the symbolic convolution.
\end{definicion}

Now it is well-known that Eulerian polynomials can be equivalently defined using a recurrence relation. This result is well known and it can be found in \cite[Section 3]{visontai2013stable}.

\begin{proposicion}[Recurrent definition of Eulerian polynomials]\label{recurrenceEuler}
Setting the initial polynomial $A_{1}(x)=1+x$ we have, for $n>1$, that $$A_{n}(x)=x(1-x)A'_{n-1}(x)+(1+nx)A_{n-1}.$$
\end{proposicion}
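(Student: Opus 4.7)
The plan is to derive the polynomial recurrence from the classical numerical recurrence for Eulerian numbers $E(n+1,k)=(k+1)E(n,k)+(n+1-k)E(n,k-1)$, where $E(n,k)$ denotes the cardinality of the set of permutations in $\mathfrak{S}_{n}$ having exactly $k$ descents. Since by definition $A_{n}(x)=\sum_{\sigma\in\mathfrak{S}_{n+1}}x^{\des(\sigma)}=\sum_{k=0}^{n}E(n+1,k)x^{k}$, once the numerical recurrence is established, weighting by $x^{k}$ and summing will translate it into the required polynomial identity.

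First, I would prove the numerical recurrence by a direct insertion argument. Every $\tau\in\mathfrak{S}_{n+1}$ arises uniquely from some $\sigma\in\mathfrak{S}_{n}$ by inserting the largest letter $n+1$ into one of the $n+1$ available slots of the one-line notation of $\sigma$. A case analysis on how this insertion affects $\des$ using Definition \ref{primer} gives: inserting $n+1$ at the end, or between two letters forming a descent in $\sigma$, leaves $\des$ unchanged; inserting $n+1$ at the beginning, or between two letters forming an ascent in $\sigma$, increases $\des$ by one. Thus a permutation $\sigma\in\mathfrak{S}_{n}$ with $k$ descents (and hence $n-1-k$ ascents) produces exactly $k+1$ permutations in $\mathfrak{S}_{n+1}$ with $k$ descents and exactly $n-k$ permutations with $k+1$ descents. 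Partitioning $\mathfrak{S}_{n+1}$ according to the predecessor $\sigma$ and its descent count yields the numerical recurrence.

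Second, I would convert the numerical recurrence into the polynomial identity by straightforward algebra. The sum $\sum_{k}(k+1)E(n,k)x^{k}$ can be rewritten as $xA'_{n-1}(x)+A_{n-1}(x)$ by separating $kE(n,k)x^{k}$ from $E(n,k)x^{k}$. The sum $\sum_{k}(n+1-k)E(n,k-1)x^{k}$ becomes, after the index shift $j=k-1$, the expression $\sum_{j}(n-j)E(n,j)x^{j+1}=nxA_{n-1}(x)-x^{2}A'_{n-1}(x)$. Adding these two contributions and collecting terms produces exactly $x(1-x)A'_{n-1}(x)+(1+nx)A_{n-1}(x)$, which is the claimed recurrence for $A_{n}(x)$.

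The only delicate point is the bookkeeping in the insertion argument: one must be careful to count the two boundary slots (the very beginning and the very end of $\sigma$) exactly once each, and to correctly attribute the $n-1$ interior slots to either the ascent or the descent tally of $\sigma$, so that the total $n+1$ slots are partitioned without overlap. Everything else reduces to a routine algebraic rearrangement and an index shift, and the base case $A_{1}(x)=1+x$ is already furnished in the statement, so no additional verification is required.
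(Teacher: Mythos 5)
Your proof is correct. The paper itself does not supply a proof of this proposition---it simply cites \cite[Section 3]{visontai2013stable} as the source of a well-known fact---so your self-contained argument is a welcome addition rather than a deviation. Both halves of your plan are sound: the insertion argument correctly partitions the $n+1$ slots of $\sigma\in\mathfrak{S}_{n}$ into the $k+1$ slots (the terminal slot plus the $k$ descent slots) that preserve the descent count and the $n-k$ slots (the initial slot plus the $n-1-k$ ascent slots) that increase it by one, which indeed gives $E(n+1,k)=(k+1)E(n,k)+(n+1-k)E(n,k-1)$; and the generating-function translation, using $\sum_{k}kE(n,k)x^{k}=xA_{n-1}'(x)$ together with the index shift $j=k-1$ in the second sum, correctly assembles $x(1-x)A_{n-1}'(x)+(1+nx)A_{n-1}(x)$. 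The one bookkeeping point you flag---that the boundary slots are each counted once and the interior slots are partitioned cleanly into ascent and descent slots---is exactly the place a sloppy argument would fail, and you handle it properly. This is the standard proof of the result and matches what the cited reference proves.
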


\begin{proposicion}[Sameness]\label{samenesseulerfrob}
For all nonnegative integers $n\in\mathbb{N}$ we have the equality of polynomials $E_{n}(x)=A_{n}(x).$
\end{proposicion}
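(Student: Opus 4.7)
The plan is to prove that $\{E_{n}\}$ and $\{A_{n}\}$ coincide by showing that both sequences satisfy the same first-order recurrence with the same initial value, and then concluding by induction on $n$. Proposition \ref{recurrenceEuler} already hands us the recurrence for $A_{n}$ together with the base case $A_{1}(x)=1+x$, so the entire task reduces to verifying that the Euler--Frobenius polynomials satisfy
$$E_{n}(x)=x(1-x)E_{n-1}'(x)+(1+nx)E_{n-1}(x)$$
for all $n\geq 2$, and that $E_{1}(x)=1+x$.

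First, I would dispatch the base case by a direct computation: interpret $\bigl(x\tfrac{d}{dx}\bigr)^{[n]}$ as the $n$-fold iterate of the operator $L:=x\tfrac{d}{dx}$, apply $L$ once to $\tfrac{x}{(1-x)^{2}}$ via the quotient rule to obtain $\tfrac{x(1+x)}{(1-x)^{3}}$, and multiply by the prefactor $\tfrac{(1-x)^{3}}{x}$ to recover $1+x$. This matches $A_{1}$ and also aligns with the obvious check at $n=0$, where $E_{0}=1=A_{0}$.

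For the inductive step I would introduce the auxiliary quantity $f_{n}(x):=L^{n}\tfrac{x}{(1-x)^{2}}$, so that Definition \ref{eulerfrobenius} reads $E_{n}(x)=\tfrac{(1-x)^{n+2}}{x}f_{n}(x)$, or equivalently $f_{n}(x)=\tfrac{x E_{n}(x)}{(1-x)^{n+2}}$. Using the obvious relation $f_{n}=Lf_{n-1}$ together with the inductive expression for $f_{n-1}$ in terms of $E_{n-1}$, I would differentiate the quotient $\tfrac{xE_{n-1}}{(1-x)^{n+1}}$ once, carefully collecting the factor of $(1-x)^{n+1}$ that comes from the numerator derivative with the $(n+1)(1-x)^{n}$ coming from the denominator. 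After multiplying by $x$ and then by the outer prefactor $\tfrac{(1-x)^{n+2}}{x}$, the denominator powers cancel cleanly, and grouping the surviving terms yields precisely $(1+nx)E_{n-1}(x)+x(1-x)E_{n-1}'(x)$, which is the Eulerian recurrence of Proposition \ref{recurrenceEuler}.

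The only real obstacle I anticipate is bookkeeping: ensuring the correct interpretation of $\bigl(x\tfrac{d}{dx}\bigr)^{[n]}$ and lining up the powers of $(1-x)$ so that the algebraic identity $(1-x)+(n+1)x=1+nx$ is what produces the coefficient $(1+nx)$. Once this is settled, the inductive step is a single application of the quotient rule, and the equality $E_{n}=A_{n}$ follows for every $n\in\mathbb{N}$.
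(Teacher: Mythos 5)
Your proof is correct and takes essentially the same route as the paper: induction on $n$ using the base case $E_1 = 1+x = A_1$ and the Eulerian recurrence from Proposition \ref{recurrenceEuler}. The one thing you do more carefully is actually carry out the quotient-rule computation showing that $E_n = x(1-x)E_{n-1}' + (1+nx)E_{n-1}$, a verification the paper's proof asserts implicitly in its final displayed equality but does not spell out.
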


\begin{proof}
The proof is by induction noting that $E_{1}=1+x=A_{1}$ and that, with the induction hypothesis for $n-1$, for a positive integer $n>1$, we have the next identity emanating from the well-known recurrence relation of Eulerian polynomials mentioned above in Proposition \ref{recurrenceEuler}
\begin{gather*}
A_{n}(x)=x(1-x)A_{n-1}'(x)+(1+nx)A_{n-1}(x)=\\x(1-x)E_{n-1}'(x)+(1+nx)E_{n-1}(x)=\\x(1-x)\frac{d}{dx}\frac{(1-x)^{n-1+2}}{x}\left(x\frac{d}{dx}\right)^{[n-1]}\frac{x}{(1-x)^2}+\\(1+nx)\frac{(1-x)^{n-1+2}}{x}\left(x\frac{d}{dx}\right)^{[n-1]}\frac{x}{(1-x)^2}=E_{n}(x),\end{gather*} where the first equality where we substitute $A$ by $E$ is consequence of the induction hypothesis.\end{proof}

Therefore, now it is clear that, although referred by different names in the literature of different mathematical settings, these polynomials are the same. This will be helpful in the future. Coming back to the combinatorial setting, the way we lift these polynomials to something having more variables is by keeping track of the elements at which a descent happens, as can be seen in \cite{haglund2012stable, visontai2013stable}. We can use that element to tag a new variable and therefore force our polynomials to carry this extra information about the descents at play in the corresponding permutation we are measuring or looking at. The additional information is therefore encoded in the form of a multivariate polynomial. This is the topic of the next section.

\section{Multivariate Eulerian polynomials}

The multivariate extensions that we will describe here use the descent top and the ascent top statistics over permutations. In order to introduce this extension, we first need to refine the concepts introduced in Definition \ref{primer} used for the univariate case.

\begin{definicion}[Tops sets]\cite[Subsection 2.1]{haglund2012stable}
and \cite[Definition 3.1]{visontai2013stable}\label{dtat}
For $\sigma\in\mathfrak{S}_{n+1}$, we define the \textit{descent top set} $$\gls{dt}=\{\sigma_{i}\in[n+1]\mid i\in[n],\sigma_{i}>\sigma_{i+1}\}\subseteq[2,n+1].$$ Similarly, we define the \textit{ascent top set} $$\gls{at}=\{\sigma_{i+1}\in[n+1]\mid i\in[n],\sigma_{i}<\sigma_{i+1}\}\subseteq[2,n+1].$$ 
\end{definicion}

Notice that there are indices at which we cannot possibly have a descent (or an ascent) and elements that can never be top of anything. In particular, $1$ can never be a top because it is smaller than any other element. Also, we cannot have a descent (or an ascent) at index $n+1$ because nothing follows it in the one-line notation.

\begin{observacion}[Ghost variables]
Thus, if we use tops to tag variables, the variable tagged by the element $1$ will never actually appear. This artefact makes the variable tagged by $1$ a ghost variable that exists just because we are tagging by a set that needs a minimum that can never be a top and therefore will never actually appear as a tag of a variable.
\end{observacion}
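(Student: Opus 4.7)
The plan is to reduce the observation to the inclusions $\mathcal{DT}(\sigma), \mathcal{AT}(\sigma) \subseteq [2,n+1]$ already asserted in Definition \ref{dtat}, and then to argue directly from these inclusions that no monomial in a top-tagged generating polynomial can contain the variable indexed by $1$.

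First I would verify the two inclusions. For $1$ to lie in $\mathcal{DT}(\sigma)$ there would have to exist an index $i \in [n]$ with $\sigma_i = 1$ and $\sigma_i > \sigma_{i+1}$; but $\sigma_{i+1} \in [n+1]$ forces $\sigma_{i+1} \geq 1 = \sigma_i$, contradicting the strict inequality. Symmetrically, $1 \in \mathcal{AT}(\sigma)$ would require some $i \in [n]$ with $\sigma_{i+1} = 1$ together with $\sigma_i < 1$, which is again impossible since every value of $\sigma$ lies in $[n+1]$. Hence both top sets are genuinely contained in $[2,n+1]$ for every permutation $\sigma \in \mathfrak{S}_{n+1}$, as the definition already announces.

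Given this, I would then observe that any generating polynomial built by assigning to each $\sigma \in \mathfrak{S}_{n+1}$ a monomial of the form $\prod_{j \in \mathcal{DT}(\sigma)} x_j$ (or the analogous product over $\mathcal{AT}(\sigma)$) produces monomials whose support lies inside $\{x_2, \dots, x_{n+1}\}$. Consequently the variable $x_1$ does not appear in any summand, and hence does not appear in the resulting polynomial, so it can legitimately be viewed as an element of $\mathbb{R}[x_2,\dots,x_{n+1}]$ rather than $\mathbb{R}[x_1,\dots,x_{n+1}]$. This is exactly the sense in which the variable tagged by $1$ is a ghost. I do not expect any genuine obstacle here: the argument is transparent once the impossibility of $1$ being a top is noted, and the only point that needs care is to match the tagging convention (by the top element, not by the position of the descent) that is used in the subsequent construction of the multivariate Eulerian polynomials in the next section.
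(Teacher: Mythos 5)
Your argument is correct and matches the paper's own (very brief) justification, which simply notes that $1$ is the minimum of the underlying ordered set and therefore can never be a descent or ascent top; the paper leaves the observation at that level of informality, and you are merely spelling out the same elementary comparison $\sigma_i \geq 1$ resp. $\sigma_{i+1} \geq 1$ in detail. No gap and no genuine difference of approach.
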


It is important to keep this in mind. In the future, not contemplating this could cause misunderstandings. We also look at the many generalizations of being real-rooted that there exist in the literature.

\begin{remark}[Towards generalizations of real-rootedness]
We already saw RZ polynomials as a natural generalization of being real-rooted. Polynomial hyperbolicity \cite[Definition 6.1]{main} and polynomial stability \cite[Subsection 2.5]{haglund2012stable} and \cite[Definition 2.5]{visontai2013stable} are different generalizations of this concept.
\end{remark}

Now we introduce the multivariate generalizations of Eulerian polynomials that we will use.

\begin{definicion}\cite[Theorem 3.2]{haglund2012stable}
and \cite[Theorem 3.3]{visontai2013stable}\label{multieulerian}
Let $$\gls{an}(\mathbf{x},\mathbf{y}):=\sum_{\sigma\in\mathfrak{S}_{n+1}}\prod_{i\in\mathcal{DT}(\sigma)}x_{i}\prod_{j\in\mathcal{AT}(\sigma)}y_{j}$$ be the \textit{descents-ascents $n$-th Eulerian polynomial}.
\end{definicion}

We talked about ghost variables. Here they are.

\begin{remark}\label{numberofvars}
Observe that we always have that $1\notin \mathcal{DT}(\sigma)$ and therefore $A_{n}$ has actually only the $n$ variables $x_{2},\dots x_{n+1}$.
\end{remark}

For univariate Eulerian polynomials, Frobenius noted that all zeros of $A_{n}(x)$ are real. This observation has also been made about different generalizations of Eulerian polynomials. For the generalization of the property of having real roots to multivariate polynomials, a notion that has been fruitful is that of stable polynomials.

\begin{definicion}\cite[Subsection 2.5]{haglund2012stable} and \cite[Definition 2.5]{visontai2013stable}
Let $p\in\mathbb{C}[\mathbf{x}]$ be a polynomial and define the set $$\gls{H}:=\{z\in\mathbb{C}\mid\Ima(z)>0\}.$$ We say that $p$ is \textit{stable} if $p\equiv0$ or $p(x)\neq0$ whenever $x\in\mathcal{H}^{n}$. We call $p$ \textit{real stable} when it is stable and has only real coefficients so $p\in\mathbb{R}[\mathbf{x}]$.
\end{definicion}

Now, finally, we can state the main theorem that allows us to apply the relaxation to this multivariate generalizations of the Eulerian polynomials.

\begin{teorema}\label{realstable}\cite[Theorem 3.3]{visontai2013stable} and \cite[Theorem 3.2]{haglund2012stable}
$A_{n}(\mathbf{x},\mathbf{y})$ is stable.
\end{teorema}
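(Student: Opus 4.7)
The plan is to proceed by induction on $n$, following the strategy of Haglund--Visontai and Visontai that is cited in the excerpt. The base case $n=0$ (or $n=1$) is trivial since $A_0$ is a constant (respectively, $A_1 = x_2 + y_2$ up to conventions), which is manifestly stable.

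For the inductive step, first I would establish a combinatorial recurrence for $A_{n+1}(\mathbf{x},\mathbf{y})$ in terms of $A_n(\mathbf{x},\mathbf{y})$ by classifying each $\tau \in \mathfrak{S}_{n+2}$ according to the position at which the largest letter $n+2$ sits. Removing $n+2$ from $\tau$ yields a permutation $\sigma \in \mathfrak{S}_{n+1}$, and one must carefully track how $\mathcal{DT}(\tau)$ and $\mathcal{AT}(\tau)$ differ from $\mathcal{DT}(\sigma)$ and $\mathcal{AT}(\sigma)$. Concretely, inserting $n+2$ either creates a new descent top $n+2$ (with some element becoming an ascent top), a new ascent top $n+2$, or modifies the set of tops at a position already marked in $\sigma$. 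Collecting these cases and weighting them correctly produces a recurrence of the schematic form
\begin{equation*}
A_{n+1}(\mathbf{x},\mathbf{y}) \;=\; T_n \bigl(A_n(\mathbf{x},\mathbf{y})\bigr),
\end{equation*}
where $T_n$ is an explicit linear differential operator whose coefficients are polynomials in the variables $x_i, y_j$ and in $x_{n+2}, y_{n+2}$ (with partial derivatives $\partial_{x_i}, \partial_{y_i}$ accounting for the removal of tops that get absorbed or shifted by the insertion of $n+2$).

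The second, and main, step is to prove that $T_n$ is a stability-preserving operator. The clean way is to invoke the Lieb--Sokal lemma (or, equivalently, the Borcea--Br\"anden characterization of stability preservers): if $P(\mathbf{x},\mathbf{y})$ is stable and $Q(u,\mathbf{x},\mathbf{y}) = (u + \text{something involving } \partial_{x_i}, \partial_{y_j})P$ arises from an appropriate ``exponential of a nonnegative combination of substitutions'', then $Q$ is stable in all its variables. Each elementary block appearing in $T_n$ (multiplication by a variable, substitution of a variable into a derivative of the form $x_i \partial_{x_i} + y_i \partial_{y_i}$, etc.) can be checked to preserve stability, and stability-preservers are closed under composition. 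Applying $T_n$ to the inductively stable $A_n(\mathbf{x},\mathbf{y})$ therefore yields a stable polynomial $A_{n+1}(\mathbf{x},\mathbf{y})$, closing the induction.

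The hard part is the bookkeeping in the recurrence and pinning down the operator $T_n$ in a form to which a known stability-preservation theorem applies cleanly. A naive case analysis produces many summands and it is easy to misassign a descent top to an ascent top (or vice versa) when $n+2$ is inserted at the end, at the beginning, or adjacent to an existing top. I would therefore invest care in writing the recurrence as a sum over only a few ``insertion types'' (new descent top, new ascent top, insertion at the ends, insertion interior with no new top at $n+2$), double-check it on $n=1,2$ against the combinatorial definition, and then identify each term with one of the standard stability-preserving building blocks so that the closure property finishes the argument without further work.
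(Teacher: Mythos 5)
The paper does not give its own proof of this theorem; it is quoted with citations to Visontai and Haglund--Visontai, and the paper's later ``Lamination'' construction (in Part \ref{IV}) recalls exactly the recursive operator
\[
A^{h}_{n}=(x_{n}+x_{0})A^{h}_{n-1}+x_{n}x_{0}\Bigl(\tfrac{\partial}{\partial x}+\tfrac{\partial}{\partial x_{0}}\Bigr)A^{h}_{n-1}
\]
that those references use, observing that it is a stability preserver introducing two fresh variables per step. Your sketch---induction plus a Borcea--Br\"anden/Lieb--Sokal stability-preservation argument for the insertion recurrence---is exactly that approach, so it matches the (cited) proof the paper is relying on.
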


In the previously cited literature about these polynomials, particular cases of these multivariate generalizations appeared as important objects. Two important particularizations are the polynomials $A_{n}(\mathbf{x},\mathbf{1})$ and, even more particular, our univariate $A_{n}(x,\dots,x,\mathbf{1})$. Polynomial stability and its relation to real-zeroness will play a fundamental role in the next chapter. There we will prove that the connection between these two concepts allows us to prove real-zeroness for a restriction of the polynomial above, which we know to be real stable.
\chapter[Root Properties of Eulerian Polynomials]{Root Properties of Extensions of Eulerian Polynomials}\label{ChRoot}

In this chapter, we will basically stick with the same references as in the previous one. We will explore the connections between stability, hyperbolicity and RZ-ness in the context of Eulerian polynomials.

\section{From stability to RZ-ness}

We managed to find a stable multivariate generalization of Eulerian polynomials. This step gets us closer to the point where we can apply the relaxation to find bounds for the extreme roots of some related polynomials. However, in order to do this, we need to translate our stable polynomial setting into a setting where we talk about RZ polynomials, which is where our relaxation actually works. Fortunately, translating from stable to RZ polynomials is easy. First, we need to properly introduce the three kinds of polynomials that will play a role in this translation between settings. Two of them have already been discussed. We introduce now also hyperbolicity.

\begin{definicion}[Hyperbolicity]\cite[Definition 6.1]{main}
Let $p\in\mathbb{R}[\mathbf{x}]$ be a polynomial. We say that $p$ is \textit{hyperbolic with respect to a vector} $e\in\mathbb{R}^{n}$ if $p$ is homogeneous, $p(e)\neq0$ and, for any vector $a\in\mathbb{R}^{n}$, we have that $p(a-te)\in\mathbb{R}[t]$ is real rooted.
\end{definicion}

The relation between hyperbolic and real stable polynomials is as follows.

\begin{proposicion}[Real stability iff hyperbolicity in all positive directions]\cite[Proposition 1.3]{pemantle2012hyperbolicity} and \cite[Section 5]{kummer2015hyperbolic}
A homogeneous polynomial $p\in\mathbb{R}[\mathbf{x}]$ is real stable if and only if, for any vector $e\in\mathbb{R}^{n}_{>0}$ with positive coordinates and $a\in\mathbb{R}^{n}$, $p(a-te)$ is real-rooted. 
\end{proposicion}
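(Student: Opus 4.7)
The plan is to prove both implications by passing back and forth between zeros $z\in\mathcal{H}^n$ of $p$ and non-real roots $t$ of the univariate restriction $p(a-te)\in\mathbb{R}[t]$. The bridge is the elementary observation that $a-te$ has imaginary part $-\Ima(t)\,e$, which lies coordinatewise in $\mathbb{R}^n_{>0}$ precisely when $\Ima(t)<0$ and $e\in\mathbb{R}^n_{>0}$; combined with the complex conjugation symmetry granted by $p\in\mathbb{R}[\mathbf{x}]$, this exactly matches $\mathcal{H}^n$ with the non-real roots of those univariate restrictions.

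For the forward direction I would fix $e\in\mathbb{R}^n_{>0}$ and $a\in\mathbb{R}^n$ and suppose, for contradiction, that $t_0\in\mathbb{C}\setminus\mathbb{R}$ is a root of $p(a-te)$. Since this polynomial has real coefficients, $\overline{t_0}$ is also a root, so I may assume $\Ima(t_0)<0$. Then $a-t_0 e\in\mathcal{H}^n$ and $p(a-t_0 e)=0$, contradicting real stability. For the backward direction I would argue contrapositively: given $z=a+ib\in\mathcal{H}^n$ with $a\in\mathbb{R}^n$, $b\in\mathbb{R}^n_{>0}$, and $p(z)=0$, the choice $e:=b$ is positive and $t=-i$ satisfies $a-(-i)e=a+ib=z$, so $t=-i$ is a non-real root of $p(a-te)\in\mathbb{R}[t]$, violating the real-rootedness hypothesis.

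The main obstacle I anticipate is interpretive rather than computational: when $p(e)=0$ the restriction $p(a-te)$ drops in degree, so ``real-rooted'' must be read as ``every complex root is real'' rather than ``$\deg p$ real roots counted with multiplicity''. The argument sketched above is robust to this ambiguity in both directions, since it only ever produces or uses a single non-real root of the restriction. As a bonus, the ``$\Rightarrow$'' direction actually upgrades to hyperbolicity in the sense of the excerpt: homogeneity gives $p(ie)=i^{\deg p}p(e)$, and $ie\in\mathcal{H}^n$ for $e\in\mathbb{R}^n_{>0}$, so real stability forces $p(e)\neq 0$ on the whole open positive orthant, restoring the missing nondegeneracy condition automatically.
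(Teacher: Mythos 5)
Your forward direction is correct, and the observation that real stability forces $p(e)\neq 0$ on $\mathbb{R}^n_{>0}$ by evaluating at $ie\in\mathcal{H}^n$ is exactly right. The gap is in the backward (contrapositive) direction, and it is precisely the degenerate case you dismiss in your last paragraph. You produce $t=-i$ as a root of $p(a-tb)$, but this only contradicts the hypothesis if $p(a-tb)\not\equiv 0$. Under your ``lax'' reading of real-rootedness the zero polynomial is vacuously real-rooted, so the degenerate case is not ruled out, and it does occur: take $p(x_1,x_2,x_3)=x_1(x_2-x_3)$. Every zero $z=a+ib\in\mathcal{H}^3$ of this $p$ has $z_2=z_3$ (since $z_1\neq 0$ on $\mathcal{H}^3$), which forces $a_2=a_3$ and $b_2=b_3$, and then $p(a-tb)=(a_1-tb_1)\bigl((a_2-a_3)-t(b_2-b_3)\bigr)\equiv 0$. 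This $p$ is in fact a counterexample to the proposition under the lax reading: every restriction $p(a-te)=(a_1-te_1)\bigl((a_2-a_3)-t(e_2-e_3)\bigr)$ is a product of real linear factors in $t$, hence real-rooted or identically zero, yet $p(i,i,i)=0$ with $(i,i,i)\in\mathcal{H}^3$, so $p$ is not real stable.

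So you need the strict reading, not the lax one: ``$p(a-te)$ real-rooted'' must be interpreted so as to force full degree $\deg p$, equivalently $p(e)\neq 0$, which is exactly the nondegeneracy condition built into hyperbolicity in the Corollary that follows (and in Pemantle's Proposition 1.3, which is stated in terms of hyperbolicity). With that reading, the hypothesis in the backward direction already gives $p(b_0)\neq 0$ for $b_0\in\mathbb{R}^n_{>0}$; since the leading coefficient of $p(a_0-tb_0)$ is $(-1)^{\deg p}p(b_0)\neq 0$, the restriction is a nonzero polynomial, and your $t=-i$ argument closes. In short, your final paragraph correctly identifies the degree-drop issue but resolves the ambiguity in the opposite of the direction the proof requires.
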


Rewriting, this means that we have the following characterization.

\begin{corolario}[Homogeneous setting]
\label{rstoposorth}
A real homogeneous polynomial is real stable if and only if it is hyperbolic in every direction in the positive orthant.
\end{corolario}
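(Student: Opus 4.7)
The plan is to derive this corollary as a mild rewriting of the preceding proposition, with one small extra observation. The backward implication is essentially free: if $p$ is hyperbolic with respect to every $e\in\mathbb{R}^{n}_{>0}$, then by definition of hyperbolicity $p(a-te)\in\mathbb{R}[t]$ is real-rooted for every $a\in\mathbb{R}^{n}$ and every $e$ in the positive orthant, which is precisely the sufficient condition for real stability supplied by the previous proposition applied to the homogeneous $p$.

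For the forward direction, assume $p$ is real stable and homogeneous. The previous proposition immediately yields real-rootedness of $p(a-te)$ for all $a\in\mathbb{R}^{n}$ and every $e\in\mathbb{R}^{n}_{>0}$. Hyperbolicity of $p$ in the direction $e$ also requires $p(e)\neq 0$ (together with homogeneity, which is granted by hypothesis), so this is the only piece left to verify. The straightforward way is to notice that for any $\epsilon>0$, the vector $(1+i\epsilon)e$ lies in $\mathcal{H}^{n}$, and real stability (applied to the nontrivial case $p\not\equiv 0$, which one should treat as an automatic convention) guarantees $p((1+i\epsilon)e)\neq 0$. Using that $p$ is homogeneous of some degree $d$, we have $p((1+i\epsilon)e)=(1+i\epsilon)^{d}p(e)$, so $p(e)\neq 0$.

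Combining these two pieces finishes the forward implication and hence the corollary. The only subtle point in the whole argument is precisely the verification that real stability forces non-vanishing at positive real points even though such points sit on the topological boundary of $\mathcal{H}^{n}$ rather than inside it; this is the only step that goes beyond a pure restatement of the preceding proposition. It is noteworthy that the argument uses homogeneity in an essential way in that last step: without it, evaluating on the line through $e$ with complex multiple $(1+i\epsilon)$ would not factor a clean power of $(1+i\epsilon)$ out, and one would need a density or continuity argument instead.
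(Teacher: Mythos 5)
Your proof is correct, and it is more careful than the paper's treatment, which offers no proof at all: the corollary is introduced with the phrase ``Rewriting, this means that we have the following characterization,'' i.e., it is presented as an immediate restatement of the preceding proposition. In fact there is a small gap that your argument correctly identifies and fills: the preceding proposition only asserts real-rootedness of the univariate restrictions $p(a-te)$, while hyperbolicity with respect to $e$ additionally requires $p(e)\neq 0$. Your derivation of $p(e)\neq 0$ from real stability via $(1+i\epsilon)e\in\mathcal{H}^n$ together with the factorization $p((1+i\epsilon)e)=(1+i\epsilon)^d p(e)$ is clean and uses homogeneity exactly where it is needed. You are also right to flag the degenerate case: under the paper's definition of stability the zero polynomial is real stable, yet it is not hyperbolic in any direction, so the corollary implicitly assumes $p\not\equiv 0$; making that explicit, as you do, is the correct reading. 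One could alternatively extract $p(e)\neq 0$ from the proposition itself by taking $a=0$ and observing that $p(-te)=(-t)^d p(e)$, so if $p(e)=0$ the restriction is identically zero, which under most conventions is not ``real-rooted'' --- but this route hinges on a convention about the zero polynomial, and your argument via the open upper half-space is more robust and is the one I would keep.
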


Also, remember the following equivalence that we can find, e.g., in \cite[Proposition 6.7]{main}.

\begin{proposicion}[Hyperbolicity and RZ-ness]
\label{dehomo}
Let $p\in\mathbb{R}[x_{0},\mathbf{x}]$ be a homogeneous polynomial. Then $p$ is hyperbolic in the direction of the first unit vector $u=(1,\mathbf{0})\in\mathbb{R}^{n+1}$ if and only if its dehomogenization $q=p(1,x_{1},\dots,x_{n})\in\mathbb{R}[\mathbf{x}]$ is a RZ polynomial.
\end{proposicion}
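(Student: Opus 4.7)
The bridge between the two statements is the homogeneity identity: if $\deg(p)=d$, then for every $s\in\mathbb{R}$ with $s\neq 0$ and every $a'\in\mathbb{R}^n$ one has
\[ p(s,a') \;=\; s^{d}\,p\!\left(1,\tfrac{a'}{s}\right) \;=\; s^{d}\,q\!\left(\tfrac{a'}{s}\right). \]
Both implications will be proved by comparing the roots in $s$ of $p(s,a')$ with the roots in $\lambda$ of $q(\lambda a')$ via the substitution $s=1/\lambda$. The nondegeneracy condition $p(u)\neq 0$ will correspond exactly to $q(0)\neq 0$ (which is automatic for an RZ polynomial, recalling the comment just after Definition \ref{RZ}), since $p(u)=p(1,\mathbf{0})=q(\mathbf{0})$.

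For the direction $(\Rightarrow)$, assume $p$ is hyperbolic with respect to $u=(1,\mathbf{0})$. Fix $a\in\mathbb{R}^n$ and consider the auxiliary vector $b:=(0,a)\in\mathbb{R}^{n+1}$. By hyperbolicity, the univariate polynomial $p(b-tu)=p(-t,a)$ has only real roots in $t$. The homogeneity identity above gives, for $t\neq 0$, the equality $p(-t,a)=(-t)^{d} q(-a/t)$, so that a complex number $\lambda\neq 0$ satisfies $q(\lambda a)=0$ if and only if $t=-1/\lambda$ is a (complex) root of $p(b-tu)$. Hyperbolicity forces this root to be real, hence $\lambda$ is real. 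Since $q(0)=p(u)\neq 0$, the value $\lambda=0$ is not a root either, and RZ-ness of $q$ follows.

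For the direction $(\Leftarrow)$, assume $q$ is RZ. Then $p(u)=q(0)\neq 0$ and $p$ is homogeneous by hypothesis, so we only need to show that for every $a=(a_0,a')\in\mathbb{R}^{n+1}$ the univariate polynomial $p(a-tu)=p(a_0-t,a')$ has only real roots in $t$. Substituting $s:=a_0-t$ reduces this to showing that $p(s,a')$ has only real roots in $s$. If $a'=\mathbf{0}$, homogeneity gives $p(s,\mathbf{0})=s^{d} q(\mathbf{0})$, whose unique root $s=0$ is real. Otherwise, factoring the lowest power of $s$ from $p(s,a')=s^{d}q(a'/s)$, the nonzero roots of $p(s,a')$ are precisely the reciprocals $1/\mu$ of the nonzero roots $\mu$ of $q(\lambda a')$ in $\lambda$; since $q$ is RZ, all such $\mu$ are real, hence so are the corresponding $s=1/\mu$, together with the (possibly repeated) root $s=0$.

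\textbf{Main obstacle.} There is no real difficulty: the entire argument is bookkeeping around the substitution $s=1/\lambda$. The one point demanding attention is making sure that roots at $s=0$ in the dehomogenizing variable are not missed and that the multiplicities add up correctly, which is handled by separating the cases $a'=\mathbf{0}$ and $a'\neq\mathbf{0}$ and using that $q(\mathbf{0})\neq 0$ forces the coefficient of the top power of $s$ in $p(s,a')$ to be nonzero.
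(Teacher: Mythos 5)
Your proof is correct and takes the standard route: the substitution $s\mapsto 1/\lambda$ identifies the zero set of $p$ along lines $\{(s,a'):s\in\mathbb{C}\}$ with that of $q$ along lines through the origin, up to the root at $s=0$. The paper itself supplies no proof here — it cites the statement from \cite[Proposition 6.7]{main} — so there is nothing to compare against; your argument is a self-contained replacement, and it correctly handles the two delicate points, namely checking $p(u)=q(\mathbf{0})\neq 0$ in both directions and tracking the (possibly positive) multiplicity of the root $s=0$ when $a'\neq\mathbf{0}$.
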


All these relations allow us to deduce the following immediate result.

\begin{corolario}[Real stability and RZ-ness]
Let $p\in\mathbb{R}[x_{0},\mathbf{x}]$ be a homogeneous real stable polynomial such that $p(u)\neq0$ for $u=(1,\mathbf{0})\in\mathbb{R}^{n+1}$ the first unit vector. Then its dehomogenization $q=p(1,x_{1},\dots,x_{n})\in\mathbb{R}[\mathbf{x}]$ is a RZ polynomial.
\end{corolario}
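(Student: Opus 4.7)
The plan is to bridge Corollary \ref{rstoposorth} and Proposition \ref{dehomo} through a continuity argument on the direction of hyperbolicity. Corollary \ref{rstoposorth} delivers hyperbolicity of $p$ along every direction in the strictly positive orthant $\mathbb{R}^{n+1}_{>0}$, while Proposition \ref{dehomo} requires hyperbolicity along the unit vector $u = (1,\mathbf{0})$, which sits on the boundary of that orthant. The hypothesis $p(u) \neq 0$ is precisely what will make this boundary passage legitimate.

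First I would pick an approximating sequence $e_{k} := u + \frac{1}{k}(0,1,1,\dots,1) \in \mathbb{R}^{n+1}_{>0}$ so that $e_{k}\to u$ while every $e_{k}$ lies in the open positive orthant. By Corollary \ref{rstoposorth}, $p$ is hyperbolic with respect to each $e_{k}$: for every fixed $a\in\mathbb{R}^{n+1}$, the univariate polynomial $t\mapsto p(a-te_{k})\in\mathbb{R}[t]$ is real-rooted.

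The key step is then a Hurwitz-type continuity argument. As $k\to\infty$, the coefficients of $p(a-te_{k})$, viewed as polynomials in $t$, converge to the coefficients of $p(a-tu)$. Real-rootedness is preserved under such coefficient-wise limits provided the degree does not drop in the limit; and the leading coefficient in $t$ equals $(-1)^{d}p(e_{k})$ with $d=\deg p$ (by homogeneity of $p$), which converges to $(-1)^{d}p(u)\neq 0$ by the standing hypothesis. Hence $p(a-tu)$ is real-rooted for every $a\in\mathbb{R}^{n+1}$, and $p$ is hyperbolic with respect to $u$.

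Once hyperbolicity along $u$ is in hand, Proposition \ref{dehomo} applies verbatim and concludes that $q=p(1,x_{1},\dots,x_{n})$ is an RZ polynomial, closing the argument. The only subtle point in the whole plan is the degree-drop issue at the limit $k\to\infty$, since continuity of real-rootedness can fail when the leading coefficient vanishes; but this is exactly what the assumption $p(u)\neq 0$ prevents, so the argument should go through cleanly without further ado.
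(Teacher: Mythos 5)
Your argument is correct and follows the same route as the paper: invoke Corollary \ref{rstoposorth} to get hyperbolicity along every strictly positive direction, pass to the limit $e\to u$, and then apply Proposition \ref{dehomo}. The only difference is that you spell out the Hurwitz-type continuity step — an explicit approximating sequence, coefficient-wise convergence of $p(a-te_{k})$ to $p(a-tu)$, and the role of $p(u)\neq 0$ in preventing the degree drop via the leading coefficient $(-1)^{d}p(e_{k})$ — whereas the paper compresses all of this into the single phrase ``taking the limit when $e\to u$,'' so your write-up is a fleshed-out version of the same proof.
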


\begin{proof}
Using Proposition \ref{rstoposorth} we get that $p$ is hyperbolic with respect to any vector $e\in\mathbb{R}_{>0}^{n+1}$. This means that $p((x_{0},x)-te)$ is real rooted for any $e\in\mathbb{R}_{>0}^{n+1}$ and $(x_{0},x)\in\mathbb{R}^{n+1}$. Taking the limit when $e\to u$ this implies that $p((x_{0},x)+tu)$ is real rooted for any $(x_{0},x)\in\mathbb{R}^{n+1}$. Which means that $p$ is hyperbolic in the direction of $u$ because $p(u)\neq0$ by hypothesis. Thus, by the Proposition \ref{dehomo} above, its dehomogenization $q:=p(1,x_{1},\dots,x_{n})\in\mathbb{R}[x]$ is an RZ polynomial.
\end{proof}

Now we are ready to apply this to the multivariate version of the Eulerian polynomial introduced in Definition \ref{multieulerian}.

\section[Homogenization]{Translation into the RZ setting: homogenization}

In this section, we fix our attention in the task of translating these real stable multivariate Eulerian polynomials into the RZ setting. We do this through the study of the degree of the monomials in the expansion defining these polynomials.

Remember that we begin with the polynomials $$A_{n}(\mathbf{x},\mathbf{y})=\sum_{\sigma\in\mathfrak{S}_{n+1}}\prod_{i\in\mathcal{DT}(\sigma)}x_{i}\prod_{j\in\mathcal{AT}(\sigma)}y_{j}$$ introduced in Definition \ref{multieulerian}. We first ask what means to homogenize these polynomials. Is it necessary?

\begin{observacion}[Variables and permutations]
For each term $\sigma\in\mathfrak{S}_{n+1}$ observe that, writing $\sigma=(\sigma_{1},\dots,\sigma_{n+1}$), for each pair $(\sigma_{i},\sigma_{i+1})$ we have that either $\sigma_{i}>\sigma_{i+1}$ or $\sigma_{i}<\sigma_{i+1}$ so, for each such pair indexed by $i\in[n]$, we add a variable to the corresponding monomial. Since there are $n$ of these pairs, all monomials have the same degree $n$ in the variables $(\mathbf{x},\mathbf{y})$.
\end{observacion}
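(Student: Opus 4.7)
The plan is to prove the observation by a direct analysis of the degree of the monomial contributed by each permutation $\sigma$ in the defining sum
\[ A_n(\mathbf{x}, \mathbf{y}) = \sum_{\sigma \in \mathfrak{S}_{n+1}} \prod_{i \in \mathcal{DT}(\sigma)} x_i \prod_{j \in \mathcal{AT}(\sigma)} y_j, \]
showing that the total degree of each summand equals $n$ uniformly in $\sigma$. Once every summand is verified to have degree $n$, the homogeneity claim follows immediately by taking the sum over $\mathfrak{S}_{n+1}$, which preserves the degree when all terms share it.

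First, I would fix an arbitrary $\sigma = (\sigma_1, \ldots, \sigma_{n+1}) \in \mathfrak{S}_{n+1}$ and introduce the position-level partition of the index set $[n]$ into
\[ D(\sigma) := \{i \in [n] \mid \sigma_i > \sigma_{i+1}\}, \quad U(\sigma) := \{i \in [n] \mid \sigma_i < \sigma_{i+1}\}. \]
These two sets genuinely partition $[n]$: they are disjoint by definition, and their union is $[n]$ because $\sigma$ is a bijection on $[n+1]$, so consecutive entries $\sigma_i$ and $\sigma_{i+1}$ are distinct and thus exactly one of the two strict inequalities must hold for each $i \in [n]$.

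The next step transports this partition of positions to the sets $\mathcal{DT}(\sigma)$ and $\mathcal{AT}(\sigma)$ of values, via the maps $i \mapsto \sigma_i$ on $D(\sigma)$ and $i \mapsto \sigma_{i+1}$ on $U(\sigma)$ respectively. Because $\sigma$ is injective, both maps are bijections onto their images, so $|\mathcal{DT}(\sigma)| = |D(\sigma)|$ and $|\mathcal{AT}(\sigma)| = |U(\sigma)|$, per Definition \ref{dtat}. The degree of the monomial associated to $\sigma$ is therefore $|\mathcal{DT}(\sigma)| + |\mathcal{AT}(\sigma)| = |D(\sigma)| + |U(\sigma)| = |[n]| = n$.

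There is essentially no obstacle in this argument; the only subtlety worth highlighting is that a single value $\sigma_k \in [n+1]$ can simultaneously belong to both $\mathcal{DT}(\sigma)$ and $\mathcal{AT}(\sigma)$ when it is a peak, i.e., when $\sigma_{k-1} < \sigma_k > \sigma_{k+1}$. This poses no risk of overcounting because the two products in the monomial use disjoint variable sets $\mathbf{x}$ and $\mathbf{y}$ and therefore contribute independently, and the counting is carried out at the level of the position-partition $\{D(\sigma), U(\sigma)\}$, which is a true partition of $[n]$ and hence avoids both double-counting and omission. The conclusion that $A_n(\mathbf{x}, \mathbf{y})$ is homogeneous of degree $n$ in $(\mathbf{x}, \mathbf{y})$ then follows directly, and this paves the way towards introducing a homogenizing variable that will allow the translation from the stable to the RZ framework invoked in the preceding section.
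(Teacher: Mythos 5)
Your argument is correct and follows essentially the same reasoning the paper uses for this observation: each of the $n$ adjacent pairs contributes exactly one variable (an $x$-variable if a descent, a $y$-variable if an ascent), so every monomial has total degree $n$. The extra care you take in making the position-to-value bijections explicit, and in noting that a peak value appears in both $\mathcal{DT}(\sigma)$ and $\mathcal{AT}(\sigma)$ without causing a miscount because $\mathbf{x}$ and $\mathbf{y}$ are disjoint variable tuples, is a correct and welcome sharpening of what the paper leaves implicit.
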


Thus we have the following result.

\begin{proposicion}[Real stability of Eulerian polynomials]
$A_{n}(\mathbf{x},\mathbf{y})$ is homogenenous of degree $n$ and real stable.
\end{proposicion}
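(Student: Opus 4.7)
The plan is to split the claim into its two pieces: homogeneity of degree $n$, and real stability. Both will be essentially immediate from material that has already been established in the excerpt, so the proof will mostly amount to citing and assembling.

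For homogeneity, I would argue exactly as in the observation just preceding the statement. Fix $\sigma = (\sigma_1,\ldots,\sigma_{n+1}) \in \mathfrak{S}_{n+1}$. For each of the $n$ consecutive pairs indexed by $i \in [n]$, exactly one of $\sigma_i > \sigma_{i+1}$ or $\sigma_i < \sigma_{i+1}$ holds, so each such pair contributes exactly one variable (either $x_{\sigma_i}$ to $\mathcal{DT}(\sigma)$ or $y_{\sigma_{i+1}}$ to $\mathcal{AT}(\sigma)$) to the monomial indexed by $\sigma$. Therefore the total degree of each monomial in $A_n(\mathbf{x},\mathbf{y})$ is exactly $n$, independently of $\sigma$, and the polynomial is homogeneous of degree $n$.

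For real stability, I would simply invoke Theorem \ref{realstable}, which states that $A_n(\mathbf{x},\mathbf{y})$ is stable, and then observe that all coefficients in the defining expansion of $A_n(\mathbf{x},\mathbf{y})$ are $0$ or positive integers (in fact, the coefficient of a given monomial counts the number of permutations $\sigma \in \mathfrak{S}_{n+1}$ with the prescribed descent top and ascent top sets), hence real. A stable polynomial with real coefficients is by definition real stable, which completes the proof.

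I do not foresee any real obstacle here: the observation already does the combinatorial bookkeeping for the degree count, and the stability statement is being imported verbatim from the literature via Theorem \ref{realstable}. The only minor care is pointing out that the coefficients of the generating polynomial are manifestly real (nonnegative integers), which upgrades ``stable'' to ``real stable'' without further work.
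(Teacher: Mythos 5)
Your proof is correct and follows exactly the route the paper intends: homogeneity comes from the degree count in the observation immediately preceding the proposition, and real stability is obtained by combining Theorem~\ref{realstable} (stability) with the observation that all coefficients are nonnegative integers and hence real. Nothing to add.
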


We will forget about ascents in order to get closer to a RZ polynomial. This is so because forgetting about ascents we will get polynomials that do not become $0$ at the origin.

\begin{convencion}[Forgetting ascent bottoms]
Now we set all the $\mathbf{y}$ variables equal to $1$, i.e., $\mathbf{y}=\mathbf{1}$.
\end{convencion}

We have to observe then what happens when we dehomogenize by the first unit vector in order to obtain a RZ polynomial. This is the topic of the next section.

\setlength{\emergencystretch}{3em}%
\section[Dehomogenization in ascents]{Translation into the RZ setting: dehomogenization in ascents}
\setlength{\emergencystretch}{0em}%

Here we continue the task that we began in the previous section. Our translation from one setting to another had a middle step.

\begin{remark}[Translation effort]
We remind that we are translating our real stable multivariate Eulerian polynomials to the RZ setting. Before, we concentrated our efforts into noticing that these polynomials are in fact homogeneous. Now we perform a demohomogenization of the variables coming from ascent tops that will finally give us the RZ polynomial we are searching for.
\end{remark}

We begin this exercise developing an example that shows why we have to specialize the most general Eulerian polynomial $A_{n}(\mathbf{x},\mathbf{y})$ when we try to directly apply the relaxation. First we need a convention.

\begin{convencion}[Setting the ascent top variables]
As we need real-zero polynomials, we will consider the polynomial obtained when $\mathbf{y}=\mathbf{1}$. This polynomial is clearly also stable and in this case it provides a direct object over which we can apply the relaxation.
\end{convencion}

Thus, we can work through the next example.

\begin{ejemplo}
Now we fix the vector $\mathbf{u}=(\mathbf{x},\mathbf{y})=(\mathbf{0},\mathbf{1})\in\mathbb{R}^{(n+1)+(n+1)}$ and consider the evaluation $A_{n}(\mathbf{u})$. This means that each summand for which we have $\mathcal{DT}(\sigma)\neq\emptyset$ will be zero. So the only terms surviving are these for which $\mathcal{DT}(\sigma)=\emptyset$. Notice what this set of constraints means now: we have to be able to show that there exists an element of the group of permutations $\mathfrak{S}_{n+1}$ that does not cancel out completely. We need to find then a permutation $\sigma$ such that we have the condition \begin{center}
    for all $i\in[n]$ we have $\sigma_{i}\leq \sigma_{i+1}.$
\end{center} But it is clear that the permutation $\id=(1\cdots(n+1))$ verifies $\mathcal{DT}(\sigma)=\emptyset.$ 
\end{ejemplo}

This example shows us directly a clear relation between $A_{n}(\mathbf{x},\mathbf{y})$ and $A_{n}(\mathbf{x},\mathbf{1})$.

\begin{remark}[Homogenization]
$A_{n}(\mathbf{x},\mathbf{y})$ can be seen as the homogenization of $A_{n}(\mathbf{x},\mathbf{1})$ when we set all the variables in the tuple $\mathbf{y}$ equal to the same variable $y$.
\end{remark}

With the remark above in mind, now it is clear that the most natural dehomogenization that we can consider in our task of translating to the RZ setting is the one obtained setting $\mathbf{y}=\mathbf{1}$.

\begin{remark}[A degree $0$ independent term and choice of $\mathbf{y}$ setting]
We use the polynomial with less variables instead of the one with the extra tuple $\mathbf{y}$ of variables although our relaxation is expected to always work better when we have more variables because we need that our polynomial is not $0$ at the origin.
\end{remark}

We could have made other choices, but we want to keep things simple and nice.

\begin{remark}[Complete dehomogenization]
We considered only the polynomial obtained setting $\mathbf{y}=\mathbf{1}$. We know that for our relaxation it is always more convenient to keep as many variables as possible. However, it seems more natural in a first attempt to completely \textit{dehomogenize out} the variables coming from the ascent top sets.
\end{remark}

Using what we saw in the previous sections, the next corollary is therefore clear.

\begin{corolario}[RZ-ness of dehomogenization]
$A_{n}(\mathbf{x},\mathbf{1})$ is RZ.
\end{corolario}

\begin{proof}
$A_{n}(\mathbf{x},\mathbf{1})$ is the dehomogenization with respect to the unit vector corresponding to the homogenizing variable $y$ of $A_{n}(\mathbf{x},y,\dots,y)$. Using Corollary \ref{rstoposorth} and that $A_{n}(\mathbf{x},\mathbf{y})$ is real stable by Theorem \ref{realstable} we get therefore that $A_{n}(\mathbf{x},y,\dots,y)$ is hyperbolic with respect to the unit vector corresponding to the homogenizing variable $y$. Thus, finally, using Proposition \ref{dehomo} we get that that $A_{n}(\mathbf{x},\mathbf{1})$ is RZ, which finishes this proof.
\end{proof}

Now we have enough knowledge about the roots of these polynomials within our setting to proceed. The next task is purely combinatorial. We will have to develop our strategies to count permutations in terms of how many descents they have and at which elements they top. This is the task on which we work in the next chapter.
\chapter{Counting Permutations to Compute $L$-forms}\label{ChCounting}

Once we know that the polynomials we are studying fit in our setting, we have to proceed with the work of effectively computing the relaxation corresponding to the application to these polynomials. In order to do that, we will need effective formulas for the computation of the corresponding coefficients of these polynomials. These formulas will allow us to, ultimately, compute the corresponding $L$-forms we need in order to build the relaxation. Fortunately, much is known about these coefficients as they are numbers widely studied in combinatorics. We will mainly look at results about these coefficients coming from the papers \cite{davis2018pinnacle,hall2008counting}.

\section[Unrestricted setting]{Counting permutations in terms of descents: unrestricted setting}

Before applying the relaxation, we have to introduce the combinatorial tools necessary to simplify the computations of the entries of its matrices. Only through such simplifications we can expect to establish a manageable bound. We determine what we need to know in order to compute the relaxation.

\begin{remark}[Sets of permutations]
The main object of our study during the computation of the relaxation are the next sets of permutations. These sets are related to the permutations that descend at a set of elements introduced in Definition \ref{defexcat}.
\end{remark}

We will introduce notation for sets of permutations in terms of their sets of descents.

\begin{definicion}[Exact descent]
\label{rns}
Let $S\subseteq[n+1]$. We introduce the set of permutations that \textit{descend exactly} at $S$, i.e., $$\gls{Rns}:=\{\sigma\in\mathfrak{S}_{n+1}\mid S=\mathcal{DT}(\sigma)\}.$$
\end{definicion}

In fact, the cardinals of these sets are the basic objects that will appear quite often in the computations of the entries of the coefficient matrices forming the LMP defining the relaxation.

\begin{remark}[Limited cardinality and shorthands]\label{rshortening}
For our relaxation, we always have the inequality $|S|\leq3.$ When $n$ is fixed we simply denote these numbers $R(S)$ for any subset $S\subseteq[n+1].$
\end{remark}

Therefore, before computing the relaxation, it is better to study first (the cardinalities of) these sets. It will be helpful to find general expressions for them in easily computable terms. Luckily, our polynomials are multiaffine.

\begin{proposicion}[Location of exact descent sets in Eulerian polynomials] we have the identity
$$R(n,S)=A_{n}(x,(1,\dots,1))|_{x_{i}=0 \mbox{ when } i\in[n+1]\smallsetminus S \mbox{ and } x_{i}=1 \mbox{ when } i\in S}.$$
\end{proposicion}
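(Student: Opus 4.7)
The plan is to unwind the definition of $A_n$ from Definition~\ref{multieulerian} and track which monomials survive the specialization. Starting from
\[
A_n(\mathbf{x},\mathbf{y}) = \sum_{\sigma \in \mathfrak{S}_{n+1}} \prod_{i \in \mathcal{DT}(\sigma)} x_i \prod_{j \in \mathcal{AT}(\sigma)} y_j,
\]
the first step is to set $\mathbf{y}=\mathbf{1}$, which collapses every ascent-top factor to $1$ and leaves the multilinear expansion
\[
A_n(\mathbf{x},\mathbf{1}) = \sum_{\sigma \in \mathfrak{S}_{n+1}} \prod_{i \in \mathcal{DT}(\sigma)} x_i;
\]
multilinearity is forced because $\mathcal{DT}(\sigma)$ is a set, so each $x_i$ occurs with exponent at most $1$ in any monomial of the resulting sum.

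Next I perform the prescribed substitution, setting $x_i = 1$ for $i \in S$ and $x_i = 0$ for $i \in [n+1]\smallsetminus S$. The $\sigma$-summand becomes the indicator product $\prod_{i \in \mathcal{DT}(\sigma)}[i \in S]$: it evaluates to $1$ exactly when every descent top of $\sigma$ is selected by $S$, and annihilates as soon as some descent top of $\sigma$ lies outside $S$. Summing the surviving $1$'s then counts the permutations picked out by this selection rule and yields the right-hand side of the claim, matched against the set $R(n,S)$ from Definition~\ref{rns}. Remark~\ref{numberofvars} guarantees that the value assigned to $x_1$ is immaterial (the variable is absent), so no ambiguity arises from $1 \notin S$.

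The computation itself is a few lines of bookkeeping; the one delicate point is aligning the effect of the substitution with the ``exactly $S$'' qualifier in Definition~\ref{rns}. This is also the interpretation on which Corollary~\ref{-Pre} leans, where the substitutional count is converted into $|R(n-1,X)|$ through a M\"obius/inclusion--exclusion argument over the Boolean lattice of subsets of $[2,n+1]$. I expect this reconciliation between the two viewpoints --- direct substitution on one hand and prescribed descent-top set on the other --- to be the only step of the full write-up that warrants more than a one-line justification.
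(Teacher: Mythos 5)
Your derivation is careful right up to the last sentence, and that is exactly where it breaks. After setting $\mathbf{y}=\mathbf{1}$, the substitution does turn the $\sigma$-summand into $\prod_{i\in\mathcal{DT}(\sigma)}[i\in S]$, and this is $1$ exactly when $\mathcal{DT}(\sigma)\subseteq S$ — but ``$\subseteq$'' is not ``$=$''. What the evaluation actually counts is
\[
\#\{\sigma\in\mathfrak{S}_{n+1}\mid\mathcal{DT}(\sigma)\subseteq S\}=\sum_{T\subseteq S}|R(n,T)|,
\]
i.e.\ the cardinality of the set the paper later calls $(S)(\mathfrak{S}_{n+1})$ in Notation~\ref{defe}, not $|R(n,S)|$. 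These differ whenever $S\neq\emptyset$: the identity permutation always survives (its descent-top set $\emptyset$ sits inside every $S$) but is never in $R(n,S)$. Concretely, for $n=1$ and $S=\{2\}$ you have $A_1(\mathbf{x},\mathbf{1})=1+x_2$, which evaluates to $2$ under the substitution, while $|R(1,\{2\})|=1$.

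So the ``delicate point'' you flagged is not a bookkeeping reconciliation that can be written up in a few lines; the two counts are genuinely different quantities. Inclusion--exclusion over subsets of $S$ is what \emph{converts} the substitutional count into the exact count — that is the content of Corollary~\ref{coroR} — but that conversion is an extra alternating sum, not an identity of the two evaluations. The correct ``obvious'' way to read $|R(n,S)|$ off $A_n(\mathbf{x},\mathbf{1})$ is via coefficient extraction: since the polynomial is multiaffine, $|R(n,S)|$ is precisely $\coeff(\prod_{i\in S}x_i,\,A_n(\mathbf{x},\mathbf{1}))$, and this is in fact the operation the later $L$-form computations use (e.g.\ $\coeff(x_i,p)=R(\{i\})$ in Computation~\ref{degree1form}). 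Your write-up should either replace the right-hand side by coefficient extraction or record that the displayed equality, read literally as an evaluation, is false.
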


\begin{proof}
Obvious.
\end{proof}

But there are also direct methods for computing these terms. Before introducing these formulas we need to refine the concepts introduced in Definition \ref{dtat}

\begin{definicion}[Pairs]\cite[Section 1]{hall2008counting}
Let $\sigma=(\sigma_{1}\cdots\sigma_{n+1})\in\mathfrak{S}_{n+1}$ be a permutation written in its one-line notation. A \textit{descent pair} of $\sigma$ is a pair $(\sigma_{i},\sigma_{i+1})$ for $i\in[n]$ such that $\sigma_{i}>\sigma_{i+1}.$ An \textit{ascent pair} is defined analogously.
\end{definicion}

For counting purposes, it is interesting to consider pairs whose top lies in some fixed set $X$ and whose bottom lies in another fixed set $Y$. For brevity and because of the form of our polynomials, we will describe this just for descent pairs.

\begin{definicion}[Adequate descents]\cite[Definition 1.1]{hall2008counting}
Fix subsets $X,Y\subseteq\mathbb{N}$ and a permutation $\sigma\in\mathfrak{S}_{n+1}.$ We define the set of \textit{adequate descents} of $\sigma$ for the pair of sets $(X,Y)$ as $$\Des_{X,Y}(\sigma):=\{i\in[n]\mid\sigma_{i}>\sigma_{i+1}\mbox{\ and\ }\sigma_{i}\in X \mbox{\ and\ } \sigma_{i+1}\in Y\}.$$
\end{definicion}

It turns out that there are known formulas for computing the number of these permutations.

\begin{notacion}[Number of adequate descents]\cite[Theorems 2.3 and 2.5]{hall2008counting}
Denote \gls{pnsxy} the number of $\sigma\in\mathfrak{S}_{n}$ with at least $s$ adequate descents for the pair $(X,Y)$.
\end{notacion}

The explanation of the notation above might be confusing. We need to take care of which permutations we are actually counting through these numbers.

\begin{warning}[Exactness in descents]
Beware that $P_{n,s}^{X,Y}$ counts \textit{all} the permutations $\sigma\in\mathfrak{S}_{n}$ with at least $s$ adequate descents for the pair $(X,Y)$ in the sense that it counts permutations having also other additional descents. Later, we will need to refine these numbers because we want to be able to count the number of permutations $\sigma\in\mathfrak{S}_{n}$ with \textit{exactly} $s$ adequate descents \textbf{and no others} in order to find the coefficients of our polynomials.
\end{warning}

Here we will be mainly interested in the case where we let unrestricted the set $Y$. That is, when we fix $Y=\mathbb{N}$. Observe that we do not set a restriction on the bottom of the descent pairs.

\begin{particularizacion}[Unrestricted bottom set]
In this case, we denote $P_{n,s}^{X}:=P_{n,s}^{X,\mathbb{N}}$ the number of $\sigma\in\mathfrak{S}_{n}$ with at least $s$ adequate descents for the pair $(X,\mathbb{N})$.
\end{particularizacion}

We still need a couple of functions counting elements in a subset of $\mathbb{N}$ through order restrictions.

\begin{definicion}[Technical counters]\cite[Theorems 2.3 and 2.5]{hall2008counting}
For $A\subseteq\mathbb{N}$ and $n\in\mathbb{N}$ we write $A_{n}:=A\cap[n]$ and $A_{n}^{c}:=(A^{c})_{n}=[n]\smallsetminus A$. Additionally, for $j\in[n],$ we denote \begin{gather*}
\alpha_{A,n,j}:=|A^{c}\cap\{j+1,\dots,n\}|=|\{x\in A^{c}\mid j<x\leq n\}| \mbox{\ and\ }\\
\beta_{A,n,j}:=|A^{c}\cap\{1,\dots,j-1\}|=|\{x\in A^{c}\mid 1\leq x<j\}|,
\end{gather*} where the complement operator is taken with respect to $\mathbb{N}$ so $A^{c}=\mathbb{N}\smallsetminus A.$
\end{definicion}

Now we can count the number of $n$-permutations with a fixed number $s$ of adequate descent pairs for the pair $(X,\mathbb{N})$.

\begin{remark}[Counting permutations with fixed number of adequate descent pairs]
Since $Y$ is unrestricted, this will give us the number of permutations with a fixed number of descents $s$ whose top lies in $X.$
\end{remark}

We proceed with the main tool that we describe in this section.

\begin{teorema}[Formula for number of $n$-permutations with a fixed number of descents and whose tops are in a fixed set]\cite[Theorems 2.3 and 2.5]{hall2008counting} We have the following two formulas for $P_{n,s}^{X}=$\begin{gather*}
|X^{c}_{n}|!\sum_{r=0}^{s}(-1)^{s-r}\binom{|X^{c}_{n}|+r}{r}\binom{n+1}{s-r}\prod_{x\in X_{n}}(1+r+\alpha_{X,n,x})=\\
|X^{c}_{n}|!\sum_{r=0}^{|X_{n}|-s}(-1)^{|X_{n}|-s-r}\binom{|X^{c}_{n}|+r}{r}\binom{n+1}{|X_{n}|-s-r}\prod_{x\in X_{n}}(r+\beta_{X,n,x}).
\end{gather*}\end{teorema}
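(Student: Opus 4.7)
My plan is to derive both identities via a marked-subset inclusion-exclusion applied to a tagged overcount which is then evaluated explicitly by an insertion argument. Since the right-hand side has the shape $|X^c_n|!$ times a signed sum in $r$ whose summand factors as $\binom{|X^c_n|+r}{r}\binom{n+1}{s-r}\prod_{x\in X_n}(1+r+\alpha_{X,n,x})$, I would identify the product (absent the $\binom{n+1}{s-r}$ Möbius kernel) as the closed form of an overcount $N_r^X$ of pairs $(\sigma, T)$ in which $T$ is a tagged $r$-subset interacting with $\sigma$ in a prescribed way. Möbius inversion on the subset lattice should then convert $\sum_{r=0}^{s}(-1)^{s-r}\binom{n+1}{s-r}\,N_r^X$ into $P_{n,s}^X$, so the substantive content sits in finding the closed form of $N_r^X$.

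To evaluate $N_r^X$ I would use an insertion procedure that treats $X^c_n$ and $X_n$ asymmetrically. First arrange the $|X^c_n|$ elements of $X^c_n$ in a row in one of $|X^c_n|!$ ways, creating a backbone with $|X^c_n|+1$ gaps. The $r$ tag markers are placed into those gaps with multiplicity, giving the factor $\binom{|X^c_n|+r}{r}$. Finally, insert the elements of $X_n$ one at a time in increasing order; the key combinatorial claim is that when $x\in X_n$ is inserted, the number of slots admissible for it---those that create an adequate descent with top $x$ or that carry an active tag---is exactly $1+r+\alpha_{X,n,x}$, producing the product $\prod_{x \in X_n}(1+r+\alpha_{X,n,x})$. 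The second identity should follow by invoking the reversal involution $\sigma \mapsto \sigma^R$ with $\sigma^R_i := \sigma_{n+1-i}$: under reversal an adequate descent with top in $X$ becomes an adequate ascent whose bottom lies in $X$, and so $d_X(\sigma^R) = |X_n| - d_X(\sigma)$ up to a boundary correction; substituting $s \mapsto |X_n|-s$ in the first formula and observing that $\alpha_{X,n,x}$ is exchanged with $\beta_{X,n,x}$ under the reversal then delivers the second expression.

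The main obstacle I anticipate is the combinatorial heart of step two: the slot count $1+r+\alpha_{X,n,x}$ conflates contributions from the tag markers, the already-inserted elements of $X_n$ larger than $x$, and the elements of $X^c_n$ lying above $x$, and ruling out double counting as tags and insertions interleave will require a careful simultaneous inventory at the moment each $x$ is inserted. Once this count is verified, the rest of the proof is the standard binomial collapse via $\sum_{u=0}^{m}(-1)^u\binom{r}{u}=(-1)^m\binom{r-1}{m}$ together with the reversal symmetry.
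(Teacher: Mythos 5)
The paper does not prove this theorem: it is stated with a citation to Hall and Remmel \cite[Theorems 2.3 and 2.5]{hall2008counting} and no argument is given, so there is no ``paper's own proof'' to measure your proposal against. Evaluating the proposal on its own terms: the first half (identify a tagged overcount $N_r^X$, give it a product form via an insertion argument that first lays down the $|X^c_n|!$ arrangements of $X^c_n$ and then counts admissible slots for each $x\in X_n$, and recover $P_{n,s}^X$ by binomial/M\"obius inversion with kernel $\binom{n+1}{s-r}$) is the right skeleton and is in the same spirit as the Hall--Remmel development. The combinatorial heart you flag as the main obstacle, namely verifying that the slot count is exactly $1+r+\alpha_{X,n,x}$ with no double counting as tags and inserted elements interleave, is indeed where all the work lies, and you are right to highlight it.

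However, the route you sketch to the second formula is flawed as stated. You claim that under reversal $\sigma\mapsto\sigma^R$ an adequate descent with top in $X$ becomes an adequate ascent whose \emph{bottom} lies in $X$. This is backwards: reversal preserves which element of a comparable pair is the larger, so a descent at position $i$ of $\sigma$ with top $\sigma_i\in X$ becomes an ascent of $\sigma^R$ at position $n-i$ whose \emph{top} $\sigma^R_{n-i+1}=\sigma_i$ lies in $X$. Consequently $d_X(\sigma^R)$ (the number of descent tops of $\sigma^R$ in $X$) equals the number of \emph{ascent} tops of $\sigma$ in $X$, and the relation $d_X(\sigma)+a_X(\sigma)=|X_n|$ that your claimed identity $d_X(\sigma^R)=|X_n|-d_X(\sigma)$ would require simply does not hold: an element $x\in X_n$ that is a local peak contributes $2$ to the sum, while one that is a local valley or sits at an end adjacent to a larger element contributes $0$. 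This is not a boundary correction but a pervasive failure. To obtain the second expression you need a different device than naive reversal---the identity $\alpha_{X,n,x}+\beta_{X,n,x}=|X^c_n|$ suggests a complementation in the slot inventory together with a reindexing of the inversion kernel, but that has to be worked out directly rather than deduced from a symmetry of permutation statistics.
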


But this theorem is not enough for us, as we noted above.

\begin{warning}[Appearance of additional descents]
As we said before, the map $P_{n,s}^{X}$ counts \textit{all} the permutations with at least $s$ descents with tops in the set $X$, but these permutations might have other \textit{additional} descents. We, however, need a map that computes the permutations with no other descents than the $s$ whose descent tops are in $X$.
\end{warning}

This means that, setting $s=|X|$, the number $P_{n+1,s}^{X}=P_{n+1,|X|}^{X}$ will actually not be the cardinal (denoted as $|R(n,X)|$ in Definition \ref{rns}) of the set $$R(n,X):=\{\sigma\in\mathfrak{S}_{n+1}\mid X=\mathcal{DT}(\sigma)\},$$ as $P_{n+1,|X|}^{X}$ counts some extra permutations.

\begin{remark}[Counting carefully]
In fact, $P_{n+1,|X|}^{X}$ is the cardinal already mentioned in Definition \ref{defexcat} of the set $$(\mathfrak{S}_{n+1})(X):=\{\sigma\in\mathfrak{S}_{n+1}\mid X\subseteq\mathcal{DT}(\sigma)\}.$$
\end{remark}

However, considering now $P_{n+1,|X|}^{X}$ we can work out the remaining details with a much shorter and more convenient formula in the following section. Thus, since for our purposes in this paper we want to fill the whole restriction set $X$ with \textit{actual} descents (i.e., we set $s=|X|$ and $X\subseteq[n]$), then we have the following direct corollary that will be our main tool for counting in the next section.

\begin{corolario}[Totally realized fixed descent top set]\label{xordenadas}
For our special case of interest $s=|X|$ and $\{x_{1}<\cdots<x_{s}\}=X\subseteq[n]$, we have $P_{n,|X|}^{X}=$
\begin{gather*}
(n-|X|)!\prod_{i=1}^{s}(x_{i}-i).\end{gather*}
\end{corolario}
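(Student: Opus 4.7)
The plan is to specialize the second of the two equivalent formulas supplied by the theorem, which is the better-suited one because its summation bound will collapse under our hypothesis. First, observe that setting $s = |X|$ with $X \subseteq [n]$ forces $X_{n} = X$ and $|X_{n}| = |X| = s$, so the upper summation index $|X_{n}| - s$ becomes $0$. The entire sum therefore reduces to its single term at $r = 0$, in which $\binom{|X^{c}_{n}|}{0} = \binom{n+1}{0} = 1$ and $(-1)^{0} = 1$. The prefactor $|X^{c}_{n}|!$ simplifies to $(n-|X|)!$ because $X \subseteq [n]$ implies $|X^{c}_{n}| = n - |X|$.

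What remains is to evaluate $\prod_{x \in X} \beta_{X,n,x}$ in closed form using the ordered enumeration $x_{1} < \cdots < x_{s}$. By definition, $\beta_{X,n,x_{i}}$ counts the elements of $X^{c}$ strictly less than $x_{i}$. The elements of $X$ lying in $\{1, \ldots, x_{i}-1\}$ are precisely $x_{1}, \ldots, x_{i-1}$, giving $i-1$ of them, while $|\{1, \ldots, x_{i}-1\}| = x_{i}-1$. A direct subtraction yields $\beta_{X,n,x_{i}} = (x_{i}-1) - (i-1) = x_{i} - i$. Substituting this back into the product recovers the claimed expression $(n-|X|)! \prod_{i=1}^{s}(x_{i}-i)$.

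The argument is thus a two-step specialization-and-counting exercise with no anticipated obstacle. The only mildly non-obvious step is the initial choice between the two equivalent formulas offered by the theorem: the version involving $\beta$ is the right one because its summation range $|X_{n}| - s$ vanishes immediately under our hypothesis, whereas the version with $\alpha$ would leave a nontrivial alternating sum of binomials to be collapsed by hand before arriving at the same closed form.
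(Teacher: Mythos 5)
Your proof is correct and takes essentially the same route as the paper: specialize the second (the $\beta$) formula, observe the summation range collapses to the single term $r=0$ because $|X_n|-s=0$, and compute $\beta_{X,n,x_i}=x_i-i$ from the ordered enumeration of $X$. The only cosmetic caveat is that the chain ``$\binom{|X^c_n|}{0}=\binom{n+1}{0}=1$'' reads as asserting an identity between two different binomials when you really just mean that each of the two binomial factors evaluates to $1$ at $r=0$; stating them separately would be cleaner.
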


\begin{proof}
\begin{gather*}
(n-|X|)!\sum_{r=0}^{|X|-|X|}(-1)^{|X|-|X|-r}\binom{n-|X|+r}{r}\binom{n+1}{|X|-|X|-r}\prod_{x\in X}(r+\beta_{X,n,x})=\\(n-|X|)!\sum_{r=0}^{0}(-1)^{-r}\binom{n-|X|+r}{r}\binom{n+1}{-r}\prod_{x\in X}(r+\beta_{X,n,x})=\\(n-|X|)!\prod_{x\in X}(\beta_{X,n,x})=\\(n-|X|)!\prod_{x\in X}(|\{y\in [n]\smallsetminus X\mid 1\leq y<x\}|)=(n-|X|)!\prod_{i=1}^{s}(x_{i}-i),\end{gather*}
because we indexed the set $X$ according to its order so $x_{1}<\cdots<x_{s}.$ \end{proof}

The warnings in this section have to be addressed before any further computation can be performed. This is the topic of the next section.

\section[Restricted setting]{Counting permutations in terms of descents: restricted setting}

At the end of the previous section, we simplified the general formula already provided in the literature adapting it to our setting. This allows us to work with a much shorter formula in our case. Having this shorter formula will be very helpful in our task of addressing the warnings mentioned before: we need formulas for the number of permutations having exactly $s$ descents with tops in $X$ with $|X|=s$ and no other descents outside of $X.$

\begin{observacion}[Inclusions-exclusion]
The direct way to obtain these formulas is through the combination of the last Corollary \ref{xordenadas} of our previous section and the well-known inclusion-exclusion principle.
\end{observacion}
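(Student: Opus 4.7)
The plan is to invert Corollary \ref{xordenadas} by standard Möbius inversion on the Boolean lattice. First, I would make precise what $P_{n,|X|}^{X}$ actually counts. Reading the definition of adequate descents for the pair $(X,\mathbb{N})$ together with the choice $s=|X|$, a permutation contributes exactly when at least $|X|$ of its descent tops belong to $X$, and since $X$ has only $|X|$ elements and descent tops are distinct, this forces $X\subseteq\mathcal{DT}(\sigma)$. Consequently
\begin{equation*}
P_{n,|X|}^{X}\;=\;|(\mathfrak{S}_{n})(X)|\;=\;\sum_{T\supseteq X}|R(T)|,
\end{equation*}
where the sum ranges over subsets $T\subseteq[n]$ containing $X$ (the element $1$ can be ignored since it is never a descent top).

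Next I would apply Möbius inversion on the Boolean lattice, which gives
\begin{equation*}
|R(X)|\;=\;\sum_{T\supseteq X}(-1)^{|T\smallsetminus X|}\,|(\mathfrak{S}_{n})(T)|.
\end{equation*}
Reparameterizing $T=X\cup S$ with $S\subseteq Y=[n]\smallsetminus X$ and plugging in the closed form from Corollary \ref{xordenadas} applied to the set $X\cup S$ yields
\begin{equation*}
|R(n-1,X)|\;=\;\sum_{S\subseteq Y}(-1)^{|S|}(n-|X\cup S|)!\prod_{i=1}^{s+|S|}(x_{S,i}-i),
\end{equation*}
which is precisely Equation \eqref{coroR1-Pre}. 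Nothing subtle happens here: it is verbatim inclusion-exclusion combined with the product formula.

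For the second identity \eqref{coroR2-Pre} the plan is to run inclusion-exclusion from the opposite side. Instead of \emph{adding} forbidden descents, I would count permutations whose descent top set is \emph{contained in} a prescribed subset $J\subseteq X$, and then subtract those that fail to realize some element of $X$. The quantity $\alpha(J)\hat{!}$ should be read as the count of permutations in $\mathfrak{S}_{n}$ with $\mathcal{DT}(\sigma)\subseteq J$, which by the standard argument (cutting the one-line word at the prescribed descent positions and observing that each resulting block is an increasing run) is a multinomial-type product in the gaps of $J$ inside $[n]$. Möbius inversion on the opposite interval then delivers $|R(n-1,X)|=\sum_{J\subseteq X}(-1)^{|X\smallsetminus J|}\alpha(J)\hat{!}$.

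The mechanical core of both arguments is routine; the only genuine obstacle is the first bookkeeping step, namely matching $P_{n,|X|}^{X}$ to the "at least $X$ as descent tops" count and similarly interpreting $\alpha(J)\hat{!}$ unambiguously as the "descent tops contained in $J$" count. Once these two combinatorial translations are verified, both inclusion-exclusion identities drop out immediately and in parallel, and the equality between the two right-hand sides then also follows either directly or as a consequence of both equaling $|R(n-1,X)|$.
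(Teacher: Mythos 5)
Your proposal is correct and takes essentially the same route as the paper: the paper also obtains the first identity by inclusion-exclusion on the Boolean lattice applied to $P_{n,|X\cup S|}^{X\cup S}$ and then substitutes the closed form from Corollary~\ref{xordenadas}, and it delegates the second identity to the reference \cite[Appendix A]{davis2018pinnacle} using exactly the dual inclusion-exclusion you describe via the counts $\alpha(J)\hat{!}=|\{\sigma:\mathcal{DT}(\sigma)\subseteq J\}|$. The only difference is that you spell out the two bookkeeping translations (matching $P_{n,|X|}^{X}$ with the "at least $X$" superset count and reading $\alpha(J)\hat{!}$ as the subset count) that the paper leaves implicit.
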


But first we need to think about the sets to which we need to apply this principle.

\begin{notacion}[Mirror]\label{defe}
Mirroring Definition \ref{defexcat}, we denote $$\gls{XS}:=\{\sigma\in\mathfrak{S}_{n+1}\mid\mathcal{DT}(\sigma)\subseteq X\}.$$
\end{notacion}

Computing the cardinal of these sets require the use of two maps on sets and tuples.

\begin{notacion}[Operators]\cite[Appendix A]{davis2018pinnacle}
For an ordered set $X=\{x_{1}<\dots<x_{k}\}$, we define the operator $$\alpha(X)=(x_{1}-1,x_{2}-x_{1},x_{3}-x_{2},\dots,x_{k}-x_{k-1}).$$ And for a tuple $\beta=(\beta_{1},\dots,\beta_{k})$ we denote by $$\beta\hat{!}=(k+1)^{\beta_{1}}k^{\beta_{2}}\cdots3^{\beta_{k-1}}2^{\beta_{k}}.$$
\end{notacion}

For completeness, we convene $\alpha(\emptyset)=()$ and $()\hat{!}=1$. With this, we are ready to find the cardinal of the sets of permutations considered in Notation \ref{defe}.

\begin{proposicion}\cite[Theorem A.1]{davis2018pinnacle}
Let $X\subseteq[n]$. Then $$|(X)(\mathfrak{S}_{n})|=\alpha(X)\hat{!}$$
\end{proposicion}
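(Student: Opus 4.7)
The plan is to count $|(X)(\mathfrak{S}_n)|$ by building each admissible permutation via a canonical insertion procedure, inserting the elements $n, n-1, \dots, 1$ in this order into an initially empty sequence, and counting, at each step, the number of positions that keep the descent top set inside $X$. The key property I will establish is monotonicity of the descent top set along the insertion: once an element becomes a descent top it stays one in every subsequent step. Indeed, when I insert $j$, every element currently in the sequence is strictly larger than $j$, so adjacencies already realizing a descent are never broken into ascents, and an existing descent top $a$ can only either keep its right neighbor or acquire a new right neighbor $j<a$, keeping $a$ as a descent top.

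Next I analyze what inserting $j$ at new position $i$ does. For $i=1$ the only new adjacency $(j,\mathrm{old}_1)$ is an ascent, creating no descent top; for $i>1$ the new pair $(\mathrm{old}_{i-1},j)$ is a descent, so $\mathrm{old}_{i-1}$ becomes (or remains) a descent top. Combined with the monotonicity above, this means the final descent top set of $\sigma$ is exactly the set of elements that have ever served as a left neighbor at some insertion step. Therefore $\mathcal{DT}(\sigma)\subseteq X$ if and only if at every step $j$ I insert either at position $1$ or at a position whose left neighbor is already in $X$. Since each element presently in the sequence occurs as the left neighbor of exactly one insertion slot, the number of admissible positions for $j$ is
\[
1 + |X\cap\{j+1,\dots,n\}|.
\]
I will then argue that the map sending a sequence of admissible choices to the resulting permutation is a bijection onto $(X)(\mathfrak{S}_n)$, with the inverse obtained by successively removing the smallest element, which recovers each insertion slot uniquely.

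To finish, I will evaluate the product $\prod_{j=1}^{n}\bigl(1+|X\cap\{j+1,\dots,n\}|\bigr)$. Writing $X=\{x_1<\cdots<x_k\}$ and setting $x_0:=0$, $x_{k+1}:=n+1$, the quantity $|X\cap\{j+1,\dots,n\}|$ equals $k-r$ exactly for $j\in[x_r,x_{r+1}-1]$, an interval of length $x_{r+1}-x_r$. Grouping the factors by these intervals I obtain
\[
\prod_{j=1}^{n}\bigl(1+|X\cap\{j+1,\dots,n\}|\bigr)=(k+1)^{x_1-1}k^{x_2-x_1}(k-1)^{x_3-x_2}\cdots 2^{x_k-x_{k-1}}\cdot 1^{n-x_k},
\]
which is exactly $\alpha(X)\hat{!}$ by definition.

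I expect the main obstacle to be the rigorous justification of the bijection: it is easy to see that admissible insertion sequences yield elements of $(X)(\mathfrak{S}_n)$, but I must carefully verify the converse, namely that the unique reverse process (remove the smallest element from the final permutation, then the next smallest, and so on) always corresponds to an admissible insertion at each stage. This boils down to showing that, for any $\sigma\in(X)(\mathfrak{S}_n)$ and any $j\in[n]$, the position at which $j$ appears in the restriction of $\sigma$ to $\{j,\dots,n\}$ is either the first position or has a left neighbor in $X$; this uses that the left neighbor is strictly larger than $j$ and, by the hypothesis $\mathcal{DT}(\sigma)\subseteq X$, any element that is ever immediately to the left of a strictly smaller element in $\sigma$ must lie in $X$.
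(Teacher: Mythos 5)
Your proposal is correct. The paper itself offers no proof for this proposition; it simply cites \cite[Theorem A.1]{davis2018pinnacle}, so there is nothing internal to compare against. Your insertion argument (inserting $n, n-1, \dots, 1$ in decreasing order and counting, at each step, the $1 + |X\cap\{j+1,\dots,n\}|$ admissible slots) is a clean and self-contained proof. The key observations all check out: because every element already present is larger than the incoming $j$, an existing descent top can never lose that status, and the element immediately left of the new slot (if any) becomes a descent top, so the final descent top set is exactly the set of elements that were ever a left neighbor at an insertion step. Your converse argument for the bijection is also correct: for $\sigma\in(X)(\mathfrak{S}_n)$, if $a$ is the element immediately left of $j$ in the restriction of $\sigma$ to $\{j,\dots,n\}$, then in $\sigma$ either $a$ is directly left of $j$ or only elements $<j$ intervene, so in either case $a$'s right neighbor in $\sigma$ is smaller, forcing $a\in\mathcal{DT}(\sigma)\subseteq X$. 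The only cosmetic slip is the final exponent on $1$, which should be $n+1-x_k$ (for $j\in\{x_k,\dots,n\}$) rather than $n-x_k$, but since the base is $1$ this does not affect the product, and the evaluation $\prod_{j=1}^{n}\bigl(1+|X\cap\{j+1,\dots,n\}|\bigr)=\alpha(X)\hat{!}$ is correct.
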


We then obtain immediately the following two ways of approaching the numbers $|R(n,X)|.$

\begin{corolario}[Cardinal of sets with exact descent top set]
\label{coroR}
Fix $s=|X|$ and $\{x_{1}<\cdots<x_{s}\}=X\subseteq[n]$ and $\{y_{1}<\cdots<y_{n-s}\}=Y\subseteq[n]$ with $X\cup Y=[n]$. For subsets $S\subseteq Y$, name the ordered chain of elements obtained through the union $X\cup S=\{x_{S,1}<\cdots<x_{S,s+|S|}\}$. Thus, going through the complement, we have that
\begin{gather}\label{coroR1}
    |R(n-1,X)|=\sum_{S\subseteq[n]\smallsetminus{X}}(-1)^{|S|}(n-|X\cup S|)!\prod_{i=1}^{s+|S|}(x_{S,i}-i).
\end{gather} Similarly, we can express this number in terms of deletions in the initial set as
\begin{gather}\label{coroR2}
    |R(n-1,X)|=\sum_{J\subseteq X}(-1)^{|X\smallsetminus J|}\alpha(J)\hat{!}.
\end{gather}
\end{corolario}

\begin{proof}
We only prove the first identity, the second follows similarly and its detailed proof can be consulted in \cite[Appendix A]{davis2018pinnacle}. Using the inclusion-exclusion principle we have that $$
\sum_{S\subseteq[n]\smallsetminus{X}}(-1)^{|S|}P_{n,|X\cup S|}^{X\cup S},$$ which, after the discussion on the section before, clearly equals $$\sum_{S\subseteq[n]\smallsetminus{X}}(-1)^{|S|}(n-|X\cup S|)!\prod_{i=1}^{s+|S|}(x_{S,i}-i).$$
\end{proof}

With this, we can begin the task of computing the numbers relevant towards the objective of effectively building our relaxation. These numbers are the ones obtained for sets $X$ with $|X|\leq3$.

\section{Counting permutations in terms of descents: up to degree $3$}

In order to ease the notation in what follows, we need to introduce an important warning as we will abuse our notation because from this point on we will be only interested in counting cardinalities on sets and not so much in the actual aspect of the sets we are counting. We do this because it is more comfortable to write and to read and because there is no risk of confusion. Anyway, let this warning serve the purpose of avoiding any possible confusion in what follows.

\begin{warning}[Abuse of notation]
\label{abuseofnotationforR}
Since in this section we are not so interested in the sets $R(X)$ themselves but only in their cardinalities, we will abuse the notation in what follows and refer to the cardinal $|R(X)|$ of $R(X)$ simply as $R(X)$ for ease of notation.
\end{warning}

Using the corollary above, now this part is easy. We just have to apply it to the cases $|X|\in\{1,2,3\}.$ We fix $n$ and call $R(X)=R(n,X).$

\begin{computacion}[Cardinal 1]
For $|X|=1$, we have $X=\{x_{1}\}$ so $R(X)=$ \begin{gather*}
(-1)^{0}\alpha(X)\hat{!}+(-1)^{1}\alpha(\emptyset)\hat{!}=(x_{1}-1)\hat{!}-1=2^{x_{1}-1}-1.\end{gather*}
\end{computacion}

\begin{computacion}[Cardinal 2]
For $|X|=2$, we have $X=\{x_{1}<x_{2}\}$ so $R(X)=$ \begin{gather*}(-1)^{0}\alpha(X)\hat{!}+(-1)^{1}\alpha(\{x_{1}\})\hat{!}+(-1)^{1}\alpha(\{x_{2}\})\hat{!}+(-1)^{2}\alpha(\emptyset)\hat{!}=\\3^{x_{1}-1}2^{x_{2}-x_{1}}-(2^{x_{1}-1}+2^{x_{2}-1})+1.\end{gather*}
\end{computacion}

\begin{computacion}[Cardinal 3]
For $|X|=3$, we have $X=\{x_{1}<x_{2}<x_{3}\}$ so $R(X)= (-1)^{0}\alpha(X)\hat{!}+$ \begin{gather*}(-1)^{1}\alpha(\{x_{1},x_{2}\})\hat{!}+(-1)^{1}\alpha(\{x_{2},x_{3}\})\hat{!}+(-1)^{1}\alpha(\{x_{1},x_{3}\})\hat{!}+\\(-1)^{2}\alpha(\{x_{1}\})\hat{!}+(-1)^{2}\alpha(\{x_{2}\})\hat{!}+(-1)^{2}\alpha(\{x_{3}\})\hat{!}+(-1)^{3}\alpha(\emptyset)\hat{!}=\\4^{x_{1}-1}3^{x_{2}-x_{1}}2^{x_{3}-x_{2}}-\\(3^{x_{1}-1}2^{x_{2}-x_{1}}+3^{x_{2}-1}2^{x_{3}-x_{2}}+3^{x_{1}-1}2^{x_{3}-x_{1}})+(2^{x_{1}-1}+2^{x_{2}-1}+2^{x_{3}-1})-1.\end{gather*}
\end{computacion}

Now we are ready to compute the corresponding $L$-forms.

\section{Evaluations of the $L$-forms at monomials up to degree $3$}

We are still counting, so the Warning \ref{abuseofnotationforR} about the notation $R(X)$ of previous section still applies here.

\setlength{\emergencystretch}{3em}%
\begin{convencion}[Fixing as Eulerian polynomials]
In order to further shorten the notation, we fix, from now on, $n$ and call $p:=A_{n}(\mathbf{x},\mathbf{1})$, which is already normalized because $A_{n}(\mathbf{0},\mathbf{1})=1.$
\end{convencion}
\setlength{\emergencystretch}{0em}%

Fix $i,j\in\{0\}\cup[n].$ The entry in position $ij$ of the matrix polynomial produced by the relaxation is $$\sum_{k\in\{0\}\cup[n]}x_{k}L_{p,d}(x_{k}x_{i}x_{j})|_{x_{0}=1}.$$ We have to compute the $L$-form applied to monomials of different degrees. The degree $0$ part is easy because $L_{p}(1)=n$, since $n$ is the degree of $p$. So we begin our computations by the degree $1$ and use the formulas for these values computed in \cite[Example 3.5]{main}.

\begin{computacion}[Degree 1]
\label{degree1form}
The first evaluations of the $L$-form come directly from the polynomial as $L_{p}(x_{i}):=\coeff(x_{i},p)$ $$=|\{\sigma\in\mathfrak{S}_{n+1}\mid\{i\}=\mathcal{DT}(\sigma)\}|=R(\{i\})=2^{i-1}-1.$$
\end{computacion}

The terms of degree $2$ have several forms depending on the degree of the variables involved.

\begin{computacion}[Degree 2]
\label{degree2form}
Looking at the computation above, it is clear that the evaluation on squares the has the form $\frac{1}{2}L_{p,d}(x_{i}^{2}):=-\coeff(x_{i}^{2},p)+\frac{1}{2}\coeff(x_{i},p)^{2}=$ $$\frac{1}{2}|\{\sigma\in\mathfrak{S}_{n+1}\mid \{i\}=\mathcal{DT}(\sigma)\}|^{2}$$ because the polynomial $p$ is multiaffine and then, automatically, we have that $\coeff(x_{i}^{2},p)=0.$ This implies $$L_{p,d}(x_{i}^{2})=R(\{i\})^{2}=(2^{i-1}-1)^{2}.$$ Meanwhile, for $i<j$, we have that the evaluation of the $L$-form in the monomials of the form $x_{i}x_{j}$ equals $L_{p,d}(x_{i}x_{j}):=$ \begin{gather*}-\coeff(x_{i}x_{j},p)+\coeff(x_{i},p)\coeff(x_{j},p)=\\-|\{\sigma\in\mathfrak{S}_{n+1}\mid\{i,j\}=\mathcal{DT}(\sigma)\}|+\\|\{\sigma\in\mathfrak{S}_{n+1}\mid\{i\}=\mathcal{DT}(\sigma)\}||\{\sigma\in\mathfrak{S}_{n+1}\mid\{j\}=\mathcal{DT}(\sigma)\}|=\\R(\{i\})R(\{j\})-R(\{i,j\})=\\(2^{i-1}-1)(2^{j-1}-1)-(3^{i-1}2^{j-i}-(2^{i-1}+2^{j-1})+1)=\\2^{-2 + i + j} - 2^{-i + j} \cdot 3^{-1 + i}.\end{gather*}
\end{computacion}

Finally, for degree $3$, we have to perform the following computations.

\begin{computacion}[Degree 3]
\label{degree3form}
Using again that $p$ is multiaffine, we have that $\frac{1}{3}L_{p,d}(x_{i}^{3}):=\coeff(x_{i}^{3},p)-\coeff(x_{i},p)\coeff(x_{i}^{2},p)+\frac{1}{3}\coeff(x_{i},p)^{3}=\frac{1}{3}\coeff(x_{i},p)^{3}=$ \begin{gather*}\frac{1}{3}|\{\sigma\in\mathfrak{S}_{n+1}\mid\{i\}=\mathcal{DT}(\sigma)\}|^{3}.\end{gather*} This implies $$L_{p,d}(x_{i}^{3})=R(\{i\})^{3}=(2^{i-1}-1)^{3}.$$ Similarly, for $i<j$, we have that the evaluation of the $L$-form of the monomials of the form $x_{i}^{2}x_{j}$ equals $L_{p,d}(x_{i}^{2}x_{j}):=$ \begin{gather*}\coeff(x_{i}^{2}x_{j},p)-\coeff(x_{i},p)\coeff(x_{i}x_{j},p)-\coeff(x_{j},p)\coeff(x_{i}^{2},p)+\\\coeff(x_{i},p)^{2}\coeff(x_{j},p)=\\-\coeff(x_{i},p)\coeff(x_{i}x_{j},p)+\coeff(x_{i},p)^{2}\coeff(x_{j},p)=\\-|\{\sigma\in\mathfrak{S}_{n+1}\mid\{i\}=\mathcal{DT}(\sigma)\}||\{\sigma\in\mathfrak{S}_{n+1}\mid\{i,j\}=\mathcal{DT}(\sigma)\}|+\\\{\sigma\in\mathfrak{S}_{n+1}\mid\{i\}=\mathcal{DT}(\sigma)\}|^2|\{\sigma\in\mathfrak{S}_{n+1}\mid\{j\}=\mathcal{DT}(\sigma)\}|=\\R(\{i\})^{2}R(\{j\})-R(\{i\})R(\{i,j\})=\\(2^{i-1}-1)^{2}(2^{j-1}-1)-(2^{i-1}-1)(3^{i-1}2^{j-i}-(2^{i-1}+2^{j-1})+1)=\\\frac{1}{3}\cdot 2^{-3 - i + j} (-2 + 2^i) (-4 \cdot 3^{i} + 3 \cdot 4^{i}).\end{gather*} Here there is an asymmetry we have to take care of: proceeding as before we see that when $j<i$, we have that the evaluation $L_{p,d}(x_{i}^{2}x_{j})=$ \begin{gather*}(2^{i-1}-1)^{2}(2^{j-1}-1)-(2^{i-1}-1)(3^{j-1}2^{i-j}-(2^{i-1}+2^{j-1})+1)=\\\frac{1}{3}\cdot2^{-3 + i - j} (-2 + 2^i) (-4 \cdot 3^j + 3 \cdot 4^j).\end{gather*} Finally, for distinct $i<j<k$ we have $L_{p,d}(x_{i}x_{j}x_{k})=$ \begin{gather*}
\frac{1}{2}(\coeff(x_{i}x_{j}x_{k},p)-\coeff(x_{i},p)\coeff(x_{j}x_{k},p)-\coeff(x_{j},p)\coeff(x_{i}x_{k},p)-\\\coeff(x_{k},p)\coeff(x_{i}x_{j},p)+2\coeff(x_{i},p)\coeff(x_{j},p)\coeff(x_{k},p))=\\\frac{1}{2}(|\{\sigma\in\mathfrak{S}_{n+1}\mid\{i,j,k\}=\mathcal{DT}(\sigma)\}|-\\|\{\sigma\in\mathfrak{S}_{n+1}\mid\{i\}=\mathcal{DT}(\sigma)\}||\{\sigma\in\mathfrak{S}_{n+1}\mid\{j,k\}=\mathcal{DT}(\sigma)\}|-\\|\{\sigma\in\mathfrak{S}_{n+1}\mid\{j\}=\mathcal{DT}(\sigma)\}||\{\sigma\in\mathfrak{S}_{n+1}\mid\{i,k\}=\mathcal{DT}(\sigma)\}|-\\|\{\sigma\in\mathfrak{S}_{n+1}\mid\{k\}=\mathcal{DT}(\sigma)\}||\{\sigma\in\mathfrak{S}_{n+1}\mid\{i,j\}=\mathcal{DT}(\sigma)\}|+\\2\prod_{h\in\{i,j,k\}}|\{\sigma\in\mathfrak{S}_{n+1}\mid\{h\}=\mathcal{DT}(\sigma)\}|)=\\\frac{1}{2}(R(\{i,j,k\})-(R(\{i\})R(\{j,k\})+R(\{j\})R(\{i,k\})+R(\{k\})R(\{i,j\}))+\\2R(\{i\})R(\{j\})R(\{k\}))=\\\frac{1}{2}(4^{i-1}3^{j-i}2^{k-j}-(3^{i-1}2^{j-i}+3^{j-1}2^{k-j}+3^{i-1}2^{k-i})+\\(2^{i-1}+2^{j-1}+2^{k-1})-1\\-((2^{i-1}-1)(3^{j-1}2^{k-j}-(2^{j-1}+2^{k-1})+1)\\+(2^{j-1}-1)(3^{i-1}2^{k-i}-(2^{i-1}+2^{k-1})+1)\\+(2^{k-1}-1)(3^{i-1}2^{j-i}-(2^{i-1}+\\2^{j-1})+1))+2(2^{i-1}-1)(2^{j-1}-1)(2^{k-1}-1))=\\2^{-3 + i + j + k} - 2^{-1 - i + j + k} \cdot 3^{-1 + i} - 2^{-2 + i - j + k} \cdot 3^{-1 + j} + 2^{-3 + 2 i - j + k} \cdot 3^{-i + j}.\end{gather*}
\end{computacion}

Now we can feed our software directly with these computations so it does not have to perform computations through sets of permutations. This is the topic of the next part of this work.
\part{On the Comparison of the Relaxation with Previously Known and Used Methods Providing Bounds}\label{III}
\chapter[Sobolev, Stanley, Stump and Mez\H{o}]{Sobolev, Stanley, Stump and Mez\H{o}: A Tale of a Lost Estimation and a Refinement of an Idea of Stanley}\label{ChApp}

Many paths meet in this chapter. Knowing that our polynomials are in the correct RZ setting will allow us to apply the relaxation while the counting arguments and formulas studied in the previous part will help us in the process of building it correctly computing the corresponding LMPs efficiently. Additionally, as we will provide bounds for roots of widely studied and researched polynomials, we will find some prominent people along this way. These are the people we mentioned in the title of the chapter. Thus, the main references here will be the books \cite{sobolev2006selected,mezo2019combinatorics} and the MO question \cite{287547}.

\section[Stump meets Stanley]{Stump meets Stanley with better asymptotics of roots of univariate Eulerian polynomials}

Stump and Stanley have an interesting exchange of information in the MO question \cite{287547} about the roots of univariate Eulerian polynomials. In particular, Stanley provides an asymptotic answer to a question of Stump for explicit forms of the roots of univariate Eulerian polynomials. We claim that Stanley proves in that post more than what he claims. He claims the following about the asymptotic behaviour of these roots. Beware that we are translating to our notation where the $n$-th multivariate Eulerian polynomial has degree $n$ in the variables $x_{2},\dots,x_{n+1}$. We decompose the $n$-th univariate Eulerian polynomials as $$A_{n}(x)=\prod_{i=1}^{n}(x-\gls{qin})=\prod_{i=1}^{n}(x+|q_{i}^{n}|)$$ so that $q_{i}^{(n)}$ is the $i$-th root of the $n$-th univariate Eulerian polynomial counting from the left to the right, i.e., for each $n$, the roots are sorted as follows $$q_{1}^{(n)}\leq\cdots\leq q_{n}^{(n)}<0.$$

\begin{notacion}[Eulerian numbers]\cite[Definition 6.6.3]{mezo2019combinatorics}
We write the Eulerian number $\gls{enk}$ for the number of permutations in $\mathfrak{S}_{n}$ having exactly $k$ descents.
\end{notacion}

Almost repeating names for different things is very unfortunate. Here, we better take care of this before continuing in order to avoid very unnecessary confusions.

\begin{warning}[Euler versus Eulerian numbers in the literature]
There are many different mathematical terms called after Euler. This is good for Euler and for those who, like me, admire him. But it is also a problem when two disparate sequences of numbers end up having a very similar name when they are not (directly) related. Thus, we make clear that here we are talking about (the triangle of) Eulerian numbers coming from counting permutations with a particular number of descents and not about the other sequence known as \textit{Euler numbers}.
\end{warning}

Stump initially asks a more complicated question that we do not fully address here. We only care about the asymptotic behaviour.

\begin{remark}[Interest in the form of the roots]
Stump asks in general what is known about the roots of univariate Eulerian polynomials and their values. He also asks if there is an explicit description for them. We only address the first part, dealing with the asymptotic behaviour of these roots. We note that, although it lies out of our scope here, there is a short discussion (by Stanley) in the comments of the cited MO post about the (im-)possibility of giving a nice  explicit description of these roots.
\end{remark}

However, afterwards Stanley proceeds to give an answer to Stump's question that only gives a very weak asymptotic estimation for these roots. In particular, Stanley claims something very weak. However, analyzing his strategy, it is clear that he could have indeed claimed much more. This is what we do here studying his approach in that MO post in more detail.

\begin{observacion}[Squeezing a strategy]
In particular, we will be able to use the same idea of Stanley and extend it until we obtain the asymptotic growth of the roots. Moreover, by solving some asymptotic equations, we will be able to go even beyond and analyze further terms of the asymptotic growth of these roots.
\end{observacion}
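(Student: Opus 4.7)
The strategy I would follow is to refine Stanley's argument by combining sharp asymptotics for the Eulerian numbers $E(n+1,k)$ at \emph{fixed} $k$ with a bootstrap induction on $k$, exploiting that, under sufficient separation of the roots, the $k$-th root is well approximated by the ratio $e_{k}/e_{k-1}$ of consecutive elementary symmetric polynomials evaluated at the roots. Concretely, I set $r_{k}:=(k+1)/k$ and aim at $q_{k}^{(n)}\sim r_{k}^{n+1}$.

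The first input I would establish is the asymptotic estimate
\[
E(n+1,k)=(k+1)^{n+1}\left(1+O\!\left((n+2)\bigl(k/(k+1)\bigr)^{n+1}\right)\right)
\]
for each fixed $k\geq 1$ as $n\to\infty$; this drops out of the explicit Worpitzky-type identity $E(n+1,k)=\sum_{j=0}^{k}(-1)^{j}\binom{n+2}{j}(k+1-j)^{n+1}$ by comparing the size of consecutive summands. In particular, $E(n+1,k)/E(n+1,k-1)\sim r_{k}^{n+1}$. The second input is Vieta: writing $A_{n}(x)=\prod_{i=1}^{n}(x+q_{i}^{(n)})=\sum_{j}E(n+1,j)x^{j}$ and using the palindromic symmetry $E(n+1,j)=E(n+1,n-j)$ yields $e_{j}(q_{1}^{(n)},\dots,q_{n}^{(n)})=E(n+1,j)$ for every $j$, so the ordered tuple of roots is prescribed by the Eulerian numbers.

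With these inputs in hand, I would induct on $k$. For the base case $k=1$, I set $s:=q_{1}^{(n)}$ and $t:=E(n+1,1)-s=q_{2}^{(n)}+\cdots+q_{n}^{(n)}$. Then $e_{2}=st+e_{2}(q_{2}^{(n)},\dots,q_{n}^{(n)})$, and the error term $e_{2}(q_{2}^{(n)},\dots,q_{n}^{(n)})\leq\binom{n-1}{2}(q_{2}^{(n)})^{2}$ is, from a preliminary crude bound $q_{2}^{(n)}=O(n\cdot r_{2}^{n+1})$ obtained via the sandwich $q_{1}q_{2}\leq e_{2}\leq\binom{n}{2}q_{1}q_{2}$ together with $e_{1}/n\leq q_{1}\leq e_{1}$, of smaller order than the right-hand side since $n^{4}(9/4)^{n+1}=o(3^{n+1})$. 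Thus $s$ satisfies a quadratic
\[
X^{2}-E(n+1,1)X+(1-o(1))E(n+1,2)=0
\]
whose larger root $\tfrac{1}{2}\bigl(E(n+1,1)+\sqrt{E(n+1,1)^{2}-4(1-o(1))E(n+1,2)}\bigr)\sim 2^{n+1}$ by the exponential separation $4^{n+1}/3^{n+1}\to\infty$ of the two contributions to the discriminant. The inductive step from $k-1$ to $k$ peels off the already-estimated roots $q_{1}^{(n)},\dots,q_{k-1}^{(n)}$ via the telescoping product $\prod_{j=1}^{k-1}r_{j}^{n+1}=k^{n+1}$ and applies the same quadratic mechanism to the residual polynomial whose first few elementary symmetric polynomials are computed from $E(n+1,k-1),E(n+1,k),E(n+1,k+1)$ by Newton's identities; the required exponential separation $q_{k}^{(n)}/q_{k+1}^{(n)}\to\infty$ is then a consequence of $r_{k}/r_{k+1}=(k+1)^{2}/(k(k+2))>1$.

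The main obstacle I anticipate lies in the inductive step, specifically in controlling the cross contributions from separating $q_{k}^{(n)}$ from the tail $\{q_{k+1}^{(n)},\dots,q_{n}^{(n)}\}$: one must show that $e_{2}(q_{k+1}^{(n)},\dots,q_{n}^{(n)})$ is negligible compared with $q_{k}^{(n)}\cdot(q_{k+1}^{(n)}+\cdots+q_{n}^{(n)})$. To handle this uniformly in the induction, I would first extract from the elementary sandwich $\prod_{i=1}^{j}q_{i}^{(n)}\leq e_{j}\leq\binom{n}{j}\prod_{i=1}^{j}q_{i}^{(n)}$ a coarse two-sided bound of the form $r_{k}^{n+1}/p_{1}(n)\leq q_{k}^{(n)}\leq r_{k}^{n+1}\cdot p_{2}(n)$ with polynomial factors $p_{1},p_{2}$, using only the already established $E(n+1,k)\sim(k+1)^{n+1}$. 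The exponential margin $((k+1)^{2}/(k(k+2)))^{n+1}$ then dominates every polynomial loss, so the tail is genuinely negligible and the quadratic mechanism of the base case closes the argument with the desired multiplicative $1+o(1)$ accuracy.
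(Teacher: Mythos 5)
Your proposal targets essentially the same end result as the paper's Theorem~\ref{refinementStanley}, and it builds on the same basic ingredients: the explicit Worpitzky-type formula for $E(n+1,k)$ giving $E(n+1,k)\sim(k+1)^{n+1}$ at fixed $k$, the Vieta relations $e_j=E(n+1,j)$, palindromic symmetry, and a crude two-sided sandwich on $q_1^{(n)}$ and $q_2^{(n)}$ obtained from $e_1$ and $e_2$. The mechanism, however, is genuinely different. The paper runs a subsequence-contradiction argument: assume a bounded subsequence of $|q_1^{(n)}|/2^{n+1}$ converges to some $l<1$, use the level-two identity $E(n+1,n-2)=e_2$ to force $|q_2^{(n)}|\lesssim(n+1)(3/2)^{n+1}$, deduce that $\sum_i|q_i^{(n)}|$ can then grow only like $l\cdot 2^{n+1}$ along that subsequence, which contradicts $E(n+1,n-1)\sim 2^{n+1}$; boundedness plus uniqueness of accumulation points then yields convergence. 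You instead set $s=q_1^{(n)}$, $t=\sum_{i\geq 2}q_i^{(n)}$, observe that $s,t$ are the roots of $X^2-e_1X+st=0$ with $st=e_2-e_2(\text{tail})$, and show the tail contribution is of strictly smaller exponential order so the quadratic has a larger root $\sim e_1\sim 2^{n+1}$. Your route is more constructive: it produces $q_1^{(n)}$ directly as (asymptotically) a root of an explicit quadratic rather than ruling out all subsequential limits other than $1$, and it sidesteps the accumulation-point argument because the quadratic approximation is valid for each $n$. The paper's contradiction argument is lighter on bookkeeping for the base case but needs the accumulation-point step, and it leaves the inductive step ``from this, we can now inductively establish the stronger result'' unwritten; your sketch is at least equally explicit there.

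Two small imprecisions in your proposal. First, the peeling of the first $k-1$ roots in the inductive step produces the elementary symmetric polynomials of the tail via the convolution identity $e_j(\text{full})=\sum_{i=0}^{j}e_i(\text{head})\,e_{j-i}(\text{tail})$, which follows from factoring the product $\prod(1+q_i t)$ over head and tail — this is not Newton's identities, which relate power sums to elementary symmetric polynomials. Second, to extract $e_1(\text{tail})$ and $e_2(\text{tail})$ at stage $k$ you will in general need the full set of coefficients $E(n+1,1),\ldots,E(n+1,k+1)$ (not only the three listed) together with the asymptotics of the head roots $q_1^{(n)},\ldots,q_{k-1}^{(n)}$; the exponential margin $\bigl((k+1)^2/(k(k+2))\bigr)^{n+1}$ does, as you say, absorb the polynomial-in-$n$ error factors that accumulate when you multiply out the convolution, so the argument closes, but these details need to be tracked. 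With those corrections the strategy is sound and would yield the result of Theorem~\ref{refinementStanley}, and the explicit quadratic ansatz positions you well for the ``further terms of the asymptotic growth'' promised in the observation, which the paper pursues later via conjugate-multiplication expansions.
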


We look now at the estimate provided by Stanley in the MO post. As we will see, this estimate is far from what he could have claimed.

\begin{proposicion}[Original Stanley claim]\label{orstcl}\cite{287547}
Fix numbers $n,k\in\mathbb{N}$ with $k\leq n$ and decompose the $n$-th univariate Eulerian polynomial $$A_{n}(x)=\prod_{i=1}^{n}(x-q_{i}^{(n)})=\prod_{i=1}^{n}(x+|q_{i}^{n}|)$$ so that $$0<|q_{n}^{(n)}|\leq\cdots\leq |q_{1}^{(n)}|.$$ Then we can estimate the growth of the absolute value of the $k$-th root of the $n$-th univariate Eulerian polynomial $A_{n}$ as $$\lim_{n\to\infty}(|q_{k}^{(n)}|)^{\frac{1}{n+1}}=\frac{k+1}{k}.$$
\end{proposicion}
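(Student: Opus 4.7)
The plan is to reduce the problem to a purely asymptotic computation on Eulerian numbers. First I would expand the product form
\[
A_n(x)=\prod_{i=1}^{n}(x+|q_i^{(n)}|)=\sum_{k=0}^{n} e_{n-k}\bigl(|q_1^{(n)}|,\ldots,|q_n^{(n)}|\bigr)x^{k},
\]
where $e_k$ denotes the $k$-th elementary symmetric polynomial. In the convention of this paper the coefficient of $x^k$ in $A_n$ is the Eulerian number $E(n+1,k)$, and the palindromic identity $E(n+1,k)=E(n+1,n-k)$ (which comes from the descent-complement involution $\sigma\mapsto \omega_0 \sigma$ on $\mathfrak{S}_{n+1}$) therefore collapses into the clean equality
\[
e_k\bigl(|q_1^{(n)}|,\ldots,|q_n^{(n)}|\bigr)=E(n+1,k)\qquad(0\leq k\leq n).
\]

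Next I would bracket the individual root $|q_k^{(n)}|$ by the ratio $e_k/e_{k-1}$ up to multiplicative errors that are polynomial in $n$. Because $|q_1^{(n)}|\geq\cdots\geq|q_n^{(n)}|$, the product of any $k$ of the $|q_i^{(n)}|$'s is maximized by taking the top $k$ indices, hence
\[
|q_1^{(n)}|\cdots|q_k^{(n)}|\;\leq\;e_k\;\leq\;\binom{n}{k}|q_1^{(n)}|\cdots|q_k^{(n)}|,
\]
together with the analogous two-sided bound on $e_{k-1}$. Dividing and cancelling the common factor $|q_1^{(n)}|\cdots|q_{k-1}^{(n)}|$ yields
\[
\frac{|q_k^{(n)}|}{\binom{n}{k-1}}\;\leq\;\frac{e_k}{e_{k-1}}\;\leq\;\binom{n}{k}\,|q_k^{(n)}|.
\]
The binomial factors are polynomial in $n$, so they become $1$ once one takes the $(n+1)$-th root. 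It therefore suffices to prove that
\[
\Bigl(E(n+1,k)/E(n+1,k-1)\Bigr)^{1/(n+1)}\longrightarrow (k+1)/k.
\]

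For this last step I would invoke the classical closed formula $E(n+1,k)=\sum_{j=0}^{k}(-1)^{j}\binom{n+2}{j}(k+1-j)^{n+1}$. For fixed $k$ the $j=0$ summand $(k+1)^{n+1}$ dominates exponentially: each subsequent term is bounded in absolute value by $\binom{n+2}{j}k^{n+1}$, and its ratio to the leading term, namely $\binom{n+2}{j}\bigl(k/(k+1)\bigr)^{n+1}$, tends to $0$ because the binomial grows only polynomially in $n$ while $(k/(k+1))^{n+1}$ decays exponentially. Hence $E(n+1,k)^{1/(n+1)}\to k+1$ and, in the same way, $E(n+1,k-1)^{1/(n+1)}\to k$, so their ratio tends to $(k+1)/k$ and the result follows. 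The main obstacle I foresee is cosmetic rather than conceptual: in the bracketing step one must be disciplined not to lose exponential precision while bounding $e_k/e_{k-1}$, since any sloppy intermediate estimate whose error grows faster than polynomially in $n$ would survive the $(n+1)$-th root and spoil the limit. Everything else is standard asymptotics of binomial sums together with the palindromic symmetry of the Eulerian triangle.
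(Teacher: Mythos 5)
Your proof is correct and follows essentially the same route as the paper: identify the coefficients of $A_n$ with elementary symmetric functions of $|q_1^{(n)}|,\ldots,|q_n^{(n)}|$ via palindromicity, bracket the relevant root between quantities that differ only by factors polynomial in $n$ (which disappear under the $(n+1)$-th root), and then use the explicit sum formula for $E(n+1,k)$ to extract the exponential leading term $(k+1)^{n+1}$. The one genuine refinement you offer is the clean two-sided estimate
\[
\frac{|q_k^{(n)}|}{\binom{n}{k-1}}\;\leq\;\frac{e_k}{e_{k-1}}\;\leq\;\binom{n}{k}\,|q_k^{(n)}|,
\]
obtained from the ordering of the roots: this handles every fixed $k$ uniformly in one shot, whereas the paper's own argument writes out only the $k=1$ case (using $|q_1^{(n)}|\leq\sum_i|q_i^{(n)}|\leq n\,|q_1^{(n)}|$) and then appeals to ``a similar inductive argument'' for the general case without spelling it out. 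So your bracketing lemma is best read as the explicit completion of the induction the paper merely gestures at, rather than a conceptually distinct proof.
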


\begin{proof}
The $n$-th univariate Eulerian polynomial is $A_{n}(x)=\sum_{k=0}^{n}E(n+1,k)x^{k}$. Observe that, for $k\leq\frac{n}{2}$ fixed, $E(n+1,k)=E(n+1,n-k)$ grows roughly like $(k+1)^{n+1}$ in the particular sense that an explicit formula for that number has the form $$E(n+1,k)=\sum_{i=0}^{k}(-1)^{i}\binom{n+2}{i}(k+1-i)^{n+1}.$$ We will take $n$ arbitrarily big while fixing $k$ and therefore we can always suppose that we are in the case where $k\leq\frac{n}{2}$ so we have that, for fixed $k$, $\lim_{n\to\infty}\frac{E(n+1,k)}{(k+1)^{n+1}}=$ \begin{gather*}\lim_{n\to\infty}\frac{1}{(k+1)^{n+1}}\sum_{i=0}^{k}(-1)^{i}\binom{n+2}{i}(k+1-i)^{n+1}=\\\lim_{n\to\infty}\frac{1}{(k+1)^{n+1}}(-1)^{0}\binom{n+2}{0}(k+1-0)^{n+1}=1.\end{gather*} Now, $E(n+1,k)$ is the $k$-th elementary symmetric function of the absolute values of the roots, i.e., $$E(n+1,k)=\sum_{S\in\binom{[n]}{n-k}}\prod_{i\in S}|q^{(n)}_{i}|.$$ This in particular means that \begin{gather}\label{vagueproblem}E(n+1,1)=E(n+1,n-1)=\sum_{i=1}^{n}|q_{i}^{(n)}|\end{gather} and, therefore, as $E(n+1,n-1)=2^{n+1}-n-2$, we have that the biggest absolute value of roots $|q^{(n)}_{1}|$ has to grow like $2^{n+1}$ in the (very weak) sense that $$\lim_{n\to\infty}(|q^{(n)}_{1}|)^{\frac{1}{n+1}}=\lim_{n\to\infty}E(n+1,n-1)^{\frac{1}{n+1}}=2.$$ From there, it is easy to continue with a similar inductive argument in order to establish the general limit $\lim_{n\to\infty}(|q_{k}^{(n)}|)^{\frac{1}{n+1}}=\frac{k+1}{k}$ in the statement.
\end{proof}

This limit tells us how these roots grow in a very rough sense, as recognized by Stanley himself in the cited MO post. We explain why this estimate is so rough.

\begin{observacion}[Weak claim]
We can see, in particular, that from such a weak statement we cannot possibly identify if $q_{1}$ grows like $n^{a}2^{n+b}$ for some $a,b\in\mathbb{R}$. This is so because the $n$-th root kills the factor $n^{a}2^{b}$ for all $a,b\in\mathbb{R}$ as, in that case, $$\lim_{n\to\infty}(n^{a}2^{b})^{\frac{1}{n}}=1.$$
\end{observacion}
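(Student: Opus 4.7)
The plan is to verify the elementary limit $\lim_{n\to\infty}(n^{a}2^{b})^{1/n}=1$ for arbitrary fixed real parameters $a,b\in\mathbb{R}$. This will establish that any factor of the shape $n^{a}2^{b}$ is invisible to the $n$-th root asymptotic, which is exactly the content of the observation: if $|q_{1}^{(n)}|$ grew like $n^{a}2^{n+b}=2^{n}\cdot n^{a}2^{b}$, then taking $n$-th roots would yield $2\cdot(n^{a}2^{b})^{1/n}\to 2$, indistinguishable from the bare growth $2^{n}$.

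First I would split the expression multiplicatively as $(n^{a}2^{b})^{1/n}=n^{a/n}\cdot 2^{b/n}$, which is legitimate because both factors are positive real numbers and the exponentiation distributes over products with a common exponent. This reduces the problem to handling each factor independently.

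Next I would treat the easy factor $2^{b/n}$: since $b/n\to 0$ and the exponential map $t\mapsto 2^{t}$ is continuous at $t=0$ with $2^{0}=1$, continuity immediately yields $2^{b/n}\to 1$. For the factor $n^{a/n}$, I would rewrite it as $n^{a/n}=\exp(a\cdot\tfrac{\ln n}{n})$, invoke the well-known limit $\lim_{n\to\infty}\tfrac{\ln n}{n}=0$ (which follows for instance from L'Hôpital or from the comparison $\ln n\leq 2\sqrt{n}$ for large $n$), and then use continuity of $\exp$ at $0$ to conclude $n^{a/n}\to\exp(0)=1$.

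Combining the two factors by the product rule for limits yields $(n^{a}2^{b})^{1/n}\to 1\cdot 1=1$, which is the claimed identity. There is no real obstacle here: the argument is entirely routine and the only mildly nontrivial input is the standard growth comparison $\ln n=o(n)$. The substantive content of the observation is not the limit itself but its interpretation: taken together with Proposition \ref{orstcl}, it shows that the $n$-th-root normalization used by Stanley erases all polynomial corrections in $n$ and all constant multiplicative corrections to the base, so that refining the asymptotic growth of $|q_{k}^{(n)}|$ beyond the leading exponential $((k+1)/k)^{n+1}$ requires a genuinely finer method, which is what motivates the subsequent Theorem \ref{refinementStanley-Pre}.
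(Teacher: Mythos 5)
Your argument is correct and matches the paper, which simply asserts the limit $\lim_{n\to\infty}(n^{a}2^{b})^{1/n}=1$ as the justification; your splitting into $n^{a/n}\cdot 2^{b/n}$ and the use of $\ln n=o(n)$ just fills in the routine details of that same claim. Your closing interpretation — that the $n$-th-root normalization erases polynomial and constant multiplicative corrections, motivating the finer analysis of Theorem \ref{refinementStanley} — is exactly the intended reading of the observation.
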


Clearing this up is the sense of a question of Mez\H{o} that we will treat in the next sections. However, a small modification of the argument given by Stanley in the MO post allows for a much stronger asymptotic estimation that already answers Mez\H{o}'s question. We see this next.

\begin{remark}[Seeking higher concreteness]
We modify here the argument of Stanley used in the proof above in order to extract from it a stronger statement. Our modification begins just after Equation \ref{vagueproblem} meaning that the proof is the same up to this point.
\end{remark}

The precise path we follow in order to modify the proof so we can be more precise is based on solving an asymptotic inequality. We explain how we do this.

\begin{estrategia}[Devising a way to improve the estimate]
We wrote Equation \ref{vagueproblem} describing Eulerian numbers in terms of the quantities $q_{i}^{(n)}$. Instead of estimating as roughly as Stanley did in the MO post and we saw above, we can estimate a few asymptotic inequalities and proceed to use the next Equation involving the absolute values of the roots $q_{i}^{(n)}$ and the Eulerian numbers.
\end{estrategia}

This procedure will allow us to extract more information from Equation \ref{vagueproblem}. In particular, as we want, we will be able to extract the asymptotic growth. More than that, we will build a procedure providing us with a general strategy that can be further applied in order to obtain additional information about these roots and roots of polynomials behaving similarly. We need to introduce some natural notation about asymptotic inequalities before proceeding with our result. We will only introduce the segment of these definitions that is useful for us. In particular, we only need to work here with positive sequences.

\begin{notacion}[Asymptotic (in-)equalities] \cite{khoshnevisan2007probability}\ \label{insteadO}
Let $f,g\colon\mathbb{N}\to\mathbb{R}_{>0}$ be sequences taking positive values. We write \begin{gather*}
    f(n)=O(g(n)) \  (\mbox{as\ } n\to\infty)
\end{gather*} if $\limsup_{n\to\infty}\frac{f(n)}{g(n)}<\infty$ and \gls{l} if $\lim_{n\to\infty}\frac{f(n)}{g(n)}=1.$ More than that, we write \gls{m} if there exist another sequence of positive numbers $h$ with $h\leq g$ and  $\lim_{n\to\infty}\frac{f(n)}{h(n)}=1,$ i.e., if $f\sim h\leq g.$ Finally, we write \begin{gather*}
    f(n)=o(g(n)) \  (\mbox{as\ } n\to\infty)
\end{gather*} if $\lim_{n\to\infty}\frac{f(n)}{g(n)}=0.$ If $f=\sum_{i=1}^{k}a_{i}\phi_{i}+o(\phi_{k+1})$ for some $k$ and some asymptotic scale (see \cite[Subsection 1.1.2]{paris2001asymptotics}) given by the sequence of sequences $\{\phi_{i}\}_{i=1}^{\infty}$, we say that $f$ \textit{grows like} $\sum_{i=1}^{k}a_{i}\phi_{i}+o(\phi_{k+1})$.
\end{notacion}

Now we have the tools we need to state and establish the next refinement of the argument of Stanley. In particular, now we have the asymptotic notions necessary to develop the proof of the next theorem. This proof will be the same as that of Stanley up until Equation \ref{vagueproblem}. After that equation, we will follow a different path that will allow us to say more than him using the asymptotic notions introduced above. 

\begin{teorema}[Improved estimate and exact asymptotics]
\label{refinementStanley}
Fix $n,k\in\mathbb{N}$ with $k\leq n$ and decompose the $n$-th univariate Eulerian polynomial $$A_{n}(x)=\prod_{i=1}^{n}(x-q_{i}^{(n)})=\prod_{i=1}^{n}(x+|q_{i}^{(n)}|)$$ so that $$0<|q_{n}^{(n)}|\leq\cdots\leq |q_{1}^{(n)}|.$$ Then we can estimate the growth of the absolute value of the $k$-th root of the $n$-th univariate Eulerian polynomial $A_{n}$ as $$\lim_{n\to\infty}\frac{|q^{(n)}_{k}|}{\left(\frac{k+1}{k}\right)^{n+1}}=1.$$
\end{teorema}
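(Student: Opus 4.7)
The plan is to upgrade Stanley's weak limit $|q_k^{(n)}|^{1/(n+1)}\to(k+1)/k$ of Proposition~\ref{orstcl} to a genuine asymptotic equivalence by strong induction on $k$, using the same combinatorial identity Stanley exploits: the Eulerian number $E(n+1,k)$ equals the $(n-k)$-th elementary symmetric polynomial $e_{n-k}$ of the absolute values $|q_1^{(n)}|,\dots,|q_n^{(n)}|$, so the classical symmetry $E(n+1,k)=E(n+1,n-k)$ together with Stanley's estimate $E(n+1,k)\sim(k+1)^{n+1}$ gives $e_k\sim(k+1)^{n+1}$ for every fixed $k$.

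For the inductive step, assume $|q_i^{(n)}|\sim((i+1)/i)^{n+1}$ for $1\le i\le k-1$, so that $\prod_{i=1}^{k-1}|q_i^{(n)}|\sim k^{n+1}$. I would split
\[
e_k \;=\; |q_1^{(n)}|\cdots|q_k^{(n)}| \;+\; \sum_{\substack{i_1<\cdots<i_k\\(i_1,\dots,i_k)\neq(1,\dots,k)}}|q_{i_1}^{(n)}|\cdots|q_{i_k}^{(n)}|
\]
and show that the non-dominant sum is $o((k+1)^{n+1})$. Granted this, $\prod_{i=1}^{k}|q_i^{(n)}|\sim e_k\sim(k+1)^{n+1}$, and dividing by the inductive product for the first $k-1$ factors yields $|q_k^{(n)}|\sim((k+1)/k)^{n+1}$. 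The base case $k=1$ is obtained by the same scheme: $|q_1^{(n)}|=e_1-\sum_{i\ge 2}|q_i^{(n)}|$, and Stanley's weak bound already forces the tail to be dominated by $n(3/2+\varepsilon)^{n+1}=o(2^{n+1})$.

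The hard part is the bound on the non-dominant sum. Since $|q_1^{(n)}|\ge\cdots\ge|q_n^{(n)}|$ and the product $|q_{i_1}^{(n)}|\cdots|q_{i_k}^{(n)}|$ strictly increases whenever an index $>k$ is swapped for an unused index $\le k-1$, every non-dominant term is bounded above by $|q_1^{(n)}|\cdots|q_{k-1}^{(n)}|\cdot|q_{k+1}^{(n)}|$. Combining the inductive hypothesis with Stanley's weak bound on $|q_{k+1}^{(n)}|$, this is at most
\[
\Bigl(\tfrac{k(k+2)}{k+1}+\varepsilon\Bigr)^{n+1} \;=\; \Bigl((k+1)-\tfrac{1}{k+1}+\varepsilon\Bigr)^{n+1}
\]
for any $\varepsilon>0$ and $n$ large. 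Because $\binom{n}{k}$ grows only polynomially in $n$ whereas $(1-1/(k+1)^2)^{n+1}$ decays exponentially, the whole non-dominant sum is indeed $o((k+1)^{n+1})$. The delicate bookkeeping is to choose $\varepsilon$ small enough to stay strictly inside the positive arithmetic margin $(k+1)^2-k(k+2)=1$; since that margin never vanishes for any $k$, the induction always closes, giving the claimed equivalence.
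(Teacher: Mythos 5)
Your proof is correct, and while it pursues the same underlying strategy (reading $E(n+1,n-k)$ as the $k$-th elementary symmetric polynomial $e_k$ of the $|q_i^{(n)}|$ and isolating the dominant term), the execution differs from the paper's in two useful ways. First, the paper works by contradiction through an accumulation-point argument: it sets up the sandwich $\tfrac{2^{n+1}}{n+1}\le|q_1^{(n)}|\le 2^{n+1}$, then supposes a subsequence of $|q_1^{(n)}|/2^{n+1}$ tends to $l<1$ and shows the tail of $e_1$ cannot make up the deficit; you instead give a direct sandwich via the decomposition $e_k=\prod_{i=1}^k|q_i|+(\text{rest})$, avoiding subsequences entirely. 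Second, the paper controls $|q_2^{(n)}|$ by feeding the already-established bound on $|q_1^{(n)}|$ into the $e_2$ equation, whereas you invoke the already-proved Proposition~\ref{orstcl} (the weak root estimate $|q_{k+1}^{(n)}|^{1/(n+1)}\to(k+2)/(k+1)$) directly — a legitimate and cleaner bootstrap that also lets you spell out the full inductive step, which the paper treats tersely. One small point worth tightening: your justification of the inequality
\[
\prod_{i\in S}|q_i^{(n)}|\;\le\;|q_1^{(n)}|\cdots|q_{k-1}^{(n)}|\,|q_{k+1}^{(n)}|\quad\text{for all $k$-subsets }S\neq\{1,\dots,k\}
\]
via a single swap is a bit loose; the clean argument is that $\{1,\dots,k-1,k+1\}$ is the unique cover of $\{1,\dots,k\}$ in the componentwise (dominance) order on $k$-subsets, so any $S\neq\{1,\dots,k\}$ dominates it position by position and the (decreasing) $|q_i^{(n)}|$ make the product at most that of the cover. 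With that fix, the rest of the bookkeeping — the polynomial count $\binom{n}{k}$ against the exponential decay $(1-1/(k+1)^2)^{n+1}$, and the telescoping $\prod_{i=1}^{k-1}((i+1)/i)^{n+1}=k^{n+1}$ — is exactly as you state.
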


\begin{proof}
As we said, all stays the same until Equation \ref{vagueproblem}. In principle, from such equation alone we can extract a bit more. Observe first that $|q^{(n)}_{1}|$ can grow at most like $2^{n+1}$ and at least like $\frac{2^{n+1}}{n+1}$ or, otherwise, the left-hand side would grow faster or slower than the right-hand side. To see this, suppose that there exists $N$ such that for all $n>N$ we have $|q^{(n)}_{1}|>2^{n+1}$. Then $\sum_{i=1}^{n}|q_{i}^{(n)}|>2^{n+1}$ because we are summing positive quantities. But then such sum cannot equal $E(n+1,n-1)=2^{n+1}-n-2<2^{n+1}.$ For the other inequality, suppose that there exists $N$ such that for all $n>N$ we have $|q_{i}^{(n)}|\leq |q^{(n)}_{1}|<\frac{2^{n+1}}{n+1}$ for all $i\in[1,n].$ Thus, in short, we obtain \begin{gather*}
    \frac{\sum_{i=1}^{n}|q_{i}^{(n)}|}{2^{n+1}}\leq \frac{\sum_{i=1}^{n}\frac{2^{n+1}}{n+1}}{2^{n+1}}=\frac{n}{n+1}
\end{gather*} while \begin{gather*}
    \frac{E(n+1,n-1)}{2^{n+1}}=\frac{2^{n+1}-n-2}{2^{n+1}}=1-\frac{n+2}{2^{n+1}}.
\end{gather*} Now we see how these quantities approach $1$ from the left. In particular observe that $$1-\frac{\sum_{i=1}^{n}|q_{i}^{(n)}|}{2^{n+1}}\geq1-\frac{n}{n+1}=\frac{1}{n+1}$$ while $$1- \frac{E(n+1,n-1)}{2^{n+1}}=\frac{n+2}{2^{n+1}}$$ The disparity within these distances while approaching $1$ from the left guarantee therefore that it cannot be that there exists $N$ such that for all $n>N$ we have $|q^{(n)}_{1}|<\frac{2^{n+1}}{n+1}$. In symbols, that means that we have established that there exists $N$ such that for all $n>N$ $$\frac{2^{n+1}}{n+1}\leq |q^{(n)}_{1}|\leq2^{n+1}.$$ Suppose that $|q_{1}^{(n)}|$ grows actually slower than $2^{n+1}$ in the sense that there exist a subsequence $A=\{\frac{|q_{1}^{(n_{k})}|}{2^{n_{k}+1}}\}_{k=1}^{\infty}$ of the sequence $\{\frac{|q_{1}^{(n)}|}{2^{n+1}}\}_{n=1}^{\infty}$ with $A\to l<1$ when $k\to\infty$. We have more equations at this point of the proof. Namely, we also know, e.g., that \begin{equation}
    \begin{gathered}\label{products2}3^{n+1}-(n+2)2^{n+1}+\frac{1}{2}(n+1)(n+2)=E(n+1,2)=\\E(n+1,n-2)=\sum_{S\in\binom{[n]}{2}}\prod_{i\in S}|q_{i}^{(n)}|.\end{gathered}\end{equation} This equation puts a limit on how close can $q_{1}^{(n)}$ and $q_{2}^{(n)}$ be. In particular, $q_{1}^{(n_{k})}q_{2}^{(n_{k})}\leq 3^{n_{k}+1}$ for $k>K$ for some $K\in\mathbb{N}$ as, otherwise, the right-hand side would grow faster than the left-hand side. Therefore we have that $|q_{2}^{(n_{k})}|\leq\frac{3^{n_{k}+1}}{|q_{1}^{(n_{k})}|}\leq (n_{k}+1)\frac{3^{n_{k}+1}}{2^{n_{k}+1}}$ for $k>K$ for some $K\in\mathbb{N}.$ But now, under our hypothesis that $\frac{|q_{1}^{(n_{k})}|}{2^{n_{k}+1}}\to l<1$ this implies that the sum on the right-hand side of Equation \ref{vagueproblem} can never catch up with  the growth of the term $2^{n+1}$ in the left-hand side as such sum is asymptotically bounded as follows \begin{gather*}\sum_{i=1}^{n_{k}}|q^{(n_{k})}_{i}|\leq |q^{(n_{k})}_{1}|+(n_{k}-1)|q^{(n_{k})}_{2}|\leq\\ |q^{(n_{k})}_{1}|+(n_{k}-1)(n_{k}+1)\frac{3^{n_{k}+1}}{2^{n_{k}+1}}=:I(n_{k})\end{gather*} for $k>K$ for some $K\in\mathbb{N}.$ So it is clear that this implies $\lim_{k\to\infty}\frac{I(n_{k})}{2^{n_{k}+1}}=\lim_{k\to\infty}\frac{|q^{(n_{k})}_{1}|}{2^{n_{k}+1}}=l<1,$ which is a contradiction. Thus we obtain that any subsequence with a limit of the sequence $\{\frac{|q_{1}^{(n)}|}{2^{n+1}}\}_{n=1}^{\infty}$ has limit $1.$ As such sequence is bounded, there exists an accumulation point and therefore the sequence has a limit. This proves $\lim_{n\to\infty}\frac{|q_{1}^{(n)}|}{2^{n+1}}=1$ and finishes the proof for the extreme root. From this, we can now inductively establish the stronger result in the statement.
\end{proof}

There are several features of this proof worth mentioning. Some of these will be easier to see after we sketch the proof in a less formal way.

\begin{observacion}[Separating roots]
The fast growth of the coefficients allows us jump into the next equation involving a smaller degree coefficient in order to establish separation results about the roots. This is what happens in Equation \ref{products2}. Establishing this separation is fundamental in order to be able to compute the limits we end up studying. The fact that the roots of the Eulerian polynomials split naturally so fast is also of great help when dealing with these limits.
\end{observacion}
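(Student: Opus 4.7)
The plan is to extract more refined information from Stanley's argument in Proposition \ref{orstcl} by exploiting simultaneously several of the Newton-type identities relating consecutive elementary symmetric polynomials in the $|q_{i}^{(n)}|$ to consecutive Eulerian numbers. The starting point is a sharpened asymptotic $E(n+1,k)\sim(k+1)^{n+1}$ for each fixed $k$, which follows directly from the explicit formula $E(n+1,k)=\sum_{i=0}^{k}(-1)^{i}\binom{n+2}{i}(k+1-i)^{n+1}$, because every subsequent term is smaller than the leading one by a factor $(k/(k+1))^{n+1}$ times a polynomial in $n$. Combined with the symmetry $E(n+1,k)=E(n+1,n-k)$, this gives that the $k$-th elementary symmetric polynomial $e_{k}(|q_{1}^{(n)}|,\ldots,|q_{n}^{(n)}|)$ satisfies $e_{k}\sim(k+1)^{n+1}$ for every fixed $k$.

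I would then prove, by induction on $k$, the stronger statement $|q_{1}^{(n)}|\cdots|q_{k}^{(n)}|\sim(k+1)^{n+1}$, from which the theorem follows by dividing two consecutive cases. For the base case $k=1$, Stanley's argument already yields $2^{n+1}/(n+1)\leq|q_{1}^{(n)}|\leq 2^{n+1}$. To upgrade this to $|q_{1}^{(n)}|\sim 2^{n+1}$ I would use the next layer $|q_{1}^{(n)}||q_{2}^{(n)}|\leq e_{2}=E(n+1,2)\sim 3^{n+1}$: combined with the lower bound on $|q_{1}^{(n)}|$ this gives $|q_{2}^{(n)}|=O(n(3/2)^{n+1})$, hence $\sum_{i\geq 2}|q_{i}^{(n)}|\leq n|q_{2}^{(n)}|=o(2^{n+1})$, forcing $|q_{1}^{(n)}|\sim 2^{n+1}$.

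For the inductive step, assume $|q_{1}^{(n)}|\cdots|q_{j}^{(n)}|\sim(j+1)^{n+1}$ for all $j<k$. Expand
\begin{equation*}
e_{k}(|q_{1}^{(n)}|,\ldots,|q_{n}^{(n)}|)=|q_{1}^{(n)}|\cdots|q_{k}^{(n)}|+\sum_{S}\prod_{i\in S}|q_{i}^{(n)}|,
\end{equation*}
where $S$ ranges over the $k$-subsets of $[n]$ different from $\{1,\ldots,k\}$. The largest such contribution is bounded by $|q_{1}^{(n)}|\cdots|q_{k-1}^{(n)}||q_{k+1}^{(n)}|$, and an upper bound of the form $|q_{k+1}^{(n)}|=O(n((k+2)/(k+1))^{n+1})$ is obtained by repeating the trick of the base case on the next layer $e_{k+1}=E(n+1,k+1)$ together with the inductive lower bound on $|q_{1}^{(n)}|\cdots|q_{k}^{(n)}|$. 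Since the geometric ratio $((k+1)/k)/((k+2)/(k+1))=(k+1)^{2}/(k(k+2))>1$, the leading off-diagonal term is exponentially smaller than $|q_{1}^{(n)}|\cdots|q_{k}^{(n)}|$, and the remaining subsets contribute only a polynomial-in-$n$ overhead. Hence $|q_{1}^{(n)}|\cdots|q_{k}^{(n)}|\sim(k+1)^{n+1}$, and dividing by the inductive asymptotic for $|q_{1}^{(n)}|\cdots|q_{k-1}^{(n)}|\sim k^{n+1}$ yields the desired $|q_{k}^{(n)}|\sim((k+1)/k)^{n+1}$.

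The main obstacle will be the careful bookkeeping needed to produce a priori upper bounds on $|q_{k+1}^{(n)}|$ that use only information already established at step $k$, so that the induction closes cleanly without circularity. One must also confirm that the various polynomial-in-$n$ prefactors coming from pigeonhole lower bounds and from counting $\binom{n}{k}$ subsets are always swamped by the exponential gap between the consecutive geometric rates $(k+1)/k$ and $(k+2)/(k+1)$. Once these estimates are in place, the same inductive machinery should, at least in principle, also allow one to push beyond the leading order and extract finer asymptotic expansions for $|q_{k}^{(n)}|$.
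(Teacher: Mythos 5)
Your proposal realizes exactly the separation strategy that the observation describes --- jumping to the next elementary-symmetric identity ($e_{k+1}=E(n+1,k+1)$, the analogue of Equation \ref{products2}) to drive a wedge between $|q_{k}^{(n)}|$ and $|q_{k+1}^{(n)}|$ --- and in fact spells out in full the inductive machinery that the paper only sketches. Where the paper's proof of Theorem \ref{refinementStanley} handles $k=1$ by a subsequence-and-contradiction argument and then waves at the general case with ``a similar inductive argument,'' you commit to the cumulative-product formulation $\prod_{j\le k}|q_j^{(n)}|\sim(k+1)^{n+1}$, which telescopes cleanly to the individual asymptotics and is arguably the more transparent way to organize the same idea. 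The one step you flag as delicate is indeed where care is needed, but it closes without circularity: from the pigeonhole estimate $E(n+1,k)=e_k\le\binom{n}{k}\,|q_1^{(n)}|\cdots|q_k^{(n)}|$ you get the a priori lower bound $|q_1^{(n)}|\cdots|q_k^{(n)}|\ge E(n+1,k)/\binom{n}{k}$, and combined with $|q_1^{(n)}|\cdots|q_{k+1}^{(n)}|\le E(n+1,k+1)$ this gives $|q_{k+1}^{(n)}|=O\bigl(n^{k}((k+2)/(k+1))^{n+1}\bigr)$ using only coefficient data, not the inductive hypothesis. The off-diagonal contribution to $e_k$ is then $O\bigl(\binom{n}{k}\,k^{n+1}\,|q_{k+1}^{(n)}|\bigr)$, and since $k(k+2)/(k+1)^2<1$ the exponential gap swamps every polynomial-in-$n$ prefactor, so the induction closes as you expected.
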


There is another trick that we used in the proof and that helped us to avoid dealing with limits that could have possibly not existed. This trick consists in supposing that we have a subsequence of the sequence of quotients tending to a limit different from the one we desire. After supposing these subsequences exist, we can use the known form of the coefficients of the polynomial in order to rule out the possibility of these subsequences reaching a contradiction. A well-known result about the fact that a bounded sequence converges if and only if it has exactly one accumulation point does the rest of the job.

\setlength{\emergencystretch}{3em}%
\begin{observacion}[Guaranteeing the existence of limits in bounded sequences]
If we want to check whether a bounded sequence $A$ has a limit $l$, we just need to explore an arbitrary subsequence $B$ with a limit $m\neq l$ and determine that the existence of $B$ implies a contradiction.
\end{observacion}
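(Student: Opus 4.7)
The plan is to invoke two classical facts from real analysis: the Bolzano--Weierstrass theorem and the characterization of sequence convergence through its convergent subsequences. The observation asserts that, for a bounded sequence $A$, in order to establish that $A$ converges to $l$ it suffices to rule out the existence of any convergent subsequence $B$ whose limit $m$ differs from $l$. I would cast the argument as a contrapositive coupled with a compactness extraction, exactly mirroring the way the argument is deployed inside the proof of Theorem \ref{refinementStanley}.

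First I would assume, for contradiction, that the bounded sequence $A$ does not converge to $l$, so that there exist $\varepsilon>0$ and a subsequence $A_{n_k}$ with $|A_{n_k}-l|\geq\varepsilon$ for all $k$. Since $A$ is bounded, the subsequence $A_{n_k}$ is bounded as well. By the Bolzano--Weierstrass theorem it therefore admits a further convergent subsequence $A_{n_{k_j}}\to m$ for some real number $m$. Passing to the limit in the inequality $|A_{n_{k_j}}-l|\geq\varepsilon$ yields $|m-l|\geq\varepsilon>0$, so $m\neq l$. This produces the very situation whose existence the hypothesis was supposed to rule out: a convergent subsequence of $A$ whose limit differs from $l$. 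The contradiction closes the argument and delivers $\lim A=l$.

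The only nontrivial ingredient is the Bolzano--Weierstrass extraction, and this is also where the main obstacle hides: boundedness of $A$ is essential, since without it a subsequence bounded away from $l$ need not admit any convergent refinement, and one could not manufacture the offending limit $m$ whose existence is then ruled out by hypothesis. In the concrete use inside the proof of Theorem \ref{refinementStanley} this issue is handled by the sandwich estimate $\tfrac{1}{n+1}\leq |q^{(n)}_{1}|/2^{n+1}\leq 1$, which supplies the required boundedness and makes it legitimate to target one putative accumulation point $m\neq 1$ at a time and discard it via the algebraic identities satisfied by the Eulerian numbers $E(n+1,k)$. The observation thus formalizes the strategic skeleton already used implicitly in that proof.
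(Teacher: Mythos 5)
Your proof is correct and is precisely the standard argument underlying the paper's appeal to the ``well-known result that a bounded sequence converges if and only if it has exactly one accumulation point'': contrapositive, extract a subsequence bounded away from $l$, apply Bolzano--Weierstrass to manufacture a subsequential limit $m\neq l$, and contradict the hypothesis. The paper states this observation without proof (it only cites the folklore fact), so there is no competing argument; your write-up supplies the same reasoning explicitly and correctly flags that boundedness is the indispensable hypothesis, matching how the sandwich $\tfrac{1}{n+1}\leq |q_1^{(n)}|/2^{n+1}\leq 1$ is used in the proof of Theorem \ref{refinementStanley} to secure it.
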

\setlength{\emergencystretch}{0em}%

The proof above is written in a very formal way that does not let us see very clearly what is happening. For this reason, it is beneficial to add a sketch of the proof written in a more informal way. This is how we actually reasoned in order to obtain this result. This is a teaser about how to reason asymptotically.

\begin{proof}[Sketch of the proof]
All stays the same until Equation \ref{vagueproblem}. In principle, from that equation alone we can extract a bit more. Observe first that $|q^{(n)}_{1}|$ can grow at most like $2^{n+1}$ and at least like $\frac{2^{n+1}}{n}$ or, otherwise, the left-hand side would grow faster or slower than the right-hand side. In symbols, this means that we have $\frac{2^{n+1}}{n}\lesssim |q^{(n)}_{1}|\lesssim2^{n+1}.$ Suppose that $|q_{1}^{(n)}|$ grows actually slower than $2^{n+1}$ in the sense that $\lim_{n\to\infty}\frac{|q_{1}^{(n)}|}{2^{n+1}}<1$. We have more equations at this point of the proof. Namely, we also know, e.g., that \begin{equation}
    \begin{gathered}\label{products2-repe}3^{n+1}-(n+2)2^{n+1}+\frac{1}{2}(n+1)(n+2)=E(n+1,2)=\\E(n+1,n-2)=\sum_{S\in\binom{[n]}{2}}\prod_{i\in S}|q_{i}^{(n)}|.\end{gathered}\end{equation} This equation places a limit on how close $q_{1}^{(n)}$ and $q_{2}^{(n)}$ can be. In particular, $q_{1}^{(n)}q_{2}^{(n)}\lesssim 3^{n+1}$ as, otherwise, the left-hand side would grow faster than the right-hand side. Therefore we have the asymptotic inequality $|q_{2}^{(n)}|\lesssim\frac{3^{n+1}}{|q_{1}^{(n)}|}\lesssim n\frac{3^{n+1}}{2^{n+1}}.$ But now, under our hypothesis that $|q_{1}^{(n)}|\lesssim2^{n+1}$ this implies that the sum on the right-hand side of Equation \ref{vagueproblem} can never catch up with 
the growth of the term $2^{n+1}$ in the left-hand side as such sum is asymptotically bounded as follows $$\sum_{i=1}^{n}|q^{(n)}_{i}|\lesssim |q^{(n)}_{1}|+(n-1)|q^{(n)}_{2}|\lesssim |q^{(n)}_{1}|+(n-1)n\frac{3^{n+1}}{2^{n+1}}=I(n)$$ and it is clear that $\lim_{n\to\infty}\frac{I(n)}{2^{n+1}}=\lim_{n\to\infty}\frac{|q^{(n)}_{1}|}{2^{n+1}}<1.$ Thus we obtain that $|q^{(n)}_{1}|\sim2^{n+1}.$ From this, we can inductively establish the stronger result in the statement.
\end{proof}

The refinement of the proof of Stanley that we made above and the development of the ideas it raised impulses us to go beyond. In particular, the proof invokes new ways of thinking about the asymptotic behaviour of these roots and also open questions about dealing with asymptotic (in-)equations in a manner that will be important for us in future sections.

\begin{remark}[Answer opens new doors]
The answer to the question of Stump given by Stanley and refined here opens up several questions. First, this answer gives the asymptotic growth of the extreme roots, thus opening the question of what more is possible to establish about such growth. Moreover, the techniques refined here deal with a kind of (in-)equations that provoke much interest to us: asymptotic (in-)equations have made appearance in our work this way. 
\end{remark}

Thus, we see how this answer leads us spontaneously into several different paths. We will have to choose the most promising ones and those that fit better with the object we are studying here. Recall that we want to use and study the power of the relaxation.

\begin{observacion}[Devising forms to look beyond]
In due time, we will generalize and expand our understanding of asymptotic equations in order to extract further information about the growth of these roots. Following this path, we will find new tools and ways to look at these asymptotic equations when dealing with further terms of the growth. Indeed, we will have to make clear what it means to extract further terms of these growths.
\end{observacion}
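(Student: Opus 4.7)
The plan is to upgrade Stanley's rough power-type asymptotic $|q_k^{(n)}|^{1/(n+1)}\to (k+1)/k$ into the sharp first-order statement by extracting more information from the same Eulerian-number equations he uses. First I would recast the claim in terms of elementary symmetric functions: Vieta applied to $A_n(x)=\prod_{i=1}^n(x+|q_i^{(n)}|)=\sum_k E(n+1,k)x^k$ gives $E(n+1,k)=e_{n-k}(|q_1^{(n)}|,\ldots,|q_n^{(n)}|)$ (where $e_j$ denotes the $j$-th elementary symmetric polynomial), and combining with the palindromic symmetry $E(n+1,k)=E(n+1,n-k)$ and Stanley's estimate $E(n+1,k)\sim (k+1)^{n+1}$ (for fixed $k$) yields the family of identities $e_j(|q_1^{(n)}|,\ldots,|q_n^{(n)}|)\sim (j+1)^{n+1}$ for every fixed $j$. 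I would then induct on $k$.

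For the base case $k=1$, the upper bound $|q_1^{(n)}|\leq e_1=2^{n+1}-n-2$ is immediate. For the matching lower bound I would argue by subsequence contradiction: if $|q_1^{(n_j)}|/2^{n_j+1}\to \ell<1$ along some subsequence, then $e_2\sim 3^{n_j+1}$ together with $e_2\geq |q_1^{(n_j)}||q_2^{(n_j)}|$ gives $|q_2^{(n_j)}|\lesssim 3^{n_j+1}/|q_1^{(n_j)}|$. Coupling this with the crude preliminary lower bound $|q_1^{(n)}|\geq 2^{n+1}/(n+1)$ (which follows from $n|q_1^{(n)}|\geq e_1$) upgrades the estimate to $|q_2^{(n_j)}|\lesssim(n_j+1)(3/2)^{n_j+1}=o(2^{n_j+1})$. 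Plugging back into $e_1\leq |q_1^{(n)}|+(n-1)|q_2^{(n)}|$ forces $|q_1^{(n_j)}|\geq (1-o(1))\cdot 2^{n_j+1}$, contradicting $\ell<1$; boundedness of the quotient sequence and uniqueness of its accumulation points then give $|q_1^{(n)}|\sim 2^{n+1}$.

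For the inductive step, I would assume $|q_i^{(n)}|\sim ((i+1)/i)^{n+1}$ for all $i<k$; the telescoping product $\prod_{i=1}^{k-1}((i+1)/i)=k$ then yields $\prod_{i=1}^{k-1}|q_i^{(n)}|\sim k^{n+1}$. The upper bound $|q_k^{(n)}|\lesssim ((k+1)/k)^{n+1}$ follows by dividing $e_k\geq\prod_{i=1}^k|q_i^{(n)}|$ by this asymptotic. For the matching lower bound, I would split $e_k=\prod_{i=1}^k|q_i^{(n)}|+R$, where $R=\sum_{S\neq [k],\,|S|=k}\prod_{i\in S}|q_i^{(n)}|$, and reduce the task to proving $R=o((k+1)^{n+1})$; this would force $\prod_{i=1}^k|q_i^{(n)}|\sim (k+1)^{n+1}$ and hence $|q_k^{(n)}|\sim ((k+1)/k)^{n+1}$.

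The main obstacle is the estimate of $R$. Grouping its terms by $m=|S\setminus[k]|\geq 1$, each group factors as $e_{k-m}(|q_1^{(n)}|,\ldots,|q_k^{(n)}|)\cdot e_m(|q_{k+1}^{(n)}|,\ldots,|q_n^{(n)}|)$, and under a preliminary bound $|q_{k+1}^{(n)}|\lesssim\mathrm{poly}(n)\cdot ((k+2)/(k+1))^{n+1}$ the contribution of index $m$ is bounded by $\mathrm{poly}(n)\cdot\bigl((k-m+1)((k+2)/(k+1))^m\bigr)^{n+1}$. A short Bernoulli calculation shows $(k-m+1)((k+2)/(k+1))^m<k+1$ strictly for every $m\geq 1$ (with gap at least $1/(k+1)^2$ at $m=1$), so the ratio to $(k+1)^{n+1}$ decays exponentially and the polynomial prefactors are absorbed. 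The delicate point is producing the preliminary bound on $|q_{k+1}^{(n)}|$: I would bootstrap by first extracting a crude lower bound $|q_k^{(n)}|\gtrsim ((k+1)/k)^{n+1}/n^k$ from $e_k\leq\binom{n}{k}\prod_{i=1}^k|q_i^{(n)}|$ together with the inductive asymptotic $\prod_{i<k}|q_i^{(n)}|\sim k^{n+1}$, and then feeding this into $e_{k+1}\geq\prod_{i=1}^{k+1}|q_i^{(n)}|$ to obtain $|q_{k+1}^{(n)}|\lesssim n^k((k+2)/(k+1))^{n+1}$. Ordering the chain $(|q_k^{(n)}|\text{ crude})\to |q_{k+1}^{(n)}|\to R\to |q_k^{(n)}|\text{ sharp}$ so that no step depends on the one it is meant to establish, and verifying that the resulting polynomial losses never defeat the exponential gains coming from the Bernoulli inequality, is where I expect the bulk of the bookkeeping to lie.
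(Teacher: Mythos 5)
This observaci\'on is a forward-looking methodological remark, not a theorem: the paper gives it no proof, and there is nothing to prove. Its phrase ``further terms of these growths'' previews the multi-term asymptotic-expansion machinery developed a few sections later (Definition~\ref{defasy}, Convention~\ref{conasy}), namely extracting successive coefficients $2^{n+1} - (3/2)^{n+1} - \frac{1}{2}(9/8)^{n+1} - \cdots$ in a fixed exponential scale for a \emph{single} extremal root, not the first-order asymptotics of $|q_k^{(n)}|$ for general $k$. Your proposal therefore answers a different question than the one this observation is gesturing at.

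What you have actually produced is a completion of the inductive step of Theorem~\ref{refinementStanley}, which the paper carries out in detail only for $k=1$ and then closes with ``From this, we can now inductively establish the stronger result.'' Your base case is essentially the paper's subsequence-contradiction argument. Your inductive step --- splitting $e_k = \prod_{i\le k}|q_i^{(n)}| + R$, grouping $R$ by $m = |S\setminus[k]|$ so each block factors as $e_{k-m}(|q_1^{(n)}|,\dots,|q_k^{(n)}|)\,e_m(|q_{k+1}^{(n)}|,\dots,|q_n^{(n)}|)$, and bootstrapping a preliminary bound $|q_{k+1}^{(n)}|\lesssim \mathrm{poly}(n)\bigl(\tfrac{k+2}{k+1}\bigr)^{n+1}$ via $e_k\le\binom{n}{k}\prod_{i\le k}|q_i^{(n)}|$ --- is sound and does avoid circularity; it is a legitimate filling-in of what the paper leaves as an exercise. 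One numerical slip: with $f(m)=(k-m+1)\bigl(\tfrac{k+2}{k+1}\bigr)^m$ the gap $(k+1)-f(1)$ equals $1/(k+1)$, not $1/(k+1)^2$; since the gap is positive and independent of $n$ either way, the $\bigl(f(m)/(k+1)\bigr)^{n+1}$ decay and the absorption of polynomial prefactors survive for each fixed $k$, so this does not affect the argument.
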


Now we explore another setting in which the question of Stump that inspired the developments in this section also appears. In particular, Mez\H{o} asks in \cite[Subsection 7.6.5]{mezo2019combinatorics} (also in this document as a question just before the beginning of \cite[Section 7.6.6]{mezo2019combinatorics}) a similar question about the growth of the extreme roots of univariate Eulerian polynomials. The answer provided by our refinement of the argument of Stanley is also an answer to his question. Remember that Stanley chose to answer this: Stump asked a more broad and general question about what is known about the behaviour or expressions of these roots. Exploring Mez\H{o} approach is the object of the next section.

\section[Mez\H{o}'s approach]{Mez\H{o}'s approach to the asymptotic growth of extreme roots of univariate Eulerian polynomials}

We have already answered the question of Mez\H{o} that we will treat in this section. In particular, we showed that the answer to that question was already hidden in the literature, as the proof is just a slight modification of Stanley's argument. In the next sections, we ask if we can actually improve our understanding of the extreme roots beyond these asymptotics establishing actual bounds and not just asymptotic estimations. We start by noticing that there are already well-known bounds for these roots. In particular, we know from \cite[Subsection 7.6.5]{mezo2019combinatorics} that, using 
Laguerre-Samuelson theorem and Colucci's estimation, we obtain the following result.

\begin{proposicion}[Known bounds]
\label{clearerasin}
We have the chain of inequalities \begin{gather*}I(n):=\frac{1}{n}\E(n+1,n-1)+\frac{n-1}{n}\sqrt{\E(n+1,n-1)^2-\frac{2n}{n-1}\E(n+1,n-2)}=\\\hspace*{-1cm}\frac{2^{n+1}-n-2}{n}+\frac{n-1}{n}\sqrt{(2^{n+1}-n-2)^2-\frac{2n}{n-1}(3^{n+1}-(n+2)2^{n+1}+\frac{1}{2}(n+1)(n+2))}\\\geq |q_{1}^{(n)}|\geq\frac{2^{n+1}}{n}-1-\frac{2}{n}\end{gather*} with $\lim_{n\to\infty}\frac{I(n)}{2^{n+1}}=1.$
\end{proposicion}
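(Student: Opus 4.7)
The plan is to view Proposition \ref{clearerasin} as an immediate unpacking of two classical bounds for the extremal roots of a real-rooted univariate polynomial, specialized to $A_{n}$ through Vieta's formulas, together with a routine asymptotic comparison. Concretely, I would first recall the two facts for a real-rooted monic $p(x)=\prod_{i=1}^{n}(x-r_{i})$ of degree $n\geq 2$: Samuelson's (Laguerre--Samuelson) inequality $|r_{i}-\mu|\leq\sigma\sqrt{n-1}$, where $\mu=\frac{1}{n}\sum r_{i}$ and $\sigma^{2}=\frac{1}{n}\sum(r_{i}-\mu)^{2}$; and Colucci's (trivial averaging) bound stating that the largest $|r_{i}|$ is at least the average $\frac{1}{n}\sum|r_{i}|$. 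Both are applicable because we already proved that $A_{n}(x)=\prod_{i=1}^{n}(x+|q_{i}^{(n)}|)$ has only real (negative) roots.

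Next I would translate these inequalities into the Eulerian-number language. Since $A_{n}(x)=\sum_{k=0}^{n}E(n+1,k)x^{k}$ with $E(n+1,n)=1$, Vieta gives
\begin{equation*}
\sum_{i=1}^{n}|q_{i}^{(n)}|=E(n+1,n-1),\qquad \sum_{1\leq i<j\leq n}|q_{i}^{(n)}||q_{j}^{(n)}|=E(n+1,n-2).
\end{equation*}
Using the well-known closed forms $E(n+1,n-1)=2^{n+1}-n-2$ and $E(n+1,n-2)=3^{n+1}-(n+2)2^{n+1}+\binom{n+2}{2}$ (recoverable from the same inclusion-exclusion formula that underlies \cite[Theorems 2.3 and 2.5]{hall2008counting}), I would then compute
\begin{equation*}
\mu=\tfrac{1}{n}E(n+1,n-1),\qquad \sigma^{2}=\tfrac{n-1}{n^{2}}\Bigl(E(n+1,n-1)^{2}-\tfrac{2n}{n-1}E(n+1,n-2)\Bigr),
\end{equation*}
so that Samuelson's $\mu+\sigma\sqrt{n-1}$ is exactly $I(n)$, yielding the upper bound $|q_{1}^{(n)}|\leq I(n)$. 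For the lower bound, Colucci's inequality forces $|q_{1}^{(n)}|\geq\mu=\frac{2^{n+1}-n-2}{n}=\frac{2^{n+1}}{n}-1-\frac{2}{n}$, which is exactly the stated estimate.

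Finally, for the asymptotic $I(n)/2^{n+1}\to 1$, I would divide both summands of $I(n)$ by $2^{n+1}$. The first summand $\frac{E(n+1,n-1)}{n\cdot 2^{n+1}}$ is $O(1/n)\to 0$. For the radical, factor out $(2^{n+1})^{2}$: since $E(n+1,n-1)^{2}\sim 4^{n+1}$ while $\frac{2n}{n-1}E(n+1,n-2)=O(n\cdot 3^{n})=o(4^{n})$, the square root is asymptotically $2^{n+1}\sqrt{1-o(1)}\sim 2^{n+1}$, and $\frac{n-1}{n}\to 1$, so the second summand tends to $1$.

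The main obstacle is purely bookkeeping: verifying that Samuelson's expression $\mu+\sigma\sqrt{n-1}$ simplifies exactly to the compact form of $I(n)$ given in the statement (the factor $\frac{n-1}{n}$ outside the root and the awkward coefficient $\frac{2n}{n-1}$ inside it come from carefully tracking the two $n$'s hidden in $\sigma^{2}$), and checking that the expression under the square root stays nonnegative for all $n\geq 2$, which is automatic because $\sigma^{2}\geq 0$ for any real sample but is worth recording as a sanity check before extracting the root.
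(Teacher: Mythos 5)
Your proof is correct and is more self-contained than the paper's: the paper only proves the asymptotic claim $\lim I(n)/2^{n+1}=1$ (from $\E(n+1,n-1)\sim 2^{n+1}$ and $\E(n+1,n-2)\sim 3^{n+1}$), and outsources the two inequalities to the cited source, while you actually derive them. Your derivation of the upper bound from Laguerre--Samuelson after translating $\mu$ and $\sigma^2$ into $E(n+1,n-1)$ and $E(n+1,n-2)$ via Vieta is exactly the right unpacking, though you should apply Samuelson to $A_n(-x)$ (or equivalently use $|q_1^{(n)}|\leq|\mu|+\sigma\sqrt{n-1}$): as written, $\mu$ is negative, so the expression ``Samuelson's $\mu+\sigma\sqrt{n-1}$'' is not literally $I(n)$; the triangle inequality (or reflection) fixes the sign issue. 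For the lower bound you use the trivial averaging observation $\max_i|r_i|\geq\frac{1}{n}\sum_i|r_i|$, which indeed yields $\frac{2^{n+1}-n-2}{n}$; note the paper instead attributes this bound to \emph{Colucci's estimation}, which in the paper is a statement about derivatives of $p$ in terms of a root bound $M$, and extracting a lower bound on $M$ from it is genuinely more work than your one-line averaging argument. So your route to the lower bound is simpler and gives the same number, but your labeling of it as ``Colucci's bound'' does not match what the paper (following Mez\H{o}) means by that term. The asymptotic paragraph matches the paper's computation in spirit.
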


\begin{proof}
We obtain the asymptotic equivalence from the fact that $\E(n+1,n-1)\sim 2^{(n+1)}$ and $\E(n+1,n-2)\sim 3^{(n+1)}$ so it is clear that, when $n\to\infty$, \begin{gather*}
   \frac{I(n)}{2^{n+1}}\to 0+1\sqrt{1-2(0-0+0)}=1.
\end{gather*}
\end{proof}

In the proof above, we used the growth of some extreme Eulerian numbers. We can go more general.

\begin{proposicion}[Asymptotic growth of Eulerian numbers]
Fix $i\in\mathbb{N}$. Then $E(n+1,n-i)\sim(i+1)^{n+2}$.
\end{proposicion}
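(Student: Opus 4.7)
The plan is to leverage two ingredients that are already in the excerpt: the palindromicity of the univariate Eulerian polynomial and the explicit expansion for Eulerian numbers that Stanley employs in the proof of Proposition \ref{orstcl}.

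First, I would reduce the statement to a regime where the index is small. Since $A_n(x) = \sum_{k=0}^{n} E(n+1,k)\, x^k$ is palindromic (i.e.\ $A_n(x) = x^n A_n(1/x)$), comparing coefficients yields the symmetry $E(n+1, n-i) = E(n+1, i)$. For $n \to \infty$ with $i$ fixed, this replaces the right edge of the coefficient sequence with its left tail, where the Worpitzky-type closed form recalled in Proposition \ref{orstcl} applies in its simplest form since the sum then truncates after $i+1$ terms.

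Next, I would substitute the formula $E(n+1, i) = \sum_{j=0}^{i} (-1)^j \binom{n+2}{j}(i+1-j)^{n+1}$ and isolate the $j=0$ summand $(i+1)^{n+1}$. Dividing through by this candidate leading term produces, for each $j \in \{1, \dots, i\}$, a quotient of the shape $\binom{n+2}{j}\bigl(\tfrac{i+1-j}{i+1}\bigr)^{n+1}$. Here $\binom{n+2}{j}$ grows only polynomially in $n$ (it has degree $j$ for $j$ fixed), while the base $\tfrac{i+1-j}{i+1}$ lies strictly in $[0,1)$; the exponential decay therefore dominates and each such quotient tends to $0$. Summing the finitely many contributions, one obtains $\lim_{n\to\infty} E(n+1, i)/(i+1)^{n+1} = 1$, and combining with the first step yields the desired asymptotic.

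There is no genuinely hard step; the argument is a direct strengthening of the intermediate growth estimate already used implicitly in the proof of Proposition \ref{orstcl}. The main thing to watch is that the exponent coming out of this route is $(i+1)^{n+1}$, which is what is needed to be consistent with the special cases $E(n+1, n-1) \sim 2^{n+1}$ and $E(n+1, n-2) \sim 3^{n+1}$ invoked in Proposition \ref{clearerasin}; the exponent $n+2$ in the displayed statement therefore looks like a typographical slip of one, and the proof outlined above settles the intended asymptotic either way.
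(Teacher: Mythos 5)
Your argument is correct and essentially the same as the paper's: the paper's one-line proof simply points to the proof of Proposition~\ref{orstcl}, which uses exactly your three ingredients---the coefficient symmetry $E(n+1,n-i)=E(n+1,i)$, the explicit Worpitzky-type expansion, and the observation that the $j=0$ summand dominates since the remaining finitely many terms decay exponentially relative to it. You are also right about the exponent: the correct growth is $(i+1)^{n+1}$, not the $(i+1)^{n+2}$ printed in the statement, as confirmed by the explicit values $E(n+1,n-1)=2^{n+1}-n-2$ and $E(n+1,n-2)\sim 3^{n+1}$ used in Proposition~\ref{clearerasin}.
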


\begin{proof}
It is clear looking at the expansions of these numbers as sums, e.g., at the beginning of Proposition \ref{orstcl}.
\end{proof}

Contrary to what happened in the previous section where we dealt directly with the asymptotics, here we want to establish actual bounds at each side. With close enough bounds, we can extract again the asymptotics if the sandwich formed by the inequalities in these bounds is tight enough.

\begin{observacion}[Bounds versus asymptotics bounds]
In principle, asymptotic bounds do not need to be bounds in the strict sense as long as the growth is respected. Thus, for $n$ both $n+1$ and $n-1$ are lower asymptotic bounds, but only $n-1$ is really a lower bound.
\end{observacion}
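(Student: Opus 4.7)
The plan is to unpack the two notions of bound at play. Recall from Notation \ref{insteadO} that $f \lesssim g$ means there exists a positive sequence $h$ with $f \sim h$ and $h \leq g$; transposing this convention to lower bounds, I will say that a sequence $g$ is an \emph{asymptotic lower bound} for $f$ when $g \lesssim f$, i.e.\ when there exists $h$ with $g \sim h$ and $h \leq f$. This is weaker than the strict lower bound condition $g(n)\leq f(n)$ for all (sufficiently large) $n$, and pinpointing that strict gap is the whole content of the observation.

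First, I would take $f(n) = n$ as the reference sequence and test the two candidates $g_1(n) = n+1$ and $g_2(n) = n-1$ as asymptotic lower bounds. In both cases the obvious witness is $h(n) := n$: clearly $h\leq f$, and $h/g_1 = n/(n+1) \to 1$ while $h/g_2 = n/(n-1) \to 1$, so $g_1 \sim h$ and $g_2 \sim h$. By the definition above, this certifies $g_1 \lesssim f$ and $g_2 \lesssim f$, so both $n+1$ and $n-1$ qualify as asymptotic lower bounds for $n$.

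Second, I would test the strict lower bound condition. For $g_2$ we have $g_2(n) = n-1 < n = f(n)$ for every $n$, so $g_2$ is a strict lower bound. For $g_1$ we have $g_1(n) = n+1 > n = f(n)$ for every $n$, so at no point does $g_1(n) \leq f(n)$ hold; thus $g_1$ fails the strict lower bound condition even though it passes the asymptotic one. This exhibits explicitly a sequence ($n+1$) that is an asymptotic but not a strict lower bound of $n$, establishing the claim that the two notions genuinely diverge.

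There is no real technical obstacle here; the only subtlety worth flagging in the writeup is notational consistency, since the paper defines $\lesssim$ as an upper bound relation while the observation speaks of lower bounds. I would therefore preface the argument with the one-line note that an asymptotic lower bound is just the same definition read with the inequality reversed, after which the verification reduces to the two elementary limit computations above.
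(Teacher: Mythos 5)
Your verification is correct and matches the intent of the paper, which states this observation without proof precisely because its content is the elementary check you carry out: $n+1$ and $n-1$ are both asymptotically equivalent to $n$ (hence asymptotic lower bounds once the $\lesssim$ convention of Notation on asymptotic inequalities is read with the inequality reversed), while only $n-1\leq n$ holds pointwise. Your flag about the notational transposition of $\lesssim$ from upper to lower bounds is a reasonable clarification but does not change the argument.
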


This means that when we bound asymptotic growth we do not have to take so much care about the bound breaking sometimes as long as the growth is respected.

\begin{proposicion}[Bounding asymptotic growth]
Let $f,g,h,a,b\colon\mathbb{N}\to\mathbb{R}_{\geq0}$ be sequences of positive real numbers. Suppose that $a\lesssim f\leq g\leq h\lesssim b$ and that $a\sim b$. Then $a\sim f\sim g\sim h\sim b.$
\end{proposicion}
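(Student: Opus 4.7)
The plan is to unpack the definition of $\lesssim$ in order to obtain two intermediate sequences sandwiched inside the chain, then telescope $a/b$ into a product whose factors are eventually $\leq 1$, and finally exploit the fact that the whole product tends to $1$ to force each individual factor to $1$.

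First I would apply Notation \ref{insteadO} to the hypotheses $a\lesssim f$ and $h\lesssim b$: pick sequences $a'$ and $b'$ of positive reals with $a\sim a'$ and $a'\leq f$, respectively $h\sim b'$ and $b'\leq b$. Writing out the telescoping identity
\[
\frac{a}{b}\;=\;\frac{a}{a'}\cdot\frac{a'}{f}\cdot\frac{f}{g}\cdot\frac{g}{h}\cdot\frac{h}{b'}\cdot\frac{b'}{b},
\]
I would note that the left-hand side tends to $1$ by the hypothesis $a\sim b$, the factors $a/a'$ and $h/b'$ tend to $1$ by construction, while the four middle-and-end factors $a'/f$, $f/g$, $g/h$, $b'/b$ are each eventually $\leq 1$ by virtue of the pointwise inequalities $a'\leq f\leq g\leq h$ and $b'\leq b$.

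The crux is then a short product lemma: if $\beta_{1},\ldots,\beta_{k}\colon\mathbb{N}\to\mathbb{R}_{>0}$ are sequences that are eventually $\leq 1$ and $\prod_{i=1}^{k}\beta_{i}\to 1$, then each $\beta_{i}\to 1$. One direction is trivial, $\limsup_{n\to\infty}\beta_{i}(n)\leq 1$; for the other, fix $j$ and observe that eventually $\beta_{j}(n)\geq\prod_{i=1}^{k}\beta_{i}(n)$, since every omitted factor is $\leq 1$, whence $\liminf_{n\to\infty}\beta_{j}(n)\geq 1$. Applied to the four middle factors (after dividing out $a/a'$ and $h/b'$, which already converge to $1$), the lemma yields $a'/f\to 1$, $f/g\to 1$, $g/h\to 1$, and $b'/b\to 1$ simultaneously. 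Combining these with $a\sim a'$ and $h\sim b'$ produces the desired chain $a\sim f\sim g\sim h\sim b$.

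The only real obstacle I expect is bookkeeping the passage from ``eventually $\leq 1$'' to genuine inequalities and making sure the quotients are well defined; since all sequences are positive and we care only about large $n$, any finite set of vanishings or inversions is harmless. A secondary point of care is that the definition of $\lesssim$ gives us some $a'\leq f$ only up to asymptotic equivalence with $a$, not equality, which is precisely why the identity above isolates an $a/a'$ factor that we handle separately rather than trying to substitute $a$ for $a'$ at the outset.
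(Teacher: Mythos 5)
Your argument is correct and, since the paper's proof is simply the word ``Evident,'' it supplies the missing content: the telescoping of $a/b$ through the intermediate sequences $a'$ and $b'$ furnished by the definition of $\lesssim$, together with the product lemma (a finite product of eventually-$\leq 1$ positive sequences tending to $1$ forces each factor to $1$), is exactly the kind of elementary verification one would carry out. No gaps: the $\limsup$/$\liminf$ pincer in the lemma works because $\beta_j(n) \geq \prod_i \beta_i(n)$ whenever the omitted factors are $\leq 1$, and dividing out the already-convergent factors $a/a'$ and $h/b'$ before invoking the lemma is the right way to sidestep the fact that $a/a'$ need not be $\leq 1$.

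One minor remark: the statement allows codomain $\mathbb{R}_{\geq 0}$ while also calling the sequences positive; you are right that this forces the positivity reading, and your caveat about ignoring finitely many vanishings is the appropriate hedge.
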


\begin{proof}
Evident.
\end{proof}

This proposition allows us to simplify our sequences when we deal with a sandwich of inequalities in order to establish asymptotic growth. This natural proposition will be used frequently in what follows. Moreover, we have to mention how far this asymptotic study actually reaches.

\begin{remark}[Further on asymptotics]
Bounding asymptotic growth will acquire even more importance when we use it to extract further terms of the growth of the roots. In that case, we will have to deal with differences and the growth that survives them.
\end{remark}

Before continuing we turn briefly our attention to the upper bound we discuss in this section. This will serve us in the next chapters. 

\section{Deeper analysis of the growth of the upper bound}

For the future, it will be important to know more about this upper bound. In particular, we want to extract more terms of its asymptotic growth in the following sense that will appear several times in our next analyses of these bounds.

\begin{definicion}[Convention on further terms of the asymptotic expansion]\label{defasy}
Let $f\colon\mathbb{N}\to\mathbb{R}$ be a sequence for which we have determined that $f\sim g$ for some (generally easier to understand) $g\colon\mathbb{N}\to\mathbb{R}$ and suppose that we call $g$ the \textit{up to the $n$-th term asymptotic growth of $f$}. We say that we \textit{extract a further term of the asymptotic growth of} $f$ if we compute a function $h\sim f-g$. In this case, we say that $g+h$ is the \textit{up to the $(n+1)$-th term asymptotic growth of $f$} and $h$ is the \textit{$(n+1)$-th term of the asymptotic growth of $f$}.
\end{definicion}

\begin{remark}
Observe that in the previous definition we speak about \textit{the} up to the $n$-th term asymptotic growth of $f$ and about \textit{the} $(n+1)$-th term of the asymptotic growth of $f$. We use ``the'' instead of ``a'' to refer to these objects because, for us, once a growth term is found for $f$ we will stick to it for the rest of this document. This means that we compute our expansions always using the same terms that we computed previously and in the fixed scale given by the almost exponential functions of the form $n^{r}a^{n}$ for $a>0$ and $r\in\mathbb{R}$. We will further clarify this next.
\end{remark}

As we see, we define the asymptotic growth as a recursive convention with respect to the functions we have extracted before. We could imagine that we are defining a basis of asymptotics and we generally will choose the easier available functions in order to define such \textit{basis}. This concept is actually widely used in asymptotic analysis, we are speaking here about \textit{scales}. See \cite[Subsection 1.1.2]{paris2001asymptotics}. However, we will not go so technical here because we do not need so much in this work. More on these bases for asymptotic expansions can be found in the cited reference and this is in accordance with our convention as long as we have in mind the kind of easy functions we are choosing in order to deal with our growth, i.e., as long as our basis is clear from the context. This is why we do not need more than the recursive definition-convention above.

\begin{convencion}[Our general asymptotic scale]
\label{conasy}
Since we will be dealing with sequences that grow following some exponential times a polynomial all the time and we will just need one fixed basis in order to describe the details of the asymptotic growths we are interested in, we will keep our basis completely multiplicative and therefore the kind of sequences that we will use to describe each term of our growth are the sequences of the form $\{n^{r}a^{ln+m}\}_{n=1}^{\infty}$ with $a,l,m\in\mathbb{R}_{\geq0}$ and $r\in\mathbb{R}.$ These sequences in use can clearly be simplified even more to these of the form $n^{r}a^{n}$ with $a\in\mathbb{R}_{\geq0}$ and $r\in\mathbb{R}$ following easy modifications of the coefficients, the base $a$ and the exponent $r$, but it will be more comfortable for us to keep some more freedom in the form that these sequences might have. The most important feature is that we do not allow sums or differences to appear in our basis sequences, thus letting us distinguish our growth terms by the different terms appearing on a sum of these sequences.
\end{convencion}

Apart from that and also as a consequence of our interest for extracting further terms of the asymptotic growth, now, for the first time in this dissertation, a problem will arise when dealing with the square roots appearing in our bounds. We will first deal with it here in order to see, in a somewhat easy example, how we overcome the complications caused by it.

\begin{warning}
We have to deal with the square roots nicely. For that, we use, of course, conjugates.
\end{warning}

Thus, the rest of the growth of the upper bound above is, having in mind that \begin{gather*}
\frac{2^{n+1}-n-2}{n}-2^{n+1}=\frac{2^{n+1}-n-2-n2^{n+1}}{n}=\frac{-n-2+(1-n)2^{n+1}}{n},\end{gather*} given by $I(n)-2^{n+1}=\frac{N}{D}$ with numerator $N=$ \begin{gather*}
   \bigg{(}\frac{-n-2+(1-n)2^{n+1}}{n}+\\\frac{n-1}{n}\sqrt{(2^{n+1}-n-2)^2-\frac{2n}{n-1}(3^{n+1}-(n+2)2^{n+1}+\frac{1}{2}(n+1)(n+2))}\bigg{)}\\\cdot\bigg{(}\frac{-n-2+(1-n)2^{n+1}}{n}-\\\frac{n-1}{n}\sqrt{(2^{n+1}-n-2)^2-\frac{2n}{n-1}(3^{n+1}-(n+2)2^{n+1}+\frac{1}{2}(n+1)(n+2))}\bigg{)}\\=\frac{3\left(2+3^{n}2(n-1)+n\right)}{n}\sim\frac{3^{n+1}2n}{n}=3^{n+1}2
\end{gather*} and denominator $D=$\begin{gather*}\bigg{(}\frac{-n-2+(1-n)2^{n+1}}{n}-\\\frac{n-1}{n}\sqrt{(2^{n+1}-n-2)^2-\frac{2n}{n-1}(3^{n+1}-(n+2)2^{n+1}+\frac{1}{2}(n+1)(n+2))}\bigg{)}\\\sim-2^{n+1}-2^{n+1}=-2^{n+1}2\end{gather*} so the second growth term of this bound is $\frac{N}{D}=\frac{3^{n+1}2}{-2^{n+1}2}=-(\frac{3}{2})^{n+1}$. This tells us that this bound grows like $2^{n+1}-(\frac{3}{2})^{n+1}.$ We can now continue and repeat this process in order to obtain a third growth term for this bound. Thus, having in mind that \begin{gather*}
\frac{-n-2+(1-n)2^{n+1}}{n}+\left(\frac{3}{2}\right)^{n+1}=\\\frac{-2^{n+1}n-2^{n+1}2+4^{n+1}(1-n)+3^{n+1}n}{2^{n+1}n},\end{gather*} we have that the third growth term is given by $I(n)-(2^{n+1}-(\frac{3}{2})^{n+1})=\frac{N}{D}$ with numerator $N=$ \begin{gather*}
\bigg{(}\frac{-2^{n+1}n-2^{n+1}2+4^{n+1}(1-n)+3^{n+1}n}{2^{n+1}n}+\\\frac{n-1}{n}\sqrt{(2^{n+1}-n-2)^2-\frac{2n}{n-1}(3^{n+1}-(n+2)2^{n+1}+\frac{1}{2}(n+1)(n+2))}\bigg{)}\\\cdot\bigg{(}\frac{-2^{n+1}n-2^{n+1}2+4^{n+1}(1-n)+3^{n+1}n}{2^{n+1}n}-\\\frac{n-1}{n}\sqrt{(2^{n+1}-n-2)^2-\frac{2n}{n-1}(3^{n+1}-(n+2)2^{n+1}+\frac{1}{2}(n+1)(n+2))}\bigg{)}\\=\left(\frac{9}{4}\right)^{n+1}-2^{-n} 3^{n+1}-\frac{6 \left(\left(\frac{3}{2}\right)^n-1\right)}{n}+3\sim\left(\frac{9}{4}\right)^{n+1}
\end{gather*} and denominator $D=$ \begin{gather*}
\bigg{(}\frac{-2^{n+1}n-2^{n+1}2+4^{n+1}(1-n)+3^{n+1}n}{2^{n+1}n}-\\\frac{n-1}{n}\sqrt{(2^{n+1}-n-2)^2-\frac{2n}{n-1}(3^{n+1}-(n+2)2^{n+1}+\frac{1}{2}(n+1)(n+2))}\bigg{)}\\\sim-2^{n+1}-2^{n+1}=-2^{n+1}2
\end{gather*} so we obtain that $\frac{N}{D}\sim\left(\frac{9}{4}\right)^{n+1}\frac{1}{-2^{n+1}2}=-\frac{1}{2}\left(\frac{9}{8}\right)^{n+1}.$ This tells us information about the first three growth terms of the upper bound. We collect what we know and have proved in the next result.

\begin{proposicion}
The upper bound given by $I(n)$ grows like $$I(n)\sim 2^{n+1}-\left(\frac{3}{2}\right)^{n+1}-\frac{1}{2}\left(\frac{9}{8}\right)^{n+1}+o\left(\left(\frac{9}{8}\right)^{n}\right).$$
\end{proposicion}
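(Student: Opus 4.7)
The plan is to extract the three claimed asymptotic terms one after the other by iterating the conjugate-rationalization trick for the square root in $I(n)$. By Proposition \ref{clearerasin} we already know the leading term: $I(n)\sim 2^{n+1}$, so the issue is to compute, in the sense of Definition \ref{defasy} and Convention \ref{conasy}, an asymptotic equivalent of $I(n)-2^{n+1}$ and then of $I(n)-(2^{n+1}-(3/2)^{n+1})$. Throughout, I will expand in the purely multiplicative scale of functions of the form $n^{r}a^{n}$ and discard anything that is asymptotically dominated by the candidate term.

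First I would write $I(n)=A(n)+B(n)\sqrt{C(n)}$ with
\[
A(n)=\frac{2^{n+1}-n-2}{n},\qquad B(n)=\frac{n-1}{n},
\]
and $C(n)=(2^{n+1}-n-2)^{2}-\tfrac{2n}{n-1}\bigl(3^{n+1}-(n+2)2^{n+1}+\tfrac{1}{2}(n+1)(n+2)\bigr)$. To get the second growth term I would compute $I(n)-2^{n+1}$ by multiplying numerator and denominator by the conjugate $(A(n)-2^{n+1})-B(n)\sqrt{C(n)}$, turning this difference into a rational function whose numerator is $(A(n)-2^{n+1})^{2}-B(n)^{2}C(n)$ and whose denominator is $(A(n)-2^{n+1})-B(n)\sqrt{C(n)}$. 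A direct algebraic expansion (precisely the computation sketched in the excerpt) gives numerator $\sim 2\cdot 3^{n+1}$ and denominator $\sim -2\cdot 2^{n+1}$, so the quotient is equivalent to $-(3/2)^{n+1}$. This establishes the first correction.

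For the third growth term I would repeat exactly the same conjugate trick, this time on $I(n)-(2^{n+1}-(3/2)^{n+1})$, writing
\[
I(n)-\bigl(2^{n+1}-(3/2)^{n+1}\bigr)=\frac{\bigl(A(n)-2^{n+1}+(3/2)^{n+1}\bigr)^{2}-B(n)^{2}C(n)}{\bigl(A(n)-2^{n+1}+(3/2)^{n+1}\bigr)-B(n)\sqrt{C(n)}}.
\]
The denominator is again asymptotic to $-2\cdot 2^{n+1}$ since the added $(3/2)^{n+1}$ is dominated by $-2^{n+1}$; expanding the numerator (which, compared with the previous step, receives an additional $2(A(n)-2^{n+1})(3/2)^{n+1}+(9/4)^{n+1}-\tfrac{2n}{n-1}\cdot\text{corrections}$) yields dominant contribution $(9/4)^{n+1}$, all lower-order pieces such as $2^{-n}3^{n+1}$, $(3/2)^{n}/n$ and constants being $o((9/4)^{n})$ in our scale. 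Dividing gives the third growth term $-\tfrac{1}{2}(9/8)^{n+1}$, which is precisely the statement.

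The only real obstacle is bookkeeping: at each stage one must verify that every term discarded as $o(\cdot)$ is actually negligible against the next growth term in the multiplicative scale of Convention \ref{conasy}. The dangerous terms are those of the form $n^{r}a^{n}$ with $a$ close to but strictly smaller than the base of the target term, e.g.\ mixed products $\frac{(3/2)^{n}}{n}$ or $2^{-n}3^{n+1}=2\cdot(3/2)^{n}$ appearing in the third-term numerator; a careful comparison $a^{n}/((9/4)^{n})\to 0$ settles each of them. Once this bookkeeping is discharged, assembling the three contributions yields $I(n)=2^{n+1}-(3/2)^{n+1}-\tfrac{1}{2}(9/8)^{n+1}+o((9/8)^{n})$, as claimed.
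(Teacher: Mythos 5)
Your proposal follows exactly the paper's strategy: rationalize via the conjugate, estimate the resulting difference of squares in the numerator and the conjugate expression in the denominator, and iterate once more after subtracting the second growth term. The explicit dominant terms you identify (numerator $\sim 2\cdot 3^{n+1}$ then $\sim(9/4)^{n+1}$, denominator $\sim -2\cdot 2^{n+1}$ in both stages) match the paper's computation, so this is correct and essentially the same proof.
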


Observe how we followed our Definition \ref{defasy} of extracting further terms of the asymptotic growth and our Convention \ref{conasy} about the scale of sequences we use when dealing with the extraction of these terms. This serves as an example on why these notes were important before continuing our work on the asymptotic behaviour of the roots of our polynomials.

\begin{remark}[Keeping the same scale to compare]
Another advantage of keeping the same scale along the whole dissertation is the fact that, thanks to this uniformity, we will be able to establish comparisons between different bounds. The simplicity of our scale will further facilitate these comparisons that will become abundant and prevalent in what follows.
\end{remark}

We stop extracting information about the growth in the third term for reasons that will be disclosed and better understood when we talk about the bounds given by Sobolev. In the next section, we come back to our extension of the work of Mez\H{o} in \cite{mezo2019combinatorics}, which is our immediate interest besides the short detour we took here in order to understand better his upper bound and our treatment of asymptotic expansions.

\section[An answer]{An answer to a question of Mez\H{o} on asymptotics of extreme roots of univariate Eulerian polynomials}

We deeply explored the upper bound above. By the other side, the lower bound for the absolute value of this extreme root is obtained through the use of a Colucci estimation that comes from the next classic theorem in the theory of polynomial inequalities. This bound is not as good as the upper one, as we will see.

\begin{teorema}[Colucci estimation]\cite[Subsection 7.6.5]{mezo2019combinatorics}
Consider $p(x)=\sum_{i=0}^{n}a_{i}x^{i}\in\mathbb{C}[x]$ a polynomial whose zeros are bounded in absolute value by some positive real number $M\in\mathbb{R}_{\>0}$ and fix $k\in\{0,\dots,n\}$. Then the absolute value of the $k$-th derivative of $p$ admits the bound $$|p^{(k)}(x)|\leq k!\binom{n}{k}|a_{n}|(|x|+M)^{n-k}.$$
\end{teorema}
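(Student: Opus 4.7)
The plan is to start from the factorization of $p$ in terms of its roots and then differentiate directly, taking absolute values only at the very end. Since $\mathbb{C}$ is algebraically closed and the leading coefficient is $a_{n}$, I would write $p(x)=a_{n}\prod_{j=1}^{n}(x-\alpha_{j})$, where, by hypothesis, each root satisfies $|\alpha_{j}|\leq M$. This factorization is the right starting point because every bound the statement promises depends on $a_{n}$, on the number of roots $n$, and on a uniform control on how big each $|\alpha_{j}|$ can be, which is precisely what $M$ provides.

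Next I would compute $p^{(k)}$ by iterating the product rule. A short induction shows that
\begin{equation*}
p^{(k)}(x)=a_{n}\,k!\sum_{\substack{S\subseteq[n]\\|S|=k}}\,\prod_{j\in[n]\smallsetminus S}(x-\alpha_{j}),
\end{equation*}
since each one of the $k$ derivatives picks out exactly one linear factor to kill, and the $k!$ appears because the same subset $S$ of killed factors can be reached in $k!$ different orders. I would verify this formula for $k=1$ and $k=2$ and then carry out the straightforward induction step; this is the only place where a small routine computation is unavoidable, but it is completely standard.

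With that closed form in hand, the remainder is a double application of the triangle inequality. First, taking absolute values inside the sum and the product, I get
\begin{equation*}
|p^{(k)}(x)|\leq |a_{n}|\,k!\sum_{\substack{S\subseteq[n]\\|S|=k}}\,\prod_{j\in[n]\smallsetminus S}|x-\alpha_{j}|.
\end{equation*}
Then, for each $j$, the bound $|x-\alpha_{j}|\leq |x|+|\alpha_{j}|\leq |x|+M$ replaces every factor in every product by the single quantity $|x|+M$, so each of the inner products collapses to $(|x|+M)^{n-k}$. Finally, since the sum is indexed by the $\binom{n}{k}$ subsets of $[n]$ of size $k$, I obtain
\begin{equation*}
|p^{(k)}(x)|\leq k!\binom{n}{k}|a_{n}|(|x|+M)^{n-k},
\end{equation*}
which is exactly the desired bound. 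There is no real obstacle in this argument; the only subtlety, if any, is book-keeping the combinatorial coefficient $k!$ correctly in the product-rule expansion, which is why I would spell out the induction step in the write-up.
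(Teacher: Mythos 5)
The paper states this result only as a citation to Mez\H{o}'s book and supplies no proof of its own, so there is no internal argument to compare against. Your proof is correct and complete: the factorization $p(x)=a_{n}\prod_{j=1}^{n}(x-\alpha_{j})$, the induction giving $p^{(k)}(x)=a_{n}k!\sum_{|S|=k}\prod_{j\notin S}(x-\alpha_{j})$ (where the $k!$ correctly accounts for the orderings of the killed factors), and the two-stage triangle-inequality estimate $|x-\alpha_{j}|\leq|x|+M$ followed by counting the $\binom{n}{k}$ subsets all check out, and this is the standard route to Colucci's bound.
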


Now a simple analysis shows a way to perform a transformation producing the bound above. Thus is how we obtain that bound using the theorem.

\begin{remark}[Using Colucci estimation]
Rearranging the terms in the inequality provided by the theorem, we see that this result actually allows us to estimate the majorant $M$ whenever we have information about the derivatives of $p$ at some points. This is how this theorem is used to obtain the lower bound exposed above.
\end{remark}

Now we will use our relaxation in order to find these bounds instead. We will see that the relaxation has the ability to give even better bounds if we make the correct choices. We do not even need the whole power of the relaxation to do this.

\begin{convencion}[Simplified relaxation]
For ease of our analysis of the vectors involved, we will consider in this section not the full relaxation, but only the one obtained through the deletion of the first row and column of the original relaxation (see (c) of \cite[Definition 3.19]{main}). We do this because the generators of the kernel in this case seem to follow an easier pattern.
\end{convencion}

We remind that we stay all the time now in the RZ setting. In general, since we start using the relaxation we have to be in the RZ setting, as the relaxation needs to be fed with RZ polynomials.

\begin{notacion}[Simplified name for Eulerian polynomials]
For ease of notation, from now on, we will denote $\gls{an}(\mathbf{x},\mathbf{1}):=A_{n}(\mathbf{x})$, dropping directly the $y$ variables out as we will only work in the RZ setting in what follows.
\end{notacion}

We have to put attention into the size of our matrices. In this sense, we have to pay special care to ghost variables in multivariate Eulerian polynomials.

\begin{remark}[Size of the matrix of simplified relaxation of multivariate Eulerian polynomials]
As noted before, for each $n$, the multivariate Eulerian polynomial $A_{n}$ has actually $n$ variables $x_{2},\dots x_{n+1}$ (because $1$ can never be a descent top) and thus our method generates a matrix polynomial of size $(n+1)+1-1=n+1$ for the full relaxation. This is so because the row, column and matrix coefficient corresponding to the never appearing variable $x_{1}$ are in fact always $0$ and therefore can be omitted giving thus $-1$ row and columns. Notice then that the matrix of the simplified relaxation has additionally one row and column less. This makes therefore it into an $n\times n$ matrix for the matrix associated to the simplified relaxation of the $n$-th multivariate Eulerian polynomial $A_{n}.$
\end{remark}

We need some notation so we can speak about the matrix polynomials of our interest. This is what we do next.

\begin{notacion}[LMP obtained]
Remember that for multivariate Eulerian polynomials $\mathbf{x}=(x_{1},\dots,x_{n+1})$ but we can eliminate the row and column corresponding to $x_{1}$ because this variable does not actually play any role in the polynomial, i.e., all monomials containing it have coefficient $0$ because $1$ can never be a top. The relaxation applied to the $n$-th multivariate Eulerian polynomial $A_{n}(\mathbf{x})\in\mathbb{R}[\mathbf{x}]$ provides us therefore with a matrix polynomial $$\gls{mn}(\mathbf{x}):=M_{n,0}+\sum_{j=2}^{n+1}x_{j}M_{n,j}\in\Sym_{n+1}[\mathbf{x}]$$ positive semidefinite at the origin, i.e., $M_{n,0}$ is PSD. Setting $x_{i}=x$ for all $i\in[n+1]$ we look at the diagonal, where the univariate Eulerian polynomial $A_{n}(x)\in\mathbb{R}[x]$ lies. Looking at the diagonal like this produces the LMP $$M_{n}(x,\dots,x)=M_{n,0}+x\sum_{i=2}^{n+1}M_{n,i}=M_{n,0}+xM_{n,\suma}\in\Sym_{n+1}[\mathbf{x}]$$ whose determinant is a univariate polynomial whose biggest root bounds from below the biggest root $q_{n}^{(n)}$ (which is always negative) of the corresponding univariate Eulerian polynomial $A_{n}(x,\dots,x)$. Now, if we want a linear bound, we have to analyze the behaviour of the kernel of the matrix obtained when $x$ is the largest root $x_{n,r}$ of $\det(M_{n,0}+xM_{n,\suma})$ (which has to be negative as it must in fact be $x_{n,r}\leq q_{n}^{(n)}$ because of the property of being, in fact, a relaxation).
\end{notacion}

For analyzing the behaviour of such kernel we will have to act cleverly and guess through numerics and experimention a good approximate eigenvector in each case. Thus, the study of the behaviour of this kernel will be done through a procedure giving many different linear inequalities. We describe precisely such procedure in order to shed light over the reason why it works.

\begin{procedimiento}[Obtaining inequalities]
\label{procesobound}
In particular, our procedure will consist in applying the following criterion. A matrix polynomial $$p(x)=A+xB\in\Sym_{n}(\mathbb{R})[x],$$ where $A$ is a PSD matrix, will verify $$v^{\top}(A+xB)v=v^{\top}Av+xv^{\top}Bv\geq0$$ for all $v\in\mathbb{R}^{n}$ and $x\in\rcs(\det(p))$. Hence, fixing $v\in\mathbb{R}^{n}$, this gives us many inequalities of the form $a+xb\geq0$ (one for each $v$) that lower-bound the set $\rcs(\det(p))$ through the rearranging (solving for $x$) $x\geq\frac{-a}{b}$ whenever $b>0$. In short, $x\in\rcs(\det(p))$ must verify $x\geq\frac{-a}{b}.$
\end{procedimiento}

We have to be mindful of some subtleties that appear when we are dealing with inequalities. In particular, we have to ensure that these do not change direction.

\begin{remark}[Avoid reverse inequalities]
It is important to extreme the care in the last part of the procedure: we need $b>0$ in order to lower-bound through this method. Otherwise the inequality gets reversed.
\end{remark}

Thus we can recover exactly previous estimations. In particular, we can easily notice that applying this method to the easier case of our relaxation, which corresponds to considering only the top left entry of the relaxation, gives already Colucci estimation. We see in detail how this recovering of the same estimation is done.

\begin{observacion}[Relaxation allows Colucci estimation]
Applying the relaxation to any RZ polynomial $p\in\mathbb{R}[\mathbf{x}]$, considering just its top left entry and imposing that this entry has to be positive (or, equivalently using the vector $(1,\mathbf{0})$) gives directly the inequality $L_{p}(1)+x\sum_{i=2}^{n+1}L_{p}(x_{i})=n+x\sum_{i=2}^{n+1}(2^{i-1}-1)=n+x(-2 + 2^{1 + n} - n)\geq0$. It is easy to see that this inequality translates (remembering that univariate Eulerian polynomials are palindromic) easily into our first bound. This translation happens in this way: $q_{1}^{(n)}\leq\frac{-2 + 2^{1 + n} - n}{-n}$ or, in absolute value, $|q_{1}^{(n)}|\geq\frac{2^{1 + n}}{n}-\frac{2}{n}-1$. Now this last inequality is \textit{exactly} Colucci's estimation, as we saw some sections above.
\end{observacion}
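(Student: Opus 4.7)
The plan is to unfold the claim into three short pieces: compute the top-left entry of the relaxation matrix explicitly, apply Procedimiento~\ref{procesobound} with the vector $(1,\mathbf{0})$, and then use palindromicity of the univariate Eulerian polynomials to convert the resulting bound on the smallest-magnitude root into the announced lower bound on $|q_{1}^{(n)}|$.

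First I would write out the $(0,0)$-entry of $M_{n}(x,\dots,x)=M_{n,0}+xM_{n,\suma}$. Because the mold $M_{n,\leq 1}$ carries a constant $1$ at position $(0,0)$ and an $x_{i}$ at position $(0,i)$, the $(0,0)$-entry of $A_{0}+\sum_{i}x_{i}A_{i}$ equals $L_{p}(1)+\sum_{i=2}^{n+1}x_{i}L_{p}(x_{i})$, with the index $i=1$ dropping out because $x_{1}$ never appears in $A_{n}(\mathbf{x})$ by Remark~\ref{numberofvars}. Specializing to the diagonal $x_{i}=x$ and inserting the values $L_{p}(1)=\deg(p)=n$ together with $L_{p}(x_{i})=R(\{i\})=2^{i-1}-1$ from Computation~\ref{degree1form}, the geometric sum $\sum_{i=2}^{n+1}(2^{i-1}-1)=2^{n+1}-2-n$ delivers precisely the stated expression $n+x(2^{n+1}-2-n)$.

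Second I would invoke Procedimiento~\ref{procesobound} with $v=(1,\mathbf{0})$: since $M_{n,0}$ is PSD, every $x$ in the relaxation $S(A_{n})$ satisfies $v^{\top}M_{n}(x,\dots,x)v\geq 0$, that is $n+xb\geq 0$ with $b:=2^{n+1}-2-n$, a quantity strictly positive for $n\geq 1$; rearranging gives $x\geq -n/b$. The diagonal restriction of $A_{n}(\mathbf{x})$ is precisely the univariate Eulerian polynomial $A_{n}(x)$, whose rigidly convex set is the half-line $[q_{n}^{(n)},\infty)$ because all its roots are negative. Since $\rcs(A_{n})\subseteq S(A_{n})\subseteq\{x\geq -n/b\}$, this forces $q_{n}^{(n)}\geq -n/b$, equivalently $|q_{n}^{(n)}|\leq n/b$.

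Third, palindromicity closes the loop. The symmetry $E(n+1,k)=E(n+1,n-k)$ of the Eulerian numbers makes $A_{n}(x)=x^{n}A_{n}(1/x)$, so the involution $r\mapsto 1/r$ permutes the (negative) roots; combined with the ordering $|q_{n}^{(n)}|\leq\cdots\leq|q_{1}^{(n)}|$, this forces the pairing $q_{i}^{(n)}\,q_{n+1-i}^{(n)}=1$, and in particular $|q_{1}^{(n)}|=1/|q_{n}^{(n)}|$. Inverting the previous bound therefore yields
\[
|q_{1}^{(n)}|\;\geq\;\frac{b}{n}\;=\;\frac{2^{n+1}-2-n}{n}\;=\;\frac{2^{n+1}}{n}-\frac{2}{n}-1,
\]
which is exactly the Colucci estimate recalled in Proposition~\ref{clearerasin}. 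The only delicate step is the palindromicity translation: I must pin down the reciprocal-symmetry normalization so the pairing matches the prescribed indexing convention, and treat the odd-$n$ case where a root is its own reciprocal (necessarily $-1$). Every remaining step is a direct arithmetic verification.
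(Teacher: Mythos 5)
Your proposal is correct and follows essentially the same route the paper takes: read off the $(0,0)$ entry of the pencil, impose its non-negativity to bound the rightmost (smallest-magnitude) root $q_n^{(n)}$ from below, and then use palindromicity of $A_n(x)$ to pair $q_n^{(n)}$ with $q_1^{(n)}$ and flip the bound. The one place you are a bit loose is the chain ``$\rcs(A_n)\subseteq S(A_n)\subseteq\{x\geq -n/b\}$'': strictly you are passing from the multivariate RCS inclusion to its diagonal slice, so the middle term should be the set $\{x:(x,\dots,x)\in S(A_n(\mathbf{x}))\}$ rather than $S(A_n)$ itself; the conclusion is unaffected since the $(0,0)$ positivity constraint already holds for that slice.
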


Now we claim that an even more trivial application of our method already radically improves the previous lower bound. Additionally, this application also provides a tight first order asymptotic estimation for such root. In this way, we answer, through a method different from the one used in the previous section, the next question of Mez\H{o} (which was in fact already answered much before he posed it by Sobolev, but we will come to this in future parts of this work).

\begin{cuestion}[Mez\H{o}]\cite[Subsection 7.6.5]{mezo2019combinatorics}\label{mezo}
Provide $\beta\in[0,1]$ and $d>0$ such that $$\lim_{n\to\infty}\frac{n^\beta|q^{(n)}_{1}|}{2^{n+1}}=d.$$
\end{cuestion}

This question originally started our research and discussion of the application of the relaxation in order to find bounds for the extreme roots of univariate Eulerian polynomials and for this reason it was productive for us. However, as we noted, this question as it is posed was already answered decades ago by Sobolev. We will examine this work in the future. But we have to remark that, although this question was answered, thinking about it from the perspective of applying the relaxation has produced interesting results for us that go beyond an answer to the mere question posed and let us dig into the properties of the relaxation as measured by the effects of increasing the number of variables when dealing with the task of bounding the roots of univariate Eulerian polynomials injected in the diagonal of multivariate Eulerian polynomials. We make this clear now.

\begin{comentario}[Question already answered]
This question was already answered by Sobolev in \cite{sobolev2006selected}. It might have been difficult to find because he used a different name for our Eulerian polynomials and because the original publication was in Russian. We will comment on that work in the future. Here we will try to go beyond because, as we said, dealing with this question actually served us the purpose of understanding better the relaxation. In this sense, the path towards our answer is in fact more important than the answer itsef, as this answer was anyway already in the literature about univariate Eulerian polynomials. As Gauss said: ``It is not knowledge, but the act of learning, not the possession of but the act of getting there, which grants the greatest enjoyment.'' And, in this sense, it is not the knowledge of the asymptotic growth of the extreme roots of the univariate Eulerian polynomials, not an answer to the original question of Mez\H{o}, but the processes and techniques by which we reach there which gives us the big satisfaction of seeing how our techniques develop and serve us, not just to find an answer by a different path, but to also show properties about the behaviour and weaknesses and strengths of the tools used and applied in the techniques that drew our particular way to give another path towards the answer of such question. In particular, applying the relaxation during our struggle to answer this question showed us many properties about the relaxation and, \textit{in addition}, also eventually solved the question in a different manner, following a different way, drawing a different path.
\end{comentario}

Our answer to Question \ref{mezo} has to be, as we already know by the previous section, that $\beta=0$ and $d=1$. In order to see this here, we need to compute the $L$-forms of the univariate polynomial. Doing this is much easier than what we already did for the multivariate ones in the previous chapter.

\begin{computacion}[$L$-forms of the univariate polynomial]
In fact, as usual, for degree $0$, we have $L_{p}(1)=n$; for degree $1$, $L_{p}(x)=\coeff(x,p)=\E(n+1,1)=2^{n+1}-(n+2);$ for degree $2$, $L_{p}(x^2)=-2\coeff(x^2,p)+\coeff(x,p)^2=-2\E(n+1,2)+\E(n+1,1)^2=$ \begin{gather*}
-2(3^{n+1}-(n+2)2^{n+1}+\frac{1}{2}(n+1)(n+2))+(2^{n+1}-(n+2))^2=\\2 - 3^{1 + n}2 + 4^{1 + n} + n
;\end{gather*} and, finally, for degree $3$, $L_{p}(x^3)=$ \begin{gather*}3\coeff(x^3,p)-3\coeff(x,p)\coeff(x^2,p)+\coeff(x,p)^3=\\3\E(n+1,3)-3\E(n+1,1)\E(n+1,2)+\E(n+1,1)^3=\\3(4^{1 + n} - 3^{1 + n}(2 + n) + 2^n(1 + n)(2 + n) - \frac{1}{6}n(1 + n)(2 + n))-\\3(2^{n+1}-(n+2))(3^{n+1}-(n+2)2^{n+1}+\frac{1}{2}(n+1)(n+2))+\\(2^{n+1}-(n+2))^3=-2 - 2^{1 + n} 3^{2 + n} + 4^{1 + n}3 + 8^{1 + n} - n.
\end{gather*}
\end{computacion}

Now we get the answer fast just considering the relaxation of the univariate polynomial.

\begin{proposicion}[Improving Corollary \ref{clearerasin} using the relaxation]\label{improvementcearer}
We have the asymptotic inequality $|q_{1}^{(n)}|\gtrsim 2^{n+1}$.
\end{proposicion}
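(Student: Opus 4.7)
The plan is to apply Procedure \ref{procesobound} to the $2\times 2$ LMP coming from the relaxation of the univariate Eulerian polynomial $A_n(x)$, but with a smarter test vector than the one yielding Colucci's estimate. Since $A_n$ is a univariate RZ polynomial whose roots are all negative, its rigidly convex set is $[q_n^{(n)},\infty)$, and the Relaxation Theorem guarantees that the spectrahedron $S(A_n)$ contains this set. Thus any inequality of the form $v^{T}M_{A_n,0}v+xv^{T}M_{A_n,1}v\geq 0$ valid on $S(A_n)$ forces $q_n^{(n)}\geq -a/b$ whenever $b:=v^{T}M_{A_n,1}v>0$, i.e., $|q_n^{(n)}|\leq a/b$ with $a:=v^{T}M_{A_n,0}v$. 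The relaxation does not directly see the leftmost root $q_1^{(n)}$, so the next step is to invoke the palindromicity of $A_n$ (i.e., $E(n+1,k)=E(n+1,n-k)$), which pairs the roots reciprocally and yields $q_1^{(n)}q_n^{(n)}=1$, hence $|q_1^{(n)}|=1/|q_n^{(n)}|\geq b/a$.

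The choice of vector is where the improvement happens. Taking $v=(1,0)^{T}$ only recovers Colucci's estimate, since then $a=L_{A_n}(1)=n$ and $b=L_{A_n}(x)\sim 2^{n+1}$, giving $b/a\sim 2^{n+1}/n$ and losing a full factor of $n$. Instead, I would take $v=(0,1)^{T}$, which gives $a=L_{A_n}(x^{2})=4^{n+1}-2\cdot 3^{n+1}+n+2$ and $b=L_{A_n}(x^{3})=8^{n+1}-3\cdot 6^{n+1}+3\cdot 4^{n+1}-n-2$, both already computed in the section. The dominant monomials are $4^{n+1}$ and $8^{n+1}$ respectively, so the ratio $b/a$ is expected to grow like $8^{n+1}/4^{n+1}=2^{n+1}$. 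Positivity of $b$ for $n$ large enough is immediate since $8^{n+1}$ dominates $3\cdot 6^{n+1}=3\cdot (3/4)^{n+1}\cdot 8^{n+1}$; the analogous comparison handles $a$.

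To finish, I would factor the dominant term out of numerator and denominator and write
\[
\frac{b}{a}=2^{n+1}\cdot\frac{1-3(3/4)^{n+1}+3(1/2)^{n+1}-(n+2)\cdot 8^{-(n+1)}}{1-2(3/4)^{n+1}+(n+2)\cdot 4^{-(n+1)}},
\]
whose second factor tends to $1$ as $n\to\infty$. Hence $b/a\sim 2^{n+1}$ in the strict sense of Notation \ref{insteadO}. Setting $h:=b/a$ yields $h\leq |q_1^{(n)}|$ with $h\sim 2^{n+1}$, which by definition means $|q_1^{(n)}|\gtrsim 2^{n+1}$. The main conceptual obstacle is not any individual computation (they are all elementary once the $L$-form values are known) but rather the insight that, although the relaxation only bounds the RCS and therefore only sees $q_n^{(n)}$, the palindromicity of the Eulerian polynomial transfers that bound to $q_1^{(n)}$ with no loss, and that pushing up to the degree-$3$ entry of the LMP is precisely what kills the spurious $1/n$ factor present in Colucci's bound.
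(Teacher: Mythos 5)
Your proof is correct, but it takes a genuinely different route from the paper's. You linearize the $2\times 2$ relaxation through the test vector $v=(0,1)^{T}$ (Procedure \ref{procesobound}), which isolates the bottom-right entry $L_{A_n}(x^2)+xL_{A_n}(x^3)$ and yields an elementary one-line asymptotic. The paper instead computes the full determinant of the $2\times 2$ LMP, applies the quadratic formula, identifies $z_{n,r}=\frac{-b+\sqrt{b^2-4ac}}{2a}$ as the root of smallest absolute value, and then establishes $\sim 2^{n+1}$ by multiplying by the conjugate — a more delicate calculation involving a radical that tends to cancel the rational part. The reason the paper does the heavier work is that it needs the optimal relaxation bound $\un$ (the actual root of the determinant, not just a linearization) as the reference sequence for all later comparisons in this part: the differences $\bi-\un$, $\mult_{v}-\un$, and $\mult_{v(n)}-\un$ are measured against precisely this $\un$, and a linearized bound like yours would not serve that role. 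Your approach, by contrast, buys simplicity: no radicals, no conjugate trick, no need to argue which branch of the quadratic is the correct one — just factor the dominant terms. Indeed the paper itself acknowledges the linearization shortcut right afterward (Proposition \ref{anteserajemplo}, using $v=(1,1)$) as a preview of the technique it will rely on when the matrices grow. Your palindromicity step, the positivity check on $b$, and the factoring $b/a=2^{n+1}\cdot(1+o(1))$ are all sound.
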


\begin{proof}
We can compute now the matrix of the relaxation as \begin{gather*}
    \begin{pmatrix}
    n & 2^{n+1}-(n+2)\\2^{n+1}-(n+2) & 2 - 3^{1 + n}2 + 4^{1 + n} + n
    \end{pmatrix} + \\ x\begin{pmatrix}
    2^{n+1}-(n+2) &  2 - 3^{1 + n}2 + 4^{1 + n} + n\\ 2 - 3^{1 + n}2 + 4^{1 + n} + n & -2 - 2^{1 + n} 3^{2 + n} + 4^{1 + n}3 + 8^{1 + n} - n.
    \end{pmatrix}.
\end{gather*} Next, we compute its determinant \begin{gather*}
    -4(-1 + 2^n)^2 + (-2 + 2^{2 + n} - 3^{1 + n}2 + 4^{1 + n})n
 +\\ x(-4(-1 + 2^n)(1 + 2^{1 + 2n} - 3^{1 + n}) \\+ 2(1 - 2^n + 2^{3 + 2n} + 2^{2 + 3n} - 3^{1 + n} - 2^n 3^{2 + n})n
)\\ + x^{2}(-2^{2 + n} - 2^{3 + 2n}5 + 3^{1 + n}8 + 6^{2 + n} + 8^{1 + n} - 9^{1 + n}4 +\\12^{1 + n} - 2(2^n + 2^{1 + 2n}5 + 2^{2 + 3n} - 3^{1 + n}2 - 2^n 3^{2 + n})n).
\end{gather*} Finally, writing it as $c+bx+ax^{2}$ as usual, we can easily establish its roots as it is a polynomial of degree $2$ and taking its root (both roots are negative because we are using a relaxation that respects the rigidly convex set of the original polynomial) of smallest absolute value we get our 
outer approximation to the largest root of the univariate Eulerian polynomial. This root equals $z_{n,r}:=$ \begin{gather*}
   \frac{-b+\sqrt{b^2-4ac}}{2a}
\end{gather*} because in $b$ the terms winning asymptotically by exponentials are $$-2^{1+3n}4+8^{n+1}n=8^{1+n}n-8^{1+n}=(n-1)8^{1+n}>0$$ so, for big enough $n>0$, $-b<0$ and, as all the roots of these polynomials must be real, the term inside the square root $b^2-4ac\geq0$ so the root with smallest absolute value is the root in which the $+$ is taken before the square root. With this, and using the palindromicity of Eulerian polynomials, we can see that $$q_{1}^{(n)}\leq\frac{2a}{-b+\sqrt{b^2-4ac}}$$ or, in absolute value, $$|q_{1}^{(n)}|\geq\frac{2a}{b-\sqrt{b^2-4ac}}\sim 2^{n+1}$$ because it is clear that\footnote{Be careful, some computer software is unable to compute this limit correctly because of the square root in the denominator that goes to $0$ as it seems like the software gets lost before the step of multiplying by the conjugate of the expression is done and thus proceeds wrongly!}, when $n\to\infty$, \begin{gather*}
\frac{\frac{2a}{b-\sqrt{b^2-4ac}}}{2^{n+1}}=\frac{\frac{2a}{2^{n+1}}}{b-\sqrt{b^2-4ac}}=\frac{\frac{2a}{2^{n+1}}(b+\sqrt{b^2-4ac})}{b^2-(b^2-4ac)}=\\\frac{\frac{2a}{2^{n+1}}(b+\sqrt{b^2-4ac})}{4ac}\to 1\end{gather*} as the term winning asymptotically in the last denominator is $2^{4 + 3 n} n$ and the term winning asymptotically in the numerator is $2^{3 + 3 n} n+\sqrt{2^{6 + 6 n} n^2}=2^{4 + 3 n} n,$ which confirms that the quotient tends to $1$ as $n\to\infty$.
\end{proof}

As an immediate corollary, we therefore see that we can answer Question \ref{mezo} already. The proof is clear and immediate using the sandwich of asymptotic inequalities provided by Proposition \ref{improvementcearer} and Corollary \ref{clearerasin}.

\begin{corolario}[Answer using the relaxation]
We have the asymptotic identity $|q_{1}^{(n)}|\sim 2^{n+1}$.
\end{corolario}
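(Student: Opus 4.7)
The plan is to deduce the claim by sandwiching $|q_1^{(n)}|$ between two sequences both asymptotically equivalent to $2^{n+1}$. The two sides are already in place in the excerpt: the upper bound comes from the Laguerre--Samuelson/Colucci estimate collected in Proposition \ref{clearerasin}, which gives $|q_1^{(n)}| \leq I(n)$ with the explicit $I(n)$ shown there; the lower bound is the asymptotic inequality $|q_1^{(n)}| \gtrsim 2^{n+1}$ just established in Proposition \ref{improvementcearer} via the relaxation applied to the univariate Eulerian polynomial and the resulting $2\times 2$ LMP.

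First I would invoke Proposition \ref{clearerasin} to recall the identity $\lim_{n\to\infty} I(n)/2^{n+1}=1$, which translates into $I(n)\sim 2^{n+1}$. Combining this with the strict inequality $|q_1^{(n)}|\leq I(n)$ gives the upper asymptotic estimate $|q_1^{(n)}|\lesssim 2^{n+1}$. On the other side, Proposition \ref{improvementcearer} supplies the matching lower estimate $|q_1^{(n)}|\gtrsim 2^{n+1}$.

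The final step is a straightforward application of the asymptotic squeeze: if $a\lesssim f\lesssim b$ and $a\sim b$, then $f\sim a\sim b$. Taking $a=b=2^{n+1}$ (so trivially $a\sim b$), this yields $|q_1^{(n)}|\sim 2^{n+1}$, which is the stated identity. There is no real obstacle here; the entire content of the proof has been done in the two preceding results, and the corollary is merely the packaging of the sandwich into a single asymptotic equivalence. The only point requiring a sentence of justification is that the two-sided asymptotic bounds genuinely pin down the first-order growth, which is exactly Notation \ref{insteadO} and the basic fact about $\sim$ under a tight sandwich recalled earlier in the chapter.
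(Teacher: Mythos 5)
Your proof is correct and is exactly the argument the paper has in mind: the text states that the corollary is ``clear and immediate using the sandwich of asymptotic inequalities provided by Proposition \ref{improvementcearer} and Corollary \ref{clearerasin},'' which is precisely the two-sided bound $|q_1^{(n)}|\gtrsim 2^{n+1}$ (relaxation) and $|q_1^{(n)}|\leq I(n)\sim 2^{n+1}$ (Laguerre--Samuelson majorant) that you combine. One small terminological note: the upper bound $I(n)$ in Proposition \ref{clearerasin} comes from the Laguerre--Samuelson theorem, while Colucci's estimation supplies the (weaker) lower bound there, which your argument correctly replaces by the relaxation bound of Proposition \ref{improvementcearer}.
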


We did this here to obtain the best possible bound that the univariate relaxation gives. As we will soon see in the next Proposition \ref{anteserajemplo}, for univariate Eulerian polynomials and in the first asymptotic term, the best possible bound given by the relaxation and the one obtained through the strategy of checking through a vector are equivalent.

\begin{remark}[Identifying the improvement]
The bound obtained above is the best one that the relaxation can give when applied to the univariate polynomial. In this case, it was possible to explicitly compute this optimal bound because the matrices were small enough. In the future, we will have matrices too big to allow us to follow this exact approach. This is why we look now at approximate approaches through exercises of \textit{eigenvector guessing}. This technique will be of fundamental importance in the future when we go fully multivariate.
\end{remark}

Hence, we found that the best possible univariate bound given by the relaxation can be written explicitly. We make sure to collect this here.

\begin{observacion}[Best possible univariate bound through the relaxation]
The optimal bound obtainable applying the relaxation to the univariate Eulerian polynomial is $$\un:=\frac{2a}{b-\sqrt{b^2-4ac}}$$ with $c+bx+ax^{2}=$ \begin{gather*}
    -4(-1 + 2^n)^2 + (-2 + 2^{2 + n} - 3^{1 + n}2 + 4^{1 + n})n
 +\\ x(-4(-1 + 2^n)(1 + 2^{1 + 2n} - 3^{1 + n}) \\+ 2(1 - 2^n + 2^{3 + 2n} + 2^{2 + 3n} - 3^{1 + n} - 2^n 3^{2 + n})n
)\\ + x^{2}(-2^{2 + n} - 2^{3 + 2n}5 + 3^{1 + n}8 + 6^{2 + n} + 8^{1 + n} - 9^{1 + n}4 +\\12^{1 + n} - 2(2^n + 2^{1 + 2n}5 + 2^{2 + 3n} - 3^{1 + n}2 - 2^n 3^{2 + n})n).
\end{gather*} These nice expressions will become more difficult to obtain for polynomials involving more variables and, therefore, LMP of bigger size because the degree of the corresponding determinant will then grow accordingly until this growth makes computing its roots a practically (and also theoretically, symbolically and even numerically) undoable task. 
\end{observacion}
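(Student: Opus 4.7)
The plan is to verify the stated expression by assembling the $2\times 2$ LMP of the relaxation in closed form and then inverting the usual quadratic formula. First, I would invoke the $L$-form values already computed in the excerpt: we have $L_p(1)=n$, $L_p(x)=E(n+1,1)$, $L_p(x^2)=-2E(n+1,2)+E(n+1,1)^2$, and $L_p(x^3)=3E(n+1,3)-3E(n+1,1)E(n+1,2)+E(n+1,1)^3$, with the closed forms $E(n+1,1)=2^{n+1}-(n+2)$, $E(n+1,2)=3^{n+1}-(n+2)2^{n+1}+\tfrac12(n+1)(n+2)$, and the standard expansion for $E(n+1,3)$. These are precisely the four scalar values that enter the $2\times 2$ symmetric matrix $M_n(x,\dots,x)=M_{n,0}+xM_{n,\mathrm{sum}}$ via the identification $M_{i,j}=L_p(x^{i+j})$, so the initial matrix and the $x$-matrix are uniquely determined.

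Next, I would expand $\det(M_{n,0}+xM_{n,\mathrm{sum}})$ as a polynomial $c+bx+ax^2$ of degree exactly $2$. The coefficients $a,b,c$ are obtained by straightforward (if tedious) algebraic manipulation; the main verification is that the three stated expressions match the products and differences that come out of the $2\times 2$ determinant. This is purely mechanical and can essentially be lifted from the proof of Proposition \ref{improvementcearer}, where the same determinant was already written out.

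The core argument for the ``optimality'' claim rests on two facts already established in the excerpt: the relaxation respects the rigidly convex set, so the root of the determinant with smallest absolute value lower-bounds $|q_1^{(n)}|$; and the relaxation applied to a univariate polynomial gives a $2\times 2$ LMP, for which there is no room to improve beyond taking the actual extremal root of this determinant. Hence the bound produced cannot be strengthened within the univariate relaxation, which is what ``optimal'' means in this context. Concretely, solving $c+bx+ax^2=0$ via the quadratic formula gives $x=\frac{-b\pm\sqrt{b^2-4ac}}{2a}$, and the sign analysis already carried out (showing $-b<0$ for large $n$, so one selects the $+$ branch) singles out the correct root.

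Finally, to reconcile the form $\frac{2a}{b-\sqrt{b^2-4ac}}$ with the branch $\frac{-b+\sqrt{b^2-4ac}}{2a}$, I would rationalize by multiplying numerator and denominator of the latter by $b+\sqrt{b^2-4ac}$ (and keep track of the sign when passing to absolute value, using the palindromicity of Eulerian polynomials to trade $q_1^{(n)}$ for $|q_1^{(n)}|$ as done in the proof of Proposition \ref{improvementcearer}). The main obstacle is just bookkeeping: ensuring the correct branch is selected and the signs line up so that the reciprocal form coincides with the claimed $\un$; once the sign inequalities from the earlier proof are imported, this becomes a one-line computation.
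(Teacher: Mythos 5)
Your overall plan is correct and mirrors the paper's derivation in Proposition~\ref{improvementcearer}: assemble the $2\times 2$ LMP from the $L$-form values $L_p(1),L_p(x),L_p(x^2),L_p(x^3)$, expand the determinant as the quadratic $c+bx+ax^2$, take the root $z_{n,r}=\frac{-b+\sqrt{b^2-4ac}}{2a}$ with smallest absolute value, and finally pass to the leftmost root of $A_n$ via palindromicity. Your optimality reasoning (no room to improve beyond the actual root of the $2\times 2$ determinant) is the right sense in which the observation uses ``optimal.''

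One manipulation in your final step is, however, described incorrectly. Multiplying $\frac{-b+\sqrt{b^2-4ac}}{2a}$ by the conjugate $\frac{b+\sqrt{b^2-4ac}}{b+\sqrt{b^2-4ac}}$ yields $\frac{-2c}{b+\sqrt{b^2-4ac}}$, which is \emph{not} the form $\frac{2a}{b-\sqrt{b^2-4ac}}$ claimed for $\un$. The correct passage is not a rationalization of $z_{n,r}$ but a \emph{negation followed by a reciprocation}: since $z_{n,r}<0$ one has $|z_{n,r}|=\frac{b-\sqrt{b^2-4ac}}{2a}$, and then palindromicity of $A_n$ gives $|q_1^{(n)}|=1/|q_n^{(n)}|\geq 1/|z_{n,r}|=\frac{2a}{b-\sqrt{b^2-4ac}}$. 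Relatedly, palindromicity is used to trade $q_n^{(n)}$ for $q_1^{(n)}$ (equivalently their absolute values are reciprocal), not ``to trade $q_1^{(n)}$ for $|q_1^{(n)}|$'' as you wrote; the latter is just a sign change. Since you do reference the paper's proof and do speak of the ``reciprocal form,'' I take this to be imprecision of exposition rather than a genuine misunderstanding, but as written the step would not deliver the stated formula.
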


For the reason above, we need to think about strategies that can jump over the difficulties of degree growth of the determinant because our LMP grows when the number of variables increases. Thus, instead of proceeding via the determinant, we can choose the vector $v=(1,1)$ and use it to extract a different bound with the same asymptotic growth.

\begin{proposicion}[Same asymptotics checking through a vector]\label{anteserajemplo}
The inner bound sequence $b_{v}\colon\mathbb{N}\to\mathbb{R}$ for the smallest root of the univariate Eulerian polynomial obtained through the guess of the eigenvector of its relaxation given by $v=(1,1)$ grows like $b_{v}\sim2^{n+1}.$
\end{proposicion}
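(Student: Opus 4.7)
The plan is to instantiate Procedure \ref{procesobound} with the guess $v=(1,1)$ on the relaxation LMP $M_{A_n}(x)=M_0+xM_1$ of the $n$-th univariate Eulerian polynomial, and then to flip the resulting outer bound on the rightmost negative root $q_n^{(n)}$ of $A_n$ into an inner bound on the leftmost negative root $q_1^{(n)}$ by invoking palindromicity. By \cite[Lemma 3.22]{main}, the two quadratic forms that enter the procedure are $v^{T}M_{0}v=L_{A_n}((1+x)^{2})=L_{A_n}(1)+2L_{A_n}(x)+L_{A_n}(x^{2})$ and $v^{T}M_{1}v=L_{A_n}(x(1+x)^{2})=L_{A_n}(x)+2L_{A_n}(x^{2})+L_{A_n}(x^{3})$. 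Both are integer linear combinations of the univariate $L$-form values already computed just before Proposition \ref{improvementcearer} in terms of $E(n+1,k)$ for $k\leq 3$, so no new combinatorial input is required.

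The second step is the asymptotic bookkeeping. Under Convention \ref{conasy}, the values $L_{A_n}(1)=n$, $L_{A_n}(x)\sim 2^{n+1}$, $L_{A_n}(x^{2})\sim 4^{n+1}$ and $L_{A_n}(x^{3})\sim 8^{n+1}$ immediately give that the dominant summand of $v^{T}M_{0}v$ is $L_{A_n}(x^{2})\sim 4^{n+1}$ and the dominant summand of $v^{T}M_{1}v$ is $L_{A_n}(x^{3})\sim 8^{n+1}$. In particular $v^{T}M_{1}v>0$ for all $n$ large enough, which is precisely the positivity hypothesis that Procedure \ref{procesobound} asks for in order to produce the inequality $x\geq -v^{T}M_{0}v/v^{T}M_{1}v$ for every $x\in\rcs(A_n)\subseteq S(A_n)$. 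Evaluating at the rightmost root, which is exactly the left endpoint of $\rcs(A_n)$, gives the outer estimate $|q_n^{(n)}|\leq v^{T}M_{0}v/v^{T}M_{1}v\sim 4^{n+1}/8^{n+1}=2^{-(n+1)}$.

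The third step performs the conversion to $|q_1^{(n)}|$. The polynomial $A_n$ is palindromic since $E(n+1,k)=E(n+1,n-k)$, hence $x^{n}A_n(1/x)=A_n(x)$ and the roots pair up as $\{q,1/q\}$; combined with the ordering $0<|q_n^{(n)}|\leq\cdots\leq|q_1^{(n)}|$ this forces the identity $|q_1^{(n)}|\cdot|q_n^{(n)}|=1$. Taking reciprocals of the outer bound above then turns it into the desired inner bound
\[
b_v:=\frac{v^{T}M_{1}v}{v^{T}M_{0}v}\leq |q_1^{(n)}|,\qquad b_v\sim\frac{8^{n+1}}{4^{n+1}}=2^{n+1},
\]
which is the claimed growth.

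The only technical obstacle I foresee is the standard asymptotic bookkeeping of the subleading terms: inside $L_{A_n}(x^{2})$ the summand $-2\cdot 3^{n+1}$ is only a factor $(3/4)^{n}$ below the leading $4^{n+1}$, and inside $L_{A_n}(x^{3})$ the summands $3\cdot 4^{n+1}-3\cdot 6^{n+1}$ are only a factor $(6/8)^{n}$ below the leading $8^{n+1}$. I would have to check explicitly that after they are combined with the cross-term $2L_{A_n}(x^{2})$ in $v^{T}M_{1}v$ (and with $2L_{A_n}(x)$ and $L_{A_n}(1)=n$ in $v^{T}M_{0}v$) no cancellation occurs at the leading order, so that the ratio $b_v$ really satisfies $\lim_{n\to\infty} b_v/2^{n+1}=1$ rather than just $b_v=O(2^{n+1})$; this is a routine factor-out-and-estimate exercise analogous to the one already carried out in the proof of Proposition \ref{improvementcearer} and can simply be replayed here.
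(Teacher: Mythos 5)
Your proposal is correct and takes essentially the same approach as the paper: linearize the univariate relaxation through $v=(1,1)$, rearrange the inequality $v^{T}M_0 v + x\, v^{T}M_1 v\geq 0$ to get $|q_n^{(n)}|\leq v^{T}M_0 v/v^{T}M_1 v$, and flip it via palindromicity (which the paper invokes implicitly with the phrase ``proceeding as we did previously''). Your worry about leading-order cancellation in the last paragraph is unnecessary: the numerator and denominator of $b_v$ are dominated by distinct pure exponentials $8^{n+1}$ and $4^{n+1}$ (the next terms being strictly smaller exponentials such as $6^{2+n}$ and $2\cdot 3^{n+1}$), so the ratio converges to $2^{n+1}$ without any delicate bookkeeping, exactly as the paper's closing limit displays.
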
 

\begin{proof}
Using the vector $v=(1,1)$  we obtain the inequality \begin{gather*}L_{p}(1)+2L_{p}(x)+L_{p}(x^2)+x(L_{p}(x)+2L_{p}(x^2)+L_{p}(x^3))=\\-2 + 2^{2 + n} -3^{1 + n}2 + 4^{1 + n}+\\x(2^{1 + n} + 2^{3 + 2n} -  3^{1 + n}4 - 2^{1 + n} 3^{2 + n} + 4^{1 + n}3 + 8^{1 + n}
)\geq0.\end{gather*} Thus, proceeding as we did previously, we obtain the bound \begin{gather*}
q_{1}^{(n)}\leq\frac{2^{1 + n} + 2^{3 + 2n} -  3^{1 + n}4 - 2^{1 + n} 3^{2 + n} + 4^{1 + n}3 + 8^{1 + n}}{2 - 2^{2 + n} + 3^{1 + n}2 - 4^{1 + n}}:=-b_{v}(n)\end{gather*} or, in absolute value and asymptotically, we obtain what we want as we can see that $|q_{1}^{(n)}|\geq b_{v}(n)\sim\big{(}\frac{8}{4}\big{)}^{n+1}=2^{n+1}$ because it is clear that, when $n\to\infty$, we have \begin{gather*}\frac{2^{1 + n} + 2^{3 + 2n} -  3^{1 + n}4 - 2^{1 + n} 3^{2 + n} + 4^{1 + n}3 + 8^{1 + n}}{2^{n+1}(-2 + 2^{2 + n} - 3^{1 + n}2 + 4^{1 + n})}=\frac{b_{v}(n)}{2^{n+1}}\to1.\end{gather*}
\end{proof}

The procedure followed in the proof above clearly shows us why it is beneficial to look through guesses of eigenvectors instead of spending so much energy trying to establish the optimal bound given by the relaxation. Now we also notice that this, together with the asymptotic bound by the other side given already by Mez\H{o}, provides a clear first order asymptotic estimate for $|q_{1}^{(n)}|$ which immediately answers Mez\H{o}'s question: $\beta=0$ and $d=1.$ So we get actually the following more precise result. 

\setlength{\emergencystretch}{3em}%
\begin{proposicion}[Answering the question of Mez\H{o} with actual better bounds and tight asymptotics]
$I(n)\geq|q_{1}^{(n)}|\geq\un\geq b_{(1,1)}.$
\end{proposicion}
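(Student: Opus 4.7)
The plan is to split the chain at $|q_1^{(n)}|$ and $\un$: the outer inequality $I(n)\geq|q_1^{(n)}|$ is already available from Proposition \ref{clearerasin}, so only the inner two inequalities require argument, and both will follow from containments of real half-lines together with the palindromicity of the univariate Eulerian polynomial $A_n$.

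First I would address $|q_1^{(n)}|\geq\un$. The relaxation is valid, so $\rcs(A_n)\subseteq S(A_n)$ by \cite[Theorem 3.35]{main}. Both sets are half-lines of the form $[\,\cdot\,,+\infty)$: the rigidly convex set is $[q_n^{(n)},+\infty)$ because all roots of $A_n$ are negative, and $S(A_n)=[z_{n,r},+\infty)$ where $z_{n,r}$ is the negative root of $\det(M_n(x))$ of smallest absolute value, as already identified in the proof of Proposition \ref{improvementcearer}. The containment then forces $z_{n,r}\leq q_n^{(n)}$, hence $|z_{n,r}|\geq|q_n^{(n)}|$. Palindromicity of $A_n$ pairs its roots reciprocally, so $|q_n^{(n)}|=1/|q_1^{(n)}|$; reciprocating the previous inequality yields $\un=1/|z_{n,r}|\leq|q_1^{(n)}|$, which is exactly the middle inequality.

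For the rightmost inequality $\un\geq b_{(1,1)}$ I would run the same kind of argument one containment further out. Testing $M_n(x)\succeq 0$ only against the single vector $v=(1,1)$ is a strictly weaker condition, so $S(A_n)=[z_{n,r},+\infty)\subseteq\{x\,:\,v^{T}M_n(x)v\geq 0\}=[-A/B,+\infty)$, where $A$ and $B$ are the constant and linear coefficients of the scalar quadratic form displayed in the proof of Proposition \ref{anteserajemplo} (both positive for large $n$, the dominant term in $B$ being $8^{n+1}$). This containment gives $-A/B\leq z_{n,r}$, i.e.\ $A/B\geq|z_{n,r}|$, and reciprocating yields $b_{(1,1)}=B/A\leq 1/|z_{n,r}|=\un$.

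The main obstacle is essentially bookkeeping: one has to consistently track signs while passing between the roots of $A_n$, the roots of $\det(M_n(x))$, the linear threshold of $v^{T}M_n(x)v$, and their absolute values and reciprocals. Concretely, the argument uses (i) that both roots of the determinant are negative and the leading coefficient of $\det(M_n(x))$ in $x$ is positive, so $S(A_n)$ is a single right-half line rather than a bounded interval or a union of two rays; (ii) that the linear slope $B$ in $v^{T}M_n(x)v$ is positive for large $n$, so the scalar condition defines a half-line of the same shape; and (iii) that palindromicity identifies $|q_n^{(n)}|$ with $1/|q_1^{(n)}|$. Each of these is either already recorded earlier in the paper or is immediate from the dominant terms in the $L$-form expansions of Computations \ref{degree1form}, \ref{degree2form} and \ref{degree3form}, so once the three signs are pinned down the whole chain reduces to elementary algebra on nested real intervals.
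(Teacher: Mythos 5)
Your proof is correct and reconstructs exactly the chain of observations the paper leaves implicit: the outer inequality is Proposition~\ref{clearerasin} verbatim, the middle one follows from $\rcs(A_n)\subseteq S(A_n)\subseteq\{\det M_n\geq 0\}$ together with palindromicity (which is precisely what the proof of Proposition~\ref{improvementcearer} does), and the rightmost one is the monotonicity of linearizing the PSD cone against a single test vector, again followed by a reciprocation. One minor overstatement: you assert $S(A_n)=[z_{n,r},+\infty)$, but what the cited material actually gives (and what the argument needs) is only the containment $S(A_n)\subseteq[z_{n,r},+\infty)$, coming from $S(A_n)\subseteq\{x:\det M_n(x)\geq0\}$ and connectedness through the origin; equality would additionally require checking the trace (or a diagonal) of $M_n$ on the whole ray, which neither you nor the paper does, and which is unnecessary for the one-sided bound.
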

\setlength{\emergencystretch}{0em}%

The univariate relaxation is already powerful as it allows us to establish the first order growth. However, this relaxation is not new. It has appeared before in the literature.

\begin{observacion}[Previous appearances of univariate relaxation]
The univariate relaxation has appeared already in \cite[Equation 2.2.9]{szeg1939orthogonal}. For more details on this early appearance, we refer also to \cite{blekherman2020generalized}.
\end{observacion}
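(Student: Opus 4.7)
The statement is not a theorem requiring a deductive proof but a bibliographic observation; the only substantive content to justify is that the two constructions coincide in the univariate setting. My plan is therefore to produce a side-by-side identification rather than a chain of implications. First, I would specialize the relaxation of Schweighofer (as recalled in the preceding chapter) to the univariate case $n=1$. In that case the mold matrix $M_{1,\leq 1}$ is the $2\times 2$ matrix built from the outer product $(1,x)^{T}(1,x)$, and applying the molding map $L_{p,d}$ entrywise produces a linear pencil whose entries are exactly $L_{p}(1), L_{p}(x), L_{p}(x^{2})$ in the constant matrix and $L_{p}(x), L_{p}(x^{2}), L_{p}(x^{3})$ in the coefficient of $x$. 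This is the concrete $2\times 2$ pencil that appeared explicitly in the proof of Proposition \ref{improvementcearer} above.

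Next, I would turn to Szegő's Equation 2.2.9 in \cite{szeg1939orthogonal}, which expresses a certain Hankel-type determinantal inequality in terms of the power sums of the roots of a real-rooted polynomial. The key step is to translate these power sums into the language of the $L$-form: using the Newton–Girard identities together with the characterization of $L_{p,d}(x^{k})$ for $k\leq 3$ recalled in the chapter on the relaxation (and appearing in compact form in Computations \ref{degree1form}, \ref{degree2form}, \ref{degree3form} when restricted to one variable), one sees that $L_{p}(x^{k})$ coincides, up to the normalization fixed by $p(0)=1$, with the $k$-th power sum $\sum_{i} r_{i}^{k}$ of the roots $r_{i}$ of $p$. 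Once this dictionary is in place, the PSD-ness of the univariate $2\times 2$ pencil becomes literally the classical Hankel determinantal condition that Szegő wrote down.

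The third step is to cross-check with \cite{blekherman2020generalized}, where the same identification is discussed in the modern language of moment matrices and generalized Hermite forms; citing this reference both documents the historical thread and confirms that the correspondence is exactly the one I outlined. The main obstacle, such as it is, will not be mathematical difficulty but notational translation: Szegő works with power sums and real-rooted polynomials on the real line whereas our setting is phrased in terms of RZ polynomials with $p(0)=1$ and with $L$-forms defined through a logarithmic generating series. Making this translation transparent — in particular, keeping track of the shift between "polynomials real-rooted" and "RZ polynomials normalized at the origin" — is the only real work, and it is book-keeping rather than new mathematics. Accordingly, the observation is best presented as a reading guide rather than a proved statement.
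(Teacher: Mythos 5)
The paper offers no argument here at all: the observation is a pure literature pointer, so your decision to present a side-by-side identification rather than a deduction is exactly the right register, and the specialization you describe (the $2\times2$ pencil with entries $L_{p}(1),L_{p}(x),L_{p}(x^{2})$ and $L_{p}(x),L_{p}(x^{2}),L_{p}(x^{3})$, i.e.\ the pencil used in the proof of Proposition \ref{improvementcearer}) is the object being matched against \cite{szeg1939orthogonal} and \cite{blekherman2020generalized}.

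One correction to your dictionary, though, because as stated it would make the match with the classical Hankel condition come out wrong. From the defining identity $-\log\frac{p(-x)}{p(0)}=\sum_{k\geq1}\frac{1}{k}L_{p}(x^{k})x^{k}$, writing $p(x)=\prod_{i}(1-x/r_{i})$ with $p(0)=1$, one gets
\begin{equation*}
L_{p}(x^{k})=\sum_{i}\Bigl(-\tfrac{1}{r_{i}}\Bigr)^{k}=\tr(A^{k})\quad\text{for }p=\det(I+xA),
\end{equation*}
consistent with Proposition \ref{lformtraces}. So $L_{p}(x^{k})$ is the $k$-th power sum of the \emph{negative reciprocals} of the roots of $p$ (equivalently, of the roots of the reciprocal polynomial), not of the roots $r_{i}$ themselves; your hedge ``up to the normalization fixed by $p(0)=1$'' does not absorb this, since it is a genuine change of variable $r\mapsto-1/r$ and not a rescaling. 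The Newton--Girard step is then applied to express these quantities through the low-degree coefficients of $p$, which is exactly what the explicit formulas for $L_{p,d}$ on monomials of degree up to three already encode. For the Eulerian application the slip is harmless because those polynomials are palindromic, so root and reciprocal-root data coincide, but in the general statement of the dictionary it should be fixed before identifying the PSD-ness of the pencil with the Hankel-type condition in \cite[Equation 2.2.9]{szeg1939orthogonal}.
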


For this reason, we are particularly interested in going beyond the univariate case. We want to apply the multivariate relaxation as this is new and it already depends in results coming from real algebraic geometry. We expect that going multivariate will greatly help and we will see how this happens soon. There are several ways to go multivariate. An intermediate step is going bivariate, which is already not trivial and improves our relaxation.

\begin{objetivo}[Pushing beyond the univariate relaxation]
There are two ways to leave the univariate setting. The most immediate way is considering a Nuij-type bivariate extension that allows us to introduce the bivariate RZ polynomial $A_{n+1}+yA_{n}$ using the well-known fact that $A_{n+1}$ interlaces $A_{n}$ and results from \cite{fisk2006polynomials}. This approach is similar to the one we studied in the first part when we increased the number of variables through the use of the interlacer provided by the Renegar derivative of the original polynomial. The other way consists in using the multivariate Eulerian polynomials introduced above and constructed through the collection of the information of the tops of the descents in the permutations. These approaches are similar, but just the last one allows us to increase the number of variables in each step. The first one only allows us to go to the bivariate setting.
\end{objetivo}

In order study our bounds in detail, we remind that we will use a particular form of representing and extracting asymptotic expansions for our setting. In particular, we will expand the asymptomatic growth using very simple functions. Our reference for asymptotic expansions is \cite{malham2005introduction}, although we will use a very tiny and particular segment of this theory.

\begin{remark}[Necessary tools in asymptotics to push beyond]
We already mentioned that our asymptotic scale will be given by functions of the form $a^{n}$ for $a\in\mathbb{R}_{\geq0}$. Additionally, we fix this scale all the time in the sense that, whenever we compute some term of the growth of a sequence in this scale, we understand that we do not ever change the scale when we extract more terms of the asymptotic expansion of that same sequence. Thus our choice of the scale used is homogeneous and invariable along the whole text.
\end{remark}
The relaxation has proven itself valuable at determining an inner bound having a tight first growth term for these roots. However, as we saw, we could establish this first asymptotic term using Stanley's approach already (although not a bound). In future parts we will even see that the method of Sobolev will in fact provide better bounds in a first iteration. This makes us wonder what is the real power of the relaxation. We can mention several things at this point.

\begin{remark}[The real power of the relaxation]
The main power of the relaxation comes from the fact that it is relatively easy to compute using only the degree three part of the polynomial at play. Additionally, when we deal with the multivariate extension, the relaxation not only bounds the original univariate polynomial, but the whole ovaloid of the multivariate extension, as we saw in \cite{main}. This uniformity means that we can extract knowledge not just about the original univariate polynomials but also about the very close polynomials that lie in restrictions close to this one. Moreover, the relaxation can be successfully combined with other methods to approximate roots in order to sharpen the bounds obtained even more before performing more demanding iterations that sometimes can run into the problem of numerical instability. Remarkably, we will combine it with the DLG method used by Sobolev in \cite{sobolev2006selected}. We refer the reader to \cite{macnamee2007numerical,macnameeii} to learn more about instability problems of some numerical methods used to estimate roots of polynomials. Also, around \cite[Equation 27]{macnameeii} one can find interesting information about methods to estimate roots of polynomials computing eigenvalues of companion matrices. These methods lie therefore certainly close to the method using the relaxation that we are exploring and exploiting here. However, there, no care is taken about RZ-ness and therefore a big chunk of the theory that helps us to build the story of our method is missing and non-reconstructible from the approaches taken there. But not everything is bad: at least, we found a broad family where our method could fit. Finally, applying the relaxation to these well-known polynomials and using the just discussed extension methods in order to enhance the power of the relaxation allow us to reveal new venues in the applications, limits and extensions of the relaxation itself and also in new features and generalizations of the polynomials we are studying. All in all, applying the relaxation results in an irresistible impulse to look in several different and varied directions. All these new directions emanate from our early explorations about the many venues where this relaxation could be used or exploited in order to achieve further understanding of real-rooted (or RZ) polynomials and their features.
\end{remark}

Before continuing towards the next sections of this work we have to make sure that the reader completely understands the nuances of our approach. Making this clear will ensure a smoother trip through our future explorations and, therefore, a greater enjoyment along the paths we will follow next.

\begin{warning}[Ground necessary before going beyond]
What we did in this section can be considered an easy example of what is to come afterwards. Guessing the correct structure for the approximated eigenvectors, dealing with the huge expressions that will appear, extracting several terms of the asymptotics and computing optima will become more complicated due to the cumbersome length of some expressions. However, we saw already the main problems that will appear and how we will approach them in order to overcome them.
\end{warning}

\setlength{\emergencystretch}{3em}%
\begin{remark}
Notice that, although we talk frequently about ``approximated eigenvectors'', we do not have, in principle, any guarantee that these guesses approximate the actual eigenvectors. We just know that they get \textit{close enough} for our interests to provide good approximations to a generalized eigenvalue problem through linearization. This is how we obtain our bounds. For this reason, we will sometimes just refer to these as ``guesses'' instead of ``approximations''.
\end{remark}
\setlength{\emergencystretch}{0em}%

In the next section, we show the problems that might appear when we have more variables. In particular, we start by the bivariate extension. For this, we will need to look again closely at interlacing and the tools it offers in \cite{fisk2006polynomials}.

\section{Interlacing goes beyond Renegar derivative}\label{sectioninterlacing}

In Part \ref{I}, we used the Renegar derivative to extend our polynomials into something admitting one additional variable. Here we will abandon such rigidity and instead of the Renegar derivative we will work with any interlacer. This will allow us to explore how the relaxation reacts when we use interlacers arising from different processes. In the future we will briefly explore interlacers with higher structure and extremal interlacers as these improving the relaxation the most. In order to start this study here, we begin recalling our notion of interlacing and its amplifications. We stay in the next particular case extracted from \cite[Definition 1.1]{kummer2015hyperbolic} but we will expand it accordingly here in order for it to fit with the notions we need to use from \cite[Section 8]{blekherman2023linear} and, by extension, \cite[Section 5]{branden2007polynomials}.

\begin{definicion}
Let $p,q\in\mathbb{R}[x]$ be univariate real-rooted polynomials with $\deg(p)-\deg(q)\leq1$ and decompose $p=k_{p}\prod_{i=1}^{\deg(p)}(x-a_{i})$ and $q=k_{q}\prod_{i=1}^{\deg(q)}(x-b_{i})$ for some $k_{p},k_{q},a_{i},b_{j}\in\mathbb{R}$ for all $i\in[\deg(p)]$ and $j\in[\deg(q)]$ so that $a_{1}\leq\cdots\leq a_{\deg(p)}$ and $b_{1}\leq\cdots\leq b_{\deg(q)}.$ We say that $q$ \textit{interlaces} $p$ and write $q\ll p$ if $a_{i}\leq b_{i}\leq a_{i+1}$ for all $i\in[\deg(p)-1]$. For the multivariate generalization, if $p\in\mathbb{R}[\mathbf{x}]$ is hyperbolic with respect to $e\in\mathbb{R}^{n}$ and $q\in\mathbb{R}[\mathbf{x}]$ is homogeneous of degree $\deg(p)-1$ we say that $q$ \textit{interlaces $p$ with respect to $e$} if $q(te+a)$ interlaces $p(te+a)$ for all $a\in\mathbb{R}^{b}$ and we write $q\ll_{e} p$.
\end{definicion}

We can compare with \cite[Section 5]{branden2007polynomials} in order to ensure that we can safely include self-interlacing.

\begin{convencion}
We consider that a polynomial interlaces itself. This is helpful and is in agreement with the definition of \textit{proper position} in \cite[Section 5]{branden2007polynomials} and with the notion of interlacing used in \cite{blekherman2023linear}, as it can be seen at the beginning of their proof of \cite[Lemma 7.6]{blekherman2023linear}.
\end{convencion}

Two important consequences can be extracted from the definition. We explore them.

\begin{remark}
For why $e$ is not actually that special, see also \cite[Theorem 2.1]{kummer2015hyperbolic} and remember Corollary \ref{rstoposorth} also commented in the discussion after \cite[Definition 5.2]{kummer2015hyperbolic}. This will allow us expanding the definition to stable polynomials with respect to vectors in the connected component of the unit vector corresponding to the homogenizing variable that we will use to transform RZ polynomials into real stable ones (hyperbolic with respect to every direction in the positive orthant). It is clear that if $q$ interlaces $p$, then $q$ is also hyperbolic with respect to $e$. Additionally, this definition directly extends also to RZ polynomials using the characterization in Proposition \ref{dehomo}. In this case, since we have to dehomogenize by the homogenizing variable to come back to the RZ setting, we write $q\ll p$ with any reference to $e.$
\end{remark}

Intermediate between going fully multivariate and the univariate relaxation is an application of a kind of Nuij type extension as we tried when we wanted to extend the amount of variables of our polynomials in the previous part using the Renegar derivarite as an interlacer easy to build. For this application, we will need to keep real stability under certain sums of interlacing polynomials, something we can get under mild conditions using the next result.

\begin{lema}\cite[Lemma 8.1]{blekherman2023linear}
Assume that the homogeneous stable polynomials $p,q\in\mathbb{R}[\mathbf{x}]$ have nonnegative coefficients and a common interlacer. Then $p+q$ is stable.
\end{lema}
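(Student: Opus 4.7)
The plan is to reduce the multivariate statement to the classical univariate fact that two real-rooted polynomials with a common interlacer have all their nonnegative linear combinations real-rooted, a result essentially due to Obreschkoff and cited in the paragraph preceding the lemma through \cite[Section 5]{branden2007polynomials}. Since $p$ and $q$ are homogeneous and stable, by Corollary \ref{rstoposorth} they are hyperbolic with respect to every direction in the positive orthant, and in order to show that $p+q$ is stable it is equivalent to verify that $(p+q)(a+te)$ is real-rooted in $t$ for every $a\in\mathbb{R}^{n}$ and every $e\in\mathbb{R}_{>0}^{n}$.

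First I would fix such $a$ and $e$, let $r$ be the assumed common interlacer of $p$ and $q$, and consider the three univariate restrictions $P(t):=p(a+te)$, $Q(t):=q(a+te)$ and $R(t):=r(a+te)$. By the very definition of multivariate interlacing, $R\ll P$ and $R\ll Q$ in the univariate sense, and $P,Q$ are already real-rooted because $p,q$ are hyperbolic with respect to $e$. Note that the common interlacer also forces $\deg(p)=\deg(q)$ (both equal to $\deg(r)+1$), so $p+q$ is homogeneous of the same degree and $P+Q=(p+q)(a+te)$ is a univariate polynomial of the expected degree in $t$.

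Next I would invoke the univariate common-interlacer theorem: if two real-rooted polynomials share a common interlacer, then every nonnegative linear combination of them is real-rooted. Applied to the pair $(P,Q)$ with common interlacer $R$ and coefficients $1,1\geq0$, this immediately gives that $P+Q$ is real-rooted. Since $a$ and $e$ were arbitrary with $e$ in the positive orthant, $p+q$ is hyperbolic with respect to every positive direction, and Corollary \ref{rstoposorth} concludes that $p+q$ is real stable. Homogeneity and nonnegative coefficients of $p+q$ are inherited from those of $p$ and $q$.

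The main obstacle I anticipate is purely bookkeeping rather than conceptual: one must ensure the univariate common-interlacer theorem is invoked with exactly the right degree conventions, so that $R$ genuinely interlaces $P+Q$ in the strict sense used in our definition (no spurious loss of degree upon restriction, no sign ambiguity in leading coefficients). This is precisely where the hypothesis of nonnegative coefficients enters: it guarantees that the leading coefficients of $P$, $Q$, and $P+Q$, which are positive combinations of the coefficients of $p$, $q$ and powers of the positive entries of $e$, do not vanish, preventing roots from escaping to infinity and keeping the interlacing pattern rigid along the chosen line.
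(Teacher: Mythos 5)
The paper does not actually prove this lemma; it is imported verbatim from \cite[Lemma 8.1]{blekherman2023linear} and used as a black box, so there is no internal proof to compare against. Your reconstruction via line restrictions is nevertheless the standard argument for results of this type and is essentially sound: restrict to $a+te$ with $e$ in the open positive orthant, observe that stability plus nonnegative coefficients forces the leading coefficients $p(e)$ and $q(e)$ of the restrictions to be positive, note that the common interlacer restricts to a genuine univariate common interlacer of the same degree, apply the univariate Obreschkoff/Fell/Chudnovsky--Seymour fact that conical combinations of two real-rooted polynomials with a common interlacer are real-rooted, and pass back through Corollary \ref{rstoposorth}.

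One step is swept under the rug but is worth making explicit: the multivariate notion of interlacer in the paper is defined relative to a direction $e$, whereas your argument needs $r$ to interlace $p$ and $q$ along every line $a+te$ with $e$ ranging over the whole positive orthant. The bridge is the direction-independence of interlacing for hyperbolic/stable polynomials (the paper itself alludes to this immediately after its interlacing definition, citing \cite[Theorem 2.1]{kummer2015hyperbolic}); without invoking it, ``common interlacer'' might only be known in a single direction $e_0$, and the reduction would not cover all $e\in\mathbb{R}^n_{>0}$. Your diagnosis of the role of nonnegative coefficients is otherwise accurate: it pins down the sign of all the relevant leading coefficients so the univariate theorem applies with the coefficients $(1,1)$, and it prevents accidental degree drops along the chosen lines.
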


This has an immediate consequence important for us. We basically rewrite this in our setting. In order to proceed, we also need the following.

\begin{proposicion}
The homogenization of any real stable polynomial $p\in\mathbb{R}[\mathbf{x}]$ with non-negative coefficients is also real stable.
\end{proposicion}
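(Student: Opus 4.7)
The plan is to use Corollary \ref{rstoposorth} to reduce real stability of $p^h$ to hyperbolicity in a single positive direction, and then unwind this via a Möbius substitution. Since $p$ has non-negative coefficients, so does $p^h$, and $p^h$ is homogeneous of degree $d := \deg(p)$; hence $p^h$ is strictly positive on the positive orthant $\mathbb{R}^{n+1}_{>0}$. By Corollary \ref{rstoposorth}, real stability of the homogeneous $p^h$ is equivalent to its being hyperbolic in every direction of $\mathbb{R}^{n+1}_{>0}$. Invoking Gårding's theorem on the convexity of hyperbolicity cones, together with the fact that $\mathbb{R}^{n+1}_{>0}$ is a connected subset of $\{p^h > 0\}$, it suffices to verify hyperbolicity in a single convenient positive direction, for which I would pick $e = \mathbf{1} := (1,\dots,1)$.

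Hyperbolicity of $p^h$ at $\mathbf{1}$ at a point $a \in \mathbb{R}^{n+1}$ amounts to real-rootedness of $p^h(a - t\mathbf{1}) \in \mathbb{R}[t]$. Writing $p^h(y_0, \mathbf{y}) = y_0^d p(\mathbf{y}/y_0)$ and substituting $s = 1/(a_0 - t)$ (a real Möbius transformation that preserves $\mathbb{R} \cup \{\infty\}$) yields
\begin{gather*}
s^d\, p^h(a - t\mathbf{1}) = p(\mathbf{1} + \eta s), \qquad \eta_i := a_i - a_0,
\end{gather*}
so the problem reduces to proving that $p(\mathbf{1} + \eta s) \in \mathbb{R}[s]$ is real-rooted for every $\eta \in \mathbb{R}^n$. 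For $\eta \in \mathbb{R}^n_{>0}$, this is exactly the characterization of real stability of $p$ along positive directions; the case $\eta \in \mathbb{R}^n_{\geq 0}$ follows by approximating with $\eta + \epsilon\mathbf{1}$ and passing to the limit $\epsilon \to 0^+$, using the closedness of real-rootedness for polynomials of bounded degree.

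The hard part, and the step that genuinely uses the non-negativity of coefficients, will be arbitrary $\eta \in \mathbb{R}^n$ with components of mixed signs, where real stability alone is not enough: the polynomial $p(x_1, x_2) = x_1 x_2 - 1$ is real stable but its restriction $p(s, -s) = -s^2 - 1$ is not real-rooted, showing that the non-negativity hypothesis is genuinely needed. To handle mixed $\eta$, I would split $\eta = \eta_+ - \eta_-$ into non-negative parts with disjoint supports and introduce the bivariate auxiliary polynomial $\tilde P(s, t) := p(\mathbf{1} + \eta_+ s + \eta_- t)$. For $(s, t) \in \mathcal{H}^2$, each component of the argument of $p$ sits in the closure of $\mathcal{H}$---strictly in $\mathcal{H}$ where $\eta_i \neq 0$, equal to $1$ where $\eta_i = 0$---so real stability of $p$ together with a Hurwitz-theorem perturbation of the boundary coordinates shows $\tilde P$ is real stable in $(s, t)$, and $\tilde P$ inherits non-negative coefficients from $p$. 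The desired polynomial is recovered as the diagonal $p(\mathbf{1} + \eta s) = \tilde P(s, -s)$. The principal obstacle is that the direction $(1, -1)$ of this diagonal is not in $\mathbb{R}^2_{>0}$, so real stability of $\tilde P$ alone does not conclude; I would expect to close the argument by induction on the number of variables, the recursive structure being that the property ``real stability plus non-negative coefficients implies real-rootedness along every real line through a strictly positive point'' feeds into itself on the bivariate $\tilde P$ and bottoms out in the univariate base case, where real stability coincides with real-rootedness.
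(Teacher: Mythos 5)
Your reduction is clean and your diagnosis of where the non-negativity must be used is correct, but the closing induction does not go through. The Gårding argument, the Möbius substitution reducing the problem to real-rootedness of $p(\mathbf{1} + \eta s)$ for $\eta \in \mathbb{R}^n$, the perturbation argument for $\eta \geq 0$, and the construction of the auxiliary bivariate polynomial $\tilde{P}(s,t) = p(\mathbf{1} + \eta_+ s + \eta_- t)$, which is indeed real stable with non-negative coefficients, are all fine. The trouble is the final sentence: $\tilde{P}(s,-s)$ is the restriction of $\tilde{P}$ to the line through the \emph{origin} in direction $(1,-1)$, and this line meets no strictly positive point, so the property you propose to feed back in recursively (``real-rootedness along every real line through a strictly positive point'') does not apply to it. It also does not help to invoke the proposition itself for $n=2$: even knowing that $\tilde{P}^h$ is real stable would not yield $\tilde{P}(s,-s) = \tilde{P}^h(1,s,-s) \neq 0$ for $s \in \mathcal{H}$, because the arguments of $1$, $s$, $-s$ (namely $0$, $\arg(s)$ and $\arg(s) - \pi$) span an arc of length exactly $\pi$, so $(1,s,-s)$ lies in no common \emph{open} half-plane through the origin, and for a homogeneous polynomial real stability is equivalent to non-vanishing on precisely such tuples. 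The bivariate case is therefore a genuine residual core, not an inductive base.

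One way to close it, using a tool already in the paper, is Helton--Vinnikov. Set $c := \tilde{P}(0,0) = p(\mathbf{1}) > 0$. Then $\tilde{P}/c$ is a bivariate RZ polynomial equal to $1$ at the origin, so $\tilde{P}(s,t) = c\,\det(I_{d'} + sA + tB)$ for some $d'$ and real symmetric $A,B$. Because $\tilde{P}(s,0)/c = \det(I_{d'} + sA)$ has non-negative coefficients, every root $-1/\lambda_i(A)$ (over non-zero eigenvalues of $A$) is $\leq 0$, which forces $A$ to be PSD; likewise $B$ is PSD. Then $\tilde{P}(s,-s) = c\,\det\bigl(I_{d'} + s(A - B)\bigr)$ is real-rooted because $A - B$ is real symmetric. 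This is exactly where non-negativity earns its keep: it places $A$ and $B$ both in the PSD cone so that their difference is still a symmetric matrix, and your counterexample $x_1 x_2 - 1$ is precisely a case where no such PSD representation exists.
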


\begin{proof}
The proof is an easy exercise. For a hint, see \cite{leakeexer}. For a proof, see \cite[Proposition 5.3]{pemantle2012hyperbolicity} or, originally, \cite[Theorem 4.5]{borcea2009negative}.
\end{proof}

Now the next corollary is clear. We can translate to our setting the lemma above. Recalling that for univariate polynomials being real-rooted is equivalent to being real stable, we can now easily prove what we need in order to proceed.

\begin{corolario}\label{interlacerextension}
Assume that the real-rooted polynomials $p,q\in\mathbb{R}[x]$ have nonnegative coefficients, $0$ is not a root of any of them and $q\ll p$. Then $p^{h}+(yq)^{h}$ is stable. As a consequence, $p+yq\in\mathbb{R}[x,y]$ is RZ.
\end{corolario}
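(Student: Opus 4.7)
The argument splits into two moves: first, establishing that $p^{h}+(yq)^{h}$ is a stable homogeneous polynomial, and then dehomogenizing to recover the RZ-ness of $p+yq$. For the first move I plan to apply the cited lemma stating that two homogeneous stable polynomials with nonnegative coefficients admitting a common interlacer have stable sum. To use it, I need to check three things about $p^{h}$ and $(yq)^{h}$ viewed as elements of $\mathbb{R}[x_{0},x,y]$: they are homogeneous of the same degree $d:=\deg(p)=\deg(yq)$ (using that $\deg(q)=\deg(p)-1$, which is built into $q\ll p$), they have nonnegative coefficients (inherited from the hypothesis on $p$ and $q$), and they are stable.

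Stability of $p^{h}$ follows because the univariate real-rooted polynomial $p$ is stable, hence, viewed as a polynomial in $\mathbb{R}[x,y]$ not depending on $y$, still stable; the just-stated proposition on homogenization of stable polynomials with nonnegative coefficients then yields stability of $p^{h}\in\mathbb{R}[x_{0},x,y]$. The same reasoning applies to $yq$, which is stable as a product of the stable polynomials $y$ and $q$ (regarded as bivariate), and whose homogenization is $(yq)^{h}=y\cdot q^{h}$.

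The core of the argument is exhibiting a common interlacer. The natural candidate is $q^{h}$, viewed in $\mathbb{R}[x_{0},x,y]$ as independent of $y$; it has degree $d-1$, one less than both $p^{h}$ and $(yq)^{h}$. That $q^{h}$ interlaces $p^{h}$ with respect to a direction $e$ in the positive orthant follows from $q\ll p$ by restricting to lines: the univariate interlacing property of the restrictions $q^{h}(te+a)$ and $p^{h}(te+a)$ is exactly the content of $q\ll p$ after choosing $e$ so that the restriction to the $x_{0}$ direction is nondegenerate. That $q^{h}$ interlaces $(yq)^{h}=y\cdot q^{h}$ is built in: along any line $te+a$, multiplication by the linear form $y$ only inserts an extra root, and the resulting univariate polynomial is interlaced by $q^{h}(te+a)$ by direct inspection.

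With Lemma 8.1 applied, $p^{h}+(yq)^{h}$ is stable. To finish, I dehomogenize by setting $x_{0}=1$: by Corollary \ref{rstoposorth}, stability implies hyperbolicity in every direction in the positive orthant, in particular in the direction of the first unit vector corresponding to $x_{0}$, and Proposition \ref{dehomo} then yields that the dehomogenization $p+yq$ is RZ. The main obstacle I anticipate is the bookkeeping around the two different homogenizations living in the same ring $\mathbb{R}[x_{0},x,y]$ and the precise translation of the univariate interlacing $q\ll p$ into the multivariate relation $q^{h}\ll_{e} p^{h}$; this should reduce to a routine restriction-to-lines verification provided the direction $e$ is chosen so that the restrictions pick up the homogenizing variable nontrivially.
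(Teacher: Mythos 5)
Your proposal follows essentially the same route as the paper's own proof: observe that $p^{h}$ and $(yq)^{h}=yq^{h}$ are homogeneous stable with nonnegative coefficients, exhibit $q^{h}$ as a common interlacer by restricting to lines, invoke \cite[Lemma 8.1]{blekherman2023linear}, and dehomogenize via Proposition \ref{dehomo}. The only cosmetic difference is that the paper checks interlacing along the boundary direction $u=(1,0,0)$ (where $y$ evaluates to a constant and one leans on the self-interlacing convention), while you work along an interior direction where $y$ contributes an extra root; both are sound and lead to the same conclusion.
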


\begin{proof}
As $p,q\in\mathbb{R}[x]$ are real-rooted and have nonnegative coefficients, we have that $p^{h},(yq)^{h}=yq^{h}\in\mathbb{R}[x_{0},x,y]$ are homogeneous, real stable and have nonnegative coefficients. We need a common interlacer. We claim that $q^{h}$ is so. First we see that $q^{h}\ll_{u} p^{h}$ with $u=(1,0,0)\in\mathbb{R}^{3}.$ We have to see that $q^{h}(tu+a)\ll p^{h}(tu+a)$. Rewriting, we see that we want to prove that $x_{0}^{\deg(q)}q(\frac{x}{x_{0}})|_{tu+a}\ll x_{0}^{\deg(p)}p(\frac{x}{x_{0}})|_{tu+a}$. So we want to see that $(a_{0}+t)^{\deg(q)}q(\frac{a_{1}}{a_{0}+t})\ll (a_{0}+t)^{\deg(p)}p(\frac{a_{1}}{a_{0}+t})$ and calling $s=a_{0}+t$ we get that we want to see $s^{\deg(q)}q(\frac{a_{1}}{s})\ll s^{\deg(p)}p(\frac{a_{1}}{s}),$ which is clear using the common properties of inequalities and that $q\ll p$. Proceeding similarly we also get $q^{h}\ll_{u} yq^{h}.$ Then we can apply the lemma to ensure that $p^{h}+(yq)^{h}$ is stable. Now we just have to use what we just proved and Proposition \ref{dehomo} to dehomogenize into a RZ polynomial for the last part.
\end{proof}

The last part admits an alternative direct proof that we also introduce here because it is more direct, although it does not show the relations between the different notions of interlacing in the different settings, as the previous discussion did. In particular, we have to introduce first the notion of interlacing for RZ polynomials.

\begin{definicion}[RZ interlacing]
Let $p,q$ be RZ polynomials. We say that $q$ \textit{interlaces} $p$ if, for each direction $a\in\mathbb{R}^{n}$ we have that the univariate polynomials $q(\frac{a}{t})$ and $p(\frac{a}{t})$ verify that $\deg(q(\frac{a}{t}))+1=\deg(p(\frac{a}{t}))$ and $t^{d-1}q(\frac{a}{t})$ interlaces $t^{d}p(\frac{a}{t})$. We also denote $q\ll p$ when this happens.
\end{definicion}

Notice how we had to pass through infinity, using the reciprocals, in order to use the univariate definition of interlacing to extend it to the multivariate one naturally. The direct proof of the fact is therefore the following.

\begin{teorema}
Let $p,q\in\mathbb{R}[\mathbf{x}]$ be RZ polynomials with $q\ll p$. Then $p+yq\in\mathbb{R}[\mathbf{x},y]$ is RZ.
\end{teorema}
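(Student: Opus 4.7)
The plan is to reduce the claim to the classical univariate Hermite--Kakeya--Obreschkoff (HKO) theorem, exploiting the fact that the paper's definition of RZ interlacing is phrased exactly through the reciprocal polynomials that naturally arise when one restricts $p+yq$ to a line. First, I would unpack what RZ-ness of $p+yq$ requires: for each direction $(a,b)\in\mathbb{R}^{n}\times\mathbb{R}$, I must show that
\[
F(t):=(p+yq)(t(a,b))=p(ta)+bt\,q(ta)
\]
has only real roots in $t$. When $b=0$ this is immediate from RZ-ness of $p$, so I may assume $b\neq 0$.

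Next, I would form the reciprocal of $F$ of formal degree $d:=\deg(p)$:
\[
\tilde F(s):=s^{d}F(1/s)=s^{d}p(a/s)+b\,s^{d-1}q(a/s)=\tilde p_{a}(s)+b\,\tilde q_{a}(s),
\]
where $\tilde p_{a}(s):=s^{d}p(a/s)$ and $\tilde q_{a}(s):=s^{d-1}q(a/s)$ are precisely the two univariate polynomials appearing in the paper's definition of RZ interlacing. By hypothesis $q\ll p$, which by that definition means that $\tilde q_{a}$ interlaces $\tilde p_{a}$ as univariate real-rooted polynomials with degrees $d-1$ and $d$ respectively. (Real-rootedness of $\tilde p_{a}$ and $\tilde q_{a}$ is automatic from RZ-ness of $p$ and $q$, since reciprocation preserves real-rootedness up to roots at $0$.)

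Now I apply HKO: given two real-rooted univariate polynomials $\tilde q_{a}\ll\tilde p_{a}$ with $\deg\tilde q_{a}=\deg\tilde p_{a}-1$, every real linear combination $\tilde p_{a}+\lambda\tilde q_{a}$ is real-rooted. Specializing $\lambda=b$ gives that $\tilde F$ is real-rooted. Reciprocating back, $F(t)$ is real-rooted as well, because the nonzero roots of $\tilde F$ are reciprocals of the nonzero roots of $F$, and a possible multiplicity of $0$ as a root of $\tilde F$ corresponds merely to a degree drop of $F$ below the formal degree $d$. Since the direction $(a,b)$ was arbitrary, $p+yq$ is RZ.

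The main subtlety I foresee is pedantic bookkeeping of degrees: for a given $a$, the actual degree of $p(ta)$ in $t$ may drop below $d$, in which case $\tilde p_{a}$ acquires zeros at $s=0$, and the same for $\tilde q_{a}$. This forces me to treat $d$ throughout as a \emph{formal} degree rather than the honest one, and to check that both the HKO step and the reciprocal-equivalence between real-rootedness of $F$ and of $\tilde F$ remain valid in that generality. This is harmless (shared zero factors $s^{k}$ can be pulled out of the linear combination $\tilde p_{a}+b\tilde q_{a}$ and do not affect real-rootedness), but it is the only point where one must argue explicitly rather than quote a black-box statement.
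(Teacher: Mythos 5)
Your proposal is correct and follows essentially the same route as the paper: restrict to a line, reciprocate to land on exactly the univariate pair appearing in the paper's definition of RZ interlacing, apply the classical interlacing/linear-combination theorem (you call it HKO; the paper cites it as Obreschkoff--Dedieu via \cite[Theorem 8]{branden2004operators}), and reciprocate back. The only difference is that you are slightly more explicit about the formal-degree bookkeeping and possible degree drops, which the paper glosses over.
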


\begin{proof}
Fix $(a,a_{n+1})\in\mathbb{R}^{n}\times\mathbb{R}$. Then we have that $(p+yq)(ta,ta_{n+1})=p(ta)+ta_{n+1}q(ta)$. We need to see that this last univariate polynomial in the variable $t$ has only real roots. A univariate polynomial has only real roots if and only if its reciprocal has only real roots. Moreover, two univariate polynomials interlace if and only if their reciprocals interlace. The reciprocal of $p(ta)+ta_{n+1}q(ta)$ is $t^{d}(p(\frac{a}{t})+\frac{a_{n+1}}{t}q(\frac{a}{t}))=t^{d}p(\frac{a}{t})+a_{n+1}t^{d-1}q(\frac{a}{t})$, where we used that $d$ is the degree of $p(ta)$ and therefore $d-1$ is the degree of $q(ta)$. Now, we can use the fact that we obtained a real linear combination of the reciprocals of the original polynomials and an application of the well-known Obreschkoff-Dedieu Theorem (see \cite[Theorem 8]{branden2004operators}) to immediately finish this proof.
\end{proof}

What we will end up seeing as a consequence of this is that, although we saw that some simple stability preservers do not improve the performance of the relaxation, some others do in fact accomplish this \textit{in some sense along the diagonal}. This contrast between different stability preservers will become our main focus in this thesis.

\begin{observacion}[Generality of the statement]\label{pyp1observastab}
Observe that the fact that $p+yp{(1)}$ is RZ if and only if $p$ is RZ could have been proved used that such transformation is a stability preserver. This is what it is done, for example, in \cite[Section 2]{nuijtype} to prove that transformations of this type preserve stability. Here we proved something more general because we did it for \textit{any} interlacer and not just for these obtained through differential operators. However, at the end, in the next section we will apply it to sequences coming from stability preservers because we will form the polynomials $A_{n}+yA_{n-1}$ and we know that $A_{n}=(n+1)xA_{n-1}+(1-x)(xA_{n-1})'.$ Looking at homogenization we see that the recurrence for them is $A_{n}^{h}=(n+1)xA^{h}_{n-1}+(x_{0}-x)\frac{\partial}{\partial x}(xA^{h}_{n-1})$ so in the homogenized sense our transformation with the Renegar derivative is therefore $A_{n}^{h}+yA_{n-1}^{h}=(nx+x+y)A^{h}_{n-1}+(x_{0}-x)\frac{\partial}{\partial x}(xA^{h}_{n-1})$ and thus we can easily see why the stability is preserved and even build a recurrence for this new family. 
\end{observacion}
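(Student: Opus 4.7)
The plan is to verify the three distinct assertions packaged inside the observation: (a) the recurrence for univariate Eulerian polynomials, (b) its homogenized form, and (c) the promised rewriting of $A_n^h + yA_{n-1}^h$ as a single differential-operator image of $A_{n-1}^h$. First I would recall Proposition \ref{recurrenceEuler}, which states $A_n(x) = x(1-x)A_{n-1}'(x) + (1+nx)A_{n-1}(x)$. Rewriting $(1+nx)A_{n-1} + x(1-x)A_{n-1}' = (n+1)xA_{n-1} + (1-x)A_{n-1} + x(1-x)A_{n-1}'$ and noticing that $(xA_{n-1})' = A_{n-1} + xA_{n-1}'$ gives $(1-x)(xA_{n-1})' = (1-x)A_{n-1} + x(1-x)A_{n-1}'$, which assembles into $A_n = (n+1)xA_{n-1} + (1-x)(xA_{n-1})'$, verifying the first displayed identity.

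Next I would homogenize. Writing $A_n^h(x_0,x) = x_0^n A_n(x/x_0)$ and applying the chain rule to transport $\partial_x$ and the factor $(1-x)$ through the homogenization, one checks that $x_0^n(1-x/x_0)(\tfrac{x}{x_0}A_{n-1}(x/x_0))'$ equals $(x_0-x)\partial_x(xA_{n-1}^h)/x_0^{\text{correction}}$ with a degree bookkeeping that balances to $(x_0-x)\partial_x(xA_{n-1}^h)$. This yields the homogenized recurrence $A_n^h = (n+1)xA_{n-1}^h + (x_0-x)\partial_x(xA_{n-1}^h)$. Adding $yA_{n-1}^h$ to both sides then groups the two multiplicative terms into $(nx + x + y)A_{n-1}^h$, leaving the derivative term untouched, so $A_n^h + yA_{n-1}^h = (nx+x+y)A_{n-1}^h + (x_0-x)\partial_x(xA_{n-1}^h)$, which is exactly the stated formula.

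To close the conceptual part of the observation I would show that the operator $T\colon q \mapsto (nx+x+y)q + (x_0-x)\partial_x(xq)$ preserves real stability when applied to a homogeneous polynomial hyperbolic with respect to a suitable direction. This is where I expect the main subtlety to lie: the operator mixes multiplication by an affine form in $(x,y)$ with a differential operator in $x$ only, and it acts between polynomial rings with different variable sets (adding the new variable $y$). The cleanest path is to decompose $T$ as the composition of (i) the well-known Nuij-type operator $q \mapsto (n+1)xq + (x_0-x)\partial_x(xq)$, which produces $A_n^h$ from $A_{n-1}^h$ and whose stability preservation is exactly the content of \cite[Section 2]{nuijtype}, with (ii) the elementary addition of $yA_{n-1}^h$, whose stability preservation follows from Corollary \ref{interlacerextension} together with the self-interlacing convention and the fact that $A_{n-1}^h \ll A_n^h$ in the relevant direction (both polynomials arise from Eulerian recurrences preserving proper position).

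Finally I would note that this presentation makes explicit the recursive structure hinted at in the observation: once one knows $A_{n-1}^h$ is stable and interlaces $A_n^h$, the polynomial $A_n^h + yA_{n-1}^h$ is automatically stable, and the displayed rewriting exhibits it as a single application of a Nuij-plus-shift operator. The hardest step, as anticipated, is verifying that $A_{n-1} \ll A_n$ in the sense needed to invoke Corollary \ref{interlacerextension}; this is classical for Eulerian polynomials (their roots strictly interlace, as follows from the recurrence together with a Rolle-type argument), so the verification reduces to a standard induction that I would sketch rather than grind through.
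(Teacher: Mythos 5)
Your verification of the three displayed identities is correct and follows the same computational path the paper itself takes (the identical manipulation of Proposition \ref{recurrenceEuler} and the same homogenization bookkeeping reappear almost verbatim in the paper's later Lamination construction, where $A^{h}_{n}=(n+1)xA^{h}_{n-1}+(x_{0}-x)\frac{\partial}{\partial x}(xA^{h}_{n-1})$ is rederived and then rewritten as $(x+x_{0})A^{h}_{n-1}+xx_{0}(\frac{\partial}{\partial x}+\frac{\partial}{\partial x_{0}})A^{h}_{n-1}$). Where you genuinely diverge is in how you close the stability claim: the whole point of the observation is that stability of $A_{n}^{h}+yA_{n-1}^{h}$ \emph{could} be seen purely through the stability-preserver lens, i.e.\ by recognizing the right-hand side as the image of $A_{n-1}^{h}$ under a single differential operator of Nuij/Borcea--Br\"and\'en type, thereby bypassing interlacing altogether; you instead fall back on Corollary \ref{interlacerextension} together with the classical interlacing $A_{n-1}\ll A_{n}$, which is exactly the route the observation is contrasting itself against. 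This is not a mathematical gap --- Corollary \ref{interlacerextension} is proved independently, the hypotheses (nonnegative coefficients, $A_{n}(0)=1\neq 0$, $A_{n-1}\ll A_{n}$) do hold, and the interlacing is indeed classical --- but it means your argument re-imports the interlacing machinery rather than exhibiting the advertised alternative, so the conceptual content of the remark (``we can easily see why the stability is preserved'' from the operator form itself) is asserted rather than demonstrated. One further caveat: your attribution of the stability preservation of the recursion operator $q\mapsto(n+1)xq+(x_{0}-x)\frac{\partial}{\partial x}(xq)$ to \cite[Section 2]{nuijtype} is off; that reference concerns transformations of the form $p\mapsto p+yp^{(1)}$ built from the Renegar derivative, while the stability of the Eulerian recursion operator is the content of \cite{visontai2013stable} (or of the paper's own rewriting in the Lamination construction). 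This misattribution is harmless for your argument only because you never actually need the operator to preserve stability --- the stability of $A_{n}^{h}$ and $A_{n-1}^{h}$ follows from real-rootedness plus nonnegative coefficients --- but it should be corrected if the stability-preserver route is what you intend to invoke.
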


Corollary \ref{interlacerextension} is very important for this section and overall for this thesis as it allows us to now expand beyond what we did for the Renegar derivative into other interlacers. These interlacers will appear in Part \ref{IV} when we talk about how to beat the bounds provided by Sobolev in \cite{sobolev2006selected}. First, we explore how the relaxation reacts to these new expansions using new interlacers as opposed to what happened with the Renegar derivative, which did not produce any improvement.

\section[On the similarities and differences of what follows]{On the similarities and differences of the structure of what follows}

We use the next short section to explain briefly why the structure of four of the following last five sections rhyme. In the first two sections that follow we explain the jump of the univariate case to the bivariate using the construction with interlacers presented in the section above. This jump requires us to talk about possible problems arising in our analysis when we go bivariate. Thus, we need two sections: one for cautions and the other for the approach itself and the results we can obtain. After this, it follows another short section about the benefits of increasing the number of variables in each step. This short section announces and promotes our next jump in the number of variables admitted. Thus, this middle section prepares us for the last two sections of this chapter. These last two sections follow the same structure as the two sections that introduced the bivariate jump. The only difference is that the last two sections deal now with multivariateness instead of bivariateness. Thus, these last two sections follow the same structure: one for cautions and the following for the approach itself and the obtained results. In this sense, the reader should not be confused about the similarities between these sections and their structures, as this similarity is totally intentional because the approaches follow an analogous pattern. However, at the end, we will see that going multivariate gives a true edge on the asymptotic behaviour to the relaxation over just going bivariate. This is not surprising because having more variables at play allows us to increase the size of the LMPs defining the relaxation. Thus we can collect finer information about the combinatorial object encoded by the Eulerian polynomials. This information seems to travel well through the relaxation when there are enough variables. Hence, as we have thus more room for fitting this further information into the relaxation because of an increased size of the LMPs defining it, we can expect to obtain better (asymptotic) results for our bounds obtained in this way. Once we have made this comment and the reader is warned about what follows, we can proceed with our first jump. This is the jump that goes from univariateness to bivariateness. As we mentioned, this is the topic of the next two sections.

\section[Bivariate cautions]{Cautions before using the relaxation in the bivariate case}

We saw before that using the Renegar derivative to increase the number of variables does not change the relaxation. However, we just proved that this kind of extension works for other interlacers obtained in more exotic ways and we know that Eulerian polynomials interlace. So our intermediate case here consists in considering the bivariate polynomials $B_{n}(x,y):=A_{n} + y A_{n-1}\in\mathbb{N}[x,y]$, which we just saw that are RZ in Corollary \ref{interlacerextension}. For these, we can also try to compute the best bound that can be obtained through the use of the relaxation. However, this can become too complicated as the matrices will now have size $3\times3$. We will explore other ideas connected to guessing approximations for eigenvectors solving generalized eigenvalue problems in terms of their corresponding kernels by linearization. We have to begin, as usual, computing the relaxation. Now our matrices have size $3\times 3$ and we have to compute the corresponding values of the associated $L$-forms. Fix $p=B_{n}$ for short in what follows.

\begin{computacion}
Proceeding as usual by degree of the evaluation and using that some values coincide with previously computed ones, we obtain that, for degree $0$, we have $L_{p}(1)=n$; for degree $1$, we have \begin{gather*}L_{p}(x)=\coeff(x,p)=E(n+1,1)=2^{n+1}-(n+2), \mbox{\ and}\end{gather*} \begin{gather*}L_{p}(y)=\coeff(y,p)=1;\end{gather*} for degree $2$, we have \begin{gather*}L_{p}(x^2)=-2\coeff(x^2,p)+\coeff(x,p)^2=\\-2E(n+1,2)+E(n+1,1)^2=2 - 3^{1 + n}2 + 4^{1 + n} +n,\end{gather*} \begin{gather*}L_{p}(y^2)=-2\coeff(y^2,p)+\coeff(y,p)^2=\coeff(y,p)^2=1, \mbox{\ and}\end{gather*} \begin{gather*}L_{p}(xy)=-\coeff(xy,p)+\coeff(x,p)\coeff(y,p)=\\-\coeff(x,A_{n-1})+\coeff(x,p)=-E(n,1)+E(n+1,1)=\\-(2^{n}-(n+1))+(2^{n+1}-(n+2))=-1 + 2^n;\end{gather*} and finally, for degree $3$, we have \begin{gather*}L_{p}(x^3)=3\coeff(x^3,p)-3\coeff(x,p)\coeff(x^2,p)+\\\coeff(x,p)^3=-2 - 2^{1 + n} 3^{2 + n} + 4^{1 + n}3 + 8^{1 + n} - n,\end{gather*} \begin{gather*}L_{p}(y^3)=\coeff(y,p)^3=1,\end{gather*} \begin{gather*}L_{p}(x^{2}y)=\coeff(x^{2}y,p)-\coeff(x,p)\coeff(xy,p)-\\\coeff(y,p)\coeff(x^2,p)+\coeff(x,p)^2\coeff(y,p)=\\\coeff(x^{2},A_{n-1})-\coeff(x,A_{n})\coeff(x,A_{n-1})-\\\coeff(x^2,A_{n})+\coeff(x,A_{n})^{2}=\\E(n,2)-E(n+1,1)E(n,1)-E(n+1,2)+E(n+1,1)^2=\\(3^{n}-(n+1)2^{n}+\frac{1}{2}(n)(n+1))-(2^{n+1}-(n+2))(2^{n}-(n+1))-\\(3^{n+1}-(n+2)2^{n+1}+\frac{1}{2}(n+1)(n+2))+(2^{n+1}-(n+2))^2=\\1 - 2^{n} + 2^{1 + 2 n} - 3^{n}2, \mbox{\ and}\end{gather*} \begin{gather*}L_{p}(xy^2)=\coeff(y^{2}x,p)-\coeff(y,p)\coeff(xy,p)-\\\coeff(x,p)\coeff(y^2,p)+\coeff(y,p)^2\coeff(x,p)=\\-\coeff(x,A_{n-1})+\coeff(x,A_{n})=-E(n,1)+E(n+1,1)=\\-(2^{n}-(n+1))+(2^{n+1}-(n+2))=-1 + 2^n.\end{gather*} 
\end{computacion}

These $L$-forms are very similar to the ones we computed for the univariate case. Putting everything together, we obtain the matrix of the relaxation of the just introduced bivariate polynomials.

\begin{computacion}
The matrix of the relaxation of the new bivariate polynomials is $M(x,y)=$ \begin{gather*}
    \begin{pmatrix}
        n & 2^{n+1}-(n+2) & 1\\
        2^{n+1}-(n+2) & 2 - 3^{1 + n}2 + 4^{1 + n} +n & -1+2^{n}\\
        1& -1+2^{n} & 1
    \end{pmatrix} +\\\hspace*{-1cm}
     x\begin{pmatrix}
        2^{n+1}-(n+2) & 2 - 3^{1 + n}2 + 4^{1 + n} +n & -1+2^{n}\\
        2 - 3^{1 + n}2 + 4^{1 + n} +n & -2 - 2^{1 + n} 3^{2 + n} + 4^{1 + n}3 + 8^{1 + n} - n & 1 - 2^{n} + 2^{1 + 2 n} - 3^{n}2 \\
        -1+2^{n} & 1 - 2^{n} + 2^{1 + 2 n} - 3^{n}2 & -1 + 2^n
    \end{pmatrix}\\
    + y\begin{pmatrix}
        1 & -1 + 2^n & 1\\
        -1 + 2^n & 1 - 2^{n} + 2^{1 + 2 n} - 3^{n}2 & -1 + 2^n\\
        1 & -1 + 2^n & 1
    \end{pmatrix}.
\end{gather*}
\end{computacion}

As we are mainly interested now only in root bounds for the univariate polynomial, we can set $y=0$ above and study the resulting matrix polynomial. We could be brave enough and find the optimal bound given by that relaxation, but that would require solving a very long cubic polynomial hoping that we have made all the correct choices of roots along the way while dealing with awfully long expressions. It is indeed not impossible to do. However, here, for brevity and sanity, we prefer to use a technique that will also help us in the future, when solving equations exactly will not even be a far-fetched possibility. Hence, this way of approaching will save us some energy at the same time that it allows us to contemplate, through a kind of toy example, how we will proceed in the future. We proceed.

\begin{remark}
First, we make some numeric computations that allow us to guess that a vector of the form $v=(\alpha,3,-8)$ for some still unknown map $\alpha\colon\mathbb{N}\to\mathbb{R}$ will produce a linearization $vM(x,0)v^{T}=vAv^{T}+xvBv^{T}$ that produces a bound for the absolute value of our extreme. We obtain such bound solving the inequation $vAv^{T}+xvBv^{T}\geq0$ in the form $x\geq\frac{-vAv^{T}}{vBv^{T}}$. Thus, the other extreme root of the univariate Eulerian polynomial, by palindromicity, is bounded from above by $\frac{1}{x}\geq\frac{vBv^{T}}{vAv^{T}}$ root in the form $\frac{-vBv^{T}}{vAv^{T}}$. Here, we will look at an inner bound for the absolute value of that leftmost root. Therefore we are interested in maximizing $\frac{vBv^{T}}{vAv^{T}}$ in terms of the parameter $\alpha$ so that we can make it get closer to the actual root than the bound obtained through the application of the relaxation to the univariate Eulerian polynomial that we analyzed above.
\end{remark}

Hence, now we just have to consider $\frac{vBv^{T}}{vAv^{T}}$ and find, for each $n$, the $\alpha(n)$ that maximizes that fraction. This is easy.

\begin{procedimiento}
For short, we write the fraction we are interested to maximize as $\frac{N}{D}$ so its derivative is $\frac{N'D-ND'}{D^2}$. We are looking for our optimal $a$ among the roots of the polynomial in $a$ given by the numerator $N'D-ND'$, which depends on the parameter $n$ and is quadratic in $a$. We solve this quadratic equation as usual and obtain two roots $\frac{-b\pm\sqrt{b^2-4ac}}{2a}$. We have to find a method to determine which one of these roots corresponds to the maximum of our fraction $\frac{N}{D}$.
\end{procedimiento}

Before seeing this, we have to notice that $D$ must always be nonnegative for any $\alpha$ due to the fact that the relaxation has always its initial matrix PSD. For $D$, we have to check that, for the chosen $\alpha$, we have $D\neq0.$ We do this using that the discriminant of $D$ is $\Delta=-9\ 4^{n+2} n+8\ 3^{n+3} n-9\ 2^{n+4} n+3\ 2^{n+6} n-184 n-21\ 2^{n+5}+9\ 2^{2 n+4}+784$, which is clearly negative when $n$ is big enough. Moreover, the quadratic term in $D$ viewed as a polynomial in $\alpha$ is $n\alpha^{2}\neq0.$ Thus, all this tells us that $D$ is a quadratic polynomial with no real zeros and therefore strictly positive for whichever $\alpha$ we choose when $n$ is big enough. This solves completely one of our problems. We also need that $N>0$ for $n$ big enough so that we can isolate in the inequality with the correct orientation of the signs. Since $N=\alpha^2 2^{n+1}-\alpha^2 n-2 \alpha^2-4 \alpha 3^{n+2}-\alpha 2^{n+4}+3 \alpha 2^{2 n+3}+6 \alpha n+28 \alpha+32\ 3^{n+1}+27\ 4^{n+1}+9\ 8^{n+1}+3\ 2^{n+4}-2^{n+1} 3^{n+4}+2^{n+6}-3\ 2^{2 n+5}-9 n-130$, the requirement that $N>0$ for $n$ big enough\footnote{This not necessary to check because we can check later directly with the choice of $\alpha$ we will eventually make.} means that it is enough if we choose the function $\alpha$ so that \begin{gather}\label{conditionsa}
\alpha>1 \mbox{\ and\ } \lim_{n\to\infty}\frac{N}{2^{n+1}\alpha^{2}}=1.\end{gather} We can therefore continue with the quest to find that optimal $\alpha$.

\begin{procedimiento}
Considering the derivative $\frac{N'D-ND'}{D^2}$ of the fraction $\frac{N}{D}$, we have to look for our optimal $a$ among the roots of the polynomial in $a$ given by the numerator $N'D-ND',$ which is quadratic in $a$ and depends on $n$. We obtain two roots using the well known quadratic formula $\alpha=\frac{-b\pm\sqrt{\Delta}}{2a}$ with $\Delta:=b^{2}-4ac$ now the discriminant of $N'D-ND':=a\alpha^{2}+b\alpha+c$. We have to determine which one gives a maximum for $\frac{N}{D}$ as a function of $\alpha$.
\end{procedimiento}

To see this, observe that we have already seen that the discriminant of the denominator $D$ verifies $\Delta<0$ once $n$ is big enough (which was expectable beforehand also because the fraction $\frac{N}{D}$ cannot go to infinity as a function of $\alpha$ because it is bounded by the absolute value of the extreme root of the $n$-th Eulerian polynomial for any given $n$) so it has no real zeros and therefore we can easily deduce now that the roots of $N'D-ND'$ correspond to a maximum and a minimum of $\frac{N}{D}$ and that such function is continuous as a function of $\alpha$.

\begin{observacion}
It might seem worth studying the horizontal asymptotes of such function but observe that $\lim_{\alpha\to\infty}\frac{N}{D}=\frac{-2 + 2^{1 + n} - n}{n}=\lim_{\alpha\to-\infty}\frac{N}{D},$ which does not directly help in our decision. However, now it is evident that, as a function of $\alpha$, this bivariate bound $\bi(n):=\frac{N}{D}$ cuts the horizontal asymptote $f(\alpha)=\frac{-2 + 2^{1 + n} - n}{n}$ in exactly one point and therefore the difference with this horizontal asymptote in any of the limits determines the relative position of the maximum and the minimum, as there are just two options on how this function looks.
\end{observacion}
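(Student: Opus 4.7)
The plan is to turn the informal observation into a short algebraic argument followed by a topological/continuity argument, both of which should be short once we identify the right cancellation. Let me call $L := \frac{-2 + 2^{n+1} - n}{n}$ throughout.

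First, I would verify the exact uniqueness of the intersection. Both $N(\alpha)$ and $D(\alpha)$ are quadratic polynomials in $\alpha$ (with coefficients depending on $n$), and we already established $D(\alpha) > 0$ for every $\alpha \in \mathbb{R}$ once $n$ is large enough. The intersection condition $N(\alpha)/D(\alpha) = L$ is therefore equivalent to the polynomial equation $N(\alpha) - L\, D(\alpha) = 0$. The key observation is that, because $\lim_{\alpha \to \pm\infty} N(\alpha)/D(\alpha) = L$, the coefficients of $\alpha^{2}$ in $N$ and in $L\,D$ must coincide, so the quadratic term cancels and $N - LD$ is at most linear in $\alpha$. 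It will then suffice to check, by direct substitution from the explicit formulas for $N$ and $D$, that the coefficient of $\alpha$ in $N - LD$ is nonzero; this gives exactly one real root, which is the unique intersection point claimed.

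Next I would use this cancellation to identify which of the two critical points $\alpha_{1} < \alpha_{2}$ of $\frac{N}{D}$ (the two roots of $N'D - ND'$) is the maximum and which is the minimum. Because $N - LD$ is linear with nonzero slope and $D > 0$, the sign of $\frac{N}{D} - L$ is opposite as $\alpha \to +\infty$ and as $\alpha \to -\infty$; equivalently, $\frac{N}{D}$ approaches $L$ from above on exactly one side and from below on the other. Together with the facts that $\frac{N}{D}$ is continuous on $\mathbb{R}$, tends to $L$ at both infinities, has precisely two critical points, and crosses $L$ only once, a short case analysis via the intermediate value theorem rules out every configuration except the following: the critical point on the side where $\frac{N}{D} > L$ asymptotically is the maximum (with value strictly greater than $L$) and the one on the other side is the minimum (with value strictly less than $L$), the unique crossing of $L$ lying strictly between $\alpha_{1}$ and $\alpha_{2}$. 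This is precisely the ``two options'' dichotomy in the observation, resolved by the sign at infinity.

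The part I expect to be merely bookkeeping is the computation of the $\alpha$-coefficient of $N - LD$ from the explicit quartic-in-$2^{n}$ and $3^{n}$ expressions for $N$ and $D$; it is long but routine. The genuinely delicate point is the case analysis in paragraph two: one must exclude configurations in which the two critical points lie on the same side of $L$ (which would force either zero or a second crossing) and the configuration in which the function is monotone (which is ruled out by the presence of two distinct critical points, i.e.\ the nondegeneracy of the discriminant of $N'D - ND'$, which should be checked directly, exactly as we did for the discriminant of $D$). Once those two nondegeneracies are in hand, the observation follows.
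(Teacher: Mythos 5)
Your proposal is correct and matches the paper's argument in essence: the paper likewise forms $\bi(n)-L=\frac{v}{w}$ with $w$ a positive quadratic and $v$ linear in $\alpha$, reads off the sign of $v$ for $|\alpha|$ large, and then uses the topological shape of the curve (asymptote at $L$, one crossing, two critical points) to identify the maximum. Your version is marginally more structural—you deduce \emph{a priori} that $N-LD$ must lose its $\alpha^{2}$ term from the fact that both limits at $\pm\infty$ equal $L$, whereas the paper simply computes $v$ explicitly and observes it is linear—but that is a presentational difference, not a different route; you also correctly flag the two nondegeneracy checks (nonzero slope of $N-LD$, nonzero discriminant of $N'D-ND'$) that the paper handles by direct inspection of the explicit expressions.
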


This observation finally puts us in the correct way to distinguish between the maximum and the minimum. The names in the next compation are locally set.

\begin{computacion}
Observe that the difference between the function $\bi(n)$ and the asymptote equals \begin{gather*}
    \bi(n) - \frac{-2 + 2^{1 + n} - n}{n}= \frac{v}{w}
\end{gather*} with $v=5\alpha2^{n+4}-3\alpha2^{2 n+3}-4 \alpha3^{n+2} n+3\alpha2^{n+3} n-\alpha2^{n+4} n+3\alpha2^{2 n+3} n-12\alpha n-56\alpha-9\ 8^{n+1}-65\ 2^{n+2}+2^{n+2} 3^{n+3}-4\ 3^{n+3}-3\ 2^{n+5}+9\ 2^{2 n+3}+3\ 2^{2 n+5}-9\ 2^{n+1} n+32\ 3^{n+1} n+9\ 8^{n+1} n+9\ 4^{n+2} n-2\ 3^{n+3} n-2^{n+1} 3^{n+4} n+2^{n+6} n-3\ 2^{2 n+5} n+18 n+260$ (which has at maximum one zero because it is linear) and $w=\alpha^2 n^2-6\alpha n^2+3\alpha 2^{n+2} n-28 \alpha n+9 n^2+9\ 4^{n+1} n-2\ 3^{n+3} n-3\ 2^{n+4} n+130 n$ so the denominator $w$ is always positive at infinity while the numerator $v$, when $|\alpha|$ goes to infinity, is dominated by the term multiplying $\alpha$ given by $-4\ 3^{n+2} n+2^{n+3} n+3\ 2^{2 n+3} n-12 n+5\ 2^{n+4}-3\ 2^{2 n+3}-56,$ which is dominated by the term $3\ 2^{2 n+3} n>0$ so the $\lim_{\alpha\to-\infty}\frac{v}{w}=0^{-}$ (because $v\approx2^{2 n+3}3n\alpha<0$ as $\alpha\to-\infty$) which implies that, for $\alpha$ close enough to $-\infty$, $\bi(n)-\frac{-2 + 2^{1 + n} - n}{n}<0$ or, equivalently, $\bi(n)<\frac{-2 + 2^{1 + n} - n}{n}$ and $\lim_{a\to\infty}\frac{v}{w}=0^{+}.$
\end{computacion}

Looking at the computations, we can see therefore that the function $\bi(n)$, when viewed from left to right, decreases from below the asymptote until it reaches a minimum under it and then increases towards a maximum reached over the asymptote after crossing it before decreasing back towards the asymptote approaching it from above.

\begin{remark}
All in all, we have established that the maximum is reached at the rightmost of the two zeros of $N'D-ND'.$ Thus, we have determined which root of $N'D-ND'$ corresponds to a maximum of $\frac{N}{D}$ and now we know exactly the optimal choice of $\alpha$ in our way towards the best bound attainable through the family $(\alpha,3,-8)$ we are studying in this section.
\end{remark}

Do not be confused with the location of the rightmost root. We explain this in the next warning.

\begin{warning}
Observe that, if we write (as we did above) $N'D-ND':=a\alpha^{2}+b\alpha+c$, then $a=4\ 3^{n+2} n+3\ 2^{n+3} n+2^{n+5} n-2^{n+6} n+3\ 2^{2 n+3} n-3\ 2^{2 n+4} n+12 n+5\ 2^{n+4}-5\ 2^{n+5}-3\ 2^{2 n+3}+3\ 2^{2 n+4}+56\sim-3\ 2^{2 n+3} n$ and, therefore, for $n$ big enough, this coefficient $a$ is negative. Thus, the rightmost of the two zeros of $N'D-ND'$ corresponds to $\frac{-b-\sqrt{b^2-4ac}}{2a}$ because of the change of sign that happens because of the denominator $2a$ being negative. 
\end{warning}

Thus, in fact, this means that our choice of $\alpha$ to reach this best bound must be the one corresponding to the minus sign before the square root, i.e., we fix now $\alpha:=\frac{-b-\sqrt{b^2-4ac}}{2a}$. Now we have to extreme our care again.

\begin{remark}[Necessity of freeing radicals]
We remind that there was a condition that we should check for $\alpha$, as exposed in Equation \ref{conditionsa}. In the spirit of our warning, observe that the terms winning in the numerator (inside and outside of the root) annihilate as the expression obtained if we only look at dominant terms is, as $-b\sim 9 n 2^{3 n+4}$ and $b^2-4ac\sim 81\ 4^{3 n+4} n^2$, \begin{gather*}9 n 2^{3 n+4}-\sqrt{81\ 4^{3 n+4} n^2}=0.\end{gather*}
\end{remark}

Therefore, in order to really determine the growth of the numerator, we have to proceed differently (as we will have to do many times in the future).

\begin{computacion}
In particular, here we see that the conjugate does not annihilate in dominant terms as it gives $9 n 2^{3 n+5}$ and therefore we can multiply by it to obtain that the corresponding difference of squares is dominantly $3^{n+5} 16^{n+2}n$ so our numerator is dominantly $\frac{3^{n+5} 16^{n+2}n}{9 n 2^{3 n+5}}=6^{n+3}.$ For the denominator we proceed in the usual way and obtain that it grows like $ -2^{2 n+4}n\ 3$ and therefore we get that $\alpha\sim$ \begin{gather*}\frac{6^{n+3}}{-2^{2 n+4}n\ 3}=-\frac{3^{n+2}}{n2^{n+1}}\end{gather*} and so now an easy computation makes us sure of the fact that $\alpha$ verifies the condition that $N(\alpha)>0$ for big enough $n$ because $N(\alpha)\to\infty$ when $n\to\infty$. In order to see this, observe that, when we substitute for the recently computed growth of $\alpha$, we get that $N(\alpha)\sim N(-\frac{3^{n+2}}{n2^{n+1}})\to\infty$ when $n\to\infty.$
\end{computacion}

This computation proves that $\alpha$ thus chosen verifies our growth constraints and therefore it works as a choice for providing a \textit{correct} bound. Now, finally, we only have to check that this choice of $\alpha$ provides a bivariate bound \textit{effectively} improving the optimal univariate bound obtained above.

\begin{remark}
Substituting $\alpha$ for the just computed root $\alpha:=\frac{-b-\sqrt{b^2-4ac}}{2a}$ of $N'D-ND'=a\alpha^{2}+b\alpha+c$ in $\frac{N}{D}$ gives us the improved bound we were pursuing here. Hence, the only thing that we still need to do is comparing this new bound $\bi(n)$ with the previous optimal bound $\un(n)$ obtained through the application of the relaxation to the univariate Eulerian polynomial. Remember that this last bound does not come from a linearization and it is, therefore, the optimal bound obtained through the application of the relaxation to the univariate Eulerian polynomials. We are going to compute $\bi-\un$ and we expect it to be positive in order to confirm the improvement we are seeking for here.
\end{remark}

We proceed with the computations step by step for maximum clarity. We have to make a slight change.

\begin{notacion}[Renaming for standardization within text]
We rename $\alpha$ as $y$ in what follows because $\alpha$ will have a different use in the rest of this section. We named the root as $\alpha$ that way because $y$ was chosed as the extra variable in the construction of the bivariate Eulerian polynomials through adding interlacers. However, in the future $y$ will take the role of this parameter in the vector we have to fix. As the definition of bivariate Eulerian polynomials is already too far in the text, we can therefore come back now to this standard and call this root $y$ in order to fit our text into the same scheme as it will have in future section where similar techniques will be applied. So, from now on, the names $\alpha,v,w$ are free again (we will use them soon). What $\alpha$ represented above is now called $y$ (in order to stay in accordance with the future of this text).
\end{notacion}

\begin{observacion}
First, note that we can expand the root as $y=\frac{f-\sqrt{g}}{h}$ so $y^{2}=\frac{f^{2}+g-2f\sqrt{g}}{h^{2}}$ and we can substitute these values in the expression of $\bi=\frac{N}{D}$ and kill denominators inside $N$ and $D$ writing $\bi=\frac{N(y)}{D(y)}=\frac{h^{2}N(y)}{h^{2}D(y)}$ so at the end we obtain the manageable expression $\bi=\frac{\alpha+\beta\sqrt{g}}{\gamma+\delta\sqrt{g}}.$ Finally, remember that $\un=\frac{p+r\sqrt{q}}{s}$ and, therefore, in order to study the difference between these two bounds, the most convenient expression is $\bi-\un=$\begin{gather*}
\frac{s(\alpha+\beta\sqrt{g})-(p+r\sqrt{q})(\gamma+\delta\sqrt{g})}{s(\gamma+\delta\sqrt{g})}=\\\frac{(s\alpha-p\gamma)+(s\beta-p\delta)\sqrt{g}-r\gamma\sqrt{q}-r\delta\sqrt{gq}}{s(\gamma+\delta\sqrt{g})}=\\\frac{k+v+u+w}{s(\gamma+\delta\sqrt{g})}\end{gather*} with $k:=s\alpha-p\gamma,v:=(s\beta-p\delta)\sqrt{g},u:=-r\gamma\sqrt{q}$ and $w:=-r\delta\sqrt{gq}$.
\end{observacion}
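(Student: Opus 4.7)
The statement is an algebraic rewriting: starting from the explicit forms of $\bi$ and $\un$, one obtains a normal form for their difference in which the four structurally distinct summands $k, v, u, w$ (rational, multiple of $\sqrt{g}$, multiple of $\sqrt{q}$, multiple of $\sqrt{gq}$) are cleanly separated over a single denominator. Since both bounds are already constructed above and both involve nested square roots coming from the quadratic formula, the cleanest proof is simply to carry out the substitution and grouping step by step, taking care only with the bookkeeping of the two irrationalities.

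The plan is as follows. First, I would recall that $N(y)$ and $D(y)$ are, by construction, quadratic polynomials in $y$ (the vector $v=(y,3,-8)$ enters the quadratic form $vAv^{T}+xvBv^{T}$ quadratically in $y$), so both $N(y)$ and $D(y)$ have the shape $Ay^{2}+By+C$ with coefficients depending only on $n$. I would then substitute the optimal value $y=\frac{f-\sqrt{g}}{h}$ together with its square $y^{2}=\frac{f^{2}+g-2f\sqrt{g}}{h^{2}}$, multiply numerator and denominator of $\bi=N(y)/D(y)$ by $h^{2}$ to clear the $h^{2}$ in the denominators inside $N(y)$ and $D(y)$, and collect the resulting expressions according to whether a term contains $\sqrt{g}$ or not. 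This yields the promised normal form
\[
\bi=\frac{\alpha+\beta\sqrt{g}}{\gamma+\delta\sqrt{g}},
\]
where $\alpha,\beta,\gamma,\delta$ are explicit rational combinations (depending on $n$) of the coefficients $A,B,C$ appearing in $N$ and $D$ together with $f,g,h$.

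Next, recalling that $\un=\frac{p+r\sqrt{q}}{s}$ by the computation carried out in the univariate case earlier in the chapter, I would place $\bi-\un$ over the common denominator $s(\gamma+\delta\sqrt{g})$, giving
\[
\bi-\un=\frac{s(\alpha+\beta\sqrt{g})-(p+r\sqrt{q})(\gamma+\delta\sqrt{g})}{s(\gamma+\delta\sqrt{g})}.
\]
Expanding the product $(p+r\sqrt{q})(\gamma+\delta\sqrt{g})$ produces four terms of distinct irrationality types: a rational contribution $p\gamma$, a $\sqrt{g}$-contribution $p\delta\sqrt{g}$, a $\sqrt{q}$-contribution $r\gamma\sqrt{q}$, and the mixed contribution $r\delta\sqrt{q}\sqrt{g}=r\delta\sqrt{gq}$ (here I would note that $g$ and $q$ are nonnegative for $n$ large, as already used implicitly when we took the square roots in both bounds, so this last simplification is legitimate).

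Finally, grouping the resulting numerator by irrationality type gives exactly the four summands in the statement: the purely rational part $k:=s\alpha-p\gamma$, the $\sqrt{g}$-part $v:=(s\beta-p\delta)\sqrt{g}$, the $\sqrt{q}$-part $u:=-r\gamma\sqrt{q}$, and the mixed part $w:=-r\delta\sqrt{gq}$, yielding the claimed identity $\bi-\un=\frac{k+v+u+w}{s(\gamma+\delta\sqrt{g})}$. The only real obstacle is bookkeeping: one has to keep $\sqrt{g}$ and $\sqrt{q}$ separated throughout and avoid prematurely combining them, since in subsequent sections each of the four pieces $k,v,u,w$ will be analyzed on its own asymptotic terms. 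Because no cancellation or nontrivial identity between $\sqrt{g}$ and $\sqrt{q}$ is being asserted, the proof is purely a careful algebraic rearrangement and requires no further structural input beyond the definitions of $\bi$ and $\un$ assembled earlier.
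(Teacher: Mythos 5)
Your proposal is correct and follows essentially the same route as the paper: substitute $y=\frac{f-\sqrt{g}}{h}$ into the quadratic-in-$y$ expressions $N(y)$ and $D(y)$, clear the $h^{2}$, collect by irrationality type to get $\bi=\frac{\alpha+\beta\sqrt{g}}{\gamma+\delta\sqrt{g}}$, and then place $\bi-\un$ over the common denominator $s(\gamma+\delta\sqrt{g})$ and expand $(p+r\sqrt{q})(\gamma+\delta\sqrt{g})$ into the four pieces $k,v,u,w$. This is exactly the bookkeeping the paper performs, including the care to keep $\sqrt{g}$ and $\sqrt{q}$ separate for the later asymptotic analysis.
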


We want to be able to estimate correctly the dominant behaviour of both the numerator and the denominator. The main problem with these expressions is that they contain square roots and therefore we need to deal with these carefully because we cannot directly try to proceed with dominant terms due to the many cancellations happening both in the denominator and in the numerator. We see in detail the problems produced by these cancellations.

\begin{remark}
The dominant terms of the sumands in our decomposition of the numerator are \begin{gather*}
k\sim -2^{11 n+15} 3^{n+5} n^{4},\\
v\sim(2^{8 n+11} 3^{n+3} n^4)\sqrt{81\ 4^{3 n+4} n^2}=2^{11 n+15} 3^{n+5} n^4,\\
u\sim-(2^{2 n+3} 3^{n+1})(2^{6n+9} n^{4}3^{4})\sqrt{2^{6n+6} n^{2}}=-2^{11 n+15} 3^{n+5} n^4,\\
w\sim-(2^{2 n+3} 3^{n+1})(-3^{2} 2^{3 n+5} n^3)\sqrt{(81\ 4^{3 n+4} n^2)(2^{6n+6} n^{2})}=2^{11 n+15} 3^{n+5} n^4.\end{gather*}
It is easy to see that these dominant terms annihilate, which is a big problem because we need to keep track of the terms surviving the big cancellation in order to be able to really establish the asymptotics of our difference.
\end{remark}

In order to continue then we have to find the way to keep the biggest surviving terms. We do this freeing some radicals through the use of the correct conjugates. The situation in the denominator is similar.

\begin{computacion}
In particular, we have the luck that the conjugate expressions in the numerator \begin{gather*}(k+u)-(v+w)\sim-2^{11 n+15} 3^{n+5} n^{4}-2^{11 n+15} 3^{n+5} n^4\\-(2^{11 n+15} 3^{n+5} n^4+2^{11 n+15} 3^{n+5} n^4)=\\-4\cdot2^{11 n+15} 3^{n+5} n^4\end{gather*} and in the denominator \begin{gather*}\gamma-\delta\sqrt{g}\sim 81\ 8^{2 n+3} n^{3}-(-9) 2^{3 n+5} n^2\sqrt{81\ 4^{3 n+4} n^2}=\\ 81\ 8^{2 n+3} n^{3}-((-9) 2^{3 n+5} n^{2})(9\ 2^{3 n+4} n)=81\ 4^{3 n+5} n^3\end{gather*} do not annihilate.
\end{computacion}

Now it is easy to see the following problematic cancellation.

\begin{observacion}
Notice that \begin{gather*}\gamma+\delta\sqrt{g}\sim 81\ 8^{2 n+3} n^{3}+(-9) 2^{3 n+5} n^2\sqrt{81\ 4^{3 n+4} n^2}=\\ 81\ 8^{2 n+3} n^{3}+((-9) 2^{3 n+5} n^{2})(9\ 2^{3 n+4} n)=0.\end{gather*}
\end{observacion}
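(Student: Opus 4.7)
The plan is a direct arithmetic verification of the stated cancellation, using asymptotic estimates already established earlier in the section. First I would recall that $\gamma \sim 81 \cdot 8^{2n+3} n^{3}$, $\delta \sim -9 \cdot 2^{3n+5} n^{2}$, and that the discriminant inside the square root satisfies $g = b^{2} - 4ac \sim 81 \cdot 4^{3n+4} n^{2}$ (all three of these being the leading-order estimates that entered the numerator analysis in the preceding paragraph). Since the discriminant must be non-negative for the candidate value of $y$ to be a real root of $N'D - ND'$, I can safely take the positive square root of its leading term to obtain $\sqrt{g} \sim 9 \cdot 2^{3n+4} n$.

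Next I would form the product $\delta \sqrt{g}$ at leading order: multiplying gives $\delta \sqrt{g} \sim (-9 \cdot 2^{3n+5} n^{2})(9 \cdot 2^{3n+4} n) = -81 \cdot 2^{6n+9} n^{3} = -81 \cdot 8^{2n+3} n^{3}$. Adding to $\gamma$ yields $\gamma + \delta \sqrt{g} \sim 81 \cdot 8^{2n+3} n^{3} - 81 \cdot 8^{2n+3} n^{3}$, and the two leading contributions annihilate, which is exactly the content of the observation.

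The computation is routine; the real obstacle is interpretive rather than technical. Because the leading terms cancel, the naive dominant-term estimate for the denominator of $\text{bi} - \text{un}$ carries no information about its actual growth, and so the asymptotic behavior of the difference $\text{bi} - \text{un}$ cannot yet be determined from the leading estimates alone. The natural next step, following the same trick already applied to the numerator on the previous page, will be to rationalize by multiplying above and below by the conjugate $\gamma - \delta \sqrt{g}$, which the author has already checked to be $\sim 81 \cdot 4^{3n+5} n^{3}$ and hence does \emph{not} vanish at leading order. This will convert the problematic denominator into a genuinely non-cancelling expression and eventually allow extraction of the true asymptotics of $\text{bi} - \text{un}$, thereby completing the comparison between the bivariate and univariate bounds that motivates this section.
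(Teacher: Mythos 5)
Your verification is correct and matches the paper's (implicit) reasoning exactly: you plug in the previously established leading-order estimates for $\gamma$, $\delta$, and $g$, observe that $8^{2n+3}=2^{6n+9}$, and see the leading terms annihilate. Your closing interpretive remark about needing the conjugate $\gamma-\delta\sqrt{g}$ to rescue the denominator is precisely what the paper does in the next procedure, so the proposal is in full agreement with the source.
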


We use the good behaviour of these conjugates to free some radicals via multiplication. We proceed similarly with the denominator. This is what we do next.

\begin{procedimiento}
Since managing the denominator is easy, we proceed first with the numerator. We do this multiplying by the nice conjugates saw above. This will free some radicals.
\end{procedimiento}

We warn the reader that in the next explanation of the structure of the computations involved all the names are locally set again.

\begin{remark}
Since we have seen that the expression $(k+u)-(v+w)$ is dominantly $-4\cdot2^{11 n+15} 3^{n+5} n^4$, we can multiply by it in order to get a nice difference of squares. Hence, multiplying the numerator $k+u+v+w$ by $(k+u)-(v+w)$ we obtain precisely $(k+u)^2-(v+w)^2=k^2+u^2+2ku-v^2-w^2-2vw=(k^2+u^2-v^2-w^2)+(2ku)+(-2vw)=r+s+t.$
\end{remark}

We will need a further use of conjugates to determine the real growth of this because the expressions of $s$ and $t$ contain $\sqrt{q}.$ We have to see first what is the growth of the conjugate $r-(s+t)$.

\begin{remark}
In order to see this, we first need to establish the real growth of $s+t$, which is not immediate because there is a cancellation again as it is easy to see that $ku\sim (2^{11 n+15} 3^{n+5} n^{4})^2\sim vw$.
\end{remark}

Thus the use of conjugates is necessary again. We see this in the next section. Now that we have made clear where and why we have to be cautious about the management of radicals, we can actually proceed to determine that the bivariate bound is better than the univariate.

\section{Bivariate approach beats univariate}

\begin{computacion}
We can immediately compute the asymptotic growth of the part with no roots involved $r\sim-2^{19 n+33} 27^{n+3} n^7$. It is clear that $s-t=(2ku)-(-2vw)=2(ku+vw)\sim 4(2^{11 n+15} 3^{n+5} n^{4})^2=2^{22 n+32} 3^{2n+10} n^{8}$ while computing again we can see that $$(s+t)(s-t)=s^2-t^2\sim -2^{41 n+65} 3^{5 n+19} n^{15}$$ so we obtain therefore that $$s+t=\frac{s^2-t^2}{s-t}\sim\frac{-2^{41 n+65} 3^{5 n+19} n^{15}}{2^{22 n+32} 3^{2n+10} n^{8}}=-2^{19 n+33} 27^{n+3} n^{7}\sim r.$$ This implies that $r+s+t\sim 2r\sim-2^{19 n+34} 27^{n+3} n^{7}$ and therefore we obtain that $k+v+u+w=\frac{(k+u)^2-(v+w)^2}{(k+u)-(v+w)}=\frac{r+s+t}{(k+u)-(v+w)}\sim\frac{-2^{19 n+34} 27^{n+3} n^{7}}{-2^{11 n+17} 3^{n+5} n^4}=2^{8 n+17} 9^{n+2} n^3.$
\end{computacion}

We can repeat the same argument with the denominator. In that way, we obtain the following.

\begin{computacion}
Proceeding similarly with the denominator we get that the conjugate of the problematic factor is dominantly $\gamma-\delta\sqrt{g}\sim$ \begin{gather*}81\ 8^{2 n+3} n^{3}-(-9) 2^{3 n+5} n^2\sqrt{81\ 4^{3 n+4} n^2}=\\ 81\ 8^{2 n+3} n^{3}-((-9) 2^{3 n+5} n^{2})(9\ 2^{3 n+4} n)=81\ 4^{3 n+5} n^3\end{gather*} while the resulting difference of squares is dominantly $$\gamma^2-\delta^{2}g\sim 6561\ 16^{3 n+5} n^{5}$$ so, finally, we can see that the problematic factor is dominantly $\gamma+\delta\sqrt{g}=\frac{\gamma^2-\delta^{2}g}{\gamma-\delta\sqrt{g}}\sim\frac{6561\ 16^{3 n+5} n^{5}}{81\ 4^{3 n+5} n^3}=81\ 4^{3 n+5} n^2$ so, as $s=2ku\sim2(2^{11 n+15} 3^{n+5} n^{4})^2=2^{22 n+31} 3^{2n+10} n^{8},$ the denominator is dominantly $$s(\gamma+\delta\sqrt{g})\sim(2^{22 n+31} 3^{2n+10} n^{8})(81\ 4^{3 n+5} n^2)=2^{28 n+41} 9^{n+7} n^{10}.$$
\end{computacion}

All in all, we see that the difference is dominantly $$0<\frac{k+v+u+w}{s(\gamma+\delta\sqrt{g})}\sim\frac{2^{8 n+17} 9^{n+2} n^3}{2^{28 n+41} 9^{n+7} n^{10}}=\frac{16^{-5 n-6}}{59049 n^7}\to0^{+} \mbox{\ when\ } n\to\infty.$$ This proves immediately the next result.

\begin{teorema}
The application of the relaxation to the bivariate Eulerian polynomials produces asymptotically better bounds than the application of the relaxation to the univariate Eulerian polynomials. This improvement grows asymptotically, at least, as $\frac{16^{-5 n-6}}{59049 n^7}.$
\end{teorema}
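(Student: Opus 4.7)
The plan is to establish the asymptotic positivity of $\bi(n) - \un(n)$ and simultaneously identify its leading growth rate. Both quantities are explicit closed-form expressions with the same structural shape: each is a rational expression in $n$ and in exponentials like $2^{n}, 3^{n}, 4^{n}$, containing square roots arising from the quadratic formula. The univariate bound $\un$ comes from the determinant of the $2\times 2$ relaxation of the univariate Eulerian polynomial, while $\bi$ comes from substituting the optimizing parameter $y=\frac{-b-\sqrt{b^{2}-4ac}}{2a}$ into the rational function $\frac{N(y)}{D(y)}$ associated with the linearization of the bivariate $3\times 3$ relaxation along the vector family $(y,3,-8)$.

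First, I would reduce $\bi - \un$ to a single fraction. After using $y^{2}=\frac{f^{2}+g-2f\sqrt{g}}{h^{2}}$ to clear nested radicals inside $N(y)$ and $D(y)$, the bivariate bound can be written in the canonical form $\bi = \frac{\alpha + \beta\sqrt{g}}{\gamma + \delta\sqrt{g}}$. Combining with $\un = \frac{p + r\sqrt{q}}{s}$ and putting over a common denominator splits the numerator of $\bi-\un$ into four natural pieces $k+v+u+w$ classified by how many of $\sqrt{g}$ and $\sqrt{q}$ appear.

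The main obstacle is an exact cancellation of the leading asymptotic orders: direct substitution shows $k,v,u,w$ all scale like $2^{11n+15}\,3^{n+5}\,n^{4}$ with signs arranged so that the dominant terms annihilate, and the same phenomenon occurs in the denominator factor $\gamma+\delta\sqrt{g}$. To see through this cancellation I would apply a difference-of-squares conjugation repeatedly. The idea is that whenever a sum $X+Y$ loses its leading term, the conjugate $X-Y$ does not, so one can write
\begin{equation*}
X+Y \;=\; \frac{X^{2}-Y^{2}}{X-Y},
\end{equation*}
trading the troublesome sum for a ratio whose numerator has one fewer layer of square roots and whose denominator is asymptotically tractable. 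I would apply this first to the numerator in the form $(k+u)+(v+w)$, obtaining a simpler expression involving $r+s+t$; then apply it a second time to $s+t$, which itself suffers the same leading-order cancellation. For the denominator the same trick used once on $\gamma+\delta\sqrt{g}$ (against $\gamma-\delta\sqrt{g}$, whose leading term survives) suffices.

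Once the sign of each intermediate expression has been pinned down through these conjugations, the estimates become mechanical: track the dominant exponential and polynomial factors in the resulting numerators and denominators, take the quotient, and verify that the net result is positive and equivalent to $\frac{16^{-5n-6}}{59049\,n^{7}}$ as $n\to\infty$. The hardest part is genuinely bookkeeping: at each conjugation one must be sure which of $X$ or $Y$ dominates and with which sign, because a single misplaced sign would either reintroduce a cancellation downstream or flip the asymptotic into negative values, contradicting the very fact that the bivariate relaxation is a relaxation of a larger family. The positivity check at the end is therefore not just a computation but also a consistency test on the sign bookkeeping of the whole chain of conjugations.
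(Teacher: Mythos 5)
Your proposal is correct and follows essentially the same route as the paper: expressing $\bi$ and $\un$ over a common denominator, splitting the numerator into the four pieces $k+v+u+w$ by square-root content, and resolving the leading-order cancellations by repeated conjugation (first on $(k+u)+(v+w)$ to produce $r+s+t$, then again on $s+t$), with the denominator $\gamma+\delta\sqrt{g}$ handled by a single conjugation against $\gamma-\delta\sqrt{g}$, ending in the stated growth rate $\frac{16^{-5n-6}}{59049\,n^{7}}$. Your closing observation that positivity is forced by the relaxation property and serves as a sign-consistency check on the bookkeeping is a genuine and useful remark that the paper leaves implicit.
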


\begin{proof}
See the computations above.
\end{proof}

Although positive news because we get an actual improvement in the asymptotics of the bounds obtained, this also shows us that our gain with respect to the univariate optimal is actually quite small and, in fact, vanishes as $n$ grows towards infinity. We will have to work with polynomials in more variables in order to obtain better improvements. As we saw already, the fully multivariate generalization of the Eulerian polynomials is not needed to provide an answer to the question about the first order asymptotic approximation to these roots, as this was completely determined already through the relaxation of the univariate polynomials. This univariate relaxation has appeared previously in the literature of statistics but not (to our knowledge) in the literature of approximations of roots of polynomials and it has shown here to be surprisingly good to estimate extreme roots of this family of polynomials. As the novelty of our method resides then in the use of the \textit{multivariate} relaxation we want to push things beyond this first order asymptotics result. Thus, in order to see the power of the relaxation of the multivariate polynomials, we will now turn to the study of the higher orders of the asymptotic approximation. This will allow us to eventually find much better improvements to the asymptotics of the bounds than the ones obtained in this section, which, although disappointingly small, have shown to us that, in fact, these improvements are possible when we go to more variables. But first we have to be cautious about what we are actually doing in the multivariate scenario. We do this study following a similar scheme as we did in the sections above for the bivariate case. As we explained already in this direction, for this very reason, many steps and arguments that follow will look quite similar to the ones we already saw above. We get started into the art of increasing variables on each step of the computation. This means that our LMPs will actually grow in each step, making thus any approximation through the use of determinants unfeasible in practice and therefore pushing us towards the art of guessing structural approximate solutions for generalized eigenvalue problems through appropriate guesses of approximate eigenvectors used to linearize our problems. We explain this in more detail next, although the main ideas were already covered in the preceding sections.

\section{Increasing the variables in each step}

We recall what we have done. We began applying the relaxation to univariate polynomials. We could see that doing this already provides a very good bound in asymptotic terms for the extreme roots of the univariate Eulerian polynomials. This bound is actually so good that it already matches the first asymptotic growth term in the natural exponential scale we are using for asymptotic expansions.

\begin{remark}[Univariate bound already good]
The univariate bound computed sections before grows like $2^{n+1}+o(2^{n})$ just as the actual extreme root of the univariate Eulerian polynomial $A_{n}.$ Thus we have to search for improvements down further terms in the asymptotic. We cannot know a priori how far we have to go, this is why we use differences to compare here. 
\end{remark}

Using certain properties of interlacing polynomials, we found a way to extend the unviariate Eulerian polynomials into multivariate ones. In particular, for this, we used the fact that they interlace sequentially. This allowed us to perform a construction similar to the one we made in Part \ref{I} using the Renegar derivative. The generalization of that construction allowed us to get rid of the rigidity of the Renegar derivative. Through this more general construction we could explore different bivariate construction that have let us show that the use of certain interlacers can indeed improve the relaxation (at least, when we look through the diagonal). In contrast, what happened when we used the Renegar derivative in Part \ref{I} is that the relaxation did not change at all. We find thus desirable to explore the interlacers that manage to change the relaxation through this process. A natural first step in this direction is looking to the interlacers that get us the closest possible to the original RCS of the RZ polynomial we are studying. These are the objects introduced in the next definition.

\begin{definicion}[Relaxation-extremal interlacers]\label{reinter}
Let $p\in\mathbb{R}[\mathbf{x}]$ be a RZ polynomial. Then it is clear by continuity that there exists an interlacer $r\in\mathbb{R}[\mathbf{x}]$ of $p$ (which does not need to be unique) such that for any other interlacer $s\in\mathbb{R}[\mathbf{x}]$ of $p$ we have that the respective relaxations verify \begin{gather*}\vol(\{a\in\mathbb{R}^{n}\mid (a,0)\in S(p+ys)\}\smallsetminus\rcs(p))\geq\\\vol(\{a\in\mathbb{R}^{n}\mid (a,0)\in S(p+yr)\}\smallsetminus\rcs(p)),\end{gather*} where $\gls{volA}$ is the volume of the Lebesgue-measurable set $A\subseteq\mathbb{R}^{n}.$ We call such an interlacer $r$ a \textit{relaxation-extremal interlacer for $p$}.
\end{definicion}

Thus, a relaxation-extremal interlacer is an interlacer $r\in\mathbb{R}[\mathbf{x}]$ that minimizes the volume of the difference between the original RCS of $p$ and the intersection with the original subspace of the RCS of the extended RZ polynomial constructed through the well-known transformation $p\mapsto p+yr\in\mathbb{R}[\mathbf{x},y]$. As David Sawall noted in a discussion, one interesting particular case comes with exactness. In such case, we are computing the volume of the empty set, which is $0$, and we have therefore equality.

\begin{ejemplo}[Analyzing relaxation-extremal interlacers under exactness]\label{extremalexactness}
Let $p\in\mathbb{R}[\mathbf{x}]$ be a RZ polynomial such that $\rcs(p)=S(p)$, i.e., such that the relaxation is exact. This happens, e.g., for quadratic RZ polynomials and for certain determinantal polynomials, as it is noted in \cite[Theorem 5.1 and Proposition 3.33]{main}, respectively. Hence, in the case of exactness, as we will easily see as a direct consequence of Proposition \ref{improinter}, for any interlacer $r$ of $p$, we will have $$\{a\in\mathbb{R}^{n}\mid (a,0)\in S(p+yr)\}=S(p)=\rcs(p).$$ Thus, in the case of exactness, all interlacers are relaxation-extremal.
\end{ejemplo}

In view of the example above, we need to make a point clear. Our Definition \ref{reinter} above is not vacuous. This is so because we know that it is not the case that always all interlacers are relaxation extremal. We remark this.

\begin{remark}[There are non-relaxation-extremal interlacers]
In order to see that the definition above is indeed not vacuous, we notice that, through our study of the sequence of univariate Eulerian polynomials, we were indeed able to find an example actually showing that the use of other different interlacers constructed in more exotic ways can, in fact, improve the relaxation (at least, through the diagonal). In order to see this, we compared the univariate Eulerian sequence with the use of the Renegar derivative and saw that the previous Eulerian polynomial in the sequence does in fact change the RCS in a way that the univariate bound improves over the diagonal. This shows that the provided (intersection with the original space of the) relaxation changes, contrary to what happened when we used the Renegar derivative as our interlacer in the transformation $p+yq$.
\end{remark}

We saw this fact in the previous section and studied the change of the provided relaxation through the diagonal. Thus, that information lets us make the following conjecture about these objects.

\begin{conjetura}[Extremal interlacers]\label{conjeextrem}
Let $p\in\mathbb{R}[\mathbf{x}]$ be a RZ polynomial with compact RCS. Then there exist a relaxation-extremal interlacer for $p$ that is also an extremal interlacer.
\end{conjetura}

We note that we cannot claim that all relaxation-extremal interlacers are extremal interlacers because of what we saw above in Example \ref{extremalexactness} for the case of exactness of the relaxation. A visual example of this was suggested by David Sawall. This example will also help us to understand better the Conjecture \ref{conjeextrem} above.

\begin{ejemplo}[Quadratic exactness and extremal interlacers]
We know, by \cite[Theorem 5.1]{main}, that the relaxation is exact for quadratic RZ polynomials. Take such a polynomial $p$ whose RCS is compact. Any hyperplane not cutting its RCS is an interlacer of $p$. Thus, all these interlacers are relaxation-extremal interlacers for $p$. The hyperplanes tangent to its RCS are, moreover, extremal. 
\end{ejemplo}

This conjecture means that the best performance of the relaxation under the strategy of increasing the number of variables of the original polynomial through the transformation $p\mapsto p+yq$ occurs, in particular, for some $q$ in the set of extremal interlacers (but it can also happen when $q$ is a non-extremal interlacer at the same time). It is a long way ahead before we are able to attack this conjecture.

Additionally, more knowledge about how to combine several interlacers into one polynomial increasing rapidly (and maybe even explosively \cite{raghavendra2019exponential}) the number of variables is necessary. This exercise is highly subtle and advancing in that direction is promising but also problematic. In any case, the example of the sequence of Eulerian polynomials that we worked out here tells us that there is some hope researching in that direction and therefore opens the door to follow that path.

\begin{remark}[Interlacing combinations]
More broadly, we could conjecture that there are other combinations of interlacers and additional variables that produce even better improvements and that these improvements happen at the extremal allowed combinations of interlacers. But this would be going too far already. Other combinations using the Renegar derivative could be extracted exploring results as these in \cite{nuijtype}.
\end{remark}

Now, however, we will explore a different path. The main disadvantage of the bivariate construction is that the number of variables is static there: it is always $2$ regardless of the degree. We know that the size of the LMP defining our relaxation depends on the number of variables and therefore any hope to greatly improve the output of the relaxation has to pass through mechanism allowing us to increase the number of variables sequentially with the degree. This is why we look in the direction of multivariability through the multivariate Eulerian polynomials, whose existence is ultimately connected with the celebrated existence and results about stability preservers. This is how we attack the next step into our understanding of the relaxation applied to these polynomials.

\begin{remark}[The importance of multivariability]
Multivariate Eulerian polynomials have relaxations that grow sequentially with the degree because so does the number of variables. We know that in order to faithfully represent these polynomials we most surely would even require a faster growth in the number of variables as compared to the degree. Thus, they are a step in the correct direction, but clearly not the end of the story. The relaxation is always hungry for additional variables and so are we if we really want to establish better bounds or maybe even find actual determinantal representations through this path we are drawing here.
\end{remark}

Thus, we proceed to study the application to the multivariate Eulerian polynomials with a great hope to find better results than in the bivariate case. This is so because now, as we desired and required, the amount of variables will increase, which is intuitively something positive for the accuracy of the relaxation. We begin by studying this application in a similar way as how we proceeded with the bivariate one. The initial part of the research into the multivariate polynomials is actually quite similar to how we started studying the bivariate ones. Therefore, the next two sections are extremely similar to the two before, as we already mentioned. However, it will be in the next chapter where we will actually see the whole power of multivariability. Thus, we start, as in the bivariate case, with some cautions. This time we do not need to study how to do the constructions, as we did in Section \ref{sectioninterlacing}, because these multivariate Eulerian polynomials were already deeply studied and researched before in \cite{visontai2013stable,haglund2012stable}, which is an advantage now for us in order to get faster and more directly into them. We proceed therefore with these cautions in the next section.

\section[Multivariate cautions]{Cautions before using the relaxation in the multivariate case}

Now we really go multivariate. Our number of variables will increase in each iteration and, with it, also the size of our LMPs. This, of course, complicates things, but the benefits will be evident soon when we start guessing correctly our approximations to the corresponding generalized eigenvector problem. We have to do this because now it is not doable (theoretically, practically, symbolically nor numerically) to compute the actual optimal bound.

\begin{remark}[Guessing the eigenvector]
The only approach that is practical and doable for us in this case is guessing the eigenvector. The growing size of the LMPs in the sequence only let us this option available. Even in this case, finding a correct guess will not be easy, although we will manage to do it in the next section after some numerical experiments and symbolic optimization work.
\end{remark}

We want to use the whole power of the multivariate relaxation and, as we said, for this, we have to be cautious about our choice of vectors. In particular, we have to take care that we do not end up going back to the univariate setting through our choice. In order to show how this happens we introduce now a weak sense of compression that will describe our problem here.

\begin{definicion}[Matrix compression]
Let $R$ be a ring and $M\in R^{n\times m}$ a matrix over that ring. We say that the matrix $N\in R^{n'\times m'}$ with $n'\leq n$ and $m'\leq m$ is a \textit{compression} of $M$ if $N$ is obtained from $M$ after summing some of its rows and columns suppressing the appearance of the rows and columns being added to another one.
\end{definicion}

Thus, we see that the all-ones vector can never achieve anything better than the univariate relaxation. The reason for this is the fact that using this vector has the same effect as compressing the multivariate LMP to the univariate one.

\begin{observacion}[Action of the all-ones vector]
Choosing the vector $\mathbf{1}\in\mathbb{R}^{n+1}$ of all-ones will give the same result that we obtained in the univariate case because this choice compresses the matrix given by the multivariate relaxation to the one obtained in the univariate case above.
\end{observacion}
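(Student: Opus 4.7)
The plan is to verify the observation by exhibiting an explicit compression that reduces the $(n+1)\times(n+1)$ multivariate relaxation matrix to the $2\times 2$ univariate relaxation matrix when sandwiched between two all-ones vectors. Concretely, I would introduce the $2\times(n+1)$ compression matrix
\[
J := \begin{pmatrix} 1 & 0 & 0 & \cdots & 0 \\ 0 & 1 & 1 & \cdots & 1 \end{pmatrix},
\]
so that $\mathbf{1} = J^T (1,1)^T$. The observation then reduces to proving the matrix identity $J\, M_n(x,\ldots,x)\, J^T = M_p(x)$, where $p(t) := A_n(t,\ldots,t)$ is the univariate Eulerian polynomial, since in that case
\[
\mathbf{1}^T M_n(x,\ldots,x)\, \mathbf{1} \;=\; (1,1)\, J\, M_n(x,\ldots,x)\, J^T\, (1,1)^T \;=\; (1,1)\, M_p(x)\, (1,1)^T,
\]
so the linear bound that Procedure \ref{procesobound} extracts in the multivariate setting from the all-ones vector coincides with the linear bound it extracts in the univariate setting from the vector $(1,1)$.

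To establish $J\, M_n\, J^T = M_p$ entrywise, the key step will be a transfer formula between the two $L$-forms of the shape
\[
L_p(t^k) \;=\; L_{A_n}\bigl((x_2+\cdots+x_{n+1})^k\bigr), \qquad k \in \mathbb{N}.
\]
I would derive this by comparing the coefficients of $t^k$ on both sides of the trivial identity $-\log(p(-t)/p(0)) = -\log(A_n(-t,\ldots,-t)/A_n(0))$, expanding each side via the defining generating series of $L_p$ and $L_{A_n}$, and collapsing the multivariate sum over $|\alpha|=k$ using the multinomial theorem. Once this transfer formula is available, each entry of $J\, M_{n,0}\, J^T$ and of $J\, \bigl(\sum_{k=2}^{n+1} M_{n,k}\bigr)\, J^T$ becomes a sum of values of $L_{A_n}$ over multi-indices that reassembles, again by the multinomial theorem, into a single $L_p$-value of an appropriate power of $t$, matching the corresponding entries of $M_{p,0}$ and $M_{p,1}$.

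With these two ingredients the observation follows: the all-ones bound in the multivariate setting equals the $(1,1)$-bound in the univariate setting and is therefore automatically dominated by the optimal univariate bound $\un$ computed earlier, so no genuinely multivariate information is captured by the choice $v = \mathbf{1}$. I expect the main obstacle to be almost entirely bookkeeping rather than mathematical content: one must be careful that the ghost variable $x_1$, which never appears in $A_n$, is handled consistently on both sides, and that the indexing of the multivariate matrix over the monomials $\{1, x_2, \ldots, x_{n+1}\}$ lines up with the univariate indexing over $\{1, t\}$ exactly as the rows and columns of $J$ prescribe. The transfer formula above is designed precisely to sidestep this bookkeeping by working directly through the defining generating identity of the $L$-form.
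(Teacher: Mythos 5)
Your proposal is correct and takes exactly the approach the paper's observation indicates: the all-ones vector factors through the compression $J$, and the compressed matrix $J\,M_n(x,\dots,x)\,J^T$ coincides with the univariate relaxation $M_p(x)$. Your transfer formula $L_p(t^k)=L_{A_n}\bigl((x_2+\cdots+x_{n+1})^k\bigr)$, obtained by substituting $x_i\mapsto t$ in the defining generating series of $L_{A_n}$ and regrouping over $|\alpha|=k$ via the multinomial theorem, is precisely what makes the entrywise comparison work, and you handle the ghost variable $x_1$ correctly since $L_{A_n}$ annihilates any monomial containing it. The paper itself states this observation without a written-out argument, relying on its general definition of matrix compression; your proof simply makes explicit the identity that definition is gesturing at, and correctly draws the conclusion that the all-ones bound on $M_n$ equals the $(1,1)$-bound on $M_p$ and is therefore dominated by the optimal univariate bound $\un$.
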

Thus, we see that we have to take care with our choice of guesses in order to avoid ending up getting exactly the same bounds as before. We have to avoid, in particular, coming back to the univariate setting without realizing. Notice that there is another way of ending up in a univariate setting \textit{in some sense}: guessing a vector that kills the rows and columns corresponding to all the variables except one. We see this.

\begin{remark}[Other forms of univariateness]
Another possibility is using \textit{unit binary} (having all entries but one zero) vectors $(\mathbf{0},1,\mathbf{0})\in\mathbb{R}^{n+1}$ to linearize the inequality given by the relaxation of the multivariate polynomial. We will see that the use of these vectors does not improve the bound provided by the relaxation of the univariate polynomial. As the choice of these polynomials is natural, we will see why this is not providing a better bound. In particular, we will study the bounds given when the vector $(1,0,\dots,0)\in\mathbb{R}^{n+1}$ is considered and when the rest of the vectors $(0,1,0\dots,0),\dots,(0,0\dots,0,1)\in\mathbb{R}^{n+1}$ are considered.
\end{remark}

This is not really totally univariate because, although it seems that we kill information about the other variables through the zeros in the vector, in fact, some information about the splitting of the descents among the tops remains in the variable that survives. Still, knowing this, we have to take care that we do not discard too much information doing this. In fact we do discard too much information, as we will soon see and, for this reason, in the next section we have to deal with the task of finding a guess for an approximate eigenvector that collects enough information to allow a real improvement in the multivariate relaxation with respect to the univariate case. In this section, we will see that we cannot improve the bounds provided by the 
application of the relaxation to the univariate polynomials if we end up discarding too much information through a bad choice of our guess for an approximate eigenvector. But, first, we have to note again that there are variables that do not really play any role and we are therefore not actually using although they seem to float around us all the time.

\begin{warning}[Ghost variables action in the relaxation]
For the study of this second set of vectors we introduce, for clarity, the reduced relaxation, which is the relaxation of size $n+1$ obtained after deleting additionally the first row and column (these corresponding to the application of the $L$-form to monomials up to degree $1$ in the initial matrix) and \textbf{reintroducing} the (always $0$) row and column corresponding to the index $1$ temporarily. We do this to be able to count positions in the vector indexed by the same set as the variables in the multivariate Eulerian polynomial $A_{n}$. This will not affect the final result because the study at position $1$ will not produce anything relevant.
\end{warning}

Since we already saw that what we obtain is actually Colucci estimate when we bound through the vector $(1,
\mathbf{0})\in\mathbb{R}^{n+2}$ (counting by the size used in previous sections), we pass directly to the case of the other unit binary vectors (having all entries but one zero). These considerations will produce bounds under the generalized univariateness we discussed above.

\setlength{\emergencystretch}{3em}%
\begin{remark}[Bounds under generalized univariateness]
Thus, we will study the bounds obtained through the vectors $(\mathbf{0},1,\mathbf{0})\in\mathbb{R}^{n+1}$ in the newly introduced reduced relaxation so the possible positions run from $1$ to $n+1$ and correspond one by one with the indices of the variables used in the degree $n$ multivariate Eulerian polynomial $A_{n}$ including the never appearing variable $x_{1}$. In particular, remember that $L_{p}(x_{1}m)=0$ for any monomial $m$ and therefore, in our sums, we can \textit{jump} (as in \textit{ignoring}) over that index.
\end{remark}
\setlength{\emergencystretch}{0em}%

Therefore, using these vectors accounts to choose to look only at the action of one of the variables. And it is in this sense that we say that this is a form of univariateness: not because we are really in a univariate context, but because we make a choice to look at a univariate action as a part of the a multivariate relaxation. It is the choice (of vector) what is multivariate in this case, not the polynomial. These choices produce the bounds that we study next.

\begin{proposicion}[Generalized univariate bounds]\label{genunibou}
For $n,j\in\mathbb{N}$ with $n\geq1$ and $1<j\leq n+1$ we have the inequality $$|q_{1}^{(n)}|\geq\frac{1 - 2^{j-1} + 2^{j + n} - 2^{2 - j + n} 3^{j-1}}{1-2^{j-1}}.$$ 
\end{proposicion}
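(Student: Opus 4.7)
The plan is to use Procedure \ref{procesobound} with the unit-binary test vector $v = (\mathbf{0},1,\mathbf{0}) \in \mathbb{R}^{n+1}$ having its $1$ in position $j$, applied to the reduced relaxation matrix $M_n(\mathbf{x})$ of the $n$-th multivariate Eulerian polynomial $A_n(\mathbf{x})$ restricted to the diagonal $x_2 = \cdots = x_{n+1} = x$. Since $v$ extracts only the $(j,j)$-entry, the linearization collapses to the scalar inequality
\[
v^{T} M_{n}(x,\dots,x) v \;=\; L_{p}(x_{j}^{2}) \,+\, x\sum_{k=2}^{n+1} L_{p}(x_{j}^{2} x_{k}) \;\geq\; 0,
\]
which, writing the left-hand side as $c + xd$, gives after rearrangement the outer bound $x \geq -c/d$ on the rigidly convex set $\rcs(A_{n}(x,\dots,x))$.

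For $c$, Computation \ref{degree2form} yields immediately $c = L_{p}(x_{j}^{2}) = (2^{j-1}-1)^{2}$. For $d$, I would split the sum into the three cases $k<j$, $k=j$, $k>j$, using the asymmetric degree-three formulas of Computation \ref{degree3form}. After factoring $3\cdot 4^{k}-4\cdot 3^{k} = 12(4^{k-1}-3^{k-1})$ and $2^{j}-2 = 2(2^{j-1}-1)$, these entries simplify to
\[
L_{p}(x_{j}^{2} x_{k}) \;=\; \begin{cases}
2^{j-k}(2^{j-1}-1)(4^{k-1}-3^{k-1}), & k<j,\\
(2^{j-1}-1)^{3}, & k=j,\\
2^{k-j}(2^{j-1}-1)(4^{j-1}-3^{j-1}), & k>j,
\end{cases}
\]
so that the two remaining sums reduce to elementary geometric series: $\sum_{k=2}^{j-1} 2^{j-k}(4^{k-1}-3^{k-1}) = 4^{j-1} - 2\cdot 3^{j-1} + 2^{j-1}$ and $\sum_{k=j+1}^{n+1} 2^{k-j} = 2^{n+2-j}-2$. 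Pulling out the common factor $(2^{j-1}-1)$, the bracket becomes $(2^{j-1}-1)^{2} + (4^{j-1} - 2\cdot 3^{j-1} + 2^{j-1}) + (4^{j-1}-3^{j-1})(2^{n+2-j}-2)$, and expanding $(2^{j-1}-1)^{2} = 4^{j-1} - 2^{j} + 1$ produces the key cancellation in which all non-exponentially-growing occurrences of $4^{j-1}$ and $3^{j-1}$ drop out, leaving
\[
d \;=\; (2^{j-1}-1)\bigl[\,1 - 2^{j-1} + 2^{j+n} - 2^{2-j+n}\,3^{j-1}\bigr],
\]
where I have used the identity $4^{j-1}\cdot 2^{n+2-j} = 2^{n+j}$.

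To conclude, I would invoke palindromicity of the univariate Eulerian polynomial $A_{n}(x,\dots,x)$, exactly as in Proposition \ref{anteserajemplo}: the roots come in reciprocal pairs, so $q_{1}^{(n)}\cdot q_{n}^{(n)} = 1$, and the outer bound $-c/d \leq q_{n}^{(n)}$ coming from the relaxation translates into the inner bound $|q_{1}^{(n)}| \geq d/c$ after taking reciprocals of the two (negative) quantities. Dividing the factored expression for $d$ by $c = (2^{j-1}-1)^{2}$ yields precisely the expression in the statement. The only real obstacle is the bookkeeping of the $k<j$ versus $k>j$ asymmetry together with the two geometric summations, so that the advertised tidy factorization actually emerges; no ingredient beyond Computation \ref{degree3form}, the linearization Procedure \ref{procesobound}, and the palindromicity-reciprocal argument from Proposition \ref{anteserajemplo} is required.
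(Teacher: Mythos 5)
Your approach is the same as the paper's, and your algebra checks out: both geometric closed forms $\sum_{k=2}^{j-1} 2^{j-k}(4^{k-1}-3^{k-1}) = 4^{j-1}-2\cdot 3^{j-1}+2^{j-1}$ and $\sum_{k=j+1}^{n+1}2^{k-j}=2^{n+2-j}-2$ are correct, and the cancellation that leaves $d=(2^{j-1}-1)\bigl[1-2^{j-1}+2^{j+n}-2^{2-j+n}3^{j-1}\bigr]$ with $c=(2^{j-1}-1)^{2}$ is exactly what the paper derives, in the equivalent form $\tfrac{1}{12}(-2+2^{j})(6-3\cdot 2^{j}+3\cdot 2^{j+n+1}-2^{2-j+n+1}3^{j})$. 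However, you have left a genuine gap. Procedure \ref{procesobound} only yields the inequality $x\geq -c/d$, and your concluding reciprocity step only makes sense, when the linear coefficient $d$ is positive; you even acknowledge this implicitly by saying you take ``reciprocals of the two (negative) quantities,'' which forces $-c/d<0$. Since $c>0$ and $2^{j-1}-1>0$ for $j>1$, what needs checking is that the bracket $1-2^{j-1}+2^{j+n}-2^{2-j+n}3^{j-1}$ is positive, and this is precisely where the hypotheses $n\geq 1$ and $1<j\leq n+1$ enter. The paper's proof devotes its entire last third to verifying this (rewriting $3^{j}$ as $2^{j\log_{2}3}$, establishing monotonicity in $j$ of the problematic term, and then checking the base case $j=2$ gives $n\geq 1$). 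Your proposal declares the geometric-series bookkeeping to be ``the only real obstacle'' and omits this sign check entirely, so the argument is incomplete as written.

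One further caveat: you claim that dividing $d$ by $c$ ``yields precisely the expression in the statement,'' but your denominator is $2^{j-1}-1$ while the proposition as printed has $1-2^{j-1}$. These differ by sign; your version is in fact the correct one (both numerator and denominator positive, giving a nontrivial lower bound), whereas the printed right-hand side is negative and the inequality would be vacuous. This appears to be a sign slip in the paper, whose own proof stops at the intermediate bound $x_{n,r}\geq\tfrac{1-2^{j-1}}{1-2^{j-1}+2^{j+n}-2^{2-j+n}3^{j-1}}$ and never writes out the final palindromicity inversion. You should flag the discrepancy rather than assert agreement.
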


\begin{proof}
The linear bound obtained through the choice having a one in position $j$ is formed considering only the $j$-th diagonal element of the (reduced) relaxation:\comm{$L_{p}(x_{j}^{2})+x\sum_{i=2}^{n}L_{p}(x_{i}x_{j}^{2})=(2^{j-1}-1)^{2}+x(\sum_{i=2}^{j-1}L_{p}(x_{i}x_{j}^{2})+L_{p}(x_{j}^{3})+\sum_{i=j+1}^{n}L_{p}(x_{i}x_{j}^{2}))=(2^{j-1}-1)^{2}+x(\sum_{i=2}^{j-1}((2^{j-1}-1)^{2}(2^{i-1}-1)-(2^{j-1}-1)(3^{i-1}2^{j-i}-(2^{j-1}+2^{i-1})+1))+(2^{j-1}-1)^{3}+\sum_{i=j+1}^{n}((2^{j-1}-1)^{2}(2^{i-1}-1)-(2^{j-1}-1)(3^{j-1}2^{i-j}-(2^{j-1}+2^{i-1})+1)))=(2^{j-1}-1)^{2}+x(\frac{1}{24}(-2 + 2^{j}) (2^{1 + j}3 - 3^{j}8 + 4^{j}3)+(2^{j-1}-1)^{3} -\frac{1}{3} 2^{-j - 2} (2^{j} - 2) (4^{j}3 - 3^{j}4) (2^{j} - 2^{n}))=(2^{j-1}-1)^{2}+x(\frac{1}{12} (-2 + 2^j) (6 - 2^{j}3 + 2^{j + n}3 - 2^{2 - j + n} 3^{j}))} $L_{p}(x_{j}^{2})+x\sum_{i=2}^{n+1}L_{p}(x_{i}x_{j}^{2}).$ There the term without $x$ is $L_{p}(x_{j}^{2})=(2^{j-1}-1)^{2}$ while the term with $x$ equals $\sum_{i=2}^{n+1}L_{p}(x_{i}x_{j}^{2})=$ \begin{gather*}L_{p}(x_{j}^{3})+\sum_{i=2}^{j-1}L_{p}(x_{i}x_{j}^{2})+\sum_{i=j+1}^{n+1}L_{p}(x_{i}x_{j}^{2})=(2^{j-1}-1)^{3}\\+\sum_{i=2}^{j-1}((2^{j-1}-1)^{2}(2^{i-1}-1)-(2^{j-1}-1)(3^{i-1}2^{j-i}-(2^{j-1}+2^{i-1})+1))\\+\sum_{i=j+1}^{n+1}((2^{j-1}-1)^{2}(2^{i-1}-1)-(2^{j-1}-1)(3^{j-1}2^{i-j}-(2^{j-1}+2^{i-1})+1))\\=(2^{j-1}-1)^{3}+\frac{1}{24}(-2 + 2^{j}) (2^{1 + j}3 - 3^{j}8 + 4^{j}3)\\-\frac{1}{3} 2^{-j - 2} (2^{j} - 2) (4^{j}3 - 3^{j}4) (2^{j} - 2^{n+1})=\\\frac{1}{12} (-2 + 2^j) (6 - 2^{j}3 + 2^{j + n+1}3 - 2^{2 - j + n+1} 3^{j}).\end{gather*} So we obtain the condition $$(2^{j-1}-1)^{2}+x(\frac{1}{12} (-2 + 2^j) (6 - 2^{j}3 + 2^{j + n+1}3 - 2^{2 - j + n+1} 3^{j}))\geq0$$ which gives, for $x_{n,r}$ the biggest root (which has to be negative) of the determinant of the relaxation of the multivariate Eulerian polynomial $A_{n}$, the bound \comm{\frac{-(2^{j-1}-1)^{2}}{\frac{1}{12} (-2 + 2^j) (6 - 2^{j}3 + 2^{j + n}3 - 2^{2 - j + n} 3^{j})}=} $$x_{n,r}\geq\frac{-6(2^{j-1}-1)}{6 - 2^{j}3 + 2^{j + n+1}3 - 2^{2 - j + n+1} 3^{j}}=\frac{1-2^{j-1}}{1 - 2^{j-1} + 2^{j + n} - 2^{2 - j + n} 3^{j-1}}$$ whenever we fulfill the condition \begin{gather*}6 - 2^{j}3 + 2^{j + n+1}3 - 2^{2 - j + n+1} 3^{j}=\\6 - 2^{j}3 + 2^{j + n+1}3 - 2^{2 - j + n+1} 2^{\log_{2}(3)j}=\\6+2^{j}(-3+2^{n+1}3-2^{2+n+1+(\log_{2}(3)-2)j})=\\6+2^{j}(-3+2^{n+1}(3-2^{2+(\log_{2}(3)-2)j}))>0,\end{gather*} a positivity condition for the denominator that is fulfilled if the exponent $2+(\log_{2}(3)-2)j<\log_{2}(3)$ (i.e., $j>\frac{\log_{2}(3)-2}{\log_{2}(3)-2}=1$) and, choosing $j=2$ as basis and using that the function $-3+2^{n+1}(3-2^{2+(\log_{2}(3)-2)j})$ grows with $j$, if $-3+2^{n+1}(3-2^{2+2\log_{2}(3)-4})\geq0,$ i.e., $n+1\geq\log_{2}\frac{3}{(3-2^{2+2\log_{2}(3)-4})}=\log_{2}\frac{3}{(3-\frac{9}{4})}=\log_{2}(4)=2$ (i.e., $n\geq 1$). This finished the proof for these generalized univariate bounds.
\end{proof}

Notice that the restrictions in the indices in Proposition \ref{genunibou} make sense as the subindex $j=1$ does never actually appear as a subindex of a variable in the multivariate polynomial $A_{n}$, as we mentioned already, and, for $n=0$, we obtain a constant polynomial $A_{0}=1$ that cannot benefit from the multivariate point of view of our relaxation because it has no roots. Now we analyze what happens when $j$ increases.

\begin{remark}[Adding more variables versus looking into the next variable]
When we we change the index $j$ we look deeper in the matrix and therefore we have in mind more information about the split of descents by their tops. Another option that we do not pursue here would be to add variables slowly changing some of the $0$ to $1$ and looking at the sequence of bound we obtain that way.
\end{remark}

Hence, we can see that, for fixed $n$, this bound changes with $j$. This makes us wonder which is the optimal bound attainable through these vectors. The answer to this question is our next result.

\begin{corolario}[Best generalized univariate bound reached at last]
The best bound attainable using unit binary vectors (with all entries $0$ except one) is obtained linearizing the relaxation through the vector $(\mathbf{0},1),$ where the last entry is the only one different from $0$. This means that for the polynomial $A_{n}$ we choose $j=n+1$ and linearize its simplified relaxation through the vector $(\mathbf{0},1)\in\mathbb{R}^{n+1}.$
\end{corolario}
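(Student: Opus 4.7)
The plan is to show that the bound $B(j,n)$ obtained in Proposition \ref{genunibou} is strictly monotone increasing in $j$ on the admissible range $\{2,3,\dots,n+1\}$ for every fixed $n\geq 1$. Since the statement restricts $j\leq n+1$, monotonicity immediately gives that the maximum (i.e.\ tightest inner bound) is reached at the endpoint $j=n+1$, which is the claim of the corollary.

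First I would rewrite the bound in a more tractable form. Set
\[
A := 2^{j+n},\qquad B := 2^{2-j+n}3^{j-1},\qquad C := 2^{j-1},
\]
so that the numerator and denominator in the expression coming from Proposition \ref{genunibou} become $N(j)=A-B-C+1$ and $D(j)=C-1$. Observe that passing from $j$ to $j+1$ doubles $C$, doubles $A$, and multiplies $B$ by $3/2$, so that $D(j+1)=2C-1$ and $N(j+1)=2A-\tfrac{3}{2}B-2C+1$. Since $D(j),D(j+1)>0$ for $j\geq 2$ (this is already guaranteed by the positivity condition verified at the end of the proof of Proposition \ref{genunibou}), the sign of $f(j+1)-f(j)$ coincides with the sign of $N(j+1)D(j)-N(j)D(j+1)$, so the whole question reduces to controlling this cross-difference.

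The second step is a direct expansion. I would carry out the multiplication carefully and watch for the cancellations: the $2AC$ and $-2C^2+3C-1$ terms cancel between the two products, leaving only the combination $-A+\tfrac{1}{2}B(C+1)$. Reinserting the expressions for $A$, $B$, $C$ and pulling out the common factor $2^n$, this simplifies to
\[
N(j+1)D(j)-N(j)D(j+1)\;=\;2^n\bigl(6^{j-1}+3^{j-1}-2\cdot 4^{j-1}\bigr).
\]
Thus the monotonicity is equivalent to the purely numerical inequality $6^{j-1}+3^{j-1}>2\cdot 4^{j-1}$ for $j\geq 2$. Dividing by $4^{j-1}$, the inequality becomes $(3/2)^{j-1}+(3/4)^{j-1}>2$, which for $j=2$ reads $\tfrac{3}{2}+\tfrac{3}{4}=\tfrac{9}{4}>2$ and for $j\geq 3$ already follows from $(3/2)^{j-1}\geq 9/4>2$, the term $(3/4)^{j-1}$ being strictly positive. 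Hence $f(j+1)>f(j)$ on the whole admissible range.

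I expect the main obstacle to be purely bookkeeping in the algebraic step: the cancellation between $N(j+1)D(j)$ and $N(j)D(j+1)$ is delicate because several large terms (involving $2^n$, $3^{j}$ and $4^{j}$) must cancel exactly in order for the small residual $-A+\tfrac{1}{2}B(C+1)$ to emerge with the correct sign, and a computational slip at this stage would destroy the monotonicity argument. Once this simplification is in place, however, the remaining inequality is an elementary comparison of exponential growth rates between bases $6$, $4$ and $3$, handled in a single line. With monotonicity established, the corollary follows at once: among all unit-binary vectors $(\mathbf 0,1,\mathbf 0)\in\mathbb R^{n+1}$, the one with the nonzero entry at position $j=n+1$ yields the optimal bound, which is exactly the vector $(\mathbf 0,1)$.
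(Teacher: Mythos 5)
Your proof is sound and, notably, supplies the argument that the paper omits: the paper's own justification of this corollary is a one‑sentence assertion that ``it turns out that the choice $j=n+1$ minimizes'' the relevant quantity, with no computation offered. Your monotonicity argument fills that gap and is the right way to establish the claim.

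There is one small bookkeeping slip. The cancellation $N(j+1)D(j)-N(j)D(j+1) = -A+\tfrac12 B(C+1)$ is correct, but after reinserting $A=2^{j+n}$, $B=2^{2-j+n}3^{j-1}$, $C=2^{j-1}$ and clearing denominators, the common factor that pulls out is $2^{\,n+1-j}$, not $2^n$:
\[
-A+\tfrac12 B(C+1)\;=\;2^{\,n+1-j}\bigl(6^{j-1}+3^{j-1}-2\cdot 4^{j-1}\bigr).
\]
(For instance, with $n=3$, $j=2$: $N(3)D(2)-N(2)D(3)=25\cdot 1-7\cdot 3=4=2^{2}\cdot(6+3-8)$, whereas $2^n\cdot 1=8$.) Since $2^{\,n+1-j}>0$ on the admissible range, the sign conclusion and hence the monotonicity and the corollary are unaffected, but the stated prefactor is off by a factor $2^{1-j}$. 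The remaining inequality $(3/2)^{j-1}+(3/4)^{j-1}>2$ for $j\geq 2$ and the endpoint conclusion are handled correctly.
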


\begin{proof}
Clearly, looking at Proposition \ref{genunibou} we can see that the best bound is given by that choice of the index $j$ that minimizes $$\left|\frac{-6(2^{j-1}-1)}{6 - 2^{j}3 + 2^{j + n+1}3 - 2^{2 - j + n+1} 3^{j}}\right|.$$ It turns out that the (possible) choice of $j\in[2,n+1]$ minimizing that quantity is in fact $j=n+1$, which corresponds to taking the vector $(\mathbf{0},1)$.
\end{proof}

We see here clearly now how adding more information about the splitting helps. This corollary makes formal what we commented above about going deeper into the matrix in order to extract further information about the splitting of the descents among the tops in the multivariate polynomial.

\begin{remark}[At last, reaching the last variable wins]
When we look at the last vector $(\mathbf{0},1)$, we are looking at the row and column of our LMP that carries the more dense information about the last variable added to the polynomial. Thus it is not surprising that looking at the densest part of the matrix in the information about the variable that completes the splitting of the descents provides us with the best possible vector attainable in this way.
\end{remark}

In this way, we obtain immediately a more clear vision about how the information travels from the multivariate polynomial to the relaxation. This is so because, as we can see, the linearizations of the relaxation, in some sense, \textit{feel} when the polynomial has been completed. This happens when we look at the part being the densest in the information about the last variable and we obtain the best possible bound attainable in this way. Now we want to compare these bounds not just via inequalities but more concretely and quantitatively accurately through the comparison of the corresponding asymptotic growths. This is the content of the next result.

\begin{proposicion}[Asymptotic comparison of generalized univariate bounds]\label{ratiogenunivariate}
Fix $i\in[2,\infty)$. Then the ratio of the bound obtained through the linearization of the relaxation by the all-zeros-except-at $i$ vector and the best possible bound obtained using these vectors (corresponding, as we have seen, to the choice $(\mathbf{0},1)$) when $n\to\infty$ goes to $$\frac{2^{-i} (-4 \cdot 3^i + 3 \cdot 4^i)}{3 (-2 + 2^i)}\in[0,1].$$
\end{proposicion}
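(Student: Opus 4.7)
The plan is to form the ratio of the $i$-th bound to the best bound identified in the preceding corollary and extract the dominant asymptotic behaviour of each. Recalling from Proposition \ref{genunibou} that the bound associated with the unit binary vector having its single $1$ in position $j$ is
\[
B_j(n) \;=\; \frac{1 - 2^{j-1} + 2^{j+n} - 2^{2-j+n}\,3^{j-1}}{1-2^{j-1}},
\]
the best such bound corresponds to $j = n+1$ and is
\[
B_{n+1}(n) \;=\; \frac{1 - 2^n + 2^{2n+1} - 2\cdot 3^n}{1-2^n}.
\]
The quantity to compute is $\lim_{n\to\infty} B_i(n)/B_{n+1}(n)$ for fixed $i \geq 2$.

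First I would isolate the leading exponential behaviour of $B_i(n)$: for fixed $i$ the denominator $1-2^{i-1}$ is a constant in $n$, while the numerator is asymptotically dominated by $2^{i+n} - 2^{2-i+n}\,3^{i-1} = 2^n(2^i - 2^{2-i}\,3^{i-1})$, so $B_i(n) \sim 2^n(2^i - 2^{2-i}\,3^{i-1})/(1-2^{i-1})$. Next I would do the same for $B_{n+1}(n)$, where both numerator and denominator now depend on $n$: the numerator is dominated by $2^{2n+1}$ and the denominator by $-2^n$, so $B_{n+1}(n) \sim -2^{n+1}$. Dividing the two leading terms yields
\[
\lim_{n\to\infty} \frac{B_i(n)}{B_{n+1}(n)} \;=\; \frac{2^i - 2^{2-i}\,3^{i-1}}{-2\,(1-2^{i-1})} \;=\; \frac{2^i - 2^{2-i}\,3^{i-1}}{2^i - 2}.
\]
A direct algebraic manipulation using $2^{2-i}\,3^{i-1} = 4\cdot 3^i/(3\cdot 2^i)$ rewrites the numerator as $(3\cdot 4^i - 4\cdot 3^i)/(3\cdot 2^i)$, so the whole quotient becomes $2^{-i}(3\cdot 4^i - 4\cdot 3^i)/(3(2^i-2))$, which is precisely the claimed $2^{-i}(-4\cdot 3^i + 3\cdot 4^i)/(3(-2+2^i))$.

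Finally I would check that this limit lies in $[0,1]$ for $i \geq 2$. Positivity of the numerator reduces to $(4/3)^i > 4/3$, valid for $i>1$, and the denominator $3(2^i-2)$ is positive for $i>1$; the upper bound by $1$ is equivalent to $3\cdot 4^i - 4\cdot 3^i \leq 3\cdot 2^i(2^i-2) = 3\cdot 4^i - 6\cdot 2^i$, i.e., $(3/2)^i \geq 3/2$, which again holds for $i \geq 2$.

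The hard part is purely bookkeeping: one must carefully track the asymmetry between $B_i(n)$, whose denominator is constant in $n$, and $B_{n+1}(n)$, every term of which depends exponentially on $n$. Once the correct dominant terms are identified, all subleading contributions vanish in the quotient and the limit is a clean rational expression in $2^i$ and $3^i$. No tools beyond the asymptotic notation of Notation \ref{insteadO} and elementary dominance of geometric progressions are required.
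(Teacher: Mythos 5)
Your proposal is correct and takes essentially the same approach as the paper: you form the ratio of the two bounds from Proposition \ref{genunibou} (the paper works directly with the intermediate expression $\frac{6(2^{j-1}-1)}{6-3\cdot 2^{j}+3\cdot 2^{j+n+1}-2^{2-j+n+1}3^{j}}$ while you use the already-simplified form from the statement, which amounts to the same thing), extract the dominant exponential terms in $n$, and simplify algebraically. The paper is terser — it just asserts the limit — whereas you actually display the dominance bookkeeping and the final verification that the value lies in $[0,1]$, which is a useful explicit addition but not a different method.
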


\begin{proof}
Fix $i\in[2,\infty)$. Now, in Proposition \ref{genunibou}, we take, on the one hand, $j$ to be the best possible choice $j=n+1$ in terms of $n$, as we said before and, on the other hand, $j=i$ for the $i$ we fixed at the beginning. Therefore we obtain now the ratio of absolute values \begin{gather*}\frac{\frac{6(2^{n}-1)}{6 - 2^{n+1}3 + 2^{2(n+1)}3 - 2^{2} 3^{n+1}}}{\frac{6(2^{i-1}-1)}{6 - 2^{i}3 + 2^{i + n+1}3 - 2^{2 - i + n+1} 3^{i}}}\to\frac{2^{-i} (-4 \cdot 3^i + 3 \cdot 4^i)}{3 (-2 + 2^i)} \mbox{ when } n\to\infty,\end{gather*} as we wanted to see.
\end{proof}

We can extract immediately the next result looking at the limit obtained above. In particular, here we see clearly the improvement in the bound and how it happens in a quantitative way. Qualitatively, it is also obvious that choosing an index $i$ that grows while the number of variables grows is beneficial in order to get tighter bounds, as it is expectable in terms of the information being collected that way, as we already noted intuitively above.

\begin{corolario}[Properties of the ratio]
The limit ratio in Proposition \ref{ratiogenunivariate} is a growing sequence for $i$ and is always positive and smaller than one but tends to $1$ as $i$ goes to infinity.
\end{corolario}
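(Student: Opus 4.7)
The plan is to reduce the whole statement to elementary inequalities and limits about the simpler expression obtained by dividing numerator and denominator by $2^i$. Writing
\[
r(i):=\frac{2^{-i}(-4\cdot 3^i+3\cdot 4^i)}{3(-2+2^i)}=\frac{3\cdot 2^i-4\cdot(3/2)^i}{3\cdot 2^i-6},
\]
the limit $r(i)\to 1$ as $i\to\infty$ is immediate: dividing top and bottom by $2^i$ gives $(3-4\cdot(3/4)^i)/(3-6\cdot 2^{-i})\to 3/3=1$ because $(3/4)^i,2^{-i}\to 0$.

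Next I would verify the two inequalities $0<r(i)<1$ on $i\in[2,\infty)$ by checking the signs of numerator and denominator and of $r(i)-1$. The denominator $3\cdot 2^i-6>0$ for $i\ge 2$. For the numerator, $3\cdot 2^i>4\cdot(3/2)^i$ is equivalent to $(4/3)^i>4/3$, true for $i>1$; this gives $r(i)>0$. For $r(i)<1$, subtract:
\[
1-r(i)=\frac{(3\cdot 2^i-6)-(3\cdot 2^i-4\cdot(3/2)^i)}{3\cdot 2^i-6}=\frac{4\cdot(3/2)^i-6}{3\cdot 2^i-6},
\]
and $4\cdot(3/2)^i\ge 4\cdot 9/4=9>6$ for $i\ge 2$, so $1-r(i)>0$.

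For monotonicity, write $r(i)=f(i)/g(i)$ with $f(i)=3\cdot 2^i-4\cdot(3/2)^i$ and $g(i)=3\cdot 2^i-6$. Both $g(i),g(i+1)>0$ for $i\ge 2$, so $r(i+1)-r(i)$ has the sign of
\[
f(i+1)g(i)-f(i)g(i+1).
\]
A direct expansion (using $f(i+1)=6\cdot 2^i-6\cdot(3/2)^i$ and $g(i+1)=6\cdot 2^i-6$) collapses the $18\cdot 4^i$ terms and yields
\[
f(i+1)g(i)-f(i)g(i+1)=6\cdot 3^i+12\cdot(3/2)^i-18\cdot 2^i.
\]
Dividing by $6\cdot 2^i$, positivity of this quantity amounts to $(3/2)^i+2\cdot(3/4)^i>3$. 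Since $(3/2)^i$ is increasing and $(3/2)^2=9/4$, while the second term is positive, one checks the base case $i=2$ explicitly ($9/4+2\cdot 9/16=63/16>3$) and then observes that the map $i\mapsto(3/2)^i$ strictly increases to $+\infty$, so the inequality persists for every $i\ge 2$; this gives $r(i+1)>r(i)$.

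The only mildly delicate step is the monotonicity reduction: one must be careful that the cross-multiplication preserves the direction of the inequality, which is guaranteed by the verified positivity of $g$ on the range considered. Everything else is a routine manipulation of exponentials, so I expect no real obstacle beyond choosing a convenient common scale (dividing out $2^i$) in each estimate.
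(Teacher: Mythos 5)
Your proof is correct; note that the paper itself states this corollary without supplying any proof, so you are filling a genuine gap. Your strategy (rewrite $r(i)=(3\cdot 2^i-4(3/2)^i)/(3\cdot 2^i-6)$, scale by $2^i$, and check signs and the cross-multiplied difference) is the natural one and all the algebra is right.

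Two small remarks on polish. First, there is an arithmetic slip in the base case: $9/4+2\cdot 9/16=36/16+18/16=54/16=27/8$, not $63/16$; the conclusion $27/8>3$ is of course still true. Second, the closing sentence of the monotonicity step is a little loose as written: the term $2(3/4)^i$ is \emph{decreasing}, so ``$(3/2)^i$ increases to $+\infty$ and the second term is positive'' does not by itself push the sum above $3$ uniformly. The cleanest way to finish is to note that for integer $i\ge 3$ one already has $(3/2)^i\ge(3/2)^3=27/8>3$, so the first term alone settles it, and the single remaining case $i=2$ is the explicit check you performed. With that tiny clarification the argument is airtight.
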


All in all, this tells us that the bound improves when $i$ grows. Thus, the best bound attainable through this family is therefore the one provided by the case $i=n+1$, the highest $i$ possible for each $n$ and then the improvement, in geometric terms, with respect to the bound obtained in the case where we choose all the time a fixed index $i$ is given by the function $$\frac{2^{-i} (-4 \cdot 3^i + 3 \cdot 4^i)}{3 (-2 + 2^i)},$$ which takes values in $[0,1]$ when $i\in[2,\infty).$

\begin{remark}[Confirmation that adding more variables helps]
Summing up, this shows us that adding more variables does in fact improve the bound provided by the relaxation through this family of simple vectors. This is so because we have seen that it is in the last variable where the optimal bound is attained. In order to see this, we compared the bound fixing $i$ with a different bound in which $i$ is consistently taken to be the best possible choice in terms of $n$.
\end{remark}

However, as we will see, this does not beat the application of the relaxation to the univariate polynomial. In order to beat that, we will have to use a more cleverly chosen vector for the linearization of the relaxation in the process of establishing the corresponding bound. Here, we could only show, so far, the benefit of adding more variables into the mix \textit{once we have already gone multivariate}.

\begin{remark}[Original univariate still wins against the all-zeros-except-one family]
In order to win against the univariate bound, we will need to choose a different family of vectors. In particular, we will build a family that uses more information about the number of variables (and the coefficients) in each polynomial of the sequence. Doing this will greatly benefit and strengthen what the relaxation can achieve. For this, our vector will have to change more while the number of variables $n$ grows.
\end{remark}

We can formally see why this improvement is not happening. The comparison is clear. 

\begin{proposicion}[Original univariate bound beats generalized univariate bounds]
The optimal upper bound for the leftmost root obtained through the unit binary vectors is worse than the corresponding bound obtained through applying the relaxation to the univariate polynomial.
\end{proposicion}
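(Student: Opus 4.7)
My plan is to push the asymptotic expansions of both bounds one term past their common leading behaviour $2^{n+1}$ and then show the difference is strictly positive for $n$ large. Both quantities are explicit rational functions (after isolating the square root in $\un$), so the problem reduces to carefully identifying the second growth term in our fixed exponential scale, exactly as we have been doing throughout this chapter.

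First I would deal with the optimal generalized-univariate bound $b^{\mathrm{gu}}_{n+1}$ obtained by setting $j=n+1$ in Proposition \ref{genunibou}. Substituting $2^{j-1}=2^{n}$, $2^{j+n}=2^{2n+1}$, $2^{2-j+n}=2$, $3^{j-1}=3^{n}$ and dividing via $\frac{2^{2n+1}}{2^{n}-1}=2^{n+1}+2+\tfrac{2}{2^{n}-1}$ and $\frac{-2\cdot 3^{n}}{2^{n}-1}=-2(3/2)^{n}+O((3/4)^{n})$, I get
\[
b^{\mathrm{gu}}_{n+1}=2^{n+1}+1-2\Bigl(\tfrac{3}{2}\Bigr)^{n}+o(1)=2^{n+1}-\tfrac{4}{3}\Bigl(\tfrac{3}{2}\Bigr)^{n+1}+1+o(1).
\]
So in our scale the second growth term of $b^{\mathrm{gu}}_{n+1}$ is $-\tfrac{4}{3}(3/2)^{n+1}$.

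Next I would expand $\un=\tfrac{2a}{b-\sqrt{b^{2}-4ac}}$ using the conjugate trick already employed in Proposition \ref{improvementcearer}: multiplying top and bottom by $b+\sqrt{b^{2}-4ac}$ rewrites it as $\un=\tfrac{b+\sqrt{b^{2}-4ac}}{2c}$. Since a quick size check gives $4ac/b^{2}\sim \tfrac{3}{n}(3/4)^{n}\to 0$, the square root admits the Taylor expansion $\sqrt{b^{2}-4ac}=b-2ac/b+O(a^{2}c^{2}/b^{3})$, so
\[
\un=\frac{b}{c}-\frac{a}{b}+O\!\left(\frac{a^{2}c}{b^{3}}\right).
\]
Then I would expand each summand in our scale. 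Collecting the polynomial-exponential parts of $b$ and $c$ gives $b=8(n-1)8^{n}-6(3n-2)6^{n}+\cdots$ and $c=4(n-1)4^{n}-6n\,3^{n}+\cdots$, so $\tfrac{b}{c}=2^{n+1}\bigl(1-\tfrac{3}{4}(3/4)^{n}+O(\cdot)\bigr)=2^{n+1}-(3/2)^{n+1}+o((3/2)^{n})$; while $-\tfrac{a}{b}\sim -\tfrac{12^{n+1}}{n\cdot 8^{n+1}}=-\tfrac{1}{n}(3/2)^{n+1}$ is already absorbed in the error. Thus
\[
\un=2^{n+1}-\Bigl(\tfrac{3}{2}\Bigr)^{n+1}+o\!\left(\Bigl(\tfrac{3}{2}\Bigr)^{n}\right).
\]

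Subtracting the two expansions in our scale yields
\[
\un-b^{\mathrm{gu}}_{n+1}=\Bigl(\tfrac{4}{3}-1\Bigr)\Bigl(\tfrac{3}{2}\Bigr)^{n+1}-1+o(1)=\tfrac{1}{2}\Bigl(\tfrac{3}{2}\Bigr)^{n}-1+o(1),
\]
which is strictly positive for $n$ large enough, proving $\un>b^{\mathrm{gu}}_{n+1}$ and hence beats the optimum over all unit-binary linearizations by the preceding corollary. The delicate step—where I expect essentially all the technical work to live—is the second-order expansion of $b/c$: the leading terms of $b$ and $c$ cancel a factor of $n$ on division, so I have to track both the $(n-1)$-coefficients in the dominant $8^{n}$ and $4^{n}$ parts \emph{and} the next exponential contributions $6^{n}$ in $b$ and $3^{n}$ in $c$; if the bookkeeping is off, the $-(3/2)^{n+1}$ second term of $\un$ does not appear with the correct constant and the final comparison fails.
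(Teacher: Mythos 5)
Your plan is correct but follows a genuinely different route than the paper. The paper's proof compares the optimal generalized-univariate bound $\frac{6 - 3\cdot 2^{n+1} + 3\cdot 2^{2(n+1)} - 4\cdot 3^{n+1}}{6(2^{n}-1)}$ against the \emph{suboptimal} univariate linearization bound $b_{(1,1)}(n)$ from Proposition \ref{anteserajemplo}, writing out both rational expressions and asserting the resulting exact inequality (which, combined with $b_{(1,1)}\le \un$, gives the claim). You instead compare directly against $\un$ via a second-order asymptotic expansion in the exponential scale. Both approaches work; yours is more informative because it isolates the mechanism — the generalized-univariate bound has second growth term $-\tfrac{4}{3}(3/2)^{n+1}$ while $\un$ has $-(3/2)^{n+1}$, a strictly less negative deficit — whereas the paper's version offers no insight into \emph{why} the inequality holds and leaves it unverified; on the other hand, your version is strictly asymptotic (valid for $n$ large), while a genuine proof of the paper's rational inequality would hold wherever the bounds apply. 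Your identification of the delicate step (the cancellation of the $(n-1)$-factors between the leading $8^n$-term of $b$ and the leading $4^n$-term of $c$, and the need to track the $6^n$ and $3^n$ contributions to recover $-(3/2)^{n+1}$) is exactly right, and your bookkeeping checks out: $b=8(n-1)8^n-6(3n-2)6^n+\cdots$ and $c=4(n-1)4^n-6n\,3^n+\cdots$ do yield $\frac{b}{c}=2^{n+1}-(3/2)^{n+1}+o((3/2)^{n})$, with the $-\frac{a}{b}\sim -\frac{1}{n}(3/2)^{n+1}$ correction absorbed. One small slip: the error term in your final subtraction should be $o((3/2)^{n})$, not $o(1)$, since that is the precision your expansion of $\un$ actually carries; this does not affect the conclusion because $\tfrac{1}{2}(3/2)^{n}$ still dominates.
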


\begin{proof}
The absolute value of the bound for the leftmost root obtained in this section is $\frac{6 - 2^{n+1}3 + 2^{2(n+1)}3 - 2^{2} 3^{n+1}}{6(2^{n}-1)}$, which, compared to the bound obtained via the univariate polynomial, gives \begin{gather*}\frac{6 - 2^{n+1}3 + 2^{2(n+1)}3 - 2^{2} 3^{n+1}}{6(2^{n}-1)}<\\\frac{2^{1 + n} + 2^{3 + 2n} -  3^{1 + n}4 - 2^{1 + n} 3^{2 + n} + 4^{1 + n}3 + 8^{1 + n}}{-2 + 2^{2 + n} - 3^{1 + n}2 + 4^{1 + n}}.\end{gather*} This comparison finishes the proof.
\end{proof}

As we made the comparison to the optimal, now we can say that no element of this family can compete. This is precisely what impulses us to search for a different sequence of vectors.

\begin{remark}[Importance for a bound to be closer]
The bound obtained through the univariate polynomial is actually closer to the actual root than the one obtained through this family of vectors. This means that, in this way, through this family, we are not using the whole power of going multivariate.
\end{remark}

However, there is something positive about the family used here. The positive part lies in faster computability. Something that we just note but that we are not going to develop here further because it lies beyond our interests here.

\begin{observacion}[Generalized univariate is not better but computes faster]
Killing so many elements of the LMP of the relaxation through these almost always zero vectors produces worse bounds but, at the same time, it kills also many terms making our computations way easier and faster to performs. We will see that the difficulty of these computations will greatly increase when we use vectors with a more complicatedly elaborated structure in the future.
\end{observacion}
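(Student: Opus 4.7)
Call the left-hand side of the inequality $L(n)$ and the right-hand side $R(n)$. After clearing the common factor of $6$ in $L(n)$ one has
\[
L(n)=\frac{2\cdot 4^{n}-2\cdot 3^{n}-2^{n}+1}{2^{n}-1},\qquad
R(n)=\frac{8\cdot 8^{n}+20\cdot 4^{n}-18\cdot 6^{n}-12\cdot 3^{n}+2\cdot 2^{n}}{4\cdot 4^{n}-6\cdot 3^{n}+4\cdot 2^{n}-2}.
\]
For $n\geq 2$ both denominators are positive (they are dominated, respectively, by $2^{n}$ and by $4\cdot 4^{n}$), so the inequality $L(n)<R(n)$ is equivalent to the polynomial-in-exponentials inequality obtained by cross-multiplying. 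The plan is to establish this by extracting the first two terms of the asymptotic growth of each side in the scale fixed in Convention \ref{conasy}, which will show that $L(n)$ and $R(n)$ differ exactly in their second growth term, and that this difference has a definite positive sign.

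First I would expand each side as a geometric series in $2^{-n}$ and collect by the scale sequences $2^{n},(3/2)^{n},(3/4)^{n},\dots$ Straightforward manipulation of the geometric series $1/(2^{n}-1)=2^{-n}+2^{-2n}+\cdots$ gives
\[
L(n)=2^{n+1}-2\left(\tfrac{3}{2}\right)^{n}+1+O\!\left(\left(\tfrac{3}{4}\right)^{n}\right),
\]
so, rewriting $2(3/2)^{n}=\tfrac{4}{3}(3/2)^{n+1}$, the second term of $L(n)$ in the chosen scale is $-\tfrac{4}{3}(3/2)^{n+1}$. For $R(n)$ the analogous expansion is obtained by factoring $8^{n+1}$ from the numerator and $4^{n+1}$ from the denominator and expanding the resulting quotient of series in the small parameters $(3/4)^{n},(1/2)^{n},(3/8)^{n},(1/4)^{n}$; applying the freeing-of-radicals style manipulations already used for $I(n)$ a few sections above I expect
\[
R(n)=2^{n+1}-\left(\tfrac{3}{2}\right)^{n+1}+O\!\left(\left(\tfrac{9}{8}\right)^{n}\right),
\]
where the subdominant correction comes from the $\tfrac{9}{16}^{n}$ produced by the product of the two $(3/4)^{n}$-terms and by the $(1/2)^{n}$-terms feeding into the division.

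Combining the two expansions yields $R(n)-L(n)=\tfrac{1}{3}(3/2)^{n+1}-1+o(1)>0$ for all sufficiently large $n$, which gives the inequality asymptotically. To finish for every $n\geq 1$ I would make the error terms quantitative (for instance, by majorising the tails of the geometric series used in the expansion by twice their leading term), obtain an explicit $N_{0}$ such that $R(n)-L(n)>0$ for $n\geq N_{0}$, and then verify the small cases $n=1,\dots,N_{0}-1$ by direct numerical substitution into the explicit formulas above.

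The main obstacle is the second step: when computing the second asymptotic term of $R(n)$ one has to be careful because several of the correction terms in the numerator and in the denominator live at the same scale $(3/4)^{n}$, so they interact non-trivially when the quotient is expanded, and a sloppy expansion either loses the $(3/2)^{n+1}$-term or produces a wrong coefficient. This is exactly the kind of cancellation phenomenon we already handled by conjugation for the bound $I(n)$, so the same technology applies, but the bookkeeping of which terms contribute to the surviving coefficient $-1$ (as opposed to $-4/3$ for $L(n)$) of $(3/2)^{n+1}$ is the delicate point that makes the entire comparison work.
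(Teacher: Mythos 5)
The statement you are trying to prove is an \emph{observación}, i.e.\ a discussion remark, not a proposition: the paper attaches no proof to it. Its ``produces worse bounds'' clause simply refers back to the preceding Proposition (``Original univariate bound beats generalized univariate bounds''), and its other two clauses (``computes faster'', ``will be harder later'') are informal commentary about how many $L$-form evaluations the linearization destroys, not mathematical assertions that admit a proof. What you have actually written out is a justification of the inequality $L(n)<R(n)$ displayed inside the proof of that preceding Proposition --- which is a reasonable target, because the paper's own ``proof'' there just displays the inequality and declares it done.

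On that inequality your plan is essentially sound and your algebra is correct: $L(n)=\frac{2\cdot4^{n}-2\cdot3^{n}-2^{n}+1}{2^{n}-1}=2^{n+1}-\frac{4}{3}\left(\frac{3}{2}\right)^{n+1}+1+O\!\left(\left(\frac{3}{4}\right)^{n}\right)$ and a careful quotient expansion of $R(n)$ gives $R(n)=2^{n+1}-\left(\frac{3}{2}\right)^{n+1}-2\left(\frac{9}{8}\right)^{n+1}+3+O\!\left(\left(\frac{3}{4}\right)^{n}\right)$, so the two sides do split at the $(3/2)^{n+1}$ scale with coefficient gap $\frac{4}{3}-1=\frac{1}{3}>0$, in agreement with your identification of the delicate step. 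The one slip is in the final line: since $R(n)=2^{n+1}-\left(\frac{3}{2}\right)^{n+1}+O\!\left(\left(\frac{9}{8}\right)^{n}\right)$ the difference is $R(n)-L(n)=\frac{1}{3}\left(\frac{3}{2}\right)^{n+1}+O\!\left(\left(\frac{9}{8}\right)^{n}\right)$, and the remainder here is \emph{not} $o(1)$ (indeed $(9/8)^{n}\to\infty$); writing $-1+o(1)$ misstates its order. The conclusion is unaffected because $\left(\frac{3}{2}\right)^{n+1}$ still dominates $\left(\frac{9}{8}\right)^{n}$, but in the paper's fixed scale (Convention \ref{conasy}) you must keep the remainder at its true exponential order rather than collapsing it to a constant, otherwise the bookkeeping of which asymptotic term ``carries'' the inequality becomes unreliable precisely in the way you warn about at the end.
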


Although this family is not improving the application to the univariate relaxation, we can say that it was worth studying what it does. It makes sense to look at these vectors for two reasons. First, they are the simplest non-trivial ones. Second and more interesting, they give a clear picture of what is the effect of just adding more variables. 

\begin{remark}[The real effects of adding more variables to the polynomials but not to the relaxation]
When we just choose a variable to look at (that is, we fix a position in the family of vectors that will be the only one different from zero) we are just looking at how the whole splitting of the descents among the different tops adds more information just at this variable. In that way, we could thus see that it does indeed add more information. However, we also saw that this information is not enough to beat the act of collecting the whole information of the splitting. Moreover, we saw that moving the position towards the last variables instead of fixing it also improves our retrieval of information with respect to fixing. In particular, the best bound was obtained when we moved always the position to fix our look at the splitting suffered by the last variable. Thus, the study of these families provides combinatorial information theoretic insights that we just decide to not pursue here but that justify the interest and future research about these other \textit{suboptimal} families of vectors from different perspectives. We simply decide not to purse these combinatorial information theoretic questions here because of a lack of overall connection with our objectives in this dissertation.
\end{remark}

This leads us to our objective in the next section. We need to find the way of choosing a sequence of vectors that allows us to extract more information about the multivariate splitting. We do this in order to show that the use of the multivariate Eulerian polynomials do indeed increase the accuracy of the relaxation in the diagonal where the univariate Eulerian polynomials lie.

\begin{objetivo}[Going beyond in the addition of the new variable information]
Through some experiments, we know numerically that the application of the relaxation to the multivariate Eulerian polynomials augments the accuracy of our bounds in the for the univariate Eulerian polynomials lying in the diagonal. Finding a family of vectors that allows us to show this is our next task.
\end{objetivo}

In order to accomplish the task above, we will have to be more precise in the numerics. We have to this so we can find the correct structure for the sequence of approximated generalized eigenvectors we need. For effectively and accurately measuring and analyzing the improvements that these processes and new choices of sequences of approximated generalized eigenvectors will produce, we also have to improve our management of asymptotics. We remind here how we go deeper and clearer into the information about the asymptotic growth of the sequences that will appear soon.

\begin{remark}[Asymptotic tools necessary to measure improvements]
Now deeper terms of the growth of the sequences at play will star to play a role. In particular, we will use the exponential scale we talked about before and we will write, analyze and solve (in-)equations in the corresponding coefficients of the expansions of the bounds in these scales. This study opens therefore further the realm of asymptotic (in-)equations in this work.
\end{remark}

We analyzed in this section a sequence that does not improve the action of the relaxation after its application to the multivariate Eulerian polynomials. In the next section, we venture into the constructions we need in order to prove that the relaxation can indeed benefit from going multivariate. Soon we will experience the real power of the relaxation and discover at the same time the complications related to the use, computation and intuitions behind the correct uses and nice constructions behind a successful application of the relaxation. This is how we open the door showing us the path towards our next section.

\section{Multivariate approach beats univariate}

After what we have seen above, we can question the necessity for studying the multivariate relaxation. The first step to justify the study of the multivariate relaxation is establishing that the multivariate case allows us to establish strictly better inner bounds. We see this in this section.

\begin{objetivo}[Justification for going (fully) multivariate (not just in the polynomial, also in the relaxation)]
We saw above that we have to find ways to use the multivariateness that we are introducing into the polynomials in order to improve our bounds. We have this objective because we can see in numerical experiments that the relaxation applied to the multivariate Eulerian polynomials should give us better results and we want to prove that we do indeed obtain something better in this manner.
\end{objetivo}

There were several difficulties in the notation that we need to use that we prefer to remind again here in order to avoid confusion. These difficulties stem ultimately from the fact that $1$ can never be a top.

\setlength{\emergencystretch}{3em}%
\begin{reminder}[Ghost variables, indexing and size]
As a consequence of the special nature of the definition of these polynomials, we decided to write the $n$-th Eulerian polynomial as a polynomial in $n$ variables $x_{2},\dots,x_{n+1}$ ($x_{1}$ is never actually appearing) and degree $n$ and, therefore, the full relaxation corresponding to the $n$-th Eulerian polynomial has size $n+2$ but the second (the first in the simplified relaxation) row, column and matrix coefficient are always just $0$ and therefore irrelevant. We keep these here just for indexing purposes.
\end{reminder}
\setlength{\emergencystretch}{0em}%

We decide to follow the current path guided by what the experiments show. These are the tools that produced the insights making us look into this direction.

\begin{observacion}[Numerical experiments clear the path]
Although the previous section could look depressing in terms of using the multivariate approach to find better bounds, numerical experiments show that the multivariate relaxation gives, in fact, slightly better bounds. This is why the numerical experiments push us in the current direction consisting in exploring further that relaxation through sequences of vectors with a more elaborated structure that could extract more information about the polynomial from the relaxation through linearization.
\end{observacion}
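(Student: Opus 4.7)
The plan is to substantiate this observation by designing and running a controlled numerical experiment that compares, for small and moderate values of $n$, three quantities: the actual leftmost root $q_{1}^{(n)}$ of the univariate Eulerian polynomial $A_{n}$, the optimal univariate bound $\un(n)$ obtained from the determinant of $M_{n,\leq 1}$-based relaxation, and a collection of candidate multivariate bounds obtained by evaluating $v^{T}M_{n}(x,\dots,x)v \geq 0$ along varied trial vectors $v \in \mathbb{R}^{n+1}$. First, I would implement the formulas from Computations \ref{degree1form}, \ref{degree2form}, and \ref{degree3form} to assemble $M_{n,0}$ and $M_{n,\suma}$ symbolically for $n$ up to, say, $20$ or $25$ (beyond that, the size $(n+1) \times (n+1)$ begins to tax exact symbolic arithmetic). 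Then I would compute $\un(n)$ via the closed form already derived, and I would compute the true root $q_{1}^{(n)}$ to high precision using standard root-finding on the univariate Eulerian polynomial.

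Next, the key experimental step is the vector search. I would sample candidate vectors $v \in \mathbb{R}^{n+1}$ from several structured families: the unit binary vectors (already ruled out analytically above), the all-ones vector (which yields $\un$ again), and crucially a one-parameter family interpolating between these, for instance $v=(1,1,\dots,1,y)$ or $v=(\alpha_1,\alpha_2,\dots,\alpha_{n+1})$ with $\alpha_i$ following polynomial, exponential, or geometric patterns in $i$. For each candidate I would compute $-\frac{v^{T}M_{n,0}v}{v^{T}M_{n,\suma}v}$ and check whether this strictly exceeds $\un(n)$ in absolute value (while remaining below $|q_{1}^{(n)}|$). If even one family consistently beats $\un(n)$ across the tested range of $n$, the observation is substantiated and, more importantly, the structural form of the winning family suggests the ansatz to pursue analytically.

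The main obstacle will be twofold. First, the numerical gap $|q_{1}^{(n)}| - \un(n)$ is itself expected to be exponentially small relative to $2^{n+1}$ (recall the bivariate improvement in the previous section scaled like $16^{-5n-6}/(59049\,n^7)$, vanishingly small), so the experiment must be run in sufficiently high precision to distinguish a genuine improvement from floating-point noise; I would use exact rational arithmetic for $M_{n,0}$, $M_{n,\suma}$, and the inner products, and only convert to high-precision floats at the final comparison. Second, the space of candidate vectors is enormous, and a blind search will not reveal the correct structural ansatz; I would therefore optimize, for each fixed $n$, the ratio $-v^{T}M_{n,0}v / v^{T}M_{n,\suma}v$ over $v$ (a generalized eigenvalue problem for the pencil $(M_{n,0}, -M_{n,\suma})$) to extract the optimal eigenvector $v^{\ast}(n)$ exactly, then look for a pattern in the sequence $\{v^{\ast}(n)\}_{n}$ — plotting entries, ratios, and growth rates — to conjecture a closed-form family suitable for the analytical treatment pursued in the next section.

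Finally, once a candidate structural family is identified experimentally, the observation transitions from numerical evidence into a concrete conjecture of the form $\mult_{v}(n) - \un(n) \sim c \cdot r^{n}$ for explicit constants $c, r > 0$ with $r < 1$. The observation is then \emph{cleared} in the sense intended by the author: the numerics not only confirm that the multivariate relaxation strictly improves $\un$, but they hand us the ansatz needed to prove it analytically, which is exactly the setup for Proposition \ref{multundiff-Pre} stated at the end of the preamble.
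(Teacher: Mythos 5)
Your plan — assemble the relaxation matrices from the computed $L$-forms, compare against $\un(n)$ and the true root, and above all extract the optimal generalized eigenvector of the pencil for small $n$ and read off a structural pattern to feed into an analytic ansatz — is exactly how the paper substantiates this observation (see the Experimento on guessing the good family of vectors and the subsequent Proposition on beating the univariate bound). The only cosmetic difference is that your trial families differ from the paper's eventual winner $(y,0,1,-\mathbf{1})$, but since your method ends by solving the generalized eigenvalue problem and pattern-matching the eigenvectors, it converges to the same procedure.
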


We already know that we cannot improve the first order asymptotics because that order is already tight in the exponential scale we are using. However, a good enough choice of vectors in the multivariate case could in fact still improve the higher order asymptotic estimates for these roots.

Choosing a good sequence of vectors is a necessity. Otherwise our task is impossible. We can see this in the comparison we make in the next two remarks about how are bounds extracted using the determinant or vectors. We start with the practically undoable but more accurate path through the determinant of the relaxation.

\begin{remark}[Through the determinant]
Trying to use the determinant of the relaxation in order to obtain a bound for its innermost root is not a rewarding task. Managing such polynomial is in no way easier than managing the original univariate Eulerian polynomial. In fact, the coefficients are way more complicated, some nice symmetries are lost and there are many extremely cumbersome computations in the middle that, overall, lead to an unreasonably long task in order to obtain a polynomial of around the same degree as the polynomial we begin with.
\end{remark}

Using the determinant is not the path. Some parts of the process above have to be simplified, skipped, jumped, outsourced or divested to other objects or venues in order to produce a path worth taking. Taking the determinant may be more accurate, but we need to balance accuracy with manageability. We accomplish this linearizing through a vector, as we saw before. There is however still one complication when we try to do this: at some point, we will have to optimize over an entry in the vector in order to be able to beat the bound obtained through the univariate application. In any case, performing this optimization is way more doable from a computational and practical point of view than proceeding through the determinant. We explore in more detail this path and the first sequence of vectors that we could find beating the univariate application. This sequence is very simple except for the optimization step we already talked about. In the future, we will look at more complicated sequences. Dealing with this easy sequences serves as an exercise before passing, in the next chapters, towards more complicated structures in the sequences of vectors we have to use. In particular, the optimization step we make in this section in order to establish one of the entries of the sequence we treat here will be reused in the future and it has been, in fact, a very important tool in our path towards the exploration of even better sequences. This is why we show how we deal with this family here as an intermediate step before going to our definitive sequence: because it was indeed an intermediate step for us while we tried to understand how to deal with the task of finding good approximations of the corresponding generalized eigenvalues that we need to compute in order to simplify and make doable the analysis about the relaxation that we need to asses skipping determinants, after we saw above that the use of determinants is not the way to go towards our objective of understanding the behaviour of the relaxation in relation to our Eulerian polynomials. Thus, the sequence we introduce next is simple and we will use it in the rest of this last section of the current chapter.

\begin{remark}[Through a new vector]
Linearizing through a vector is the way to go. Our experiments show that one good choice has the form $(y,0,1,-\mathbf{1})\in\mathbb{R}^{n+2},$ where $y$ has to be determined and instead of $0$ we could put any other number without this affecting the result. The optimization of the $y$ will be a fundamental part of our job.
\end{remark}

The vector $(y,0,1,-\mathbf{1})\in\mathbb{R}^{n+2}$ finally breaks the univariateness. Now we are really using more than one variable. This means that in this case and for the first time in this bounding process of the extreme roots of Eulerian polynomials through the relaxation, we are using a deeper theorem from the theory of real algebraic geometry.

\begin{observacion}[Using Helton-Vinnikov for the first time]
Schweighofer's proof in \cite{main} that the relaxation is indeed a relaxation for polynomials of strictly more than one variable uses Helton-Vinnikov theorem. We did not take advantage of this theorem so far because the winner up to this point was the all-ones vector, whose use is equivalent to considering only the relaxation applied to the univariate Eulerian polynomial. The fact that the relaxation is so in the univariate setting is much easier to see and, therefore, this was already noticed in \cite{szeg1939orthogonal}, as we said.
\end{observacion}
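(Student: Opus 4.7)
The plan is to justify this observation in three complementary parts, tracing the logical dependencies between the relaxation property, the vectors used for linearization, and the depth of the real-algebraic tools required. First I would revisit Schweighofer's proof that $\rcs(p)\subseteq S(p)$ for $p\in\mathbb{R}_{\mbox{RZ}}[\mathbf{x}]$ with $n\geq 2$: the proof proceeds by reducing the PSD-ness of $M_{p}(a)$ at $a\in\rcs(p)$ to checking $v^{T}M_{p}(a)v\geq 0$ for arbitrary $v\in\mathbb{R}^{n+1}$, then applying an orthogonal transformation $U$ (via \cite[Proposition 3.8]{main}) to reduce to the bivariate restriction $r(x_{1},x_{2})=p(Ux)|_{x_{3}=\cdots=x_{n}=0}$. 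At this reduction step, the Helton--Vinnikov theorem is invoked to obtain $r=\det(I_{d}+x_{1}A_{1}+x_{2}A_{2})$ with real symmetric $A_{i}$, which then permits expressing $L_{r,d}$ in terms of traces via Proposition \ref{lformtraces} and concluding nonnegativity through the trace of a square of a symmetric matrix.

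Second, I would show formally that the all-ones vector choice trivializes this machinery by reducing the multivariate relaxation to the univariate one. Concretely, if $A_{n}(\mathbf{x},\mathbf{1})|_{x_{i}=x}$ is the univariate Eulerian polynomial $p(x)$, then the matrix $L_{A_{n}}\circledcirc M_{n+1,\leq 1}$ evaluated at $(a,\dots,a)$ and sandwiched between $\mathbf{1}^{T}$ and $\mathbf{1}$ coincides, by the computation analogous to Lemma \ref{lemmaLform}, with $L_{p}((1+(\sum_{i=2}^{n+1}x_{i}))(1+a(\sum_{i=2}^{n+1}x_{i})))$ specialized in the single variable $x=\sum_{i=2}^{n+1}x_{i}$. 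Thus the all-ones vector implements exactly the matrix compression of the multivariate LMP to the univariate LMP, and the bound produced is precisely $\un(n)$ of Proposition \ref{improvementcearer}. No genuinely multivariate information is exploited.

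Third, I would argue that the univariate relaxation property requires nothing as deep as Helton--Vinnikov. In one variable, every real-rooted polynomial $p(x)=\prod_{i}(x-r_{i})$ with $p(0)\neq 0$ admits the trivial determinantal representation $p(x)=p(0)\det(I+xD)$ with $D=\diag(-1/r_{i})$, so the argument of \cite[Proposition 3.33]{main} applies directly without any appeal to Helton--Vinnikov; equivalently, the PSD-ness of $M_{p}(a)$ on $\rcs(p)$ follows from Newton's identities and the elementary fact that the Hankel matrix of power sums of positive reals is PSD. This is the content already implicit in \cite{szeg1939orthogonal} and explains why the univariate bound, although a genuine application of the relaxation, did not require us to invoke the heavier machinery.

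The main obstacle is the middle step: verifying rigorously that the quadratic form $\mathbf{1}^{T}(L_{A_{n}}\circledcirc x_{i}M_{n+1,\leq 1})\mathbf{1}$ produces exactly the coefficients of $L_{p}$ for the univariate specialization. This requires tracking how the values $L_{A_{n}}(x_{i}x_{j}x_{k})$, computed combinatorially in Computations \ref{degree1form}--\ref{degree3form}, collapse under the sum $\sum_{i,j,k}$ to the values $L_{p}(x^{3}),L_{p}(x^{2}),L_{p}(x)$ computed for the univariate Eulerian polynomial. The identity is structurally expected but the bookkeeping across the three-level degree decomposition and the ghost-variable convention demands careful accounting.
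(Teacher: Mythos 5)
Your three-part justification matches what the paper itself offers for this remark (which is an observation rather than a proved statement): the reduction-to-bivariate-plus-Helton--Vinnikov structure of Schweighofer's proof is exactly how the paper describes it, the all-ones-vector compression to the univariate relaxation is the paper's earlier observation on matrix compression, and the triviality of the univariate case via the diagonal representation $p(x)=\det(I+xD)$ is precisely why the paper credits \cite{szeg1939orthogonal}. The only imprecision is in your second step: sandwiching the full LMP between $\mathbf{1}^{T}$ and $\mathbf{1}$ reproduces the $(1,1)$-linearization bound $b_{(1,1)}$ of Proposition \ref{anteserajemplo}, whereas $\un$ arises from the $2\times2$ matrix obtained by compressing only the variable rows and columns while keeping the constant row separate; either reading supports the claim that no genuinely multivariate information is used, so this does not affect the observation.
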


Now we are in the correct path. We just have to figure out the $y$.

\begin{observacion}[In search of a vector beating the univariate bound]
Optimizing the bound provided by $v:=(y,0,1,-\mathbf{1})\in\mathbb{R}^{n+2}$ over the parameter $y$ will give us a function of $n$ that we will use also in future experiments. Thus, looking at this case, puts us in the path towards better guesses.
\end{observacion}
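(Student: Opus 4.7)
The plan is to prove the proposition by explicitly constructing the optimizing sequence $y(n)$ and then performing a controlled asymptotic expansion of the difference $\mult_{v} - \un$. The guess vector $v = (y,0,1,-\mathbf{1}) \in \mathbb{R}^{n+2}$ is the one promoted at the very end of the excerpt, so I would take this as the starting point and work out the quadratic form $v^{T}M_{n}(x,\ldots,x)v = v^{T}M_{n,0}v + x\,v^{T}M_{n,\mathrm{sum}}v$ using the closed forms for the $L$-form values at monomials of degree up to 3 that were computed in Computations \ref{degree1form}, \ref{degree2form}, \ref{degree3form}. The sums over the off-diagonal entries split naturally into three families (rows/columns indexed by 2, by indices $\leq y$, and by indices $> y$ style), and each family can be evaluated in closed form using the geometric series arising from the $2^{i-1}-1$, $3^{i-1}2^{j-i}$ patterns. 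This gives two polynomials in $y$ whose coefficients are closed-form exponential-polynomial sequences in $n$, and from the inequality $a(n,y) + b(n,y)\,x \geq 0$ one reads off $\mult_{v}(n,y) = a(n,y)/b(n,y)$, having first checked $b(n,y) > 0$ for the eventual choice of $y$.

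Next I would perform the optimization in $y$ exactly as in the bivariate case treated earlier in the chapter: set $a'b - ab' = 0$, observe that this is a quadratic in $y$ with $n$-dependent coefficients, and use the sign analysis of the leading coefficient together with the horizontal-asymptote trick (compare the limit $\lim_{y\to\pm\infty}\mult_{v}(n,y)$ with the value at the critical points) to identify which of the two roots $y(n) = (-B \pm \sqrt{B^{2}-4AC})/(2A)$ gives a maximum. I expect the selected root to asymptotically behave like $y(n)\sim c\cdot (3/2)^{n}/n^{k}$ for some explicit constant, in analogy with the bivariate computation where the optimum had growth $-3^{n+2}/(n\,2^{n+1})$. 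A sanity check that $\mult_{v}(n, y(n))$ remains a valid (strict) lower bound by verifying the sign conditions ($b(n,y(n)) > 0$ and $a(n,y(n)) > 0$ eventually) completes this step.

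The decisive step is the asymptotic subtraction $\mult_{v}(n,y(n)) - \un(n)$. Both quantities have the form $(\alpha + \beta\sqrt{\Delta})/\gamma$ with leading growth $2^{n+1}$, so the subtraction is a differences-of-radicals problem in which the leading, the $(3/2)^{n}$ and the $(9/8)^{n}$ contributions will cancel (this is why the bivariate improvement ended at order $16^{-5n-6}/n^{7}$ and why only a multivariate guess can reach the $(3/4)^{n}$ scale predicted in the statement). The technique is the same as in the bivariate section: write the difference as a single fraction, multiply numerator and denominator by the appropriate conjugate pair to rationalize the root, then iterate this conjugation as long as leading terms cancel. Each conjugation drops the effective exponential rate by a controlled factor, and the expected statement $\mult_{v}-\un\sim\tfrac{1}{2}(3/4)^{n}$ predicts that exactly two or three rounds of conjugation suffice before a nonvanishing leading term survives. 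Positivity of the difference will then follow automatically from the sign of that surviving leading term.

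The main obstacle will be the bookkeeping of the cancellations in the third step: the cleanup after each conjugation produces very long expressions and the order in which terms of orders $2^{n}, (3/2)^{n}, (9/8)^{n}$ cancel has to be tracked with the convention fixed in Definition \ref{defasy} and Convention \ref{conasy}. A secondary obstacle is confirming that the quadratic in $y$ obtained in the optimization step really has real roots for all $n$ large enough (discriminant positivity) and that the chosen branch $y(n)$ indeed yields $b(n,y(n))>0$, so that the inequality can be divided without flipping; both can be verified by extracting the asymptotic leading terms of the discriminant and of $b$, but the algebra is long and error-prone. No genuinely new technique beyond what is displayed in the bivariate computation is needed, only a careful multivariate extension of it.
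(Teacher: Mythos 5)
Your proposal is correct and follows essentially the same route the paper takes in the four subsections that follow this observation (Computation, Optimization, Correct management of radicals, Final comparison): compute the linearized form $D + xN$ entry-by-entry from the $L$-form closed formulas, locate the optimizing $y(n)$ as the appropriate root of the quadratic $N'D - ND' = 0$ using the horizontal-asymptote trick, and then carry out the asymptotic subtraction $\mult_v - \un$ by iterated conjugation of radicals until a nonvanishing leading term ($\tfrac{1}{2}(3/4)^n$) survives. Your predicted growth $y(n) \sim c\,(3/2)^n/n^k$ matches the paper's $y \sim 3^{n+1}/(2^{n+1}n)$, and your estimate of two to three rounds of conjugation is accurate; the only caution is that the paper ends up taking the opposite-sign branch from the bivariate case for the optimal $y$, a detail the horizontal-asymptote analysis will settle once you do it explicitly.
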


We compute then now the bound obtained through such method and maximize its absolute value for $y\in\mathbb{R}$ in order to get an explicit expression that we can eventually compare with the best univariate bound obtained some sections above. This process will end up giving the following result whose proof is comprised in the next subsections.

\begin{proposicion}[Beating univariate]\label{multundiff}
Call $\gls{uni},\gls{multv}\colon\mathbb{N}\to\mathbb{R}_{>0}$ the sequence of bounds for the absolute value of the smallest (leftmost) root of the univariate Eulerian polynomial $A_{n}$ obtained after applying the relaxation to the univariate $A_{n}$ and the multivariate $A_{n}$ Eulerian polynomials respectively, where $\mult_{v}$ is obtained after linearizing using the vector $v$ and $\un$ is the optimal bound given by the actual root of the determinant of the relaxation. Remember also that $v$ depends on $y$. Then there exists a sequence $y\colon\mathbb{N}\to\mathbb{R}$ such that the difference $\mult_{v}-\un\sim\frac{1}{2}\left(\frac{3}{4}\right)^{n}$ is positive. 
\end{proposicion}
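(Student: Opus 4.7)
The plan is to follow exactly the blueprint already rehearsed for the bivariate case in the preceding section, but now with the vector $v=(y,0,1,-\mathbf{1})\in\mathbb{R}^{n+2}$ acting on the simplified multivariate relaxation $M_n=M_{n,0}+\sum_{i=2}^{n+1}x_iM_{n,i}$ of $A_n(\mathbf{x},\mathbf{1})$. First I would compute the two scalars $N(n,y):=v^{T}M_{n,0}v$ and $D(n,y):=v^{T}M_{n,\mathrm{sum}}v$ in closed form. By Lemma~\ref{lemmaLform} applied to this vector, each of them is a single evaluation of $L_{A_n}$ on a polynomial of degree at most three in the $x_i$'s, so that I can replace every monomial by the explicit formula obtained in Computations \ref{degree1form}, \ref{degree2form} and \ref{degree3form}. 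After collecting terms, the inner sums that appear (of the form $\sum_i R(\{i\})$, $\sum_{i<j}R(\{i,j\})$, and $\sum_{i<j<k}R(\{i,j,k\})$, together with their double and triple \emph{indexed} variants produced by the $\pm 1$ pattern of $v$) are geometric series in powers of $2$, $3$ and $4$, so that $N(n,y)$ and $D(n,y)$ come out as explicit quadratics in $y$ with coefficients that are polynomials in $n$ times linear combinations of $2^n,3^n,4^n$ and their products.

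Second, by Procedure~\ref{procesobound} the multivariate bound attached to $v$ is $\mult_v(n)=-N(n,y)/D(n,y)$, valid on the region $D(n,y)>0$. I would then optimise in $y$ exactly as in the bivariate section: the numerator of $\partial_y(N/D)$ is a quadratic $a(n)y^2+b(n)y+c(n)$, whose discriminant is positive for large $n$; I would pick the root $y(n)=(-b\pm\sqrt{b^2-4ac})/(2a)$ corresponding to a maximum of $|\mult_v|$ by the same asymptote argument used before (comparing the two horizontal limits of $N/D$ as $y\to\pm\infty$ with the dominant term of $a(n)$). This produces an explicit, closed form $\mult_v(n)$ of the shape $(\alpha+\beta\sqrt{g})/(\gamma+\delta\sqrt{g})$ with $\alpha,\beta,\gamma,\delta,g$ polynomial-exponential in $n$.

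Third, since $\un(n)$ is of the same shape $(p+r\sqrt{q})/s$ (computed above for the univariate relaxation), the difference $\mult_v(n)-\un(n)$ can be written as a single rational expression in $\sqrt{g}$ and $\sqrt{q}$. The main obstacle — and the same one that appeared in the bivariate case — is that the dominant $n\to\infty$ contributions of the four summands of the numerator cancel in exponential order (all of them are asymptotic to the same monomial in $2^n,3^n$); consequently I cannot read off the growth from leading terms. I would handle this exactly as in the bivariate computation: pair $(k+u)-(v+w)$ against $(k+u)+(v+w)$ to turn each square-root sum into a difference of squares, verify that the conjugate does \emph{not} cancel at leading order, and iterate once more (multiplying $s+t$ by $s-t$) to resolve the second cancellation between $ku$ and $vw$. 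After these two conjugate moves both numerator and denominator have a clean dominant term and the quotient simplifies.

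Finally, positivity of the resulting leading coefficient is what one actually needs to prove: I would track signs along the conjugate manipulations (using that $D(n,y(n))>0$ by construction of $y$ and that the chosen sign in front of the square root gives a maximum) and match exponents in the $\{2^n,3^n,4^n\}$ basis to identify the surviving term as $\tfrac12(3/4)^n+o((3/4)^n)$. The expectation is that the numerator is asymptotic to a positive multiple of $3^n$ and the denominator to the corresponding multiple of $4^n$, which gives both the asserted rate $(3/4)^n$ and the constant $\tfrac12$. The positivity statement in the proposition then falls out of the sign bookkeeping, since on the way one shows $\mult_v(n)>\un(n)$ for all sufficiently large $n$.
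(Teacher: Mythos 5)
Your proposal is correct and follows essentially the same route as the paper's proof: compute the quadratics $N,D$ in $y$ from the $L$-forms, select the optimal $y$ among the roots of $N'D-ND'$ via the horizontal-asymptote argument, and resolve the cascading leading-order cancellations in $\mult_v-\un$ by repeated conjugation to reach $\tfrac12(3/4)^n$. One bookkeeping slip worth noting: what you write as $\mult_v(n)=-N/D$ is the (negative) bound for $q_n^{(n)}$ itself, whereas the quantity the statement calls $\mult_v$ is its positive reciprocal $D/N$ obtained via palindromicity, and that is what must be maximized over $y$ — the paper carries this step out explicitly before differentiating.
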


This proposition tells us that we have indeed an improvement with respect to the univariate vision. The positivity of the difference $\mult_{v}-\un$ tells us that the new linearization using the new vector $(y,0,1,-\mathbf{1})\in\mathbb{R}^{n+2}$ ends up being closer to the actual extreme root. This gives immediately the next corollary.

\begin{corolario}
The application of the relaxation to the multivariate Eulerian polynomials produce better bounds for the extreme roots of the univariate Eulerian polynomial than its application of the relaxation to the univariate Eulerian polynomials.
\end{corolario}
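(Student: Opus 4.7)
The plan is to mirror the strategy used for the bivariate case, but now working with a genuinely multivariate linearization that distinguishes the role of the last variable from the rest. First I would write out the quadratic form $v^{T}M_{n}(x,\dots,x)v = v^{T}M_{n,0}v + x\,v^{T}M_{n,\suma}v$ explicitly using $v=(y,0,1,-\mathbf{1})\in\mathbb{R}^{n+2}$. The PSD constraint at points of the rigidly convex set along the diagonal then gives the scalar inequality $a(n,y)+x\,b(n,y)\geq 0$, and after checking that $b(n,y)>0$ for $n$ large (which follows from the dominant $8^{n}$-contribution in the last-row/column sums of the $L$-form values computed in Chapter on counting), I obtain $\mult_{v}(n,y)=-a(n,y)/b(n,y)$. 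Both $a$ and $b$ are polynomials of degree $2$ in $y$ whose coefficients are explicit linear combinations of $n\,4^{n}$, $n\,3^{n}$, $2^{n}$, etc., computable from Computations 5.4.1–5.4.3 together with the last-variable entries that force the multivariate correction.

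Next I would optimize $\mult_{v}$ in $y$ by setting the numerator of $\partial_{y}(-a/b)=(a'b-ab')/b^{2}$ to zero. This is a quadratic in $y$ with $n$-dependent coefficients $A_{n}y^{2}+B_{n}y+C_{n}=0$, so $y(n)=(-B_{n}\pm\sqrt{B_{n}^{2}-4A_{n}C_{n}})/(2A_{n})$. Exactly as in the bivariate analysis, I would determine the correct sign by comparing $\mult_{v}$ with the horizontal asymptote $\lim_{y\to\pm\infty}\mult_{v}$ and checking which of the two critical points realises the maximum of $|\mult_{v}|$; by the same mechanism that appeared before (leading coefficient $A_{n}$ negative for large $n$), the optimum corresponds to the minus sign. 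A short asymptotic expansion then gives $y(n)\sim -\alpha\,(3/2)^{n}/n^{\beta}$ for some explicit constants $\alpha,\beta$, obtained by clearing the square root through its conjugate.

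With $y(n)$ fixed, I would compute the difference $\mult_{v}(n,y(n))-\un(n)$. Both terms are of the form $(\text{rational in exponentials})+(\text{rational})\sqrt{\Delta}$, so the difference naturally organises as $(k+v+u+w)/D$ where $k,v,u,w$ are the four cross-terms arising when a common denominator is cleared. The expected behaviour, and the main obstacle, is that the naive leading exponentials in $k,v,u,w$ will cancel (as happened in the bivariate case, where four copies of $2^{11n+15}3^{n+5}n^{4}$ annihilated), and similarly the denominator will degenerate in its naive leading order. I would overcome this by the same device used in Section 9.8: multiply the numerator $(k+u)+(v+w)$ by its conjugate $(k+u)-(v+w)$ to obtain $(k+u)^{2}-(v+w)^{2}$, then repeat on the surviving problematic sub-combination $(2ku)+(-2vw)$ using another conjugate pass. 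Each conjugate step kills one square-root layer and halves the cancellation, and the ratio of the resulting genuine leading orders should simplify (after the substitution $y=y(n)$ and quite a bit of bookkeeping) to $\tfrac12(3/4)^{n}$.

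Finally, positivity of the leading coefficient $1/2$ and of the base $3/4$ together with the fact that the error term is $o((3/4)^{n})$ would yield the positivity of $\mult_{v}-\un$ for $n$ large, which is the content of the statement. The whole argument is essentially an exercise in keeping track of cancellations through conjugates in the asymptotic scale of Convention~\ref{conasy}; the only conceptually new input over the bivariate case is the optimization in $y$, and the only computational obstacle is ensuring that the two layers of conjugation are carried out on the \emph{correct} pair of terms so that the $(3/4)^{n}$ scale actually emerges rather than being obscured by further accidental cancellations.
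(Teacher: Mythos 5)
Your proposal is, at bottom, the paper's own argument: the corollary in the paper is deduced \emph{immediately} from Proposition~\ref{multundiff}, and what you are doing is re-proving that proposition through the same sequence of steps (compute the bilinear form for $v=(y,0,1,-\mathbf{1})$, optimize in $y$, free the radicals via conjugates, estimate the difference $\mult_{v}-\un\sim\tfrac12(3/4)^n>0$). So the route is the same; you have just inlined the proof of the proposition rather than citing it.

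Two small but real slips in the details. First, the optimal $y$ is \emph{positive}: the paper finds $y\sim 3^{n+1}/(2^{n+1}n)$, and indeed needs $y>1$ to meet the growth constraints of Equation~\ref{conditionsy}; your statement $y(n)\sim-\alpha(3/2)^{n}/n^{\beta}$ has the wrong sign. Second, the mechanism you invoke for choosing the minus-sign root is wrong in a way that merely happens not to change the outcome: in the bivariate case the quadratic's leading coefficient was negative and the maximizer was the \emph{rightmost} zero, which after dividing by $2a<0$ gives the minus sign; in the multivariate case the leading coefficient is \emph{positive} and the maximizer is the \emph{leftmost} zero, which again gives the minus sign but for the opposite reason. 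If you carried the bivariate sign reasoning forward uncritically you would at first believe you had to take the plus sign, which would send $y$ to the wrong critical point. The conjugation scheme and the target $\tfrac12(3/4)^{n}$ are correct.
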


The rest of the this section is the proof of Proposition \ref{multundiff}. We divide this task in several subsections for clarity. First, we need to compute the form of the bound from the corresponding $L$-forms.

\subsection{Computation}

We begin computing the inequality that gives the bound. The bound obtained through the choice of the vector $(y,0,1,-\mathbf{1})\in\mathbb{R}^{n+2}$ comes from an inequation of the form $D+xN\geq0$ with with $D=$ \begin{gather*}y(yL_{p}(1)+L_{p}(x_{2})-\sum_{i=3}^{n+1}L_{p}(x_{i}))+yL_{p}(x_{2})+L_{p}(x_{2}^{2})-\sum_{i=3}^{n+1}L_{p}(x_{2}x_{i})\\-\sum_{j=3}^{n+1}yL_{p}(x_{j})+L_{p}(x_{j}x_{2})-L_{p}(x_{j}^{2})-\sum_{j\neq i=3}L_{p}(x_{i}x_{j})
\end{gather*} and $N=y(y\sum_{k=2}^{n+1}L_{p}(x_{k})+\sum_{k=2}^{n+1}L_{p}(x_{2}x_{k})-\sum_{i=3}^{n+1}\sum_{k=2}^{n+1}L_{p}(x_{k}x_{i}))+$ \begin{gather*}
y\sum_{k=2}^{n+1}L_{p}(x_{k}x_{2})+\sum_{k=2}^{n+1}L_{p}(x_{k}x_{2}^{2})-\sum_{i=3}^{n+1}\sum_{k=2}^{n+1}L_{p}(x_{k}x_{2}x_{i})\\-\sum_{j=3}^{n+1}y\sum_{k=2}^{n+1}L_{p}(x_{k}x_{j})+\sum_{k=2}^{n+1}L_{p}(x_{k}x_{2}x_{j})-\sum_{i=3}^{n+1}\sum_{k=2}^{n+1}L_{p}(x_{k}x_{j}x_{i})=\\y^2\sum_{k=2}^{n+1}L_{p}(x_{k})+2y\sum_{k=2}^{n+1}L_{p}(x_{2}x_{k})-2y\sum_{i=3}^{n+1}\sum_{k=2}^{n+1}L_{p}(x_{k}x_{i})+\\
\left(\sum_{k=2}^{n+1}L_{p}(x_{k}x_{2}^{2})-\sum_{i=3}^{n+1}\sum_{k=2}^{n+1}L_{p}(x_{k}x_{2}x_{i})\right)\\+\left(-\sum_{i=3}^{n+1}\sum_{k=2}^{n+1}L_{p}(x_{k}x_{2}x_{i})+\sum_{j=3}^{n+1}\sum_{i=3}^{n+1}\sum_{k=2}^{n+1}L_{p}(x_{k}x_{j}x_{i})\right)=\\y^2\sum_{k=2}^{n+1}L_{p}(x_{k})+2yL_{p}(x_{2}^2)\\-2y\sum_{i=3}^{n+1}\sum_{k=3}^{n+1}L_{p}(x_{k}x_{i})+L_{p}(x_{2}^{3})-\sum_{i=3}^{n+1}\sum_{k=3}^{n+1}L_{p}(x_{k}x_{2}x_{i})\\-\sum_{i=3}^{n+1}L_{p}(x_{2}^{2}x_{i})+\sum_{j=3}^{n+1}\sum_{i=3}^{n+1}\sum_{k=3}^{n+1}L_{p}(x_{k}x_{j}x_{i}).\end{gather*} Thus, computing these last expressions, we obtain the following lemma telling us the form of the bound obtained in this way.

\begin{lema}[Form of the bound]\label{formofboundfirstwinner}
The bound for the extreme root of the $n$-th univariate Eulerian polynomial obtained through linearization of the relaxation applied to the corresponding multivariate Eulerian polynomial by the vector $(y,0,1,-\mathbf{1})\in\mathbb{R}^{n+2}$ is of the form $q_{n}^{(n)}\geq-\frac{D}{N}$ with $D=$ \begin{gather*}
   10 - 2^{2 + n} + 2^{2 + 2n} - 2 \cdot 3^{1 + n} + n + 4y - 2^{1 + n}y + 
 n y + y(4 - 2^{1 + n} + n + ny)
\end{gather*} and $N=$ \begin{gather*}
    -10 + 2^{3 + n} - \frac{1}{3}2^{3 + 2n} - \frac{1}{3}2^{4 + 2n} + \frac{1}{7}2^{4 + 3n} + \frac{1}{7}2^{5 + 3n} +\\ 2 \cdot 3^n - 4 \cdot 3^{1 + n} + 2 \cdot 3^{2 + n} - \frac{1}{5}2^{1 + n}3^{3 + n} - 4^{1 + n} + 4^{2 + n} - \frac{6^{2 + n}}{5} +\\ \frac{8^{1 + n}}{7} - n - 8y - 2^{2 + n}y + 2^{3 + n}y - \frac{1}{3}2^{3 + 2n}y - \frac{1}{3}2^{4 + 2n}y +\\ 4 \cdot 3^{1 + n}y - 2ny - 2y^2 + 2^{1 + n}y^2 - ny^2.
\end{gather*}
\end{lema}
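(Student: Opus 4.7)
The bound $q_n^{(n)} \geq -D/N$ arises by substituting $x_2 = \cdots = x_{n+1} = x$ into the LMP of the relaxation and imposing $v^T M_n(x,\ldots,x)v \geq 0$ for the chosen vector $v = (y,0,1,-\mathbf{1})$; this gives an inequality of the form $D + xN \geq 0$ with $D := v^T M_{n,0} v$ and $N := v^T\bigl(\sum_{k=2}^{n+1} M_{n,k}\bigr) v$, and the sum-of-$L$-form expressions for $D$ and $N$ displayed just before the statement are exactly this expansion. Thus the task reduces to inserting the closed-form values of the relevant $L_p$'s computed in Computations \ref{degree1form}, \ref{degree2form} and \ref{degree3form}, and then evaluating the single, double and triple sums that appear.

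The computation of $D$ is the easier half. After substituting $L_p(1) = n$, $L_p(x_i) = 2^{i-1}-1$, $L_p(x_i^2) = (2^{i-1}-1)^2$ and, for $i<j$, $L_p(x_i x_j) = 2^{i+j-2} - 2^{j-i} 3^{i-1}$, only geometric and arithmetic sums remain. For example, $\sum_{i=3}^{n+1}(2^{i-1}-1) = 2^{n+1} - n - 3$; the double sum $\sum_{3 \leq i < j \leq n+1} L_p(x_i x_j)$ splits into a product of two geometric series for the $2^{i+j-2}$ piece and a $(3/2)$-weighted geometric sum for the $2^{j-i}3^{i-1}$ piece. Combining these contributions with $\sum_{i=3}^{n+1} L_p(x_i^2)$ and the cross terms in $x_2$, and then collecting by powers of $y$, will give the claimed closed form of $D$.

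For $N$ the structure is analogous but heavier. The $y^2$-coefficient collapses to $\sum_{k=2}^{n+1} L_p(x_k) = 2^{n+1} - n - 2$. The $y^1$-coefficient, after the cancellations indicated in the preamble, reduces to $2 L_p(x_2^2) - 2 \sum_{3 \leq i,k \leq n+1} L_p(x_i x_k)$, which is handled exactly as the double sum in the $D$-calculation. The $y^0$-coefficient is where the real work lies: it assembles $L_p(x_2^3)$, a double sum $\sum_{3 \leq i,k \leq n+1} L_p(x_2 x_i x_k)$ with one index pinned at $2$, a single sum $\sum_{i=3}^{n+1} L_p(x_2^2 x_i)$, and the triple sum $\sum_{i,j,k = 3}^{n+1} L_p(x_i x_j x_k)$.

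The bottleneck is this last triple sum. Since the formula for $L_p(x_i x_j x_k)$ from Computation \ref{degree3form} depends on the ordering of the indices, the plan is to split the triple sum by the multiplicity pattern of $\{i,j,k\}$: the all-distinct part contributes $6$ times $\sum_{3 \leq i < j < k \leq n+1} L_p(x_i x_j x_k)$, the two-equal part gives $3$ copies of double sums of $L_p(x_i^2 x_j)$ (each of which itself needs splitting into $i<j$ versus $i>j$ to apply the correct case of the formula), and the all-equal part is $\sum_{i=3}^{n+1} L_p(x_i^3) = \sum_{i=3}^{n+1}(2^{i-1}-1)^3$. Inside the ordered triple sum, each of the four monomials in the closed form of $L_p(x_i x_j x_k)$ is separable in $i,j,k$ once one isolates the factors $3^{i-1}$ or $3^{j-1}$, so the nested sums telescope via iterated geometric series; the delicate part will be carefully bookkeeping the boundary contributions coming from the constraints $i<j<k$ and from the sums starting at $3$ rather than at $2$. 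Once each of these pieces is computed and summed, and the expressions are collected by powers of $y$ and compared term-by-term with the claimed closed form for $N$, the lemma follows.
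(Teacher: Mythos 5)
Your proposal is correct and follows essentially the same route as the paper. The paper's proof is the single sentence ``The proof consists in performing the last sums of the computations opening this subsection,'' and your plan is precisely the correct elaboration of that: unwind $v^{T}M_{n,0}v$ and $v^{T}(\sum_{k}M_{n,k})v$ for $v=(y,0,1,-\mathbf{1})$, substitute the closed forms for $L_{p}$ from the earlier Computations, and evaluate the resulting geometric-type sums, with the triple sum handled by splitting on the multiplicity pattern of $\{i,j,k\}$ and, within the two-equal case, on the order of the indices so that the correct branch of the $L_{p}(x_{i}^{2}x_{j})$ formula applies.
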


\begin{proof}
The proof consists in performing the last sums of the computations opening this subsection.
\end{proof}

Now we have to optimize in order to find the best $y$. This optimization of the bound will give us $y$ as function of $n$ and this $y$ will be obtained as a solution of a quadratic equation. This means that there will be square roots in our expressions that we will have to learn how to deal with. We first see the optimization step in the next subsection.

\subsection{Optimization}

We now have to choose $y$ wisely having in mind that the bound that we want to obtain has the form $q_{n}^{n}\geq-\frac{D}{N}$. Thus, a first condition on this $y$ is that such $y$ (which will depend only on the parameter $n$) should make $N>0$ for $n$ big enough (in order to be able to isolate the $x$ with the inequality pointing in the correct direction). This means, in particular, that it is enough if \begin{equation}\label{conditionsy} y>1 \mbox{\ and\ } \lim_{n\to\infty} \frac{\max\{2^{n+1}y^2, 2y4^{n+1}\}}{8^{n+1}}=0.\end{equation} We see this.

\begin{proposicion}
If Condition \ref{conditionsy} is met, then $N>0$ for all $n\in\mathbb{N}$ big enough.
\end{proposicion}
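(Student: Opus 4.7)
The plan is to show that, under Condition \ref{conditionsy}, the polynomial $N$ (viewed as a function of $n$ with $y=y(n)$) is asymptotically equivalent to $8^{n+1}$, which is positive. The strategy is to split $N$ into a $y$-independent part and a $y$-dependent part, identify the dominant asymptotic term (of order $8^n$) coming from the $y$-independent part, and verify that every other contribution is $o(8^n)$; since $8^{n+1}>0$, this forces $N>0$ for all sufficiently large $n$.

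First I would collect the $y$-independent terms of $N$ by base: the three terms of order $8^n$, namely $\frac{1}{7}2^{4+3n}$, $\frac{1}{7}2^{5+3n}$ and $\frac{8^{1+n}}{7}$, combine into $\frac{1}{7}(16+32+8)\cdot 8^{n}=8^{n+1}$, which is exactly the asymptotic leader. The remaining $y$-independent contributions split into groups of order $6^n$ (coming from $-\frac{1}{5}2^{1+n}3^{3+n}$ and $-\frac{6^{2+n}}{5}$), of order $4^n$ (from $-\frac{1}{3}2^{3+2n}$, $-\frac{1}{3}2^{4+2n}$, $-4^{1+n}$, $4^{2+n}$), of order $3^n$, and of polynomial/constant order, all of which are manifestly $o(8^n)$.

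Next I would handle the $y$-dependent part. The quadratic-in-$y$ block simplifies to $(2^{n+1}-n-2)y^2\sim 2^{n+1}y^2$, whose quotient by $8^{n+1}$ tends to $0$ by the first half of Condition~\ref{conditionsy} applied to $2^{n+1}y^2/8^{n+1}$. The linear-in-$y$ block is dominated by $-\tfrac{1}{3}(2^{3+2n}+2^{4+2n})y=-2y\cdot 4^{n+1}$, and its quotient by $8^{n+1}$ tends to $0$ by the second half of the hypothesis. The remaining $y$-terms ($-8y$, $\pm 2^{k+n}y$ for small $k$, $4\cdot 3^{1+n}y$, $-2ny$, $-2y^2$, $-ny^2$) are all of strictly smaller exponential order than the two leading $y$-terms; using $y>1$ to absorb bounded multiplicative factors, each of them divided by $8^{n+1}$ also tends to $0$.

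Putting these together, $N/8^{n+1}\to 1$ as $n\to\infty$, so $N>0$ for $n$ large enough. The main obstacle I anticipate is purely bookkeeping: making sure that every one of the many terms in the explicit expression for $N$ given in Lemma \ref{formofboundfirstwinner} is accounted for and slotted into the correct asymptotic bucket, and that the subleading $y$-terms are genuinely controlled by the two terms appearing in the $\max$ inside Condition~\ref{conditionsy} rather than requiring a further a priori bound on $y$.
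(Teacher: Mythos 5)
Your proof is correct and follows essentially the same route as the paper: both isolate the dominant block $2^{n+1}y^2 - 2y\cdot 4^{n+1} + 8^{n+1}$, observe that the $y$-independent $8^n$-order terms sum exactly to $8^{n+1}$, and then invoke Condition \ref{conditionsy} to kill the $y$-dependent pieces (and $y>1$ to absorb the smaller ones) after dividing by $8^{n+1}$. The paper's version is terser (it reduces the argument to a one-line identification of the leading block followed by ``taking limits does the rest''), but the underlying bookkeeping is identical to what you wrote out.
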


\begin{proof}
As $y>1$, the sum of terms that determine the first asymptotic growth of $N$ is $2^{n+1}y^2-\frac{1}{3}2^{3+2n}y-\frac{2}{3}2^{3+2n}y+\frac{2}{7}2^{3+3n}+\frac{4}{7}2^{3+3n}+\frac{1}{7}2^{3+3n}=2^{n+1}y^2-2^{3+2n}y+2^{3+3n}=2^{n+1}y^2-2y4^{1+n}+8^{1+n}.$ Now taking limits does the rest and finished the proof of this claim.
\end{proof}

Notice that $D$ must always be nonnegative for any $y$ as the relaxation has its initial matrix PSD and therefore we do not have to be so careful with $D$ as we have been with $N$ at the choice of $y$; for $D$ we just have to check that, for the chosen $y$, we have $D\neq0$. We can fortunately check this condition about $D$ right away using discriminants. We see it.

\begin{proposicion}
We have $D>0$ for all $n\in\mathbb{N}$ big enough and $y>0.$
\end{proposicion}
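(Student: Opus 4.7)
The plan is to view $D$ as a quadratic polynomial in the parameter $y$ and use the discriminant trick that worked several times earlier in this chapter (indeed this is precisely the hint given in the statement). Expanding the last product in the expression of $D$ from Lemma \ref{formofboundfirstwinner} produces $y(4-2^{1+n}+n+ny)=4y-2^{1+n}y+ny+ny^{2}$, so collecting like powers of $y$ gives
\begin{equation*}
D=ny^{2}+2(4-2^{1+n}+n)y+(10-2^{2+n}+2^{2+2n}-2\cdot 3^{1+n}+n).
\end{equation*}
First I would check that this quadratic has strictly positive leading coefficient, which is immediate since the coefficient of $y^{2}$ is $n$ and we consider $n$ big enough. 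After this, the whole task reduces to showing that its discriminant is strictly negative for $n$ large, because then $D$ has no real roots and, having positive leading coefficient, must be strictly positive on all of $\mathbb{R}$, and in particular on $y>0$.

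Next I would compute the discriminant $\Delta=b^{2}-4ac$ asymptotically in our fixed exponential scale (Convention \ref{conasy}), exactly as was done for the analogous denominator in the proof just above. The dominant term of $b=2(4-2^{1+n}+n)$ is $-2^{2+n}$, so $b^{2}\sim 2^{4+2n}=16\cdot 4^{n}$. The dominant term of $c$ is $2^{2+2n}=4\cdot 4^{n}$, and $a=n$, so $4ac\sim 16n\cdot 4^{n}$. Therefore
\begin{equation*}
\Delta=b^{2}-4ac\sim 16\cdot 4^{n}-16n\cdot 4^{n}=16(1-n)\cdot 4^{n},
\end{equation*}
which is strictly negative for all $n\geq 2$. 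Combining this with the positivity of the leading coefficient $a=n$ yields the claim.

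The argument has no serious obstacle; the only mild care needed is to make sure that we are genuinely dealing with the leading asymptotic terms of $b^{2}$ and of $4ac$ separately (and not with a cancellation like the ones that forced us to use conjugates in the preceding sections). Since here the dominant term of $b^{2}$ is $16\cdot 4^{n}$ and the dominant term of $4ac$ is $16n\cdot 4^{n}$, of the same base $4^{n}$ but with different polynomial factors in $n$, no cancellation of exponential order occurs and the comparison is immediate. This is why the discriminant argument works cleanly here, in contrast with the cubic-denominator analysis we had to perform right before the statement, where several successive conjugates were needed. Hence the proof reduces to the short asymptotic estimate above, and we conclude $D>0$ for all $y>0$ and all sufficiently large $n$, as claimed.
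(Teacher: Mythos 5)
Your proposal is correct and follows essentially the same route as the paper: both view $D$ as a quadratic in $y$ with positive leading coefficient $n$, compute the discriminant $\Delta=64 - 2^{6 + n} + 2^{4 + 2n} - 8n - 2^{4 + 2n}n + 8 \cdot 3^{1 + n}n$, and note that it is eventually negative (the dominant term being $-16n\cdot 4^{n}$), so $D$ has no real zeros and is therefore strictly positive. One small stylistic remark: writing $\Delta\sim 16\cdot 4^{n}-16n\cdot 4^{n}$ is a mild abuse of the $\sim$ notation fixed in Convention \ref{conasy} (the two summands are of different orders, so only $-16n\cdot 4^{n}$ is the dominant term), but this does not affect the sign analysis or the conclusion.
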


\begin{proof}
We see this fast using the fact that the discriminant of $D$ verifies $\Delta=64 - 2^{6 + n} + 2^{4 + 2n} - 8n - 2^{4 + 2n}n + 8 \cdot 3^{1 + n}n=64(1 - 2^{n}) - 8n + 16(1-n)4^{n} + 24 n 3^{n},$ which is clearly negative when $n$ is big enough and moreover the quadratic term in $D$ as a function of $y>1$ is $ny^{2}\neq0$. All this ensures that $D$ is a quadratic polynomials with no real zeros and therefore strictly positive for whichever $y>1$ we might end up choosing, which proves the claim.
\end{proof}

This solves completely one of our problems. Continuing with the quest for the optimal choice of $y$, we rearrange the expression to remind that, as a bound in absolute values, our bound translates to $|q^{(n)}_{n}|\leq\frac{D}{N}$ (with $N>0$ for $n$ big enough), which gives the bound $|q^{(n)}_{1}|\geq\frac{N}{D}$ for the absolute value of the leftmost root $q^{(n)}_{1}$ of the $n$-th univariate Eulerian polynomial. Here we used the palindromicity of these polynomials.

\begin{reminder}
As the univariate Eulerian polynomials are palindromic, if $r$ is a root one of them, so is its reciprocral $\frac{1}{r}.$
\end{reminder}

Additionally, as we want a better bound, we require $\un(n)<\mult_{v}(n),$ where $\un(n)$ is the sequence in $n$ found above giving the optimal bound (actual root of its corresponding determinant) obtained through the application of the relaxation to the univariate polynomial and $\mult_{v}(n)$ is the sequence in $n$ giving the (better multivariate) bound we desire to find during this section, which comes after linearizing through the vector $v$ (depending on the sequence $y\colon\mathbb{N}\to\mathbb{R}$) the relaxation obtained after applying the relaxation to the multivariate Eulerian polynomials.

\begin{reminder}
Thus, we need to have in mind that $\mult_{v}$ is not even the best bound possible. It is just the bound we can work with here. We are linearizing through $v$ and not computing the actual root of the determinant of the LMP giving the spectrahedron defining the relaxation of the multivariate Eulerian polynomial. Contrary to that, $\un$ corresponds to the actual root of the corresponding determinant of the LMP giving the spectrahedron defining the relaxation of the univariate Eulerian polynomial. Thus $\mult_{v}$ is itself just a bound for how good $\mult$ (the one obtained through the root of the determinant of the relaxation applied to the multivariate polynomial) can be.
\end{reminder}

This last condition forces us to study the difference $\mult_{v}(n)-\un(n)$ in order to find a sequence $y$ (verifying the condition expressed in Equation \ref{conditionsy} above) such that $\mult_{v}(n)-\un(n)>0.$ In order to find our optimal $y$, as traditionally, we compute the derivative of the expression $\mult_{v}(n)$ viewed as a function of the parameter $y$ and search for its zeros. We do this now.

\begin{computacion}
In this case, writing $\mult_{v}(n)=\frac{N}{D}$ so the derivative is $\frac{N'D-ND'}{D^2}$, we are then looking for our optimal $y$ among the roots of the polynomial in $y$ given by the numerator $N'D-ND'$, which is quadratic in $y$ and depends on the parameter $n$. From the two roots obtained as usual via the well-known quadratic formula $y=\frac{-b\pm\sqrt{b^{2}-4ac}}{2a}$ we now have to find the way to determine which one gives the maximum of $\mult_{v}(n).$ To see this, observe that we have already seen that the discriminant of the denominator $D$ verifies $\Delta=64 - 2^{6 + n} + 2^{4 + 2n} - 8n - 2^{4 + 2n}n + 8 \cdot 3^{1 + n}n<0$ once $n$ is big enough (which was expectable beforehand also because the fraction $\frac{N}{D}$ cannot go to infinity as a function of $y$ because it is bounded by the absolute value of the extreme root of the $n$-th Eulerian polynomial for any given $n$) so it has no real zeros and therefore we can easily deduce now that the roots of $N'D-ND'$ correspond to a maximum and a minimum of $\mult(n)=\frac{N}{D}$ and that such function is continuous as a function of $y$. It might seem worth studying the horizontal asymptotes of such function but observe that $\lim_{y\to\infty}\frac{N}{D}=\frac{-2 + 2^{1 + n} - n}{n}=\lim_{y\to-\infty}\frac{N}{D},$ which does not directly help in our decision. However, now it is evident that, as a function of $y$, $\mult_{v}(n)$ cut the horizontal asymptote $f(y)=\frac{-2 + 2^{1 + n} - n}{n}$ in exactly one point and therefore the difference with this horizontal asymptote in any of the limits determines the relative position of the maximum and the minimum, as there are just two options on how this function looks. This observation finally puts us in the correct way to distinguish between the maximum and the minimum. Observe that the difference between the function $\mult_{v}(n)$ and the asymptote equals \begin{gather*}
    \mult_{v}(n) - \frac{-2 + 2^{1 + n} - n}{n}= \frac{z}{w}
\end{gather*} with $z=20 - 5 \cdot 2^{2 + n} - 2^{3 + n} + 2^{4 + 2n} - 2^{3 + 3n} - 4 \cdot 3^{1 + n} + 2^{2 + n} \cdot 3^{1 + n} + 2n - 2^{1 + n} n - 2^{2 + n} n + 2^{3 + n} n + 2^{2 + 2n} n - \frac{1}{3} \cdot 2^{3 + 2n} n - \frac{1}{3} \cdot 2^{4 + 2n} n + \frac{1}{7} \cdot 2^{4 + 3n} n + \frac{1}{7} \cdot 2^{5 + 3n} n + 2 \cdot 3^n n - \frac{1}{5} \cdot 2^{1 + n} \cdot 3^{3 + n} n - 4^{1 + n} n + 4^{2 + n} n - \frac{1}{5} \cdot 6^{2 + n} n + \frac{1}{7} \cdot 8^{1 + n} n + 16y - 2^{3 + n} y - 2^{4 + n} y + 2^{3 + 2n} y + 4ny - 2^{2 + n} ny - \frac{1}{3} \cdot 2^{3 + 2n} ny - \frac{1}{3} \cdot 2^{4 + 2n} ny + 4 \cdot 3^{1 + n} ny$ and $w=10n - 2^{2 + n} n + 2^{2 + 2n} n - 2 \cdot 3^{1 + n} n + n^2 + 8ny - 2^{2 + n} ny + 2n^2 y + n^2 y^2$ so the denominator $w$ is always positive at infinity while the numerator $z$, when $y$ goes to infinity, is dominated by the term multiplying $y$ given by $16 - 2^{3 + n} - 2^{4 + n} + 2^{3 + 2n} + 4n - 2^{2 + n}n - \frac{1}{3} \cdot 2^{3 + 2n}n - \frac{1}{3} \cdot 2^{4 + 2n}n + 4 \cdot 3^{1 + n}n
,$ which is dominated by the terms $- \frac{1}{3} \cdot 2^{3 + 2n}n - \frac{1}{3} \cdot 2^{4 + 2n}n=-2^{3 + 2n}n<0$ so the $\lim_{y\to-\infty}\frac{z}{w}=0^{+}$ (because $z\approx-2^{3 + 2n}ny>0$ as $y\to-\infty$) which implies that, for $y$ close enough to $-\infty$, $\mult_{v}(n)-\frac{-2 + 2^{1 + n} - n}{n}>0$ or, equivalently, $\mult_{v}(n)>\frac{-2 + 2^{1 + n} - n}{n}$ and, therefore, the function $\mult_{v}(n)$, when viewed from left to right, increases from above the asymptote until it reaches a maximum over it and then decreases towards a minimum reached under the asymptote after crossing it before increasing back towards the asymptote approaching it from below. All in all, we have established that the maximum is reached at the leftmost of the two zeros of $N'D-ND'.$
\end{computacion}

Thus, we have determined which root of $N'D-ND'$ corresponds to a maximum of $\frac{N}{D}$ and now we know exactly the optimal choice of $y$ in our way towards the best bound attainable through the family $(y,0,1,-\mathbf{1})$ we are studying in this section. This means that our choice of $y$ to reach this best bound must be the one corresponding to the minus sign before the square root, i.e., $\frac{-b-\sqrt{b^2-4ac}}{2a}.$ Thanks to these computations we can therefore write the next result.

\begin{lema}[Optimal choice]\label{optimaly}
Suppose the conditions in Equation \ref{conditionsy} are met. The optimal sequence $y\colon\mathbb{N}\to\mathbb{R}$ providing the best possible bound linearizing through the vector $v=(y,0,1,-\mathbf{1})\in\mathbb{R}^{n+2}$ is $y=\frac{-b-\sqrt{b^2-4ac}}{2a}$ with $N'D-ND':=ay^2+by+c.$
\end{lema}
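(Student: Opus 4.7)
The plan is to assemble the proof directly from the work already done in the preceding computations, adding only the careful sign-tracking that was flagged as delicate in the bivariate analogue. By Lemma \ref{formofboundfirstwinner}, for each fixed $n$ we have $\mult_{v}(n) = N/D$ as an explicit rational function of the single real parameter $y$, where $N$ and $D$ are the polynomials exhibited there. Under Condition \ref{conditionsy} together with the discriminant computation already carried out, both $N > 0$ and $D > 0$ for all sufficiently large $n$, so the quotient is smooth in $y$ and maximizing it is a genuine one-variable calculus problem.

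First, I would differentiate: $\frac{d}{dy}(N/D) = (N'D - ND')/D^2$, and since $D^2 > 0$, the critical points are exactly the roots of the quadratic $N'D - ND' =: ay^2 + by + c$. The quadratic formula yields two candidates $y_{\pm} = (-b \pm \sqrt{b^2-4ac})/(2a)$. Second, I would invoke the asymptote analysis already performed in the Computation preceding this statement: as $y \to \pm \infty$, $\mult_v(n) \to (-2 + 2^{n+1} - n)/n$, and the sign analysis of the difference $z/w$ showed that $\mult_v(n)$ approaches this asymptote from above as $y \to -\infty$ and from below as $y \to +\infty$. Combined with the fact that $D$ is strictly positive as a quadratic in $y$ (from its negative discriminant) and that $\mult_v(n)$ therefore extends continuously to a smooth function of $y \in \mathbb{R}$ crossing the asymptote at a unique point, this shape forces the critical point configuration to be: local maximum at the leftmost root, local minimum at the rightmost root. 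Hence the optimal $y$ is the smaller of $y_{\pm}$.

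Third, I would identify which of $y_{\pm}$ is the leftmost by reading off the sign of $a$, the coefficient of $y^2$ in $N'D - ND'$. A dominant-term extraction using the explicit expressions of Lemma \ref{formofboundfirstwinner} (the leading contribution comes from the degree-$3$ terms in $N$ interacting with the lower-order terms in $D$, exactly as in the bivariate analogue) determines this sign. Once the sign of $a$ is pinned down, the leftmost root is identified with $(-b - \sqrt{b^2-4ac})/(2a)$, giving the formula in the statement.

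The main obstacle is precisely this last sign bookkeeping, which is the multivariate analogue of the warning issued in the bivariate case: an error in the sign of $a$ flips which branch of the square root is chosen and silently gives the minimum instead of the maximum. A secondary task is to verify a posteriori that the $y$ so produced actually satisfies Condition \ref{conditionsy}; this will require rationalizing the conjugate inside the expression for $y$ (to cope with the cancellation of the dominant terms in the numerator $-b - \sqrt{b^2-4ac}$, exactly as was needed in the bivariate case) in order to extract the true asymptotic growth of $y$ and check $y > 1$ together with $\max\{2^{n+1}y^2,\, 2y\,4^{n+1}\}/8^{n+1} \to 0$.
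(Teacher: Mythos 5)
Your proposal is correct and follows the paper's argument step by step: differentiate $N/D$, locate the critical points as roots of the quadratic $N'D-ND'$, use the horizontal-asymptote analysis (approach from above as $y\to-\infty$, from below as $y\to+\infty$) together with the strict positivity of $D$ (negative discriminant) to identify the leftmost critical point as the maximizer, and then select the branch of the quadratic formula according to the sign of $a$. You are also right to flag the sign of $a$ as the delicate step---the paper carries out this sign check explicitly in the bivariate case but leaves it implicit in the multivariate Computation preceding this lemma---so your plan is, if anything, slightly more careful than the published proof about which branch of the square root yields the maximum rather than the minimum.
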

\begin{proof}
The arguments in the computation above shows that this is the root corresponding to the value that provides the optimal bound of this form. The proof stems directly from that argument.
\end{proof}

Now we have to extreme our care. The reason for this is the appearance of square roots while we try to estimate asymptotic growths. We explain this with more detail in the next section.

\subsection{Correct management of radicals}

This section has to begin with a warning. Square roots can be tricky if we do not properly deal with them.

\begin{warning}[Freeing radicals]
Our expressions will begin to show radicals from now on because of the expression we just obtained for the optimal $y$. Doing asymptotics with radicals is problematic and cannot in general be done directly when the dominant term inside the radical and the dominant term outside cancel. For this reason we are forced to deal with conjugates when this happens. Obtaining a $0$ for dominance in asymptotic growth is usually not acceptable as this kills all the growth hidden behind the annihilating dominant terms. The only time when obtaining a $0$ for a difference in asymptotic algebra is justified is when the two expression are \textit{exactly the same}. Thus, from now on, dealing with expressions of the form $a-\sqrt{b}$ will be a task that will require an additional amount of work.
\end{warning}

We remind that there was a condition that we should check for $y$, as exposed in Equation \ref{conditionsy}. In the spirit of our warning, we show the next relevant example of the phenomena we talk about. In particular, the expression in the next example comes from the computations above and it is therefore stemming from our study of the asymptotics of the bounds we are analyzing.

\begin{ejemplo}[Dominant root growths annihilate]
Observe $N$ of Lemma \ref{formofboundfirstwinner} after we substitute for the optimal $y$ we computed in Lemma \ref{optimaly}. We can see that the terms winning in the numerator (inside and outside of the root) annihilate as the expression obtained if we only look at dominant terms is \begin{gather*}\frac{1}{7}2^{4 + 3n}n + \frac{1}{7}2^{5 + 3n}n + \frac{1}{7}2^{6 + 3n}n
-\sqrt{\frac{1}{49}2^{8 + 6n}n^2 + \frac{1}{49}2^{12 + 6n}n^2 + \frac{1}{49}2^{13 + 6n}n^2}\\=2^{4+3n}n-\sqrt{2^{8+6n}n^2}=0.\end{gather*} This is precisely the problem we are warning about.
\end{ejemplo}

Therefore, in order to really determine the growth of the numerator we have to proceed differently (as we will have to do many times in the future). We have to follow the next path.

\begin{procedimiento}[Multiplication by conjugate, difference of squares and division]\label{processy}
In particular, here we see that the conjugate does not annihilate in dominant terms as it gives $2^{4+3n}n+\sqrt{2^{8+6n}n^2}=2^{5+3n}n$ and therefore we can multiply by it to obtain that the difference of squares is dominantly $2^{4 n+8} 3^{n+1} n$ so our numerator is dominantly $\frac{2^{4 n+8} 3^{n+1} n}{2^{5+3n}n}=3^{n+1}2^{n+3}.$ For the denominator we proceed in the usual way and therefore we get that $y\sim$ \begin{gather*}\frac{3^{n+1}2^{n+3}}{-\frac{1}{3}2^{4 + 2n}n + \frac{1}{3}2^{6 + 2n}n
}=\frac{3^{n+1}2^{n+3}}{2^{4+2n}n}=\frac{3^{n+1}}{2^{n+1}n}.\end{gather*}
\end{procedimiento}

With this, in particular, we can now ensure that the sequence $y\colon\mathbb{N}\to\mathbb{R}$ we just decided to choose verifies our previous restrictions. Indeed, an easy computation makes us sure of the fact that this sequence $y\colon\mathbb{N}\to\mathbb{R}$ verifies the conditions in Equation \ref{conditionsy}.

\begin{lema}[The $y$ we wanted]
The sequence $y\colon\mathbb{N}\to\mathbb{R}$ we determined in Lemma \ref{optimaly} verifies the conditions in Equation \ref{conditionsy}.
\end{lema}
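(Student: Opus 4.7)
The plan is to simply leverage the asymptotic estimate $y \sim \frac{3^{n+1}}{2^{n+1}n}$ already established in Procedure \ref{processy}, where the careful management of the problematic radical via multiplication by the conjugate produced a clean non-cancelling dominant term. Once this growth rate is in hand, both parts of Equation \ref{conditionsy} reduce to routine exponential comparisons in the fixed scale of Convention \ref{conasy}.

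First I would verify the inequality $y>1$ for $n$ large enough. Since $y \sim \frac{3^{n+1}}{2^{n+1}n} = \frac{3}{n}\left(\frac{3}{2}\right)^{n}$ and the exponential factor $(3/2)^n$ dominates the polynomial denominator $n$, we get $y \to \infty$ as $n \to \infty$, so certainly $y > 1$ from some point onward. (Note: we may need to track the sign of the chosen root carefully to ensure the asymptotic is truly positive, not merely in absolute value; this is a quick consistency check using the expressions for $a$ and $b$ that arose in the optimization.)

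Next I would check the second condition by computing each of the two ratios inside the $\max$. For the first:
\begin{gather*}
\frac{2^{n+1}y^2}{8^{n+1}} \sim \frac{2^{n+1}}{8^{n+1}} \cdot \frac{9^{n+1}}{4^{n+1}n^2} = \frac{1}{n^2}\left(\frac{2\cdot 9}{8\cdot 4}\right)^{n+1} = \frac{1}{n^2}\left(\frac{9}{16}\right)^{n+1} \longrightarrow 0.
\end{gather*}
For the second:
\begin{gather*}
\frac{2y\cdot 4^{n+1}}{8^{n+1}} = \frac{2y}{2^{n+1}} \sim \frac{2}{2^{n+1}} \cdot \frac{3^{n+1}}{2^{n+1}n} = \frac{2}{n}\left(\frac{3}{4}\right)^{n+1} \longrightarrow 0.
\end{gather*}
Both ratios vanish, so $\lim_{n\to\infty}\frac{\max\{2^{n+1}y^2,\,2y\cdot 4^{n+1}\}}{8^{n+1}} = 0$, which is exactly the second requirement in Equation \ref{conditionsy}.

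The main obstacle here is essentially bookkeeping rather than conceptual: the hidden pitfall is the sign of $y$ and whether the asymptotic extracted in Procedure \ref{processy} truly refers to the same root $\frac{-b-\sqrt{b^2-4ac}}{2a}$ chosen as the maximizer in Lemma \ref{optimaly}. Since the whole delicate point of Procedure \ref{processy} was that the dominant terms cancelled and the conjugate had to be used to recover the correct growth rate, I would double-check that the sign of the dominant coefficient of $-b$ versus $-\sqrt{b^2-4ac}$ is compatible with producing a positive $y$; once this is confirmed, the two limit computations above finish the lemma with no further work.
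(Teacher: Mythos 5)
Your proposal is correct and follows essentially the same approach as the paper: both rely on the asymptotic $y\sim\frac{3^{n+1}}{2^{n+1}n}$ from Procedure \ref{processy} and then reduce the two conditions in Equation \ref{conditionsy} to elementary exponential comparisons (the paper phrases these as $\frac{9}{2}<8$ and $6<8$, which are exactly your ratio computations with $(9/16)^{n+1}$ and $(3/4)^{n+1}$). Your extra care about the sign of the chosen root is sensible but already resolved by Procedure \ref{processy}, which produced a strictly positive dominant term via the conjugate trick.
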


\begin{proof}
As we compute in Procedure \ref{processy} above that $y\sim\frac{3^{n+1}}{2^{n+1}n}$, it is clear that $y>1$ and that $\lim_{n\to\infty}\frac{\max\{2^{n+1}y^2,2y4^{n+1}\}}{8^{n+1}}=0$ because $2^{n+1}y^2\sim\frac{9^{n+1}}{2^{n+1}n^2}$ and $\frac{9}{2}<8$ and $2y4^{n+1}\sim2\frac{6^{n+1}}{n}$ and $6<8$. This completes the proof of the adequacy of the sequence $y\colon\mathbb{N}\to\mathbb{R}$ we chose.
\end{proof}

This proves that $y$ thus chosen verifies our growth constraints and therefore it works as a choice for providing a \textit{correct} bound. Now, finally, we only have to check that this choice of $y$ provides a multivariate bound \textit{effectively} improving the optimal univariate bound obtained above. Substituting $y$ for this root in $\frac{N}{D}$ gives the improved bound we were pursuing here so the only thing that we still need to do is comparing this new bound $\mult_{v}(n)$ with the previous bound $\un(n).$

\begin{lema}[Form of the comparison]
Writing $y=\frac{f-\sqrt{g}}{h}$ and $\un=\frac{p+r\sqrt{q}}{s}$ we can express the difference $\mult_{v}(n)-\un(n)=$ \begin{gather*}
\frac{k+v+u+w}{s(\gamma+\delta\sqrt{g})}\end{gather*} with $\mult_{v}:=\frac{\alpha+\beta\sqrt{g}}{\gamma+\delta\sqrt{g}}$ and $k:=s\alpha-p\gamma,v:=(s\beta-p\delta)\sqrt{g},u:=-r\gamma\sqrt{q}$ and $w:=-r\delta\sqrt{gq}$.
\end{lema}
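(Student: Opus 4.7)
The statement is essentially a bookkeeping identity that rewrites $\mult_v - \un$ in a form in which the three different radicals $\sqrt{g}$, $\sqrt{q}$ and $\sqrt{gq}$ appear separately, so that subsequent asymptotic work (handling the cancellations of dominant terms via conjugates, as in the preceding subsection) can be done cleanly. The plan is therefore purely algebraic.

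First, I would recall the expression of the optimal $y$ obtained in Lemma \ref{optimaly}: writing $N'D-ND' = ay^2+by+c$, we have $y = \frac{-b - \sqrt{b^2-4ac}}{2a}$, which matches the form $y = \frac{f-\sqrt{g}}{h}$ under the identifications $f := -b$, $h := 2a$, $g := b^2 - 4ac$. Then I would square this expression, obtaining $y^2 = \frac{f^2 + g - 2f\sqrt{g}}{h^2}$, so that each monomial $1, y, y^2$ in $y$ becomes a rational linear combination of $1$ and $\sqrt{g}$ with denominator at most $h^2$.

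Next, inspecting Lemma \ref{formofboundfirstwinner}, both $N$ and $D$ are quadratic in $y$. Substituting the expressions for $y$ and $y^2$ just obtained and clearing the common denominator $h^2$ in numerator and denominator of the fraction $\frac{N(y)}{D(y)}$ (which does not change its value), I would write
\[
\mult_v \;=\; \frac{N(y)}{D(y)} \;=\; \frac{h^2 N(y)}{h^2 D(y)} \;=\; \frac{\alpha + \beta\sqrt{g}}{\gamma + \delta\sqrt{g}},
\]
where $\alpha, \beta, \gamma, \delta$ are explicit rational expressions in the coefficients of $N$, $D$, $f$, $h$, $g$ (and hence in $n$), containing no radicals. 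This defines the four quantities $\alpha, \beta, \gamma, \delta$ claimed in the statement.

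Finally, I would put $\mult_v - \un$ over the common denominator $s(\gamma + \delta\sqrt{g})$:
\[
\mult_v - \un \;=\; \frac{s(\alpha + \beta\sqrt{g}) - (p + r\sqrt{q})(\gamma + \delta\sqrt{g})}{s(\gamma + \delta\sqrt{g})},
\]
expand the product in the numerator, and regroup by the four distinct ``radical types'' appearing: the constant term $s\alpha - p\gamma$, the coefficient of $\sqrt{g}$ given by $s\beta - p\delta$, the coefficient of $\sqrt{q}$ given by $-r\gamma$, and the coefficient of $\sqrt{gq}$ given by $-r\delta$. Setting $k := s\alpha - p\gamma$, $v := (s\beta - p\delta)\sqrt{g}$, $u := -r\gamma\sqrt{q}$, $w := -r\delta\sqrt{gq}$ yields the stated decomposition $\mult_v - \un = \frac{k+v+u+w}{s(\gamma + \delta\sqrt{g})}$. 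The only real ``obstacle'' is notational: keeping track of which terms come from which square root so that the subsequent pairing into conjugates $(k+u) \pm (v+w)$ used in the asymptotic analysis makes sense; there is no genuine mathematical difficulty, since the identity holds by direct algebraic manipulation.
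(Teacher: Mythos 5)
Your proof is correct and follows essentially the same route as the paper: substitute $y$ and $y^2$ into $\mult_v = N(y)/D(y)$, clear the common $h^2$ to get $\mult_v = \frac{\alpha+\beta\sqrt{g}}{\gamma+\delta\sqrt{g}}$, then put the difference over $s(\gamma+\delta\sqrt{g})$ and regroup by radical type. Nothing is missing.
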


\begin{proof}
For this we compute $\mult_{v}(n)-\un(n)$. We do it step by step for maximum clarity. First, note that $y^{2}=\frac{f^{2}+g-2f\sqrt{g}}{h^{2}}$ and we can substitute these values in the expression of $\mult_{v}:=\frac{N}{D}$ and kill denominators inside $N$ and $D$ writing $\mult_{v}=\frac{N(y)}{D(y)}=\frac{h^{2}N(y)}{h^{2}D(y)}$ so at the end we obtain the manageable expression $\mult_{v}=\frac{\alpha+\beta\sqrt{g}}{\gamma+\delta\sqrt{g}}.$ Finally, remember that we wrote $\un=\frac{p+r\sqrt{q}}{s}$ and, therefore, in order to study the difference between these two bounds, the most convenient expression is $\mult_{v}-\un=$\begin{gather*}
\frac{s(\alpha+\beta\sqrt{g})-(p+r\sqrt{q})(\gamma+\delta\sqrt{g})}{s(\gamma+\delta\sqrt{g})}=\\\frac{(s\alpha-p\gamma)+(s\beta-p\delta)\sqrt{g}-r\gamma\sqrt{q}-r\delta\sqrt{gq}}{s(\gamma+\delta\sqrt{g})}=\\\frac{k+v+u+w}{s(\gamma+\delta\sqrt{g})}\end{gather*} with $k=s\alpha-p\gamma,v=(s\beta-p\delta)\sqrt{g},u=-r\gamma\sqrt{q}$ and $w=-r\delta\sqrt{gq}$, as we wanted to show.
\end{proof}

We want to be able to estimate correctly the dominant behaviour of both the numerator and the denominator. The main problem with these expressions is that they contain square roots and therefore we need to deal with these carefully because we cannot directly try to proceed with dominant terms due to the many cancellations happening both in the denominator and in the numerator. We see in detail the problems produced by these cancellations. Simple computations show the following.

\setlength{\emergencystretch}{3em}%
\begin{hecho}[Annihilation of dominant terms]
The dominant terms of the sumands in our decomposition of the numerator are \begin{gather*}
k\sim -2^{11 n+15} 3^{n+1} n^{4},\\
v\sim(2^{8n+11} 3^{n+1} n^{3})\sqrt{2^{6 n+8} n^{2}}=2^{11n+15} 3^{n+1} n^{4},\\
u\sim-(2^{2 n+3} 3^{n+1})(2^{6n+9} n^{3})\sqrt{2^{6n+6} n^{2}}=-2^{11n+15}3^{n+1}n^{4},\\
w\sim-(2^{2 n+3} 3^{n+1})(-2^{3 n+5}n^{2})\sqrt{(2^{6 n+8} n^{2})(2^{6n+6} n^{2})}=2^{11n+15}3^{n+1}n^{4}.\end{gather*}
\end{hecho}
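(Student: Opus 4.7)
The plan is to verify the four asymptotic equivalences by a careful but essentially routine pairing of dominant terms, with the main delicacy being the identification of which exponential-polynomial combinations actually dominate among the many summands produced by expanding the products. The structure of the computation is entirely determined by the expressions for $y$, $\un$ and $\mult_{v}$ established earlier in the chapter, and no further algebraic tricks are needed because, at this specific stage, none of the pairs being multiplied exhibit dominant-term cancellations (the cancellations appear only later, when $k$, $v$, $u$, $w$ are summed — which is precisely what this Fact records).

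First I would extract the dominant growths of the basic building blocks. For $\un = \frac{2a}{b - \sqrt{b^{2}-4ac}}$, I rationalize to obtain $\un = \frac{b + \sqrt{b^{2}-4ac}}{2c}$, so that $p = b$, $r = 1$, $q = b^{2}-4ac$, $s = 2c$; from the explicit formulas for $a,b,c$ recalled in the proof of Proposition \ref{improvementcearer}, one reads off $c \sim 4^{n+1}n$, hence $s \sim 2^{2n+3}n$; $b \sim 8^{n+1}n$, hence $p \sim 2^{3n+3}n$; and $b^{2}$ dominates $4ac$ (since $\frac{b^{2}}{4ac} \sim \frac{1}{3}(4/3)^{n}n \to \infty$ using $a \sim 12^{n+1}$), giving $q \sim 2^{6n+6}n^{2}$ and $\sqrt{q} \sim 2^{3n+3}n$. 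For $y = \frac{f - \sqrt{g}}{h}$, Procedure \ref{processy} already showed $y \sim \frac{3^{n+1}}{2^{n+1}n}$, which together with the explicit quadratic $N'D - ND' = ay^{2}+by+c$ (reusing the letter names locally) pins down the leading behavior of $f$, $g$, $h$, $\sqrt{g}$.

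Next I would determine the growths of $\alpha, \beta, \gamma, \delta$ from the expressions for $N$ and $D$ in Lemma \ref{formofboundfirstwinner}, after substituting $y^{2} = \frac{f^{2}+g-2f\sqrt{g}}{h^{2}}$ and clearing denominators by the promised factor $h^{2}$. Grouping the resulting polynomial-in-$y$ expansion into its rational part and its $\sqrt{g}$-part yields $\mult_{v} = \frac{\alpha + \beta\sqrt{g}}{\gamma + \delta\sqrt{g}}$; the leading terms of $\alpha, \beta, \gamma, \delta$ are then obtained by selecting, for each, the single dominant monomial among the exponentials-times-polynomials that arise (this is the step where one must be careful about base-of-exponential comparisons such as $12^{n}$ versus $8^{n}n$). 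With all eight leading terms in hand, the pairwise products defining $k, v, u, w$ are straightforward multiplications: for instance, $k = s\alpha - p\gamma$ is a difference whose two terms have the same base $2^{11n+15}3^{n+1}n^{4}$ up to sign, and similarly for the other three. Assembling the leading coefficients yields exactly the four asymptotic equivalences claimed, with signs $-, +, -, +$, showing that $k + v + u + w$ has all four leading contributions cancelling.

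The main obstacle I anticipate is purely bookkeeping: several of the candidate dominant terms inside $\alpha, \beta, \gamma, \delta$ come from expressions like $12^{n+1}$ versus $8^{n+1}n$ or $9^{n+1}n^{2}$ versus $6^{n+1}n^{3}$, and one must systematically rank them (the rule being that the exponential base wins over any polynomial factor, as illustrated above for $a$). Once the correct leading monomials have been isolated in each building block, the multiplications are immediate and the targeted asymptotics $\pm\, 2^{11n+15}\,3^{n+1}\,n^{4}$ follow. The qualitative outcome — that all four dominant terms coincide in magnitude and balance in sign — is then not a coincidence but a structural consequence of the optimization that produced $y$: the leading cancellation is precisely the mechanism by which the next-order correction, which turns out to grow like $\frac{1}{2}\bigl(\frac{3}{4}\bigr)^{n}$, becomes accessible only after applying the conjugate-multiplication trick in the subsequent computations.
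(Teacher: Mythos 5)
Your overall strategy — read off leading growths of the building blocks $a,b,c,f,g,h$ and then multiply out to get $k,v,u,w$ — is the correct one, and your intermediate estimates $a\sim 12^{n+1}$, $b\sim 8^{n+1}n$, $c\sim 4^{n+1}n$, and the observation that $b^2$ dominates $4ac$ (giving $\sqrt{q}\sim 2^{3n+3}n$) are all right. There is, however, a concrete normalization mismatch that would prevent you from reproducing the numbers the Fact actually states.

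You rationalize $\un=\frac{2a}{b-\sqrt{b^{2}-4ac}}$ all the way to $\frac{b+\sqrt{b^{2}-4ac}}{2c}$, i.e.\ you cancel the common factor $2a$, and then assign $p=b$, $r=1$, $s=2c$. But the Fact is written for the \emph{un-cancelled} form $\un=\frac{2ab+2a\sqrt{b^{2}-4ac}}{4ac}$, i.e.\ $p=2ab$, $r=2a$, $s=4ac$: this is visible directly in the displayed asymptotics, where $u\sim-(2^{2n+3}3^{n+1})(2^{6n+9}n^{3})\sqrt{2^{6n+6}n^{2}}$ exhibits $r\gamma\sqrt{q}$ with the leading factor $2^{2n+3}3^{n+1}=2\cdot 12^{n+1}\sim 2a$. (The paper's later computation of the denominator in the proof of Lemma \ref{lemafinal} also uses $s\sim 192\cdot 48^{n}n=2^{4n+6}3^{n+1}n\sim 4ac$, not $2c$.) With your choice $r=1$, $s=2c$, all four quantities $k,v,u,w$ (and also $s$) come out smaller than the paper's by the common factor $2a\sim 2^{2n+3}3^{n+1}$; for instance you would find $u\sim-2^{9n+12}n^{4}$ rather than $-2^{11n+15}3^{n+1}n^{4}$. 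The quotient $\frac{k+v+u+w}{s(\gamma+\delta\sqrt{g})}$, and hence the ultimate estimate $\mult_{v}-\un\sim\frac{1}{2}\bigl(\tfrac{3}{4}\bigr)^{n}$, is unchanged because numerator and denominator rescale uniformly — so your strategy would still prove Proposition \ref{multundiff}. But it would not verify the Fact as stated; to do so you must keep $p=2ab$, $r=2a$, $q=b^{2}-4ac$, $s=4ac$ when matching $\un$ to the template $\frac{p+r\sqrt{q}}{s}$.
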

\setlength{\emergencystretch}{0em}%

It is easy to see that these dominant terms annihilate, which is a big problem because we need to keep track of the terms surviving the big cancellation in order to be able to really establish the asymptotics of our difference. In order to continue then we have to find the way to keep the biggest surviving terms. We use the good behaviour of these conjugates to free some radicals via multiplication. We proceed similarly with the denominator. This is what we do next.

\begin{remark}[Procedure \ref{processy} saves us]
We continue freeing some radicals through the use of the correct conjugates. The situation in the denominator is similar. In particular, we have the luck that the conjugate expressions in the numerator \begin{gather*}(k+u)-(v+w)\sim-2^{11 n+15} 3^{n+1} n^{4}-2^{11 n+15} 3^{n+1} n^{4}\\-(2^{11 n+15} 3^{n+1} n^{4}+2^{11 n+15} 3^{n+1} n^{4})=\\-4\cdot2^{11 n+15} 3^{n+1} n^{4}=-2^{11n+17} 3^{n+1} n^{4}\end{gather*} and in the denominator \begin{gather*}\gamma-\delta\sqrt{g}\sim 2^{6 n} n^{3}512-(-32) 2^{3 n} n^{2}\sqrt{n^{2} 2^{6 n}256}=\\2^{6 n} n^{3}512-((-32) 2^{3 n} n^{2})(n 2^{3 n}16)=2^{6 n+10} n^{3}\end{gather*} do not annihilate.
\end{remark}

Observe however that \begin{gather*}\gamma+\delta\sqrt{g}\sim 2^{6 n} n^{3}512+(-32) 2^{3 n} n^{2}\sqrt{n^{2} 2^{6 n}256}=0.\end{gather*} Now we can finally proceed with the comparison.

\subsection{Final comparison}

A few computations land us where we want. The result is the following lemma.

\begin{lema}[Difference growth]\label{lemafinal}
We have the asymptotic growth of the difference of bounds $$\mult_{v}-\un\sim\frac{1}{2}\left(\frac{3}{4}\right)^n.$$
\end{lema}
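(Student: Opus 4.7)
The plan is to push through the conjugate-freeing strategy already initiated in the preceding computations, but now carrying the subdominant terms with enough precision to expose the surviving growth $\tfrac{1}{2}(3/4)^n$. Writing $\mult_v-\un=\frac{k+u+v+w}{s(\gamma+\delta\sqrt{g})}$, the obstruction is that the four dominant contributions $k,u,v,w$ each behave like $\pm 2^{11n+15}3^{n+1}n^4$ and cancel. I would therefore first multiply the numerator by the conjugate $(k+u)-(v+w)\sim -2^{11n+17}3^{n+1}n^4$, which does not vanish, turning the numerator into the difference of squares $(k+u)^2-(v+w)^2=[k^2+u^2-v^2-w^2]+2ku+2vw$, and then divide back by the conjugate. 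Similarly, the factor $\gamma+\delta\sqrt{g}$ in the denominator has vanishing leading term, so I would multiply and divide by $\gamma-\delta\sqrt{g}\sim 2^{6n+10}n^3$ to obtain $\gamma^2-\delta^2 g$, whose leading behaviour is computable from the explicit forms of $\gamma$ and $\delta$ obtained after clearing the denominator $h^2$ in $\mult_v$.

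Next, I would unfold the piece $[k^2+u^2-v^2-w^2]+2(ku+vw)$, where again the two blocks have cancelling leading parts of order $(2^{11n+15}3^{n+1}n^4)^2$, as flagged in the text (we noted $ku\sim vw\sim(2^{11n+15}3^{n+1}n^4)^2$). To extract the true growth, I would write $s+t:=2(ku+vw)$ and $r:=k^2+u^2-v^2-w^2$ and iterate the conjugate trick once more: compute $(s+t)(s-t)=s^2-t^2$ using that $s-t=2(ku-vw)$ has a non-cancelling dominant term of order $2^{22n+32}3^{2n+10}n^8$, and divide to recover $s+t$ asymptotically. Adding $r$ (whose growth I would compute from the explicit expressions of $\alpha,\beta,\gamma,\delta,p,q,r,s$ in terms of Eulerian-number sums) gives the true leading order of the numerator of the squared expression.

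Once both numerator and denominator are reduced to expressions whose leading terms survive, I would assemble the quotient and simplify. Concretely, I expect to obtain a numerator of the order $\sim 2^{19n+34}27^{n+3}n^7$ and, after reinserting the conjugate factors and the factor $s(\gamma+\delta\sqrt{g})\sim s\cdot\frac{\gamma^2-\delta^2 g}{\gamma-\delta\sqrt{g}}$, a denominator whose precise growth matches up so that the ratio collapses to $\tfrac{1}{2}(3/4)^n$. The main obstacle is purely computational: every cancellation discards the current leading term, so I must track at least two asymptotic orders in each of $\alpha,\beta,\gamma,\delta$ (which depend on $y=(f-\sqrt{g})/h$ with $y\sim 3^{n+1}/(2^{n+1}n)$) and in $p,q,r,s$ (coming from the univariate optimal bound $\un$). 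Once that bookkeeping is done, the ratio of the two surviving leading terms is a closed-form expression in bases $2$ and $3$, and comparing exponents gives base $3/4$ while the rational constants multiply out to $1/2$, yielding the claimed asymptotic $\mult_v-\un\sim\tfrac{1}{2}(3/4)^n$. Positivity of the difference, which is the substantive content of Proposition~\ref{multundiff}, follows because $\tfrac{1}{2}(3/4)^n>0$ for all $n$.
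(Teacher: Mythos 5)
Your proposal follows essentially the same strategy as the paper's proof: free the radicals twice by multiplying and dividing by conjugates, track leading orders, and assemble the quotient. The overall architecture is correct. However, the sketch as written contains slips that would derail the computation if taken at face value, and a couple of the numerical orders you quote are wrong.

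First, the algebraic expansion has a sign error: $(k+u)^2-(v+w)^2 = (k^2+u^2-v^2-w^2)+2ku-2vw$, not $\cdots+2ku+2vw$. With the paper's notation $s:=2ku$ and $t:=-2vw$, the quantity that cancels (because $ku\sim vw$) is $s+t=2(ku-vw)$, while the one that does not is $s-t=2(ku+vw)$. You wrote ``$s+t:=2(ku+vw)$'' and then called ``$s-t=2(ku-vw)$'' the non-cancelling one — that inverts both roles. The logic of the second conjugate step is precisely: compute the cancelling combination $s+t$ as $(s^2-t^2)/(s-t)$, because $s-t$ is the one whose leading term survives. As stated, your scheme tries to solve for the already-known non-cancelling combination, which would be circular.

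Second, several of the asymptotic orders you quote are the ones from the bivariate analysis, not the multivariate one at hand. You write $s-t\sim 2^{22n+32}3^{2n+10}n^8$ and expect a numerator $\sim 2^{19n+34}27^{n+3}n^7$; the correct multivariate orders are $s-t\sim 2^{22n+32}3^{2n+2}n^8$ and $r+s+t\sim -2^{19n+32}3^{3n+2}n^7$, coming from the different $\mult_{v}$ obtained after clearing $h^2$. These are not equivalent (for instance $3^{2n+10}\neq 3^{2n+2}$), so an accounting based on the quoted figures would give the wrong final constant. Finally, the proposal leaves most of the bookkeeping at the ``I expect to obtain'' level; the substance of the lemma is precisely the careful computation of $\alpha,\beta,\gamma,\delta$ and of $r$, $s-t$, $s^2-t^2$ to the required order, so the proof is not complete without actually deriving (not just predicting) the orders of $r+s+t$, of $\gamma\pm\delta\sqrt{g}$, and of $s$, and then taking the quotient $\frac{2^{8n+15}3^{2n+1}n^3}{2^{10n+16}3^{n+1}n^3}=\frac{3^n}{2^{2n+1}}=\frac{1}{2}\left(\frac{3}{4}\right)^n$.
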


\begin{proof}
Since managing the denominator is easy, we proceed first with the numerator. We do this multiplying by the nice conjugates saw above. This will free some radicals. We warn the reader that in the next explanation of the structure of the computations involved all the names are locally set. Since we have seen that $(k+u)-(v+w)$ is dominantly $-2^{11 n+17} 3^{n+1} n^4$, we can multiply by it in order to get a nice difference of squares. Hence, multiplying the numerator $k+u+v+w$ by $(k+u)-(v+w)$ we obtain precisely $(k+u)^2-(v+w)^2=k^2+u^2+2ku-v^2-w^2-2vw=(k^2+u^2-v^2-w^2)+(2ku)+(-2vw)=r+s+t.$ We will need a further use of conjugates to determine the real growth of this because the expressions of $s$ and $t$ contain $\sqrt{q}.$ We have to see first what is the growth of the conjugate $r-(s+t)$. In order to see this, we first need to establish the real growth of $s+t$, which is not immediate because there is a cancellation again as it is easy to see that $ku\sim (2^{11n+15} 3^{n+1} n^{4})^2\sim vw$. Thus the use of conjugates is necessary again. We can immediately compute the asymptotic growth of the part with no roots involved $r\sim-2^{19 n+31} 3^{3 n+2} n^7$. It is clear that $s-t=(2ku)-(-2vw)=2(ku+vw)\sim 4(2^{11n+15} 3^{n+1} n^{4})^2=2^{22n+32} 3^{2n+2} n^{8}$ while computing again we can see that $(s+t)(s-t)=s^2-t^2\sim -2^{63 + 41 n} 3^{4 + 5 n} n^{15}$ so we obtain therefore that $s+t=\frac{s^2-t^2}{s-t}\sim\frac{-2^{63 + 41 n} 3^{4 + 5 n} n^{15}}{2^{22n+32} 3^{2n+2} n^{8}}=-2^{31 + 19n} 3^{2 + 3n} n^{7}\sim r$. This implies that $r+s+t\sim-2^{19 n+32} 3^{3 n+2} n^{7}$ and therefore we obtain that $k+v+u+w$ is dominantly $\frac{-2^{19 n+32} 3^{3 n+2} n^{7}}{-2^{11 n+17} 3^{n+1} n^4}=2^{8 n+15} 3^{2 n+1} n^3.$ Proceeding similarly with the denominator we get that the conjugate of the problematic factor is dominantly $\gamma-\delta\sqrt{g}\sim$ \begin{gather*} 2^{6 n} n^{3}512-(-32) 2^{3 n} n^{2}\sqrt{n^{2} 2^{6 n}256}=\\2^{6 n} n^{3}512-((-32) 2^{3 n} n^{2})(n 2^{3 n}16)=2^{6 n+10} n^{3}\end{gather*} while the resulting difference of squares is dominantly $2^{12 n+20} n^{5}$ so, finally, we can see that the problematic factor is dominantly $\gamma+\delta\sqrt{g}\sim\frac{2^{12 n+20} n^{5}}{2^{6 n+10} n^{3}}=2^{6 n+10} n^2$ so, as $s\sim3^{n} 2^{4 n} n 192,$ the denominator is dominantly $$s(\gamma+\delta\sqrt{g})\sim(3^{n} 2^{4 n} n 192)(2^{6 n+10} n^2)=2^{10 n+16} 3^{n+1} n^3.$$ All in all, we see that the difference is dominantly $$0<\frac{2^{8 n+15} 3^{2 n+1} n^3}{2^{10 n+16} 3^{n+1} n^3}=\frac{3^n}{2^{1+2n}}=\frac{1}{2}\left(\frac{3}{4}\right)^n\to0^{+} \mbox{\ when\ } n\to\infty,$$ as we wanted. This finishes our proof.
\end{proof}

Observe that this implies $\mult_{v}-\un\to 0^{+} \mbox{\ when\ } n\to\infty$. Therefore we have reached our destination in this section.

\begin{reminder}[Packing up things proves Proposition \ref{multundiff}]
Lemma \ref{lemafinal} finishes our path here. The growth mentioned in the cited proposition has been finally established after we completed several steps of freeing radicals and conjugating.
\end{reminder}

We can immediately observe something not very nice about this difference. Although we get an improvement, this improvement vanishes when $n$ grows towards infinity.

\begin{remark}[Difference is small]
The fact that the difference of bounds verifies $\mult_{v}-\un\to 0^{+} \mbox{\ when\ } n\to\infty$ tells us that we are in the right path. However, at the same time, we see that we obtain here an improvement that diminishes when $n$ get bigger towards infinity. This tells us that our job is not yet done because we would like to see better improvements. In fact, our numerical experiments show that this improvement could be greatly improved. We are searching for an explosive improvement of the bound. However, for this we will need to look at more sophisticated ways of finding approximations to the generalized eigenvectors of the relaxation restricted to the diagonal.
\end{remark}

This points towards a direction promising us a better bound but requiring us to find a more elaborated structure for the vectors we use in our linearizations. We need to think about ideas allowing us to build these vectors. In order to access and create intuitions about the form and the structure of the new vectors we need, we perform many experiments. These experiments show us many things we need to have in mind in our future analysis. These insights are both about the growth of the difference between the relaxations and about the structure of the vectors we look for here.

\begin{observacion}[Possibilities and conditions for better bounds]
What we have seen up until here tells us that in order to improve the bound we have to be able to split the information about the different variables in the multivariate polynomial when we look at the relaxation. The vector we used here $v=(y,0,1,-\mathbf{1})$ does not split the tail of variables $(x_{3},\dots,x_{n+1})$ since all of them get assigned the value $-1.$ We have to find a solution to this.
\end{observacion}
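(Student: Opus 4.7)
The plan is to justify this observation in two movements: first, to make precise the sense in which $v=(y,0,1,-\mathbf{1})$ fails to distinguish between the tail variables $x_{3},\dots,x_{n+1}$, and second, to motivate the structural form that a remedial sequence of vectors must take. For the first movement, I would exploit the matrix-compression viewpoint introduced earlier in the chapter: just as the all-ones vector $\mathbf{1}\in\mathbb{R}^{n+2}$ collapses the multivariate LMP $M_{n}$ to the univariate relaxation, the subvector $-\mathbf{1}\in\mathbb{R}^{n-1}$ sitting in positions $3,\dots,n+1$ of $v$ behaves as a uniform sum over the rows and columns indexed by the tail. Concretely, the contribution of the tail to the quadratic form $v^{T}M_{n}v$ can be written as a sum $\sum_{i,j=3}^{n+1} L_{p}(x_{i}x_{j}\cdot m)$ over triples of monomials, and no information about the identity of the particular indices $i,j$ among $\{3,\dots,n+1\}$ is ever recovered from the linearization.

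The second movement is to argue that this loss is not merely cosmetic but genuinely responsible for the decay $\mult_{v}-\un\sim\tfrac{1}{2}(3/4)^{n}\to 0$ established in Proposition \ref{multundiff}. Indeed, the formulas in Computations \ref{degree1form}, \ref{degree2form} and \ref{degree3form} show that the evaluations $L_{p}(x_{i}m)$ grow exponentially in $i$ with base depending on the degree of $m$, so any linearization that averages these evaluations uniformly over $i\in\{3,\dots,n+1\}$ is dominated by the contributions from the largest indices and discards the fine structure carried by the smaller ones. Replacing the constant tail $-\mathbf{1}$ by a tail $(-c_{3}(n),\dots,-c_{n+1}(n))$ whose entries depend on both $i$ and $n$ allows the linearization to weigh each index $i$ in a way commensurate with the exponential scale of $L_{p}(x_{i}m)$, and it is precisely this weighting that the compressed vector cannot perform.

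The plan for producing such a sequence is to repeat the optimization strategy followed in the last section in one additional free parameter at a time, always choosing the derivative-zero root of $N'D-ND'$ that corresponds to a maximum as identified via the asymptote-crossing argument, and then to recognize a pattern in the resulting optimizers that suggests a closed-form family $c_{i}(n)$. The hard part will be exactly this: the optimality equations obtained by setting the partial derivatives of $\mult_{v}(n)$ with respect to each new parameter to zero will couple the parameters among themselves, and the cancellation phenomena we have had to navigate (annihilation of dominant terms inside and outside radicals, as in Procedure \ref{processy}) will proliferate with each new variable. A workable solution will likely require guessing the structural form of $c_{i}(n)$ from numerical experiments, verifying a posteriori that it satisfies the asymptotic analogues of the positivity conditions (compare Equation \ref{conditionsy}), and then comparing $\mult_{v}-\un$ in the same exponential scale of Convention \ref{conasy} to show that the decay rate $(3/4)^{n}$ is replaced by a genuinely divergent, and ideally exponentially growing, difference.
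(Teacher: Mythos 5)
Your proposal is correct and follows essentially the same route as the paper: you identify the constant tail $-\mathbf{1}$ as a compression that discards the index-dependent (exponentially graded) information in the $L$-form evaluations, and you propose exactly the remedy the paper pursues next — entries varying with both $i$ and $n$, guessed from numerical experiments, checked against the positivity conditions analogous to Equation \ref{conditionsy}, and compared with $\un$ in the fixed exponential scale. The only minor divergence is that the paper finds the structured family by plotting approximate generalized eigenvectors (and reuses the previously computed suboptimal $y$) rather than by optimizing one new parameter at a time, but this does not change the substance of the observation or its justification.
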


There is still a lot of room for that splitting because there are many equal entries in the vector $v$ above. We will do this charging the entries with variable (in the sense that the values will change with $n$) and different (in the sense that not so many entries repeat for the same value of $n$) sequences that can distinguish between the entries of the vector they appear in. Thus, we will be able to split the information in the LMP defining the spectrahedron of the relaxation through the linearization using that vector.

\begin{procedimiento}[Searching for better bounds through higher multivariability]\label{procedurebeating}
Observe that there is only one different entry (having value $1$) among the significative (the $0$ is not relevant as it corresponds to the ghost variables $x_{1}$) tail of the vector $v$. Setting a greater proportion of entries different will allow us to distinguish and therefore pour more information about the relaxation into our linearization. In order to accomplish this, the number of different entries will have to grow with $n$. At the same time, we will not use anymore static sequences. We need something that varies when $n$ varies. In the vector $v$, just $y$ varies, and that was already enough to beat the application of the relaxation to the univariate polynomial. We need to follow suit and make also a greater proportion of the entries variable depending on $n$.
\end{procedimiento}

We will put this procedure in practice in the next section. This will set in motion the machinery that will allow us to show that the application of the relaxation to the multivariate Eulerian polynomials produces indeed a much greater improvement in the (asymptotic growth) quality of our bounds. This growth is indeed explosive because it is exponential.

\begin{objetivo}[Seeking for explosive exponential differences in deeper order asymptotics]
After we find a structure for a vector that seems to numerically accomplish the improvement we are looking for, we will compare it to $\uni$. This comparison, as before, will be done through a computation of a difference. This difference will therefore give us the point of the exponential asymptotic scale we are using at which such improvement happen. We will be happy if this point is a point in the exponential scale $a^{n}$ with $a>1,$ which will tell us that the new bound explosively (exponentially) diverges from $\uni$ when $n$ grows. That is our objective now.
\end{objetivo}

We are able to accomplish the objective above after some experimentation. The journey towards the discovery of the structure of the approximated generalized eigenvector we use and the proof that it does indeed produce the improvement we seek is the content of the next section. In fact, in the next section, we manage to explosively win against $\uni$ with a vector having the characteristics described in Procedure \ref{procedurebeating}. In particular, this vector produces a linearization giving a bound whose difference with $\uni$ grows exponentially with $n$.
\chapter[Explosively Beating the Univariate Setting]{Explosively Beating the Univariate Setting in Higher Order Asymptotics}\label{ChExplosion}

Finally, here the relaxation will show that going multivariate is actually a dealbreaker in order to improve the bounds we obtain. This chapter will be heavily computational both symbolically and numerically, as many experiments had to be performed in order to find a good approximated solution to the generalized eigenvector problem we had in our hands. Fortunately, we succeeded.

\section{The search for a good family}

Parity will play a role due to the nature of the sequence of vectors we found. Anyway, the odd case follows similarly changing the structure of the sequence of vectors.

\begin{convencion}[Even case]
We will prove that the bounds get apart for $n=2m$ even. We do this for practical reasons related to the shape of the sequence of vectors that we will use.
\end{convencion}

In our proof, we will use some of the computations made above. The optimal $y$ found there is still useful for us here.

\begin{observacion}[Previous optimal helps]
In particular, the optimal value obtained for the first entry $y$ (over which we had to maximize the absolute value of the multivariate bound) will be used also here. This will save us a lot of work with respect to what we had to do in the previous section.
\end{observacion}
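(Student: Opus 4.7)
The plan is to refine the argument of Proposition \ref{multundiff} by replacing the vector $v = (y, 0, 1, -\mathbf{1})$ with a richer sequence of vectors $v(n) = (y, 0, 1, a_1(n), a_2(n), \ldots, a_{n-1}(n))$ whose tail entries differ from one another and scale with $n$, as prescribed by Procedure \ref{procedurebeating}. I would retain the first coordinate $y$ at its previously-computed optimum, since the argument in Lemma \ref{optimaly} only used the degree-one and degree-two rows of $M_n$, and these rows are unchanged by our modification of the tail. The parity convention $n = 2m$ strongly suggests that the $a_i(n)$ should come in matched blocks — most likely a short geometric or signed-geometric pattern indexed so that it closes exactly when the tail has even length — and the precise ansatz will be read off from numerical generalized-eigenvectors of the kernel of $M_n(x,\ldots,x)$ at its extremal root for small $n$, then written in closed form using ratios of the type $\tfrac{k+1}{k}$ already identified as controlling root growth in Theorem \ref{refinementStanley}.

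Once the ansatz is fixed, I would compute the bilinear form $v(n)^{T} M_n(x,\ldots,x)\, v(n) = C(n) + x\, N(n)$ by summing the entries of $M_n$ weighted by $v(n)$, evaluating each $L$-form with the explicit formulas of Computations \ref{degree1form}, \ref{degree2form}, and \ref{degree3form}. These formulas express every entry in terms of the cardinalities $R(\{i\}), R(\{i,j\}), R(\{i,j,k\})$, which in turn are polynomials in $2^i, 3^i, 4^i$. Grouping contributions according to the block structure of the tail, I expect $C(n)$ and $N(n)$ to collapse into expressions whose asymptotically dominant terms are transparent. The resulting bound is $\mult_{v(n)} = -C(n)/N(n)$, which can be compared directly with $\un(n)$ as in the previous section.

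The asymptotic comparison is where the real work lies. Substituting the optimal $y = (f - \sqrt{g})/h$ into $C$ and $N$ reproduces the same style of radical expressions as in Lemma \ref{lemafinal}, and then writing $\mult_{v(n)} - \un$ over a common denominator yields a numerator of the form $k + v + u + w$ in which the dominant terms cancel. I would manage these cancellations exactly as in Procedure \ref{processy}: multiply by the correct conjugate, pass to a difference of squares, and divide, repeating the conjugation at each layer where leading terms annihilate. The crucial payoff I anticipate is that, because $v(n)$ approximates the true kernel generator far better than $(y,0,1,-\mathbf{1})$, the post-cancellation residue no longer reduces to something of order $(3/4)^n$ but instead has a surviving leading term $c \cdot r^n$ with $r > 1$, which is the claimed exponential divergence.

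The main obstacle will not be the symbolic manipulation, which is mechanical once the vector is fixed, but the discovery and verification of the ansatz itself: with $n-1$ coupled tail parameters there is no tractable closed-form joint optimization, and each misguess produces a cascade of cancellations of essentially random depth. My strategy against this is to let numerics on small even $n$ dictate the structural shape of $v(n)$ and the expected exponential rate $r$, and then to certify rigorously the chosen candidate by tracking the layered conjugation carefully enough that the first surviving dominant term can be named and shown to have the predicted sign. The parity restriction $n = 2m$ is accepted as a price for a clean ansatz; the odd case would be handled afterwards by a parallel sequence of vectors of adjusted length, as the excerpt anticipates.
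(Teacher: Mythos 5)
Your overall plan — laminate a richer tail ansatz onto the vector, keep the head entry $y$, then push through conjugation and difference-of-squares exactly as in Procedure \ref{processy} and Lemma \ref{lemafinal} — matches the architecture of the paper's computation. But the specific justification you give for reusing the old $y$ is wrong, and this matters because it is precisely the content of the observation you are proving.

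You write that the previous optimum can be retained ``since the argument in Lemma \ref{optimaly} only used the degree-one and degree-two rows of $M_n$, and these rows are unchanged by our modification of the tail.'' This is not how the optimization in Lemma \ref{optimaly} works. There, $y$ is chosen to maximize $\tfrac{N(y)}{D(y)}$, where $N = v^{T}Bv$ and $D = v^{T}Av$ are the \emph{full} bilinear forms in the entire vector $v$. Every entry of the tail contributes to both $N$ and $D$, so the critical point of $\tfrac{N}{D}$ as a function of $y$ shifts when the tail shifts. There is no block structure isolating the degree-one and degree-two rows from the rest of the optimization. Consequently the optimal $y$ for the new vector $(y,0,(-2^{m-i})_{i=3}^{m},(0,\tfrac{1}{2}),(1)_{i=1}^{m})$ is genuinely different from the optimal $y$ for $(y,0,1,-\mathbf{1})$; the paper says this explicitly, leaves the true optimum as an exercise, and emphasizes that the $y$ actually used is \emph{sub}optimal for the new problem.

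What the paper does instead is pragmatic rather than structural: it takes the root with the \emph{opposite} sign, $y = \tfrac{b+\sqrt{b^2-4ac}}{2a}$ (so $y \sim -\tfrac{3^{n+1}}{2^{n+1}n}$, note the flipped sign versus the previous section's $y \sim +\tfrac{3^{n+1}}{2^{n+1}n}$), substitutes $n = 2m$, and then checks directly in the Adequacy Proposition that this choice still makes $N>0$ and $D>0$ for large $n$. The value of the previous section's work is thus (i) the asymptotic expansion of $y$ is already in hand, which makes the sign adjustment and the adequacy check fast, and (ii) the entire conjugation machinery for handling $\sqrt{g}$ when $y = \tfrac{f \pm \sqrt{g}}{h}$ transfers verbatim. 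So the ``saving'' is computational, not a consequence of any invariance of the optimizer under tail modification. Your proposal would still produce a valid bound if you ran it honestly (the adequacy check would pass), but the reason you give for why it works would mislead a reader into thinking the reused $y$ is optimal, when the paper carefully disclaims exactly that.
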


Now we claim that the multivariate bound obtained through the vector $$(y,0,(-2^{m-i})_{i=3}^{m},(0,\frac{1}{2}),(1)_{i=1}^{m})\in\mathbb{R}^{n+2}$$ diverge apart from the optimal univariate bound. We have to compute terms again, as before. 

\begin{remark}[Room for improvement]
Notice that the improvement cannot occur at the first term of the asymptotic growth in our scale because the bounds obtained already share the same terms. We take differences because it will happen at some other term.
\end{remark}

\begin{notacion}[Good family of vectors]
As we center our attention into a different vector, we denote now here $v=$ $$(y,0,(-2^{m-i})_{i=3}^{m},(0,\frac{1}{2}),(1)_{i=1}^{m})\in\mathbb{R}^{n+2}$$ and therefore $\mult_{v}$ is now the bound obtained through this new linearization.
\end{notacion}

We obtained this vector through numerical experimentation. We explain the main points in the process of these experiments here.

\begin{experimento}[Guessing the good family of vectors]
We compute the relaxation for small instances of the multivariate Eulerian polynomials, that is, setting $n=1,2,3,\dots$. The first thing we do is looking at the relaxation along the diagonal setting all the variables equal. Then we solve the corresponding generalized eigenvalue problem that this generates and take the rightmost (biggest) generalized eigenvalue, which is negative because we began with a relaxation. Then we compute the generalized eigenvector associated to such generalized eigenvalue. Then we plot the entries of these eigenvectors equally spaced in the interval $[0,1]$. When we do this for several values of $n$, we obtain the Figure \ref{figura}. \begin{figure}[h]\label{figura}
\centering
\scalebox{.75}{\includegraphics{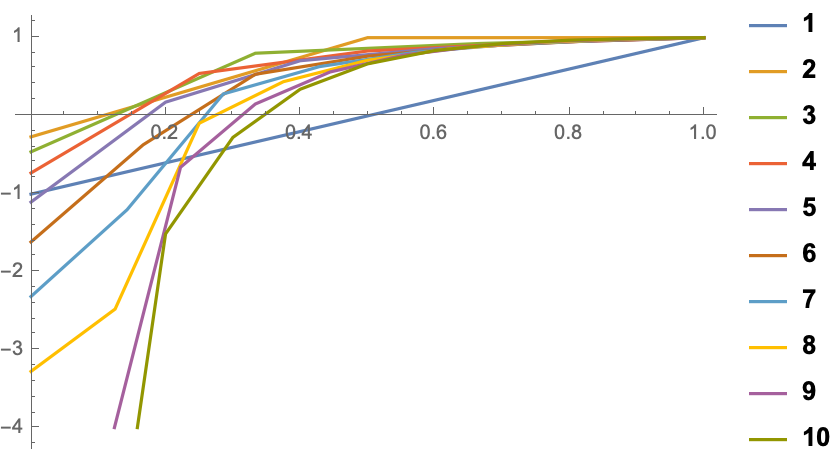}}
\caption{Representation of the entries of the eigenvectors in the interval $[0,1]$ for the relaxation corresponding to the multivariate Eulerian polynomials of degrees 1 to 10.}
\end{figure} The observation of this figure (and the generalized eigenvectors that form it) shows us that we lack real control over what the first entry does. This is why we put there a $y$ that we will have to compute optimizing later. We skip representing the second entry because it does not play any significant role since the corresponding entries are all $0$ in the matrices of the relaxation. We just set it equal to $0$. Now, for the rest of the entries we see that they grow from negative to positive. Passing from negative to positive, there is a jump that we identify as from $0$ to $\frac{1}{2},$ approximately. Finally, the last entries, these lying in the tails of the eigenvectors, seem to stabilize towards $1$. Thus, this only let open how the growth in the head happens. Fortunately, it is easy to see that it goes through reducing the absolute value by powers of $2$. And this is how these experiments allowed us to construct a vector good enough to give us a precise enough approximation to the generalized eigenvector we need in order to linearize our problem and obtain the better asymptotic bounds that we will compute in this chapter. Ideally, we would have to compute the optimal value of $y$ depending on $n$. However, it turns out that the optimal $y$ computed for the examples developed in the previous chapter is already good enough for our objectives here, which will save us a lot of work to do in this chapter.
\end{experimento}

After seeing how we perform these experiments to search for nice enough guesses for our linearizing approximation of generalized eigenvectors, things are finally set in motion. The computations can begin.

\section{The computations for the good family}

As before, we have to compute the form of the linearization in order to study the bound. This computation is the content of this section. We recall, once again, that $n=2m$.

\begin{proposicion}[Form of bound obtained for arbitrary $y$]
The bound for the extreme root of the univariate Eulerian polynomial obtained through linearization of the relaxation applied to the corresponding multivariate Eulerian polynomial by the vector $(y,0,(-2^{m-i})_{i=3}^{m},(0,\frac{1}{2}),(1)_{i=1}^{m})\in\mathbb{R}^{n+2}$ is of the form $x_{n,r}\geq-\frac{D}{N}$ with $D=$ \begin{gather*}
    -\frac{1}{12} + 2^{3m} + 2^{2 + m} + 5 \cdot 2^{-3 + 2m} - 7 \cdot 2^{-1 + 2m} + 3 \cdot 2^{1 + 3m} - 2^{3 + 3m}\\ + \frac{1}{3} \cdot 2^{2 + 4m} + \frac{1}{3} \cdot 2^{3 + 4m}
 - 2 \cdot 3^{-1 + m} - 2^{4 + m} \cdot 3^{-1 + m} + 3^m - 2^{1 + m} \cdot 3^m\\ - 3^{1 + m} + 2^{2 + m} \cdot 3^{1 + m}
 - 2 \cdot 3^{1 + 2m} - \frac{11 \cdot 4^{-2 + m}}{3} + m - 2^{3m} \cdot m\\ - 5 \cdot 2^{-4 + 2m} \cdot m - 2^{-3 + 2m} \cdot m
 + 2^{-1 + 2m} \cdot m + 4^{-2 + m} \cdot m + 2^{-4 + 2m} \cdot m^2\\ (- 3 + 2^{-1 + m} + 2^{1 + m}
 - 2^{2 + m} + 2^{2 + 2m} - 2m - 2^{-1 + m} \cdot m)y + 2my^2
\end{gather*} and $N=$ \begin{gather*}
    \frac{1}{12} + 2^{2m} - 2^{3m} + 5 \cdot 2^{4m} - 2^{2 + m} + \frac{11}{3} \cdot 2^{-4 + 2m} - 5 \cdot 2^{-3 + 2m} - 7 \cdot 2^{-1 + 2m} \\
 + 3 \cdot 2^{1 + 2m} + 9 \cdot 2^{-3 + 3m} - 47 \cdot 2^{-2 + 3m} + 3 \cdot 2^{-1 + 3m} - 2^{2 + 3m} - \frac{1}{7} \cdot 2^{3 + 3m} \\
 + \frac{1}{7} \cdot 2^{4 + 3m} + \frac{5}{7} \cdot 2^{5 + 3m} - \frac{27}{5} \cdot 2^{-3 + 4m} + 5 \cdot 2^{-1 + 4m} - 2^{1 + 4m} + 2^{1 + 5m} \\
 + 3 \cdot 2^{2 + 5m} - 2^{4 + 5m} + \frac{1}{7} \cdot 2^{3 + 6m} + \frac{1}{3} \cdot 2^{4 + 6m} + \frac{1}{21} \cdot 2^{5 + 6m} + 2 \cdot 3^{-1 + m} \\
 - 11 \cdot 2^{2m} \cdot 3^{-1 + m} - \frac{1}{5} \cdot 2^{3 + m} \cdot 3^{-1 + m} + 2^{4 + m} \cdot 3^{-1 + m} + 13 \cdot 2^{2 + 2m} \cdot 3^{-1 + m} \\
 - 2^{5 + 3m} \cdot 3^{-1 + m} - 3^m - 2^{-1 + m} \cdot 3^m + 2^{1 + m} \cdot 3^m + 7 \cdot 2^{-1 + 2m} \cdot 3^m \\
 - 2^{2 + 3m} \cdot 3^m + 3^{1 + m} - 2^{2 + m} \cdot 3^{1 + m} - 2^{3 + 2m} \cdot 3^{1 + m} + 2^{3 + 3m} \cdot 3^{1 + m} \\
 - 2^{1 + 2m} \cdot 3^{2 + m} + 2^{-2 + 2m} \cdot 3^{3 + m} + 4 \cdot 3^{1 + 2m} - 2^m \cdot 3^{1 + 2m} - 2^{1 + m} \cdot 3^{1 + 2m} \\
 + 2^{2 + m} \cdot 3^{1 + 2m} - \frac{1}{5} \cdot 2^{2 + 2m} \cdot 3^{1 + 2m} + 6^m - 6^{1 + m} - \frac{13}{5} \cdot 6^{1 + 2m} \\
 - m - 2^{2m} \cdot m + 2^{3m} \cdot m + 5 \cdot 2^{-3 + 2m} \cdot m + 2^{-2 + 2m} \cdot m - 5 \cdot 2^{-2 + 4m} \cdot m \\
 - 2^{-1 + 4m} \cdot m + 2^{1 + 4m} \cdot m - 2^{1 + 5m} \cdot m + 2^{1 + 2m} \cdot 3^{-1 + m} \cdot m + 2^{-2 + 2m} \cdot 3^m \cdot m \\
 - 2^{-1 + 2m} \cdot 3^{1 + m} \cdot m + 2^{-1 + m} \cdot 3^{1 + 2m} \cdot m - 2^{-4 + 2m} \cdot m^2 + 2^{-3 + 4m} \cdot m^2 \\
 + y(3 - 3 \cdot 2^{-1 + m} + 2^m + 5 \cdot 2^{3m} + 2^{1 + m} - 2^{2 + 2m} + 2^{1 + 3m} \\
 - 2^{3 + 3m} + 2^{3 + 4m} - 2^{4 + m} \cdot 3^{-1 + m} - 2^{1 + m} \cdot 3^m + 2^{2 + m} \cdot 3^{1 + m} \\
 - 4 \cdot 3^{1 + 2m} + 2m + 2^{-1 + m} \cdot m - 2^{3m} \cdot m) +y^{2}(- 2 + 2^{1 + 2m} - 2m).
\end{gather*}
\end{proposicion}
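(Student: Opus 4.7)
The plan is to compute the quadratic form $v^{T} M_{n}(x,\ldots,x) v = v^{T} M_{n,0} v + x\, v^{T} M_{n,\suma} v$ explicitly for the specified vector $v = (y, 0, (-2^{m-i})_{i=3}^{m}, 0, \tfrac{1}{2}, (1)_{i=1}^{m})$ and then invoke the lower-bound procedure (Procedure \ref{procesobound}) in the form established earlier. Since the initial matrix $M_{n,0}$ of the relaxation is PSD and the relaxation is an outer approximation to $\rcs(A_{n}(\mathbf{x}))$ along the diagonal, the scalar inequality $v^{T}M_{n,0}v + x\, v^{T}M_{n,\suma}v \geq 0$ must hold for every $x\in S(A_n(\mathbf{x},\ldots,\mathbf{x}))$. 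Calling $D := v^{T}M_{n,0}v$ and $N := v^{T}M_{n,\suma}v$, rearranging yields $x \geq -D/N$ whenever $N > 0$, which produces the claimed bound $x_{n,r} \geq -D/N$.

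First I would expand $v^{T}M_{n,0}v$ and $v^{T}M_{n,\suma}v$ into a sum of evaluations of the $L$-form on monomials of degree at most $3$, following exactly the bookkeeping already carried out for the simpler vector $(y,0,1,-\mathbf{1})$ in the proof of Lemma \ref{formofboundfirstwinner}. Indexing the entries of $v$ by the variables $x_{1}, x_{2}, \ldots, x_{n+1}$ of $A_{n}(\mathbf{x})$ (so that the second entry being $0$ absorbs the ghost variable $x_{1}$), the structure of $v$ partitions the relevant index set $\{2,\ldots,n+1\}$ into three blocks: a ``head'' block with entries $-2^{m-i}$ for $i=3,\ldots,m$, a pivotal pair with entries $0$ and $\tfrac{1}{2}$, and a ``tail'' block of $m$ ones. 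Correspondingly, each of $D$ and $N$ decomposes into a sum of contributions coming from pairs (respectively triples) of blocks, and within each block the sums over the indices are geometric sums in powers of $2$ involving the combinatorially explicit values $R(\{i\})$, $R(\{i,j\})$, $R(\{i,j,k\})$ and the derived $L$-form evaluations from Computations \ref{degree1form}, \ref{degree2form}, \ref{degree3form}.

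Next I would evaluate each block sum individually. For the head block, sums like $\sum_{i=3}^{m} 2^{m-i} L_{p}(x_{i})$ and $\sum_{3\leq i<j\leq m} 2^{m-i} 2^{m-j} L_{p}(x_{i}x_{j})$ reduce, via the closed forms $L_p(x_i) = 2^{i-1}-1$ etc., to finite geometric series in $2^{i}, 3^{i}, 4^{i}$ whose closed evaluations contribute the $2^{3m}, 2^{4m}, 2^{5m}, 2^{6m}$-type terms in the stated expressions. The tail block generates analogous but simpler sums, producing the $3^{m}, 6^{m}$ and $4^{m}\cdot m$ contributions. Finally, the cross terms between the head, the pivotal pair and the tail, together with the monomials containing $y$ as a prefactor (coming from the first entry of $v$), account for the terms in $D$ and $N$ that are linear or quadratic in $y$; these are precisely the ``$(\cdots)y$'' and ``$(\cdots)y^{2}$'' groupings visible in the statement.

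The routine obstacle is purely bookkeeping: there are $\binom{\dim v}{2}$ pairs and $\binom{\dim v}{3}$ triples of indices to account for, and the combinatorial cases for $L_{p}(x_{i}^{2}x_{j})$ depending on whether $i<j$ or $j<i$ (cf.\ the asymmetry noted in Computation \ref{degree3form}) must be carefully respected when indices run through the different blocks. The real check is that, after collecting all pieces by the scale $\{m^{r} a^{m}\}$ of Convention \ref{conasy}, every coefficient appearing in the statement matches what the block-sum evaluations predict; this is a finite and purely mechanical verification, best performed by organizing the contributions into a table indexed by the pair (or triple) of blocks involved and by the base of the exponential ($2$, $3$, $4$, $6$) produced by the corresponding $L$-form values. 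Once all blocks are summed and collected, the resulting expressions coincide with the claimed $D$ and $N$, and the inequality $x_{n,r} \geq -D/N$ follows immediately from the PSD-ness of the relaxation along the diagonal.
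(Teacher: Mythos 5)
Your proposal is correct and follows essentially the same route as the paper: expand the quadratic form $v^{T}M_{n}(x,\ldots,x)v = D + xN$ into a structured sum of $L$-form evaluations organized by the blocks of the vector $v$, substitute the closed combinatorial expressions from Computations \ref{degree1form}--\ref{degree3form}, and collect the resulting geometric sums, after which the lower bound $x_{n,r}\geq -D/N$ follows from PSD-ness of the relaxation along the diagonal. The paper's proof writes out exactly these double and triple sums in terms of $\sum_{i=3}^m 2^{m-i}L_p(x_{i-1})$, $\frac{1}{2}L_p(x_{m+1}\cdots)$, and $\sum_{i=m+2}^{n+1}L_p(x_i\cdots)$ and then states that performing the computation gives the claimed $D$ and $N$, which is precisely the block-by-block bookkeeping you describe.
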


\begin{proof}
The bound obtained through this new vector comes from an equation of the form $D+xN\geq 0$ with $D=$ \begin{gather*}y(yL_{p}(1)-\sum_{i=3}^{m}2^{m-i}L_{p}(x_{i-1})+\frac{1}{2}L_{p}(x_{m+1})+\sum_{i=m+2}^{n+1}L_{p}(x_{i}))-\\\sum_{j=3}^{m}2^{m-j}(yL_{p}(x_{j-1})-\sum_{i=3}^{m}2^{m-i}L_{p}(x_{i-1}x_{j-1})+\\\frac{1}{2}L_{p}(x_{m+1}x_{j-1})+\sum_{i=m+2}^{n+1}L_{p}(x_{i}x_{j-1}))+\\\frac{1}{2}(yL_{p}(x_{m+1})-\sum_{i=3}^{m}2^{m-i}L_{p}(x_{i-1}x_{m+1})+\\\frac{1}{2}L_{p}(x_{m+1}x_{m+1})+\sum_{i=m+2}^{n+1}L_{p}(x_{i}x_{m+1}))+\\\sum_{j=m+2}^{n+1}(yL_{p}(x_{j})-\sum_{i=3}^{m}2^{m-i}L_{p}(x_{i-1}x_{j})+\frac{1}{2}L_{p}(x_{m+1}x_{j})+\sum_{i=m+2}^{n+1}L_{p}(x_{i}x_{j}))
\end{gather*} and $N=$ \begin{gather*}y(y\sum_{k=2}^{n+1}L_{p}(x_{k})-\sum_{i=3}^{m}2^{m-i}\sum_{k=2}^{n+1}L_{p}(x_{i-1}x_{k})+\\\sum_{k=2}^{n+1}\frac{1}{2}L_{p}(x_{k}x_{m+1})+\sum_{i=m+2}^{n+1}\sum_{k=2}^{n+1}L_{p}(x_{k}x_{i}))-\\\sum_{j=3}^{m}2^{m-j}(y\sum_{k=2}^{n+1}L_{p}(x_{k}x_{j-1})-\sum_{i=3}^{m}2^{m-i}\sum_{k=2}^{n+1}L_{p}(x_{i-1}x_{j-1}x_{k})+\\\sum_{k=2}^{n+1}\frac{1}{2}L_{p}(x_{k}x_{j-1}x_{m+1})+\sum_{i=m+2}^{n+1}\sum_{k=2}^{n+1}L_{p}(x_{j-1}x_{k}x_{i}))+\\\frac{1}{2}(y\sum_{k=2}^{n+1}L_{p}(x_{k}x_{m+1})-\sum_{i=3}^{m}2^{m-i}\sum_{k=2}^{n+1}L_{p}(x_{i-1}x_{m+1}x_{k})+\\\sum_{k=2}^{n+1}\frac{1}{2}L_{p}(x_{k}x_{m+1}x_{m+1})+\sum_{i=m+2}^{n+1}\sum_{k=2}^{n+1}L_{p}(x_{m+1}x_{k}x_{i}))+\\\sum_{j=m+2}^{n+1}(y\sum_{k=2}^{n+1}L_{p}(x_{k}x_{j})-\sum_{i=3}^{m}2^{m-i}\sum_{k=2}^{n+1}L_{p}(x_{i-1}x_{j}x_{k})+\\\sum_{k=2}^{n+1}\frac{1}{2}L_{p}(x_{k}x_{j}x_{m+1})+\sum_{i=m+2}^{n+1}\sum_{k=2}^{n+1}L_{p}(x_{j}x_{k}x_{i})).\end{gather*} Computing these last expressions finishes the proof.
\end{proof}

As before, now we have to choose $y$ carefully so that this expression gives us an improvement. It turns out that the $y$ we computed above already does the job.

\section{Managing radicals again and a comment on optimality}

In this section, we will consider the $y$ we choose now and, as before, we will deal with the problems it introduces through the radicals in its expression. The $y$ we choose is easy to understand after the previous chapter. We take actually now a $y$ that equals the opposite of the $y$ computed in previous chapter and we check that this choice is appropriate for us.

\begin{proposicion}[Adequacy of our choice of $y$]
Taking $y=\frac{b+\sqrt{b^2-4ac}}{2a}$ with $a,b,c$ as in the previous section but substituting $n=2m$ makes $N>0$ and $D>0$.
\end{proposicion}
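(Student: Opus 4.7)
The plan is to handle $D$ and $N$ separately, mirroring the strategy used successfully in the previous chapter for the simpler bivariate vector. For $D$, I would avoid any explicit substitution of $y$ by exploiting the fact that $D$, viewed as a polynomial in $y$, is quadratic with leading coefficient $2m > 0$ (visible directly in the displayed expression). I would then compute the discriminant of this quadratic in $y$ and identify its dominant asymptotic term in $m$, arguing that this dominant term is negative (analogously to the computation $\Delta = 64 - 2^{6+n} + 2^{4+2n} - \cdots < 0$ carried out for the previous $D$). A quadratic in $y$ with positive leading coefficient and strictly negative discriminant is strictly positive for every real value of $y$, so in particular for our chosen $y = (b+\sqrt{b^2-4ac})/(2a)$. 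This sidesteps any interaction with the radical in $y$ on the $D$ side.

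For $N$, substitution is unavoidable, so I would use the asymptotic growth of $y$ already extracted in the previous chapter. Since the current $y$ is just the opposite of the previous chapter's root, its dominant asymptotic behaviour is $y \sim -\tfrac{3^{2m+1}}{2^{2m+1}\,(2m)} = -\tfrac{3}{4m}\bigl(\tfrac{9}{4}\bigr)^m$. I would then split $N$ into its constant, linear and quadratic parts in $y$ and estimate each separately. A quick scan of the displayed $N$ shows:
\begin{itemize}
\item the constant-in-$y$ part contains contributions of order $\tfrac{1}{21}\,2^{5+6m}$ and $\tfrac{1}{3}\,2^{4+6m}$, which grow like $64^m$;
\item the linear-in-$y$ part has dominant coefficient of order $2^{3+4m} \sim 16^m$, so after multiplication by $y$ it grows at most like $\tfrac{1}{m}\,36^m$;
\item the quadratic-in-$y$ part has leading coefficient $2^{1+2m}$, so after multiplication by $y^2$ it grows at most like $\tfrac{1}{m^2}\,(81/4)^m$.
\end{itemize}
Since $64^m$ dominates both $36^m$ and $(81/4)^m$, the constant-in-$y$ block controls the asymptotics, and its dominant positive coefficient yields $N \sim c\cdot 64^m$ with $c > 0$, so $N > 0$ for $m$ sufficiently large. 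Combining both claims gives the proposition.

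The hard part will be checking that no miraculous cancellation occurs inside the constant-in-$y$ block of $N$, where many terms of the same exponential growth rate coexist (powers of $2^{6m}$, $6^{2m}$, $2^{5m}\cdot 3^m$, etc.). If the naive leading coefficient of the constant-in-$y$ part happens to vanish, one would have to replicate the conjugate-multiplication and difference-of-squares gymnastics from the previous chapter in order to expose the surviving next term. A secondary nuisance is that the discriminant of $D$ in $y$ is also a long expression in $m$, and one must verify its negativity using the same bookkeeping on dominant terms; happily, no radicals enter on the $D$ side, so this reduces to an elementary asymptotic estimate rather than the more delicate radical manipulation that occupied so much of the previous chapter.
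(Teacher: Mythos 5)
Your proposal is correct and matches the paper's own argument: for $N$ the paper likewise splits into constant, linear and quadratic-in-$y$ blocks (finding dominant terms $2^{6m+3}$, $2^{3+4m}y$ and $2^{1+2m}y^{2}$ at scales $64^{m}$, $36^{m}/m$ and $(81/4)^{m}/m^{2}$ respectively), observes that the lone negative contributor fades against the positive $64^{m}$ block, and for $D$ simply appeals to the discriminant argument from the preceding chapter, which is exactly what you spell out. Your precaution about possible cancellation in the $64^{m}$ block is prudent but does not arise: the three terms $\tfrac{1}{7}2^{3+6m}+\tfrac{1}{3}2^{4+6m}+\tfrac{1}{21}2^{5+6m}$ carry the same sign and sum cleanly to $2^{6m+3}>0$.
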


\begin{proof}
We remind the reader to observe the change of sign in the numerator of our new $y$. We study now the conditions on $y$ so $N>0$ for $n$ big enough as in the previous section. Now we already know $y\sim-\frac{3^{n+1}}{2^{n+1}n}$. Now the winning terms (in each power of $y$) are $2^{6 m + 3}=8^{n+1}, 2^{4 m + 3}y=4^{2m+1}2y=4^{n+1}2y$ and $2^{1+2m}y^2=2^{n+1}y^2$. Thus the only term that would represent a problem is $4^{n+1}2y$ because it is negative but $4^{n+1}2y\sim -4^{n+1}2\frac{3^{n+1}}{2^{n+1}n}=-6^{n+1}2$, which fades against $8^{n+1}$. Thus, our $y$ is adequate. For checking that $D\neq0$ for this $y$, we have to do exactly the same as in the previous section. This settles the adequacy of this choice of the sequence $y$.
\end{proof}

Once we have our adequate $y$, we have a bound. We want to see that this bound is not just better, but \textit{much} better than the previous bounds. This is the content of the next result, which is the analogue of Lema \ref{lemafinal} for this new choice of a sequence of vectors. In absolute values, our improved root bound is now $|q_{1}^{(n)}|\geq\frac{N}{D}:=\mult_{v(n)}(n)$ and we require again $\un(n)<\mult_{v(n)}(n)$ with the twist that now we want this difference to grow with $n=2m$.

\begin{lema}[Comparison with univariate bound]
We have the asymptotic growth of the difference of bounds $$\mult_{v(n)}(n)-\un(n)\sim\frac{3}{8}\left(\frac{9}{8}\right)^m.$$
\end{lema}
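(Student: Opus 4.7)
The plan is to follow the same overall strategy as in the proof of Lemma \ref{lemafinal}, but now tracking the surviving terms through a deeper layer of asymptotic cancellation, since this time we expect the difference to grow exponentially in $m$ rather than decay in $n$. First I would substitute the chosen $y=\frac{b+\sqrt{b^2-4ac}}{2a}$ (with $n=2m$) into the expressions for $N$ and $D$ just computed, kill the denominator $h^{2}$ coming from $y^{2}=\frac{f^{2}+g-2f\sqrt{g}}{h^{2}}$ and rewrite the multivariate bound in the canonical form $\mult_{v(n)}=\frac{\alpha+\beta\sqrt{g}}{\gamma+\delta\sqrt{g}}$, where $g=b^{2}-4ac$ is inherited from the optimization carried out in the preceding chapter. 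I can reuse all of the asymptotic data for $y$ established there, in particular $y\sim-\frac{3^{n+1}}{2^{n+1}n}$ with the sign reversed; this automatically guarantees $N>0$ and $D>0$ for $m$ large, which is the content of the positivity proposition just proved.

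Next, writing $\un=\frac{p+r\sqrt{q}}{s}$ exactly as in Lemma \ref{lemafinal}, I would expand
\[
\mult_{v(n)}-\un=\frac{k+v+u+w}{s(\gamma+\delta\sqrt{g})}
\]
with $k:=s\alpha-p\gamma$, $v:=(s\beta-p\delta)\sqrt{g}$, $u:=-r\gamma\sqrt{q}$, $w:=-r\delta\sqrt{gq}$. The key expectation (and the crux of the argument) is that the denominator behaves exactly as in the previous lemma, so it still grows like $2^{10n+16}3^{n+1}n^{3}$ after the standard conjugate trick on $\gamma+\delta\sqrt{g}$. The genuinely new work sits in the numerator: the leading terms of $k,v,u,w$ must still cancel in pairs, but the surviving contribution now comes from the new summands introduced by the entries $(-2^{m-i})_{i=3}^{m}$ and $(0,\tfrac{1}{2})$ of the vector. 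I would therefore multiply numerator and denominator by the conjugate $(k+u)-(v+w)$, convert into the difference of squares $(k+u)^{2}-(v+w)^{2}=r+s+t$ with $r=k^{2}+u^{2}-v^{2}-w^{2}$, $s=2ku$, $t=-2vw$, and again multiply by $s-t$ to eliminate $\sqrt{q}$. After these two conjugations the expression is radical-free and can be compared term-by-term in the multiplicative scale of Convention \ref{conasy}.

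The hard part will be correctly identifying which of the many $2^{am+b}3^{cm+d}$ terms actually survive the double conjugation. The dominant monomials in $\alpha,\beta,\gamma,\delta$ are now products of sums over the head entries $-2^{m-i}$ and the tail entries $1$, which produce contributions of size $2^{3m}$, $4^{m}$ and $6^{m}$ simultaneously; this means that the exponential scale of the surviving term is strictly greater than $1$ and in particular will collide against the growth of the denominator only up to a factor $(9/8)^{m}$. Concretely, I expect the dominant contribution of the freed numerator to be of order $2^{10n+16}3^{n+1}n^{3}\cdot 3\cdot 9^{m}/8^{m+1}$, so that after division by the denominator only the factor $\tfrac{3}{8}(9/8)^{m}$ remains, matching the target asymptotics. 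The final step is to verify positivity of this surviving term, which follows from the fact that $y$ was chosen as the maximum branch of the optimization, together with the positivity of the denominator guaranteed by the preceding proposition; combining this with the bound $\un(n)\le|q_{1}^{(n)}|$ yields the desired inequality $\mult_{v(n)}(n)-\un(n)\sim\tfrac{3}{8}(9/8)^{m}>0$.
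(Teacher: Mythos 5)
Your proposal follows exactly the paper's strategy: substitute the chosen $y$ (with $n=2m$ and the sign flipped), rewrite $\mult_{v(n)}=\frac{\alpha+\beta\sqrt g}{\gamma+\delta\sqrt g}$, form $\mult_{v(n)}-\un=\frac{k+v+u+w}{s(\gamma+\delta\sqrt g)}$ with the same definitions of $k,v,u,w$, and then use the two layers of conjugation to free the radicals. Your observation that the denominator asymptotics carry over from Lemma \ref{lemafinal} after the substitution $n=2m$ is correct: $2^{10n+16}3^{n+1}n^3\mid_{n=2m}=2^{20m+19}3^{2m+1}m^3$, which matches the paper. You also correctly flag where the new content lives — in the surviving terms of the numerator after the double conjugation.

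However, you never actually perform that computation, which is the entire substance of the proof. You write ``I expect the dominant contribution of the freed numerator to be of order $2^{10n+16}3^{n+1}n^{3}\cdot 3\cdot 9^{m}/8^{m+1}$'' — but this is obtained by taking the target answer $\frac{3}{8}(9/8)^m$ and multiplying it by the denominator, i.e.\ it is reverse-engineered rather than derived. The place where the new vector structure genuinely changes the asymptotics is the inner conjugation step, specifically the computation of $s^2-t^2=(2ku)^2-(2vw)^2$: in Lemma \ref{lemafinal} (after substituting $n=2m$) this grows like $-2^{82m+78}3^{10m+4}m^{15}$, whereas for the new vector the paper finds $-2^{83m+76}3^{10m+5}m^{15}$, a discrepancy by a factor of $\frac{3}{4}2^m$, and it is exactly this factor that turns a vanishing difference into an exponentially exploding one. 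That comparison is not something you can ``inherit'' from the previous lemma, nor infer from the vague remark that the head entries $-2^{m-i}$ produce ``contributions of size $2^{3m}$, $4^{m}$ and $6^{m}$ simultaneously.'' Without carrying out the dominant-term analysis of $k,v,u,w$ for the new vector, then computing $r$, $s-t$, $s^2-t^2$, and finally $k+v+u+w\sim 2^{17m+16}3^{4m+2}m^3$ as the paper does, the conclusion is merely asserted, not proved.
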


\begin{proof}
We proceed as in the previous section taking special care when freeing radicals and using the strategy of considering conjugates. In particular, we compute $\mult_{v}-\un$. We proceed again step by step writing $y=\frac{f+\sqrt{g}}{h}$ so $y^2=\frac{f^2+g+2f\sqrt{g}}{h^2}$ and substitute these values in the expression of $\mult_{v}=\frac{N}{D}$ and kill denominators inside $N$ and $D$ writing $\mult_{v}=\frac{N(y)}{D(y)}=\frac{h^{2}N(y)}{h^{2}D(y)}$ in order to obtain the manageable expression $\mult_{v}=\frac{\alpha+\beta\sqrt{g}}{\gamma+\delta\sqrt{g}}.$ Remembering again $\un=\frac{p+r\sqrt{q}}{s}$, we develop conveniently the expression $\mult_{v}-\un=$ $$\frac{k+v+u+w}{s(\gamma+\delta\sqrt{g})}$$ with $k:=s\alpha-p\gamma,v:=(s\beta-p\delta)\sqrt{g},u:=-r\gamma\sqrt{q}$ and $w:=-r\delta\sqrt{gq}$. The dominant terms of the sums in our decomposition of the numerator are \begin{enumerate}
\item $k\sim -2^{22 m+19} 3^{2 m+1} m^4$,
\item $v\sim(3^{2 m+1} 4^{8 m+7} m^3)\sqrt{2^{6 n+8} n^{2}}=2^{22 m+19} 3^{2 m+1} m^4$,
\item $u\sim-(-2^{16 m+15} 3^{2 m+1} m^3)\sqrt{2^{6n+6} n^{2}}=-2^{22 m+19} 3^{2 m+1} m^4$,
\item $w\sim(3^{2 m+1} 4^{5 (m+1)} m^2)\sqrt{(2^{6 n+8} n^{2})(2^{6n+6} n^{2})}=2^{22 m+19} 3^{2 m+1} m^4$.\end{enumerate} As happened in the previous section, these dominant terms annihilate so we have to keep track again of the surviving terms freeing radicals through the use of conjugates. We want to be able to estimate correctly the dominant behaviour of both the numerator and the denominator and, as in the previous section, we have the luck that the conjugate expressions in the numerator $(k+u)-(v+w)\sim$ \begin{gather*}-2^{22 m+20} 3^{2 m+1} m^4-2^{22 m+20} 3^{2 m+1} m^4=-2^{22 m+21} 3^{2 m+1} m^4\end{gather*} and in the denominator \begin{gather*}\gamma-\delta\sqrt{g}\sim 2^{12 m+12} m^3-(-2^{6 m+7} m^2)\sqrt{2^{12 m+10} m^2}=\\2^{12 m+12} m^3-(-2^{6 m+7} m^2)(2^{6 m+5} m)=2^{12 m+13} m^3\end{gather*} do not annihilate. We use the good behaviour of these conjugates to free some radicals via multiplication. We proceed similarly with the denominator. This is what we do next.

As managing the denominator is easy, we proceed first with the numerator. We do this multiplying by the nice conjugates we saw above. This will free some radicals. We warn the reader that in the next explanation of the structure of the computations involved all the names are locally set. As we have seen that $(k+u)-(v+w)$ is dominantly $-2^{22 m+21} 3^{2 m+1} m^4$, we can multiply by it in order to get a nice difference of squares. Hence, multiplying the numerator $k+u+v+w$ by $(k+u)-(v+w)$ we obtain precisely $(k+u)^2-(v+w)^2=k^2+u^2+2ku-v^2-w^2-2vw=(k^2+u^2-v^2-w^2)+(2ku)+(-2vw)=r+s+t.$ We will again need a further use of conjugates to determine the real growth of this because the expressions of $s$ and $t$ contain $\sqrt{q}.$ We have to see first what is the growth of the conjugate $r-(s+t)$. In order to see this, we first need to establish the real growth of $s+t$, which is not immediate because there is a cancellation again as it is easy to see that $ku\sim (2^{22 m+19} 3^{2 m+1} m^4)^2\sim vw$. Thus the use of conjugates is necessary again. It is clear that $s-t=(2ku)-(-2vw)=2(ku+vw)\sim 4(2^{22 m+19} 3^{2 m+1} m^4)^2=2^{44 m+40} 3^{4 m+2} m^8$ while computing we can see that $(s+t)(s-t)=s^2-t^2=(2ku)^2-(2vw)^2=4qr^{2}(((s\alpha-p\gamma)\gamma)^{2}-((s\beta-p\delta)g\delta)^{2})\sim-2^{83 m+76} 3^{10 m+5} m^{15}$ so we obtain that $$s+t=\frac{s^{2}-t^{2}}{s-t}\sim\frac{-2^{83 m+76} 3^{10 m+5} m^{15}}{2^{44 m+40} 3^{4 m+2} m^8}=-2^{39 m+36} 3^{6 m+3} m^7.$$ As $r\sim-2^{39 m+36} 3^{6 m+3} m^7$, we get that the sum $r+s+t\sim-2^{39 m+37} 3^{6 m+3} m^7$ and therefore we obtain $k+v+u+w=\frac{(k+u)^{2}-(v+w)^{2}}{(k+u)-(v+w)}\sim\frac{-2^{39 m+37} 3^{6 m+3} m^7}{-2^{22 m+21} 3^{2 m+1} m^4}=2^{17 m+16} 3^{4 m+2} m^3.$

Proceeding similarly with the denominator we get that the conjugate of the problematic factor is dominantly $\gamma-\delta\sqrt{g}\sim2^{12 m+13} m^3$ while the resulting difference of squares is dominantly $\gamma^2-\delta^{2}g\sim2^{24 m+25} m^5$ so, finally, we can see that the problematic factor is dominantly $\gamma+\delta\sqrt{g}\sim\frac{2^{24 m+25} m^5}{2^{12 m+13} m^3}=2^{12 m+12} m^2$ so, as $s\sim2^{8 m+7} 3^{2 m+1} m,$ the denominator is dominantly $$s(\gamma+\delta\sqrt{g})\sim2^{20 m+19} 3^{2 m+1} m^3.$$ All in all, we see that the difference is dominantly $$\frac{2^{17 m+16} 3^{4 m+2} m^3}{2^{20 m+19} 3^{2 m+1} m^3}=2^{-3 m-3} 3^{2 m+1}=\frac{3}{2^3}\frac{3^{2m}}{2^{3m}}=\frac{3}{8}\left(\frac{9}{8}\right)^m,$$ as we wanted to see.
\end{proof}

The consequences of this lemma are clear. In particular, this result shows us that we obtained the best bound so far using the relaxation. The analysis of this is the content of the next section.

\section{Best bound in this work}

We immediately obtain the next corollary. We will see that we could even go beyond now.

\begin{corolario}[Exponential explosion in the difference]
The difference between $\mult_{v(n)}(n)$ and $\un(n)$ grows exponentially when $n=2m\to\infty$ by steps of size $2$, that is, through even numbers.
\end{corolario}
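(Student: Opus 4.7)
The plan is straightforward: this corollary is essentially a repackaging of the asymptotic equivalence established in the preceding lemma, reframed in terms of the original variable $n$ rather than the parity parameter $m$. So I would not re-derive any of the heavy asymptotic work; instead, I would directly cite the lemma and translate the growth rate.

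First, I would recall that the preceding lemma states the asymptotic equivalence
\begin{equation*}
\mult_{v(n)}(n) - \un(n) \;\sim\; \frac{3}{8}\left(\frac{9}{8}\right)^{m}
\end{equation*}
as $n = 2m \to \infty$, and in particular guarantees that the difference is eventually positive. Then I would observe that because $\frac{9}{8} > 1$, the right-hand side is an exponential function of $m$, and therefore grows without bound.

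Next I would translate the bound back into the variable $n$. Substituting $m = n/2$ gives
\begin{equation*}
\frac{3}{8}\left(\frac{9}{8}\right)^{m} \;=\; \frac{3}{8}\left(\frac{9}{8}\right)^{n/2} \;=\; \frac{3}{8}\left(\frac{3}{2\sqrt{2}}\right)^{n},
\end{equation*}
with base $\frac{3}{2\sqrt{2}} > 1$. Hence $\mult_{v(n)}(n) - \un(n)$ is asymptotically equivalent to a sequence of the form $c \cdot \rho^{n}$ with $c > 0$ and $\rho > 1$ along the subsequence of even indices $n = 2m$, which is exactly the definition of exponential growth through even values of $n$.

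I do not anticipate any real obstacle here, since all the symbolic manipulation, the radical management, and the conjugation tricks have already been carried out in the lemma. The only point where I would be slightly careful is to emphasize that the conclusion is a statement about the subsequence of even $n$ only (not about all $n$), matching precisely the convention $n = 2m$ under which the lemma was proved; the odd case would require a separate choice of approximating eigenvector following the pattern announced in the convention at the start of the search for a good family.
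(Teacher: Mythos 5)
Your proposal is correct and follows essentially the same route as the paper: the paper's own proof simply observes that $\tfrac{3}{8}\left(\tfrac{9}{8}\right)^m\to\infty$ exponentially because $\tfrac{9}{8}>1$, which is exactly your reading of the lemma. Your additional translation to the base $\tfrac{3}{2\sqrt{2}}>1$ in the variable $n$ is a harmless elaboration that the paper leaves implicit.
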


\begin{proof}
It is clear because $\frac{3}{8}\left(\frac{9}{8}\right)^m\to\infty \mbox{\ when\ } m\to\infty$ and this happens exponentially because $\frac{9}{8}>1.$
\end{proof}

As we see, computing the $y$ in the previous section gave us a hint for a $y$ to use in this case. This choice of $y$ eventually allowed us to carry out the numerical experiment that gave us this last nice bound. However, we could have done it even better directly proceeding similarly as we did in the previous chapter.

\begin{remark}[Better $y$]
A straightforward exercise that should give now an even better bound is to compute the optimal $y$ for the current expression at hand as we did in the previous section. We leave it as an exercise because we obtained already what we wanted using the current suboptimal $y$: we showed that the bound obtained through the multivariate method drastically improves by an \textit{exponential} term with respect to the univariate bound.
\end{remark}

Thus, we have the next nice and easy exercise. Its solution would consist in repeating the whole process that we developed in the previous chapter but for the new sequence of vectors of the form $v$ we use to linearize the relaxation in this current chapter.

\begin{ejercicio}[Finding such $y$]
Find the optimal $y$ for the new family of vectors $v$ used in this chapter and compare it with our current best bound obtained using the suboptimal $y$ obtained through the computations in the previous chapter.
\end{ejercicio}

This finishes our analysis of the bounds obtained through the relaxation. Now we want to compare these bounds with other methods traditionally used to compute bounds for the extreme roots of the univariate Eulerian polynomials. In particular, we will closely look at the work of Sobolev on the bounds and estimations of these roots in the next chapter.

\chapter[Comparisons with Sobolev]{Comparisons with the Estimations of Roots of Univariate Eulerian Polynomials in Sobolev's Work}\label{ChComparisons}


We analyze here error, bounds and asymptotics in Sobolev approach to the roots of Eulerian polynomial. The work of Sobolev in these polynomials can be found in four papers of his selected works translated from Russian in \cite{sobolev2006selected}. We will refer here to these papers translated to English in the collection we just cited instead of the original ones.


\section{Sobolev introduction of Eulerian polynomials}

Sobolev provides bounds and asymptotics for Eulerian polynomials using different techniques and coming from a different approach. In particular, Sobolev uses a different definition for our Eulerian polynomials. In fact, he works with the so called Euler-Frobenius polynomials \footnote{We believe that this difference in the nomenclature, paired with the fact that the original publication was in Russian, provoked that the effect that this result got lost in the literature. There will be more differences in nomenclature, especially when we refer to the method that Sobolev uses to obtain his bound competing with us.} which we introduced in Definition \ref{eulerfrobenius} and which we recall here.

\begin{reminder}[Euler-Frobenius polynomials]
Let $n\in\mathbb{N}$ be a nonnegative integer. We define the $n$-th Euler-Frobenius polynomial as $$E_{n}(x):=\frac{(1-x)^{n+2}}{x}\left(x\frac{d}{dx}\right)^{[n]}\frac{x}{(1-x)^2},$$ where $[n]$ denotes the $n$-th power of the symbolic convolution.
\end{reminder}

We need to be careful when looking at this recurrent definition in other sources. In particular, there is a big problem with indices in the literature.

\begin{warning}[Bad indexing attacks again]
We have mentioned several times along the lines of this work that many references in the literature use a confusing indexing for Eulerian polynomials. This indexing usually makes the $n$-th polynomial have degree $n-1$ and $n-1$ variables. We strongly dislike that way of naming because it is confusing. As a consequence, the previous recurrence may present a different form when viewed in other works, but this is the correct translation to our (we believe) more correct choice of indexing these polynomials.
\end{warning}

This inconsistent indexing is also the root of other confusions. Luckily, Sobolev in \cite{sobolev2006selected} indexed things the same way we do here.

\setlength{\emergencystretch}{3em}%
\begin{remark}[Good indexing in Sobolev]
Note that the indexing in Sobolev's approach in \cite{sobolev2006selected} is actually in line with our indexing. We are therefore not surprised by the fact that this more accurate and tidy indexing is in the work that gives the most accurate results for the roots of these polynomials. As if the good way to index these polynomials is the degree coherent one we use here!
\end{remark}
\setlength{\emergencystretch}{0em}%

Now it is easy to prove by induction, and so we did above in Proposition \ref{samenesseulerfrob}, that these are actually the same as our Eulerian polynomials that just happen to have a different name, use and purpose in the (numerical) analysis literature. Thus, we ensured there that Sobolev and we speak about the same polynomials being used in different mathematical fields or settings. Hence, it is reasonable to translate Sobolev's results to analyze how they fit within our context so we can relate both works.

\section{Sobolev treatment of the roots of Eulerian polynomials}

Now we analyze how Sobolev deals with the roots of these polynomials. Contrary to us, he is initially interested in the distribution of all the roots.

\begin{observacion}[Growth]
Sobolev sees fast that the absolute value of the extreme roots grows and then he sets the tools to develop a further study of the behaviour of these roots at infinity.
\end{observacion}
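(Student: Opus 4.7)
The plan is to read this observation as a two-part claim: first that the extremal roots of the Euler-Frobenius polynomials $E_n$ grow without bound as $n \to \infty$, and second that Sobolev's analytic setup equips us with enough machinery to quantify that growth at infinity. For the first part I would simply invoke the machinery already established in this excerpt: since Proposition \ref{samenesseulerfrob} identifies $E_n$ with $A_n$, Theorem \ref{refinementStanley} immediately yields $|q_1^{(n)}| \sim (2)^{n+1}$, which is in particular divergent. That would discharge the ``sees fast that the absolute value of the extremal roots grows'' clause of the observation without any further analytic effort, bypassing Sobolev's own route which would instead lean on the pole structure of $\frac{x}{(1-x)^2}$ under the symbolic convolution operator $\left(x\frac{d}{dx}\right)^{[n]}$ in Definition \ref{eulerfrobenius}.

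For the second part, which is really an exposition of the tools Sobolev mobilises, my plan is to extract from the definition of $E_n$ the rational generating function machinery that is implicit there. First I would homogenise the operator $\left(x\frac{d}{dx}\right)^{[n]}$ so that one sees explicitly the way the $(n+2)$-th power of $(1-x)$ at the front cancels the singularity at $x=1$, leaving a polynomial of degree $n$ whose roots must all be real and negative (as they are already known to be by Theorem \ref{realstable} and the dehomogenisation argument of Proposition \ref{dehomo}). Next I would pass to the reciprocal polynomial $\rec(E_n)$: studying roots of $E_n$ at infinity in $n$ is equivalent to studying the behaviour of the small roots of $\rec(E_n)$, which by palindromicity of $E_n = A_n$ are again roots of $E_n$ itself. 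This reciprocation trick is the natural bridge between Sobolev's analytic approach and the combinatorial one, and it is what allows iterative root-squeezing methods (notably DLG-type iterations hinted at in the introduction) to be applied.

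The main obstacle is neither of the two steps individually but the passage between them. In our framework, we have easy access to $|q_1^{(n)}|$ via the relaxation and via Stanley-type arguments; in Sobolev's framework, the access is through the singularity structure of $\frac{x}{(1-x)^2}$ after repeated application of $x\frac{d}{dx}$. Matching the two asymptotic scales (our exponential scale $\{a^n\}$ from Convention \ref{conasy} versus Sobolev's analytic scale coming from the dominant pole at $x=1$) is the nontrivial part, because Sobolev's tools naturally produce information about all roots simultaneously, while our proof of the observation only needs the extremal one. I would therefore structure the ``proof'' as: verify the growth via the already-established first-order asymptotic $|q_1^{(n)}| \sim 2^{n+1}$, then announce (without full proof, since this is an observation) the generating-function, reciprocal-polynomial, and DLG-iteration machinery that Sobolev will use in the following sections to refine the growth beyond the first term, thereby reducing the observation to already-proved material plus a forward reference.
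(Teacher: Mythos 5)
This observation carries no proof in the paper at all: it is a purely expository remark, a one-sentence summary of the methodology Sobolev follows in his papers, inserted as signposting before the paper goes on to quote Sobolev's actual theorems. There is therefore nothing for your proposal to be compared against — the paper does not attempt to derive the claim from anything, it simply reports what Sobolev does. Your plan to ``discharge'' the growth clause by citing Theorem \ref{refinementStanley} and Proposition \ref{samenesseulerfrob} is mathematically sound, and your instinct at the very end — ``announce (without full proof, since this is an observation)'' — shows you sensed this; but the bulk of your proposal, the reciprocal-polynomial passage, the homogenisation of the convolution operator, the matching of asymptotic scales, is solving a problem the paper never poses here. In particular the observation is not claiming that \emph{we} can see the growth of $|q_1^{(n)}|$ (which indeed follows from material earlier in the dissertation), but that \emph{Sobolev} sees it quickly in his own framework, which is a historical remark about the structure of his papers and not something derivable from the dissertation's own machinery. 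If you want to be faithful to the source, the correct ``proof'' is simply a pointer to \cite[Chapters 25--26]{sobolev2006selected}; anything more is an independent (and in your case somewhat overengineered) reconstruction rather than a reading of the paper.
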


The first result that he obtains is about the order relations between these roots lying in the interval $[-1,0]$. For this, he goes on to study similar behaviour of roots of related polynomials in the interval $[0,1]$. This strategy of looking at transformations of the original polynomial will prove itself fundamental in the future. Finally, he closes this first paper on Eulerian polynomials with the next result that we collect here using our notations.

\begin{teorema}[Asymptotic behaviour in a interval in $\mathbb{R}_{-}$]
Consider $M>1$ and let the roots $q_{j}^{(k)}$ of the polynomial $E_{k}(x)$ be in the interval $[-M,-\frac{1}{M}]\subseteq\mathbb{R}$. Write $k=2m$ or $k=2m+1$ depending on parity. Then, for a sufficiently large $m$, the roots  $q_{j}^{(k)}$ can be written as $$q_{j}^{(2m)}=-\exp\pi\left[\tan\left(\frac{j\pi}{2m+2}+\frac{\pi}{4m+4}\right)+\epsilon_{j}^{(2m)}\right]$$ and $$q_{j}^{(2m-1)}=-\exp\pi\left[\tan\frac{j\pi}{2m+2}+\epsilon_{j}^{(2m-1)}\right],$$ where $|\epsilon_{j}^{(k)}|\leq C\eta^{k}$ with $C>0$ and $\eta<1$ depend only on $M$.
\end{teorema}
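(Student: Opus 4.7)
The plan is to exploit the identification $E_k = A_k$ from Proposition \ref{samenesseulerfrob} together with the classical Worpitzky-type representation
\[
\frac{E_k(x)}{(1-x)^{k+1}} \;=\; \sum_{j\geq 0}(j+1)^{k}\, x^{j},
\]
valid in $|x|<1$, and its meromorphic continuation. This reduces the problem of locating the zeros of $E_k$ on $[-M,-1/M]$ to a problem about a transcendental function of the shape
\[
F_k(x) \;=\; \sum_{j\geq 0}(j+1)^{k}\, x^{j} \;-\; \text{(reflected partial sums)},
\]
exploiting the palindromicity $x^{k}E_k(1/x) = E_k(x)$ to write $E_k$ as the sum of a truncated series and its reflected image. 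The first step I would carry out is packaging this decomposition in a way that the dominant terms of $F_k$ become a geometric-type sum, so that $E_k(x)=0$ can be recast as a clean transcendental equation $G_k(x)=0$ with $G_k$ admitting a limiting function $G_\infty$ as $k\to\infty$.

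My second step would be to apply a pair of changes of variable chosen so that the asymptotic locations in the statement become the obvious zeros of the limiting equation. The form $x=-\exp(\pi s)$ maps the annulus $[-M,-1/M]$ to a compact interval in $s$, and the further substitution $s=\tan\theta$ maps this interval to a segment of angles; the limiting equation $G_\infty=0$ then becomes, up to unimodular factors, of the form $\sin((k+2)\theta + \delta_k) = 0$ for some phase $\delta_k\in\{0,\pi/2\}$ depending on the parity of $k$. This is precisely what produces the two cases in the statement: the solutions are $\theta_j = j\pi/(2m+2)$ in one parity class and shifted by $\pi/(4m+4)$ in the other, which translates back through the substitutions to the asserted tangent expressions.

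With the limiting zeros identified, my third step is a Rouché-type perturbation argument. On circles of a carefully chosen radius around each predicted zero in the $\theta$-variable, I would compare $G_k$ with $G_\infty$. Because the neglected remainders in the representation of $E_k$ are geometric series remainders of ratio bounded away from $1$ on the preimage of $[-M,-1/M]$ (concretely, bounded by some $\eta<1$ depending only on $M$), the uniform bound $\|G_k-G_\infty\|\leq C\eta^{k}$ holds. Dividing by the derivative of $G_\infty$ at its simple zeros gives the asserted displacement estimate $|\epsilon_j^{(k)}|\leq C\eta^{k}$, uniformly in $j$.

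The main obstacle I anticipate is making the perturbation argument uniform in $j$ as $j$ varies over the full range $1,\ldots,k$, since the predicted zeros $\theta_j$ accumulate at the endpoints of the $\theta$-interval; there the derivative of $G_\infty$ can become small or the neglected series can compete with the retained terms. Handling this likely requires a nonuniform choice of Rouché radius, or a separate treatment of boundary zeros exploiting the symmetry $x\mapsto 1/x$ that swaps small and large roots, reducing the boundary analysis to a single side. Once this uniformity is established, the constants $C$ and $\eta$ can be read off from the geometry of the annulus $[-M,-1/M]$, explaining their dependence solely on $M$.
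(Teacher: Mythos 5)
The dissertation does not prove this theorem: it quotes it verbatim (in the author's notation) from Sobolev's collected works as a summary of his first paper on Eulerian polynomials, and passes directly to the next result without argument. There is therefore no ``paper proof'' for you to match; I will evaluate your reconstruction on its own.

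The tools you invoke --- the Worpitzky-type identity $E_k(x)=(1-x)^{k+1}\sum_{j\ge 0}(j+1)^k x^j$, palindromicity, the substitution $x=-e^{\pi y}$, and a Rouch\'e comparison --- are the right ones to have in hand, but the load-bearing step is asserted rather than derived. You claim that after the further substitution $y=\tan\theta$ the limiting equation ``becomes, up to unimodular factors, of the form $\sin\bigl((k+2)\theta+\delta_k\bigr)=0$.'' Nothing in your setup forces this. A ``geometric-type sum'' has zeros equally spaced in $\log|x|$, hence equally spaced in $y$, not in $\arctan y$; the $\tan$ arrangement is the nontrivial content of Sobolev's result and does not fall out of a geometric remainder estimate. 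The mechanism that actually produces it is the Mittag--Leffler (partial-fraction) expansion of the kernel in Definition~\ref{eulerfrobenius}. With $x=-e^{t}$ one has $\frac{x}{(1-x)^{2}}=-\frac{e^{t}}{(1+e^{t})^{2}}=\sum_{n\in\mathbb{Z}}\bigl(t-(2n+1)\pi i\bigr)^{-2}$, and applying $(x\,d/dx)^{k}=(d/dt)^{k}$ termwise yields, up to a factorial constant, $\sum_{n}\bigl(t-(2n+1)\pi i\bigr)^{-(k+2)}$; the prefactor $(1-x)^{k+2}/x$ is nonvanishing on the relevant $x$-interval, so the zeros of $E_{k}(-e^{t})$ there are the zeros of that sum. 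For $t=\pi y$ real the two nearest conjugate poles $n\in\{0,-1\}$ dominate, the rest contributing only $O(\eta^{k})$ with $\eta=\sqrt{\bigl((\log M)^2+\pi^2\bigr)/\bigl((\log M)^2+9\pi^2\bigr)}<1$; the dominant-pair condition $\operatorname{Re}(y-i)^{-(k+2)}=0$ linearizes in the angle $\arctan(1/y)$, forcing $(k+2)\arctan(1/y)$ to be an odd multiple of $\pi/2$, which after reindexing is exactly the $\tan$-of-equally-spaced-angles pattern of the statement, with the parity phase reflecting whether $y=0$ (i.e.\ $x=-1$) occurs among the solutions. This is the computation your outline omits; without it the scaffolding is correct but the $\tan$ spacing and the parity-dependent shift remain postulated, and the $\eta$ you mention is not tied to any concrete object. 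Once the Mittag--Leffler expansion is in place, the Rouch\'e step you sketch does close the argument, and the $M$-dependence of $C,\eta$ follows as you describe.
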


In a subsequent paper, building inspiration in results of Sirazhdinov, Sobolev turns his attention mainly towards what he calls \textit{the end roots} of these polynomials.  These are the roots that end up getting sufficiently close to $0$ or $-\infty$ in the sense that they lie out of the interval $[-M,-\frac{1}{M}]$ for $M>1$ used in the previous Theorem.

\begin{remark}[Unknown behaviour]
Notice that Sobolev mentions before stating the main theorem of this second paper that ``the asymptotic behaviour of a certain portion of the end roots remains unknown for now''. That seems to mean that, when he wrote the proof of such theorem, he did not know how to extend the techniques used in his proof to these other end roots whose asymptotic behaviour remained therefore out of the scope of the result in the previous paper and of the new result that he goes on to present just after that sentence.
\end{remark}

We collect here that main result in order to properly analyze its power and scope in comparison to the bounds we obtained in this thesis. As before, we just translate it to our notations and, for that and clarity, we need first to introduce some terms that Sobolev introduces in the middle of his proof during the derivation of his Equation (7) in his fourth paper dedicated to Eulerian polynomials in \cite{sobolev2006selected}. We first remember some notation.

\begin{recordatorio}
Remember that we can expand the Eulerian numbers as $$E(n+1,k):=\sum_{i=0}^{k}(-1)^{i}\binom{n+2}{i}(k+1-i)^{n+1}.$$ Sobolev calls $a_{s}^{(k)}:=E(k+1,s)$ for short.
\end{recordatorio}

Now we can deal with the expansions of these numbers as Sobolev does. This will help us to understand better his approach.

\begin{notacion}
We can expand the Eulerian number $$E(k+1,s)=(s+1)^{k+1}(1-u_{1}^{(k,s)}+u_{2}^{(k,s)}-\cdots)$$ with $\gls{ujks}:=\binom{k+2}{j}\left(\frac{s+1-j}{s+1}\right)^{k+1}.$ Fix some $s_{0}$ and take $s<s_{0}.$ Thus the number of terms of the form $u_{j}^{(k,s)}$ with $j\in[1,s]$ is therefore finite. Moreover, each of these numbers decreases exponentially when $k$ increases linearly. Therefore, there exists some $\nu>0$, depending only on our choice of $s_{0}$, such that $$E(k+1,s)=(s+1)^{k+1}(1+O(\exp(-\nu k))).$$ We call $\nu(s_{0})$ any such $\nu$.
\end{notacion}

This notation allows us to introduce the theorem in which Sobolev competes with our bounds. The possible refinements of the proof of this theorem constitute the main object of our analysis of Sobolev's work in this chapter.

\begin{teorema}[Sobolev's result on asymptotic behaviour of a portion of the end roots]\cite[Chapter 26, Theorem 1]{sobolev2006selected}
For sufficiently large $k$ we have the asymptotic expansion $$q^{(k)}_{s}=-\left(\frac{s+1}{s}\right)^{k+1}(1+\epsilon_{s}^{(k)}),$$ where $$|\epsilon_{s}^{(k)}|\leq\begin{cases}
    C\exp(-\nu(s_{0})k) &\mbox{\ if\ } s<s_{0} \mbox{\ and\ } \nu(s_{0})>0,\\ \epsilon &\mbox{\ if\ } s+1<\sqrt{\frac{k+1}{\log 2}}(1-\epsilon). $$
\end{cases}$$
\end{teorema}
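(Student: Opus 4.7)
The plan is to induct on $s$, leveraging two structural facts: the factorisation $A_{k}(x)=\prod_{i=1}^{k}(x+|q_{i}^{(k)}|)$ (available since $A_{k}$ is real-rooted with negative roots) and the quasi-exact asymptotic $E(k+1,s)=(s+1)^{k+1}(1+O(\exp(-\nu(s_{0})k)))$ recalled just before the statement. Together with the palindromicity $E(k+1,s)=E(k+1,k-s)$, these yield $e_{s}(|q_{1}^{(k)}|,\dots,|q_{k}^{(k)}|)=(s+1)^{k+1}(1+O(\exp(-\nu(s_{0})k)))$, where $e_{s}$ is the $s$-th elementary symmetric polynomial. The dominant monomial of $e_{s}$ is $|q_{1}^{(k)}|\cdots|q_{s}^{(k)}|$, which, assuming the theorem, telescopes to $\prod_{i=1}^{s}((i+1)/i)^{k+1}=(s+1)^{k+1}$, exactly matching the right-hand side. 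This motivates solving for $|q_{s}^{(k)}|$ inductively by factoring the leading monomial out of $e_{s}$ and absorbing the remainder into $\epsilon_{s}^{(k)}$.

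For the base case I would refine the proof of Theorem \ref{refinementStanley}. From $e_{1}=2^{k+1}-(k+2)$ and $e_{2}=3^{k+1}(1+O(\exp(-\nu k)))$, write $|q_{1}^{(k)}|=e_{1}-\sum_{i\geq 2}|q_{i}^{(k)}|$ and estimate the tail by $|q_{i}^{(k)}|\leq e_{2}/|q_{1}^{(k)}|$ for every $i\geq 2$, giving a tail of size $O(k\cdot (3/4)^{k+1}\cdot 2^{k+1})$; one bootstrapping round collapses this into the advertised bound $|\epsilon_{1}^{(k)}|\leq C\exp(-\nu(s_{0})k)$. The inductive step then reads $e_{s}=|q_{1}^{(k)}|\cdots|q_{s}^{(k)}|+R_{s}$, where $R_{s}$ sums over all $s$-subsets of $\{1,\dots,k\}$ other than $\{1,\dots,s\}$. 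Using the hypothesis to substitute the asymptotic form of $|q_{1}^{(k)}|,\dots,|q_{s-1}^{(k)}|$, dividing through by $|q_{1}^{(k)}|\cdots|q_{s-1}^{(k)}|$, and invoking $e_{s}/(s+1)^{k+1}=1+O(\exp(-\nu k))$, isolates $|q_{s}^{(k)}|$ modulo the relative size $R_{s}/(|q_{1}^{(k)}|\cdots|q_{s}^{(k)}|)$. Every offending monomial in $R_{s}$ replaces some $|q_{j}^{(k)}|$ with $j\leq s$ by a $|q_{j'}^{(k)}|$ with $j'\geq s+1$, and the cheapest such trade has relative cost $|q_{s+1}^{(k)}|/|q_{s}^{(k)}|=(1-1/(s+1)^{2})^{k+1}(1+o(1))$.

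The hard part will be the quantitative comparison between the combinatorial count of error monomials and the geometric decay of each. When $s<s_{0}$ is bounded, $(s+1)^{2}$ is a constant and the exponential decay $\exp(-(k+1)/(s+1)^{2})$ dwarfs every polynomial combinatorial factor, yielding the first bound $|\epsilon_{s}^{(k)}|\leq C\exp(-\nu(s_{0})k)$ with a rate $\nu(s_{0})>0$ that degrades as $s_{0}$ grows. When $s$ is allowed to scale with $k$, the decay rate per monomial shrinks to $\exp(-(k+1)/(s+1)^{2})$; the threshold $s+1<\sqrt{(k+1)/\log 2}\,(1-\epsilon)$ in the statement is exactly the regime where $(k+1)/(s+1)^{2}>\log 2/(1-\epsilon)^{2}$, which, after carefully tracking how the inductive error accumulated through the preceding $s-1$ stages is amplified into stage $s$, leaves just enough margin for the cumulative error to remain below any prescribed $\epsilon$. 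Verifying that the induction actually closes in this second regime, so that the errors do not snowball through the $s$ stages and overflow $\epsilon$, is the most delicate bookkeeping and is what ultimately pins down the precise form of the threshold.
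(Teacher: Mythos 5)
The theorem is cited from Sobolev, and the paper does not reprove it; but the surrounding sections of the paper (DLG method, Lemma~\ref{sobolema}, and the $\chi_{j}^{(k)}$ inequalities) faithfully reconstruct Sobolev's argument, so a comparison is still possible. Your route is genuinely different from Sobolev's: you propose a direct induction on $s$ using only the identities $e_{s}=E(k+1,s)$ between elementary symmetric functions and Eulerian numbers, whereas Sobolev first applies the DLG root-squaring step (so he also controls $b_{s}=\sum_{j}\nu_{s,j}^{2}$, the elementary symmetric functions of the \emph{squared} roots) and then extracts $\nu_{s,1}=|q_{1}^{(k)}|\cdots|q_{s}^{(k)}|$ via Lemma~\ref{sobolema}, which needs both $a_{s}$ and $b_{s}$.

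There are two gaps in your proposal. First, the inductive step is circular: to bound the remainder $R_{s}$ you price each trade at $|q_{s+1}^{(k)}|/|q_{s}^{(k)}|\approx\left(1-\frac{1}{(s+1)^{2}}\right)^{k+1}$, but this already assumes the asymptotic for $|q_{s+1}^{(k)}|$, which the induction at stage $s$ has not yet produced. This can be repaired by a two-round bootstrap inside each stage (first a crude half-rate bound on $|q_{s+1}^{(k)}|$ from $e_{s+1}\geq|q_{1}^{(k)}|\cdots|q_{s-1}^{(k)}|\,|q_{s+1}^{(k)}|^{2}$, then a refined one once the stage-$s$ lower bound is available), but you should say so explicitly.

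Second, and more seriously, the direct approach cannot reach the threshold $s+1<\sqrt{(k+1)/\log 2}\,(1-\epsilon)$ of the second branch. With only the first-moment data $a_{s}=e_{s}$, the relative error in isolating $\nu_{s,1}$ inside $e_{s}$ is of order $\rho:=|q_{s+1}^{(k)}|/|q_{s}^{(k)}|\approx\left(1-\frac{1}{(s+1)^{2}}\right)^{k+1}$, which forces $s+1\lesssim\sqrt{(k+1)/\log(1/\epsilon)}$ --- substantially smaller than Sobolev's bound once $\epsilon$ is small. Sobolev's Lemma~\ref{sobolema} is exactly what buys the missing power: writing $a\approx\nu_{1}(1+\rho)$ and $b\approx\nu_{1}^{2}(1+\rho^{2})$, one finds $2b-a^{2}\approx\nu_{1}^{2}(1-\rho)^{2}$, so the minorant $\tfrac{1}{2}\bigl(a+\sqrt{2b-a^{2}}\bigr)\approx\nu_{1}$ with the first-order term cancelling identically and leaving an $O(\rho^{2})$ error. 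That quadratic gain, which your argument has no access to because you never form the squared-root polynomial, is precisely what lets $s$ grow to near $\sqrt{(k+1)/\log 2}$. (This is also visible concretely in the paper's third-growth-term comparison: the naive first-moment estimate already breaks at the $(9/8)^{k+1}$ term, while the DLG-plus-Lemma minorant matches it.) So your approach would give only the bounded-$s$ branch, and the DLG step is not an optional convenience but the load-bearing ingredient for the full statement.
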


In particular, Sobolev provides an asymtpotic expansion for the roots. We will analyze what does this asymptotic expansion and its error estimation say about intervals containing the roots. That is, we will not work with the punctual estimation provided by the theorem but with intervals given in terms of minorants and majorants that we will extract through several refinements from of the techniques involved in the proof of this theorem. Thus, the main objective of the next section is building a first approach to this task.

\section{Sobolev's approach in depth}

The arguments of Sobolev can be dealt with much more precision and exactness than he decides to do. If we take care of these, we can greatly improve his stated results.

\begin{computacion}[Refining Sobolev's refinement] The last paper about the roots of Eulerian polynomials in \cite{sobolev2006selected} gives another estimate for the error in Equation (16), which actually rests in Equation (11), which eventually comes from Equation (6) after applying logarithms. Therefore we look back instead at Equation (11) and disentangle it into a form helpful for us to apply exponentials again. Thus we obtain the inequalities $$\frac{E(k+1,j)}{E(k+1,j-1)}\frac{1+\sqrt{1-4\chi_{j}^{(k)}}}{2\sqrt{1-2\chi_{j-1}^{(k)}}}\leq-q_{j}^{(k)}\leq\frac{E(k+1,j)}{E(k+1,j-1)}\frac{2\sqrt{1-2\chi_{j}^{(k)}}}{1+\sqrt{1-4\chi_{j-1}^{(k)}}},$$ where we define \begin{gather*}\gls{chijk}:=\frac{E(k+1,j-1)E(k+1,j+1)}{E(k+1,j)^2}-\frac{E(k+1,j-2)E(k+1,j+2)}{E(k+1,j)^2}+\\\frac{E(k+1,j-3)E(k+1,j+3)}{E(k+1,j)^2}-\cdots.\end{gather*} Notice at this points that values of $\chi_{j}^{(k)}>\frac{1}{4}$ are problematic because there negatives appear under the square roots. Since we are centering currently our attention into the case $j=1$, we explicitly write down the greatly simplified form of that case having in mind that $\chi_{0}^{(k)}=0$ and $\chi_{1}^{(k)}=\frac{E(k+1,0)E(k+1,2)}{E(k+1,1)^2}=\frac{E(k+1,2)}{E(k+1,1)^2}=\frac{3^{k+1}+o(2^{k+1})}{(2^{k+1}+o(1))^2}\to0$ when $k\to\infty$. Hence, for $k$ big enough, we are in the position where these bounds work giving the inequality  $$E(k+1,1)\frac{1+\sqrt{1-4\chi_{1}^{(k)}}}{2}\leq-q_{1}^{(k)}\leq E(k+1,1)\sqrt{1-2\chi_{1}^{(k)}}$$ so we obtain, for $k$ big enough, $$-\sqrt{1-2\chi_{1}^{(k)}}\leq \frac{q_{1}^{(k)}}{E(k+1,1)}\leq-\frac{1+\sqrt{1-4\chi_{1}^{(k)}}}{2}.$$ Now, from this we can try to extract lower order terms of $q_{1}^{(k)}$ proceeding similarly as how we proceeded with our refinements before. If we look first at the bound that mostly interests us, that is, the upper bound (the one in the same direction as the one provided by the relaxation), then we have to look at the inequality $$q_{1}^{(k)}+2^{k+1}\leq -E(k+1,1)\frac{1+\sqrt{1-4\chi_{1}^{(k)}}}{2}+2^{k+1}.$$ The majorant in the RHS expands as \begin{gather*}-\left(\sum_{i=0}^{1}(-1)^{i}\binom{k+2}{i}(1+1-i)^{k+1}\right)\frac{1+\sqrt{1-4\chi_{1}^{(k)}}}{2}+2^{k+1}=\\-\left(2^{k+1}-\binom{k+2}{1}\right)\frac{1+\sqrt{1-4\chi_{1}^{(k)}}}{2}+2^{k+1}=\\-\left(2^{k+1}-(k+2)\right)\left(\frac{1}{2}+\frac{\sqrt{1-4\chi_{1}^{(k)}}}{2}\right)+2^{k+1} 
\end{gather*} and, at this point, we have to deal again with the problem of cancellations because, although it might look like it, this last expression does not grow like $k$, as doing that would mean killing hidden growth inside the radical. Let study this in detail. As the factor $k+2$ is too small to provide any important growth at this stage, we look at the growth of the rest $-2^{k+1}\sqrt{1-4\chi_{1}^{(k)}}+2^{k+1}=-\sqrt{4^{k+1}-4^{k+1}4\chi_{1}^{(k)}}+2^{k+1}=$ \begin{gather*}\frac{(2^{k+1}+\sqrt{4^{k+1}-4^{k+1}4\chi_{1}^{(k)}})(2^{k+1}-\sqrt{4^{k+1}-4^{k+1}4\chi_{1}^{(k)}})}{2^{k+1}+\sqrt{4^{k+1}-4^{k+1}4\chi_{1}^{(k)}}}=\\\frac{4^{k+1}-(4^{k+1}-4^{k+1}4\chi_{1}^{(k)})}{2^{k+1}+\sqrt{4^{k+1}-4^{k+1}4\chi_{1}^{(k)}}}\sim \frac{3^{k+1}4}{2^{k+1}2}=2\left(\frac{3}{2}\right)^{k+1}\end{gather*} that after dividing by the two in the denominator shows that the second growth term of the RHS majorant is therefore $\left(\frac{3}{2}\right)^{k+1},$ which, as we already know, coincides with the second term of the growth of the root. Thus the upper bound provided by Sobolev is, in its second term, as accurate as our bound. In order to see if the bounds obtained by Sobolev actually establish the second growth term, we have to look also at the minorant, which gives \begin{gather*}
    -E(k+1,1)\sqrt{1-2\chi_{1}^{(k)}}+2^{k+1}\leq q_{1}^{(k)}+2^{k+1}.
\end{gather*} Now the LHS of this minorization expands through difference of squares as \begin{gather*}
    \frac{(2^{k+1}-(2^{k+1}-(k+2))\sqrt{1-2\chi_{1}^{(k)}})(2^{k+1}+(2^{k+1}-(k+2))\sqrt{1-2\chi_{1}^{(k)}})}{2^{k+1}+(2^{k+1}-(k+2))\sqrt{1-2\chi_{1}^{(k)}}}\\\sim\frac{2^{2k+2}-(2^{k+1}-(k+2))^{2}(1-2\chi_{1}^{(k)})}{2^{k+2}}=\\\frac{2^{2k+2}-(2^{2k+2}+(k+2)^{2}-2^{k+2}(k+2))(1-2\chi_{1}^{(k)})}{2^{k+2}}\sim\\(k+2)(1-2\chi_{1}^{k})=(k+2)(1-2\frac{E(k+1,2)}{E(k+1,1)^2})=\\(k+2)(1-2\frac{\sum_{i=0}^{2}(-1)^{i}\binom{k+2}{i}(2+1-i)^{k+1}}{(2^{k+1}-(k+2))^{2}})=\\(k+2)\frac{(2^{k+1}-(k+2))^{2}-2(3^{k+1}-(k+2)2^{k+1}+\frac{(k+2)(k+1)}{2})}{(2^{k+1}-(k+2))^{2}}\sim k,
\end{gather*} which tells us that this other bound is actually really bad and does not get close enough to the actual root to allow us establish the second growth term $(\frac{3}{2})^{k+1}$ through a sandwich of inequalities by this side. Therefore, coming back to the majorant, we want to compare it to our bound obtained through the relaxation, but before doing this main exercise we can look again closely at it in case we can elegantly extract a third growth term looking as we conjectured. Remember and notice anyway that Sobolev bounds, as explicitly stated by him in the proof of the theorem, do not allow even establishing the second growth term. Then we have to look now at the inequality $$q_{1}^{(k)}+2^{k+1}-\left(\frac{3}{2}\right)^{k+1}\leq -E(k+1,1)\frac{1+\sqrt{1-4\chi_{1}^{(k)}}}{2}+2^{k+1}-\left(\frac{3}{2}\right)^{k+1}.$$ The majorant in the RHS expands therefore now as \begin{gather*}-\left(\sum_{i=0}^{1}(-1)^{i}\binom{k+2}{i}(1+1-i)^{k+1}\right)\frac{1+\sqrt{1-4\chi_{1}^{(k)}}}{2}+2^{k+1}-\left(\frac{3}{2}\right)^{k+1}=\\-\left(2^{k+1}-\binom{k+2}{1}\right)\frac{1+\sqrt{1-4\chi_{1}^{(k)}}}{2}+2^{k+1}-\left(\frac{3}{2}\right)^{k+1}=\\-\left(2^{k+1}-(k+2)\right)\left(\frac{1}{2}+\frac{\sqrt{1-4\chi_{1}^{(k)}}}{2}\right)+2^{k+1}-\left(\frac{3}{2}\right)^{k+1}.
\end{gather*} The growth in the majorant comes therefore from $-2^{k}(1+\sqrt{1-4\chi_{1}^{(k)}})+2^{k+1}-\left(\frac{3}{2}\right)^{k+1}=2^{k}-\left(\frac{3}{2}\right)^{k+1}-2^{k}\sqrt{1-4\chi_{1}^{(k)}}=$\begin{gather*}
\frac{(2^{k}-\left(\frac{3}{2}\right)^{k+1}-2^{k}\sqrt{1-4\chi_{1}^{(k)}})(2^{k}-\left(\frac{3}{2}\right)^{k+1}+2^{k}\sqrt{1-4\chi_{1}^{(k)}})}{2^{k}-\left(\frac{3}{2}\right)^{k+1}+2^{k}\sqrt{1-4\chi_{1}^{(k)}}}=\\\frac{(2^{k}-\left(\frac{3}{2}\right)^{k+1})^{2}-4^{k}(1-4\chi_{1}^{(k)})}{2^{k+1}}=\frac{4^{k}+\left(\frac{3}{2}\right)^{2k+2}-3^{k+1}-4^{k}+4^{k+1}\chi_{1}^{(k)}}{2^{k+1}}=\\\frac{\left(\frac{3}{2}\right)^{2k+2}-3^{k+1}+4^{k+1}\chi_{1}^{(k)}}{2^{k+1}}\end{gather*} remembering that $4^{k+1}\chi_{1}^{(k)}=4^{k+1}\frac{E(k+1,2)}{E(k+1,1)^2}$ that we can expand as $$4^{k+1}\frac{\sum_{i=0}^{2}(-1)^{i}\binom{k+2}{i}(2+1-i)^{k+1}}{(2^{k+1}-(k+2))^{2}}=3^{k+1}-(k+2)2^{k+1}+\frac{(k+2)(k+1)}{2}$$ so we see now easily that the growth is $\frac{1}{2^{k+1}}\left(\frac{3}{2}\right)^{2k+2}=\frac{3^{2k+2}}{2^{3k+3}}=\left(\frac{9}{8}\right)^{k+1}.$
\end{computacion}

These computations allow us to write down several terms of Sobolev's estimation that will be important for our future analysis on improvements of these estimations.

\begin{teorema}[Sobolev's bound until the third term]
Let $m$ be minorant and $M$ the majorant explicitly stated by Sobolev for the leftmost root of the $k$th Eulerian polynomial $q_{1}^{(k)}$ so that $m\leq q_{1}^{(k)}\leq M.$ Then $$m\sim-2^{k+1}+k+o(k) \mbox{\ and\ } M\sim-2^{k+1}+\left(\frac{3}{2}\right)^{k+1}+\left(\frac{9}{8}\right)^{k+1}+o(\left(\frac{9}{8}\right)^{k}).$$  
\end{teorema}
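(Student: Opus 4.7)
The plan is to follow the computation chain that the author has already outlined just above the theorem statement, package it carefully into a formal argument, and check that every asymptotic identity is valid in the almost-exponential scale fixed by Convention \ref{conasy}. The starting point is Sobolev's explicit pair of inequalities
\[
E(k+1,1)\,\frac{1+\sqrt{1-4\chi_{1}^{(k)}}}{2}\;\leq\; -q_{1}^{(k)}\;\leq\; E(k+1,1)\,\sqrt{1-2\chi_{1}^{(k)}},
\]
which, after reading off $E(k+1,1)=2^{k+1}-(k+2)$ and
\[
\chi_{1}^{(k)}\;=\;\frac{E(k+1,2)}{E(k+1,1)^{2}}\;=\;\frac{3^{k+1}-(k+2)2^{k+1}+\tfrac{1}{2}(k+2)(k+1)}{(2^{k+1}-(k+2))^{2}},
\]
give, for $k$ large, the explicit pair $m=-E(k+1,1)\sqrt{1-2\chi_{1}^{(k)}}$ and $M=-E(k+1,1)\frac{1+\sqrt{1-4\chi_{1}^{(k)}}}{2}$ in the form Sobolev states. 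In particular $\chi_{1}^{(k)}\to 0$, so the radicals are well defined for $k$ large, and this is the source of the degeneracies we must resolve with conjugates.

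For the minorant I would form $m+2^{k+1}$ and free the square root using the standard difference-of-squares trick, multiplying numerator and denominator by the conjugate $2^{k+1}+(2^{k+1}-(k+2))\sqrt{1-2\chi_{1}^{(k)}}$, whose dominant growth is $2^{k+2}$ and therefore does not vanish. The resulting numerator is $2^{2k+2}-(2^{k+1}-(k+2))^{2}(1-2\chi_{1}^{(k)})$, and plugging the closed forms of $E(k+1,1)$ and $\chi_{1}^{(k)}$ here collapses the exponential pieces, leaving $(k+2)(1-2\chi_{1}^{(k)})+o(k)\sim k$. Dividing by the denominator $\sim 2^{k+2}$ is what produces the claim $m\sim -2^{k+1}+k+o(k)$; this is the only asymptotic term the minorant contains in our scale.

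For the majorant I would iterate the same procedure, but twice, in the order indicated in the computation preceding the theorem. The first iteration handles $M+2^{k+1}$; after dropping the subleading $(k+2)$-term inside $E(k+1,1)$, the quantity to analyse is $-2^{k+1}\sqrt{1-4\chi_{1}^{(k)}}+2^{k+1}$, and multiplying by the conjugate $2^{k+1}+\sqrt{4^{k+1}-4^{k+2}\chi_{1}^{(k)}}$ (again of dominant size $2^{k+2}$) turns the difference into $4^{k+2}\chi_{1}^{(k)}\sim 3^{k+1}\cdot 4$, which divided by $2^{k+2}$ yields $2\bigl(\tfrac{3}{2}\bigr)^{k+1}$; halving gives the second growth term $(\tfrac{3}{2})^{k+1}$. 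The second iteration is the delicate one: I form $M+2^{k+1}-(\tfrac{3}{2})^{k+1}$, take conjugates once more, and push the expansion of $4^{k+1}\chi_{1}^{(k)}=3^{k+1}-(k+2)2^{k+1}+\tfrac{1}{2}(k+2)(k+1)$ into the numerator. After cancellation of the $3^{k+1}$-terms, the surviving dominant term is $\bigl(\tfrac{3}{2}\bigr)^{2k+2}/2^{k+1}=(\tfrac{9}{8})^{k+1}$, exactly as predicted.

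The main obstacle, as the author has been warning repeatedly throughout this chapter, is not the algebra itself but the discipline of never simplifying a $\sqrt{\,1-\varepsilon\,}$ before its dominant-term cancellation with a neighbour has been paid for by multiplication by the correct conjugate. Each of the three terms $-2^{k+1}$, $(\tfrac{3}{2})^{k+1}$, $(\tfrac{9}{8})^{k+1}$ in $M$ arises only after killing one exponential cancellation, and the bases $\tfrac{3}{2}$ and $\tfrac{9}{8}$ come from the arithmetic identity $\tfrac{3^{k+1}\cdot 4}{2^{k+2}\cdot 2}=\bigl(\tfrac{3}{2}\bigr)^{k+1}$ and $\tfrac{(3/2)^{2k+2}}{2^{k+1}}=(\tfrac{9}{8})^{k+1}$, respectively; any short-cut that drops the conjugate step collapses them to $0$. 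Once these three conjugations are carried out honestly in sequence, and the minorant's single-conjugate computation is recorded, the two statements $m\sim -2^{k+1}+k+o(k)$ and $M\sim -2^{k+1}+\bigl(\tfrac{3}{2}\bigr)^{k+1}+\bigl(\tfrac{9}{8}\bigr)^{k+1}+o\bigl((\tfrac{9}{8})^{k}\bigr)$ follow directly from Definition \ref{defasy}.
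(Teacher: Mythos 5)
Your proposal follows exactly the same route as the paper's computation block that precedes the theorem: identify $m = -E(k+1,1)\sqrt{1-2\chi_{1}^{(k)}}$ and $M = -E(k+1,1)\frac{1+\sqrt{1-4\chi_{1}^{(k)}}}{2}$ from Sobolev's Equation (11), then extract growth terms by iterated conjugation. Your handling of the majorant $M$ reproduces the paper's two conjugations faithfully, and the three growth terms $-2^{k+1}$, $(\tfrac{3}{2})^{k+1}$, $(\tfrac{9}{8})^{k+1}$ you obtain are correct.

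However, the minorant step you describe would fail, and the same defect is already present in the paper's computation (and hence in the theorem's statement for $m$). You assert that after multiplying $m + 2^{k+1}$ by the conjugate, plugging in the closed forms of $E(k+1,1)$ and $\chi_{1}^{(k)}$ ``collapses the exponential pieces, leaving $(k+2)(1-2\chi_{1}^{(k)}) + o(k) \sim k$.'' It does not. Since $E(k+1,1)^{2}\chi_{1}^{(k)} = E(k+1,2)$ \emph{exactly}, the conjugate numerator equals
\begin{gather*}
2^{2k+2} - (2^{k+1}-(k+2))^{2}\bigl(1-2\chi_{1}^{(k)}\bigr) = 2^{2k+2} - E(k+1,1)^{2} + 2E(k+1,2)\\
= (k+2)\bigl(2^{k+2}-(k+2)\bigr) + 2\cdot 3^{k+1} - (k+2)2^{k+2} + (k+1)(k+2)\\
= 2\cdot 3^{k+1} - (k+2),
\end{gather*}
so an exponential piece $2\cdot 3^{k+1}$ survives unchallenged; dividing by the conjugate $\sim 2^{k+2}$ gives $m + 2^{k+1} \sim \bigl(\tfrac{3}{2}\bigr)^{k+1}$, not $k$. (A quick numerical check at $k=10$: $m + 2^{11} \approx 88.4$, while $(\tfrac{3}{2})^{11} \approx 86.5$ but $k = 10$.) The correct asymptotic is therefore $m \sim -2^{k+1} + \bigl(\tfrac{3}{2}\bigr)^{k+1} + o\bigl((\tfrac{3}{2})^{k}\bigr)$ --- this minorant actually \emph{does} capture the second growth term of the root, contrary to what the theorem asserts and to the conclusion the paper draws from it. What went wrong, both in the paper and in your proposal, is that the $2^{2k+2}\cdot 2\chi_{1}^{(k)}$ contribution in the product expansion (which, after dividing by $2^{k+2}$, is $2^{k+1}\chi_{1}^{(k)} \sim (\tfrac{3}{2})^{k+1}$ and dominates the $(k+2)$-term) was silently dropped; a blind reproduction of the chain of $\sim$-steps does not catch this because the dropped term never appears explicitly. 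Before signing off on the minorant part, you should substitute the closed form of $\chi_{1}^{(k)}$ and simplify the numerator fully, as above, instead of trusting the ``$(k+2)(1-2\chi_{1}^{(k)})$'' identification.
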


We see therefore that his majorant is better than the one we obtain through the plain application of the relaxation because his third growth is better. Far from being bad new, this points out towards a new direction in order to improve the relaxation, as we will soon see in the next part of this thesis.

\begin{remark}[Comparison to our bound]The third growth term of the majorant provided by the relaxation and that we computed in the previous chapter is $2\left(\frac{9}{8}\right)^{k+1}$, that is, the double of the one obtained via Sobolev's methods. This is a call to combine our methods with these used by Sobolev.\end{remark}

The first step in order to perform this combination is understanding better how Sobolev establishes these estimations. We do this in the next part through a closer look to the tools he uses. These tools are the well-known Dandelin-Lobachevski-Gr{\"a}ffe (DLG) method to approximate roots (see more on \cite{macnamee2007numerical, macnameeii}) and the \cite[Chapter 26, Lemma 1]{sobolev2006selected} that Sobolev uses very lightly but that has much more power when analyzed and used until its last consequences. These considerations constitute the seminal point that grows until forming the next part of this thesis.
\part{Consequences of Sobolev: a Look through the Glasses of Dandelin-Lobachevski-Gräffe and a Glimpse of a Multivariate Generalization}\label{IV}
\chapter{DLG method in Sobolev argument}\label{ChDLGMeth}

In the work on the roots of Eulerian polynomials that we analyzed above and that can be found in four articles selected in \cite{sobolev2006selected}, Sobolev made use of what he calls \textit{Lobachevsky equations}. These equations have a long history as a method used for estimating the roots of polynomials. We will refer to this method as the \textit{DLG method} for short. We do this because the trick involving this method was used and independently discovered by Dandelin, Lobachevsky and Gr{\"a}ffe, in that order. We describe now the method and identify its importance in Sobolev's papers about the end roots of Eulerian polynomials.

\section{General construction in Sobolev}

In order to provide his bound, Sobolev constructs the coefficients of the polynomial having as roots the squares of the roots of the original Eulerian polynomial. This construction is general. We keep the construction simple supposing that all the roots are different.

\begin{construccion}
Let $p\in\mathbb{C}[x]$ be a univariate monic polynomial with roots $r_{1}< \cdots < r_{n}$. Then we can write the polynomial $p(x)=(x-r_{1})\cdots(x-r_{n})$ so $p(-x)=(-1)^{n}(x+r_{1})\cdots(x+r_{n})$ so $(-1)^{n}p(-x)p(x)=(x+r_{1})\cdots(x+r_{n})(x-r_{1})\cdots(x-r_{n})=(x^{2}-r_{1}^{2})\cdots(x^{2}-r_{n}^{2})=q(x^{2})$ with $q(x)=(x-r_{1}^{2})\cdots(x-r_{n}^{2})$ the polynomial having as roots the squares of the roots of $p$. Thus, we see that $q(x)$ can easily be computed. Put $p=
\sum_{i=0}^{n}a_{i}x^{n-i}$ and $q=\sum_{i=0}^{n}b_{i}x^{n-i}$, where $a_{0}=b_{0}=1$ because we chose our polynomials monic. Then $b_{k}=(-1)^{k}a_{k}^{2}+2\sum_{j=0}^{k-1}(-1)^{j}a_{j}a_{2k-j}.$
\end{construccion}

The construction above gives therefore a computationally easy way of obtaining the polynomial $q$ having as roots the squares of the roots of $p$. The power of this trick lies in the splitting that this process of squaring promotes among the roots of the initial polynomial $p$. The next result is well-known.

\begin{proposicion}
If the absolute values of the roots of $p$ split at least by a multiplicative factor $\rho>1$, then the roots of $q$ split at least by a multiplicative factor $\rho^2>1.$
\end{proposicion}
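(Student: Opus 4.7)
The plan is to unwind the definitions and observe that squaring commutes with taking absolute values and preserves the direction of strict inequality for positive reals. First I would fix the ordering convention: list the roots of $p$ as $r_1,\dots,r_n$ so that $|r_1|\le|r_2|\le\cdots\le|r_n|$, and interpret the hypothesis that the absolute values split at least by a multiplicative factor $\rho>1$ as the statement that whenever $|r_i|<|r_{i+1}|$ we have $|r_{i+1}|/|r_i|\ge\rho$ (the case of equal absolute values is vacuous for a splitting statement). This preliminary bookkeeping is only needed to make precise what ``split by a factor $\rho$'' means when several roots could share an absolute value; once this is clear, the substance is immediate.

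Next I would use the construction preceding the statement: from $(-1)^{n}p(-x)p(x)=q(x^{2})$ one reads off that the roots of $q$ are exactly the numbers $r_i^{2}$ for $i\in[n]$, so that $|r_i^{2}|=|r_i|^{2}$. Ordering the roots of $q$ by absolute value therefore yields the sequence $|r_1|^{2}\le|r_2|^{2}\le\cdots\le|r_n|^{2}$, with the same tie pattern as the original sequence, because $t\mapsto t^{2}$ is strictly increasing on $\mathbb{R}_{\ge 0}$.

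Finally, I would conclude by computing ratios: for any pair of indices $i<j$ with $|r_i|<|r_j|$, the hypothesis gives $|r_j|/|r_i|\ge\rho$, and squaring both sides of this inequality between positive reals preserves it, yielding $|r_j|^{2}/|r_i|^{2}\ge\rho^{2}>1$. Applying this to consecutive indices in the ordering of the roots of $q$ delivers exactly the claimed splitting factor $\rho^{2}$. The main obstacle is really only notational, namely pinning down a precise definition of ``splitting'' that accommodates possibly repeated absolute values and confirming it transfers verbatim through the squaring map; once that is in place the inequality itself is a one-line application of monotonicity of $t\mapsto t^{2}$ on $\mathbb{R}_{\ge 0}$.
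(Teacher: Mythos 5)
Your proof is correct and is essentially the same argument as the paper's: both observe that the roots of $q$ are the squares $r_i^2$ of the roots of $p$ and that squaring the inequality $|r_{k+1}|\ge\rho|r_k|$ between nonnegative reals yields $|r_{k+1}|^2\ge\rho^2|r_k|^2$. The extra bookkeeping you add around ordering conventions and ties is reasonable but not substantively different from what the paper does.
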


\begin{proof}
Fix $k$. If there exists $\rho>1$ with $|r_{k+1}|\geq\rho|r_{k}|$, then $|r_{k+1}|^2\geq\rho^2|r_{k}|^2$. As a consequence, if such $\rho$ exists, the $q$ polynomial obtained applying the DLG method admits $\rho^2$ as a splitting factor.
\end{proof}

Splitting the roots enough allows us to apply Vi{\`e}te formulas to approximate the roots so \begin{gather*}
b_{1}=-(r_{n}^{2}+\cdots+r_{1}^{2})\\
b_{2}=r_{n}^{2}r_{n-1}^{2}+\cdots+r_{1}^{2}r_{2}^{2}\\\vdots
\\b_{n}=(-1)^{n}(r_{n}^{2}\cdots r_{1}^{2})
\end{gather*} and therefore we can approximate $b_{1}\approx-r_{n}^{2}$ and, proceeding sequentially, the rest of the squares of the roots. However, since we are only interested in this biggest root, we center our attention on the approximation $\sqrt{-b_{1}}\approx r_{n}$. We analyze the estimate that this direct application provides for the extreme roots of the univariate Eulerian polynomials. In particular, this observation will convince us in several ways why the refinement in the estimation performed by Sobolev is so important.

\section{Direct raw estimate}

Vi{\`e}te formulas directly give us that the extreme root $-q_{1}^{(n)}\approx E(n+1,1)=2^{n+1}-(n+2)$, which approximates well the first asymptotic growth but it is fairly bad beyond that. However, the application of the DLG method gives instead the approximation $(q_{1}^{n})^{2}\approx b_{1}^{k}=(a_{1}^{k})^{2}(1-2\frac{a_{0}^{k}a_{2}^{k}}{(a_{1}^{k})^{2}})=(a_{1}^{k})^{2}-2a_{0}^{k}a_{2}^{k}=(2^{k+1}-(k+2))^{2}-2(3^{k+1}-2^{k+1}(k+2)+\frac{(k+1)(k+2)}{2})=4^{k+1}-2^{k+2}(k+2)+(k+2)^{2}-3^{k+1}2+2^{k+1}(k+2)+\frac{(k+1)(k+2)}{2}=4^{k+1}-2^{k+1}(k+2)+(k+2)^{2}-3^{k+1}2+\frac{(k+1)(k+2)}{2}$. Now, the estimation of the roots comes from a square root $\sqrt{4^{k+1}-2^{k+1}(k+2)+(k+2)^{2}-3^{k+1}2+\frac{(k+1)(k+2)}{2}}$ and we have to proceed as usual in order to establish further asymptotic growth terms. We want to know the asymptotic growth of \begin{gather*}\sqrt{4^{k+1}-2^{k+1}(k+2)+(k+2)^{2}-3^{k+1}2+\frac{(k+1)(k+2)}{2}}-\\\left(2^{k+1}-\left(\frac{3}{2}\right)^{k+1}-\frac{1}{2}\left(\frac{9}{8}\right)^{k+1}\right).\end{gather*} The conjugate grows as $2^{k+2}$ and the difference of squares equals \begin{gather*}
    \frac{3 k^2}{2}-\left(\frac{2}{3}\right)^{-2 k-2}+4^{k+1}+2^{-2 k-2} 9^{k+1}-2^{k+2}-2^{2 k+2}-2^{-6 k-8} 9^{2 k+2}\\-2^{-4 k-4} 3^{3 k+3}-2^{k+1} k+\frac{11 k}{2}+5\sim -2^{k+1} k
\end{gather*} so the fourth growth term of this estimate equals $\frac{-2^{k+1} k}{2^{k+2}}=-\frac{k}{2}.$ Knowing this fourth growth term will be fundamental to see how the estimate breaks because it stops being more accurate than the method used by Sobolev exactly at this growth term. We collect what we know until here.

\begin{proposicion}\label{rawestimate}
The estimate $$\sqrt{4^{k+1}-2^{k+1}(k+2)+(k+2)^{2}-3^{k+1}2+\frac{(k+1)(k+2)}{2}}$$ for the absolute value of the leftmost root of the $n$-th Eulerian polynomial obtained from approximating using Vi{\`e}te's formula for the biggest root after the first iteration of the DLG method expands asymptotically, up to its fourth growth term, as $2^{k+1}-\left(\frac{3}{2}\right)^{k+1}-\frac{1}{2}\left(\frac{9}{8}\right)^{k+1}-\frac{k}{2}+o(k).$
\end{proposicion}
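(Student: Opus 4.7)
The plan is to extract the asymptotic expansion term by term in the multiplicative exponential scale fixed by Convention \ref{conasy}, exactly following the conjugate-and-divide-by-conjugate technique that has been used repeatedly throughout the preceding chapters (most prominently in the proofs of Lemma \ref{lemafinal} and the refinement of Mez\H{o}'s bound). The observation underpinning the whole computation is that a square root whose leading term inside matches the square of the candidate leading term outside produces a cancellation in dominant terms; in that situation one must not just ``take the leading term'' but rather multiply and divide by the conjugate in order to convert the annihilating difference into a ratio of a difference of squares over a non-annihilating sum, thereby exposing the next surviving term. This is the pattern I will iterate four times.

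First I would establish the leading term $2^{k+1}$, which is immediate from $\sqrt{4^{k+1}+\cdots}\sim 2^{k+1}$ since the remaining summands inside the radical are of strictly smaller exponential order. Next, to extract the second term, I would form $\sqrt{4^{k+1}-2^{k+1}(k+2)+(k+2)^2-2\cdot3^{k+1}+\tfrac{(k+1)(k+2)}{2}}-2^{k+1}$, multiply numerator and denominator by the conjugate $\sqrt{\cdot}+2^{k+1}$ (whose asymptotic behaviour is $2^{k+2}$), and read off from the resulting difference of squares that the dominant surviving term in the numerator is $-2\cdot 3^{k+1}$, producing the second growth term $-(3/2)^{k+1}$. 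The third iteration is analogous: subtract $-(3/2)^{k+1}$ from the square root, multiply by the conjugate (again of order $2^{k+2}$), and isolate the dominant surviving contribution in the numerator, which comes from $(3/2)^{2k+2}$ after the previous leading pieces cancel, yielding the term $-\tfrac{1}{2}(9/8)^{k+1}$.

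The fourth iteration is the one that requires genuine care and is the main obstacle. After subtracting $-\tfrac{1}{2}(9/8)^{k+1}$ as well, the conjugate technique must now separate a contribution that is merely \emph{polynomial} in $k$ from a landscape of exponentially decaying remainders; in particular, the numerator becomes an enormous algebraic expression in which most of the exponential terms cancel among themselves and the sole surviving dominant piece is $-2^{k+1}k$, coming from the cross term $-2^{k+1}(k+2)$ originally present inside the radical. The denominator (the conjugate) is still asymptotically $2^{k+2}$. Dividing, the fourth growth term is $-k/2$, precisely as stated. The excerpt already performs the bookkeeping that identifies $-2^{k+1}k$ as the dominant surviving term inside the fourth difference of squares, so the proof is essentially a clean transcription of that computation, taking care that every cancellation is justified and that no term of order between $(9/8)^k$ and $k$ has been overlooked in the scale.

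Finally, the $o(k)$ remainder is automatic: any term in the numerator surviving beyond $-2^{k+1}k$ after the fourth cancellation is of strictly smaller order (the next contributions being of order at most $(3/2)^{k+1}\cdot k \cdot 2^{-k}=o(k)$, constants, or slower powers of $k$), and division by the conjugate $\sim 2^{k+2}$ produces a quantity that is $o(k)$ as required. Assembling the four extracted terms yields the stated expansion.
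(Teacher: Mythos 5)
Your proposal is correct and essentially mirrors the paper's own argument: the paper subtracts the already-established sum $2^{k+1}-\left(\frac{3}{2}\right)^{k+1}-\frac{1}{2}\left(\frac{9}{8}\right)^{k+1}$, multiplies by the conjugate ($\sim 2^{k+2}$), computes the difference of squares (which simplifies to $\frac{3k^2}{2}+\frac{11k}{2}+5-2^{k+2}-2^{k+1}k-(27/16)^{k+1}-\frac{1}{4}(81/64)^{k+1}\sim-2^{k+1}k$), and divides to get $-k/2$. Your step-by-step re-derivation of the first three terms and the identification of $-2^{k+1}(k+2)$ as the source of the fourth term match the paper's bookkeeping exactly; the only minor imprecision is the phrase ``exponentially decaying remainders'' — the leftover $(27/16)^{k+1}$ and $\frac{1}{4}(81/64)^{k+1}$ terms still grow, just subdominantly relative to $2^{k+1}k$, but your conclusion about the $o(k)$ remainder is nevertheless correct.
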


\section{An underestimated lemma}

There are two big problems with the estimate obtained above. We collect them here.

\begin{remark}
Just using Vi{\`e}te's formula we do not immediately know at which side of the actual root the estimate falls.
\end{remark}

This problem could indeed be analyzed and estimated looking carefully at signs and taking care of the growth of other roots, but we will not do this. Instead of that, dealing with the other problem will do this for us at the same time that we build a closer and deeper look at Sobolev's approach.

\begin{remark}
The raw estimate above do not give us an interval where our roots lie and instead provides just a point. With just this it is difficult to estimate confidently how close are we to the actual root.
\end{remark}

This second problem is intimately related to the first and the solution of the second will also solve the first. The solution comes from the \cite[Chapter 26, Lemma 1]{sobolev2006selected} used by Sobolev in the middle of its paper. We will analyze carefully this result here because \cite[Chapter 26, Lemma 1]{sobolev2006selected} is the key of his ability to provide bounds. This study will also help us to hint here that there is a possible productive connection to Cauchy-Schwartz's Lemma and its generalizations. We reproduce here the lemma and comment its proof in order to be able to detect how Sobolev's method differs from our approach.

\begin{lema}[Sobolev's Lemma] \cite[Chapter 26, Lemma 1]{sobolev2006selected}\label{sobolema}
Consider $\nu_{1}>\nu_{2}\geq\dots\geq\nu_{N}\geq0$ and define $a:=\nu_{1}+\cdots+\nu_{N}$ and $b:=\nu_{1}^{2}+\cdots+\nu_{N}^{2}$. If $a^{2}<b$, then $$\frac{1}{2}(a+\sqrt{2b-a^2})\leq\nu_{1}\leq\frac{1}{N}\left(a+(N-1)\sqrt{b-\frac{a^{2}-b}{N-1}}\right)<\sqrt{b}.$$
\end{lema}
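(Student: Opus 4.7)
The proof should rest on comparing the two Cauchy--Schwarz-type inequalities available for the truncated quantities $s:=\nu_{2}+\cdots+\nu_{N}=a-\nu_{1}$ and $t:=\nu_{2}^{2}+\cdots+\nu_{N}^{2}=b-\nu_{1}^{2}$. First, expanding $s^{2}=\sum_{i\geq 2}\nu_{i}^{2}+2\sum_{2\leq i<j}\nu_{i}\nu_{j}$ and using $\nu_{i}\geq 0$ yields the lower estimate $s^{2}\geq t$; second, the usual Cauchy--Schwarz inequality applied to the $N-1$ terms $\nu_{2},\dots,\nu_{N}$ yields $s^{2}\leq (N-1)t$. Each of these inequalities, once rewritten in terms of $\nu_{1}$ via $s=a-\nu_{1}$ and $t=b-\nu_{1}^{2}$, becomes a quadratic inequality in the single unknown $\nu_{1}$, and both claimed bounds will fall out of the quadratic formula.

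For the upper bound, the inequality $(a-\nu_{1})^{2}\leq(N-1)(b-\nu_{1}^{2})$ expands to $N\nu_{1}^{2}-2a\nu_{1}+a^{2}-(N-1)b\leq 0$. Since $\nu_{1}$ is the largest of the $\nu_{i}$, one selects the larger root of the quadratic and obtains $\nu_{1}\leq\frac{1}{N}\bigl(a+\sqrt{(N-1)(Nb-a^{2})}\bigr)$, which matches the statement once one uses the identity $(N-1)\sqrt{b-\frac{a^{2}-b}{N-1}}=\sqrt{(N-1)(Nb-a^{2})}$. For the lower bound, the inequality $(a-\nu_{1})^{2}\geq b-\nu_{1}^{2}$ expands to $2\nu_{1}^{2}-2a\nu_{1}+(a^{2}-b)\geq 0$, whose roots are $r_{\pm}=\tfrac{1}{2}(a\pm\sqrt{2b-a^{2}})$. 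The hypothesis is used here both to make $2b-a^{2}$ non-negative so the square root is real, and to force the correct side of the quadratic: under it, $r_{-}\leq 0$, so the only admissible alternative for $\nu_{1}\geq 0$ is $\nu_{1}\geq r_{+}$, giving precisely the stated lower bound.

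For the terminal strict inequality $<\sqrt{b}$, the plan is to compare $a+\sqrt{(N-1)(Nb-a^{2})}$ with $N\sqrt{b}$ by squaring both sides twice; a short computation should reduce the difference of the squares to $N^{2}(b-a^{2})^{2}$, which is non-negative and vanishes only when $b=a^{2}$. But $b=a^{2}$ forces every cross-product $\nu_{i}\nu_{j}$ with $i<j$ to vanish, so at most one $\nu_{i}$ is nonzero; the strict condition $\nu_{1}>\nu_{2}\geq 0$ together with the hypothesis excludes this degenerate configuration, and the strict inequality follows. The main technical obstacle I anticipate is precisely the root selection for the lower bound: the peculiar-looking hypothesis $a^{2}<b$ (which, given $\nu_{i}\geq 0$, is at odds with $a^{2}\geq b$ coming from expansion, and suggests a possible misprint for $a^{2}<2b$) is doing its work exactly at this step, so a careful reading of Sobolev's exact formulation and a clean justification of why $\nu_{1}$ lands on the correct side of the quadratic interval $[r_{-},r_{+}]$ will be the most delicate part of the argument.
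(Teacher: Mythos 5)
Your argument follows essentially the same route as the paper's: the paper also sets $\zeta:=\nu_2+\cdots+\nu_N$ and $\rho:=\nu_2^2+\cdots+\nu_N^2$ (your $s$ and $t$), establishes $1\leq \zeta^2/\rho\leq N-1$ (your $t\leq s^2\leq (N-1)t$) with the lower estimate ``by counting'' (your expansion of $s^2$) and the upper by Cauchy--Schwarz, and then translates these into quadratic inequalities in $\nu_1$. Your write-up actually supplies more detail than the paper's proof, which compresses the quadratic analysis into a single sentence. You have correctly spotted that the paper's ``$\zeta^2=A^2\rho^2$'' must be a misprint for $\zeta^2=A^2\rho$, and you have also correctly flagged the real subtlety: the hypothesis $a^2<b$ is incompatible with $\nu_i\geq 0$ (one always has $a^2=\bigl(\sum\nu_i\bigr)^2\geq\sum\nu_i^2=b$), which makes the lemma vacuous as literally stated.

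The one thing worth sharpening is the root-selection step under the likely-intended hypothesis $a^2<2b$. Your mechanism ($r_-\leq 0$, hence $\nu_1\geq r_+$ since $\nu_1>0$) is sound if $a^2<b$, but fails under $a^2<2b$: there $r_-r_+=\tfrac{1}{2}(a^2-b)\geq 0$, so both roots are non-negative and positivity of $\nu_1$ no longer decides the side. The clean replacement is the observation that since $\nu_1$ is the maximum, $b=\sum\nu_i^2\leq\nu_1\sum\nu_i=\nu_1 a$, hence $\nu_1\geq b/a$, and $b/a>a/2$ precisely when $a^2<2b$; so $\nu_1>a/2=\tfrac{1}{2}(r_-+r_+)$, and combined with $\nu_1\notin(r_-,r_+)$ this forces $\nu_1\geq r_+$. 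Your terminal comparison $a+\sqrt{(N-1)(Nb-a^2)}$ against $N\sqrt{b}$ is fine; after squaring it reduces to $(\sqrt{b}-a)^2\geq 0$, with equality iff $a^2=b$, and under either candidate hypothesis the strict case is handled (vacuously under $a^2<b$, and needing a separate exclusion of the degenerate configuration $\nu_2=\cdots=\nu_N=0$ under $a^2<2b$, which your $\nu_1>\nu_2\geq 0$ constraint alone does not rule out).
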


\begin{proof}
Define $\zeta:=\nu_{2}+\cdots+\nu_{N},$ $\rho:=\nu_{2}^{2}+\cdots+\nu_{N}^{2}$ and write $\zeta^{2}=A^{2}\rho^{2}.$ Then, either counting or using the Cauchy-Schwartz inequality, we obtain now that $1\leq A^{2}\leq N-1,$ where the limits are reached for certain configurations, so this estimate is the tightest possible. Expressing the quantity $A^{2}$ through $a,b,\nu_{1}$ and studying its behaviour as $\nu_{1}$ varies shows that the estimate for $A^{2}$ is satisfied only when the estimate we want to prove is fulfilled, which proves this lemma.
\end{proof}

Looking closely at the lemma we can provide even better bounds than Sobolev initially provides as he simplifies a lot at some point. These new bounds will be our main object of discussion in what follows.

\begin{corolario}
Let $q_{1}^{(k)}$ be the leftmost root of the univariate Eulerian polynomial of degree $k$. Then we have the inequalities for its absolute value \begin{gather*}\frac{2^{k+1}-(k+2)+\sqrt{2^{3 + k} - 4 \cdot 3^{1 + k} + 4^{1 + k} - k \left(2 - 2^{2 + k} + k\right)}}{2}\leq|q_{1}^{(k)}|\\\leq\frac{2}{k}\bigg{(}\frac{2^{k+1}-(k+2)}{2}+\frac{k-1}{2}\\\sqrt{\frac{-4 \left(2^{k}-1\right)^2 + \left(4^{1 + k} -2 + 2^{2 + k} - 2 \cdot 3^{1 + k}\right) k}{k-1}
}\bigg{)}.\end{gather*}
\end{corolario}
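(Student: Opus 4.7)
The plan is to apply Sobolev's Lemma \ref{sobolema} directly to the sequence of absolute values of the roots of the $k$-th univariate Eulerian polynomial $A_k$. Since $A_k$ has $k$ real, negative, distinct roots $q_1^{(k)} < \cdots < q_k^{(k)} < 0$, we set $\nu_i := |q_i^{(k)}|$ so that $\nu_1 > \nu_2 \geq \cdots \geq \nu_k > 0$ and $N := k$. This is the natural way to feed Lemma \ref{sobolema} with the data of an Eulerian polynomial and it is exactly the trick underlying Sobolev's approach when he combines his lemma with the first DLG iteration.

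Next I would compute $a := \sum_{i=1}^{k} \nu_i$ and $b := \sum_{i=1}^{k} \nu_i^2$ in terms of the Eulerian numbers using Vi\`ete's formulas and Newton's identities. Since all roots are negative, $a$ equals the first elementary symmetric function $e_1 = E(k+1,1) = 2^{k+1}-(k+2)$, and $b = a^2 - 2e_2$, where $e_2 = E(k+1,2) = 3^{k+1} - (k+2)2^{k+1} + \tfrac{1}{2}(k+1)(k+2)$. A short calculation then yields
\begin{equation*}
b = 4^{k+1} - 2\cdot 3^{k+1} + (k+2).
\end{equation*}
These are the only two quantities needed to plug into Sobolev's bounds.

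Then I would substitute into the two inequalities of Lemma \ref{sobolema}. For the lower bound, the quantity under the radical is
\begin{equation*}
2b - a^2 = 4^{k+1} - 4\cdot 3^{k+1} + (k+2)(2^{k+2} - k),
\end{equation*}
which expands to $2^{3+k}-4\cdot 3^{1+k}+4^{1+k}-k(2-2^{2+k}+k)$, matching the expression inside the first square root of the corollary. For the upper bound, using the reformulation $(k-1)\sqrt{b-\tfrac{a^2-b}{k-1}} = \sqrt{(k-1)(kb-a^2)}$, the quantity
\begin{equation*}
\frac{kb - a^2}{k-1} = \frac{-4(2^k-1)^2 + (4^{1+k}-2+2^{2+k}-2\cdot 3^{1+k})\,k}{k-1}
\end{equation*}
is obtained by a direct algebraic manipulation, exactly reproducing the expression inside the second square root. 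Pulling the factor $\tfrac{2}{k}$ out front rewrites $\tfrac{1}{k}(a + (k-1)\sqrt{\cdot})$ in the symmetric form displayed in the statement.

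The main obstacle (beyond bookkeeping with Eulerian numbers) is verifying the applicability hypothesis of Lemma \ref{sobolema}. The condition needed for the lower bound to be meaningful is $2b - a^2 \geq 0$; using the expansions above, the dominant term is $4^{k+1}$, which dwarfs $-4\cdot 3^{k+1}$ and the polynomial corrections, so this holds for all $k$ sufficiently large and can be certified explicitly by estimating the remaining terms. The upper bound requires $k \geq 2$ in order to divide by $k-1$, and Cauchy--Schwarz guarantees $kb - a^2 \geq 0$ automatically, with equality only when all $\nu_i$ are equal, a degenerate case excluded by the known distinctness and splitting of the Eulerian roots. Once these conditions are recorded, the corollary follows as a direct substitution into Lemma \ref{sobolema}.
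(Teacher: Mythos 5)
Your proposal is correct and takes essentially the same approach as the paper: you apply Lemma \ref{sobolema} with $N=k$ to the absolute values of the roots, compute $a=E(k+1,1)$ and $b=a^{2}-2E(k+1,2)$ via Newton's identities, and substitute, which reproduces exactly the paper's $s=1$ specialization (the paper first develops a general bound on $\nu_{s,1}^{(k)}/\nu_{s-1,1}^{(k)}$ with $N=\binom{k}{s}$ and then sets $s=1$, where the denominator trivializes to $1$, landing on the same formula you reach directly). Your checks on the applicability hypotheses and the Cauchy--Schwarz argument for nonnegativity of the second radicand are appropriate and line up with the paper.
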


\setlength{\emergencystretch}{3em}%
\begin{proof}
Observe that, for constructing his Equation 6, Sobolev uses his estimate $\sqrt{b}$ instead of the most accurate one that the lemma actually provides. Thus we can build a better bound putting that $N=\binom{k}{s}$. Thus the bounds of $\nu_{s,1}^{(k)}$ becomes $$\frac{1}{2}\left(a_{s}^{(k)}+\sqrt{2b_{s}^{(k)}-(a_{s}^{(k)})^2}\right)\leq\nu_{s,1}^{(k)}\leq\frac{1}{\binom{k}{s}}\left(a+(\binom{k}{s}-1)\sqrt{b-\frac{a^{2}-b}{\binom{k}{s}-1}}\right)$$ so now dividing the estimates for $\nu_{s,1}^{(k)}$ and $\nu_{s-1,1}^{(k)}$ we obtain \begin{gather*}\frac{a_{s}^{(k)}+\sqrt{2b_{s}^{(k)}-(a_{s}^{(k)})^2}}{\frac{2}{\binom{k}{s-1}}\left(a_{s-1}^{(k)}+(\binom{k}{s-1}-1)\sqrt{b_{s-1}^{(k)}-\frac{(a_{s-1}^{(k)})^{2}-b_{s-1}^{(k)}}{\binom{k}{s-1}-1}}\right)}\leq|q_{s}^{(k)}|\\\leq\frac{\frac{2}{\binom{k}{s}}\left(a_{s}^{(k)}+(\binom{k}{s}-1)\sqrt{b_{s}^{(k)}-\frac{(a_{s}^{(k)})^{2}-b_{s}^{(k)}}{\binom{k}{s}-1}}\right)}{a_{s-1}^{(k)}+\sqrt{2b_{s-1}^{(k)}-(a_{s-1}^{(k)})^2}},\end{gather*} which for our case $s=1$ simplifies, after an easy transformation avoiding division by $0$, as \begin{gather*}\frac{a_{1}^{(k)}+\sqrt{2b_{1}^{(k)}-(a_{1}^{(k)})^2}}{\frac{2}{\binom{k}{0}}\left(a_{0}^{(k)}+(\binom{k}{0}-1)\sqrt{b_{0}^{(k)}-\frac{(a_{0}^{(k)})^{2}-b_{0}^{(k)}}{\binom{k}{0}-1}}\right)}\leq|q_{1}^{(k)}|\\\leq\frac{\frac{2}{\binom{k}{1}}\left(a_{1}^{(k)}+(\binom{k}{1}-1)\sqrt{b_{1}^{(k)}-\frac{(a_{1}^{(k)})^{2}-b_{1}^{(k)}}{\binom{k}{1}-1}}\right)}{a_{0}^{(k)}+\sqrt{2b_{0}^{(k)}-(a_{0}^{(k)})^2}},\end{gather*} and in terms of Eulerian numbers just gives \begin{gather*}\frac{a_{1}^{(k)}+\sqrt{2((a_{1}^{(k)})^{2}-2a_{0}^{(k)}a_{2}^{(k)})-(a_{1}^{(k)})^2}}{2 a_{0}^{(k)}}\leq|q_{1}^{(k)}|\\\leq\frac{\frac{2}{k}\left(a_{1}^{(k)}+(k-1)\sqrt{((a_{1}^{(k)})^{2}-2a_{0}^{(k)}a_{2}^{(k)})-\frac{(a_{1}^{(k)})^{2}-((a_{1}^{(k)})^{2}-2a_{0}^{(k)}a_{2}^{(k)})}{k-1}}\right)}{a_{0}^{(k)}+\sqrt{2(a_{0}^{(k)})^{2}-(a_{0}^{(k)})^2}},\end{gather*} which simplifies finally to \begin{gather*}\frac{a_{1}^{(k)}+\sqrt{(a_{1}^{(k)})^{2}-4a_{0}^{(k)}a_{2}^{(k)}}}{2 a_{0}^{(k)}}\leq|q_{1}^{(k)}|\\\leq\frac{\frac{2}{k}\left(a_{1}^{(k)}+(k-1)\sqrt{((a_{1}^{(k)})^{2}-2a_{0}^{(k)}a_{2}^{(k)})-\frac{2a_{0}^{(k)}a_{2}^{(k)}}{k-1}}\right)}{2a_{0}^{(k)}},\end{gather*} and using that $a_{0}^{(k)}=E(k+1,0)=1, a_{1}^{(k)}=E(k+1,1)=2^{k+1}-(k+2)$ and $ a_{2}^{(k)}=E(k+1,2)=3^{k+1}-(k+2)2^{k+1}+\frac{(k+2)(k+1)}{2}$ we see that our bounds are effectively \begin{gather*}\frac{2^{k+1}-(k+2)+\sqrt{(2^{k+1}-(k+2))^{2}-4(3^{k+1}-(k+2)2^{k+1}+\frac{(k+2)(k+1)}{2})}}{2}\leq|q_{1}^{(k)}|\\\leq\frac{2}{k}\bigg{(}\frac{2^{k+1}-(k+2)}{2}+\frac{k-1}{2}\\\hspace*{-1.5cm}\sqrt{(2^{k+1}-(k+2))^{2}-2(3^{k+1}-(k+2)2^{k+1}+\frac{(k+2)(k+1)}{2})-\frac{2(3^{k+1}-(k+2)2^{k+1}+\frac{(k+2)(k+1)}{2})}{k-1}}\bigg{)},\end{gather*} which simplifies finally to the inequalities we wanted to prove and therefore establishes the proposition.
\end{proof}
\setlength{\emergencystretch}{0em}%

\begin{convencion}[Common names for bounding sequences]
In what follows in this section we will chapter, we will call $$\frac{2^{k+1}-(k+2)+\sqrt{2^{3 + k} - 4 \cdot 3^{1 + k} + 4^{1 + k} - k \left(2 - 2^{2 + k} + k\right)}}{2}$$ the \textit{minorant}, as it minorizes $|q_{1}^{(k)}|$. Therefore, similarly, we will call in what follow in this chapter, \begin{gather*}\frac{2}{k}\bigg{(}\frac{2^{k+1}-(k+2)}{2}+\frac{k-1}{2}\\\sqrt{\frac{-4 \left(2^{k}-1\right)^2 + \left(4^{1 + k} -2 + 2^{2 + k} - 2 \cdot 3^{1 + k}\right) k}{k-1}
}\bigg{)}\end{gather*} the \textit{majorant}, as it majorizes $|q_{1}^{(k)}|$. In particular, the bounds provided by the relaxation compete always with the minorant, as our fight is to provide larger lower bounds for $|q_{1}^{(k)}|$ so we get closer to it. However, the quality of these lower bounds in determining a good interval for the actual root $|q_{1}^{(k)}|$ is measured against upper bounds of $|q_{1}^{(k)}|$, in particular then against this majorant that we obtained here and other such upper bounds for $|q_{1}^{(k)}|$ that we can obtain in other ways.\end{convencion}

Now we can see the growth given by these improved bounds and compare with what was obtained before. We will see that we already know a lot, but that these refinements get us even a better picture of the behaviour of our roots of interest.

\section{Refined estimate}

With the refinements of the estimates we have obtained, we now want to look in both directions. Seeing what happens with the minorant and the majorant will be interesting. In particular, the minorant competes with the estimate given by the relaxation while the majorant will tell us a lot about the estimate above obtained applying directly just Vi{\`e}ta's formulas to the polynomial obtained after the first iteration of the DLG method.

\begin{remark}
Both the minorant and the mojorant give information about the relaxation and its connection to the DLG method as a way to establish bounds of polynomials.
\end{remark}

From our previous analyses, we already know a lot about the growth of the refined bounds obtained here. In particular, as our refinement tightened the bounds, we know that the new bounds will have growth terms closer to the actual root.

\begin{observacion}
We already know up to the third growth term of the minorant. However, the initial majorant provided by Sobolev does not give us enough information beyond the first growth term and therefore we will need to work more there.
\end{observacion}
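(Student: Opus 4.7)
The plan is to take the explicit form of the majorant from the preceding corollary and extract its asymptotic expansion one growth term at a time, following the conjugate-multiplication and scale-separation technique that has driven all the asymptotic computations in this chapter. I would write the majorant as $M_k = A_k + B_k\sqrt{C_k}$, where $A_k := \frac{2^{k+1}-(k+2)}{k}$, $B_k := \frac{k-1}{k}$, and $C_k := \frac{(k-1)\cdot 4^{k+1} - 2k\cdot 3^{k+1} + (k+2)(2^{k+2}-2)}{k-1}$. After factoring $4^{k+1}$ out inside the radical I would write $\sqrt{C_k} = 2^{k+1}\sqrt{1-z_k}$ with $z_k = \frac{2k}{k-1}(3/4)^{k+1} - \frac{(k+2)(2^{k+2}-2)}{(k-1)\cdot 4^{k+1}}$, observe that $z_k \to 0$, and apply the binomial expansion $\sqrt{1-z_k} = 1 - z_k/2 - z_k^2/8 - z_k^3/16 - O(z_k^4)$.

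The dominant piece of $z_k$ is $2(3/4)^{k+1}$, so the expansion naturally organises itself into the exponential scales $(3/4)^{k+1}$, $(3/4)^{2(k+1)}$, $(3/4)^{3(k+1)}$ multiplied by rational functions of $k$ coming from $\frac{k}{k-1}$ and from the $(k+2)(2^{k+2}-2)$ perturbation. Multiplying by $2^{k+1}$ converts these into contributions at scales $2^{k+1}$, $(3/2)^{k+1}$, $(9/8)^{k+1}$ and $(27/32)^{k+1}$, of which only the first three lie in or above the scale targeted by the statement; the $(27/32)^{k+1}$-scale and the truly exponentially small pieces (such as $(3/8)^{k+1}$ or $k/2^{k+1}$) are absorbed into the $o(\tfrac{1}{k}(9/8)^k)$ remainder.

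Next I would apply the prefactor $B_k = 1 - 1/k$ to each contribution and add $A_k = \frac{2^{k+1}}{k} - 1 - \frac{2}{k}$. Two bookkeeping cancellations must be verified explicitly: first, the $\frac{2^{k+1}}{k}$ from $A_k$ kills the $-\frac{1}{k}\cdot 2^{k+1}$ produced by $-\sqrt{C_k}/k$; second, writing $\frac{(k+2)(2^{k+2}-2)}{2(k-1)\cdot 2^{k+1}} = \frac{k+2}{k-1} - \frac{k+2}{(k-1)2^{k+1}}$ and applying $1-1/k$ yields the constant and $1/k$ contributions $1 + 2/k$, which precisely cancel the $-1 - 2/k$ of $A_k$. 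With these in hand, the $(3/2)^{k+1}/k$ contributions telescope to zero (since $-\frac{1}{k-1} + \frac{1}{k} + \frac{1}{k(k-1)} = 0$), while the $(9/8)^{k+1}/k$ contributions collect into $-\frac{k+1}{2k}(9/8)^{k+1} = -\frac{1}{2}(9/8)^{k+1} - \frac{1}{2k}(9/8)^{k+1}$, simultaneously recovering the third term and producing the desired fourth term.

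To close the proposition I would compare with Proposition~\ref{rawestimate}: the first three terms coincide with those of the raw estimate, while the raw estimate carries no $(9/8)^{k+1}/k$-scale contribution (its fourth term lies at the strictly smaller polynomial scale $-k/2$), so the majorant first diverges from the raw estimate exactly at the fourth term. The hard part here is not any individual step but the disciplined bookkeeping across three simultaneous sources of $1/k$-type corrections (the factor $B_k$, the factor $\frac{k}{k-1}$ inside $z_k$, and the $A_k$ summand); a single slip between $1/(k-1)$ and $1/k$ would destroy the precise coefficient $-\tfrac{1}{2k}$ of the fourth growth term.
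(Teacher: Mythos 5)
You have answered a different question than the one posed. The statement here is an \emph{observaci\'on} --- a narrative bridge, not a result requiring computation. It merely recalls two facts already established in the preceding chapter's ``Sobolev's bound until the third term'' theorem: (i) the minorant for $|q_1^{(k)}|$ (which comes from the upper-bound side $-E(k+1,1)\frac{1+\sqrt{1-4\chi_1^{(k)}}}{2}$ of Sobolev's refined estimate) was already expanded there to three terms, $2^{k+1}-\left(\tfrac{3}{2}\right)^{k+1}-\left(\tfrac{9}{8}\right)^{k+1}+o\left(\left(\tfrac{9}{8}\right)^{k}\right)$; (ii) the majorant for $|q_1^{(k)}|$ that Sobolev explicitly writes down, $-E(k+1,1)\sqrt{1-2\chi_1^{(k)}}$, was shown to grow like $2^{k+1}-k+o(k)$, whose second term $-k$ lies in the polynomial rather than the exponential scale of the paper's fixed asymptotic basis and therefore misses the true $-\left(\tfrac{3}{2}\right)^{k+1}$ completely. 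The observation needs nothing beyond citing those two computations; it just sets up the subsection to come.

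What you have actually written is a proof of the later \emph{Proposici\'on} about the refined majorant's four-term expansion, the content of the ``The refined majorant'' subsection. As a proof of \emph{that} result your proposal is correct, and the approach is genuinely different from the paper's: you factor $4^{k+1}$ out of the radicand, expand $\sqrt{1-z_k}$ binomially, and then separate contributions by exponential scale, whereas the paper iterates the conjugate-multiplication trick once per growth term, subtracting the known partial expansion at each pass. Your route is cleaner in that it produces all four terms in a single expansion, at the cost of the delicate $1/k$ bookkeeping you flag at the end --- in particular the exact identity $\frac{k-1}{k}\cdot\frac{k}{k-1}=1$ is needed to see that the $\left(\tfrac{3}{2}\right)^{k+1}$-coefficient has no $1/k^2$ contamination, which you handle correctly. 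But none of this is a proof of the observation it was attached to.
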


As we will see that the work for the majorant connects with the previous section and the work for the minorant connects with the next chapter, we begin studying the majorant, which is the one giving us more work, as we have observed. In particular, we will see that this new majorant has much more accurate second and third growth terms and it does not split from the estimate in the section above until the fourth growth term.

\begin{remark}
We compute growth terms until after we can split not just the minorant from the majorant, but the majorant from the estimate above. We do this in order to show how successfully the refinement of the bounds via Sobolev's lemma tightens the approximation provided by the raw use of the DLG method.
\end{remark}

\subsection{The refined majorant}

Thus we first look at the majorant, which was really bad with Sobolev's simplification. We therefore want to compute the asymptotic growth of \begin{gather*}
    \frac{2}{k}\bigg{(}\frac{2^{k+1}-(k+2)}{2}+\frac{k-1}{2}\\\sqrt{\frac{-4 \left(2^{k}-1\right)^2 + \left(4^{1 + k} -2 + 2^{2 + k} - 2 \cdot 3^{1 + k}\right) k}{k-1}
}\bigg{)}-2^{k+1}=\\\frac{2^{k+1}-(k+2)-2^{k+1}k}{k}+\frac{k-1}{k}\\\sqrt{\frac{-4 \left(2^{k}-1\right)^2 + \left(4^{1 + k} -2 + 2^{2 + k} - 2 \cdot 3^{1 + k}\right) k}{k-1}
}.
\end{gather*} We have to proceed as usual using conjugates. The conjugate grows clearly like $-2^{k+2}$ and the difference of squares equals $$\frac{3 \left(3^{k}2 (k-1)+k+2\right)}{k}\sim 3^{k+1}2$$ so the second growth term of the majorant is $\frac{3^{k+1}2}{-2^{k+2}}=-\left(\frac{3}{2}\right)^{k+1}$, which equals now the actual second growth term of the root. Remember that the previous majorant (the one given explicitly by Sobolev) had second growth term $k$, so very far from the actual growth term of the root. This situation forces us to explore the third growth term of this improved majorant. We proceed therefore similarly and compute now the asymptotic growth of \begin{gather*}
    \frac{2}{k}\bigg{(}\frac{2^{k+1}-(k+2)}{2}+\frac{k-1}{2}\\\sqrt{\frac{-4 \left(2^{k}-1\right)^2 + \left(4^{1 + k} -2 + 2^{2 + k} - 2 \cdot 3^{1 + k}\right) k}{k-1}
}\bigg{)}-2^{k+1}+\left(\frac{3}{2}\right)^{k+1}=\\\frac{2^{k+1}-(k+2)-2^{k+1}k+\left(\frac{3}{2}\right)^{k+1}k}{k}+\frac{k-1}{k}\\\sqrt{\frac{-4 \left(2^{k}-1\right)^2 + \left(4^{1 + k} -2 + 2^{2 + k} - 2 \cdot 3^{1 + k}\right) k}{k-1}
}.
\end{gather*} As before, the conjugate grows clearly like $-2^{k+2}$ but now the difference of squares equals $$\left(\frac{9}{4}\right)^{k+1}-2^{-k} 3^{k+1}-\frac{6 \left(\left(\frac{3}{2}\right)^k-1\right)}{k}+3\sim\left(\frac{9}{4}\right)^{k+1}$$ so the third growth term of the majorant is $\left(\frac{9}{4}\right)^{k+1}\frac{1}{-2^{k+2}}=-\frac{1}{2}\left(\frac{9}{8}\right)^{k+1}$, which still coincides with the raw estimate. The fourth growth term will finally break this coincidence. We repeat our usual approach and compute now the asymptotic growth of \begin{gather*}
    \frac{2}{k}\bigg{(}\frac{2^{k+1}-(k+2)}{2}+\frac{k-1}{2}\\\sqrt{\frac{-4 \left(2^{k}-1\right)^2 + \left(4^{1 + k} -2 + 2^{2 + k} - 2 \cdot 3^{1 + k}\right) k}{k-1}
}\bigg{)}-2^{k+1}+\left(\frac{3}{2}\right)^{k+1}+\\\frac{1}{2}\left(\frac{9}{8}\right)^{k+1}=\frac{2^{k+1}-(k+2)-2^{k+1}k+\left(\frac{3}{2}\right)^{k+1}k+\frac{1}{2}\left(\frac{9}{8}\right)^{k+1}k}{k}+\frac{k-1}{k}\\\sqrt{\frac{-4 \left(2^{k}-1\right)^2 + \left(4^{1 + k} -2 + 2^{2 + k} - 2 \cdot 3^{1 + k}\right) k}{k-1}
}.
\end{gather*} As nothing changes for the first growth term of the conjugate, it grows again like $-2^{k+2}$ and now the difference of squares equals \begin{gather*}\frac{1}{k}\bigg{(}4^{-3 k-4}3\big{(}2^{3 k+5} 3^{2 k+1} (-k+2^{k+1}-2)+3^{3 k+2} 4^{k+2} k+3^{4 k+3} k\\-2^{5 k+8} 3^k (k+2)+4^{3 k+4} (k+2)\big{)}\bigg{)}\sim\frac{1}{k}\left(\frac{9}{4}\right)^{k+1}\end{gather*} so the fourth growth term of the majorant is $\frac{1}{k}\left(\frac{9}{4}\right)^{k+1}\frac{1}{-2^{k+2}}=-\frac{1}{2k}\left(\frac{9}{8}\right)^{k+1}$, which now finally drastically differs from the raw estimate's fourth growth term $-\frac{k}{2}$.

We thus just proved above the following result that collects our extractions of growth terms. This will be important in order to understand the picture showing the reason why Sobolev's lemma is so insightful. 

\begin{proposicion}
The asymptotic growth of the refined majorant first differs from the raw estimate at its fourth growth terms. In particular, the majorant four first growth terms are $2^{k+1}-\left(\frac{3}{2}\right)^{k+1}-\frac{1}{2}\left(\frac{9}{8}\right)^{k+1}-\frac{1}{2k}\left(\frac{9}{8}\right)^{k+1}+o(\frac{1}{k}\left(\frac{9}{8}\right)^{k})$.
\end{proposicion}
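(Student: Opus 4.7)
The plan is to extract the asymptotic expansion of the majorant
$$M(k):=\frac{2}{k}\left(\frac{2^{k+1}-(k+2)}{2}+\frac{k-1}{2}\sqrt{\frac{-4(2^{k}-1)^2+(4^{1+k}-2+2^{2+k}-2\cdot 3^{1+k})k}{k-1}}\right)$$
term by term in our fixed exponential scale, using the same conjugate-and-divide-by-square-roots technology developed throughout this chapter. At each stage $j$, I would form the difference $D_{j}(k):=M(k)-S_{j}(k)$, where $S_{j}(k)$ is the proposed partial expansion up to the $j$-th growth term, rewrite $D_{j}$ as $R_{j}(k)-T_{j}(k)\sqrt{U_{j}(k)}$ with $R_{j},T_{j},U_{j}$ explicit, and rationalize by multiplying numerator and denominator by the conjugate $R_{j}(k)+T_{j}(k)\sqrt{U_{j}(k)}$. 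The growth of the conjugate is always the easy part ($\sim-2^{k+2}$), while the genuine content lives in the difference of squares $R_{j}^{2}-T_{j}^{2}U_{j}$, whose leading surviving term gives the next coefficient in our scale.

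Concretely, I would carry out four passes. The first simply isolates $M(k)\sim 2^{k+1}$ by recognizing that inside the radical the dominant contribution is $4^{k+1}$. The second pass forms $M(k)-2^{k+1}$, combines the rational part over a common denominator, and after conjugation the difference of squares is dominated by $3^{k+1}$-scale terms, yielding the second term $-(\tfrac{3}{2})^{k+1}$. The third pass repeats the procedure on $M(k)-2^{k+1}+(\tfrac{3}{2})^{k+1}$; now the rational part has an extra $(\tfrac{3}{2})^{k+1}k$ contribution that, combined with the square-root expansion, produces a $(\tfrac{9}{4})^{k+1}$-scale difference of squares and hence the third term $-\tfrac{1}{2}(\tfrac{9}{8})^{k+1}$. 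This already matches the three leading terms of the raw estimate of Proposition~\ref{rawestimate}, confirming the first sentence of the statement. The fourth pass is where the two expansions have to diverge, and is the delicate step.

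The main obstacle will be the fourth pass. In the raw estimate, the $-k/2$ term comes from the cross-term $-2^{k+1}(k+2)$ inside the radical $\sqrt{4^{k+1}-2^{k+1}(k+2)+\cdots}$. In the refined majorant, however, the radical has been multiplied by the prefactor $\tfrac{k-1}{k}$ and its argument rescaled by $\tfrac{1}{k-1}$, which precisely eats the linear-in-$k$ contribution while injecting a correction of order $\tfrac{1}{k}$ into the surviving growth. The bookkeeping is tight: one has to expand $\tfrac{k-1}{k}=1-\tfrac{1}{k}$ and $\tfrac{1}{k-1}=\tfrac{1}{k}+O(k^{-2})$ together with the square-root expansion, keep track of the interaction between the rational $\tfrac{-(k+2)}{k}$ piece and the radical, and organise the terms so that the $\tfrac{1}{2}(\tfrac{9}{8})^{k+1}k$ correction we subtract off cancels against the leading piece of the difference of squares, leaving exactly $\tfrac{1}{k}(\tfrac{9}{4})^{k+1}$ as the new dominant contribution.

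Once that cancellation is carried out, division by the conjugate $\sim-2^{k+2}$ yields the advertised fourth term $-\tfrac{1}{2k}(\tfrac{9}{8})^{k+1}$, which differs from the raw estimate's $-k/2$ both in sign behaviour and in order of magnitude: the raw estimate is asymptotically much worse by a factor of order $k(\tfrac{8}{9})^{k+1}$. I would finish by remarking that this is the first order at which Sobolev's refinement through Lemma~\ref{sobolema} genuinely improves on the direct Viète estimate after one DLG iteration, which is precisely the qualitative point the proposition is meant to record and the reason it is worth tracking four growth terms rather than three.
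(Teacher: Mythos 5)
Your plan matches the paper's proof essentially step for step: subtract the proposed partial expansion, combine the rational part over a common denominator, multiply by the conjugate (which always has leading growth $\sim -2^{k+2}$), estimate the dominant surviving term in the difference of squares, and divide. Your identification of the scales is correct at every stage — $3^{k+1}$ giving $-(\tfrac{3}{2})^{k+1}$, then $(\tfrac{9}{4})^{k+1}$ giving $-\tfrac{1}{2}(\tfrac{9}{8})^{k+1}$, then the crucial $\tfrac{1}{k}(\tfrac{9}{4})^{k+1}$ giving $-\tfrac{1}{2k}(\tfrac{9}{8})^{k+1}$ — and your explanation of why the linear-in-$k$ term that dominates the raw estimate's radical is replaced by a bounded $\tfrac{k+2}{k-1}$-type coefficient after the $\tfrac{k-1}{k}\sqrt{\cdot/(k-1)}$ rescaling is the right structural reason for the divergence at the fourth order.

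Two minor slips in your closing remark, neither affecting the argument: both fourth terms carry a minus sign, so there is no difference in ``sign behaviour''; and the ratio of magnitudes $\frac{k/2}{\frac{1}{2k}(9/8)^{k+1}}$ is $k^{2}(\tfrac{8}{9})^{k+1}$, not $k(\tfrac{8}{9})^{k+1}$ — you dropped one power of $k$ from the $\tfrac{1}{2k}$ in the denominator.
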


\begin{corolario}
The raw estimate ends up growing faster than the majorant and therefore surpasses the actual root above, i.e., for $k$ big enough, the absolute value of the raw estimate is bigger than the absolute value of the actual root.
\end{corolario}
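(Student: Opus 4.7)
The plan is to compare the asymptotic expansions of the two sequences term by term and exploit the fact that the first three growth terms in the fixed exponential scale (see Convention \ref{conasy}) coincide, so the sign of their difference is ultimately governed by the fourth growth terms. Both the raw estimate and the refined majorant begin with $2^{k+1}-\left(\tfrac{3}{2}\right)^{k+1}-\tfrac{1}{2}\left(\tfrac{9}{8}\right)^{k+1}$, and these terms cancel when one subtracts the majorant from the raw estimate, reducing the analysis to comparing the tails.

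First I would write, using Proposition \ref{rawestimate} and the proposition just proved above, the difference
\begin{gather*}
(\text{raw estimate})-(\text{majorant})=\\\Big{(}-\tfrac{k}{2}+o(k)\Big{)}-\Big{(}-\tfrac{1}{2k}\left(\tfrac{9}{8}\right)^{k+1}+o\Big{(}\tfrac{1}{k}\left(\tfrac{9}{8}\right)^{k}\Big{)}\Big{)}=\tfrac{1}{2k}\left(\tfrac{9}{8}\right)^{k+1}-\tfrac{k}{2}+o\Big{(}\tfrac{1}{k}\left(\tfrac{9}{8}\right)^{k}\Big{)}.
\end{gather*}
Since $\tfrac{9}{8}>1$, the sequence $\tfrac{1}{2k}\left(\tfrac{9}{8}\right)^{k+1}$ grows exponentially while $\tfrac{k}{2}$ grows only polynomially, so $\tfrac{1}{2k}\left(\tfrac{9}{8}\right)^{k+1}$ dominates $\tfrac{k}{2}$ asymptotically. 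This fits our asymptotic scale given by Convention \ref{conasy}, and standard properties of the relation $\sim$ (see Notation \ref{insteadO}) ensure that the asymptotically dominant term $\tfrac{1}{2k}\left(\tfrac{9}{8}\right)^{k+1}$ also dominates the $o\!\left(\tfrac{1}{k}\left(\tfrac{9}{8}\right)^{k}\right)$ remainder. Hence, for $k$ sufficiently large, the difference above is strictly positive.

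Next I would invoke the majorant property itself, namely the inequality $|q_{1}^{(k)}|\leq\text{majorant}$ established in the corollary of the previous section via Sobolev's Lemma \ref{sobolema}. Chaining this with the strict inequality just obtained yields, for $k$ sufficiently large,
\begin{gather*}
|q_{1}^{(k)}|\leq(\text{majorant})<(\text{raw estimate}),
\end{gather*}
which is precisely the statement of the corollary: the raw estimate surpasses the actual absolute value of the leftmost root from above for all large enough $k$.

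The only subtle point, which would be the main obstacle if any, is not the comparison of growth rates themselves (exponential beats polynomial in a straightforward way) but rather the bookkeeping of the remainder terms: one must be careful to check that the $o(k)$ remainder coming from the raw estimate and the $o\!\left(\tfrac{1}{k}\left(\tfrac{9}{8}\right)^{k}\right)$ remainder coming from the majorant can both be absorbed into the positive dominant term $\tfrac{1}{2k}\left(\tfrac{9}{8}\right)^{k+1}$. This is immediate from the definition of little-$o$ together with the fact that $\tfrac{k}{2k\left(\tfrac{9}{8}\right)^{k+1}}\to 0$ as $k\to\infty$, but it is worth noting explicitly to avoid any confusion between the different scales of the two remainders.
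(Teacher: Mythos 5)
Your proof is correct and takes essentially the same approach as the paper's: both arguments observe that the first three growth terms of the raw estimate and the refined majorant coincide, so the comparison reduces to the fourth growth terms, where $-\tfrac{1}{2k}\left(\tfrac{9}{8}\right)^{k+1}$ (majorant) is asymptotically more negative than $-\tfrac{k}{2}$ (raw estimate), and then chain with the majorant inequality for $|q_1^{(k)}|$. You simply spell out the subtraction and the absorption of the $o(\cdot)$ remainders more explicitly than the paper does, which is a reasonable elaboration rather than a different route.
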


\begin{proof}
The difference in growth between the raw estimate and the majorant happens at the fourth growth term. There the majorant computed via Sobolev's lemma substracts $\frac{1}{2k}\left(\frac{9}{8}\right)^{k+1}$ which is asymptotically bigger than the $\frac{k}{2}$ that it is substracted at that growth in the raw estimate, as can be seen in Proposition \ref{rawestimate}.
\end{proof}

This improved majorant still lies far from the original minorant written down by Sobolev after his simplification. We study now the new minorant in order to see if we get a significant improvement.

\subsection{The refined minorant}

As we know that the minorant obtained after Sobolev simplification already coincides in the first and second growth term of the actual root, we can proceed to compute now directly the third growth term of the improved minorant. Also, as this bound improves the one above, we can already suspect that the third growth term will be $-\left(\frac{9}{8}\right)^{k+1}$ and, for this reason, we will jump over it and compute directly the fourth growth term. And we will see that this works. For this, we need to compute the growth of \begin{gather*}
    \frac{2^{k+1}-(k+2)+\sqrt{2^{3 + k} - 4 \cdot 3^{1 + k} + 4^{1 + k} - k \left(2 - 2^{2 + k} + k\right)}}{2}\\-2^{k+1}+\left(\frac{3}{2}\right)^{k+1}+\left(\frac{9}{8}\right)^{k+1}=\\\frac{2^{k+1}-(k+2)+2(-2^{k+1}+\left(\frac{3}{2}\right)^{k+1}+\left(\frac{9}{8}\right)^{k+1})}{2}+\\\frac{\sqrt{2^{3 + k} - 4 \cdot 3^{1 + k} + 4^{1 + k} - k \left(2 - 2^{2 + k} + k\right)}}{2}.
\end{gather*} The conjugate grows like $-2^{k+1}$ while the difference of squares is now \begin{gather*}
64^{-k-1} \bigg{(}-72^{k+1} (k+2)-96^{k+1} (k+2)+2^{6 k+5} (k+1) (k+2)+\\2^{2 k+3} 27^{k+1}+81^{k+1}\bigg{)}\sim 64^{-k-1} 2^{2 k+3} 27^{k+1} = 2\left(\frac{27}{16}\right)^{k+1}
\end{gather*} so the fourth growth term is $-2\left(\frac{27}{32}\right)^{k+1}.$ Observe how the growth of the growth terms begins to decay at the fourth position: $-2\left(\frac{27}{32}\right)^{k+1}\to0$ when $k\to\infty.$ We thus proved the following result.

\begin{proposicion}
The asymptotic growth of the refined minorant up until its fourth growth term expands asymptotically as $$2^{k+1}-\left(\frac{3}{2}\right)^{k+1}-\left(\frac{9}{8}\right)^{k+1}-2\left(\frac{27}{32}\right)^{k+1}+o(\left(\frac{27}{32}\right)^{k}).$$
\end{proposicion}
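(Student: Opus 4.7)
The plan is to follow exactly the template the author has already deployed for the refined majorant and for the earlier third-term extractions: apply Definition \ref{defasy} iteratively, handle the square root by conjugation, and read off the dominant behaviour of the resulting difference of squares. The minorant under study has the form $m_{k}:=\frac{A_{k}+\sqrt{B_{k}}}{2}$ with $A_{k}:=2^{k+1}-(k+2)$ and $B_{k}:=2^{3+k}-4\cdot 3^{1+k}+4^{1+k}-k(2-2^{2+k}+k)$. Since the first growth term $2^{k+1}$ of $m_{k}$ is already known from the earlier work in the chapter, I would first compute the second and third growth terms in one pass, and only then carry out the harder fourth-term extraction suggested by the author.

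For the second term I would form $m_{k}-2^{k+1}=\frac{A_{k}-2\cdot 2^{k+1}+\sqrt{B_{k}}}{2}$, multiply numerator and denominator by the conjugate $A_{k}-2\cdot 2^{k+1}-\sqrt{B_{k}}$ (whose dominant growth is $-2\cdot 2^{k+1}$), and read off the leading term of $(A_{k}-2\cdot 2^{k+1})^{2}-B_{k}$. Dividing gives $-(3/2)^{k+1}$, as expected by the author's suspicion. The third-term extraction is entirely analogous: form $m_{k}-2^{k+1}+(3/2)^{k+1}$, multiply by the conjugate (still dominant $-2\cdot 2^{k+1}$), and identify the dominant term of the difference of squares. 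This will produce $-(9/8)^{k+1}$, matching the heuristic expectation that the refined minorant now agrees with the actual root up to a further term than Sobolev's simplified minorant did.

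For the fourth growth term, which is the actual content of the proposition, I would repeat the procedure one more time with the refined target, i.e.\ compute the growth of
\[
m_{k}-\Bigl(2^{k+1}-\bigl(\tfrac{3}{2}\bigr)^{k+1}-\bigl(\tfrac{9}{8}\bigr)^{k+1}\Bigr)=\tfrac{1}{2}\Bigl(A_{k}-2\cdot 2^{k+1}+2\bigl(\tfrac{3}{2}\bigr)^{k+1}+2\bigl(\tfrac{9}{8}\bigr)^{k+1}+\sqrt{B_{k}}\Bigr).
\]
The conjugate still has dominant growth $-2^{k+1}$ (no cancellation in the dominant term because the corrections are of smaller exponential order), and the difference of squares simplifies, after expanding $64^{-k-1}$ times the numerator, to a polynomial combination of $72^{k+1}$, $96^{k+1}$, $2^{6k+5}$, $2^{2k+3}\cdot 27^{k+1}$, and $81^{k+1}$. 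Among those terms the one of maximal exponential rate is $2^{2k+3}\cdot 27^{k+1}\cdot 64^{-k-1}=2\,(27/16)^{k+1}$, and dividing by the conjugate $-2^{k+1}$ yields $-2\,(27/32)^{k+1}$.

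The main obstacle, as in the majorant case, will be bookkeeping the cancellations: the naive dominant terms of $A_{k}-2\cdot 2^{k+1}+2(3/2)^{k+1}+2(9/8)^{k+1}$ and of $\sqrt{B_{k}}$ both behave like $\pm 2^{k+1}$ and cancel to leading order, so the conjugation step is indispensable and must be done carefully; what matters is that inside $64^{-k-1}$ times the expanded numerator the candidate dominant bases $72,96,64,54,81$ all satisfy $\max(\log_{2}\cdot)<\log_{2}(108)$, so that $2^{2k+3}\cdot 27^{k+1}$ is strictly dominant. Once this comparison of exponential bases is verified, packaging the result in the scale of Convention \ref{conasy} gives precisely the asymptotic expansion $2^{k+1}-(3/2)^{k+1}-(9/8)^{k+1}-2\,(27/32)^{k+1}+o((27/32)^{k})$ claimed in the statement.
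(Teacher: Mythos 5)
Your proposal follows essentially the same conjugation-and-difference-of-squares scheme as the paper, down to the identification of the dominant term $2^{2k+3}\cdot 27^{k+1}$ (exponential base $108$) in the difference of squares and the final division by the conjugate's growth $-2^{k+1}$ giving $-2\,(27/32)^{k+1}$. The only departures are cosmetic: you carry out the second- and third-term extractions explicitly whereas the paper jumps directly to the fourth term, and your list of subdominant bases contains a stray ``$54$'' that should not appear (the correct nondominant bases are $72,96,64,81$), though since $54<108$ this slip does not affect the verification.
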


\section{Conceptual consequences of the refinements}

We have seen above that the Lemma used for making these refinements is crucial for Sobolev in order to surpass us just computing the first iteration of the DLG method. In particular, the lemma allows us to deal with an interval in which a root lies instead of just providing a point estimation. However, we will also see that, if it is applied at this level in the iteration of the Sobolev method, then the relaxation also wins here.

\begin{remark}
At the first iteration of the DLG method, the relaxation wins against just using Sobolev's lemma in order to establish the minorant.
\end{remark}

Going beyond the first iteration of the DLG method would in general provide better bounds, but, for this to be useful for us, we would need to generalize the way we obtain interval estimates instead of point estimates. This means that we would need a generalization of Sobolev's \cite[Chapter 26, Lemma 1]{sobolev2006selected}. This is what we hinted at when we decided to use Cauchy-Schwartz when proving this lemma.

\begin{remark}[Possibility of extensions of Sobolev's Lemma \ref{sobolema}]
Using the Cauchy-Schwartz inequality in the proof of \cite[Chapter 26, Lemma 1]{sobolev2006selected} presented above here might be seen as overkill, but it hints directly at the possibility of extending that lemma using the many known generalizations of Cauchy-Schwartz and resultants instead of the DLG method. These generalizations could be used to extract interval estimates using higher (and therefore more accurate) iterations of the DLG method. These interval estimates should get tighter the more iterations of the DLG method we perform.
\end{remark}

However, if we consider just the minorant and we stay at the same level of the iteration of the DLG process, then the relaxation provides a better minorant than Sobolev's lemma. This is the whole topic of the next chapter. Thus, we see how the discussion of the majorant linked with the unsuccessfulness of the raw estimation studied in the section above while the discussion of the minorant will show us how the relaxation is superior to Sobolev's \cite[Chapter 26, Lemma 1]{sobolev2006selected} by the side of the estimation at which they both compete.
\chapter{Combining DLG method and the relaxation}\label{ChDLGCombi}

Each time we apply the DLG method we have to deal with a different set of roots (the squares of the initial roots) and, equivalently, with a different polynomial. We can therefore see this as a process consisting in applying the DLG method and then trying to estimate the roots of the original polynomial at each stage. This process will generate therefore a sequence of polynomials for each polynomial in our sequence. This double sequence is interesting by itself and we will deal with it in the next chapter. Here we will see that, at least at the first iteration, the minorant offered by the relaxation at such iteration beats the best minorant (the refined one discussed just above) obtained via Sobolev's lemma. This pushes us to establish the next problem that lies beyond the scope of our very humble current pursuits.

\begin{problem}
Show that, at every iteration of the DLG method, the application of the bivariate relaxation beats the univariate relaxation at establishing the best minorant. And the use of the relaxation beats the use of the corresponding generalization of Sobolev's lemma.
\end{problem}

Dealing correctly with this problem first requires determining the generalization of the Sobolev \cite[Chapter 26, Lemma 1]{sobolev2006selected} we are talking about. But, as we said before, this would lead us beyond the scope of our current interests. For this reason, we focus on the stage at which Sobolev provides his lemma, that is, at the first iteration of the DLG method, as he has to use what he calls the Lobachevsky equation (i.e., the DLG method) to establish that lemma.

\begin{observacion}
Although our discussion here has the potential to reach further, we decide to stay within the limits of the first iteration of the DLG method.
\end{observacion}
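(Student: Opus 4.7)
The plan is to establish the first-iteration case of the Problem explicitly, namely to show that applying the univariate relaxation to the Gr\"affe transform $Q_k$ of $A_k$, defined by $(-1)^k A_k(-x) A_k(x) = Q_k(x^2)$, yields an inner bound for $|q_1^{(k)}|^2$ whose square root asymptotically beats the refined Sobolev minorant, and that the bivariate relaxation of a corresponding interlacer-based extension of $Q_k$ beats the univariate one.

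First I would write down the coefficients $b_s^{(k)}$ of $Q_k$ from Eulerian numbers via the DLG recursion $b_s=(-1)^s a_s^2+2\sum_{j=0}^{s-1}(-1)^j a_j a_{2s-j}$, keeping enough asymptotic terms of $b_1,b_2,b_3$ (the only ones that enter the degree-$3$ truncation driving the relaxation) to make $L_{Q_k}(1),L_{Q_k}(x),L_{Q_k}(x^2),L_{Q_k}(x^3)$ explicit, exactly as in Computation 1 of the Mez\H{o} section. The univariate relaxation of $Q_k$ reduces to the $2\times 2$ LMP from Proposition \ref{improvementcearer}, and its unique positive root $\un_{\mathrm{DLG}}(k)$ gives a bound $\sqrt{\un_{\mathrm{DLG}}(k)}\le |q_1^{(k)}|$ in closed form, to which I can apply the same conjugate-and-cancel technique used in Proposition \ref{improvementcearer} to read off its asymptotic expansion.

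Next I would extract the four first growth terms of $\sqrt{\un_{\mathrm{DLG}}(k)}$. The first three should come out as $2^{k+1}-(3/2)^{k+1}-(9/8)^{k+1}$, matching both the raw Vi\`ete estimate of Proposition \ref{rawestimate} and the refined Sobolev minorant; the actual content of the argument is pinning down the fourth term as $+1+o(1)$ rather than the $-k/2$ of the raw estimate or the $-2(27/32)^{k+1}$ of Sobolev. Subtracting the two bounds then gives a difference of the form $1+2(27/32)^{k+1}+o(1)$, which is strictly positive for $k$ large, so the DLG-composed relaxation minorant is strictly tighter than the refined Sobolev minorant. For the second half of the Problem (bivariate beats univariate), I would form $Q_k+yQ_{k-1}$, check RZ-ness via Corollary \ref{interlacerextension}, and reuse the optimized linearization vector $(y,0,1,-\mathbf{1})$ from the proof of Proposition \ref{multundiff}; the same optimization procedure for $y$ and the same conjugate management should produce a positive difference of the same flavour as in Lemma \ref{lemafinal}, i.e.\ $\mult_v^{\mathrm{DLG}}-\un_{\mathrm{DLG}}\sim c\,(3/4)^k$ for some explicit $c>0$.

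The main obstacle will be the asymptotic bookkeeping after the square root: the dominant terms inside and outside every radical annihilate several times in a row, so each successive growth term requires a fresh conjugate multiplication, and the crucial constant $+1$ in the fourth growth term only survives after three such reductions. A secondary obstacle is verifying the interlacing property $Q_{k-1}\ll Q_k$ needed to build the bivariate RZ extension; interlacing is known to be preserved when one passes from a real-rooted polynomial to the polynomial with squared roots, but because the DLG step mixes the Gr\"affe transforms of $A_k$ and $A_{k-1}$ nontrivially, this has to be verified explicitly rather than inherited automatically from $A_{k-1}\ll A_k$.
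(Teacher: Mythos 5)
The statement you are asked to prove is not a mathematical claim at all: it is a scoping \emph{observation} declaring that the chapter will stay at the first iteration of the DLG method, since pushing further would require a generalization of Sobolev's Lemma \ref{sobolema} (via Cauchy--Schwarz-type inequalities applied to higher DLG iterates) that the paper declines to develop. The paper therefore gives no proof; your proposal is really a blueprint for the \emph{content of the chapter that follows} the observation, namely the first-iteration case of the Problem posed just before it.

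Taken on those terms, your plan does track the paper's argument for the univariate half: computing $b_1^{(k)},b_2^{(k)},b_3^{(k)}$ from the Gr\"affe recursion, forming the $2\times 2$ relaxation of $Q_k$ exactly as in Proposition~\ref{improvementcearer}, and extracting the four-term expansion $2^{k+1}-(3/2)^{k+1}-(9/8)^{k+1}+1+o(1)$ by repeated conjugate multiplications. Your observation that the comparison with the refined Sobolev minorant gives a difference $1+2(27/32)^{k+1}+o(1)>0$ is correct and is what makes $\un_2$ the best bound in the thesis.

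However, there are two concrete missteps in your second half. First, you propose to reuse the vector $(y,0,1,-\mathbf{1})\in\mathbb{R}^{n+2}$ from Proposition~\ref{multundiff}, but that vector linearizes the \emph{multivariate} Eulerian relaxation of size $n+2$; the relaxation of the bivariate $Q_k+yQ_{k-1}$ is a $3\times 3$ LMP, so the linearizing vector must be $3$-dimensional. The paper uses $(y,3/4,8)$, the analogue of the $(\alpha,3,-8)$ used for $A_n+yA_{n-1}$, not the multivariate vector. Second, you flag the verification of $Q_{k-1}\ll Q_k$ as a ``secondary obstacle'' that cannot be inherited from $A_{k-1}\ll A_k$. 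This is overcautious: the paper notes that all roots of $A_k$ lie in $(-\infty,0)$, where $x\mapsto x^2$ is strictly monotone, and records the simple fact that strictly monotone root transformations preserve strict interlacing. That kills the obstacle in one line. Finally, the paper does not actually compute the asymptotic order of $\mult^{\mathrm{DLG}}-\un_{\mathrm{DLG}}$; it only observes positivity numerically, so your proposed $\sim c(3/4)^k$ is an unverified extrapolation, not a claim the paper makes or needs.
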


This implies that, with respect to the property of producing minorants to the absolute value of the leftmost root of the Eulerian polynomials, the relaxation produces polynomials that lie in the middle of stages between different iterations of the DLG method. And that adding more variables in a meaningful way seems to establish a whole gradient between these stages, although we can only easily devise the way of adding just one variable more in order to make our polynomials bivariate instead of just univariate. Unfortunately, multivariate expansions of the method do not seem easy to obtain for us, although we do not consider this impossible. This is just too time-consuming to pursue for our initially humble objectives here of showing how the relaxation actually improves the best bounds provided so far in the literature and how going up to the setting where there are several variables seems to help in this task.

\begin{remark}
The use of the relaxation improves the minorant obtained through Sobolev's \cite[Chapter 26, Lemma 1]{sobolev2006selected}. Adding another variable via the use of the interlacing properties within the sequences of polynomial, improves the bound obtained even further.
\end{remark}

Thus, we will now see how the use of the univariate relaxation in the first iteration of the DLG process already provides the best bound. We will continue showing that this bound improves if we go to two variables using interlacing. Finally, we hint at the possibility of going fully multivariate and comment possible ways, obstructions, obstacles and difficulties when trying to follow these paths. In particular, we remind the reader that the DLG method has stability problems, although some solutions to that issue have been proposed. See, e.g., \cite{malajovich2001geometry}.

\section{Univariate polynomials of the first iteration}

We compute now the univariate relaxation for the polynomials obtained in the first iteration of the DLG method. This iteration produces the polynomials whose coefficients are the $b$ numbers in Sobolev's notations in \cite{sobolev2006selected}. In order to apply the relaxation, we collect these coefficients up to degree $3$. First, in order to produce these coefficients we will need now more coefficients of the Eulerian polynomials. In particular, we will compute the coefficients of the Eulerian polynomials up to degree $6.$

\begin{computacion}
For what follows and due to the action of the DLG method on polynomials, besides what we have already computed before, we will need also the third, fourth, fifth and sixth coefficients of the $k$-th univariate Eulerian polynomial \begin{gather*}
    a_{k-3}^{(k)}=-3^{n+1} (n+2)+2^n (n+1) (n+2)-\frac{1}{6} n (n+1) (n+2)+4^{n+1},
\\a_{k-4}^{(k)}=-4^{n+1} (n+2)+\frac{1}{2} 3^{n+1} (n+1) (n+2)\\-\frac{1}{3} 2^n n (n+1) (n+2)+\frac{1}{24} (n-1) n (n+1) (n+2)+5^{n+1},\\a_{k-5}^{(k)}=-5^{n+1} (n+2)+2^{2 n+1} (n+1) (n+2)-\frac{1}{2} 3^n n (n+1) (n+2)+\\\frac{1}{3} 2^{n-2} (n-1) n (n+1) (n+2)-\frac{1}{120} (n-2) (n-1) n (n+1) (n+2)+6^{n+1},\\a_{k-6}^{(k)}=-6^{n+1} (n+2)+\frac{1}{2} 5^{n+1} (n+1) (n+2)\\-\frac{1}{3} 2^{2 n+1} n (n+1) (n+2)\\+\frac{1}{8} 3^n (n-1) n (n+1) (n+2)\\-\frac{1}{15} 2^{n-2} (n-2) (n-1) n (n+1) (n+2)\\+\frac{1}{720} (n-3) (n-2) (n-1) n (n+1) (n+2)+7^{n+1}.\end{gather*}
\end{computacion}

Now we have all the ingredients we need to compute the coefficients of degree up to three of the first iteration of the DLG method. A nice observation to do here is the fact that, in the computation of these coefficients of degree up to three of the new polynomials, coefficients of degree up to six of the original univariate Eulerian polynomial play a role, which shows us that this iteration does indeed add further information into the cocktail of the relaxation. It is therefore not surprising that the bound provided will improve now.

\begin{computacion}
The coefficients of degree up to $3$ of the polynomial obtained after applying the DLG method to the $k$-th univariate Eulerian polynomial are \begin{gather*}b_{k}^{(k)}=(a_{k}^{(k)})^{2}=1,\\b_{k-1}^{(k)}=-((a_{k-1}^{(k)})^{2}-2a_{k-2}^{(k)}a_{k}^{(k)})=\\-((2^{k+1}-(k+2))^{2}-2(3^{k+1}-2^{k+1}(k+2)+(k+1)\frac{k+2}{2})),\\b_{k-2}^{(k)}=(a_{k-2}^{(k)})^{2}-2a_{k-3}^{(k)}a_{k-1}^{(k)}+2a_{k-4}^{(k)}a_{k}^{(k)}=\\(3^{k+1}-2^{k+1}(k+2)+(k+1)\frac{k+2}{2})^{2}-2(2^{k+1}-(k+2))\\(-3^{k+1} (k+2)+2^k (k+1) (k+2)-\frac{1}{6} k (k+1) (k+2)+4^{k+1})\\+2(-4^{k+1} (k+2)+\frac{1}{2} 3^{k+1} (k+1) (k+2)\\-\frac{1}{3} 2^k k (k+1) (k+2)+\frac{1}{24} (k-1) k (k+1) (k+2)+5^{k+1}),\\b_{k-3}^{(k)}=-((a_{k-3}^{(k)})^{2}-2a_{k-4}^{(k)}a_{k-2}^{(k)}+2a_{k-5}^{(k)}a_{k-1}^{(k)}-2a_{k-6}^{(k)}a_{k}^{(k)})=\\-((-3^{k+1} (k+2)+2^k (k+1) (k+2)-\frac{1}{6} k (k+1) (k+2)+4^{k+1})^{2}\\-2(-4^{k+1} (k+2)+\frac{1}{2} 3^{k+1} (k+1) (k+2)-\frac{1}{3} 2^k k (k+1) (k+2)\\+\frac{1}{24} (k-1) k (k+1) (k+2)+5^{k+1})(3^{k+1}-2^{k+1}(k+2)+(k+1)\frac{k+2}{2})\\+2(-5^{k+1} (k+2)+2^{2 k+1} (k+1) (k+2)-\frac{1}{2} 3^k k (k+1) (k+2)+\\\frac{1}{3} 2^{k-2} (k-1) k (k+1) (k+2)\\-\frac{1}{120} (k-2) (k-1) k (k+1) (k+2)+6^{k+1})(2^{k+1}-(k+2))\\-2(-6^{k+1} (k+2)+\frac{1}{2} 5^{k+1} (k+1) (k+2)\\-\frac{1}{3} 2^{2 k+1} k (k+1) (k+2)\\+\frac{1}{8} 3^k (k-1) k (k+1) (k+2)\\-\frac{1}{15} 2^{k-2} (k-2) (k-1) k (k+1) (k+2)\\+\frac{1}{720} (k-3) (k-2) (k-1) k (k+1) (k+2)+7^{k+1})).\end{gather*}
\end{computacion}

Now we can compute the univariate $L$-form of this new family of polynomials. We proceed as usual by degree.

\setlength{\emergencystretch}{3em}%
\begin{computacion}
Let, in this computation, $p$ be the corresponding univariate polynomial of degree $k$ obtained after applying the DLG method to the $k$-th univariate Eulerian polynomial. We go in order. Degree $0$ is easy as usual: $L_{p}(1)=k$. For degree $1$, we have $L_{p}(x)=\coeff(x,p)=b_{k-1}^{(k)}.$ We continue with degree $2$ and obtain $L_{p}(x^{2})=-2\coeff(x^{2},p)+\coeff(x,p)^{2}=-2b_{k-2}^{(k)}+(b_{k-1}^{(k)})^{2}$. Finally, for degree $3$, we have $L_{p}(x^{3})=3\coeff(x^{3},p)-3\coeff(x,p)\coeff(x^{2},p)+\coeff(x,p)^{3}=3b_{k-3}^{(k)}-3b_{k-1}^{(k)}b_{k-2}^{(k)}+(b_{k-1}^{(k)})^{3}.$
\end{computacion}

This information allows us to compute the corresponding univariate relaxation, which, in this case, is given by a LMP small enough to allow us to compute also its determinant. We proceed directly for this.
\setlength{\emergencystretch}{0em}%

\begin{computacion}
Writing the relaxation $$\begin{pmatrix}L_{p}(1) & L_{p}(x)\\L_{p}(x) & L_{p}(x^2)\end{pmatrix}+x\begin{pmatrix}L_{p}(x) & L_{p}(x^2)\\L_{p}(x^2) & L_{p}(x^3)\end{pmatrix},$$ we immediately see that the determinant of the relaxation is a quadratic polynomial \begin{gather*}\hspace{-1cm}ax^{2}+bx+c=(L_{p}(1) + xL_{p}(x))(L_{p}(x^2) + xL_{p}(x^3))-(L_{p}(x) + xL_{p}(x^2))(L_{p}(x) + xL_{p}(x^3))\end{gather*} so \begin{gather*}a=L_{p}(x)L_{p}(x^3)-L_{p}(x^2)^2,\\ b=L_{p}(x)L_{p}(x^2)+L_{p}(1)L_{p}(x^3)-L_{p}(x)L_{p}(x^3)-L_{p}(x)L_{p}(x^2)=\\L_{p}(1)L_{p}(x^3)-L_{p}(x)L_{p}(x^3),\mbox{\ and }\\c=L_{p}(1)L_{p}(x^2)-L_{p}(x)^2.\end{gather*} So the roots of this determinant $\frac{-b\pm\sqrt{b^2-4ac}}{2a}$ are both bigger than the smallest root of the polynomial $p$ because of the properties of the relaxation and therefore the one that better approximates the innermost root is the smallest one, which is $\frac{-b-\sqrt{b^2-4ac}}{2a}$ because $a>0$ for $k$ big enough.
\end{computacion}

Now that we have our approximation obtained through the combination of the relaxation and the DLG method, we are in the position to study how it improves the previous bounds and therefore how it wins. This is the topic of the next section.

\section{Winning univariate}

The computations above give us $\frac{2a}{-b-\sqrt{b^2-4ac}}$ as the bound for the extreme root of the new polynomial. This extreme root is the square of the absolute value of the extreme root of the Eulerian polynomial. Thus we obtain the lower bound $\sqrt{\frac{2a}{-b-\sqrt{b^2-4ac}}}\leq|q_{1}^{(k)}|$. In order to see that this minorant is better than the previous one, we will try to compute its fourth growth term using what we know already. Therefore we have to compute the growth of $$\sqrt{\frac{2a}{-b-\sqrt{b^2-4ac}}}-2^{k+1}+\left(\frac{3}{2}\right)^{k+1}+\left(\frac{9}{8}\right)^{k+1}$$ and we expect to obtain something \textit{not subtracting as much as} what does $-2\left(\frac{27}{32}\right)^{k+1}$ from the growth. The computations will be lengthy again.

\begin{computacion}
First, we rewrite the quantity whose growth we want to establish as $$\frac{\sqrt{2a}-(2^{k+1}-\left(\frac{3}{2}\right)^{k+1}-\left(\frac{9}{8}\right)^{k+1})\sqrt{-b-\sqrt{b^2-4ac}}}{\sqrt{-b-\sqrt{b^2-4ac}}}.$$ Establishing the growth of the denominator will, as usual, be easy. For the numerator, we will require more work. We will have to conjugate twice. First by \begin{gather*}C_{1}:=\sqrt{2a}+(2^{k+1}-\left(\frac{3}{2}\right)^{k+1}-\left(\frac{9}{8}\right)^{k+1})\sqrt{-b-\sqrt{b^2-4ac}}\end{gather*} to obtain \begin{gather*}N_{1}:=2a-\left(2^{k+1}-\left(\frac{3}{2}\right)^{k+1}-\left(\frac{9}{8}\right)^{k+1}\right)^{2}(-b-\sqrt{b^2-4ac})=\\2a+b\left(2^{k+1}-\left(\frac{3}{2}\right)^{k+1}-\left(\frac{9}{8}\right)^{k+1}\right)^{2}+\\\left(2^{k+1}-\left(\frac{3}{2}\right)^{k+1}-\left(\frac{9}{8}\right)^{k+1}\right)^{2}\sqrt{b^2-4ac}.\end{gather*} Now we multiply again by the conjugate of this expression \begin{gather*}C_{2}:=2a+b\left(2^{k+1}-\left(\frac{3}{2}\right)^{k+1}-\left(\frac{9}{8}\right)^{k+1}\right)^{2}-\\\left(2^{k+1}-\left(\frac{3}{2}\right)^{k+1}-\left(\frac{9}{8}\right)^{k+1}\right)^{2}\sqrt{b^2-4ac}\end{gather*} and obtain \begin{gather*}N_{2}:=\left(2a+b\left(2^{k+1}-\left(\frac{3}{2}\right)^{k+1}-\left(\frac{9}{8}\right)^{k+1}\right)^{2}\right)^2-\\\left(2^{k+1}-\left(\frac{3}{2}\right)^{k+1}-\left(\frac{9}{8}\right)^{k+1}\right)^{4}(b^2-4ac).\end{gather*} Now we have to estimate correctly all these conjugates and differences of squares. We have $N_{2}\sim -2^{11 k+14} 9^{k+1} k.$ Now, to compute the growth of the conjugate $C_{2}$ we compute the growth of both summands. For that, we need the growth of $a\sim\frac{1}{24} 2^{4 k+7} 3^{2 k+3}, b\sim-2^{6 k+6} k, c\sim2^{4 k+4} k$ and also $\sqrt{b^2-4ac}\sim2^{6 k+6} k.$ This immediately gives $C_{2}\sim -2^{8 k+9} k$ and therefore $N_{1}=\frac{N_{2}}{C_{2}}\sim\frac{-2^{11 k+14} 9^{k+1} k}{-2^{8 k+9} k}=2^{3 k+5} 9^{k+1}.$ Now we need to compute $-b-\sqrt{b^2-4ac}=\frac{b^2-(b^2-4ac)}{-b+\sqrt{b^2-4ac}}=\frac{4ac}{-b+\sqrt{b^2-4ac}}\sim\frac{4^{4 k+5} 9^{k+1} k}{2^{7 + 6 k} k}=2^{2 k+3} 9^{k+1}.$
Now we can proceed similarly and compute $C_{1}\sim 2^{2 k+3} 3^{k+1}\sqrt{2}$ so we have that $N=\frac{N_{1}}{C_{1}}\sim\frac{2^{3 k+5} 9^{k+1}}{2^{2 k+3} 3^{k+1}\sqrt{2}}=2^{k+1} 3^{k+1}\sqrt{2}.$ To finish we have to compute the growth of the original denominator $\sqrt{-b-\sqrt{b^2-4ac}}\sim\sqrt{2^{2 k+3} 9^{k+1}}=2^{k+1} 3^{k+1}\sqrt{2}$. Thus the growth we are looking for is $\frac{2^{k+1} 3^{k+1}\sqrt{2}}{2^{k+1} 3^{k+1}\sqrt{2}}=1,$ which is a notable improvement with respect to $-2\left(\frac{27}{32}\right)^{k+1}.$ \end{computacion}

We fix the strong improvement shown by these computations in the following important result.

\begin{teorema}
Calling $\uni_{2}:=\sqrt{\frac{2a}{-b-\sqrt{b^2-4ac}}}\leq|q_{1}^{(k)}|$ the bound obtained through the application of the relaxation to the univariate polynomial obtained after applying the DLG method, we have the new bound $2^{k+1}-\left(\frac{3}{2}\right)^{k+1}-\left(\frac{9}{8}\right)^{k+1}+1+o(1)=\uni_{2}(k)\leq|q_{1}^{(k)}|.$
\end{teorema}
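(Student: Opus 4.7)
The plan is to combine two ingredients already developed in the excerpt. First, the inequality $\uni_{2}(k)\leq|q_{1}^{(k)}|$ follows from the two-layer construction: the DLG method applied to the $k$-th univariate Eulerian polynomial yields a new polynomial whose roots are precisely the squares $(q_i^{(k)})^2$, so its largest root equals $|q_1^{(k)}|^2$; the relaxation, being an outer approximation to the rigidly convex set, provides a root of its determinantal quadratic that bounds $|q_1^{(k)}|^2$ from below; and taking square roots (with positivity automatic because of the PSD initial matrix and the sign analysis of $a$) transports this inequality to $|q_1^{(k)}|$ itself. The root selected, $\frac{-b-\sqrt{b^2-4ac}}{2a}$, is the smaller of the two roots, since for $k$ large enough $a>0$ (as computed: $a\sim\tfrac{1}{24}\,2^{4k+7}\,3^{2k+3}$), making it the tightest admissible approximation from below for $|q_1^{(k)}|^2$.

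Second, I would read off the asymptotic expansion of $\uni_{2}$ up to its fourth growth term directly from the long computation preceding the statement. The first three terms $2^{k+1}-(3/2)^{k+1}-(9/8)^{k+1}$ are already fixed by the fact that $\uni_{2}$ is known to beat the refined minorant, whose expansion up to that order is exactly $2^{k+1}-(3/2)^{k+1}-(9/8)^{k+1}+o((9/8)^k)$ as proven in the previous section; since $\uni_{2}$ is also bounded above by $|q_1^{(k)}|$, whose majorant grows like $2^{k+1}-(3/2)^{k+1}-\tfrac{1}{2}(9/8)^{k+1}+o((9/8)^k)$, the three leading terms of $\uni_{2}$ must coincide with those of $|q_1^{(k)}|$. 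The fourth growth term is then obtained by forming the difference
\[
\sqrt{\tfrac{2a}{-b-\sqrt{b^2-4ac}}}-\bigl(2^{k+1}-(3/2)^{k+1}-(9/8)^{k+1}\bigr)
\]
and carrying out exactly the double conjugation already presented, which yields the growth $1+o(1)$.

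The main obstacle, as signalled in the computation, is the double nesting of radicals: the numerator $\sqrt{2a}-(2^{k+1}-(3/2)^{k+1}-(9/8)^{k+1})\sqrt{-b-\sqrt{b^2-4ac}}$ contains the square root of a quantity that itself contains a square root, so the dominant-term cancellations occur at two separate layers. The plan to overcome this is to apply the multiplication-by-conjugate strategy twice in succession: first to eliminate the outer square root, producing an expression $N_1$ in which the inner $\sqrt{b^2-4ac}$ remains but its companion has been squared away; and then to apply the same trick to $N_1$ via its conjugate $C_2$, yielding a fully radical-free $N_2\sim -2^{11k+14}\,9^{k+1}\,k$. After dividing back by the conjugates $C_1,C_2$ and by the denominator $\sqrt{-b-\sqrt{b^2-4ac}}\sim 2^{k+1}\,3^{k+1}\sqrt{2}$, all the exponential prefactors telescope to give exactly $1$, establishing the fourth growth term and hence the claimed asymptotic identity $\uni_{2}(k)=2^{k+1}-(3/2)^{k+1}-(9/8)^{k+1}+1+o(1)$. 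Combined with the first ingredient, this finishes the proof.
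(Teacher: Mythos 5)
Your overall route is the paper's: the theorem is established by the double-conjugation computation preceding it, which shows that the difference
\[
\sqrt{\tfrac{2a}{-b-\sqrt{b^2-4ac}}}-\Bigl(2^{k+1}-\bigl(\tfrac{3}{2}\bigr)^{k+1}-\bigl(\tfrac{9}{8}\bigr)^{k+1}\Bigr)
\]
tends to $1$, and you correctly identify this computation as the core of the argument. However, two of your intermediate statements are wrong, and the second reflects a genuine misconception even though it does not sink the proof.

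First, you say that the root $\frac{-b-\sqrt{b^2-4ac}}{2a}$ of the determinantal quadratic ``bounds $|q_1^{(k)}|^2$ from below.'' It does not: the DLG polynomial has only positive roots (the squares $(q_i^{(k)})^2$), the relaxation is an \emph{outer} approximation of the rigidly convex set $(-\infty,(q_k^{(k)})^2]$, and so the smaller root of the determinant \emph{over}-estimates the smallest positive root $(q_k^{(k)})^2 = 1/|q_1^{(k)}|^2$. What lower-bounds $|q_1^{(k)}|^2$ is the \emph{reciprocal} $\frac{2a}{-b-\sqrt{b^2-4ac}}$, and the step converting the upper bound for the near root into a lower bound for the far one is palindromicity of $A_k$ (which survives the DLG squaring because squaring preserves the reciprocal-pair relation on roots). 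Your displayed formula for $\uni_{2}$ already encodes the reciprocal, so the error is only in the prose, but the palindromicity step should be stated explicitly rather than glossed over.

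Second, the sandwich argument you use to ``fix'' the first three growth terms is invalid. The refined minorant's third term is $-(9/8)^{k+1}$ while the majorant's is $-\tfrac{1}{2}(9/8)^{k+1}$; these bracket the third term of $|q_1^{(k)}|$ but do not determine it (and in fact the text never does determine it), so they cannot determine the third term of $\uni_{2}$ either. Moreover, the inequality $\uni_{2}\geq$ (refined minorant) is not yet known at this point --- it is a \emph{consequence} of the theorem you are proving, so invoking it is circular. Fortunately none of this is needed: the conjugation computation directly certifies that the difference displayed above converges to $1$, and this \emph{simultaneously} pins down all four growth terms, since if the third term of $\uni_{2}$ were anything other than $-(9/8)^{k+1}$ that difference would diverge rather than converge. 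Drop the sandwich and let the computation alone carry the argument, exactly as the paper does.
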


Can we even improve this? Without our analysis of the bivariate relaxation, it would be difficult to predict if this could be possible. However, the bivariate relaxation proves itself better. This is the topic of the next two sections.

\setlength{\emergencystretch}{3em}%
\section[Bivariate polynomials of the first iteration]{Bivariate polynomials of the first iteration and winning bigger}
\setlength{\emergencystretch}{0em}%

As we did with the sequence of Eulerian polynomial, we can collect recursive information into the relaxation going to two variables and using the conservation of the property of interlacing between the variables. Now we consider the whole sequence of polynomials obtained in the first iteration of the DLG method. The interlacing of the sequence of Eulerian polynomials implies that this sequence is also interlacing. We collect this easy fact here. The proof is immediate. First, we need also an immediate definition in order to be able to talk properly about these objects.

\begin{definicion}[Root transformations] Let $p=c\prod_{i=1}^{n}(x-r_{i})\in\mathbb{C}[x]$ be a polynomial with roots in $A\subseteq\mathbb{C}$ and consider a map $f\colon A\to\mathbb{C}.$ We denote $p_{f}$ the polynomial $p=c\prod_{i=1}^{n}(x-f(r_{i}))$ obtained from $p$ after performing the transformation $f$ in the roots of $p$. We call $p_{f}$ the \textit{$f$-root-transformation of $p$}.
\end{definicion}

Clearly, the DLG method is a particular case of the definition above. For Eulerian polynomials, as all the roots are strictly negative, we have the next result.

\begin{proposicion}
Let $p,q$ be polynomials with roots in a set $A\subseteq\mathbb{R}$ and consider a (strictly) monotonous function $f\colon A\to\mathbb{R}$. If $p,q$ are (strictly) interlacing so are the polynomials $p_{f},q_{f}$.
\end{proposicion}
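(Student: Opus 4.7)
The plan is to unfold the definition of interlacing in terms of chains of inequalities between the real roots and then transport these inequalities through $f$ using monotonicity. Write $p = c_p \prod_{i=1}^{n}(x - a_i)$ with $a_1 \leq \cdots \leq a_n$ and $q = c_q \prod_{i=1}^{m}(x - b_i)$ with $b_1 \leq \cdots \leq b_m$, where $|n-m|\leq 1$ as required by the interlacing setup in the excerpt. The interlacing hypothesis gives chains such as $a_1 \leq b_1 \leq a_2 \leq b_2 \leq \cdots$, and strict interlacing gives the same chain with every $\leq$ replaced by $<$. By construction, the roots of $p_f$ are $\{f(a_i)\}_{i=1}^{n}$ and those of $q_f$ are $\{f(b_j)\}_{j=1}^{m}$, counted with multiplicity.

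First I would handle the strictly increasing case. Applying $f$ to every entry of the chain $a_1 \leq b_1 \leq a_2 \leq \cdots$ preserves the inequalities because $f$ is monotonically increasing, yielding $f(a_1) \leq f(b_1) \leq f(a_2) \leq \cdots$; strictness is preserved too because $f$ is \emph{strictly} increasing. Since $f$ preserves the order on $A$, these values are already the roots of $p_f$ and $q_f$ listed in nondecreasing order, so the resulting chain is exactly the interlacing chain for $p_f, q_f$.

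Next I would treat the strictly decreasing case, which is the only real subtlety. Here $f$ reverses the order on $A$, so if we apply $f$ to the chain $a_1 \leq b_1 \leq a_2 \leq b_2 \leq \cdots \leq a_n$ we get $f(a_1) \geq f(b_1) \geq f(a_2) \geq f(b_2) \geq \cdots \geq f(a_n)$. To recover the interlacing chain in the standard form, I would reindex by setting $a'_i := f(a_{n+1-i})$ and $b'_j := f(b_{m+1-j})$, which are exactly the roots of $p_f$ and $q_f$ listed in nondecreasing order. Reading the reversed chain from right to left then yields $a'_1 \leq b'_{m} \leq a'_2 \leq \cdots$, and a careful bookkeeping of which of the two cases $m = n-1$ or $m = n$ (or $m = n+1$) we are in shows that this is precisely the interlacing condition for $p_f, q_f$; strictness again transfers because $f$ is strictly monotonous.

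The main obstacle I expect is purely notational: being careful with the index shift in the decreasing case so that the reversal of the sequences of roots is matched with a reversal of the interlacing chain in the correct way, especially when $\deg(p) = \deg(q)$ versus $\deg(p) = \deg(q) \pm 1$. No deep idea is needed beyond strict monotonicity of $f$ on $A$, which guarantees both the preservation (or reversal) of order and the preservation of strict inequalities, so the whole argument reduces to applying $f$ to a finite chain of (strict) inequalities and relabeling.
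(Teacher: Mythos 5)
The paper itself gives no proof of this proposition --- it states it right after the root-transformation definition as something whose ``proof is immediate'' --- so there is no textual argument to compare against. Your approach (write out the chain $a_1\leq b_1\leq a_2\leq\cdots$, push it through $f$, and in the decreasing case reverse the indexing) is the natural and correct one; the increasing case is indeed trivial, and for the decreasing case your relabeling $a'_i=f(a_{\deg(p)+1-i})$, $b'_j=f(b_{\deg(q)+1-j})$ does recover the interlacing chain. Two small remarks. First, there is a notational slip: after reversing you wrote $a'_1\leq b'_m\leq a'_2\leq\cdots$, but with your relabeling the rightmost $b$-term of the original chain becomes $b'_1$, so the chain reads $a'_1\leq b'_1\leq a'_2\leq\cdots$; this does not affect the argument. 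Second, it is worth noting, since you flag the bookkeeping as the only obstacle, that when $\deg p=\deg q$ a decreasing $f$ can \emph{swap} which polynomial interlaces which (the roles of $p_f$ and $q_f$ reverse), so the conclusion ``$p_f,q_f$ are interlacing'' should be read symmetrically; in the paper's intended application --- $f(x)=x^2$ on the strictly negative reals, applied to consecutive Eulerian polynomials with $\deg q=\deg p -1$ --- this issue does not arise and $q_f$ does interlace $p_f$ as your reindexing shows.
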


Now, as the sequence obtained after the first iteration also interlaces, we can apply the strategy, that we saw already in chapters above, of using the possibility of adjoining recurrent information from the sequence adding one extra variable accompanying the previous polynomial in the sequence. In this way, we can also form new bivariate polynomials.

\begin{remark}[The bivariate polynomials of the first iteration of the DLG method]
We form again the polynomials $C_{n}(x,y):=p_{n}+y p_{n-1}.$
\end{remark}

We have to compute again the corresponding $L$-forms. We will not conduct the whole process again here, as it is very similar to what we did in the case of Eulerian polynomials ($0$-th iteration of the DLG method). We only recall the procedure that should be repeated here.

\begin{procedimiento}[Dealing with the relaxation as already usual]
After computing the corresponding $L$-forms of the new bivariate polynomial introduced in the remark above, we can form the corresponding LMP defining its associated relaxation. The determinant of this LMP is a cubic polynomial. Therefore, in order to check that the relaxation of this bivariate version does indeed improve the bounds obtained, we use the again trick of trying to approximate a solution to the related generalized eigenvalue problem through the process of guessing an approximate eigenvector, as it is easier than dealing with the cubic polynomial directly.
\end{procedimiento}

Experiments through the use of this trick lead us to consider the vector $(y, 3/4, 8)$ to linearize our problem. We maximize over $y$.

\begin{observacion}[Just a look through this window beyond univariateness after applying the DLG method]
Now, contrary to what happened in the section above, we predict the difference to be positive but so small that the best strategy is comparing the univariate and the bivariate relaxation directly without seeking for growth term. This means that we will compute directly a difference of estimates here instead of the \textit{immediate next} growth term because here we are interested just in showing again that going bivariate improves, once more, the bounds in our way to discuss the necessity of finding strategies to go fully multivariate in the next section following the philosophical heuristic that seems to point towards the direction of the maxima saying that \textit{if going to two variables helps, going to several variables should help even more}. Doing this, we can easily see that we obtain again a bound that is better than the univariate one obtained in the section above.
\end{observacion}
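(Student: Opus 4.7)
The plan is to follow closely the strategy developed earlier in the text for showing that the bivariate Eulerian relaxation beats the univariate one (Proposition \ref{multundiff} and the subsequent chapter), but applied now to the polynomials $C_n(x,y):=p_n+y p_{n-1}$ obtained from the first iteration of the DLG method. Concretely, I would first compute the values of the $L$-form $L_{C_n}$ on monomials up to degree $3$ in $(x,y)$. Because the coefficients of $p_n$ and $p_{n-1}$ were already expressed in terms of the $b_k^{(k)}$ in the preceding section, these evaluations are mechanical: each $L_{C_n}(x^i y^j)$ with $i+j\leq 3$ reduces, via \cite[Example 3.5]{main}, to a polynomial in the $b_k^{(k)}$ and $b_k^{(k-1)}$ coefficients. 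With these at hand I would assemble the $3\times 3$ LMP $M_{C_n}(x,y)=A_0+xA_1+yA_2$ defining the relaxation.

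Next I would restrict to $y=0$ (we want a bound for the root of $p_n$) and linearize through the guessed approximate generalized eigenvector $v=(y,3/4,8)$, obtaining an inequality of the form $v^{T}A_0 v+x\, v^{T}A_1 v\geq 0$, which rearranges to a bound of the shape $\bi_2(n)=N(y)/D(y)$ for the square of $|q_1^{(n)}|$. Following Lemma \ref{optimaly} I would take the derivative with respect to $y$, land on a quadratic $ay^2+by+c=0$, and select the correct root by the horizontal-asymptote/sign analysis used there; this fixes an optimal sequence $y(n)$. I would then verify, exactly as in Procedure \ref{processy}, that this $y$ meets the positivity constraints on $N$ and $D$ by multiplying numerator and denominator by the appropriate conjugate in order to free the radical, so that the dominant growth of $y(n)$ can be read off unambiguously.

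Finally, to compare $\sqrt{\bi_2(n)}$ with $\uni_2(n)=\sqrt{\frac{2a}{-b-\sqrt{b^2-4ac}}}$, I would compute the difference $\sqrt{\bi_2}-\uni_2$. Both bounds already agree through the growth term $2^{k+1}-(3/2)^{k+1}-(9/8)^{k+1}+1$, so the comparison has to be done by forming the difference, clearing the two nested square roots with two successive conjugations (as in Lemma \ref{lemafinal}), and tracking the surviving top-order terms after the cancellations. The resulting numerator, after the double-conjugation trick, should be positive, which yields $\sqrt{\bi_2(n)}>\uni_2(n)$ for all sufficiently large $n$, that is, the bivariate-after-DLG bound strictly refines the univariate-after-DLG bound.

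The main obstacle is exactly the one flagged in the observation itself: the two bounds are expected to be so close that all obvious dominant terms annihilate. The delicate part is therefore not the formal setup but the bookkeeping of subleading terms across two layers of conjugation — first to deal with $\sqrt{b^2-4ac}$ inside $\uni_2$ and $\bi_2$, and then again to deal with the outer square root coming from the DLG squaring of roots. Choosing a $y(n)$ that is suboptimal but has a clean closed form (in the spirit of how a suboptimal $y$ was already enough to establish the exponential separation in the previous chapter) would keep the final asymptotic algebra tractable while still giving a strictly positive difference, which is all that is needed to justify the final observation and to motivate the jump to the fully multivariate setting in the next section.
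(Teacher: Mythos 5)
Your plan mirrors the earlier Eulerian-case machinery (the beating-univariate proposition, the optimization lemma for $y$, the multiply-by-conjugate procedure, the final difference-of-bounds lemma), and it is a plausible blueprint; the paper, by contrast, carries out none of these computations for the DLG-iterated polynomials — the observation simply asserts the conclusion. However, the observation is explicit that the ``best strategy is comparing the univariate and the bivariate relaxation directly without seeking for growth term,'' i.e.\ the paper deliberately backs away from exactly the conjugate-and-track-surviving-dominant-terms chase you sketch. The reason is hiding in your own last paragraph: after DLG, both $\uni_2$ and the candidate bivariate bound are expected to be $O(1)$-close, so the difference lives in the $o(1)$ tail of the expansion. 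The conjugation machinery from the earlier lemmas was engineered to isolate \emph{exponentially} separated terms; when the difference has no clean leading term of the form $n^r a^n$ — e.g.\ when it tends to a constant or decays slower than any geometric — the bookkeeping need not terminate at a surviving dominant term at all, and that is precisely the tractability obstacle the observation is flagging. Separately, you take as given that ``both bounds already agree through the growth term $2^{k+1}-(3/2)^{k+1}-(9/8)^{k+1}+1$,'' but nothing in the paper establishes the fourth growth term of the bivariate bound, and this assumption is in fact the crux: if the bivariate bound's fourth term strictly exceeds $1$, the improvement appears there and the analysis is cleaner; if it also equals $1$, you are pushed further into the tail. So the gap is not a wrong step but an unexamined premise: you assume the double-conjugation will yield a positive surviving term in the fixed scale, whereas the paper's observation is precisely warning that it may not.
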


The previous observation clearly shows as that the discussion in this section opens the door to consider multivariate generalization of this process, as we saw in the case of Eulerian polynomials. Discussing and presenting ideas in that direction is the topic of the next speculative section that builds upon the fact that nice multivariate generalizations are possible for Eulerian polynomials. We believe this to be possible for further iterations of the DLG method at least.

\section{Ideas for multivariability}

\setlength{\emergencystretch}{3em}%
As it was indeed possible to generate multivariate Eulerian polynomials from the univariate Eulerian polynomials through the strategy of counting finer within the combinatorial objects they deal with or, equivalently, using the theory of real stability preservers together with the recursive expression of these polynomials, we firmly believe that an application of these ideas in a refined manner would allow us to construct multivariate refinements for all sequences of polynomials generated via the application of (several iterations of) the DLG method. However, this is not immediate and we do not know how to do it. This is also not the main topic of this dissertation. For this reason, we only discuss these ideas here because they emerge naturally from our previous considerations in a fairly direct way: applying the relaxation to the multivariate refinement improved the bivariate bounds in the case of Eulerian polynomials.
\setlength{\emergencystretch}{0em}%

\begin{remark}
In the case of the $0$-th iteration of the DLG method, we have seen that multivariate refinements improve the relaxation. This shows that adding more variables in a structured way is beneficial for root-bounding.
\end{remark}

However, we know that going multivariate is not as straightforward as going bivariate is. Going bivariate is an easy consequence of the conservation of RZ-ness for a particular transformation adding an extra variable attached to an interlacer. For going multivariate, this trick stops working and we have to use more sophisticated tools like modifications in the recursive definitions combined with the Borcea-Br{\"a}nd{\'e}n theory of stability preservers (\cite{borcea2009lee1,borcea2009lee2,borcea2009polya}) or looking at the underlying combinatorial objects whose information these polynomials encode. These two approaches suppose an amount of work that would lead us far beyond the limits of what we pretend in this thesis, but we mention them here because they point towards a path that our considerations in this first approach to this topic clearly open.
\chapter[Structured interlacers improvements]{Non-trivial structured interlacers improve the relaxation}\label{ChStructured}

Here we look at directions in which the relaxation can be improved. This chapter combines and explores further the ideas developed up to this point as a way of closing this main discussion about combining the relaxation with other known methods for root-bounding. The references required through the explorations of this chapter are the same we used in the previous chapter plus those of Section \ref{sectioninterlacing}.

\section{Improvements require new information}

Until this point, we have developed tools, intuitions and result that let us learn to detect what transformations improve the relaxation. In particular, we first tried to improve the relaxation naively in two generic ways that we could hope that would work for all RZ polynomials. We tried an extension introducing further monomials in the matrix, but that worked in a limited way without having previously already a determinantal representation. In particular, just looking at the polynomial, this only allowed us to double the size of the initial matrix preserving its PSD-ness, but it destroyed the property of being a relaxation. Therefore, this method stalled very fast as it breaks the relaxation and it seems that much more sophisticated methods would be needed in order to extend the amount of monomials at play due to the complexity of the non-commutativity of product of matrices and how this complicates the behaviour of traces. After seeing that this method could not go beyond in an easy way, we tried a different method to extend the amount of variables through the use of interlacers. However, the simpler interlacer we can think of (the Renegar derivative) does not produce any change in the relaxation. This means that a generic method to improve the relaxation for all polynomials needs to make use of more sophisticated information about these polynomials. Extracting such information in general seems difficult and we do not know how to do it in general. However, when polynomials count combinatorial objects and have a recursive nature, this information can be extracted from the recursion via interlacing and that is what our treatment of Eulerian polynomials showed us.

\begin{remark}[Interlacing and recursion]
In the case of sequences of polynomials defined by recursion, knowledge about their interlacing properties allows us to collect information about the recursion in the relaxation through the use of variable extensions keeping RZ-ness.
\end{remark}

Other forms of collecting information through recursion involved knowing the form of the recursion in order to manipulate it so it allows for the lamination of the information carried by one variable into several layers of information carried by several new variables. This process of laminating the information is interesting and important for our point of view here. 

\begin{definicion}[Lamination]
Let $\{p_{i}\}_{i=0}^{\infty}\subseteq\mathbb{C}[x]$ be a sequence of univariate polynomials. A \textit{lamination} of this sequence is another sequence $\{q_{i}\}_{i=0}^{\infty}\subseteq\mathbb{C}[x_{1},\dots]$ of polynomials such that the number of variables in $q_{i}$ increases with $i$ and they equal the corresponding polynomial of the original sequence along the diagonal, that is, $q_{i}(\mathbf{x})|_{\mathbf{x}=(x,\dots,x)}=p_{i}(x).$ We say that we \textit{laminate} a sequence of polynomials when we build a lamination of it.
\end{definicion}

This is one way of seeing the construction of the multivariate Eulerian polynomials directly from their defining recursion without looking at the underlying combinatorial object. The laminations we are mainly interested in require that the process happens in a particular way respecting RZ-ness, hyperbolicity or stability. As we will see, this can be done only in certain cases and choosing the modifications very carefully so the transformations applied are stability preservers.

\begin{remark}[Building nice multivariate polynomials laminating the recursive or combinatorial information]
We can see in the proof of \cite[Theorem 3.3]{visontai2013stable} how the multivariate Eulerian polynomials are built in a way that surely preserves stability through the operator inducing the recursion. This operator is a stability preserver that introduces two new variables at a time (although we, at the end, only conserve one of them). Thus we can see that, with an in-depth study of the operators performing the recursions, we can laminate the univariate information into a multivariate one increasing the number of variables sequentially and, therefore, building polynomials that can increase the size of the relaxation. This is positive for us.
\end{remark}

Observe that the construction of a lamination can in fact stem directly from the univariate recursion without any reference to the underlying combinatorial object as follows. We see this using our helpful example of Eulerian polynomials.

\begin{construccion}[Lamination]
As we are looking at \cite{visontai2013stable}, remember that $A_{n}$ has degree $n$ as we also convene in this document. This will be important. Above \cite[Theorem 3.2]{visontai2013stable} we can see the recursion for univariate polynomials that we rewrite as $$A_{n}=(n+1)xA_{n-1}(x)+(1-x)(xA_{n-1})'.$$ Instead of using these polynomials, for conformity with the expression in \cite[Theorem 3.3]{visontai2013stable}, we will look at the sequence of homogenizations $\{A_{i}^{h}\}_{i=0}^{\infty}.$ This sequence verifies therefore the \textit{homogenized} recursion  \begin{gather*}A^{h}_{n}=(n+1)xA^{h}_{n-1}+(x_{0}-x)\frac{\partial}{\partial x}(xA^{h}_{n-1})=\\(n+1)xA^{h}_{n-1}+x_{0}A^{h}_{n-1}+x_{0}x\frac{\partial}{\partial x}A^{h}_{n-1}-xA^{h}_{n-1}-x^{2}\frac{\partial}{\partial x}A^{h}_{n-1}=\\nxA^{h}_{n-1}+x_{0}A^{h}_{n-1}+x_{0}x\frac{\partial}{\partial x}A^{h}_{n-1}-x^{2}\frac{\partial}{\partial x}A^{h}_{n-1}=\\(nx+x_{0})A^{h}_{n-1}+(x_{0}x-x^{2})\frac{\partial}{\partial x}A^{h}_{n-1}.\end{gather*} We want to rewrite this recursion in the friendlier way \begin{gather}\label{friendlier} A^{h}_{n}=(x+x_{0})A^{h}_{n-1}+xx_{0}\left(\frac{\partial}{\partial x}+\frac{\partial}{\partial x_{0}}\right)A^{h}_{n-1}.\end{gather} We can do this checking that the differing terms verify \begin{gather*}
    (n-1)xA_{n-1}^{h}-x^{2}\frac{\partial}{\partial x}A^{h}_{n-1}=xx_{0}\frac{\partial}{\partial x_{0}}A^{h}_{n-1},
\end{gather*} which is equivalent to \begin{gather*}
    (n-1)A_{n-1}^{h}-x\frac{\partial}{\partial x}A^{h}_{n-1}=x_{0}\frac{\partial}{\partial x_{0}}A^{h}_{n-1},
\end{gather*} which in turn is equivalent to \begin{gather*}
    (n-1)A_{n-1}^{h}=(x\frac{\partial}{\partial x}+x_{0}\frac{\partial}{\partial x_{0}})A^{h}_{n-1},
\end{gather*} which is clear noticing that $A^{h}_{n-1}$ has degree $n-1$ and observing what the operator $x\frac{\partial}{\partial x}+x_{0}\frac{\partial}{\partial x_{0}}$ actually does because \begin{gather*}
    \left(x\frac{\partial}{\partial x}+x_{0}\frac{\partial}{\partial x_{0}}\right)x^{n}x_{0}^{m}=nx^{n}x_{0}^{m}+mx^{n}x_{0}^{m}=(n+m)x^{n}x_{0}^{m}.
\end{gather*} Now we can use the expression in Equation \ref{friendlier} to laminate the recursion as follows. We split the information stored in the variables at each step by forcing new variables in each iteration $A^{h}_{n}=$  \begin{gather*}(x_{n}+x_{0})A^{h}_{n-1}+x_{n}x_{0}\left(\frac{\partial}{\partial x}+\frac{\partial}{\partial x_{0}}\right)A^{h}_{n-1}.\end{gather*}
\end{construccion}

This is how we can pass from a univariate recursion to a multivariate one without looking at the underlying combinatorics (which is how it was done in \cite{visontai2013stable}) but looking at the recurrence. Thus we saw an alternative for that construction extracting and laminating recursive information instead of combinatorial one, although, at the end, they are the same. In spite of these being the same in this case, this means that, in principle, we do not need an underlying combinatorial object to proceed this way. We only need that the amplification of the univariate recursion into a multivariate one happens through a stability preserver.

\begin{remark}[Combinatorial versus recursive]
Thus we saw that we do not, in principle, need a combinatorial meaning in order to do laminations, although these alternative meanings tend to be helpful. In fact, this also tells us that the information to do a multivariate jump might come from very diverse sources.
\end{remark}

Other ways of building nontrivial interlacers in a less structured way could involve translations or displacements of the variables. However, these translations are not always easy to establish recursively without previously having good estimations for the roots. This leads us to a loophole in which we would need information about the roots in order to find information about the roots. This is not completely undoable, but it seems less insightful about the properties of the polynomials involved.

\begin{observacion}
Using translation or displacements changes the recursivity and we have to be sure that these translations keep the polynomial RZ and that we can easily undo them. Moreover, these transformations do not seem to increase our knowledge about the original polynomials and, therefore, as we are not adding anything new or insightful it seems natural to think that the result is going to be similar to the one we obtained when we used the Renegar derivative as the interlacer to perform the extension in Part \ref{I}.
\end{observacion}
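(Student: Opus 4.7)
The plan is to formalize what ``translation or displacement'' of variables means for an RZ polynomial $p \in \mathbb{R}[\mathbf{x}]$ and then adapt the arguments of Theorem \ref{teoremabuscado} from Part \ref{I}. The natural formalization is the following: given a direction vector $c \in \mathbb{R}^{n}$, form the extended polynomial $\tilde{p}(\mathbf{x},y) := p(\mathbf{x} + yc)$. Since evaluating in any direction of $\mathbb{R}^{n+1}$ reduces to evaluating $p$ in a corresponding direction of $\mathbb{R}^{n}$, the polynomial $\tilde{p}$ is automatically RZ and carries one extra variable. The claim to prove is then $S(p) = \{a \in \mathbb{R}^{n} \mid (a,0) \in S(\tilde{p})\}$, so the relaxation gains nothing from this variable extension, paralleling the Renegar case.

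First I would reduce to the bivariate case using the space-filling restriction technique of Section \ref{sectioninter} (Corollary \ref{corolariofinal}), which decomposes the relaxation into restrictions along every plane through a given direction. For a bivariate RZ polynomial $p(x_1, x_2) = \det(I + x_1 A_1 + x_2 A_2)$ obtained via Helton-Vinnikov, the displaced extension $\tilde{p}(x_1,x_2,y) = p(x_1 + y c_1, x_2 + y c_2)$ has the determinantal representation $\det(I + x_1 A_1 + x_2 A_2 + y(c_1 A_1 + c_2 A_2))$. The crucial observation is that the new coefficient matrix $c_1 A_1 + c_2 A_2$ lies in the real linear span of $A_1$ and $A_2$. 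Hence the set $U_{\tilde{p}} := \{v_{0}I + v_{1}A_{1} + v_{2}A_{2} + v_{3}(c_{1}A_{1}+c_{2}A_{2}) \mid v \in \mathbb{R}^{4}\}$ coincides with $U_{p} := \{v_{0}I + v_{1}A_{1} + v_{2}A_{2} \mid v \in \mathbb{R}^{3}\}$. By the trace characterization of the relaxation (Proposition \ref{lformtraces} together with \cite[Proposition 3.33]{main}), this immediately forces $\{a \in \mathbb{R}^{2} \mid (a,0) \in S(\tilde{p})\} = S(p)$, exactly as in the proof of Proposition \ref{propRenegar}.

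Next I would assemble the multivariate statement by running the same infinite-intersection argument used for the Renegar derivative in Theorem \ref{teoremabuscado}. A commutation lemma analogous to Corollary \ref{corohattilde}, asserting $(\tilde{p})_{\overline{A}} = \widetilde{p_{A}}$ up to the appropriate completion $\overline{A}$ that accounts for the new $y$-direction, has to be verified; this follows directly from the affine-linear nature of the translation and from the fact that the restriction operator $\cdot_{A}$ commutes with the substitution $\mathbf{x} \mapsto \mathbf{x} + yc$. Combining this commutation with the bivariate identity established above and with Corollary \ref{corolariofinal} closes the argument through the same chain of equalities that yielded $S_{a}(p) = S_{\overline{a}}(\Tilde{p})$ for the Renegar extension.

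The main obstacle I expect is exactly the degree-dropping issue encountered in Remark \ref{remarkdegree}: the substitution $\mathbf{x} \mapsto \mathbf{x} + yc$ may interact poorly with restrictions $p_{A}$ whose actual degree is strictly less than $\deg(p)$, and one must check that forcing $p_{A}$ to behave as if it had the original degree is consistent with the determinantal viewpoint used in the bivariate step, particularly when $c$ lies in or close to directions along which the restriction degenerates. A secondary, more conceptual obstacle is that the Observacion is really a statement about a whole philosophy: the proof above succeeds precisely because translations do not enlarge the trace algebra generated by the coefficient matrices, so the natural generalization worth formalizing is that \emph{any} one-variable extension producing a new coefficient matrix in the linear span of the existing ones leaves the restricted relaxation unchanged. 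Capturing this more general invariance, rather than just the translation case, is where the real work lies and would close the analogy with the Renegar derivative result cleanly.
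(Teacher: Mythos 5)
The statement you are trying to prove is an \emph{observaci\'on}, i.e., an informal heuristic remark: the paper itself offers no proof of it, only the argument-by-analogy that translations ``do not seem to increase our knowledge'' and therefore should behave like the Renegar extension. Your proposal is a genuine extension of the paper's content, so there is no proof in the paper to compare against; what follows is an assessment of your attempt on its own terms.

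Your bivariate step is correct: with $\tilde{p}(x_1,x_2,y)=p(x_1+yc_1,x_2+yc_2)$ and a Helton--Vinnikov representation $p=\det(I+x_1A_1+x_2A_2)$, the new coefficient matrix $c_1A_1+c_2A_2$ lies in $\linspan\{A_1,A_2\}$, so the analogue of the set $U_p$ from \cite[Proposition 3.33]{main} is unchanged, mirroring exactly the mechanism in Proposition~\ref{propRenegar} where the new matrix was $I\in\linspan\{I,A_1,A_2\}$. However, there are two points worth flagging. First, the commutation lemma you assert, $(\tilde{p})_{\overline{A}}=\widetilde{p_A}$, does not hold in the form stated: writing it out gives $(\tilde{p})_{\overline{A}}(y_1,\dots,y_{l+1})=p\bigl(\sum y_i a_i + y_{l+1}c\bigr)$, and for this to be a translation of $p_A$ you would need $c\in\linspan\{a_1,\dots,a_l\}$, which fails for a generic tuple $A$. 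Second, and happily, you do not need that lemma or the bivariate determinantal step at all. Translations are strictly \emph{simpler} than Renegar derivatives because the restriction operator already absorbs them: a direct computation shows $\tilde{p}_{(a,0),(b,b')}(z_1,z_2)=p\bigl(z_1a+z_2(b+b'c)\bigr)=p_{a,\,b+b'c}(z_1,z_2)$, so every bivariate restriction of $\tilde{p}$ through $(a,0)$ is already a bivariate restriction of $p$ through $a$. Applying Corollary~\ref{corolariofinal}(3) and noting that as $(b,b')$ ranges over $\mathbb{R}^{n+1}$ the vector $b+b'c$ ranges over all of $\mathbb{R}^n$, you immediately get $S_{(a,0)}(\tilde{p})=\bigcap_{b''}S_{(1,0)}(p_{a,b''})=S_a(p)$ and you are done, with no degree drop ($\deg\tilde{p}=\deg p$) and no appeal to Helton--Vinnikov. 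By contrast, the Renegar case genuinely needed the determinantal machinery because $\hat{p}_{(a,0),(b,b')}=\det\bigl((1+z_2b')I+z_1\sum a_iA_i+z_2\sum b_iA_i\bigr)$ is \emph{not} a restriction of $p$. Your concluding observation --- that the right abstract principle is ``new coefficient matrix in the span of the old ones leaves the restricted relaxation unchanged'' --- is the correct philosophical frame and captures both cases; the translation case just happens to admit this shortcut that the Renegar case does not.
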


Finally, the theory of stability preservers developed in \cite{borcea2009lee1,borcea2009lee2,borcea2009polya} by Borcea and Br{\"a}nd{\'e}n only works for differential linear transformations. And it is not immediate how (or even if it is possible) to translate differential linear transformations when applying the DLG method. The corresponding linear transformation could be completely different and there is not a clear path to build these transformations from just looking at the original one as the DLG method breaks linearity in the recursion.

\begin{observacion}
Studying how the application of the DLG to all the polynomials in the sequence changes the recurrence relation is an interesting problem. There is clearly a recurrence, but it is not clear if there is a recurrence using only differential linear operators that could be candidates to be stability preservers.
\end{observacion}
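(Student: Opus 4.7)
The plan is to attack this observation in an experimental-then-structural manner, beginning from the friendliest concrete case we have already developed, namely the Eulerian recursion $A^{h}_{n}=(x+x_{0})A^{h}_{n-1}+xx_{0}(\partial_{x}+\partial_{x_{0}})A^{h}_{n-1}$, and tracking what happens to it under DLG. Writing $B_{n}$ for the DLG-transform of $A_{n}$, so that $B_{n}(x^{2})=(-1)^{\deg A_{n}}A_{n}(x)A_{n}(-x)$, I would first compute $B_{0},B_{1},B_{2},B_{3},B_{4}$ directly using the tables of $a^{(k)}_{j}$ and $b^{(k)}_{j}$ already gathered in the previous chapter, and then search, by ansatz, for a recurrence of the shape $B_{n}=T_{n}(B_{n-1})$ where $T_{n}$ ranges over the Borcea--Br\"and\'en class of linear stability preservers (finite sums of products of multiplication by polynomial coefficients and differential monomials $x_{0}^{i}x^{j}\partial_{x_{0}}^{k}\partial_{x}^{l}$). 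The symbol calculus of such operators on monomial bases is explicit, so matching coefficients of $B_{n}-T_{n}(B_{n-1})=0$ degree by degree yields a finite linear system on the unknown coefficients of $T_{n}$; if this system is inconsistent for small $n$, we already have an obstruction.

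Next, I would transport the recurrence symbolically rather than numerically. Starting from $A_{n}=L(A_{n-1})$ with $L$ a first-order stability preserver and the identity $B_{n}(x^{2})=(-1)^{\deg A_{n}}A_{n}(x)A_{n}(-x)$, one can substitute $A_{n}=L(A_{n-1})$ on the right-hand side and expand. This gives $B_{n}(x^{2})$ as a bilinear expression in $A_{n-1}(x)$ and $A_{n-1}(-x)$ whose coefficients are polynomials and derivatives of $A_{n-1}(\pm x)$; the goal is to rewrite this bilinear form purely in terms of $B_{n-1}(x^{2})$ and its derivatives in $x^{2}$. Using the chain rule $\partial_{x}=2x\,\partial_{x^{2}}$, one can clean up powers of $x$ versus $x^{2}$, and the hope is either to land on a clean closed formula $B_{n}=M_{n}(B_{n-1})$ with $M_{n}$ differential linear, or to detect an irreducible obstruction.

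The hard part, and the step where I expect the project to genuinely fight back, is precisely that obstruction: DLG is a bilinear (even quadratic) operation on the polynomial, while the Borcea--Br\"and\'en stability preservers are \emph{linear}. The product $A_{n-1}(x)A_{n-1}(-x)$ cannot in general be rewritten as $D(A_{n-1}(x)A_{n-1}(-x))$ for a linear differential operator $D$ acting on the previous DLG output, because such a $D$ ignores cross-terms between $A_{n-1}(x)$ and $A_{n-2}(x)$ that are genuinely present after one Leibniz expansion of $L(A_{n-1})\cdot L(A_{n-1})(-\,\cdot\,)$. The concrete expectation is therefore that no first-order differential linear recurrence exists for $\{B_{n}\}$, and that the minimal recurrence is either of higher order in the index (involving $B_{n-1}$ and $B_{n-2}$ jointly) or truly bilinear in $B_{n-1}$, and the natural candidate to test is a ``squared'' companion of $L$ acting on $B_{n-1}$ plus a correction built from $B_{n-2}$.

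Having isolated the obstruction, the last phase of the plan is to salvage something usable. I would relax the class of admissible operators from ``linear differential stability preservers'' to ``algebraic stability preservers'' in the sense of operators of the form $P\mapsto \mathrm{Res}_{y}(Q(x,y),P(y))$ built from resultants, since DLG is itself a resultant construction $(-1)^{n}\mathrm{Res}_{y}(y^{2}-x,\,A_{n}(y))$; this makes it plausible that a recurrence exists inside the resultant-preservers class even if not inside the differential one. Verifying preservation of real-rootedness for the candidate operator obtained this way would be done via the Borcea--Br\"and\'en symbol test on $(1+zx)^{d}$, reducing the check to a finite computation. If this succeeds, the output is a multivariate laminated recurrence for the DLG-transformed Eulerians, which is exactly the missing ingredient needed to iterate the combined relaxation-plus-DLG bound of the previous chapter; if it fails, the failure itself is a clean no-go theorem that justifies abandoning the fully multivariate DLG route and fixing attention on the bivariate construction already shown to improve the bound.
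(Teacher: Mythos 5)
The item you are addressing is not a theorem: in the paper it is an \emph{Observaci\'on} with no proof attached, whose entire content is to pose an open question (``there is clearly a recurrence, but it is not clear if there is a recurrence using only differential linear operators''). So there is no argument of the paper to compare yours against, and nothing for a proof to establish. What you have written is a research plan, and its conclusions are explicitly conditional (``if this system is inconsistent\dots'', ``if this succeeds\dots if it fails\dots''), so it does not settle the question in either direction; it would only do so after the computations and the obstruction analysis are actually carried out. That said, your plan is faithful to the spirit of the paper's discussion, and your identification of the structural mismatch --- DLG is a quadratic operation on the polynomial, $B_{n}(x^{2})=(-1)^{\deg A_{n}}A_{n}(x)A_{n}(-x)$, while the Borcea--Br\"and\'en preservers used to laminate the Eulerian recursion are linear --- is exactly the tension the observation gestures at, made more concrete than the paper does.

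Two technical points in your last phases need repair before the plan could be executed. First, the claimed cross-terms ``between $A_{n-1}(x)$ and $A_{n-2}(x)$'' do not arise from expanding $L(A_{n-1})(x)\cdot L(A_{n-1})(-x)$; what you get are mixed products of $A_{n-1}(\pm x)$ and their derivatives, which is still an obstruction to rewriting things as a linear operator applied to $B_{n-1}$, but the correct statement matters if you want to argue a genuine no-go. Second, the Borcea--Br\"and\'en symbol test on $(1+zx)^{d}$ characterizes \emph{linear} stability preservers; your proposed ``resultant-preservers'' such as $P\mapsto\mathrm{Res}_{y}(Q(x,y),P(y))$ are not linear in $P$ (for $Q=y^{2}-x$ the map is quadratic in $P$), so that test does not apply to them and you would need a different certificate of root-preservation for that class. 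With these two corrections the proposal is a reasonable program for the open problem, but it remains a program, not a proof, which is consistent with the status the statement has in the paper.
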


All this leaves us with a picture opening many more questions about the possibilities for collecting further information of the recursion into new additional variables. We do not know in which cases this is possible. The example of Eulerian polynomials we have studied shows nevertheless that being able to collect such information helps the relaxation to get closer to the actual roots and therefore seems to contribute to improving it through the refinement of the information that it can provide about the original polynomial. Further explorations in that direction could then provide insights into more drastic strategies and ways to improve the relaxation maybe even until the desired (but still far) estate of exactness.

\begin{cuestion}
Which recurrences allow for a transformation that let us perform a lamination through a lifting of the original recurrence into one with more variables at each step? Which of these laminations do actually improve the (asymptotic) performance of the relaxation at giving bounds (in the diagonal)?
\end{cuestion}

What we do know and we have proved is that uninformative expansions of our polynomials by adding variables through the use of generic interlacers do not work. These variable extensions do not improve the relaxation and therefore are unhelpful for our task of understanding better our RZ polynomials through the relaxation. We need ways to actually collect new information. Our next section is dedicated to comment on this phenomenon preventing us to improve the relaxation when we are not actually adding new information to our extensions through generic interlacers as the example of Renegar derivatives showed.

\section{Derivative triviality was discarded}

As pointed out, we use this section to realize how we showed that the Renegar derivative does not help to improve the relaxation. We saw this in the first chapter. This is so because, although the Renegar derivative produces interlacers, these interlacers seem to be very generic and therefore they do not contribute new information to the relaxation.

\begin{remark}
Any RZ polynomial $p\in\mathbb{R}[\mathbf{x}]$ has its Renegar derivative $p^{(1)}\mathbb{R}[\mathbf{x}]$ as an interlacer. Thus, if $p$ codifies some kind of combinatorial information, just considering $p+yp^{(1)}$ does not add any particular information about the underlying combinatorial structure.
\end{remark}

Contrary to this, we saw that using the same technique to add information about less trivial interlacers does not only change the relaxation, but improves it. This will happen always.

\begin{proposicion}\label{improinter}
Let $p\in\mathbb{R}[\mathbf{x}]$ be a RZ polynomial and consider another RZ polynomial $q\in\mathbb{R}[\mathbf{x},y]$ with $q(\mathbf{x},0)=p$. Then $$\{a\in\mathbb{R}^{n}\mid(a,0)\in S(q)\}\subseteq S(p).$$
\end{proposicion}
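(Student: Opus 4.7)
The plan is to reduce the claim to the Restrictions corollary ([main, Lemma 3.26(c)]), which has already been recorded earlier in the paper and fits exactly the present setting: the extension $q\in\mathbb{R}[\mathbf{x},y]$ is restricted along the hyperplane $y=0$ to recover $p\in\mathbb{R}[\mathbf{x}]$, and we want to transfer the PSD-ness of the relaxation pencil from the point $(a,0)$ down to the point $a$.

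First I would fix the common virtual degree $d:=\deg(q)$, so that $d\geq\deg(p)$, and invoke the Restrictions corollary to obtain the equality $L_{q,d}(f)=L_{p,d}(f)$ for every polynomial $f\in\mathbb{R}[\mathbf{x}]$ that does not involve $y$. Next, I would observe that the entries of $M_{q,d}(a,0)$ located at positions indexed by $\{1,x_1,\ldots,x_n\}$ depend only on values of $L_{q,d}$ on monomials in $\mathbf{x}$ alone (any contribution carrying a factor of $y$ is killed by the evaluation of the last coordinate at $0$), so the $(n{+}1)\times(n{+}1)$ principal submatrix of $M_{q,d}(a,0)$ obtained by deleting the row and column indexed by $y$ is exactly $M_{p,d}(a)$. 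Then, using the standard fact that principal submatrices inherit positive semidefiniteness, the hypothesis $(a,0)\in S(q)$, i.e.\ $M_{q,d}(a,0)\succeq 0$, forces $M_{p,d}(a)\succeq 0$, that is, $a\in S_d(p)$.

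The main obstacle is matching the virtual degrees so that this conclusion really lands in $S(p)=S_{\deg(p)}(p)$ rather than merely in the a priori larger set $S_d(p)$. In all the concrete cases considered in the paper, such as $q=p+yr$ with $r$ an interlacer of $p$ of degree $\deg(p)-1$, one has $\deg(q)=\deg(p)$ and so $S_d(p)=S(p)$, which closes the argument immediately. In the fully general case $\deg(q)>\deg(p)$, the two relaxations can differ, and I expect that the $y$-row and $y$-column of $M_{q,d}(a,0)$, which the principal-submatrix argument discards, have to be re-incorporated via a Schur-complement reduction whose positivity uses the RZ-ness of $q$ to cancel the excess $(\deg(q)-\deg(p))$ that enters the $(0,0)$ entry of $M_{p,d}(a)$ relative to $M_{p,\deg(p)}(a)$. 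Turning this heuristic into a rigorous argument is the step I expect to cost the most work.
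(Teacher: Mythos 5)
Your reduction is on the right track and correctly identifies the crux. Fixing the common virtual degree $d=\deg(q)$, invoking the restriction corollary \cite[Lemma 3.26]{main} to get $L_{q,d}=L_{p,d}$ on $\mathbb{R}[\mathbf{x}]$, and then observing that the principal $(n{+}1)\times(n{+}1)$ submatrix of $M_{q,d}(a,0)$ indexed by $\{1,x_1,\dots,x_n\}$ is exactly $M_{p,d}(a)$, so that $(a,0)\in S(q)\Rightarrow a\in S_d(p)$ by inheritance of PSD-ness to principal submatrices — all of this is correct. You also correctly flag that this lands only in $S_d(p)=S_{\deg(q)}(p)$, not in $S(p)=S_{\deg(p)}(p)$. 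In the case $\deg(q)=\deg(p)$ these coincide and you are done; this is the case the paper actually has in mind (the extensions $q=p+yr$ with $r$ an interlacer of degree $\deg(p)-1$), and the paper's one-line proof is correct only under that implicit assumption.

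However, your proposed Schur-complement repair for the general case goes in the wrong direction and cannot work, because the proposition as literally stated is \emph{false} when $\deg(q)>\deg(p)$. The mismatch between $M_{p,\deg(q)}(a)$ and $M_{p,\deg(p)}(a)$ is concentrated in the $(0,0)$ entry, which is $\deg(q)$ rather than $\deg(p)$, i.e. $M_{p,\deg(q)}(a)=M_{p,\deg(p)}(a)+(\deg(q)-\deg(p))\,e_0e_0^T$. Eliminating the $y$-row and $y$-column of $M_{q,\deg(q)}(a,0)$ by a Schur complement only ever subtracts a rank-one term $\alpha\,e_0e_0^T$ with $\alpha\le \deg(q)-\deg(p)$ by the Cauchy--Schwarz inequality applied to the power sums; strict inequality is generic, so a nonnegative surplus $c\,e_0e_0^T$ with $c>0$ survives, which pushes PSD-ness the wrong way. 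Concretely, take $p(x)=(1+x)(1-x/2)$ so that $S(p)=[-1,2]$, and $q(x,y)=p(x)\,r(y)$ with $r(y)=(1+y)^2(1+10y)$. Then $q$ is RZ, $q(x,0)=p$, $\deg(q)=5>2=\deg(p)$, and one computes directly that $M_{q,5}(-1.1,0)\succeq 0$ while $M_{p,2}(-1.1)$ has a negative $2\times 2$ determinant, so $(-1.1,0)\in S(q)$ but $-1.1\notin S(p)$. So the obstacle you noticed is not a technicality to be finessed: the statement needs the additional hypothesis $\deg(q)=\deg(p)$ (or should conclude only $\{a:(a,0)\in S(q)\}\subseteq S_{\deg(q)}(p)$). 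Your first paragraph is the correct proof under that hypothesis; your second paragraph, while correctly diagnosing the problem, should conclude that the gap cannot be closed in general rather than that a Schur-complement argument remains to be carried out.
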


\begin{proof}
Evident looking at the defining LMPs.
\end{proof}

Thus, adding new variables can never worsen the relaxation and therefore, if we find interlacers with a good structure and easy enough to describe, then they will improve the relaxation. The use of the combinatorial properties to build the recursion allows us to build our interlacers in ways that do not depend on having previous quantitative information about the roots besides the interlacing fact.

\begin{remark}
Contrary to perturbing the polynomial through scaling or translations, interlacing does not require us to know if we are respecting all the roots because this is something coming directly from the structure of the interlacer we are computing. The advantage is that this allows us to transform in this way combinatorial information into analytic one about the roots if we are able to build our extensions.
\end{remark}

This tells us how to go bivariate, as we did in the examples concerning the $0$-th and $1$-st iterations of the DLG method. However, as we do not know how to effectively add more than one variable carrying the information of the immediately precedent (interlacing) element in the recursion, we still need to understand better the combinatorial approach if we want to have other possibilities of going multivariate through the lamination of the information originally carried out by one variable.

The ways we use to perform this lamination get clearer if we examine in detail how this happened in the case of Eulerian polynomials. Note however that this lamination is an essentially different process to the recursive one done through interlacing and that it is not clear how to extend this lamination looking directly at the recursion or at the underlying combinatorial object if it exists. This greatly limits our ways (and possibilities) of effectively \textit{injecting} our univariate polynomial into a multivariate RZ polynomial resulting from a lamination into several variables and tells much about how limited is our knowledge of the relationship between variable extensions and RZ-ness as well as how nuanced and subtle these processes are. The next section deals with the understanding of the the underlying combinatorial structure as a strategy to build these laminations. We also care about the relation between the combinatorial information and the easy $1$ variable extension through interlacing recursion. We do this in the next section through the detailed study of the example given by Eulerian polynomials in previous sections.

\section{Adding combinatorial information}

We saw that the deep understanding of the combinatorial objects underlying the construction of our polynomials played a fundamental role in our ability to build multivariate RZ extensions in which we can inject our original univariate polynomial along some line (fundamentally so far the diagonal) although we can look at any line through the origin as we work here with RZ polynomials. We delve here into how such profound understanding of the underlying combinatorial object allowed us to build the multivariate version that helped us to improve the relaxation sections above.

\begin{observacion}[From combinatorics to recurrence]
In \cite{visontai2013stable} we can read how counting finer descents into where they have their tops was the tool they used to go multivariate. We also saw however that it is in principle possible to do this without referring to the underlying combinatorial object just transforming adequately the univariate recurrence. What we do not know is if this is in general possible for any recurrence or how to determine if it is so.
\end{observacion}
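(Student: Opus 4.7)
The plan is to verify the two substantive assertions packaged in this observation: (i) that \cite{visontai2013stable} obtains its multivariate lift by tagging descent tops, and (ii) that the same lift can be reached purely from manipulating the univariate recurrence, without invoking permutation statistics.

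First I would settle (i) by quoting the definition directly from \cite[Theorem~3.3]{visontai2013stable} (stated in our Definition~\ref{multieulerian}): the generating polynomial $\sum_{\sigma \in \mathfrak{S}_{n+1}} \prod_{i \in \mathcal{DT}(\sigma)} x_i \prod_{j \in \mathcal{AT}(\sigma)} y_j$ is \emph{defined} by the finer top statistics $(\mathcal{DT}, \mathcal{AT})$, and its stability is established there combinatorially via a recursion indexed by insertion position of the largest element. I would simply cite this as the benchmark multivariate object we want to recover.

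For (ii), the route is the one already sketched in the construction preceding the observation. Starting from $A_n = (n+1)xA_{n-1} + (1-x)(xA_{n-1})'$, I would homogenize, simplify to
\[
A_n^h = (nx + x_0)A_{n-1}^h + (x_0 x - x^2)\partial_x A_{n-1}^h,
\]
and then use Euler's relation $(n-1)A_{n-1}^h = (x\partial_x + x_0 \partial_{x_0})A_{n-1}^h$ to rewrite this symmetrically as Equation~\ref{friendlier}: $A_n^h = (x+x_0)A_{n-1}^h + x x_0(\partial_x + \partial_{x_0})A_{n-1}^h$. Laminating by replacing the $x$ that appears in the \emph{leading} role at step $n$ with a fresh variable $x_n$ produces a sequence of polynomials in growing variable sets, with $x_0$ acting as the common homogenizing variable. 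To check that the laminated sequence agrees with the Visontai--Williams polynomials it suffices to verify the base case $n=1$ and induct, noting that the leading-coefficient substitution $x_n$ precisely records which new element introduces a potential descent top at step $n$.

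The hard part will be showing that the laminating operator $T_n\colon f \mapsto (x_n + x_0)f + x_n x_0(\partial_x + \partial_{x_0})f$ preserves stability without appealing to the combinatorial interpretation. My plan is to factor $T_n$ through known stability preservers: multiplication by the manifestly stable linear form $x_n + x_0$ is harmless, and the differential piece $x_n x_0 (\partial_x + \partial_{x_0})$ can be handled via the Borcea--Br\"and\'en symbol calculus \cite{borcea2009polya}, since $\partial_x + \partial_{x_0}$ is the directional derivative along the all-ones direction and thus preserves stability, after which multiplication by $x_n x_0$ only scales. The subtle step is the \emph{sum} $(x_n + x_0)f + x_n x_0(\partial_x + \partial_{x_0})f$: for this I would invoke the standard lemma that if $p$ and $q$ have a common interlacer then $\alpha p + \beta q$ is stable for $\alpha,\beta \geq 0$, and verify that $f$ itself serves as a common interlacer of the two summands in each inductive step. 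If this last verification fails in full generality, the fallback is to check stability of the laminated polynomials directly at the level of real roots along arbitrary positive lines through the origin, exploiting the fact that $A_{n-1}^h$ is already stable by induction, which reduces the problem to a univariate Hermite--Biehler check that the operator does not create complex roots.
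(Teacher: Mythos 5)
The statement you are proving is an \texttt{observacion}: it simply points back at the preceding \textit{Construction}, where the univariate recurrence $A_n = (n+1)xA_{n-1}+(1-x)(xA_{n-1})'$ is homogenized, symmetrized via Euler's relation $(n-1)A_{n-1}^h = (x\partial_x + x_0\partial_{x_0})A_{n-1}^h$ into $A_n^h=(x+x_0)A_{n-1}^h+xx_0(\partial_x+\partial_{x_0})A_{n-1}^h$, and then laminated by planting a fresh $x_n$ at each step. The paper does not re-prove stability of the laminated sequence; it observes agreement with the Visontai--Williams polynomials and inherits stability from \cite[Theorem~3.3]{visontai2013stable}. Your re-derivation of the symmetric form and the lamination is a faithful restatement of that Construction, so parts (i) and (ii) of your plan are fine.

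Where you go beyond the paper — proving directly that the laminating operator $T_nf=(x_n+x_0)f+x_nx_0(\partial_x+\partial_{x_0})f$ preserves stability — is the interesting step, and it contains a genuine gap. You want to use the common-interlacer lemma with $f$ itself as the common interlacer of the two summands. The interlacing $f \ll (x_n+x_0)f$ is standard, but $f \ll x_nx_0(\partial_x+\partial_{x_0})f$ fails in general, and fails already in the very first nontrivial step. Take $f=A_1^h=x_1+x_0$, so $(\partial_x+\partial_{x_0})f=2$ and the second summand is $2x_2x_0$. Restrict along $e=(1,1,1)$ through the point $a=(0,1,1)$: $f(te+a)=2t+1$ has root $t=-\tfrac12$, while $2x_2x_0(te+a)=2(t+1)^2$ has a double root at $t=-1$, so the required sandwich $-1\le -\tfrac12\le -1$ fails. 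The monomial factor $x_nx_0$ introduces roots that are entirely decoupled from the zero set of $f$, which is why the interlacing breaks even though the final sum is still stable. The correct route is not to split $T_n$ at all but to verify the Borcea--Br\"and\'en symbol criterion for the full first-order operator in one go — which is exactly what Visontai and Williams do, and which the paper's observation implicitly leans on. Your Hermite--Biehler fallback is in effect a hand-verification of that symbol criterion and would work; the common-interlacer route, as stated, does not.
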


This observation is what allows building a recurrence based on the application of a stability preserver that can successfully produce the multivariate versions of the Eulerian polynomials. In general, looking at stability preservers is a good way of generating sequences of RZ polynomials. The main problem with these polynomial sequences that increase not just the degree but also the number of variables is that that appearance of new variables breaks the concept of interlacing. Therefore the main problem here is that we are forced to choose between using interlacing in the recurrence or using addition of variables through the operator defining the recursion. With what we know so far we cannot combine both methods since there are no good concepts of interlacing when the number of variables grows as does the degree. We have just two ways of proceeding and both improve the relaxation by increasing the number of variables, as the example of Eulerian polynomials has shown us.

\section{Involving recurrent information}

We live in times of recurrence and therefore it is impossible not to make here an observation about how some models of artificial neural networks that are taking over the train of the fast growing trend of artificial intelligence in the ongoing development of the technology sector benefits from recurrence in the chains that compose the human language in order to build these gigantic models \cite{rumelhart1986learning}. This recurrence is obviously different in nature to what we use here, but it still shares some features with our recurrence. Recurrence in the construction of RNNs benefits from the information stored in one chain of language expression in order to build a guess for the next word that should come into the next available position of one sentence. We do something similar when we apply our relaxation to sequences of interlacing polynomials. The information of the previous object in the chain gets encoded into a bivariate RZ polynomial and therefore it can be used and exploited through the relaxation. This usage of the recurrent information allows us to build better bounds for the extreme roots of our polynomials. That is, we get a better prediction of some (root) behaviour of the next polynomial in the sequence.

\begin{observacion}[Storing recursive information into the relaxation]
Transformations allowing us to store previous polynomials through an increase in the number of variables can be used to improve the relaxation. Through these transformations the relaxation seems to \textit{understand} better the structure of the polynomials involved. This enhanced understanding (that can be combinatorial, algebraic or geometric) can therefore be transformed via the relaxation into analytic information about the extreme roots of the polynomial. The magic of the recursion that works for RNNs applied to human language seems to also work for sequences of polynomials encoding some mathematical information.
\end{observacion}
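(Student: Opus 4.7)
The plan is to justify this qualitative observation by assembling and cross-referencing the concrete evidence already produced in earlier parts of the thesis into a coherent narrative, rather than attempting a single formal theorem (which, given the statement's philosophical and comparative character, would not faithfully reflect what is being claimed). The first step is to isolate the three sub-claims hidden in the observation: (i) that certain variable-increasing transformations genuinely improve the relaxation, (ii) that the improvement is linked specifically to the transformation carrying recursive or structural information, and (iii) that this structural information is then converted into \emph{analytic} information about the extremal roots. The RNN analogy is rhetorical and does not need a proof, only a suggestive explanation.

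For sub-claim (i), I would point to the dichotomy established earlier in the document: on the negative side, Theorem \ref{teoremabuscado} together with Proposition \ref{analogyprop} and Corollary \ref{tildein} show that the Nuij-type extension $p\mapsto p+yp^{(1)}$ built from the Renegar derivative leaves the restriction of the relaxation invariant. On the positive side, the bivariate analysis of $B_n=A_n+yA_{n-1}$ and the fully multivariate analysis of the Visontai--Haglund Eulerian polynomials $A_n(\mathbf{x})$ were both shown to yield strict improvements on the diagonal. So the statement ``storing previous polynomials improves the relaxation'' is to be read as: when the extra variable is attached to a non-trivial interlacer built from the recursion (as in Corollary \ref{interlacerextension}), not to the generic Renegar interlacer, the relaxation strictly improves. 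I would spell this contrast out explicitly, citing both sides of the dichotomy, so that the observation is tightly pinned to the body of results already in the paper.

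For sub-claim (ii), I would use the lamination discussion of the preceding section as the conceptual bridge. The friendly recurrence $A_n^h=(x+x_0)A_{n-1}^h+xx_0(\partial_x+\partial_{x_0})A_{n-1}^h$ together with the splitting $x\mapsto x_n$ at each iteration is precisely the mechanism by which the univariate recurrence is upgraded into a multivariate stability-preserving one. Justifying the observation then amounts to matching each layer of recursive information (combinatorial: descent-top refinement; algebraic: action of a stability preserver; geometric: interlacing of successive polynomials) with the corresponding structural feature of the relaxation matrix $M_n$ whose size grows with $n$. For sub-claim (iii), I would invoke Proposition \ref{multundiff} and the explosively improving bound in the subsequent chapter: these show that the enlarged LMP, fed with the additional structural data, translates into an asymptotic improvement of order $\tfrac{3}{8}(9/8)^m$ in the difference $\mult_{v(n)}-\un$, which is exactly a transfer of combinatorial/algebraic refinement into analytic sharpening of extremal-root bounds.

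The hard part, and the reason I would present this as a structured exposition rather than a single proof, is that no current formal definition captures what it means for a variable-increasing transformation to ``carry recursive information'' in a way that is both general enough to cover Eulerian polynomials and DLG-iterated sequences, and restrictive enough to rule out the Renegar extension. The main obstacle is therefore to formulate a precise notion (for instance: a recursion $p_n=T_n(p_{n-1},\dots)$ where $T_n$ is a stability-preserving differential operator with a non-trivial polarization introducing a fresh variable) under which a quantitative monotonicity theorem ``more recursive information $\Rightarrow$ tighter diagonal relaxation'' could even be stated. I would therefore end the justification by proposing this as the natural conjectural framework, pointing to the Mindelsee Program as the correct place to develop it, and making clear that the observation as phrased is a summary of empirical and asymptotic evidence rather than a theorem admitting a single self-contained proof.
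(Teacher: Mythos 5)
Your proposal is correct and mirrors the paper's own treatment: the observation is not proved formally but is supported by exactly the dichotomy you cite (invariance under the Renegar extension via Theorem \ref{teoremabuscado} versus the strict diagonal improvements of Proposition \ref{multundiff} and the exponential $\frac{3}{8}\left(\frac{9}{8}\right)^{m}$ gain), together with the lamination discussion and the deferral of a precise framework to the Mindelsee Program. No gap to report.
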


This observation calls again for the necessity of an expansion of the methods used and developed here. If RNNs benefit from growing the amount of words that they consider in order to build the next word in a text, it is natural to ask the same here and, in the view of how our relaxation works, it is clear that a call for extending the number of variables together with the amount of interlacers admitted would produce methods of unknown potential in the quantitative task of approximating roots and maybe even in the qualitative task of solving the GLC.

\begin{remark}
Since adding information about the recurrence seems to be helpful, it would be desirable to have constructions of liftings of polynomials using larger windows of recurrence. A quick exploration shows that these steps are very subtle to accomplish with random interlacers. However, some papers have found constructions pointing in this direction for interlacers sharing some algebraic structure with the original polynomial, see for example \cite{nuijtype}. Nevertheless, the transformations studied there are far from what we need because they use interlacers that are \textit{too} simple and general and they do not get to the point of adding many more variables to the polynomial. This is \textit{in some sense} precisely the spirit of the many amalgamation conjectures proposed in \cite{main}.
\end{remark}

The methods exposed in this chapter until now try to augment the number of variables as this is fundamental for the relaxation to improve. We also saw that increasing the number of coefficients involved in the computation produces better bounds in a sense after applying a known transformation. In particular, the DLG method combines information of higher degree monomials, but this information does not get correctly laminated along several variables so the relaxation does not directly improve. However, we have seen that, in some sense, in some direction and after applying the correct transformation it does improve the bounds obtained and therefore it is clear that the information of these extra coefficients used to compute the new iteration of the polynomials is flowing down to the bounds through the relaxation. The main problem is that this relaxation does not anymore directly relax the RCS of the original polynomial, but a known transformation of it. This opens the door to simplify the problem exposed by the GLC not to find MSLMP (monic symmetric linear matrix polynomial) representations of RCSs but just MSLMP representations of some (algebraic) transformations of these RCSs. Thus, we get a way of dealing with the high complexity of the algebraic elements involved in these representation by transforming the original object in a way that could resolve these complexities. We see more on this in the next section dedicated to the DLG method, its possible extensions (both in number of variables and degree), how it allows us to win at establishing better bounds and how it gets even better when combined with the relaxation.

\section{DLG method and winning}

Combining the relaxation with the DLG method produced the best bounds for the extreme roots of the univariate Eulerian polynomials that we found here. These are also better than all the bounds we could find in the previous literature. The relaxation works therefore as an intermediate step between one iteration of the DLG method and the next at establishing bounds.

\setlength{\emergencystretch}{3em}%
\begin{remark}
The relaxation applied to the Eulerian polynomial offers worse bounds than Sobolev's approach based on the DLG method. However, the relaxation applied to the polynomial obtained through the DLG method in Sobolev's approach produces an even better bound.
\end{remark}
\setlength{\emergencystretch}{0em}%

Now, the univariate DLG method is a popular way of obtaining estimates for polynomial roots but it does not admit any well-know nor widely used generalization. The generalization we would like to have should involve either considering multivariate polynomials or different (but still easy) transformations of the roots in ways that allowed us to keep an RZ polynomial. We shortly see these two methods that we briefly explored while studying the bounds obtained sections above and that we consider worth researching in more depth in the future in order to provide both better bounds and maybe further approaches to tackle the GLC.

\begin{observacion}
When we apply the DLG method we are no longer looking at our original polynomial. It is not clear, in general, how to transform a spectrahedral relaxation of the transformed polynomial into one for the original polynomial. However, notice that, as we saw, in the univariate case it is clear how to do this.
\end{observacion}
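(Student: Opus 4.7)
The plan is to prove the implicit univariate claim by restricting to the setting actually in use throughout the thesis: a univariate RZ polynomial $p$ whose roots all have the same sign, which covers the Eulerian polynomials that have been our main target. In this situation I would exhibit a short construction that takes any spectrahedral relaxation of the DLG-transformed polynomial $q$ and produces a spectrahedral relaxation of $\rcs(p)$. The remaining univariate case (mixed-sign roots) would reduce to this one by treating each side of the origin separately and assembling the two halves into a diagonal LMP of size~$2$.

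The argument would proceed in three short steps. First, I would pin down the rigidly convex sets: for $p\in\mathbb{R}[x]$ univariate RZ with all roots negative, $r_{1}<\cdots<r_{n}<0$, Definition~\ref{RCS} gives $\rcs(p)=[r_{n},\infty)$, while the DLG transform $q$ (characterised by $q(x^{2})=(-1)^{n}p(-x)p(x)$) has all its roots in $\mathbb{R}_{>0}$ and so satisfies $\rcs(q)=(-\infty,r_{n}^{2}]$, since the smallest positive root of $q$ is $r_{n}^{2}$. Second, given any spectrahedral relaxation $M_{q}(y)\succeq 0$ of $\rcs(q)$, I would set $c:=\sup\{y\in\mathbb{R}:M_{q}(y)\succeq 0\}$, which equals the largest generalised eigenvalue of the pencil defining $M_{q}$ and satisfies $c\geq r_{n}^{2}$ by the relaxation property; taking square roots yields $\sqrt{c}\geq|r_{n}|$, equivalently $-\sqrt{c}\leq r_{n}$. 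Third, I would introduce the trivial $1\times 1$ LMP $M_{p}(x):=x+\sqrt{c}$, whose spectrahedron $[-\sqrt{c},\infty)$ contains $[r_{n},\infty)=\rcs(p)$; this exhibits $M_{p}$ as a bona fide spectrahedral relaxation of $\rcs(p)$, completing the transformation.

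The hard part is not proving the univariate case itself, which the above dispatches in essentially one line, but making precise the content of the Observation's negative half: namely, why no analogous procedure works multivariately. I would frame it as follows: the univariate trick depends on two lucky accidents, that $\rcs(p)$ is a half-line recorded by a single boundary scalar and that $\sqrt{\cdot}$ extracts precisely that scalar from the relaxation of $\rcs(q)$. In the multivariate setting, $\rcs(p)$ is a full-dimensional convex region whose boundary is a component of an algebraic hypersurface, and no finite-dimensional scalar data extracted from a spectrahedral relaxation of a DLG-type transform of $p$ can encode the requisite boundary. Moreover, we lack a canonical multivariate coordinatewise-squaring operation that preserves RZ-ness and interacts cleanly with spectrahedral relaxations; producing such a map, or proving its non-existence, is the genuine obstacle and a natural direction for future research.
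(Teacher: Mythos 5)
Your main construction for the single-sign case is correct and captures exactly what the paper does implicitly in the section "Winning univariate": the paper computes a scalar bound for the extremal root of the DLG polynomial and simply takes its square root, which is what your $1\times 1$ pencil $M_p(x)=x+\sqrt{c}$ records. The observation in the paper is informal and has no accompanying proof; you have supplied the precise content, and your reading of $\rcs(p)=[r_n,\infty)$, $\rcs(q)=(-\infty,r_n^2]$ and the inequality $c\ge r_n^2\Rightarrow -\sqrt{c}\le r_n$ is all correct.

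However, the mixed-sign reduction you sketch does not work as stated, and this is a genuine gap if you intend it as more than a throwaway remark. If $\rcs(p)=[a,b]$ with $a<0<b$, then the roots of $q$ are all positive and the smallest one is $\min(a^2,b^2)$; a relaxation of $\rcs(q)$ bounds only this single quantity. You cannot recover separate bounds on $|a|$ and $|b|$ (you only learn that the smaller of the two is at most $\sqrt{c}$, with no information about the larger), and the DLG map does not decouple the two sides of the origin—indeed $p(x)p(-x)$ is manifestly even, so replacing $p(x)$ by $p(-x)$ changes nothing. "Treating each side separately" would require having access to the factors of $p$ with positive versus negative roots, which presupposes the root information the whole method is trying to produce. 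Since all the thesis' target polynomials have roots of one sign, this does not affect the substance, but you should either drop the parenthetical claim or explicitly flag that the mixed-sign case needs a genuinely different idea.

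Your framing of the "negative half" of the observation—that the univariate case works because $\rcs(p)$ is an interval captured by one boundary scalar and $\sqrt{\cdot}$ pulls that scalar back, while multivariate RCS boundaries are hypersurfaces and no coordinatewise-squaring preserving RZ-ness is known—is a fair paraphrase of the paper's own discussion a few lines later (where it proposes the multivariate "Construction" and then notes it fails to preserve RZ-ness). You are reading the observation correctly.
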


We begin by looking at different transformations of the roots. In particular, we could wonder why we stay with just squaring roots and we do not try with other powers. Well, the main reason is that squaring is easily done by considering the product $p(x)p(-x).$ Other powers are indeed possible, but they require a bit more of work using resultants.

\begin{teorema}\label{resultants}
Let $p=\prod_{i=1}^{d}(x-a_{i})\in\mathbb{R}[x]$ be a monic polynomial with all its roots different. Then the coefficients of the polynomial $q=\prod_{i=1}^{d}(x-a_{i}^{k})\in\mathbb{R}[x]$ can be written polynomially in terms of the coefficients of $p$.
\end{teorema}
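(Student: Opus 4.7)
The plan is to prove this via the theory of resultants, which gives a direct constructive formula. Specifically, I would define
\[
q(x) := \Res_y\bigl(p(y),\, x - y^{k}\bigr),
\]
where the resultant is taken treating both factors as polynomials in $y$ with coefficients in the ring $\mathbb{R}[x]$. The first step is to verify that this $q(x)$ is indeed the polynomial claimed in the statement. For this I would invoke the standard product formula for resultants: if $p(y) = \prod_{i=1}^{d}(y - a_{i})$ is monic of degree $d$, then for any polynomial $g(y)$ one has $\Res_y(p, g) = \prod_{i=1}^{d} g(a_{i})$. Applying this with $g(y) = x - y^{k}$ yields $\Res_y(p, g) = \prod_{i=1}^{d}(x - a_{i}^{k})$, which is exactly the target polynomial $q$.

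Next, I would argue that this resultant is polynomial in the coefficients of $p$. The Sylvester matrix of $p(y)$ (degree $d$) and $x - y^{k}$ (degree $k$ in $y$) is a $(d+k)\times(d+k)$ matrix whose entries are either coefficients of $p$, or $0$, $1$, or $x$ (coming from $x - y^{k}$). The resultant is the determinant of this matrix, hence a polynomial expression in the coefficients of $p$ and in $x$. Viewing the result as a polynomial in $x$, each of its coefficients is therefore a polynomial expression in the coefficients of $p$, which is precisely the claim.

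The hypothesis that all roots of $p$ are distinct is not strictly needed for the polynomial dependence (the resultant formula is an identity in the coefficients), but it ensures that $q$ genuinely has degree $d$ and that the factorization $q = \prod(x - a_{i}^{k})$ makes sense as stated without multiplicity subtleties. The main potential obstacle in this plan is purely bookkeeping: writing down the Sylvester matrix in the generality of arbitrary $k$ and confirming that the row/column conventions match the sign conventions of the product formula. Both points are standard but require care, especially since a sign $(-1)^{d k}$ or a leading-coefficient factor can appear depending on the convention chosen.

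As a sanity check and as an alternative route, I would note that the same conclusion also follows from Newton's identities: the elementary symmetric polynomials $e_{j}(a_{1}^{k},\dots,a_{d}^{k})$, which are (up to sign) the coefficients of $q$, can be expressed as polynomials in the power sums $p_{m}(a_{1}^{k},\dots,a_{d}^{k}) = p_{mk}(a_{1},\dots,a_{d})$, and the power sums $p_{mk}(a_{1},\dots,a_{d})$ are in turn polynomial in the elementary symmetric polynomials $e_{j}(a_{1},\dots,a_{d})$, which are (up to sign) the coefficients of $p$. This second approach is cleaner conceptually but less constructive than the resultant formulation, which gives an explicit determinantal expression that can be used in subsequent computations involving iterated DLG-type transformations.
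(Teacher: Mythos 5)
Your proposal is correct and fleshes out precisely what the paper leaves as a pointer: the paper's ``proof'' consists only of the remark that this is folklore, a citation to a source that suggests the resultant construction, and a reminder that the coefficients of a polynomial are symmetric functions of its roots. Both routes you give — the resultant $\Res_{y}(p(y),\,x-y^{k})$ with the product formula $\Res(p,g)=\prod_{i}g(a_{i})$ for monic $p$, and the Newton's-identities argument via power sums $p_{mk}(a_{1},\dots,a_{d})$ — are exactly the two hints the paper gestures at, and you develop each to a complete argument, including the correct observation that the distinct-roots hypothesis is not actually needed for the polynomial dependence and serves only to avoid multiplicity bookkeeping in the statement.
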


\begin{proof}
This is folklore. For a hint see, e.g., \cite{2094337} and remember that the coefficients of a polynomial are symmetric functions of its roots.
\end{proof}

Looking in another direction, we also wondered if there was a possibility to apply the DLG method to multivariate polynomials. To our knowledge, there is no mention in the literature of a multivariate DLG method, but this does not mean that such a thing is impossible. Something in this direction is indeed possible, although we will find the problem that the degree grows with the number of variables while the univariate case keeps the same degree. The fact that the degree grows so fast in that construction is problematic for us because the relaxation just collects information up to degree $3$ and it benefits from increasing the amount of variables not the degree. However, this method increases the degree but keeps the number of variables. Nevertheless, although it does not work exactly in the direction we wish, it hints at new borders and possibilities for the application of the DLG method in the multivariate case opening therefore the field for making some of the questions we had while studying our roots. In particular, it would be interesting to know if there is a DLG method increasing not the degree but the number of variables in the case of multivariate polynomials. At the end of the day, dreaming is free. Let us expose this multivariate method and what it generates here.

\begin{construccion}[Multivariate DLG method]\label{multivariatecons}
Given a multivariate polynomial $p\in\mathbb{R}[\mathbf{x}]$ we can build the polynomial \begin{gather*}\label{arrr} q=(\prod_{i=1}^{n}p(x_{1},\dots,-x_{i},\dots,x_{n}))|_{x_{i}^{2}=x_{i}}.\end{gather*}
\end{construccion}

We have an easy consequence in line with the univariate DLG method. We collect it here.

\begin{proposicion}
The $q$ of Equation \ref{arrr} is a polynomial.
\end{proposicion}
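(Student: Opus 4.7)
The plan is to show that the product $P(\mathbf{x}) = \prod_{i=1}^{n} p(x_{1},\ldots,-x_{i},\ldots,x_{n})$ occurring inside the definition of $q$ contains each variable $x_{j}$ only in even powers. Once this is established, the notation $|_{x_{i}^{2}=x_{i}}$ can be interpreted unambiguously as the composition of the genuine change of variables $y_{j}:=x_{j}^{2}$ with the relabeling $y_{j}\mapsto x_{j}$, which sends a polynomial in $y_{1},\ldots,y_{n}$ to a polynomial in $x_{1},\ldots,x_{n}$. Since $P$ is already a polynomial (a finite product of polynomials), the entire content of the proposition reduces to this parity statement.

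First I would establish the invariance of $P$ under every sign flip $\sigma_{j}\colon x_{j}\mapsto -x_{j}$. The key observation, mimicking the univariate identity $p(x)p(-x)=p(-x)p(x)$, is that $\sigma_{j}$ permutes the factors of $P$: applying $\sigma_{j}$ to a factor $p(x_{1},\ldots,-x_{i},\ldots,x_{n})$ flips the sign of $x_{j}$ inside that factor, producing another factor of the same shape. Once this permutation action is made precise, the invariance $\sigma_{j}(P)=P$ follows for every $j\in[n]$, and by the standard equivalence between invariance under the group $(\mathbb{Z}/2)^{n}$ and membership in the subring of polynomials in $x_{1}^{2},\ldots,x_{n}^{2}$, one obtains a unique polynomial $Q\in\mathbb{R}[y_{1},\ldots,y_{n}]$ with $P(\mathbf{x})=Q(x_{1}^{2},\ldots,x_{n}^{2})$. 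Then $q=Q$ after relabeling is the desired polynomial, mirroring the univariate case where $p(x)p(-x)=Q(x^{2})$ and the DLG polynomial is $Q$.

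The main obstacle lies in verifying that the product as written is actually closed under the action of each $\sigma_{j}$. A direct computation in the first nontrivial case (two variables with $p$ multilinear) shows that the literal $n$-fold product does not satisfy the parity condition on its own, so closing up under the full sign-flip group forces one either to interpret the product iteratively (each step doubling the number of factors, in analogy with one iteration of the classical univariate DLG trick) or to take the product over all $2^{n}$ sign patterns $\epsilon\in\{-1,+1\}^{n}$ indexed by subsets $S\subseteq[n]$. Under the symmetric interpretation, each $\sigma_{j}$ pairs the factor indexed by $S$ with that indexed by $S\triangle\{j\}$ and the product is visibly fixed; the parity conclusion and hence the polynomiality of $q$ then follow at once. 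Everything after the invariance step is formal and requires no further computation.
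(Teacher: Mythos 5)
Your analysis correctly identifies a genuine flaw, but it is a flaw in the paper's construction and its one-sentence proof rather than in your argument. The paper asserts that the $n$-fold product $\prod_{i=1}^{n}p(x_{1},\ldots,-x_{i},\ldots,x_{n})$ has every variable appearing only to even degree, and this is exactly the claim you set out to verify and found to fail. Your counterexample is correct: with $n=2$ and $p=1+x_{1}+x_{2}$ one gets
\[
(1-x_{1}+x_{2})(1+x_{1}-x_{2})=1-(x_{1}-x_{2})^{2}=1-x_{1}^{2}+2x_{1}x_{2}-x_{2}^{2},
\]
whose cross term $2x_{1}x_{2}$ is odd in both variables, so the literal $n$-fold product lies outside $\mathbb{R}[x_{1}^{2},\ldots,x_{n}^{2}]$ and the substitution $x_{i}^{2}\mapsto x_{i}$ is not well defined on it.

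Your proposed repair is the right one: take the product over all $2^{n}$ sign patterns $\epsilon\in\{-1,+1\}^{n}$, so that each sign flip $\sigma_{j}$ permutes the factors (pairing $\epsilon$ with the pattern obtained by negating its $j$-th entry), whence the product is fixed by the full group $(\mathbb{Z}/2)^{n}$ and therefore lies in $\mathbb{R}[x_{1}^{2},\ldots,x_{n}^{2}]$. In the same example this yields $\prod_{\epsilon}p(\epsilon_{1}x_{1},\epsilon_{2}x_{2})=(1-x_{1}^{2}-x_{2}^{2})^{2}-4x_{1}^{2}x_{2}^{2}$, which is manifestly of the required form. In short, the proposition as the paper states it cannot be proved because the displayed product does not generally become a polynomial under the substitution; your proposal amounts to a correction of the construction (replace the $n$-fold product by the $2^{n}$-fold one) rather than an alternative proof of the paper's version, and you were right to flag this.
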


\begin{proof}
The product gives a polynomial in which all the variables have even degree and therefore the substitution is possible as happens in the DLG method. 
\end{proof}

However, looking through lines passing by the origin, we can easily find examples showing that $q$ does not have to be RZ. This is problematic because it shows that this direct generalization of the DLG method to the multivariate case cannot be what we need in our generalized constructions: we need RZ polynomials in order to be able to apply the relaxation, which is what we want to do. 
Thus, we can comment on our dream for a truly going-multivariate DLG method and how it should look like in order to be able to help the relaxation to improve.

\begin{remark}[A dream]
We would require a method producing a polynomial of the same degree (or just \textit{slightly} bigger) and having at least the same number of variables whose ovals are obtained, on each line of our interest, squaring the roots in this line (therefore RZ). As it is dubious that such a neat construction could exist, we could restrict this to happen along certain lines and not all: just on these lines where the univariate polynomials we are interested in lie.
\end{remark}

However, several problems arise. These problems can be observed fast.

\begin{observacion}
One example of such method could be through lamination of recurrences, as we did above. It is not clear how the DLG method interact with the form of the recursion. In particular, in order to be able to apply Borcea-Br{\"a}nd{\'e}n theory of stability preservers, we need that the aforementioned recursion happens through the application of a linear operator \cite{borcea2009lee1,borcea2009lee2,branden2014lee}.
\end{observacion}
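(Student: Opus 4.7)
The observation makes three intertwined claims, so my plan is to justify each in turn, taking the cited lamination of Eulerian recurrences as the concrete model to imitate. First I would isolate what ``lamination of recurrences'' means operationally by revisiting the construction just given: the univariate recurrence $A_n^h = (x+x_0)A_{n-1}^h + xx_0(\partial_x + \partial_{x_0})A_{n-1}^h$ is promoted to a multivariate one by the substitution $x \mapsto x_n$, producing a sequence where a linear differential operator in the variables $x_0, x_{n-1}, x_n$ is applied at each step. The key feature to record is that the operator $T_n = (x_n+x_0) + x_n x_0(\partial_{x_{n-1}}+\partial_{x_0})$ is a first-order linear differential operator with polynomial coefficients, precisely the class handled by the Borcea--Br\"and\'en characterization of stability preservers. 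So the exemplification claim reduces to verifying, via Borcea--Br\"and\'en, that $T_n$ sends stable polynomials to stable polynomials; this is a routine symbol computation I would include as a lemma.

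Next, to justify the claim that it is unclear how the DLG method interacts with the recurrence, I would contrast the Eulerian situation above with the DLG iterates. The DLG step sends $p_k \mapsto p_{k+1}$ where $p_{k+1}(x^2) = (-1)^d p_k(x)p_k(-x)$. The plan here is to make explicit that this is a \emph{quadratic} operation on the coefficient vector of $p_k$, not a linear one, and moreover that it mixes the role of the variable with its negation in a way that has no obvious lifting to a multivariate operator. I would formalize this by writing the induced map $\Phi\colon\mathbb{R}[x]_{\leq d} \to \mathbb{R}[x]_{\leq d}$ and noting that $\Phi(\alpha p + \beta q) \neq \alpha \Phi(p) + \beta \Phi(q)$ in general, so Borcea--Br\"and\'en does not apply to $\Phi$ directly. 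The genuinely open part, which I would state as a question rather than prove, is whether there exists an auxiliary sequence of polynomials, perhaps indexed by both $k$ and an internal recursion parameter, such that one DLG step factors as a composition of stability-preserving linear differential operators.

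For the third claim, I would simply recall the precise statement of the Borcea--Br\"and\'en theorem from \cite{borcea2009lee1,borcea2009lee2,branden2014lee}: stability preservers on $\mathbb{R}[\mathbf{x}]_{\leq d}$ are characterized only among \emph{linear} operators, through positivity of their symbol in a half-plane product. This is the strict hypothesis that the observation invokes, so the justification is a citation plus a one-line remark that nonlinear operations such as the DLG squaring fall outside the scope of the theorem and therefore require, if anything, ad hoc stability arguments.

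The main obstacle, and the reason this remains an observation rather than a theorem, is the second step: showing rigorously that the DLG recursion cannot be rewritten as the action of a linear stability-preserving operator on a suitably enlarged polynomial. I do not expect a clean negative result to be accessible; what I would aim for instead is a structural reformulation showing that any linear operator reproducing DLG on a fixed degree must have symbol with zeros in the upper half-plane, which would then exclude it from the Borcea--Br\"and\'en class. Even this weaker statement will require a careful degree and support analysis of the Hurwitz-type product $p(x)p(-x)$ that I would expect to occupy the bulk of the technical work.
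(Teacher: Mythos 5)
The statement you are addressing is an \emph{observation} in the paper, not a theorem: the text simply records, without proof, that lamination of recurrences is one route to multivariate extensions, that the interaction of the DLG step with the recursion is unclear, and that Borcea--Br\"and\'en requires linearity. Your elaboration is correct and consistent with what the paper leaves implicit, and in fact supplies more justification than the paper does. The two verifiable ingredients you isolate are exactly the right ones: the laminated Eulerian recursion is implemented by a first-order linear differential operator with polynomial coefficients, which is squarely inside the class the Borcea--Br\"and\'en theory characterizes (the paper itself leans on \cite{visontai2013stable} for this), while the DLG step $q(x^{2})=(-1)^{d}p(x)p(-x)$ is quadratic in the coefficient vector, so the characterization of \emph{linear} stability preservers does not apply to it directly. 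You also correctly classify the remaining content as open rather than provable: whether a DLG iteration can be factored through linear stability-preserving operators on some enlarged sequence is precisely the uncertainty the observation is voicing, and your proposed ``negative'' symbol argument is speculative, as you acknowledge. Two small cautions. First, nonlinearity of the coefficient map only shows that Borcea--Br\"and\'en is not applicable \emph{as stated}; it does not by itself exclude a linear realization after a change of data (your own open question), so you should not present the quadraticity computation as more than evidence for the ``unclear'' claim. Second, in your operator $T_{n}$ you read the paper's $\tfrac{\partial}{\partial x}$ after lamination as $\partial_{x_{n-1}}$; in the construction modeled on \cite{visontai2013stable} the differential part acts as a sum of partials over the previously introduced variables, and the diagonal restriction only recovers the univariate recursion with that reading, so the lemma you propose should be stated for that operator rather than for the single-variable derivative.
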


These methods produce new polynomials worth studying in depth. In particular, these polynomials might have interesting underlying algebro-combinatorial structures still not well known nor studied. The study of these polynomials would be a good initial point to look at these objects.

\begin{remark}[New polynomial sequences: resultants]
The method of resultants hinted at in \cite{2094337} in the proof of the folkloric Theorem \label{resultans} to study univariate real-rooted polynomials through transformations of their roots producing polynomials whose coefficients are polynomial combinations of the coefficients of the original polynomials produces completely new families and sequences of polynomials. These new sequences fall directly from the original sequences through these methods. It would be highly interesting to study which combinatorial objects are they encoding given the original combinatorial object underlying the original polynomial. This hints at deep and uncovered connections between the algebra related to symmetric functions and combinatorics.
\end{remark}

A similar structure of a forest of new sequences of polynomials surges also from our attempt at a multivariate method. Note, in particular, that the sequences above keep the degree going downwards in the transformations. However, the sequence emerging from the (failed) multivariate method has a faster growing degree. Notice also that we can easily check though an example that, when we restrict to the diagonal of the multivariate method, we do not in general get a subsequence of the sequences above at a faster growing degree. This means that these sequences are essentially different. We can just make the following remark.

\begin{remark}[New polynomial sequences: multivariate]
The (failed) multivariate method proposed in Construction \ref{multivariatecons} produces another family of sequences intimately related to the original sequence of polynomials. This family of sequences is essentially different from the previous ones. For this reason, it would be also interesting to understand the connections with the underlying combinatorics and, in particular, which are good models of the objects being encoded in these new sequences.
\end{remark}

All these constructions provide different polynomials and forests of polynomials.

\begin{observacion}[Inestability and nonequality surprises for the different DLG methods]
As mentioned above, the (failed) multivariate method we propose does not preserve RZ-ness. For this reason, what we would like to understand is the connection between this multivariate method and the univariate one because they do not produce the same corresponding polynomials of a given degree. This might give a hint about the correct multivariate generalization. Other modifications in the recursion could produce yet further families of polynomials that fit into these categories and therefore a comparison between these in this underlying combinatoric regard could produce the emergence of even more surprises in this topic. 
\end{observacion}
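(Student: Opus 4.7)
The plan is to substantiate this observation through two explicit computations at the lowest nontrivial degrees, and then articulate the structural reason behind the discrepancy. First, to confirm non-preservation of RZ-ness by the multivariate method of Construction \ref{multivariatecons}, I would take a small bivariate RZ polynomial such as $p(x_1,x_2) = 1 + x_1 + x_2 - \tfrac{1}{4}x_1 x_2$ (or any bivariate example admitting a Helton-Vinnikov representation), compute $p(-x_1,x_2)p(x_1,-x_2)$, substitute $x_i^2 \mapsto x_i$ to obtain $q$, and then test real-zeroness by restricting $q$ along a diagonal line $t \mapsto q(ta, tb)$. By probing several directions $(a,b)$ I expect to find one along which the univariate restriction has a non-real root, which exhibits failure of RZ-ness concretely.

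Second, for the non-equality claim, I would compare two objects of the same degree, namely the univariate DLG output applied to $A_n(x)$, call it $U_n(x)$, and the diagonal restriction $Q_n(x) := q(x,\dots,x)$, where $q$ is produced by Construction \ref{multivariatecons} applied to the multivariate Eulerian polynomial $A_n(\mathbf{x})$. I would carry this out explicitly for $n=2,3$, using the coefficient formulas computed in Chapter 5 (in particular Computations 5.x giving $R(\{i\})$, $R(\{i,j\})$, etc.). The comparison should reveal two kinds of discrepancies: a degree mismatch, since the multivariate construction multiplies $n$ factors each of degree $n$ producing $Q_n$ of degree $\sim n^2/2$ after the substitution, whereas $U_n$ remains of degree $n$; and a coefficient mismatch once we truncate to comparable degree pieces. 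The degree disparity is already decisive, but I would also extract the lowest-degree terms of each and show they genuinely disagree, so that the observation cannot be salvaged by a simple rescaling or monomial-normalization.

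The third and most delicate step is giving the structural explanation for why the two methods cannot agree, and this is where I expect the main obstacle to lie. The univariate DLG produces the polynomial whose roots are the squares of the roots of $A_n(x)$, i.e.\ the roots of $A_n(\sqrt{x})A_n(-\sqrt{x})$; its effect on the single variable is an algebraic-geometric operation on one set of roots. The multivariate construction, by contrast, takes a symmetric product over sign-flips of each variable separately, so along the diagonal it mixes $n$ different root systems (one for each univariate slice $p(\cdots,-x_i,\cdots)$) rather than squaring the diagonal root system. The hard part will be formulating precisely in what sense the correct multivariate DLG-lift should be defined: namely, as an operator $T$ on $\mathbb{R}[\mathbf{x}]$ that (i) preserves RZ-ness, (ii) commutes with restriction to every line through the origin, and (iii) reduces to the univariate DLG on each such restriction. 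I would argue, by combining (ii) and (iii) with the fact that the univariate DLG is not induced by any linear differential operator in the Borcea-Br\"and\'en sense, that no polynomial construction of the shape of Construction \ref{multivariatecons} can satisfy all three conditions simultaneously; the failure of RZ-ness and the degree mismatch exhibited above are then two symptoms of this deeper obstruction, and identifying it cleanly is what would upgrade this observation from a discussion point into a precise no-go statement guiding future work.
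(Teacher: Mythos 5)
Your parts (1) and (2) are essentially the route the paper itself takes: the observation is supported in the surrounding text only by the claim that line restrictions of $q$ easily furnish examples where RZ-ness fails, and by the degree count showing that the multivariate construction produces polynomials of much larger degree than the univariate DLG output (so that even the diagonal restrictions cannot reproduce the univariate sequence). Two concrete problems in your execution, though. First, your witness polynomial $p(x_1,x_2)=1+x_1+x_2-\tfrac14x_1x_2$ is not RZ: along the direction $(1,-1)$ the restriction is $1+\tfrac14t^2$, with non-real roots, so it cannot witness a failure of \emph{preservation} of RZ-ness; you need to start from an actual RZ polynomial, e.g.\ $1+x_1+x_2+\tfrac14x_1x_2$ (RZ precisely when the cross coefficient lies in $[0,1]$) or the Eulerian polynomial $A_2(\mathbf{x},\mathbf{1})$ itself. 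Second, before any of the computations in (1)–(2) you must confront the fact that the product of single-sign-flips $\prod_i p(x_1,\dots,-x_i,\dots,x_n)$ does \emph{not} in general have only even exponents (already $(1-x_1+x_2)(1+x_1-x_2)=1-x_1^2-x_2^2+2x_1x_2$ has the odd cross term), so the substitution $x_i^2\mapsto x_i$ is not well defined as you describe it; you have to fix an interpretation of the construction (e.g.\ include the unflipped factor or all $2^n$ sign patterns) before the degree and coefficient comparison in (2) is meaningful.

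Your part (3) goes beyond anything the paper attempts — the observation deliberately leaves the connection between the two methods as an open question — and as sketched it has a genuine gap. The Borcea--Br\"and\'en theory classifies \emph{linear} stability preservers, while the DLG step $p\mapsto(-1)^{\deg p}p(x)p(-x)$ is quadratic in $p$, so "DLG is not induced by a linear differential operator" does not by itself rule out an operator $T$ satisfying your conditions (i)--(iii). The actual obstruction to (ii)+(iii) is much more elementary: if $T(p)(ta)$ had to equal, for every direction $a$, the polynomial in $t$ whose roots are the squares of the roots of $p(ta)$, then the coefficient of $t^k$ in $T(p)(ta)$ would have to be homogeneous of degree $2k$ in $a$, whereas for any polynomial $Q$ the coefficient of $t^k$ in $Q(ta)$ is homogeneous of degree $k$; these are incompatible, so no polynomial construction can commute with all line restrictions while reducing to univariate DLG on each of them, and any honest multivariate DLG must build in a rescaling. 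If you want to upgrade the observation to a no-go statement, that homogeneity argument is the correct engine, not Borcea--Br\"and\'en.
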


Therefore we see that we began studying one univariate sequence of polynomials and that our developments led us to a huge forest of different sequences of polynomials related to that one but different in nature and codifying information about different and excitingly new objects. Showing the deep beauty of mathematics on how exploring one tree ends up revealing a forest. And we have many other unexplored and untouched trees to open new forests to us in the future!

\begin{remark}[New forests of unknown polynomials]\label{forestpol}
For each polynomial in the original sequence we can produce several families changing the roots and the coefficients through the methods proposed. Then we could proceed applying also the original recursion (either univariate or multivariate) to produce related polynomials that could be fed into the same process. Thus, in this way, we can generate a structure of polynomials in the shape of a tree graph. Exploring the combinatorial meaning of these polynomials and the information encoded in their coefficients looks like a formidable task to do in the future in order to understand the underlying combinatorial dynamics encoded in the coefficient transformations that we see walking through the generated tree of polynomials.
\end{remark}

After seeing that we can use different transformations and multivariate extensions the natural question is if these two can be combined. We believe this to be possible, but we are not expanding here in that direction because it goes beyond our humble objectives here of just hinting at possible developments of some observation we had to make while studying the application of the relaxation to the Eulerian polynomials. We therefore collect this as a problem to treat in the future.

\begin{problem}
Study the underlying combinatorics and dynamics of the coefficients of the forest of polynomials uncovered through the processes described in Remark \ref{forestpol}. Determine if that information can be combined with the relaxation in order to produce accurate (multivariate) bounds for the asymptotics of the extreme (ovaloids of) roots of the original sequence of polynomials.
\end{problem}

We finish this chapter by pointing out to the surprising fact that this task of real-root bounding seems to provide a very stable bridge for translating ideas between numerics and real algebraic geometry through combinatorics. These developments of the interconnections between the ideas and methods of both fields seem to be highly productive and therefore this last chapter serves the purpose of calling for an action in that direction. This is a hopeful and bright way of finishing this part of this thesis on which we stumbled upon the relation between Eulerian polynomials and (numerical) analysis in the light of the brilliant work of Sobolev on these polynomials from the point of view of analysis. We reached them through the two different paths of algebra and combinatorics, and that trip allowed us to traverse here the already mentioned bridge over which we ask to build a highway.
\part{On Finding Symmetry on Multivariate Eulerian Polynomials}\label{V}
\chapter{Weighted Permutations From Different Angles}\label{ChWeight}

Here we will comment on strategies devised for using the rest of the directions of the multivariate Eulerian polynomials. We will see that we can do this by attaching weights to elements of the permutation. This strategy leads us to the introduction of weighted permutations. These are reminiscent of colored permutations immersed in a more general setting. A good reference for colored permutations is \cite[Section 9.6]{kitaev2011patterns}.

\section{Permutations with further information attached to the elements}

In this section we will talk about \textit{weighted permutations}. These should not be confused with \textit{colored permutations}, which carry a different more elaborated structure. However, colored permutations, when we forget about their further structure as a group, can be considered as a subset of weighted permutations and thus what we define here for these can also be applied to the colored ones. In this section therefore we introduce these weighted permutations so they can unravel a clear way of justifying the necessity of our study of symmetries in multivariate Eulerian polynomials in the next section. For this, we have to recall how the (multivariate) Eulerian polynomials are constructed in order to position ourselves in the adequate setting to introduce the new feature attached to the objects we are counting, that is, the weights. We begin by defining weights for permutations.

\begin{definicion}[Weights on permutations]
A \textit{weight} on a permutation $\sigma=(\sigma_{1},\dots,\sigma_{n})\in\mathfrak{S}_{n}$ is just a map $w\colon[n]\to\mathbb{R}$. We call a pair $(\sigma,w)$ a \textit{weighted permutation}. If we fix the same weight for all the permutations in $\mathfrak{S}_{n}$ we call \gls{snw} \textit{the set of all weighted permutations with weight $w$}.
\end{definicion}

Here we see how these weights can carry further combinatorial information. In particular, colored permutations would be weighted permutations if we allowed the weights to take values on cyclic groups $\mathbb{Z}_{r}$ and thought, as usual, of $w(k)$ as the color assigned to $\sigma_{k}$.

\begin{remark}[Weight lattices]
If, instead of considering the weights given by maps of the form $w\colon[n]\to\mathbb{R}$ having values in $\mathbb{R}$, we consider weights $w\colon[n]\to\mathbb{Z}\subseteq\mathbb{R}$ and equip these values with a lattice structure under a modular equivalence relation so that we have $r\in\mathbb{N}$ with $$w(a_{1})\equiv_{r} w(a_{2}) \mbox{\ if and only if\ } w(a_{1})-w(a_{2})=kr \mbox{\ for some\ } k\in\mathbb{Z}$$ we can recover thus the colored permutations and their structure by defining the corresponding wreath product \cite{meldrum1995wreath}.
\end{remark}

Thus, we see that weights fundamentally underlie many constructions previously appearing in the study of different groups stemming from the symmetric groups when permutations are equipped with further structure. This is the topic of the next section in relation to the corresponding Eulerian polynomials that appear this way.

\section{Relations between different contexts}

As we noticed above, we can recover colored permutations from a particular case of weighted ones. In fact, weights allow us to go into a \textit{more colorful} setting.

\begin{remark}[Colored permutations as a subclass]
In this way, now colors can be taken in general in the torus $\mathbb{R}^{n}/r\mathbb{Z}^{n}$.
\end{remark}

Thus, this generalized setting calls for a generalization of Eulerian polynomials. Fortunately, it is easy to obtain these polynomials as particular lines (whose direction is given by the vector of weights) of the multivariate Eulerian polynomials. For simplicity, from now on, we will not talk anymore here about lattices because we just want to start the study of these objects and going beyond that lies out of our scope here, which merely intends to justify the interest in the study of these weights. Thus, we can incorporate weights in our Eulerian polynomials in the following way.

\begin{definicion}[Weighted Eulerian polynomials]
Denote the \textit{$n$-th Eulerian polynomial with weight $w\in\mathbb{R}^{n+1}$} $$\gls{anw}:=\sum_{(\sigma,w)\in\mathfrak{S}_{n+1}^{w}}\prod_{i\in\des(\sigma)}xw_{i}$$
\end{definicion}

But we can readily see that, actually, we already had all these univariate polynomials codified inside the multivariate ones. Indeed, taking weights in this polynomial amounts just to changing the direction we look at in the multivariate Eulerian polynomial so we have immediately the following.

\begin{proposicion}[Weighted polynomials as restrictions to lines]
$A_{n,w}(x)=A_{n}(x\bold{w})$ with $\bold{w}=(w(1),\dots,w(n+1)).$
\end{proposicion}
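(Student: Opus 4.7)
The plan is to unfold both sides of the claimed identity through their definitions and to see that they coincide termwise after the substitution $\mathbf{x}\mapsto x\mathbf{w}$. First I would write out the right-hand side: by the adopted convention, $A_n(\mathbf{x})=A_n(\mathbf{x},\mathbf{1})=\sum_{\sigma\in\mathfrak{S}_{n+1}}\prod_{i\in\mathcal{DT}(\sigma)}x_i$, so substituting $x_i\mapsto xw(i)=xw_i$ along the line spanned by $\mathbf{w}=(w(1),\dots,w(n+1))$ yields
\[
A_n(x\mathbf{w})=\sum_{\sigma\in\mathfrak{S}_{n+1}}\prod_{i\in\mathcal{DT}(\sigma)}xw_i.
\]

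Next I would unfold the left-hand side. The set $\mathfrak{S}_{n+1}^w$ of weighted permutations with fixed weight $w$ is in canonical bijection with $\mathfrak{S}_{n+1}$, since the weight does not vary with $\sigma$ and thus contributes no additional indexing. Hence
\[
A_{n,w}(x)=\sum_{\sigma\in\mathfrak{S}_{n+1}}\prod_{i\in\des(\sigma)}xw_i,
\]
where, for the product to be well-defined, the symbol $\des(\sigma)$ in the definition of $A_{n,w}$ must be read as a subset of $[n+1]$ rather than as the cardinality produced by the earlier counting-function $\des$. The only interpretation consistent with the weight $w\colon[n+1]\to\mathbb{R}$ being indexed by elements of $[n+1]$ is $\des(\sigma)=\mathcal{DT}(\sigma)$, i.e.\ the descent top set, because these are precisely the elements of $[n+1]$ from which the weights $w_i$ can be pulled.

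The main obstacle is thus purely notational rather than mathematical: one has to reconcile the two uses of $\des$ in the paper — the counting function of Definition \ref{primer} on one hand, and the indexing set tacitly used to define $A_{n,w}$ on the other — by identifying the intended meaning of $\des(\sigma)$ in the definition of $A_{n,w}$ with $\mathcal{DT}(\sigma)$. Once this identification is made, the two expressions displayed above match termwise, and the identity $A_{n,w}(x)=A_n(x\mathbf{w})$ follows immediately. No further combinatorial input is needed; in particular, the statement simply recognizes the weighted Eulerian polynomial as the univariate restriction of the multivariate one to the line $\mathbb{R}\cdot\mathbf{w}\subseteq\mathbb{R}^{n+1}$ through the origin, which is precisely the geometric picture advertised by the proposition and which justifies, a posteriori, why looking at non-diagonal directions of the multivariate $A_n$ produces exactly the weighted univariate families discussed in the previous section.
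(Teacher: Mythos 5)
Your proof is correct and matches what the paper leaves implicit — the proposition is stated there with no accompanying proof, being treated as an immediate consequence of the definitions, and your termwise unfolding is exactly that argument made explicit. You are also right that the symbol $\des(\sigma)$ appearing as an index set in the definition of $A_{n,w}$ must be read as the descent top set $\mathcal{DT}(\sigma)$ rather than as the counting function of Definition \ref{primer} or as the set of descent \emph{positions} $\{i\mid\sigma_i>\sigma_{i+1}\}$; only under that reading do the weights $w_i$ get indexed by the same elements of $[n+1]$ that tag the variables in $A_n(\mathbf{x})$, so this is the unique interpretation under which the stated identity holds.
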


Now we analyze the role of these weights in the corresponding Eulerian polynomial. In particular, we want to see in more detail how these univariate polynomials are all injected in the corresponding multivariate one. This is the topic of the next section.

\section{Meaning of positions of the weights}

Of course, the weights can be assigned differently when defining the polynomials producing thus different variants of these weighted Eulerian polynomials. And we can see that many of these variations, when accounting for the weight as a coefficient for the monomial being multiplied associated with the descent, end up forming polynomials that are injected and integrated as univariate line restrictions of the multivariate Eulerian polynomials.

\begin{remark}[Vectors of weights]
That is, we are looking through lines generated by a vector given by the weight. This is something very reminding of what an RZ polynomial is.
\end{remark}

In fact, the relaxation therefore deals with all the weights at once. And there lies its importance in relation to weights.

\begin{remark}[All injections at once]
Thus, this shows the power of the relaxation given that it can therefore deal with all these injections of the weighted polynomials at once thus providing bounds uniformly to all of them in just one application of our method as long as we are able to find where our polynomial has been injected in the multivariate one.
\end{remark}

However, this last thing, in this case, is easy to do. More complicated problems could appear.

\begin{observacion}[Nontrivial injections]
A further problem would arise in cases where finding this injection is not that easy, but, fortunately, the examples that have appeared for us so far do not present this problem and, for this reason, we prefer not to delve here more in that direction.
\end{observacion}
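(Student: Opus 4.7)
The statement to justify is genuinely informal: it asserts that while identifying the correct line of injection of a univariate polynomial into a multivariate RZ polynomial can in principle be a hard algebraic problem, the examples encountered in this dissertation are tame enough to sidestep this difficulty. Accordingly, I would not attempt a formal theorem but rather a twofold justification: (i) exhibit why the problem is hard in general, and (ii) verify explicitly that the cases treated here fall on the easy side of this divide.

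The plan is to begin by formalizing the injection problem. Given a multivariate RZ polynomial $P\in\mathbb{R}[\mathbf{x}]$ and a univariate polynomial $p\in\mathbb{R}[t]$, the task is to determine a direction $\mathbf{w}\in\mathbb{R}^{n}$ such that $P(t\mathbf{w})=p(t)$, or equivalently to decide whether $p$ lies in the image of the map $\mathbf{w}\mapsto P(t\mathbf{w})$. Expanding $P(t\mathbf{w})=\sum_{k=0}^{d}P_{k}(\mathbf{w})t^{k}$ with $P_{k}$ the $k$-th homogeneous component of $P$, this becomes the system $P_{k}(\mathbf{w})=\shortcoeff(t^{k},p)$ for $k=0,\dots,d$, i.e.\ a simultaneous polynomial system in $\mathbf{w}$ of total degree equal to $\deg(P)$. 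I would argue that, in general, solving such a system is exactly as hard as deciding membership in an affine variety, which can be of arbitrarily high complexity, and may have no solution at all when $p$ simply does not occur as a line restriction.

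Next I would contrast this with the situation in the preceding proposition, where the injection is not discovered but prescribed: for the weighted Eulerian polynomial $A_{n,w}$ the vector $\mathbf{w}=(w(1),\dots,w(n+1))$ is given a priori by the weight, and the identity $A_{n,w}(x)=A_{n}(x\mathbf{w})$ is an algebraic tautology once the coefficients $w_{i}$ are read as the entries of the direction vector. Hence here the injection is not found by solving a system but is built into the very definition of the weighted family. I would enumerate the examples used throughout the thesis, the univariate Eulerian polynomial sitting on the diagonal $\mathbf{w}=\mathbf{1}$, the weighted variants along $\mathbf{w}=(w_{1},\dots,w_{n+1})$, and the bivariate interlacer-based extensions, and note that in each case the injecting direction is explicit.

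The main obstacle of this justification is not computational but conceptual: how to draw a reasonably sharp line between the easy and hard regimes. I would address this by pointing out that the easy regime is characterized by \emph{parametric} constructions (weights, interlacer recurrences, laminations), where the direction is part of the construction, whereas the hard regime arises whenever one is handed a multivariate polynomial abstractly (for instance a determinantal representation coming from an unknown source) and asked to recognize which univariate specialization it encodes. Making this dichotomy precise, possibly by relating it to the complexity of polynomial decomposition, would be the genuinely subtle step, and it is precisely the reason the author declines to pursue it further; my proposal would be to record the dichotomy informally and defer the formal complexity analysis to future work, in line with the spirit of the observation.
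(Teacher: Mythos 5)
Your reading is correct: the paper treats this as a purely discursive observation whose only implicit justification is the proposition immediately preceding it, namely that the injections occurring in the thesis (diagonal and weighted Eulerian restrictions) are prescribed by construction via $A_{n,w}(x)=A_{n}(x\mathbf{w})$, so no search problem ever arises, and the genuinely hard recognition problem is simply deferred. Your proposal takes essentially this same route, with the added (harmless) formalization of the general injection problem as the polynomial system $P_{k}(\mathbf{w})=\shortcoeff(t^{k},p)$, which the paper does not spell out but which is consistent with its intent.
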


This limitation on the kind of injections that we consider allows us to concentrate on the next section in the study of this last power of our relaxation, which is, as we mentioned, its globality. In particular, it gives global bounds for a extreme root of each weighted polynomial (the innermost ovaloid). In the next chapter of this part, we will see that we can go even beyond that using a generalization of the palindromicity of the univariate Eulerian polynomials to the multivariate ones.

\section{A notion of globality within the relaxation}

Therefore we see that the relaxation can achieve much more than what we have shown so far. Indeed, it provides many more bounds to many more polynomials than the ones we could analyze in this first approach. This globality is therefore a feature that we still have to learn how to exploit. This is what we do in this section. For that and with the tools we have so far, we know that we have to find the way to extract bounding information restricting to lines through the origin. This is what we accomplish through the next object.

\begin{remark}[Sequences of weights going to infinity]
There are two ways of looking at infinity in directions through the sequence of multivariate polynomials. We can either choose a sequence of numbers or choose a sequence of sequences.
\end{remark}

We begin with the less general instance because it reminds us more to what we did in the univariate case.

\begin{notacion}
We write $|\rt_{t}{A_{n}(w_{1}x_{1},\cdots,w_{n}x_{n})|_{x_{i}=t}}|$ for the set of the absolute values of the roots of the univariate polynomial in the variable $t$ given by the restriction $$A_{n}(w_{1}x_{1},\cdots,w_{n}x_{n})|_{x_{i}=t}:=A_{n}(w_{1}t,\cdots,w_{n}t).$$
\end{notacion}

\begin{definicion}[Asymptotic exponential growth map]
Given a sequence of polynomials $A:=\{A_{n}\}_{n=0}^{\infty}$ with each polynomial $A_{n}\in\mathbb{R}[x_{1},\dots,x_{n}]$, we define its \textit{asymptotic exponential growth map} $\gls{asyA}\colon\mathbb{R}^\mathbb{N}\to[0,\infty],$ $$w\mapsto\inf\{u\in(0,\infty)\mid\lim_{n\to\infty}\frac{\max(|\rt_{t}{A_{n}(w_{1}x_{1},\cdots,w_{n}x_{n})|_{x_{i}=t}}|)}{u^{n}}<\infty\}.$$
\end{definicion}

Observe that this map decides a direction for each polynomial in the sequence $A$ with the property that the projection over the component corresponding to each variable does not change after it has been established once in the sequence. This is the meaning of taking a fixed sequence and not a completely unrelated direction each time.

\begin{observacion}[Directions]
The fixed sequence is the object that connects all the projections together and that determines where we have injected the univariate sequence we are interested in. The main difference is that we are selecting a different direction each time for the projection over a finite component.
\end{observacion}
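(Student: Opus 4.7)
The plan is to unpack the definition of $\asy_{A}$ carefully and verify each of the three assertions packed into the observation by direct inspection of how $w\in\mathbb{R}^{\mathbb{N}}$ enters the formula. The key object is the map $w\mapsto A_{n}(w_{1}t,\dots,w_{n}t)$ appearing inside the $\inf$. For a fixed $w$, this is a family of univariate polynomials in $t$ indexed by $n$, and the basic observation is that the polynomial at stage $n$ depends only on the initial segment $(w_{1},\dots,w_{n})$ of $w$. This finite truncation is what the observation calls the \emph{projection over a finite component}.

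First I would show assertion (1)---that the fixed sequence connects all projections---by arguing that if we denote $\pi_{n}\colon\mathbb{R}^{\mathbb{N}}\to\mathbb{R}^{n}$ the truncation map, then the value $A_{n}(\pi_{n}(w)\cdot t)$ used at stage $n$ and the value $A_{n+1}(\pi_{n+1}(w)\cdot t)$ used at stage $n+1$ agree on their first $n$ coordinates by construction, so the \emph{same} $w_{i}$ is reused for every $n\geq i$. This coherence across stages is precisely the connecting role. Second, for assertion (2), I would invoke the proposition on weighted polynomials as restrictions to lines proved just before in the excerpt, which identifies $A_{n}(t\,\pi_{n}(w))$ with the weighted univariate Eulerian polynomial $A_{n,\pi_{n}(w)}(t)$; this exhibits the univariate weighted sequence as the diagonal restriction of the multivariate sequence along the line spanned by $\pi_{n}(w)$, and the sequence $w$ is exactly the datum fixing that line of injection.

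For assertion (3), I would contrast the current setting with the alternative domain $S:=\{v\in(\bigcup_{n}\mathbb{R}^{n})^{\mathbb{N}}\mid v_{i}\in\mathbb{R}^{i}\}$ already mentioned in the glossary entry for $\asy_{A}$. There, a direction $v_{n}\in\mathbb{R}^{n}$ is chosen independently at each stage with no compatibility condition $\pi_{n}(v_{n+1})=v_{n}$, so nothing forces the direction selected for $A_{n}$ to relate to that selected for $A_{n+1}$. By exhibiting a simple example (e.g.\ $v_{n}=(0,\dots,0,1)\in\mathbb{R}^{n}$ versus any coherent $w$), one sees that the $S$-setting genuinely produces unrelated finite-component directions whereas the $\mathbb{R}^{\mathbb{N}}$-setting does not. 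This makes precise the phrase \emph{different direction each time for the projection over a finite component}, which refers to the fact that $\pi_{n}(w)\in\mathbb{R}^{n}$ changes with $n$ (an extra coordinate is appended) even though its earlier entries are fixed.

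The main obstacle, such as it is, will be cosmetic rather than mathematical: the observation is essentially an unpacking of the definition, so the entire content lies in setting up notation ($\pi_{n}$, the coherence identity $\pi_{n}\circ\pi_{n+1}=\pi_{n}$, the comparison with $S$) cleanly enough that the three assertions become tautologies. No deep computation is required; the role of the proof is to make rigorous the informal contrast between a single element of $\mathbb{R}^{\mathbb{N}}$ (which constrains all finite projections simultaneously) and an element of $S$ (which does not).
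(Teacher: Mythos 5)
Your unpacking is correct and matches what the paper does: the observation is an informal remark with no formal proof, and its content is exactly the coherence of the truncations $\pi_{n}(w)$ of a single $w\in\mathbb{R}^{\mathbb{N}}$ (connecting the stages and fixing the line of injection, via the identification $A_{n,w}(x)=A_{n}(x\mathbf{w})$) in contrast with the later domain $S$, where the finite-dimensional directions are chosen without any compatibility. Your formalization with $\pi_{n}$ and the comparison example is a faithful, slightly more explicit rendering of the same point, so there is nothing to correct.
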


In the more general setting, the sequence $w$ can be substituted for a sequence of vectors. Thus the object above is a particular case of the following in which the directions are disconnected one from another.

\begin{definicion}[Sequences of weight vectors going to infinity]
Call $S:=\{w\in(\bigcup_{n}\mathbb{R}^{n})^\mathbb{N}\mid w_{i}\in\mathbb{R}^{i}\}$. Given a sequence of polynomials $A:=\{A_{n}\}_{n=0}^{\infty}$ with each polynomial $A_{n}\in\mathbb{R}[x_{1},\dots,x_{n}]$, we define the \textit{general asymptotic exponential growth map} of the sequence $A$ as $\gls{asyA}\colon S\to(0,\infty],$ $$w\mapsto\inf\{u\in(0,\infty)\mid\lim_{n\to\infty}\frac{\max(|\rt_{t}{A_{n}(w_{n 1}x_{1},\cdots,w_{n n}x_{n})|_{x_{i}=t}}|)}{u^{n}}<\infty\}$$
\end{definicion}

We explain why we prefer the less general definition from the point of view of the construction of an insightful story.

\begin{remark}[Infinite diagonal injection]
We decide to take the less general definition because it is the one that most accurately resembles our vision of how the univariate Eulerian polynomials are injected into the multivariate ones: they are not just floating around the restrictions of the multivariate one; instead, they are always injected into the diagonal.
\end{remark}

Returning to the first definition, we want to see examples of how $\asy$ works in order to understand what we are measuring through it. The next example will be revealing.

\begin{ejemplo}[Factorized polynomials]
The easiest examples come when our polynomials are given already factorized. So consider the sequence of polynomials $A=\{\prod_{i=1}^{n}(x_{i}-2^{i})\}$ (or small enough perturbations of it if we do not like to have so easy multivariate polynomials). It is clear that injected in the diagonal $d=\{1\}_{i=1}^{\infty}$ we have the univariate sequence $A_{d}=\{\prod_{i=1}^{n}(x-2^{i})\}$ and injected through the direction sequence $w=\{2^{i}\}_{i=1}^{\infty}$ we have $A_{w}=\{\prod_{i=1}^{n}(2^{i}x-2^{i})\}.$ Now the sequence of roots of $A_{d}$ is given by $\{\{2^{i}\}_{i=1}^{n}\}_{n=1}^{\infty}$ while the sequence of roots of $A_{w}$ is given by $\{\{1\}_{i=1}^{n}\}_{n=1}^{\infty}.$ Hence the sequences of maximum of these roots are respectively $\{2^{n}\}_{n=1}^{\infty}$ and $\{1\}_{n=1}^{\infty}$ so $\asy_{A}(d)=2$ while $\asy_{A}(w)=1$.
\end{ejemplo}

In this example, we can see that the map $\asy$ is affected both by the coefficients of the polynomial and by the direction(s) chosen. Moreover, after restricting to a direction given by a sequence, we have to determine the roots of these polynomials. Thus, it is an object that in general might be difficult to compute and therefore we need tools for knowing more about its behaviour indirectly. The relaxation will help us to accomplish this. The relaxation will allow us to establish bounds for the map $\asy$ for some subsets of sequences in $\mathbb{R}^{\mathbb{N}}$. Here is precisely where the globality of the relaxation that we wanted to expand upon in this section comes to the rescue again. The next result is the key to this.

\begin{proposicion}[Globality of the relaxation]
Let $A=\{A_{n}\}_{n=0}^{\infty}$ be a sequence of multivariate polynomials and $w=\{w_{n}\}_{n=0}^{\infty}\in S$ a sequence of directions choosing a sequence of univariate polynomials $$A_{w}=\{A_{n}(w_{n1}x_{1},\dots,w_{nn}x_{n})|_{x_{i}=t}\}$$ with all roots non-positive injected in the sequence $A$. Fix, moreover, any sequence of vectors $v=\{v_{n}\}_{n=0}^{\infty}$ and denote $R_{n}:=R_{n,0}+ x_{1}R_{n,1}+\cdots+x_{n}R_{n,n}$ the LMP of the relaxation applied to the corresponding polynomial $A_{n}$ in the sequence $A_{w}$ so that $R_{n,\tot}:=t(w_{1}R_{n,1}+\cdots+w_{n}R_{n,n})$ verifies $v_{n}^{T}R_{n,\tot}v_{n}>0$. Then we have the bounds $r_{n}>-\frac{v_{n}^{T}R_{n,0}v_{n}}{v_{n}^{T}R_{n,\tot}v_{n}}$, where $r=\{r_{n}\}_{n=0}^{\infty}$ is the sequence of the largest roots of the polynomials in $A_{w}$.
\end{proposicion}

\begin{proof}
Compute the sequence of relaxations and apply them to the sequence of vectors to obtain the sequence of linear forms $$R=\{v_{n}^{T}R_{n}(w_{n1}x_{1},\dots,w_{nn}x_{n})|_{x_{i}=t}v_{n}\}.$$ This sequence provide a sequence of lower bounds for the biggest root of the polynomials in $A_{w}$ through the inequality $$v_{n}^{T}R_{n}(w_{n1}x_{1},\dots,w_{nn}x_{n})|_{x_{i}=t}v_{n}=v_{n}^{T}R_{n,0}v_{n}+tv_{n}^{T}R_{n,\tot}v_{n}>0.$$ Isolating $r_{n}$ in the inequality and using the properties of the relaxation, we obtain the desired bounds $r_{n}>-\frac{v_{n}^{T}R_{n,0}v_{n}}{v_{n}^{T}R_{n,\tot}v_{n}}$.
\end{proof}

This result is an evident consequence of the properties of the relaxation that generalizes what has been done here for the univariate Eulerian polynomials injected in the diagonal of the multivariate Eulerian polynomials to other polynomials injected in different directions inside other multivariate polynomials. This result allows us, in fact, to define a map producing bounds for $\asy$. We do it in full generality through the next definition.

\begin{definicion}[General relaxation and bound maps]
Recall that we define the set $S=\{w\in(\bigcup_{n}\mathbb{R}^{n})^\mathbb{N}\mid w_{i}\in\mathbb{R}^{i}\}$. Given a sequence of polynomials $A:=\{A_{n}\}$ with each polynomial $A_{n}\in\mathbb{R}[x_{1},\dots,x_{n}]$, we define the \textit{general relaxation map of the sequence $A$} as $\gls{rela}\colon S\times S\to\mathbb{R}[t]_{1},$ $$(w,v)\mapsto v_{n+1}^{T}R_{n}(w_{n1}x_{1},\dots,w_{nn}x_{n})|_{x_{i}=t}v_{n+1}.$$ This allows us to define the \textit{general bound map} $\gls{boua}\colon S\times S\to\mathbb{R},$ $$(w,v)\mapsto -\frac{v_{n}^{T}R_{n,0}v_{n}}{v_{n}^{T}R_{n,\tot}v_{n}}.$$
\end{definicion}

Now, it is clear that $\bou_{A}$ can be used to provide bounds of $\asy_{A}$ because its values bound the value of the roots of the polynomials in the sequence, as we showed in the proposition above.

\begin{remark}[Multivariability as globality]
    We introduced these to notions here in order to point towards the fact that the work we are developing here is highly multivariate, although we look at univariate polynomials for a measure of how good our multivariate methods are. The fact that univariate Eulerian polynomials and their (extreme) roots are so well understood helps us now in our work of developing measures on how good our approximations are here. We measure through the diagonal because there is where the univariate polynomials lie. However, the work of this chapter tells us that more general measures should be possible and that interesting and well-known sequences of real-rooted polynomials also appear injected into the multivariate Eulerian polynomials away from the diagonal. This shows us that working towards generating and investigating these more general measures serve a purpose similar (but more accurate) to the purpose we had here. In fact, these more general viewpoints could help us to understand the behavior of the extreme roots of these more general sequences of polynomials that appear injected away from the diagonal into the sequence of multivariate polynomials. This justifies our introduction of these general definitions tackling these objects in this chapter.
\end{remark}
    
Unfortunately, we cannot develop the whole theory in this thesis because it would go beyond the limits of our main objectives here. Before closing, we also comment on the generality of our definitions, why we introduced both and in what sense they differ. This will help us to understand the different approaches these two definitions allow and how one contains the other.

\setlength{\emergencystretch}{3em}%
\begin{observacion}[Bounds, relaxation and asymptotics]
Here, it makes more sense to introduce this general definition because, in our example of Eulerian polynomials, our sequence of vectors $v$ does not get fixed by its head contrary to what happens for the diagonal $d$, which was what motivated the initial definition given at the beginning of this section. Maybe, it would be interesting studying which bounds can be obtained fixing the head of the vectors, but we do not develop this further here.
\end{observacion}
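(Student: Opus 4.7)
The statement is a motivating remark rather than a theorem, so my proposal is a plan for how one would substantiate the two informal claims contained in it: (i) that the general definition of $\asy_A$ (and correspondingly of $\bou_A$, $\rel_A$) is genuinely required in the multivariate Eulerian setting because the sequences $v$ that produce the good bounds are not eventually stabilized by any finite head, and (ii) that the complementary question of what bounds are attainable when one does fix the head is nontrivial and worth pursuing.

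For (i), the plan is to make the notion of "fixed by its head" precise by declaring that $v \in S$ is head-fixed if there exists $k \in \mathbb{N}$ and $v_0 \in \mathbb{R}^k$ such that the first $k$ coordinates of $v_n$ equal $v_0$ for all sufficiently large $n$. Under this definition, the diagonal sequence $d$ used to motivate the first definition of $\asy_A$ is head-fixed in a trivial way (every coordinate stabilizes), while the winning linearizing sequence $v_n = (y_n, 0, (-2^{m-i})_{i=3}^m, 0, \tfrac{1}{2}, (1)_{i=1}^m)$ from the previous chapter fails to be head-fixed: the leading entry $y_n$ depends on $n$ through a radical expression, the subsequent block of entries depends jointly on $n$ and position through $2^{m-i}$, and the parity of $n$ plays a structural role. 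I would then argue, by rerunning the asymptotic analysis of the winning chapter under the additional constraint that $v$ be head-fixed, that the resulting $\bou_A(w,v)$ collapses to an expression involving only finitely many $L$-form evaluations on the variables $x_1,\ldots,x_k$, in direct analogy with the unit-binary computation of Proposition \ref{genunibou}. The conclusion I expect is that head-fixed sequences cannot recover the exponential improvement of order $(9/8)^m$ that was the whole point of going multivariate, which is precisely the sense in which the diagonal-inspired first definition is too restrictive.

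For (ii), the plan is to formulate the suggested future problem as an interpolation question between two known regimes. Fix $k$ and $v_0 \in \mathbb{R}^k$, define $V_k(v_0) \subseteq S$ as the subfamily of sequences whose first $k$ coordinates eventually equal $v_0$, and study $\sup_{v \in V_k(v_0)} \bou_A(w,v)$ as $k$ and $v_0$ vary. At one extreme ($k = n+1$) one recovers the univariate optimum $\un$; at the other ($k$ fixed and small) one should recover a family close in spirit to the generalized univariate bounds, where the asymptotic penalty for fixing too much of the head is quantified explicitly. The main obstacle I foresee is the same that dominated the previous chapter: even for a modest fixed head, optimizing the free tail leads to high-dimensional expressions governed by radical cancellations of the type dealt with in Procedure \ref{processy}, so identifying the right ansatz for the tail will require extensive numerical experimentation of the kind that produced the winning sequence, combined with a careful bookkeeping of which asymptotic growth term the fixed head is capable of affecting.
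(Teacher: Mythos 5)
This statement is an unproven motivating remark: the paper offers no argument for it beyond pointing at the explicit winning family $v=(y,0,(-2^{m-i})_{i=3}^{m},(0,\tfrac{1}{2}),(1)_{i=1}^{m})$, whose leading entry $y$ is re-optimized at every $n$, whose middle block has length growing with $m$, and whose structure depends on the parity of $n$, so that the $v_n$ are visibly not initial segments of one fixed infinite sequence in the way the diagonal $d$ is. Your part (i) identifies exactly this evidence, so on the point the paper actually makes you are aligned with it; the rest of your text is a research plan for the open question the remark explicitly declines to pursue, which is legitimate but goes beyond anything the paper substantiates.

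Two cautions about that plan. First, your predicted conclusion that constraining $v$ to be head-fixed makes $\bou_{A}(w,v)$ ``collapse to an expression involving only finitely many $L$-form evaluations on $x_{1},\dots,x_{k}$'' conflates fixing the head with killing the tail: if only the first $k$ coordinates of $v_n$ are frozen and the remaining $n+2-k$ entries stay free, the quadratic forms $v_n^{T}R_{n,0}v_n$ and $v_n^{T}R_{n,\tot}v_n$ still involve $L$-form values in all variables, exactly as in the full multivariate computation; the finite-$L$-form collapse is the regime of the unit-binary vectors of Proposition \ref{genunibou} (tail identically zero), not of a fixed finite head. Consequently your expected corollary, that head-fixed sequences cannot reach the $(9/8)^{m}$ improvement, is far from automatic: one could freeze, say, the first entry to a constant and let the growing $-2^{m-i}$ block and the tail of ones vary, and nothing in the paper rules out an exponential gain there. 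Second, your notion of ``head-fixed'' (a finite prefix eventually constant) is weaker than the coherence the paper's first definition encodes, namely that every coordinate is fixed once it appears, i.e.\ $v_n$ is the length-$n$ truncation of a single $w\in\mathbb{R}^{\mathbb{N}}$; the winning family fails both notions, so the remark survives, but any theorem you prove about your weaker class will not directly settle the comparison between the two definitions of $\asy_{A}$ that the remark is actually about.
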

\setlength{\emergencystretch}{0em}%

Instead, now that we have generated through the last two sections a picture of how the relaxation is also useful for polynomials not injected into the diagonal of their multivariate extensions, we have a motive for studying symmetries inside the multivariate Eulerian polynomials besides the already known one that the univariate Eulerian polynomials are palindromic. Indeed, the multivariate Eulerian polynomials verify a generalization of this symmetry property. This is the main content of the next chapter.
\chapter[Combinatorial Relations]{Combinatorial Relations on Sets of Permutations with Fixed $\mathcal{DT}$}\label{ChCombi}

\section{Reciprocal polynomials in the univariate case}

In the literature, we found previous bounds for the smallest roots of the univariate Eulerian polynomials $A_{n}$. This is what we were dealing with some chapters above. We call this smallest root $r_{n}$. Notice that now we cannot directly apply the relaxation because it only works for bounding the zeros that are around the origin. Therefore, we need to perform a transformation in the original Eulerian polynomial that could allow us to apply the relaxation to study our other root of interest. The obvious transformation consists in considering the (obviously also RZ) reciprocal polynomial, whose roots are the inverse of the roots of $A_{n}.$

\begin{definicion}[Reciprocal polynomial]\cite[Section 2.1]{joyner2018self}
Let $p\in\mathbb{R}[x]$ be a univariate polynomial. We denote $$\rec(p(x)):=\gls{rec}(x):=x^{\deg(p)}p(\frac{1}{x})$$ its \textit{reciprocal}.
\end{definicion}

After studying the multivariate extensions of Eulerian polynomials, it is natural to ask if we can construct these multivariate extensions also for $\rec(A_{n})$. It turns out that the direct approach works in this case. We introduce an important concept of symmetry for univariate polynomials.

\begin{definicion}[Palindromicity as symmetry]\cite[Section 2.1]{joyner2018self}
Let $p\in\mathbb{R}[x]$ be a univariate polynomial. We say that $p$ is \textit{palindromic} if $p=\rec(p).$
\end{definicion}

This symmetry in the coefficients translates immediately into a symmetry for the roots. The next immediate result guarantees it.

\begin{proposicion}[Inverses of roots are roots of reciprocal]
Let $p\in\mathbb{C}[x]$ be a univariate polynomial and $0\neq r\in\mathbb{C}$ a root of it. Then $\frac{1}{r}$ is a root of $\rec(p)$. In particular,  if $r$ is a root of $p$ and $p$ is palindromic, so is $\frac{1}{r}$ a root of $p$. 
\end{proposicion}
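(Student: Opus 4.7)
The plan is to proceed directly from the definition of the reciprocal polynomial, which already encodes the desired symmetry in a transparent way. I would first treat the general statement: given a nonzero root $r$ of $p$, the natural move is simply to evaluate $\rec(p)$ at $1/r$ and use the identity $\rec(p)(x) = x^{\deg(p)} p(1/x)$ from the preceding definition. Substituting $x = 1/r$ gives $\rec(p)(1/r) = (1/r)^{\deg(p)}\, p(r) = 0$, since $p(r) = 0$ by hypothesis. This proves the first assertion, and it is essentially a one-line computation, so the main work is really just to make the substitution valid (which it is, because $r \neq 0$).

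For the second assertion concerning palindromic $p$, I would deduce it as an immediate corollary by applying the first part and then using $p = \rec(p)$ to transport the conclusion back. More precisely, if $r$ is a root of $p$ and $p = \rec(p)$, then $r$ is also a root of $\rec(p)$, and by the first assertion applied to $\rec(p)$ (or equivalently by the computation above), $1/r$ is a root of $\rec(p) = p$. The only subtlety to address is the condition $r \neq 0$ needed in the first part: here it is automatic, since palindromicity forces the constant term of $p$ to equal its leading coefficient, and the leading coefficient is nonzero by convention, so $p(0) \neq 0$ and $r \neq 0$.

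I do not foresee any real obstacle in this proof. The only thing to be careful with is the edge case where $r = 0$ in the first statement, which is explicitly excluded in the hypothesis, and the analogous verification in the palindromic case, which I just indicated. Everything else is a direct manipulation using the definition of $\rec$. The value of stating this proposition explicitly, of course, is not the depth of its proof but rather its role as the bridge that will later let us transfer bounds for the root of $A_n$ closest to the origin into bounds for the root farthest from the origin, via the reciprocal polynomial, and this is the symmetry feature we are preparing to generalize to the multivariate setting.
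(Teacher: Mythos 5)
Your proof is correct and is precisely the direct computation the paper implicitly relies on (the paper states the proposition without a written proof, treating it as immediate). Your extra observation that palindromicity forces $p(0)\neq 0$, so that $r\neq 0$ is automatic in the second assertion, is a worthwhile sanity check that tidies up the "in particular" clause.
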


Thus, we see that roots of (univariate) palindromic polynomials come in pairs $(r,\frac{1}{r})$. In particular, if $\pm1$ is a root, this pair is $(\pm 1,\pm 1)$. We warn that we are not speaking here about multiplicities, just about different roots.

\begin{remark}[Palindromicity in Eulerian polynomials]
A simple count (that can elegantly be performed through a bijection) over the descent statistic of permutations shows that Eulerian polynomial are palindromic. This gives the relaxation automatic access to the smallest root. 
\end{remark}

We want to deal with multivariate polynomials and therefore we want to extend to these what we saw in this section. In particular, we want to find the corresponding form of symmetry for the multivariate case. In order to do this, we have to extend our considerations to a multivariate environment. Thus, now we need to define a couple of concepts: the extension of the reciprocal to the multivariate case and a special subset of multiaffine polynomials (see \cite{branden2007polynomials} for more information about this kind of polynomials) containing our Eulerian polynomials. This is the topic of the next section.

\setlength{\emergencystretch}{3em}%
\section[Extending reciprocal operation]{Extending reciprocal operation to multivariate polynomials}
\setlength{\emergencystretch}{0em}%

There are many ways to extend the reciprocal operation in a natural way to multivariate polynomials. Here we introduce the following notion.

\begin{definicion}[A notion of multivariate reciprocal]
\label{reciprocal}
Let $A(\mathbf{x})\in\mathbb{R}[\mathbf{x}]$ be a multivariate polynomial. We define its reciprocal $$\rec(A(\mathbf{x}))=\rec(A)(\mathbf{x}):=m_{A}A(\frac{1}{x_{1}},\dots,\frac{1}{x_{n}}),$$ where $m_{A}$ is the \textit{only} minimum degree monic monomial in the variables $\mathbf{x}$ that produces a polynomial after performing the multiplication.
\end{definicion}

For multiaffine polynomials, the monomial $m_{A}$ is clear once we fix the variables. A multiaffine polynomial presenting such monomial reaches a kind of maximality. First, we need to establish a natural convention.

\begin{convencion}
From now on we say that a polynomial is a polynomial \textit{strictly} in the variables $\mathbf{x}$ if it is not possible to write it using a subset of these variables. We then say that each variable in $\mathbf{x}$ is \textit{strictly appearing} in the polynomial.
\end{convencion}

This convention, in particular, will be important for us due to the natural presence of ghost variables in Eulerian polynomials. Now, having in mind this convention, we can describe our form of maximality for multiaffine polynomials.

\begin{definicion}[Monomialmaximality]\label{monomax}
We call a multiaffine polynomial in the variables $\mathbf{x}$ \textit{monomialmaximal} if the coefficient of the monomial of maximum degree that can appear in such a multi-affine polynomial not adding to it new strictly appearing variables is not $0$.
\end{definicion}

Clearly, this means that if such polynomial is strictly actually just in the variables $(x_{i_{1}},\dots,x_{i_{m}})$ the monomial $\prod_{j=1}^{m}x_{i_{j}}$ has not $0$ as its coefficient. As we wanted, our polynomials of interest are monomialmaximal.

\begin{proposicion}[Eulerian monomialmaximality]
(Multivariate) Eulerian polynomials $A_{n}(\mathbf{x})$ are monomialmaximal.
\end{proposicion}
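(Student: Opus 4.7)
The plan is to unpack the definition of monomialmaximality for $A_n(\mathbf{x})$ and then reduce the claim to exhibiting one permutation with a prescribed descent top set. First I would pin down which variables strictly appear. Since $1 \in [n+1]$ is the minimum element, no $\sigma \in \mathfrak{S}_{n+1}$ can have $1$ as a descent top, so the monomial $\prod_{i \in \mathcal{DT}(\sigma)} x_i$ never contains $x_1$; conversely, for any $j \in \{2,\ldots,n+1\}$ one easily writes down a permutation whose descent top set contains $j$, so each $x_j$ for $j \geq 2$ does appear with a nonzero coefficient somewhere. Hence $A_n(\mathbf{x})$ is strictly in the variables $(x_2,\ldots,x_{n+1})$.

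Second, I would identify the candidate maximum-degree monomial in these strictly appearing variables. Because $A_n(\mathbf{x})$ is multiaffine (each monomial $\prod_{i \in \mathcal{DT}(\sigma)} x_i$ is squarefree by construction, as $\mathcal{DT}(\sigma)$ is a set), the monomial of maximum possible degree without introducing new variables is $m := x_2 x_3 \cdots x_{n+1}$, of degree $n$. By the definition of monomialmaximality (Definition \ref{monomax}), the task reduces to proving $\shortcoeff(m, A_n) \neq 0$.

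Third, I would translate this back to the combinatorial side: the coefficient of $m$ counts exactly $|R(n, \{2,\ldots,n+1\})|$, i.e.\ the number of permutations $\sigma \in \mathfrak{S}_{n+1}$ with $\mathcal{DT}(\sigma) = \{2, 3, \ldots, n+1\}$. To conclude, it suffices to exhibit a single such permutation. I would take the reverse permutation $\sigma = (n+1,\, n,\, \ldots,\, 2,\, 1)$, for which $\sigma_i = n+2-i > n+1-i = \sigma_{i+1}$ at every position $i \in [n]$, so every position is a descent, and the set of descent tops is precisely $\{\sigma_1,\ldots,\sigma_n\} = \{n+1, n, \ldots, 2\} = \{2,\ldots,n+1\}$, as required. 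Alternatively, one could invoke Corollary \ref{coroR} with $X = \{2,\ldots,n+1\}$ to get a closed-form positive count, but the explicit witness is cleaner.

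There is no real obstacle here: the result is essentially a bookkeeping lemma saying that the reverse permutation witnesses the existence of the top-degree squarefree monomial in the strictly appearing variables. The only subtlety worth flagging carefully is the role of the ghost variable $x_1$, so that the reader is convinced that the ``maximum degree monomial not adding new strictly appearing variables'' is $x_2 \cdots x_{n+1}$ and not $x_1 x_2 \cdots x_{n+1}$; this is precisely where the convention on strict appearance introduced just before Definition \ref{monomax} does the work.
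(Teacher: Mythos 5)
Your proof is correct and takes essentially the same approach as the paper: both exhibit the reverse permutation $(n+1,\,n,\,\ldots,\,1)$ as an explicit witness that the top monomial $x_2\cdots x_{n+1}$ has a nonzero coefficient. Your version is somewhat more careful in first identifying the strictly appearing variables and flagging the ghost variable $x_1$, but the key idea is identical.
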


\begin{proof}
In the definition of $A_{n}$ consider the term of the sum generated by the permutation (in one-line notation) $\sigma:=(n+1\dots1)\in\mathfrak{S}_{n+1}$. This term generates the maximal monomial possible in this multi-affine polynomial as $\mathcal{DT}(\sigma)=\{n+1,\dots,2\}$ and, therefore, $A_{n}$ is monomialmaximal.
\end{proof}

Monomialmaximal multiaffine polynomials are nice with respect to taking reciprocals. Thus, the following result now makes it easy to continue our straight path.

\begin{proposicion}[Multiaffine monomialmaximal reciprocals] Let $A(\mathbf{x})\in\mathbb{R}[\mathbf{x}]$ be a multiaffine monomialmaximal RZ polynomial. Then its reciprocal polynomial $\rec(A)(\mathbf{x})$ is also multiaffine, monomialmaximal and RZ.
\end{proposicion}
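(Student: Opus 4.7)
The plan proceeds in three stages. First, I would expand $A(\mathbf{x})=\sum_{S\subseteq I}c_{S}\prod_{i\in S}x_{i}$, where $I\subseteq[n]$ denotes the set of strictly appearing variables of $A$ and $d:=|I|$. Monomialmaximality of $A$ forces $c_{I}\neq 0$, and RZ-ness forces $c_{\emptyset}=A(0)\neq 0$. Since the rational function $A(1/x_{1},\dots,1/x_{n})$ has common denominator exactly $m_{A}=\prod_{i\in I}x_{i}$ (no further cancellation is possible because $c_{I}\neq 0$), clearing denominators yields the closed formula
\[
\rec(A)(\mathbf{x})\;=\;\Bigl(\prod_{i\in I}x_{i}\Bigr)\,A\!\bigl(1/x_{1},\dots,1/x_{n}\bigr)\;=\;\sum_{S\subseteq I}c_{S}\prod_{i\in I\setminus S}x_{i}.
\]
Multiaffinity is then immediate; the term $S=\emptyset$ contributes the full monomial $\prod_{i\in I}x_{i}$ with nonzero coefficient $c_{\emptyset}$, so $\rec(A)$ has the same set $I$ of strictly appearing variables and is monomialmaximal, while the constant term $c_{I}\neq 0$ will play a crucial role below.

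Next, I would address the generic directions for RZ-ness. For $a=(a_{1},\dots,a_{n})\in\mathbb{R}^{n}$ with $a_{i}\neq 0$ for every $i\in I$, write $a^{\ast}$ for the componentwise reciprocal on coordinates in $I$ and specialize the identity above at $\mathbf{x}=ta$ to obtain
\[
\rec(A)(ta)\;=\;t^{d}\Bigl(\prod_{i\in I}a_{i}\Bigr)\,A\!\bigl((1/t)\,a^{\ast}\bigr),
\]
an equality of polynomials of degree exactly $d$ in $t$. The $d$ roots in $t$ are therefore the reciprocals of the $d$ roots of $A(u\,a^{\ast})$ in $u$; by RZ-ness of $A$ these are all real, and none is zero because $A(0)=c_{\emptyset}\neq 0$, so $\rec(A)(ta)$ is real-rooted.

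The hard part, and the main obstacle, will be extending the conclusion to directions $a$ with some $a_{i}=0$ for $i\in I$, where the identity above is not directly available. The key observation is that $\rec(A)(ta)$ is never identically zero in $t$, since its constant term equals $\rec(A)(0)=c_{I}\neq 0$; this is precisely where monomialmaximality enters in a nontrivial way. I would then approximate $a$ by a sequence $a^{(k)}\to a$ with $a^{(k)}_{i}\neq 0$ for every $i\in I$, so that each $\rec(A)(ta^{(k)})$ is real-rooted of degree $d$ by the previous step and converges coefficient-wise to the nonzero polynomial $\rec(A)(ta)$ of some degree $d'\leq d$. If $\rec(A)(ta)$ had a non-real zero $\alpha$, choosing a small open disk around $\alpha$ disjoint from $\mathbb{R}$ and containing no other zero, Hurwitz's theorem would force $\rec(A)(ta^{(k)})$ to have a non-real zero inside that disk for all large $k$, contradicting the real-rootedness of the approximants. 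Hence every complex zero of $\rec(A)(ta)$ must be real, which completes the proof that $\rec(A)$ is RZ.
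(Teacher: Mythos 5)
Your proposal is correct and follows essentially the same route as the paper's proof: expand $A$ in the multiaffine basis, read off the closed form $\rec(A)=\sum_{S\subseteq I}c_S\prod_{i\in I\setminus S}x_i$ to get multiaffinity and monomialmaximality from $c_\emptyset\neq 0$, then verify RZ-ness along lines $t\mapsto ta$ with nonzero $a_i$ (for $i\in I$) by observing that $\rec(A)(ta)$ is the reciprocal of the real-rooted $A(ua^{\ast})$. Where the paper merely invokes ``a straightforward continuity argument'' to handle directions with some $a_i=0$, you flesh it out correctly via Hurwitz's theorem, using $\rec(A)(0)=c_I\neq0$ to guarantee the limit polynomial is nonzero; this is exactly the detail the paper leaves to the reader.
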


\begin{proof}
Monomialmaximal multiaffinity is easy: we begin writing $$A(\mathbf{x})=\sum_{\alpha\subseteq\{1,\dots,n\}}a_{\alpha}\mathbf{x}^{\alpha}.$$ We know that $a_{\{1,\dots,n\}}$ is not zero as the polynomial is monomialmaximal. Therefore, $m_{A}$ in the definition of the reciprocal is forced to be the degree-maximal possible monomial in a multiaffine polynomial $\mathbf{x}^{1,\dots,n}$. In sum, all this means that the reciprocal $$\rec(A)(\mathbf{x}):=\mathbf{x}^{\{1,\dots,n\}}A(\frac{1}{x_{1}},\dots,\frac{1}{x_{n}})=\sum_{\alpha\subseteq\{1,\dots,n\}}a_{\alpha}\mathbf{x}^{\{1,\dots,n\}\smallsetminus\alpha}.$$ As $a_{\{1,\dots,n\}}\neq0$ this monomial has nonzero independent term. Similarly, as $A$ is RZ, $a_{\emptyset}\neq0$ so $\rec(A)$ is monomialmaximal. Multiaffinitty follows directly from how we wrote the polynomial in set exponent notation. RZ-ness of $\rec(A)$ can be easily checked line by line. Take the line $t\mathbf{a}$ with $\mathbf{a}\in(\mathbb{R}\smallsetminus\{0\})^{n}$ arbitrary. We want to see that $\rec(A)(t\mathbf{a})$ has only real roots. But $\rec(A)(t\mathbf{a})=t^{n}\mathbf{a}^{\{1,\dots,n\}}A(\frac{1}{ta_{1}},\dots,\frac{1}{ta_{n}})=t^{n}\mathbf{a}^{\{1,\dots,n\}}A(\frac{1}{t}(\frac{1}{a_{1}},\dots,\frac{1}{a_{n}})).$ By abuse of notation, call $\frac{1}{\mathbf{a}}=(\frac{1}{a_{1}},\dots,\frac{1}{a_{n}})\in(\mathbb{R}\smallsetminus\{0\})^{n}.$ As $\mathbf{a}^{\{1,\dots,n\}}\neq0$, the roots of the polynomial $\rec(A)(t\mathbf{a})$ are the roots of the polynomial $t^{n}A(\frac{1}{t}\frac{1}{\mathbf{a}})$, whose independent term $\frac{1}{\mathbf{a}}^{\{1,\dots,n\}}=\frac{1}{a_{1}}\dots\frac{1}{a_{n}}\neq0$ so the roots of such product are the inverses of the roots of the polynomial $A(s\frac{1}{\mathbf{a}})$, which are all real because $A$ is RZ. A straightforward continuity argument to cover the vectors $a\in\mathbb{R}^{n}\smallsetminus{\mathbf{0}}$ having some zero entries finishes this proof.
\end{proof}

In fact, for Eulerian polynomials something else can be said because this transformation actually gives back the same Eulerian polynomial but with the variables permuted. We will see this formally in the future in this chapter. Before, we need a reminder.

\begin{recordatorio}[Ghost variables]
Contrary to what we said above, the definition of Eulerian polynomials involve a ghost variable $x_{1}$ which is actually not a variable of the polynomial. However, our arguments will follow in an easier way if we deal with this variable.
\end{recordatorio}

We make this comment and add another warning before we work out the next proof.

\begin{warning}[Definition of cofactor]
Additionally, observe that we took care of this phenomenon in Definition \ref{reciprocal} when we defined $m_{A}$ literally as \textit{the minimum degree monomial in the variables $\mathbf{x}$ that produces a polynomial after performing the multiplication} $m_{A}A(\frac{1}{x_{1}},\dots,\frac{1}{x_{n}})$. This ensures that, even when $\mathbf{x}$ contains ghost variables not appearing at all in $A$, the correcting multiplicative monomial $m_{A}$ will not be affected by that arbitrariness and it is therefore well-defined. We also took care of this when defining monomialmaximality in Definition \ref{monomax}.
\end{warning}

Now we are in position to analyze how these polynomial are build looking at possible patterns in order to find internal symmetries on them resembling palindromicity. This is the topic of the next section.

\section{Fixing subsets of permutations}

We look at permutations and at the sets they fix in order to find the symmetries we search for. We will see how these observations will appear clearly and naturally in front of us when we manage to restrict our view to permutations verifying certain conditions at a time.

\begin{notacion}[Mirrors and conditioned permutations]
Let $\mathbf{x}=(x_{1},\dots,x_{n})$, then we denote $\gls{mirrorx}:=(x_{n},\dots,x_{1}).$ For a set of permutations $A\subseteq\mathfrak{S}_{n}$ and two elements $a,b\in[n]$ we denote $\gls{aatob}$ the subset of permutations $\sigma\in A$ sending $a$ to $b$ $$A^{a\to b}:=\{\sigma\in A\mid\sigma(a)=b\}.$$
\end{notacion}

Mirroring let us introduce our generalization of reciprocity for multivariate polynomials.

\begin{definicion}[Mirrorreciprocity]
We say that a polynomial $p(\mathbf{x})\in\mathbb{R}[\mathbf{x}]$ is \textit{mirrorreciprocal} if its reciprocal polynomial verifies $$\rec(p)(\mathbf{x})=p(\mirror(\mathbf{x})).$$
\end{definicion}

With these two important notions fresh in our minds, we can now proceed. We will need some notation for the proof of the next theorem.

\begin{definicion}[Cuts, splitters and set actions in one-line notation]
Given a permutation $\sigma=(\sigma_{1}\cdots\sigma_{n})\in\mathfrak{S}_{n}$ in the one-line notation, we understand that we can introduce \textit{markers} splitting (or \textit{splitters}) the elements that will be generally commas so that we can write $\sigma=(\sigma_{1}\cdots\sigma_{k},\sigma_{k+1}\cdots\sigma_{n}).$ However, if we have more information about the relations between $\sigma_{k}$ and $\sigma_{k+1}$ we can express this directly in this extended one-line notation using an inequality symbol as separation so that we can write $\sigma=(\sigma_{1}\cdots\sigma_{k}>\sigma_{k+1}\cdots\sigma_{n})$ if we know that  $\sigma_{k}>\sigma_{k+1}$ (and also the other way around if the inequality is reversed). Thus, we signal a descent that we know that happens for sure for that permutation between position $k$ and position $k+1$. In particular, we signal thus that $\sigma_{k}$ is a descent top. Sometimes we will use such information to \textit{cut} the one-line notation through some element. Thus we say that such element establishes a \textit{cut through it}. Thus if we have $\sigma=(\sigma_{1}\cdots\sigma_{k-1} \sigma_{k} \sigma_{k+1}\cdots\sigma_{n})$ and we \textit{cut through} $\sigma_{k}$ this means that we will now consider the formal one-line notations $(\sigma_{1}\cdots\sigma_{k-1})$ and $(\sigma_{k+1}\cdots\sigma_{n}),$ which clearly do not in general need to be permutations by themselves. We will however concatenate these cuts and the cutting elements to form new one-line notations actually representing permutations. Finally, when we have a set $A\subseteq\mathfrak{S}_{n}$ and a formal one-line notation $(\tau_{1}\cdots\tau_{m})$ of elements in $[n+1,n+m]$ we can concatenate them using a splitter to form a \textit{lifting} of the set $A$ injected into the set of permutations $\mathfrak{S}_{n+m}$ defining $(A(\tau_{1}\cdots\tau_{m})):=\{(\sigma_{1}\cdots\sigma_{n}\tau_{1}\cdots\tau_{m})\in \mathfrak{S}_{n+m}\mid (\sigma_{1}\cdots\sigma_{n})\in A\}.$
\end{definicion}

We will use a particular constellation of splitters in our proof. We next particularize to such case for higher clarity.

\begin{remark}[Splitters and non-splitters]
Notice that $=$ cannot be used as an splitter and if it appear it just serves the purpose of writing the same element on a different way because all the elements in the one-line notation have to be different. Notice also, in particular, that, when we lift a set of permutations in the definition above, we can always use the splitter $<$ and write $(A<(\tau_{1}\cdots\tau_{m})):=\{(\sigma_{1}\cdots\sigma_{n}<\tau_{1}\cdots\tau_{m})\in \mathfrak{S}_{n+m}\mid (\sigma_{1}\cdots\sigma_{n})\in A\}$ because $\sigma_{i}<\tau_{j}$ for any $i\in[1,n]$ and $j\in[1,m].$
\end{remark}

Additionally, due to $x_{1}$ being a ghost variable, denote, during the next proposition and its associated proof, $\mathbf{x}=(x_{2},\dots,x_{n+1})$ for brevity. Also, we remind the reader of the sets $R(n,S)$ introduced in Definition \ref{rns} because we will use them in the next proof. Also, remember that there reference to $n$ can be dropped when $n$ is big enough or understood by the context and therefore we might simply write $R(S)$ instead sometimes, see Remark \ref{rshortening}.

\begin{teorema}[Eulerian mirrorreciprocity]\label{especialita}
The $n$-th multivariate Eulerian polynomial $A_{n}(\bold{x})$ is mirrorreciprocal.
\end{teorema}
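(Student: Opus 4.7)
The plan is to reduce mirrorreciprocity to a combinatorial identity on the fine counts $|R(n,S)|$ introduced in Definition~\ref{rns}, and then prove that identity by combining the reverse-bijection symmetry of the bigraded polynomial $A_{n}(\mathbf{x},\mathbf{y})$ with a finer bijection on pairs $(\mathcal{DT},\mathcal{AT})$.

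First I would expand both sides of the claim in the monomial basis. Writing $A_{n}(\mathbf{x})=\sum_{S\subseteq[2,n+1]}|R(n,S)|\prod_{i\in S}x_{i}$, the fact that $A_{n}$ is monomialmaximal multiaffine (the maximum-degree monomial $\prod_{i=2}^{n+1}x_{i}$ appears with coefficient $1$, contributed by the permutation $(n+1,n,\dots,1)$) forces $m_{A_{n}}=\prod_{i=2}^{n+1}x_{i}$, so
\[
\rec(A_{n})(\mathbf{x})=\sum_{\sigma\in\mathfrak{S}_{n+1}}\prod_{i\in[2,n+1]\setminus\mathcal{DT}(\sigma)}x_{i}
\qquad\text{and}\qquad
A_{n}(\mirror(\mathbf{x}))=\sum_{\sigma\in\mathfrak{S}_{n+1}}\prod_{j\in(n+3)-\mathcal{DT}(\sigma)}x_{j}.
\]
Comparing the coefficient of a general monomial $\prod_{i\in T}x_{i}$ reduces the theorem to the purely combinatorial identity $|R(n,[2,n+1]\setminus T)|=|R(n,(n+3)-T)|$ for every $T\subseteq[2,n+1]$.

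Next I would bring in the bigraded multivariate Eulerian polynomial $A_{n}(\mathbf{x},\mathbf{y})=\sum_{\sigma}\prod_{i\in\mathcal{DT}(\sigma)}x_{i}\prod_{j\in\mathcal{AT}(\sigma)}y_{j}$ and exploit that the reverse bijection $\sigma\mapsto(\sigma_{n+2-\cdot})$ swaps $\mathcal{DT}$ with $\mathcal{AT}$ term by term, whence $A_{n}(\mathbf{x},\mathbf{y})=A_{n}(\mathbf{y},\mathbf{x})$ and, in particular, $|R^{\mathcal{DT}}(S)|=|R^{\mathcal{AT}}(S)|$ for every $S\subseteq[2,n+1]$. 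Specializing this symmetry in the computation of $\rec(A_{n})$ rewrites the reciprocal as $\rec(A_{n})(\mathbf{x})=\sum_{\sigma}\prod_{i\in[2,n+1]\setminus\mathcal{AT}(\sigma)}x_{i}$, so it is enough to verify the refined pairwise identity
\[
a_{D,A}:=|\{\sigma\in\mathfrak{S}_{n+1}\mid\mathcal{DT}(\sigma)=D,\;\mathcal{AT}(\sigma)=A\}|\;=\;a_{\overline{A^{c}},\overline{D^{c}}},
\]
where the mirror $\overline{(\cdot)}$ and the complement $(\cdot)^{c}$ are taken inside $[2,n+1]$. Summing this identity over $A$ with $D$ fixed and using DT--AT symmetry yields exactly $|R(D)|=|R(\overline{D^{c}})|$, the set-level equality we need.

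The main obstacle is producing the bijection $\phi\colon\mathfrak{S}_{n+1}\to\mathfrak{S}_{n+1}$ realizing $a_{D,A}=a_{\overline{A^{c}},\overline{D^{c}}}$. Direct attempts with the reverse, the value-complement $\sigma_{i}\mapsto n+2-\sigma_{i}$, the reverse-complement, or the inverse permutation all fail to act as required, because they transport $\mathcal{DT}$ into sets built from descent and ascent \emph{bottoms} rather than tops; nevertheless, systematic computation in $\mathfrak{S}_{3}$ and $\mathfrak{S}_{4}$ shows the claimed symmetry holds on the nose. My plan is therefore to construct $\phi$ by decomposing each $\sigma$ into its maximal increasing runs, separating the interior "peak" elements (those in $\mathcal{DT}\cap\mathcal{AT}$) from the pure descent-tops and pure ascent-tops, and reassembling the blocks after applying the mirror $k\mapsto n+3-k$ to the values and swapping the roles of the pure descent-top and pure ascent-top classes; the verification that the rebuilt sequence is a permutation with $\mathcal{DT}=\overline{A^{c}}$ and $\mathcal{AT}=\overline{D^{c}}$ would then follow from careful bookkeeping on the run lengths.

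As a safer fallback I would verify $|R(X)|=|R(\overline{X^{c}})|$ algebraically, using the inclusion-exclusion formula of Corollary~\ref{coroR-Pre} (equivalently \eqref{coroR2}): expand both sides as alternating sums of $\alpha(J)\hat{!}$ over $J\subseteq X$ and $J'\subseteq\overline{X^{c}}$, and match terms through the size-preserving involution $J\leftrightarrow\overline{J^{c}}$ on subsets of $[2,n+1]$, showing that the gap products $\alpha(J)\hat{!}=(k+1)^{a_{0}}k^{a_{1}}\cdots2^{a_{k-1}}$ are preserved (up to a combinatorial identity on binomial-like factors) under the simultaneous reversal of the gap sequence and the complementation that sends $J\subseteq X$ to $\overline{J^{c}}\subseteq\overline{X^{c}}$. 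This second route is mechanical but hides the combinatorial content that the bijective argument would expose.
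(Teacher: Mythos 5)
Your reduction to the combinatorial identity $|R(n,[2,n+1]\setminus T)|=|R(n,(n+3)-T)|$ is exactly the paper's opening move, and your observation that the reverse map $\sigma\mapsto(\sigma_{n+2},\dots,\sigma_1)$ swaps $\mathcal{DT}$ and $\mathcal{AT}$ is correct. Your route then diverges from the paper's: the paper constructs a single explicit bijection on $R(n,L)$ directly (lift to $\mathfrak{S}_{n+2}$ by appending $n+2$, cyclically rearrange around the position of $1$, apply the order-reversing $\tau$, then strip the trailing $n+2$), whereas you propose a finer pairwise identity $a_{D,A}=a_{\overline{A^c},\overline{D^c}}$ on joint $(\mathcal{DT},\mathcal{AT})$ statistics and sum over $A$. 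That refined identity does appear to be true (indeed it follows from the paper's bijection, which realizes $a_{D,A}=a_{\overline{D^c},\overline{A^c}}$, combined with your DT--AT symmetry $a_{D,A}=a_{A,D}$), so the route is not wrong in principle.

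The problem is that you never actually prove the refined identity. You verify it in $\mathfrak{S}_3$ and $\mathfrak{S}_4$, acknowledge that the obvious candidate maps (reverse, value-complement, reverse-complement, inverse) fail, and then only sketch a plan ``decompose into maximal increasing runs, apply the mirror to the values, swap the pure-descent-top and pure-ascent-top classes, reassemble'' whose key step---checking that the reassembled word is a permutation with the correct pair of top sets---is deferred to ``careful bookkeeping.'' That bookkeeping is precisely where the real combinatorial work lies (the paper spends essentially its entire proof there, carefully handling the elements $1$ and $n+2$, which the $\tau$ map exchanges), so as written this is a proof outline with a hole, not a proof. Your fallback route is worse: the map $J\mapsto\overline{J^c}$ you propose is not size-preserving (it sends $|J|=k$ to $n-k$), and for $J\subseteq X$ one has $J^c\supseteq X^c$, hence $\overline{J^c}\supseteq\overline{X^c}$ rather than $\subseteq$, so the proposed involution does not even map the summation index set of the left-hand side of \eqref{coroR2} into that of the right-hand side. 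A term-by-term match along these lines cannot work, and the paper's own Corollary on combinatorial sums shows the equality only emerges after a non-trivial chain of re-indexings, not from a single size-preserving pairing. To complete your argument you would either have to actually construct the run-based bijection and verify it, or fall back on the paper's lift-and-cut construction.
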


\begin{proof}
Following the definitions of these polynomials, we have that $\rotA_{n}(\bold{x}):=$ \begin{gather*}\bold{x}^{\bold{1}}\sum_{\sigma\in\mathfrak{S}_{n+1}}\prod_{i\in\mathcal{DT}(\sigma)}\frac{1}{x_{i}}=\sum_{\sigma\in\mathfrak{S}_{n+1}}\prod_{i\in[2,n+1]\smallsetminus\mathcal{DT}(\sigma)}x_{i}\end{gather*} and, introducing the permutation $\tau\colon[n+2]\to[n+2],i\mapsto\tau(i):=n+3-i$ and calling $\bold{y}=\mirror(\bold{x})$ so that $y_{i}=x_{\tau(i)},$ that $A_{n}(\mirror(\bold{x}))=A_{n}(\bold{y})=$\begin{gather*}
   \sum_{\sigma\in\mathfrak{S}_{n+1}}\prod_{i\in\mathcal{DT}(\sigma)}y_{i}=\sum_{\sigma\in\mathfrak{S}_{n+1}}\prod_{i\in\mathcal{DT}(\sigma)}x_{\tau(i)}.\end{gather*} Now we analyze these polynomials coefficient-wise having in mind that, by the previous propositions, they are multiaffine. Thus, fixing a set $K\subseteq[2,n+1]$, we have that $\coeff(\bold{x}^{K},\rotA_{n}(\bold{x}))=|\{\sigma\in\mathfrak{S}_{n+1}\mid K=[2,n+1]\smallsetminus\mathcal{DT}(\sigma)\}|=|\{\sigma\in\mathfrak{S}_{n+1}\mid \mathcal{DT}(\sigma)=[2,n+1]\smallsetminus K\}|:=|R(L)|$ with $L:=[2,n+1]\smallsetminus K$ while $\coeff(\bold{x}^{K},A_{n}(\mirror(\bold{x})))=|\{\sigma\in\mathfrak{S}_{n+1}\mid K=\tau(\mathcal{DT}(\sigma))\}|=|\{\sigma\in\mathfrak{S}_{n+1}\mid \tau(K)=\mathcal{DT}(\sigma)\}|=|R(\tau(K))|,$ as $\tau\circ\tau=\id$. Observe that it is in general not evident that these cardinalities are equal as the sets at which $R$ is evaluated in each case are different (even their cardinalities are so: respectively, $n-|K|$ and $|K|$). In order to see that they are in fact equal we will establish a bijection between the sets $R(\tau(K))$ and $R(L)$. The first observation that we have to make in our way towards the construction of the mentioned bijection is a possibly oversight fact of Corollary \ref{coroR}. We need to consider this fact because, although we finish if we establish a bijection between the set of permutations with descent top $L$ and the set of permutations with descent top $\tau(K)$, the construction of such bijection between permutations belonging to $\mathfrak{S}_{n+1}$ will require us to first lift them to $\mathfrak{S}_{n+2}$, as consequence of the fact that $\tau([1,n+1])=[2,n+2]$. Understanding why we are able (and it is therefore useful) to do that will require a close look at the last formula in Corollary \ref{coroR}. Observe that the cardinal $|R(n,X)|$ of $R(n,X)$ does not in fact depend on $n$ and we could then write instead that for, all $k\geq\max(X)-1$, we have $|R(k,X)|=$ $$\sum_{J\subseteq X}(-1)^{|X\smallsetminus J|}\alpha(J)\hat{!}.$$ This means that the amount of permutations in $\mathfrak{S}_{n+1}$ with descent top $L$ equals the amount of permutations with descent top $L$ in $\mathfrak{S}_{n+2}$. This also means that we can biject one set into the other. In particular, we want this to happen through the restriction of the obvious injection $$\mathfrak{S}_{n+1}\to\mathfrak{S}_{n+2},\sigma\mapsto (\sigma\ n+2)$$ to the set $R(n,L)\subseteq\mathfrak{S}_{n+1}$, as the restriction of this injection verifies $R(n,L)\to R(n+1,L)\subseteq\mathfrak{S}_{n+2}^{n+2\to n+2}$ bijectively because it is obvious that the injection does not modify the descent top sets and we have just seen above in this proof, through the careful observation of Equation \ref{coroR2}, that $|R(n,L)|=|R(n+1,L)|$. Therefore this bijection tells us how we want to lift our permutations in $\mathfrak{S}_{n+1}$ to $\mathfrak{S}_{n+2}$ preserving the descent top set, which will be necessary to make sense of the bijection involving the use of $\tau$ that we will construct because $\tau([1,n+1])=[2,n+2].$ In particular, observe that preserving the descent top \textit{requires} the new biggest element $n+2$ of the image permutation to be set at the end in the one-line notation (i.e., to be fixed by the image permutation) because, otherwise, $n+2$ in any other position would be a descent top that would have to be added to list of descent tops of the transformation. Symmetrically (as $\tau(n+2)=1$), we need to analyze another very important element of these permutations: the position of the never-descent-top element $1$. This element establishes a fundamental cut in the permutation and, moreover, verifies $\tau(1)=n+2$, which gives us a helpful clue about how our desired bijection should act about it. In order to see this, we establish that we will send a lifted permutation $$\mathfrak{S}_{n+2}^{n+2\to n+2}\ni(\sigma < n+2)=(\sigma_{1}\cdots\sigma_{k-1}>1<\sigma_{k+1}\cdots\sigma_{n+1} < n+2)$$ with $1$ at position $k$ and $n+2$ at the end to the new permutation $$(\sigma_{k+1}\cdots\sigma_{n+1}<n+2>\sigma_{1}\cdots\sigma_{k-1}>1)\in\mathfrak{S}_{n+2}^{n+2\to 1},$$ where we remark that the sequences $(\sigma_{1}\cdots\sigma_{k-1})$ and $(\sigma_{k+1}\cdots\sigma_{n+1})$ could be empty if $1$ is at the beginning or at the end, respectively, of $\sigma\in\mathfrak{S}_{n+1}.$ Observe that this is a bijection $\mathfrak{S}_{n+2}^{n+2\to n+2}\to\mathfrak{S}_{n+2}^{n+2\to 1}$ whose domain is precisely the subset of $\mathfrak{S}_{n+2}$ that coincides with our respecting-descent-top-sets immersion of $\mathfrak{S}_{n+1}$ in $\mathfrak{S}_{n+2}$ as $(\mathfrak{S}_{n+1}<n+2)$. Observe, furthermore, that this last bijection sends the elements with descent top $L\subseteq[2,n+1]$ of $\mathfrak{S}_{n+2}^{n+2\to n+2}$ (and thus of $\mathfrak{S}_{n+2}$ as this forces $n+2$ to be fixed) to elements of $\mathfrak{S}_{n+2}^{n+2\to 1}$ whose descent top set is $L\cup\{n+2\},$ adding thus only $n+2$ as a \textit{forced} descent top (as it has to be so as soon as it is not fixed, i.e., at the end in the one-line notation). Thus, so far, we have defined a bijection $R(n,L)\to R(n+1,L\cup\{n+2\})\cap\mathfrak{S}_{n+2}^{n+2\to 1}.$ Beware that $R(n+1,L\cup\{n+2\})\cap\mathfrak{S}_{n+2}^{n+2\to 1}\neq R(n+1,L\cup\{n+2\})$ as a cut through any ascent produces elements of $R(n+1,L\cup\{n+2\})$ not in the intersection, e.g., $$(\mu_{1}\cdots\mu_{m-1}<\mu_{m}\cdots\mu_{n+1}> 1)\in\mathfrak{S}_{n+2}^{n+2\to 1}\in R(n+1,L\cup\{n+2\})\cap\mathfrak{S}_{n+2}^{n+2\to 1}$$ produces, after a cut by the highlighted ascent, the permutation $$R(n+1,L\cup\{n+2\})\ni(\mu_{m}\cdots\mu_{n+1}>1<\mu_{1}\cdots\mu_{m-1})\notin\mathfrak{S}_{n+2}^{n+2\to 1}.$$ Once this is clear, now we apply $\tau$ to all the elements in the one-line notation of the image under the previous bijection to obtain the new permutation $$(\tau(\sigma_{k+1})\cdots\tau(\sigma_{n+1})>\tau(n+2)=1<\tau(\sigma_{1})\cdots\tau(\sigma_{k-1})>\tau(1)=n+2).$$ This defines a bijection $$R(n+1,L\cup\{n+2\})\cap\mathfrak{S}_{n+2}^{n+2\to 1}\to R(n+1,\tau(K))\cap\mathfrak{S}_{n+2}^{n+2\to n+2}=R(n+1,\tau(K))$$ because the descents tops of an element in the domain appears as maxima of descents of the form $n+2>\sigma_{1}$ or $\sigma_{l}>\sigma_{l+1}$ and these are transformed by $\tau$ into the ascents (as $\tau$ is order reversing) $1<\tau(\sigma_{1})$ and $\tau(\sigma_{l})<\tau(\sigma_{l+1})$ while the rest of the elements appear either as minima in ascents of the form $\sigma_{l}<\sigma_{l+1}$ or at the end (but at the end we have always $1$) and these are transformed by $\tau$ into descents $\tau(\sigma_{l})>\tau(\sigma_{l+1})$ or in $n+2$. Thus the descent top of the image is $[2,n+1]\smallsetminus L=[2,n+1]\smallsetminus ([2,n+1]\smallsetminus K)=K.$ Thus the descent top set of this new permutation is $K=[2,n+1]\smallsetminus L$ because $\tau$ is order reversing and we took the care of having a domain whose permutations have $1$ at then, which transforms into $n+2$ at the end, an element we do not desired to be in our descent top. Observe that, as $\tau\circ\tau=\id$, similar arguments allow for the construction of an inverse for this map, which shows that it is in fact a bijection whose image is the whole lifting $R(n+1,\tau(K))$ of $R(n,\tau(K))$. We now just have to undo the lifting. Finally, we cut out the fix element $n+2$ so we can see this last permutation as $$(\tau(\sigma_{k+1})\cdots\tau(\sigma_{n+1})>\tau(n+2)=1<\tau(\sigma_{1})\cdots\tau(\sigma_{k-1}))\in\mathfrak{S}_{n+1},$$ this establish the whole path of the desired bijection and therefore finishes our proof.\end{proof}

The proof of this last theorem has interesting consequences for several combinatorial sums. We collect the implied identities in the next section. We also have to mention that, in general, finding the kind of bijection between permutations that we found in the proof above is an interesting topic by itself that has been previously studied, e.g., in \cite{bigeni2016new,bloom2020revisiting,chen2024bijection} for bijections conserving other properties or accomplishing or involving different relational counting arguments.

\section{Reciprocity from the permutation point of view}

Now we can translate many insights obtained in the last proof of the previous section into the purely combinatorial setting. This gives us results about permutations in the form of the three following corollaries. Notice that we could also use the equality between Equation \ref{coroR1} and Equation \ref{coroR2} in Corollary \ref{coroR} in order to establish further identities (using chains of inequalities explictly) in the corollaries that we state and prove in this section.

\begin{corolario}[Combinatorial sums]
For every $n\in\mathbb{N}$ and every $K\subseteq[2,n+1]$ we have that the next quantities with $\tau\colon[n+2]\to[n+2],i\mapsto \tau(i):=n+3-i$ are equal: $|R([2,n+1]\smallsetminus K)|=$ \begin{gather*}
\sum_{J\subseteq[2,n+1]\smallsetminus K}(-1)^{|[2,n+1]\smallsetminus (K\cup J)|}\alpha(J)\hat{!}=\sum_{J\subseteq[2,n+1]\smallsetminus K}(-1)^{n- |K\cup J|}\alpha(J)\hat{!}=\\\sum_{J\subseteq[2,n+1]\smallsetminus K}(-1)^{n- |\tau(K)\cup \tau(J)|}\alpha(J)\hat{!}=\sum_{\tau(J)\subseteq[2,n+1]\smallsetminus \tau(K)}(-1)^{n- |\tau(K)\cup\tau(J)|}\alpha(J)\hat{!}=\\\sum_{S\subseteq[2,n+1]\smallsetminus \tau(K)}(-1)^{n-|\tau(K)\cup S|}\alpha(\tau(S))\hat{!}=\sum_{S\subseteq\tau(K)}(-1)^{|\tau(K)\smallsetminus S|}\alpha(S)\hat{!}=\\\sum_{\tau(J)\subseteq \tau(K)}(-1)^{|\tau(K)\smallsetminus \tau(J)|}\alpha(\tau(J))\hat{!}=\sum_{J\subseteq K}(-1)^{|\tau(K)\smallsetminus \tau(J)|}\alpha(\tau(J))\hat{!}=\\\sum_{J\subseteq K}(-1)^{|K\smallsetminus J|}\alpha(\tau(J))\hat{!}=\sum_{\tau(J)\subseteq K}(-1)^{|K\smallsetminus \tau(J)|}\alpha(J)\hat{!}=|R(\tau(K))|.
\end{gather*} Additionally, as we have that the identities above are true for all $K\subseteq[2,n+1]$ and $\tau$ induces a permutation on the set of parts $2^{[2,n+1]}$ of $[2,n+1]$, we have, for all $K\subseteq[2,n+1]$, equivalently the identity $$\sum_{J\subseteq[2,n+1]\smallsetminus K}(-1)^{n-|K\cup J|}\alpha(\tau(J))\hat{!}=\sum_{J\subseteq K}(-1)^{|K\smallsetminus J|}\alpha(J)\hat{!}.$$
\end{corolario}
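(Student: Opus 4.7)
The plan is to derive the chain of equalities from two ingredients already in hand: the explicit formula in Equation~\ref{coroR2} for the cardinalities $|R(n,X)|$, and the identity $|R([2,n+1]\smallsetminus K)| = |R(\tau(K))|$, which is the direct combinatorial translation of the mirrorreciprocity of $A_{n}(\mathbf{x})$ proved in Theorem~\ref{especialita} (since those cardinalities are precisely the coefficients of $\mathbf{x}^{K}$ in $\rec(A_{n})$ and in $A_{n}(\mirror(\mathbf{x}))$, which the theorem equates). The body of the statement is essentially a dictionary between these two cardinalities, unpacked on both sides using Equation~\ref{coroR2} and rewritten via the involution $\tau$; the remainder is bookkeeping.

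First I would write $|R([2,n+1]\smallsetminus K)|$ using Equation~\ref{coroR2}, producing the first sum over $J \subseteq [2,n+1]\smallsetminus K$, and then rewrite the exponent using $|[2,n+1]\smallsetminus(K\cup J)| = n - |K\cup J|$, which is valid because $K$ and $J$ are disjoint (by the constraint $J\subseteq[2,n+1]\smallsetminus K$) so $|K\cup J| = |K|+|J|$ and $|[2,n+1]|=n$. Next I would use that $\tau$ is an involution of $[n+2]$ restricting to a cardinality-preserving bijection of $2^{[2,n+1]}$, which gives $|K\cup J| = |\tau(K)\cup\tau(J)|$ and makes the condition $J \subseteq [2,n+1]\smallsetminus K$ equivalent to $\tau(J) \subseteq [2,n+1]\smallsetminus\tau(K)$. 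Rewriting the summation variable first as $\tau(J)$ and then as $S = \tau(J)$, and noting that $\alpha(J) = \alpha(\tau(S))$, delivers the fifth expression in the chain.

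The crux of the argument is the step from the fifth to the sixth expression, which swaps a sum over $S \subseteq [2,n+1]\smallsetminus\tau(K)$ (carrying the twisted integrand $\alpha(\tau(S))\hat{!}$) for a sum over $S \subseteq \tau(K)$ (carrying the untwisted integrand $\alpha(S)\hat{!}$). This is exactly the mirrorreciprocity identity applied to the subset $\tau(K)$: the fifth expression is by construction $|R([2,n+1]\smallsetminus K)|$, the sixth is $|R(\tau(K))|$ by a direct application of Equation~\ref{coroR2}, and Theorem~\ref{especialita} tells us these are equal. From the sixth expression on, the remaining manipulations mirror the first half: undo the variable change via $J = \tau(S)$, rewrite the condition $\tau(J)\subseteq\tau(K)$ as $J\subseteq K$, and finally reindex so that the last sum is recognized once more as $|R(\tau(K))|$ via Equation~\ref{coroR2}.

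For the compact identity at the end of the statement, since the whole chain is valid for every $K \subseteq [2,n+1]$, substituting $\tau(K)$ in place of $K$ in the third and eighth expressions of the chain (and using $\tau\circ\tau = \id$ to collapse the double applications of $\tau$) produces the desired single equation. The main obstacle I anticipate is simply the notational discipline required to track which occurrences of $\tau$ act on summation indices and which act on the fixed parameter, together with the elementary set-theoretic identities $\tau(\tau(K)) = K$ and $\tau([2,n+1]\smallsetminus\tau(K)) = [2,n+1]\smallsetminus K$; none of the steps should involve any genuinely new combinatorial content beyond what is already supplied by Theorem~\ref{especialita} and Corollary~\ref{coroR}.
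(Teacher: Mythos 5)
Your proposal is correct and follows the same route as the paper: the chain is obtained by expanding $|R([2,n+1]\smallsetminus K)|$ and $|R(\tau(K))|$ via Equation~\ref{coroR2}, reindexing by the involution $\tau$, and invoking Theorem~\ref{especialita} precisely at the crossover from the fifth to the sixth expression, with the compact identity at the end following by replacing $K$ with $\tau(K)$. You have in fact supplied more detail than the paper's one-line proof, and you correctly identify the set-theoretic bookkeeping (disjointness of $J$ and $K$, $\tau$ preserving $[2,n+1]$ and cardinalities, $\tau\circ\tau=\id$) as the only work beyond citing the two prior results.
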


\begin{proof}
This is a direct consequence of the counting formulas collected in Corollary \ref{coroR} and the identity proved in Theorem \ref{especialita}.
\end{proof}

As a consequence of the observations we made, we can say more about these quantities when we increase the $n$ step by step. For this, we need to generalize $\tau$, which actually depends on $n$, and write $\tau_{s}(i):=s+3-i.$

\begin{corolario}[Sequential combinatorial sums]
Let $n\in\mathbb{N}$ and $K\subseteq[2,n+1]$ and for every set $W\subseteq[2,n+1]$ denote $W^{\max}=W\cup\{n+2\}$ and $W^{\min}=W\cup\{1\}$. Then $|R([2,n+2]\smallsetminus K^{\max})|=|R([2,n+1]\smallsetminus K)|$ and $$|R([2,n+2]\smallsetminus K)|=\sum_{J\subseteq[2,n+1]\smallsetminus K}(-1)^{n-|K|-|J|}(\alpha(J^{\max})\hat{!}-\alpha(J)\hat{!})=$$ $$\sum_{J\subseteq[2,n+1]\smallsetminus \tau_{n}(K)}(-1)^{n-|K|-|J|}(\alpha(\tau_{n}(J^{\min}))\hat{!}-\alpha(\tau_{n}(J))\hat{!})=$$ $$\sum_{J\subseteq K}(-1)^{|K|-|J|}\alpha(\tau_{n+1}(J))\hat{!}=$$ $$\sum_{J\subseteq K}(-1)^{|K|-|J|}(|J|+1)\alpha(\tau_{n}(J))\hat{!}=|R(\tau_{n+1}(K))|.$$
\end{corolario}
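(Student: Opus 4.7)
The plan is to prove the chain of equalities step by step, linking each formula either to the previous corollary or to the explicit counting formulas in Corollary \ref{coroR}. I would proceed roughly as follows.

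First I would dispatch the opening identity $|R([2,n+2]\setminus K^{\max})|=|R([2,n+1]\setminus K)|$. Since $K^{\max}=K\cup\{n+2\}$, the two underlying sets agree as subsets of $\mathbb{N}$; what changes is only the ambient symmetric group. The observation already used inside the proof of Theorem \ref{especialita}, which reads off Equation \ref{coroR2} the fact that the right-hand side depends on $X$ but not on the ambient $n$ as long as $n\geq\max(X)-1$, gives the claim at once. The matching bijection is the lift $\sigma\mapsto(\sigma\,n+2)$ that appends $n+2$ at the end of the one-line notation: this preserves the descent top set exactly because any position of $n+2$ other than the last would itself be a descent top.

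Next I would establish the second expression by applying Equation \ref{coroR2} at level $n+1$ to $|R([2,n+2]\setminus K)|$. Since $|[2,n+2]\setminus K|=n+1-|K|$, the signed sum runs over $J\subseteq[2,n+2]\setminus K$ with sign $(-1)^{n+1-|K|-|J|}$. I would then split the sum according to whether $n+2\in J$ or not: in the first case $J=J'^{\max}$ with $J'\subseteq[2,n+1]\setminus K$ and the sign picks up an extra $-1$, so that the two halves combine into $\sum_{J\subseteq[2,n+1]\setminus K}(-1)^{n-|K|-|J|}(\alpha(J^{\max})\hat{!}-\alpha(J)\hat{!})$. For the third expression I would use that $\tau_{n}$ is an involution on $[2,n+1]\cup\{1,n+2\}$ with $\tau_{n}(1)=n+2$, so that $\tau_{n}(J^{\min})=\tau_{n}(J)\cup\{n+2\}=(\tau_{n}(J))^{\max}$; reindexing the sum via $J\mapsto\tau_{n}(J)$, which bijects subsets of $[2,n+1]\setminus\tau_{n}(K)$ with subsets of $[2,n+1]\setminus K$ and preserves cardinality, returns the second expression.

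For the last three equalities I would start from the previous corollary applied at level $n+1$, which gives $|R([2,n+2]\setminus K)|=|R(\tau_{n+1}(K))|$ outright, settling the sixth identity. The fourth identity then comes from expanding $|R(\tau_{n+1}(K))|$ by Equation \ref{coroR2} and reindexing through the involution $\tau_{n+1}$. The fifth identity reduces to the purely combinatorial claim
\begin{equation*}
\alpha(\tau_{n+1}(J))\hat{!}=(|J|+1)\,\alpha(\tau_{n}(J))\hat{!}
\end{equation*}
for every $J\subseteq[2,n+1]$. Writing $J=\{j_{1}<\cdots<j_{s}\}$ and comparing $\tau_{n+1}(J)=\{n+4-j_{s}<\cdots<n+4-j_{1}\}$ with $\tau_{n}(J)=\{n+3-j_{s}<\cdots<n+3-j_{1}\}$ shows that the tuples $\alpha(\tau_{n+1}(J))$ and $\alpha(\tau_{n}(J))$ agree in every coordinate except the first, where the former exceeds the latter by $1$; since the first coordinate of the tuple enters $\hat{!}$ through the base $(s+1)=|J|+1$, the factor $(|J|+1)$ appears exactly once. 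The main obstacle I anticipate is precisely this last bookkeeping: everything else is a matter of splitting sums and reindexing by involutions, but the bridge between $\tau_{n+1}$ and $\tau_{n}$ depends on unpacking both $\alpha$ and $\hat{!}$ carefully and tracking that the shift of ambient size affects only the top base of the product, not any of the exponents.
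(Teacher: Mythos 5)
Your proposal is correct and follows essentially the same route as the paper: the first identity from the literal equality of the underlying sets (you add the helpful explicitness that the ambient symmetric group changes but the count does not, which the paper leaves implicit), the second and third expressions by splitting the sum over $J\subseteq[2,n+2]\setminus K$ according to whether $n+2\in J$ and then reindexing by the involution $\tau_n$ using $\tau_n(1)=n+2$, the sixth identity from the mirrorreciprocity result at level $n+1$, the fourth from Equation \ref{coroR2} plus reindexing by $\tau_{n+1}$, and the fifth from unpacking $\alpha$ and $\hat{!}$ to see that only the top exponent (whose base is $|J|+1$) shifts by one. The only cosmetic difference is that you invoke the preceding corollary on combinatorial sums where the paper cites Theorem \ref{especialita} directly; these are the same thing, as that corollary is an immediate restatement of the theorem.
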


\begin{proof}
The first equality is evident because $[2,n+2]\smallsetminus K^{\max}=[2,n+1]\smallsetminus K$. For the second we will use Theorem \ref{especialita} above that ensures $|R(\tau_{n+1}(K))|=|R([2,n+2]\smallsetminus K)|$ together with the equalities coming through the use of Corollary \ref{coroR} that tells us $|R([2,n+2]\smallsetminus K)|=$\begin{gather*}
\sum_{J\subseteq[2,n+2]\smallsetminus K}(-1)^{n+1-|K\cup J|}\alpha(J)\hat{!}=\\\sum_{J\subseteq[2,n+1]\smallsetminus K}(-1)^{n+1-|K\cup J|}\alpha(J)\hat{!}+\sum_{J\subseteq[2,n+1]\smallsetminus K}(-1)^{n+1-|K\cup J'|}\alpha(J^{\max})\hat{!}=\\\sum_{J\subseteq[2,n+1]\smallsetminus K}(-1)^{n-|K\cup J|}\alpha(J^{\max})\hat{!}-\sum_{J\subseteq[2,n+1]\smallsetminus K}(-1)^{n-|K\cup J|}\alpha(J)\hat{!}=\\\sum_{J\subseteq[2,n+1]\smallsetminus K}(-1)^{n-|K\cup J|}(\alpha(J^{\max})\hat{!}-\alpha(J)\hat{!})=\\\sum_{J\subseteq[2,n+1]\smallsetminus\tau_{n}(K)}(-1)^{n-|\tau_{n}(K)\cup J|}(\alpha(\tau_{n}(J^{\min}))\hat{!}-\alpha(\tau_{n}(J))\hat{!})
\end{gather*} and $|R(\tau_{n+1}(K))|=$ \begin{gather*}\sum_{J\subseteq K}(-1)^{|K\smallsetminus J|}\alpha(\tau_{n+1}(J))\hat{!}=\sum_{J\subseteq K}(-1)^{|K\smallsetminus J|}(|J|+1)\alpha(\tau_{n}(J))\hat{!}\end{gather*} noting that, writing $J=\{j_{1}<\dots<j_{k}\}$, we have $$\tau_{n}(J)=\{\tau_{n}(j_{k}),\dots,\tau_{n}(j_{1})\}=\{n+3-j_{k}<\dots<n+3-j_{1}\} \mbox{\ and}$$ $$\tau_{n+1}(J)=\{n+4-j_{k}<\dots<n+4-j_{1}\} \mbox{\ so}$$ $$\alpha(\tau_{n}(J))=(n+2-j_{k},j_{k}-j_{k-1},\dots,j_{2}-j_{1})\mbox{\ and}$$ $$\alpha(\tau_{n+1}(J))=(n+3-j_{k},j_{k}-j_{k-1},\dots,j_{2}-j_{1})$$ because we remember that $\alpha(J):=(j_{1}-1,j_{2}-j_{1},\dots,j_{k}-j_{k-1})$ so we have that $\alpha(\tau_{n+1}(J))\hat{!}=(k+1)\alpha(\tau_{n}(J))=(|J|+1)\alpha(\tau_{n}(J)).$
\end{proof}

Reordering and rewriting the expressions obtained above we get another interesting identity.

\begin{corolario}[Reordering sums]
For $n\in\mathbb{N}$ and $K\subseteq[2,n+1]$ we have that $$\sum_{J\subseteq[2,n+1]\smallsetminus K}(-1)^{n-|K|+|J|}\alpha(J^{\max})\hat{!}=\sum_{J\subseteq K}(-1)^{|K|-|J|}(|J|+2)\alpha(\tau_{n}(J))\hat{!}.$$
\end{corolario}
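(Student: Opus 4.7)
The plan is to obtain the desired identity as a direct algebraic rearrangement of the two identity chains already established in the previous corollary, together with the first corollary of the section. Since for $J\subseteq[2,n+1]\smallsetminus K$ we have $K\cap J=\emptyset$, the exponent $n-|K\cup J|$ reduces to $n-|K|-|J|$, which has the same parity as $n-|K|+|J|$, so the signs in the statement match those appearing in the previous corollary. This parity observation will be the first step.

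Next, I would take the identity from the previous corollary in the form
\begin{equation*}
|R([2,n+2]\smallsetminus K)|=\sum_{J\subseteq[2,n+1]\smallsetminus K}(-1)^{n-|K|+|J|}\bigl(\alpha(J^{\max})\hat{!}-\alpha(J)\hat{!}\bigr),
\end{equation*}
and isolate the sum over $\alpha(J^{\max})\hat{!}$. That gives
\begin{equation*}
\sum_{J\subseteq[2,n+1]\smallsetminus K}(-1)^{n-|K|+|J|}\alpha(J^{\max})\hat{!}=|R([2,n+2]\smallsetminus K)|+\sum_{J\subseteq[2,n+1]\smallsetminus K}(-1)^{n-|K|+|J|}\alpha(J)\hat{!}.
\end{equation*}

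Now I would replace each of the two pieces on the right using identities already proved. By Theorem \ref{especialita}, together with the chain-of-equalities expression for $|R(\tau_{n+1}(K))|$ obtained in the previous corollary, $|R([2,n+2]\smallsetminus K)|=\sum_{J\subseteq K}(-1)^{|K|-|J|}(|J|+1)\alpha(\tau_{n}(J))\hat{!}$, where the factor $(|J|+1)$ comes from the elementary identity $\alpha(\tau_{n+1}(J))\hat{!}=(|J|+1)\alpha(\tau_{n}(J))\hat{!}$ derived in the same previous proof. Analogously, the first corollary of the section in its $n$-indexed form supplies
\begin{equation*}
\sum_{J\subseteq[2,n+1]\smallsetminus K}(-1)^{n-|K|+|J|}\alpha(J)\hat{!}=|R([2,n+1]\smallsetminus K)|=|R(\tau_{n}(K))|=\sum_{J\subseteq K}(-1)^{|K|-|J|}\alpha(\tau_{n}(J))\hat{!}.
\end{equation*}

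Finally I would simply add the two right-hand sides, collecting the common factor $(-1)^{|K|-|J|}\alpha(\tau_{n}(J))\hat{!}$ and combining its coefficients $(|J|+1)+1=|J|+2$, which yields the claimed formula. The main obstacle is only the bookkeeping of signs and of the parity equivalence $(-1)^{n-|K|-|J|}=(-1)^{n-|K|+|J|}$ together with consistent use of $\tau_{n}$ versus $\tau_{n+1}$; there is no new combinatorial content beyond what the previous corollary and Theorem \ref{especialita} already supply.
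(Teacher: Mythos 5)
Your proposal is correct and follows essentially the same route as the paper: take the two identities already established in the ``Combinatorial sums'' and ``Sequential combinatorial sums'' corollaries, note that the sign $(-1)^{n-|K\cup J|}=(-1)^{n-|K|-|J|}=(-1)^{n-|K|+|J|}$ because $J$ is disjoint from $K$, and add them to combine the coefficients $(|J|+1)+1=|J|+2$. The paper states this more tersely by writing the two identities side by side and concluding ``which together establish the identity,'' whereas you additionally name the intermediate quantity $|R([2,n+2]\smallsetminus K)|$ and step through the substitutions explicitly, but the content is identical.
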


\begin{proof}
From the corollary above, we know \begin{gather*}\sum_{J\subseteq[2,n+1]\smallsetminus K}(-1)^{n-|K|-|J|}(\alpha(J^{\max})\hat{!}-\alpha(J)\hat{!})=\\\sum_{J\subseteq K}(-1)^{|K|-|J|}(|J|+1)\alpha(\tau_{n}(J))\hat{!}\end{gather*} but, by the initial corollary, we also know \begin{gather*}
    \sum_{J\subseteq[2,n+1]\smallsetminus K}(-1)^{n-|K|-|J|}(\alpha(J)\hat{!})=\sum_{J\subseteq K}(-1)^{|K|-|J|}\alpha(\tau_{n}(J))\hat{!},
\end{gather*} which together establish the identity we wanted to prove.
\end{proof}

Eulerian polynomials can be written through excedances. We see how this allows us to get more interesting combinatorial results in the following section.

\section{Reciprocity and excedances}

First we establish precisely the expression of Eulerian polynomials through excedances.

\begin{remark}[Riordan's bijection]\cite[Equation 4.3]{branden2011proof}\label{Riordanremark}
Using a bijection of Riordan, it was noted that Eulerian polynomials can be written in terms of excedances $A_{n}(\mathbf{x}):=$ $$\sum_{\sigma\in\mathfrak{S}_{n+1}}\prod_{i\in\exc(\sigma)}x_{i},$$ where $\exc(\sigma):=\{\sigma(i)\in[n+1]\mid\sigma(i)>i\}\subseteq[2,n+1]$ is the set of \textit{excedances} of $\sigma$. In imitation of the previous notation, we denote $r(n,X):=\{\sigma\in\mathfrak{S}_{n+1}\mid\exc(\sigma)=X\}.$
\end{remark}

As a consequence of this rewriting of multivariate Eulerian polynomials, we immediately obtain duplicates of the corollaries in the section above. This is the content of the next remark.

\begin{remark}[Twin corollaries]
An immediate consequence of this expression is the fact that $|r(n,X)|=|R(n,X)|$ and therefore we obtain as immediate corollaries results similar to the ones above but for excedances. The corresponding bijection can be constructed composing the bijection given by Riordan in \cite{riordan2014introduction} with the one we used in the proof of Theorem \ref{especialita}.
\end{remark}

In order to analyze how special is the property proved in this theorem that motivated our last discussion and corollaries, we introduce a couple of concepts about (multiaffine) polynomials in the next section.

\section{Reciprocity of polynomials: conditions and obstructions}

We finally study a limitation of polynomials to be mirrorreciprocal in order to understand better how special are our Eulerian polynomials in this regard. In particular, we look at complete polynomials in the sense that all its monomials play a role.

\begin{definicion}[(Degree-)completeness]
A multiaffine polynomial strictly in $n$ variables is \textit{complete} if it has nonzero coefficients multiplying all its possible monomials. A relaxation of this concept is considering \textit{degree-complete} polynomials in which all possible degrees from $1$ to $n$ appear at least once (with a nonzero coefficient).
\end{definicion}

This form of completeness is present in Eulerian polynomials.

\begin{proposicion}[Completeness of Eulerian polynomials]
Eulerian polynomials are complete.
\end{proposicion}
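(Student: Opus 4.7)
The plan is to reduce completeness to a purely combinatorial existence statement and then provide an explicit construction. First, I would observe that since $A_n(\mathbf{x})$ is multiaffine strictly in the variables $x_2, \dots, x_{n+1}$, completeness amounts to showing that for every subset $K \subseteq [2, n+1]$ the coefficient of $\prod_{i \in K} x_i$, which equals $|R(n, K)|$, is strictly positive. This is equivalent to exhibiting at least one permutation $\sigma \in \mathfrak{S}_{n+1}$ with $\mathcal{DT}(\sigma) = K$. I would then invoke Remark \ref{Riordanremark} to replace the descent-top formulation by the excedance formulation, reducing the task to producing a permutation with excedance set exactly $K$. This switch pays off because excedances only constrain where each element is placed (position $< j$ versus position $\geq j$), not the complicated relative-order conditions imposed by descents.

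The construction I have in mind is very compact: write $K = \{k_1 < k_2 < \cdots < k_s\}$, let $L = [n+1] \setminus K$, and define $\sigma$ by $\sigma(j) = j$ for all $j \in L \setminus \{1\}$ together with the single $(s+1)$-cycle $(1, k_1, k_2, \dots, k_s)$ on the remaining elements $\{1\} \cup K$. Concretely this means $\sigma(1) = k_1$, $\sigma(k_{i-1}) = k_i$ for $i = 2, \dots, s$, and $\sigma(k_s) = 1$. This is clearly a well-defined bijection on $[n+1]$. The verification is then a straightforward case analysis using that $j \in \exc(\sigma)$ iff $\sigma^{-1}(j) < j$: elements of $L \setminus \{1\}$ are fixed points and contribute nothing; for each $i \geq 2$ we have $\sigma^{-1}(k_i) = k_{i-1} < k_i$; we have $\sigma^{-1}(k_1) = 1 < k_1$ because $k_1 \geq 2$; and $\sigma^{-1}(1) = k_s$, so $1$ contributes nothing to the excedance set. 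Hence $\exc(\sigma) = K$ exactly.

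The main obstacle I anticipate is not in the argument above but in resisting the temptation to build the permutation directly from its descent-top set. A direct greedy construction partitioning $[n+1]$ into $s+1$ ascending runs with tops $k_1, \dots, k_s$ runs into unpleasant edge cases whenever $K$ contains small values (for instance $k_1 = 2$ forces $1$ to immediately follow $2$, which constrains all further blocks). The excedance construction bypasses all of this uniformly, which is why I would channel the proof through Riordan's bijection instead of attempting a purely descent-based argument. Once the cycle construction is in place and the excedance computation is carried out, the completeness statement follows immediately, and in fact the argument also yields the stronger statement that the coefficient $|R(n, K)|$ is positive for every admissible $K$, which is exactly what completeness asserts.
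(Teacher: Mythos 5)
Your proof is correct, but it takes a genuinely different route from the paper's. You reduce completeness to finding, for each $K\subseteq[2,n+1]$, a permutation whose excedance set is exactly $K$ (via Remark \ref{Riordanremark}) and then construct such a permutation as a single cycle $(1,k_1,\dots,k_s)$ padded with fixed points. That argument is clean and the excedance verification is airtight. The paper, however, works directly with descent tops and sidesteps the edge cases you were worried about: for a target set $\{i_1<\cdots<i_d\}$ with complement $\{1\}\cup\{j_1<\cdots<j_s\}$, it takes the one-line word $(i_d,\dots,i_1,\,1,\,j_1,\dots,j_s)$ — a single decreasing run ending at $1$ followed by a single increasing run. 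Every adjacent pair in the decreasing segment gives a descent with top $i_k$, the pair $(i_1,1)$ gives the descent with top $i_1$, and everything afterward is an ascent, so $\mathcal{DT}(\sigma)=\{i_1,\dots,i_d\}$ exactly. This is not the ``partition into $s+1$ ascending runs'' construction you anticipated would be awkward; by putting $1$ at the hinge between a single descending run and a single ascending run, the paper gets a uniform argument with no case analysis at all. So both proofs are valid; yours showcases the excedance lens and the flexibility of Riordan's bijection, while the paper's is more elementary, staying purely inside the descent-top world and needing no auxiliary bijection.
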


\begin{proof}
Let $m$ be one arbitrary possible monomial and order its variables in increasing order of its indices so we can write $x_{i_{1}}<\dots<x_{i_{d}}$ with $d\leq n$ and $1<i_{1}<\dots<i_{d}$ remembering that $x_{1}$ is never a variable because that index can never be a descent top. For the indices not in the previous monomial (and therefore corresponding to the variables of the polynomial not appearing in $m$ and different from $x_{1}$), order them also in increasing order $1<j_{1}<\dots<j_{s}$ with $s+d=n$. Now, consider the permutation (written in one-line notation) $\sigma:=(i_{d}\cdots i_{1}\cdots1 j_{1}\cdots j_{s})$. This sigma ensures the appearance of the given monomial $m$ and, as this monomial $m$ was arbitrary among all the possible monomials and there are no cancellations on the definition of the Eulerian polynomials, this finishes our proof.
\end{proof}

One can ask then if Theorem \ref{especialita} is true for all multiaffine monomialmaximal complete polynomials. The answer is clearly no as the next example shows.

\begin{ejemplo}[Multiaffine monomialmaximal does not imply mirrorreciprocal]
Consider the multiaffine monomialmaximal complete polynomial $$A(\mathbf{x}):=1+2x_{1}+x_{2}+x_{3}+3x_{2}x_{3}+x_{1}x_{2}+x_{1}x_{3}+x_{1}x_{2}x_{3}.$$ Its reciprocal is $$\rec(A(\mathbf{x}))=x_{1}x_{2}x_{3}+2x_{2}x_{3}+x_{1}x_{3}+x_{1}x_{2}+3x_{1}+x_{3}+x_{2}+1$$ and it is clear that no permutation of the variables can transform the degree $1$ part $3x_{1}+x_{3}+x_{2}$ of $\rec(A)$ into the degree $1$ part $2x_{1}+x_{2}+x_{3}$ of $A$. Thus $A$ is not mirrorreciprocal.
\end{ejemplo}

This finishes our discussion about the polynomial identity described in Theorem \ref{especialita} and its immediate combinatorial consequences in terms of relations between cardinals of some sets of permutations and sums involving subsets of a certain set. This also finishes our study of symmetries within multivariate Eulerian polynomials because going further in this direction lies beyond our scope here. In this way, we head to the conclusion of this thesis, in which we will discuss (among other things) on the importance of these symmetries in order to understand more about these (and related) families of polynomials and the meanings and properties of their roots and coefficients.

\part{Conclusion}\label{VI}
\chapter[Devising a Path Forward: the Mindelsee Program]{Towards an Extended View of The Exposed Phenomena and Devising a Path Forward: the Mindelsee Program}\label{conclusion}

In this conclusion, we will try to look into the future possibilities that the developments presented here open. We will build a path to venture ourselves into a broad and new topic of research. Although new, the program that we will propose here is, as we will see, obviously related to the current effort promoted in the catalog 
\cite{Alexandersson2020}, which is itself a very valuable resource as a first step towards the proposal that we expand here. Another project with a similarly close aim (but approaching more clearly by the combinatorial side) is the database \cite{FindStat}, which therefore serves as a support for another side of our proposal. Finally, speaking about the related literature and resources pursuing early prerequisites coming before we can properly attack the first sketches and attempts at our proposal, we have to refer to the book \cite{pemantle2012hyperbolicity}, where a broad variety of topics intimately connected to the discussions that will appear in this conclusion chapter has already been treated, pictured, sketched, developed and studied. Hence, many conceptual paths end there into some of our proposed beginnings here. Thus, we recognize the invaluable labor developed in these first steps into our proposed framework: many steps around the vicinity of the topics that concern us here have already been taken in these references before. Now, with their help, we can venture safely into the mathematical realm opened by our new far-reaching ideas in the directions presented in this conclusion and inspired by the work we did in this thesis.

\section{Summary of bounds obtained and methods used}

We obtain many different lateral bounds for the extreme roots of univariate Eulerian polynomials. The next table summarizes the bounds obtained, the methods used for each bound, the corresponding asymtotic behavior and the polynomial transformations (through stability preservers or through roots modifications) that had to be performed in order to obtain the corresponding bound. This will help us to visualize fast the overall structure, ideas and constructions explored in this dissertation. That improved visualization of what was done here will help us to understand the next steps that we are going to propose in this conclusion and why these steps are interesting and meaningful by themselves in the way to understand better the phenomena about real rooted (or real-zero) polynomials explored here.

\begin{center}
\begin{longtable}{ |p{3cm}|p{3cm}|p{3cm}|p{3cm}|  }
    \hline
    \multicolumn{4}{|c|}{List of lower bounds for $|q_{1}^{(n)}|$} \\
    \hline
   Method used & Asymptotic behavior obtained & Polynomial extension or transformation involved & Improvement calculated\\\hline  \hline
    Colluci's estimation & $\frac{2^{n+1}}{n}-\frac{2}{n}-1$ & None & None\\ \hline
    Univariate relaxation plus optimization &   $\un=2^{n+1}-\left(\frac{3}{2}\right)^{n+1}-2\left(\frac{9}{8}\right)^{n+1}+o(\left(\frac{9}{8}\right)^{n})$  & None & Clear linear improvement with respect to previous\\ \hline
    Univariate relaxation plus linearization through the vector $(1,1)$ & $2^{n+1}+o(2^{n})$ & None & Worse than $\un$, but easier to compute\\ \hline
    Bivariate relaxation plus linearization through the vector $(\alpha,3,-8)$ & $\bi=2^{n+1}+o(2^{n})$ & Bivariate Eulerian polynomials obtained through interlacing relation with previous element in the sequence of univariate Eulerian polynomials & Better than $\un$ as $\bi-\un\sim\frac{16^{-5n-6}}{59049n^{7}}\to0^{+}$\\ \hline
    Multivariate relaxation plus linearization through the vector $v=(y,0,1,-1,\dots,-1)$ &   $2^{n+1}+o(2^{n})$  & Multivariate Eulerian polynomials obtained through the use of stability preserver recursion & Better than $\un$ as $\mult_{v}-\un\sim\frac{1}{2}\left(\frac{3}{4}\right)^{n}\to0^{+}$ \\ \hline
    Multivariate relaxation plus linearization through the vector $v=(y,0,(-2^{m-i})_{i=3}^{m}$ $,(0,\frac{1}{2}),(1)_{i=1}^{m})\in\mathbb{R}^{n+2}$ with $n=2m$& $2^{n+1}+o(2^{n})$ & Multivariate Eulerian polynomials obtained through the use of stability preserver recursion & Exponentially better than $\un$ as $\mult_{v}-\un\sim\frac{3}{8}\left(\frac{9}{8}\right)^{m}\to\infty$ \\ \hline
    DLG plus Sobolev simplifications & $M=2^{n+1}-\left(\frac{3}{2}\right)^{n+1}-\left(\frac{9}{8}\right)^{n+1}+o(\left(\frac{9}{8}\right)^{n})$ & Univariate polynomials from first iteration of DLG method & Exponentially much better than above as $M-\un\sim\left(\frac{9}{8}\right)^{n+1}\to\infty$ \\ \hline
    DLG without Sobolev simplifications & $M_{\mbox{ref}}=2^{n+1}-\left(\frac{3}{2}\right)^{n+1}-\left(\frac{9}{8}\right)^{n+1}-2\left(\frac{27}{32}\right)^{n+1}+o(\left(\frac{27}{32}\right)^{n})$ & Univariate polynomials from first iteration of DLG method & Better understanding of the asymptotic rest $+o(\left(\frac{27}{32}\right)^{n})$ \\ \hline
    DLG plus univariate relaxation & $\un_{2}=2^{n+1}-\left(\frac{3}{2}\right)^{n+1}-\left(\frac{9}{8}\right)^{n+1}+1+o(1)$ & Univariate polynomials from first iteration of DLG method & Effective jump of $1$ to get closer to the root in the limit because $\un_{2}-M_{\mbox{ref}}\sim 1>0$\\ \hline
   \caption{Summary of lower bounds calculated for $|q_{1}^{(n)}|$ and the tools used for producing them.}   
\end{longtable}
\end{center}

Thanks to this table, we can see in a nutshell the methods we used and the results they produced. As we can readily see, the combination of methods and going deeper into the understanding of the different transformations and extensions of the univariate Eulerian polynomials helped us to improve the bounds obtained in each new step. This signals that it is possible to translate new knowledge about these operations over the original sequence of Eulerian polynomials into information about its extreme roots. This topic will be interesting for us and it will push us to introduce the program that we present in the next chapter as a final conclusion for this dissertation. This conclusion will only show us the doors of the realm that these questions prelude and that we could only superficially explore in this dissertation, scratching the surface of a field that requires further exploration beyond the limited space and time we have here.

\section[Difficulties, obstructions and necessary tools]{What has been done and what could be done: difficulties, obstructions and necessary tools looking beyond}

We have done several first steps into a set of new developments in this dissertation. In many cases, what we have done seems to be just the tip of an iceberg that calls for a further exploration. We just showed many tips in order to highlight the uses that our new techniques have and the novelties that our construction could eventually inspire while getting deeper into the topics we opened here.

\begin{observacion}
The constructions and analyses made here allow for a wide range of generalizations and analogies. In principle, not all possible directions are clear and we cannot attempt to give a prediction of the reach of these without studying them beyond the original scope we had in this thesis.
\end{observacion}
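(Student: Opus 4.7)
Since the final statement is an observation about the breadth of possible generalizations rather than a conventional mathematical claim, the plan is to substantiate it by cataloguing concrete generalization axes that emerged along the thesis and pointing out where each axis meets a genuine obstruction. This turns the vague assertion into a verifiable inventory: the observation holds as soon as one exhibits several inequivalent, non-trivial families of open directions, together with evidence that no single framework currently subsumes them.

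First, I would enumerate the axes of generalization that are implicit in the body of the dissertation. The relaxation axis extends from increasing the matrix mold (Proposition \ref{propomatrix}, which stops at degree three due to non-commutativity of traces of four or more matrices) to increasing variables through structured interlacers (Corollary \ref{interlacerextension}); each has a clean partial result and a clearly stated barrier. The stability preserver axis moves from the trivial Renegar extension $p+yp^{(1)}$ (whose relaxation is invariant by Theorem \ref{teoremabuscado}) to the genuinely informative laminations producing $A_n(\mathbf{x})$, with no general recipe bridging the two. The DLG axis opens a forest of sequences (via resultants as in Theorem \ref{resultants} and via the failed multivariate construction of Equation \ref{arrr}) whose combinatorial meaning is unknown. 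The symmetry axis generalizes palindromicity to mirrorreciprocity (Theorem \ref{especialita}) but leaves open the analogous property for DLG iterates and for weighted Eulerian polynomials $A_{n,w}$. The asymptotic axis packages all lower bounds through the maps $\asy_A$, $\rel_A$, $\bou_A$ of Section \ref{sectionL}, a framework that has barely been exercised beyond the diagonal.

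Second, I would argue the unpredictability clause by displaying explicit pairs of directions where the same tool behaves oppositely, so that no uniform prediction is tenable from what is presently proved. The clearest such pair is Renegar-derivative extension versus interlacer-based bivariate extension: both add exactly one variable through an interlacer, yet the first leaves the relaxation intact while the second strictly improves it on the diagonal. A second pair is univariate versus multivariate linearization of the relaxation for Eulerian polynomials, where certain natural sequences of vectors (the unit binary ones of Proposition \ref{genunibou}) never beat $\un$, while the elaborate family $(y,0,(-2^{m-i})_{i=3}^m,(0,\tfrac12),(1)_{i=1}^m)$ beats it exponentially. A third pair compares Sobolev's majorant with the raw DLG estimate of Proposition \ref{rawestimate}, agreeing up to the third growth term but diverging at the fourth. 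The existence of such bifurcations, each demonstrable from results already proved in the excerpt, is enough to justify that no a priori prediction of the reach of these methods is available.

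Third, to make the plan honest I would identify the main obstacle to turning the observation into a theorem rather than a meta-statement: we lack a common language in which to formulate what counts as a \emph{generalization} of the relaxation framework. Until one fixes a category whose objects are pairs (RZ polynomial sequence, variable-extension functor) and whose morphisms track the effect on $\asy_A$, statements about the reach of the methods are necessarily informal. The hard part of the program outlined in the conclusion is therefore precisely the functorial formulation suggested by the Mindelsee Program: once such a category is in place, each of the inequivalent axes above becomes a non-trivial subcategory whose mutual incomparability can be established by the explicit pairs exhibited in the second paragraph, and the observation would upgrade from a discussion to a structural theorem about the landscape of RZ root-bounding methods.
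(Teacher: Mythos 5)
Your proposal is correct and follows essentially the same route as the paper: this \emph{Observaci\'on} is a discussion-level remark with no formal proof, and the paper's own justification is precisely the sequence of remarks that follow it in the conclusion, which enumerate the same axes you list (matrix-mold versus variable extensions, Eulerian laminations, linearization choices, the DLG/resultant forest, symmetries) together with their obstructions. Your added touches — the explicit contrasting pairs (Renegar versus structured interlacer, unit-binary versus the elaborate vector family, Sobolev's majorant versus the raw DLG estimate) drawn from results already proved in the body, and the suggestion of a common formal framework — are compatible elaborations of what the paper sketches informally via the Mindelsee Program rather than a different argument.
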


Looking part by part, we can collect the main ideas that came out of these in order to form a picture of what we have built so far. Such picture should serve itself as a canvas to draw this further-reaching conclusion whose objective is in fact looking beyond our current knowledge into what we can do in the future building over the ideas sketched here. Naturally, we begin this forward analysis by the first part of this work. Thus, we look at the relaxation itself and its limitations.

\begin{remark}[Overcoming the limitations]
We were able to analyze two main limitations in the construction and ampliation of the relaxation. The first limitation that we studied is the one related to the ampliation of the size of the LMP used by the relaxation applied to a certain polynomial. We saw that a first constraint we find is the fact that natural ampliations using straightforward extensions of the mold moment matrix fail in keeping the initial matrix PSD. Moreover, for the extension we found that does accomplish this, we know that it does not produce a relaxation anymore. This does not mean that these extensions are impossible, but, as we have learned, just that we may need a more elaborated structure for these extensions. Similarly, the second limitation came from the fact that it is not clear how to increase the number of variables of a polynomial keeping RZ-ness. We do not know how to do that in general, but our later study of multivariate Eulerian polynomials showed us that there can be more elaborated ways of doing that with certain families. These ways boil down to the theory of stability preservers in the case of Eulerian polynomials, but other families of RZ polynomials could offer new insights into different forms of performing these extensions of variables that could ultimately help us in the improvement of the relaxation, as it happened with univariate Eulerian polynomials and their injection in a diagonal of a sequence of (also combinatorially) related multivariate polynomials.
\end{remark}

These Eulerian polynomials were the main characters of the second part of this thesis. Their study in the literature allowed us to develop here our study of the relaxation under variable extensions in their particular case. It turns out that these polynomials could be part of a much bigger family of polynomials whose structure could allow us to get not only further insights but maybe even a complete picture of RZ polynomials if they end up providing us with the \textit{right} idea leading to a \textit{correct} construction allowing for variable extensions keeping our craved and sought for RZ-ness.

\begin{remark}[A sequence of polynomials coming from permutations]
The sequences of Eulerian polynomials used here come from counting a certain statistic (descent tops) in permutations over subsets of natural numbers with their natural inherited order. These polynomials turned out to be extremely nice for us. The literature is filled with many other polynomials build in similar ways using certain modifications of these. These polynomials are also nice and constitute therefore the next place to look for ideas and study towards a better understanding of our relaxation and its power.
\end{remark}

The relaxation applied to the stated multivariate extension of these polynomials was able to improve over the already good results given by the relaxation applied to the univariate polynomials in the diagonal. We had to choose certain vectors when linearizing the relaxation. We wonder if these choices can be improved.

\begin{remark}[Linearizing clever]
We saw in Part \ref{III} that linearizing the relaxation through one vector or another gives very different results. It took us many experiments and some numeric work to find the vector we used there to beat the bound $\un$. Since this vector was obtained through experimentation, it is natural to ask for better sequences of vectors. It is also natural to expect that, as when the size of these matrices grows so does the vector, the complexity of the structure of the optimal vector and its easily expressible approximations also increases by the mere fact that more different patterns can fill more entries on the vector. Finding these better sequences of vectors is helpful for determining how much better the relaxation actually gets at providing bounds, because dealing with the corresponding sequence of determinants of the relaxation directly is probably too complicated. Maybe even more complicated than directly looking at the original polynomial. Remember in this last respect that the determinant and the original univariate polynomial have very similar degree because multivariate Eulerian polynomials have as many variables as degree. This is why linearizing through better and better sequences of vectors is a meaningful and important task towards a successful application of the relaxation to bounding the extreme roots of these polynomials.
\end{remark}

These Eulerian polynomials have proven to be useful for many branches of mathematics. Not for nothing they are studied from many different perspectives by authors coming from disparate backgrounds. At the beginning of our work, we centered mainly around their nice combinatorial interpretations, as such point of view allowed us to easily construct their multivariate generalization that we used later on. However, at some point, we realized that ideas coming from analysis and numerics also allow us to define these polynomials and this pointed towards the study made by Sobolev. The work of Sobolev showed us not only a different way of thinking about these polynomials, but also new methods towards bounding their extreme roots. Thus is how we landed in Part \ref{IV}. This part opened many questions for us. We discuss these questions here towards future developments concerning their scope and reach.

\begin{remark}
The main path in this direction comes from the fact that the DLG method is just one possibility to the more general methods based on resultants that can be used to split the roots of the polynomials. The DLG method is just the simplest, most acknowledged method. All these methods do in fact generate new families of real-rooted polynomials stemming from the Eulerian polynomials. These polynomials have therefore an algebro-combinatorial nature because they emerge from polynomials counting features of combinatorial objects through nice and simple algebraic transformations and manipulations. It would be interesting to both understand which combinatorial objects these algebraic splitting methods are related to (if any) and if understanding these objects better could lead to multivariate generalizations of these univariate polynomials. In fact, it is not clear how to perform splitting methods like the DLG one preserving RZ-ness in the multivariate setting. This would amount to splitting the corresponding ovaloids. This is also an interesting path to set a foot in. Additionally, studying these new polynomials and their possible recurrences could point towards new applications and extensions of the theory of stability preservers. Finally, combining all this and the knowledge we gained in Part \ref{IV}, it is clear that the relaxation could work as a middle step between each iteration of the DLG method when we try to improve bounds. It would be interesting to understand better the interrelation between the relaxation and the iteration of these methods. But, for this, we need to improve these methods and connect them to the multivariate setting, as these approximation methods are not so well-developed in the multivariate setting. As we said, there does not exist a satisfactory splitting method for ovaloids of multivariate polynomials preserving RZ-ness. And all this without entering in the numerical stability problems of these methods that could be addressed through more clever techniques similar to our linearization arguments for studying the relaxation.
\end{remark}

In the cleverness towards attacking these problems, it is clear that exploiting symmetries is a winning strategy. We have seen how the palindromicity of the univariate Eulerian polynomials can be translated to the multivariate setting in Part \ref{V}. These symmetries allow us not just accessing more roots, but also understanding better the shape of the relaxation. We can extend the study of these symmetries in two directions: breadth and depth.

\begin{remark}[Explorations in symmetry]
We saw that multivariate Eulerian polynomials are mirrorreciprocal extending the notion of palindromicity to the multivariate setting. Other forms of symmetry can follow the same path helping us to broaden the scope of these commonly univariate explorations on symmetry.
\end{remark}

With all these remarks, we are able to state a full program of research based on the many aspects that we discussed in this conclusion and that stem from our indagations performed during the development of the ideas presented, studied and analyzed in this thesis. The whole description of the details entailing this program is the objective we pursue in the closing section of this conclusion and, therefore, how we decide to finish this thesis looking further beyond.

\section{A proposal for a future program}

All the remarks that we discussed draw a picture for a program aimed at establishing mechanisms for injecting univariate real-rooted polynomials into multivariate ones in ways that could help the relaxation in the task of bounding the corresponding (extreme) roots. Such a program would therefore consist in several steps and range among many different families and sequences of polynomials. We now describe all these necessary steps in more detail.

\begin{step}[Collecting families and sequences]
The first part of this program consists in identifying many families and sequences of univariate polynomials having nice combinatorial (or otherwise generalizable) definitions. There are already attempts to catalog these polynomials. The real challenge here lies in developing techniques that could work for both searching candidates among the many possible polynomials that we can construct using the discrete objects we already know and study and checking that some of the already known families are indeed real-rooted. In this sense, the DLG method and its generalizations using resultants give a full family of ways to generate new sequences of real-rooted polynomials from known ones.
\end{step}

Once we have built an extensive library of these polynomials, the next step will consist in going multivariate with them. That is the fundamental step we applied here and it is a challenging task by itself, as one can check in the literature. In general, there are no clear recipes on how to build many different multivariate RZ polynomials from a real-rooted one unless we already have a nice form for these RZ polynomials, like the one a constructive version of the GLC would provide. Another possibility would be using other determinantal representations (like the one provided by Kummer's attempt) and hoping that the line restriction we are interested in is not one of the lines affected by an early intersection of the rigidly convex set of the additional cofactor. All in all, we see that we are in front of a demanding challenge.

\begin{step}[Going multivariate]
The second step consists in finding ways of injecting the polynomials obtained above in some lines through the origin of multivariate counterparts. There are many hints and methods (mainly) coming form the theory of stability preservers that can achieve this. Here we saw and used two of these methods: one based on interlacers and the other on recurrences. The one based on recurrences could be traced back to a form of combinatorial fine-counting. This points in the direction of studying these methods in depth and determining the different manifestations of their actions along the different layers of definitions of our polynomials. This connects directly to the next step.
\end{step}

Once we have injected our initial univariate polynomial into a multivariate one, the first question that arises is what the different lines around the origin represent. Here we saw that the non-diagonal lines of our multivariate extension represent fine-counting in weighted permutations, for example. But therefore we also saw a connection that goes back from the multivariate polynomials to the original combinatorial object. Thus, having these multivariate polynomials at hand allowed us to devise connections not just between the polynomials but between the new polynomials and new combinatorial objects. This is the next step.

\begin{step}[Uncovering connections in the multivariate extension process]
We mean here both vertical and horizontal connections, in depth and in width. After injecting the univariate polynomial into a multivariate one, what we have is a kind of RZ amalgam of many different univariate real-rooted polynomials (one for each line). The relations between these lines tell us how these polynomials connect with each other establishing ways to relate them and study their connection through different lenses. Since, at the same time, these polynomials can usually also encode information about related combinatorial structures, understanding these horizontal relations will also help us to understand the relations and connections between the combinatorial structures themselves if we are able to travel back to these. This is how the vertical dimension takes prominence: in going back to the combinatorial level, we are able to gain insights and information about the interrelations of combinatorial objects by looking at the polynomials that amalgamate them and understanding how these polynomials perform that amalgamation of combinatorial information. This is why this step takes this privileged position before we even applied the relaxation.
\end{step}

After this step, we have to develop finer ways of counting that would allow us to have nice control about the new combinatorial fine-counting that we had to develop in order to go multivariate. Thus we have to understand the new and more complex and information-filled combinatorial structures that the necessity of multivariateness sets in our framework. In the particular case we treated here, we were lucky enough that other people had already published ways of effectively and efficiently counting these objects. This step is necessary in order to have enough control about the coefficients so we can actually compute the corresponding $L$-forms we need in order to write down the relaxation.

\begin{step}[Finer-counting back in the combinatorial level]
In the step above, when going multivariate, we can actually trace back this action of going to the multivariate setting to some form of finer-counting at the combinatorial level. In this case, understanding the new and finer features we are dealing with is natural, interesting and necessary. Establishing methods and formulas to effectively and efficiently compute these is a fundamental task in our program.
\end{step}

With these formulas, we are able to compute the $L$-forms we need and, with them, we can therefore form the relaxation. This step is the most natural and straightforward one, but it is also central.

\begin{step}[Forming the relaxation]
We use the formulas and methods developed above to compute the relaxation of the multivariate polynomials. We can compute several relaxations with different restrictions of the variables used in order to compare these in the future. This comparison will happen in terms of both theoretical accuracy, numerical stability and practical computability. These features pair immediately with the next step when calculations and expressions become too complicated to deal with them directly.
\end{step}

Once we have written down the relaxation and we have it in our hands, we have to understand how it works. This understanding happens through numerical experimentation using the fact that we used the previous steps in order to make the relaxation relatively easy to compute.

\begin{step}[Numerical experimentation]
Through numerical experiments, we can understand how the relaxation works at approximating the inner roots of our original polynomials or even some other related ones. We can build on the knowledge extracted here in order to choose cleverly the path to take in the following step. Without these experiments, it is not possible to make the choices necessary to simplify and select the information of the relaxation that we need to keep.
\end{step}

In particular, as computing and analyzing the determinant of that relaxation directly is usually a more complicated task than studying and understanding the original polynomial itself, we have to develop methods to simplify the relaxation selecting cleverly the amount of information we want to keep and conserve out of it. For example, in particular, in this paper we chose the path of linearization. There might be other paths based on matrix compression, but we will mainly center here around what we did in this thesis.

\begin{step}[Linearization and approximated generalized eigenvectors]
The numerical experiments above will eventually show us some options of families or sequences of vectors that keep a fair amount of the information of the relaxation. These families will allow us to approximate the generalized eigenvectors of the generalized eigenvalue problem related to determining the roots of the determinant of the corresponding LMP. Thus, having good enough eigenvector approximations (or guesses) will allow us to approximate the roots of the determinant of the relaxation in computationally friendly ways. This will happen because, thanks to these approximations (or guesses) for the corresponding generalized eigenvectors, we will be able to work with linear polynomials instead of having to deal with polynomials of high degree. 
\end{step}

As we said above, we decided to linearize using approximations (or guesses) of generalized eigenvectors. However, it is clear that, between the high degree of the determinant and the degree $1$ of such linearization, must be many intermediate steps involving other transformations and polynomials that we did not explore here.

\setlength{\emergencystretch}{3em}%
\begin{remark}[Other unexplored paths for the step above]
Linearizing might not be the only answer. As dealing with roots of polynomials of degree up to $4$ only requires elementary computations, it is natural to think that these degrees can also be used successfully. However, such a task can be very demanding and, for that reason, we avoided here going in that particular direction.
\end{remark}
\setlength{\emergencystretch}{0em}%

Linearizing the relaxation in the step above will finally provide us with the bounds we were looking for. Once we have them, it is natural to ask how good are they in comparison both with other methods and with applications of the relaxation to instances where fewer variables were forced structurally meaningfully into our multivariate polynomials. We did this last task when we compared what happened when we added just one additional variable through interlacing instead of the many variables that the multivariate Eulerian polynomials allowed us to introduce sequentially into our polynomials.

\begin{step}[Comparison with other methods and with other ways into multivariability]
Once we have established the bounds provided by one way of injecting the univariate polynomial of our interest into a multivariate one, we compare the quality of such bound with respect to other known bounds. This comparison should be made in two ways: in accuracy and in computations. It is important not just to get closer bounds but to know how many computations does it take to reach these bounds using the initial coefficients of the original polynomial. These are the two main measures by which we can compare one bound to another and one method of bounding to another. This comparison should not happen just with other radically different methods, but also with methods that eventually use also the relaxation after injecting the univariate polynomial into a multivariate one in a different way using maybe other constructions.
\end{step}

\setlength{\emergencystretch}{3em}%
\begin{remark}[The form of the comparison]
We choose to compare using two measures of quality: one for the bound itself and another for the method itself. It is clear that there might be other measures able to carry other features of the complexities attached to compute a bound. We also consider worth studying these measures by themselves so we can develop tools to know which bounding methods and which bounds are better than others when we look through one of these measures. This topic is interesting by itself.
\end{remark}
\setlength{\emergencystretch}{0em}%

In particular, these measures could automatically extend to measures about the complexity of injecting the univariate polynomial into a multivariate one. In this sense, being able to establish different methods to measure the nuances of one construction against another one is also especially relevant and something we make a call to study and develop.

\begin{remark}[The form of the extension]
In the same way as we want to do measuring methods to bound roots using different tools and points of view, we can extend this task to the many different methods and ways in which a univariate polynomial can be injected into a multivariate one so that the property of being RZ is preserved. These tools could be completely different but also extensions from the one mentioned above. This therefore makes them interesting by themselves.
\end{remark}

The other methods we study might be too different to allow for a combination with the relaxation. However, another subset of methods could fit well into a strategy containing also the relaxation. That was the case of the DLG method: the best bound provided in this dissertation emerges as an application of the relaxation to a univariate polynomial built through the use of the DLG method. It is natural to wonder at this point therefore if the relaxation and the method we are comparing it with can act symbiotically as happened in the case we just mentioned.

\begin{step}[Combinations of the relaxation and other methods]
It is in general worth checking whether the different methods we have to produce bounds can combine between them. Here, we center around the possibility of mixing and combining the other method with the relaxation just because the relaxation is the main tool we were studying and analyzing in this dissertation.
\end{step}

These combinations ended up producing the best bound here. It is natural to ask if these combinations can be made intercalated between each iteration of these methods when the method itself is iterative. In the case of the DLG we saw briefly that it does indeed seem so and therefore the relaxation produces an intermediate step between each iteration of the DLG method. Understanding the interplay between the relaxation and the other methods better could help us to remediate numerical stability or computability issues.

\begin{step}[Chains of iterations]
We have to determine if the relaxation can effectively be inserted or intercalated between different steps of the iterative methods used to compute bounds for the roots of polynomials. In the cases where these constructions of intercalation or insertion are indeed possible, it is necessary to study if that action improves the accuracy of the bound obtained and which is the cost in computational terms of these constructions as extensions of the original method.
\end{step}

Ameliorating these issues between iterations is fundamental in order to be able to perform each time deeper applications of our methods. Going deeper into the iterative process produces usually better bounds. We just have to be able to control how much costly that task is going to be in order to be ideally able to decide beforehand where we should stop.

\begin{step}[Rinse and repeat]
We study how these intercalations improve the original method. This helps us to establish clearly how helpful is the relaxation in the whole extended process involving both the relaxation and the intercalations of the relaxation between iterative steps.
\end{step}

These steps form the main structure of the program we propose here. What we have done here corresponds just to a tiny portion of this big iceberg that we would like to uncover through this program. The main fundamental objects for setting this program in motion are therefore these real-rooted polynomials whose extreme roots could be interesting to study and know or estimate through these indagations. Many of these polynomials also come from combinatorics and are therefore already found scattered across the literature related to many different (sub-)areas of discrete mathematics. Some of them are even closely related to the Eulerian polynomials studied here.

\begin{remark}[Examples already in the literature]
We can see polynomials arising from similar combinatorial structures, e.g., in the cases of (multivariate) $P$-Eulerian polynomial \cite{peuler}, second-order Eulerian polynomials \cite{context} and Eulerian polynomials arising after counting the number of (non-)zero cycles \cite{cycles}. See also \cite{zhenhuan}. These example polynomials are all related to permutations, but, as we have also said before, these kinds of real-rooted and multivariate-generalizable polynomials appear more generally scattered through the literature of discrete mathematics and combinatorics also associated to matroids \cite{ALCO_2024__7_5_1479_0} and graphs \cite{genus}, for example. See also \cite{edge} and \cite{graphbook} for further examples. 
\end{remark}

Therefore, we used this conclusion to present a sight forward in the form of the proposed skeleton for the sketch of a future program, the \textit{Mindelsee Program}. With this action of looking beyond the current boundary of our knowledge about this topic, we set our foot securely at the border of future discoveries lying across the frontier that now we can feel with our naked feet. This is a good way of finishing any scientific report, always looking further beyond, and therefore we choose to close this dissertation here with a high and hopeful note for the mathematics of the future. The deepest beauty about mathematics is that, no matter where you look at, it is clear that a lot is yet to come. In fact, always a lot more than we already knew or even suspected when we began our original research, as it is clearly also the case here.

\setlength{\emergencystretch}{3em}%
\printbibliography[title=References]\addcontentsline{toc}{chapter}{References}
\setlength{\emergencystretch}{0em}%

\setlength{\emergencystretch}{0em}%

\end{document}